\newcommand{\redd}[1]{{\color{red}#1}}
\theoremstyle{plain}
\newtheorem{thm}[equation]{Theorem}
\newtheorem{cor}[equation]{Corollary}
\newtheorem{prop}[equation]{Proposition}
\newtheorem{lem}[equation]{Lemma}
\theoremstyle{definition}
\newtheorem{defn}[equation]{Definition}
\theoremstyle{remark}
\newtheorem{ex}[equation]{Example}
\newtheorem{rem}[equation]{Remark}
\newtheorem{rems}[equation]{Remarks}
\newtheorem{claim}[equation]{Claim}
\renewcommand{\subsection}{\@startsection{subsection}{2}{0pt}{-3ex
plus -1ex minus -0.2ex}{-2mm plus -0pt minus
-2pt}{\normalfont\bfseries}} \makeatother
\numberwithin{equation}{subsection}
\newlength{\dhatheight}
\def\dmo{\DeclareMathOperator}
\dmo{\ddim}{{\mathbf{dim}}}
   \let\D\cD
\let\a\alpha  \let\b\beta  \let\la\lambda        \let\th\theta
\let\Im\undefined   \let\det\undefined
\dmo{\cHom}{\text{\textrm{\itshape{\cH}\kern-.2ex{}om}}}
\dmo{\cEnd}{\text{\textrm{\itshape{\cE}\kern-.2ex{}nd}}}    
\dmo{\cExt}{\text{\textrm{\itshape{\cE}\kern-.2ex{}xt}}}
\dmo{\REnd}{{\mathrm{REnd}}}\dmo{\Lie}{{\mathrm Lie}}
\dmo{\chark}{{\mathrm{char}}}
\dmo{\Ext}{{\mathrm{Ext}}}
\dmo{\Hom}{{\mathrm{Hom}}}
\dmo{\End}{{\mathrm{End}}}
\dmo{\Aut}{{\mathrm{Aut}}}
\dmo{\Sym}{{\mathrm{Sym}}}
\dmo{\det}{{\mathrm{det}}}
\dmo{\rk}{{\mathrm{rk}}}
\dmo{\Tr}{{\mathrm{tr}}}
\dmo{\Ker}{{\mathrm{Ker}}}
\dmo{\coker}{{\mathrm{Coker}}}
\dmo{\Im}{{\mathrm{Im}}}
\dmo{\gr}{{\mathrm{gr}}}
\dmo{\supp}{{\mathrm{supp}}}
\dmo{\ad}{{\mathrm{ad}}}
\dmo{\Ad}{{\mathrm{Ad}}}
\dmo{\Spec}{{\mathrm{Spec}}}
\dmo{\Mor}{{\mathrm{Mor}}}
\newcommand{\opn}{\operatorname}
\newcommand{\scr}{\mathscr }
\newcommand{\gen}{\reg}
\newcommand{\hdot}{{\:\raisebox{2pt}{\text{\circle*{1.5}}}}}
\newcommand{\idot}{{\:\raisebox{2pt}{\text{\circle*{1.5}}}}}
\newcommand{\GG}{\mathbb G }
\newcommand{\gS}{{\mathfrak S}}
\newcommand{\cF}{{\mathcal F}}
\newcommand{\cf}{{\mathcal F}}
\newcommand{\red}{_{\op{red}}}
\newcommand{\fF}{{\mathfrak F}}
\newcommand{\DD}{{\mathsf D}}
\newcommand{\cR}{{\mathcal R}}
\newcommand{\cC}{{\scr C}}
\newcommand{\cX}{{\mathcal X}}
\newcommand{\D}{{\mathcal  D}}
\newcommand{\cM}{{\mathcal  M}}
\renewcommand{\AA}{\mathbb A }
\newcommand{\bvt}{_\bv^\th}
\newcommand{\vth}{{\wt\theta }}
\newcommand{\ZZ}{{\mathbb Z}}
\newcommand{\iso}{{\;\stackrel{_\sim}{\to}\;}}
\newcommand{\cd}{\!\cdot\!}
\newcommand{\erem}{\hfill$\lozenge$\end{rem}}
\newcommand{\beq}{\begin{equation}\label}
\newcommand{\eeq}{\end{equation}}
\newcommand{\triv}{{\op{triv}}}
\newcommand{\FDN}{\mathrm{F}}
\newcommand{\sch}{{\mathsf{AffSch}}}
\newcommand{\bo}{\mbox{$\bigotimes$}}
\renewcommand{\o}{\otimes }
\newcommand{\leg}{_{\op{legs}}}
\newcommand{\bplus}{\mbox{$\bigoplus$}}
\newcommand{\ccong}{\ \cong \  }
\newcommand{\mto}{\mapsto }
\newcommand{\sset}{\subset }
\newcommand{\g}{{\fg}}
\newcommand{\stack}{{}^{\,{\textit{\tiny{stacky}}}\!}}
\newcommand{\fZ}{{\mathfrak Z}}
\renewcommand{\k}{\mathsf k} \newcommand{\fg}{\mathfrak{g}}
\newcommand{\Lmod}[1]{#1\text{-}{\mathsf{mod}}}
\newcommand{\ccc}{{\scr C}}
\newcommand{\CV}{{\mathcal V}}
\newcommand{\h}{{\mathfrak h}}
\newcommand{\cb}{{\mathcal B}}
\newcommand{\pt}{{\opn{pt}}}
\newcommand{\pr}{{\opn{pr}}}
\newcommand{\Vect}{{\opn{Vect}}}
\newcommand{\GL}{{\opn{GL}}}
\dmo{\Coh}{{Coh}}
\newcommand{\PGL}{{\opn{PGL}}}
\newcommand{\E}{{\mathsf E}}
\newcommand{\BR}{{\mathbb R}}
\newcommand{\pgl}{\mathfrak{pgl}}
\newcommand{\gl}{\mathfrak{gl}}  \newcommand\ot\leftarrow
\newcommand{\RGam}{\text{\upshape R}\Gamma}
\newcommand{\BQ}{{\mathbb Q}}
\newcommand{\lat}{{\mathbb K}}
\newcommand{\BI}{{\mathsf I}}
\newcommand{\fl}{{\mathcal{F}^{\!}\ell}}
\newcommand{\bbm}{{\Xi(\bm)}}
\newcommand{\stab}{{\text{-stab}}}
\dmo{\chr}{char}
\newcommand{\fx}{{\mathfrak X}}
\DeclareMathOperator{\Loc}{{\textit{Loc}}}
\newcommand{\f}{ factorization }
\newcommand{\wtc}{\wt\cC }
\newcommand{\into}{\hookrightarrow}
\newcommand{\onto}{\twoheadrightarrow}
\newcommand{\tv}{{\wt\bv}}
\newcommand{\vpi}{\varpi }
\let\x\times
\let\eps\varepsilon
\newcommand{\wcz}{{\wt{\mathcal Z}}}
\newcommand{\cz}{{\mathcal Z}}
\def\ccirc{{{}_{\,{}^{^\circ}}}}
\newcommand{\IC}{\op{IC}}
\newcommand{\inv}{^{-1}}
\newcommand{\en}{{\enspace}} 
\newcommand{\vi}{${\en\sf {(i)}}\;$}
\newcommand{\vii}{${\;\sf {(ii)}}\;$}
\newcommand{\viii}{${\sf {(iii)}}\;$}
\newcommand{\iv}{${\sf {(iv)}}\;$}
\newcommand{\vv}{${\sf {(v)}}\;$}
\DeclareMathOperator{\Id}{{\mathrm Id}}
\newcommand{\bpsi}{{\boldsymbol{\psi}}}
\newcommand{\bphi}{{\boldsymbol{\phi}}}
\newcommand{\extf}{{\Ext^1_{\cC,F}}}
\newcommand{\sminus}{\smallsetminus}
\newcommand{\bw}{{\mathbf w}}
\newcommand{\bv}{{\mathbf v}} 
   \newcommand{\ft}{{\mathfrak t}}
\newcommand{\tg}{{\wt\g}}
\newcommand{\fX}{{\mathfrak X}}
\newcommand{\Z}{{\mathbb Z}}
\newcommand{\X}{{\mathfrak X}}
\newcommand{\F}{{\mathbb F}}
\newcommand{\al}{{\alpha}}
\newcommand{\CC}{{\mathbb C}}
\newcommand{\C}{{\underline{\mathbf C}}}
\newcommand{\op}{\operatorname}
\newcommand{\too}{\,\longrightarrow\,}
\newcommand{\half}{\mbox{$\frac{1}{2}$}}
\newcommand{\fN}{{\mathfrak N}}
\newcommand{\TT}{{\op{T}}}
\newcommand{\FF}{{\mathfrak F}}
\newcommand{\ki}{{\k{I}}}
\DeclareMathOperator{\Exp}{{\mathrm{Exp}}}
\DeclareMathOperator{\Rep}{{\mathrm{Rep}}}
\DeclareMathOperator{\AI}{{\mathrm{AI}}}
\newcommand{\cur}{{C}}
\newcommand{\si}{\gS }
\newcommand{\be}{\beta }
\newcommand{\ltr}{{\mathsf{Tr}_{_\fr}}|\,}
\newcommand{\wt}{\widetilde }
\renewcommand{\aa}{{\scr A}}
\newcommand{\fr}{{\mathsf{Fr}}}
\newcommand{\sign}{{\op{sign}}}
\DeclareMathOperator{\Irr}{{\mathrm Irr}}
\DeclareMathOperator{\res}{{\mathfrak R}}
\newcommand\cv{\coprod_\bv}
\newcommand{\Ga}{\Gamma }
\newcommand{\Aff}{{\op{\textsl{Aff}}}}
\newcommand{\wh}{\widehat }
\newcommand{\FD}{{\mathsf{F}}}
\newcommand{\sq}{\sqcup }
\newcommand{\ii}{{i\in I}}
\renewcommand{\a}{{\mathfrak A}}
\newcommand{\dis}{\displaystyle}
\newcommand{\oo}{{\mathcal O}}
\newcommand{\ww}{{w}}
\newcommand{\cL}{{\mathscr L}}
\newcommand{\aad}{{\!}_{_{\Ad}}}
\newcommand{\ql}{{\mathcal Q}}
\newcommand{\abs}{_{\op{abs}}}
\newcommand{\kabs}{{{\mathsf K}^I\abs}}
\newcommand{\ce}{{\mathcal E}}
\newcommand{\ev}{\op{ev}}
\newcommand{\vphi}{\varphi }
\renewcommand{\L}{{\mathbb L}}
\newcommand{\Ups}{{\mathfrak{P}}}
\newcommand{\vp}{\varpi }
\renewcommand{\ss}{^{\mathrm{ss}}}
\newcommand{\ck}{{\cC_\k}}
\newcommand{\rf}{\cR^{F,\phi}}
\newcommand{\rr}{{\mathcal R}}
\newcommand{\fB}{{\mathfrak B}}
\newcommand{\VV}{{{\mathcal{V}\textit{ect}}}}
\newcommand{\fz}{{\mathfrak z}}
\newcommand{\mmod}{\op{\text{-}\rm{mod}}}
\newcommand{\nat}{_\flat }
\newcommand{\bmu}{{\boldsymbol{\mu}}}
\newcommand{\he}{^{^{_\textrm{\tiny disj}}}}
\newcommand{\hec}{^{\circ}}
\newcommand{\qlb}{{\bar{\mathbb Q}}_\ell }
\renewcommand{\b}{{\mathfrak b}}
\renewcommand{\u}{{\mathfrak u}}
\newcommand{\fu}{\u}
\newcommand{\fp}{{\mathfrak p}}
\newcommand{\br}{{\mathbf r}}
\newcommand{\slope}{\op{\textit{slope}}}
\newcommand{\hn}{\op{HN}}
\newcommand{\cat}{{\scr C}}
\newcommand{\bun}{\textit{Bun} }
\newcommand{\buna}{{\fX_r(\VV_\gam(C))}}
\newcommand{\hig}{\textit{Higgs}}
\newcommand{\higgs}{{\textit{Higgs}_{\br}}}
\newcommand{\ppar}{\mathcal{PB}\textit{un}}
\newcommand{\higgsd}{{\textit{Higgs}_{\br,d}}}
\newcommand{\hit}{{\textit{Hitch}_{r}(D)}}
\newcommand{\hitirr}{{\textit{Hitch}_{r}^{\op{red,irr}}}}
\newcommand{\ri}{\textit{Res}_i }
\newcommand{\reshit}{{\textit{res}}_{\textit{Hitch}}}
\newcommand{\reshiggs}{{\textit{res}}_{\textit{Higgs}}}
\newcommand{\V}{{\mathcal V}}
\newcommand\Nil{{\opn{Nil}}}
\newcommand{\cl}{{\mathcal L}}
\newcommand{\gam}{\gamma }
\newcommand{\kap}{\kappa }
\newcommand{\vk}{\varkappa }
\newcommand{\Si}{\Sigma }
\newcommand{\bd}{{\mathbf d}}
\newcommand{\bm}{{\mathbf m}}
\newcommand{\W}{{\mathcal W}}
\newcommand{\cc}{{\wt C}}
\renewcommand{\P}{{\mathsf P}}
\renewcommand{\ij}{_i^{(j)}}
\newcommand{\mm}{{\mathcal M}}
\newcommand{\reg}{^\diamond }
\newcommand{\bvtn}{_{\op{nil}}^\th}
\newcommand{\nil}{_{\op{nil}}}
\newcommand{\ssum}{\mbox{$\sum$}}
\newcommand{\pprod}{\mbox{$\prod$}}
\newcommand{\tpp}{^{\tiny\mbox{${}^{_{^{_+}}}$}}\!}
\newcommand{\VGpp}{{\VV^\Ga_{\bm}(\cc)}}
\newcommand{\vgampp}{{\VV^\Ga_{\bm,\gam}(\cc)}}
\newcommand{\dsl}{/\!\!/}
\newcommand{\cx}{\cX }
\newcommand{\pig}{{\pi_1^{et}}}
\newcommand{\pia}{{\pi_1^{\op{arith}}}}
\newcommand{\hhh}{{\mathscr H}}
\newcommand{\fw}{{\mathfrak w}}
\newcommand{\fa}{{\mathfrak a}}
\newcommand{\fb}{{\mathfrak b}}
\newcommand{\sig}{\sigma }
\newcommand{\ig}{ $I$-graded }
\newcommand{\cde}{{C_\Delta}}
\newcommand{\dct}{ decomposition  }
\newcommand{\N}{{\mathsf N}}
\title{Moduli spaces, indecomposable objects and potentials\\ over a finite field}
\author{Galyna Dobrovolska}
\author{Victor Ginzburg}
\author{Roman Travkin}
\begin{document}

\begin{abstract} Given an $\F_q$-linear  category such that
the moduli space of its objects is a 
smooth Artin stack  (and some additional conditions)
we give  formulas for an exponential sum over
the set of absolutely indecomposable objects and a stacky sum over the set of all objects of the category, respectively,
in terms of the geometry of the cotangent
bundle on the moduli stack.
The first formula was inspired by the work of 
 Hausel,  Letellier, and  Rodriguez-Villegas. It
provides a new approach for counting  absolutely
indecomposable
quiver representations, vector bundles with parabolic structure
on a projective curve, and
 irreducible \'etale local systems (via a result of Deligne). Our second formula 
resembles formulas appearing in the theory of Donaldson-Thomas invariants.
\end{abstract}
\maketitle
{\small
\tableofcontents
}

\section{Main results}

\subsection{Exponential sums}\label{exp-sec}
Fix a finite field $\k$ and an additive  character $\psi:\k\to
\qlb^\times$.
Let ${\scr C}$ be  a  $\k$-linear  
(not necessarily abelian)  Karoubian  category (so, $\cC$ has finite direct sums).
Let $[Ob\,\cC]$ denote the set of isomorphism classes of 
objects of $\cC$.
We  fix a function $\phi: [Ob\,\cC]\to\k$, usually referred to as a `potential'.
such that  $\phi(x\oplus y)=\phi(x)+
\phi(y)$ for all $x,y\in [Ob\,\cC]$.
We also fix a finite set $I$ and a $\k$-linear    functor  $F: x\mto F(x)=\oplus_\ii F_i(x)$, from $\cC$
 to the category of finite dimensional $I$-graded vector spaces 
such that $F$ does not kill nonzero objects.
 The $I$-tuple $\bv:=(\dim F_i(x))_\ii\in \Z^I_{\geq 0}$ is
called the  dimension (or dimension vector) of an object $x$ of $\cC$.
We assume that for any $\bv$, the set  $[Ob_\bv\,\cC]$, of isomorphism classes of 
objects of dimension $\bv$, is finite.
 In this paper, we are  interested in the following exponential sums:
%f\beq{E0}
%fE_\bv({\scr C}, F,\phi):=\sum_{x\in[\op{Ob}_\bv({\scr C})]}\
%f\psi(\phi(x)),\quad\text{resp.}\quad
%f\stack E_\bv({\scr C}, F,\phi):=\sum_{x\in[\op{Ob}_\bv({\scr C})]}\
%f\frac{\psi(\phi(x))}{\#\Aut(x)(\k)}.
%f\eeq
%\begin{align}
\beq{E0}E_\bv({\scr C}, F,\phi,\psi)=\sum_{^{x\in[\op{Ob}_\bv({\scr C})]}}\
\psi(\phi(x)),\qquad
\stack E_\bv({\scr C}, F,\phi,\psi)=\sum_{^{x\in[\op{Ob}_\bv({\scr C})]}}\
\mbox{$\frac{1}{\#\Aut(x)}$}  \cdot \psi(\phi(x)),%,\quad\bv\in \Z^I_{\geq 0},\label{Estack}
\eeq%\end{align}
%for all $\bv\in \Z^I_{\geq 0}$.
where  $\Aut(x)$ denotes the group of 
automorphisms of $x$.
Our main result provides, for certain interesting classes of triples $(\cC,F,\phi)$,
a formula  for each of these sums in geometric terms.

In order to use geometry, we assume that our category $\cC$
has a well-behaved
 moduli space  of objects in the sense 
that there exists a {\em smooth} Artin   stack $\X(\cC)$, over $\k$,
and an identification of  groupoid $\X(\cC)(\k)$, of closed $\k$-points of $\X(\cC)$,
with the groupoid of objects of $\cC$.
Furthermore, we assume that the data discussed in the previous paragraph is
compatible with the stack structure as follows. 
We assume   that one has a
direct sum  morphism   $\oplus: {\mathfrak X}({\scr C})\times {\mathfrak X}({\scr
  C})\to
{\mathfrak X}({\scr C})$.
Let $pr_j: \X(\cC)\times\X(\cC)\to\X(\cC),\ j=1,2$, denote the projection to the $j$-th factor.
We assume that the potential comes from a regular function
${\mathfrak X}({\scr C})\to\AA^1$, to be also denoted by $\phi$ again,
which  is additive in the sense that  one has $\oplus^*\phi=pr_1^*\phi+ pr_2^*\phi$.
Further, 
we assume given, for each $\ii$, a   locally  free
sheaf $\fF_i$ on $\fx(\cC)$ equipped with an additivity isomorphism:\
\ $\oplus^*\fF_i\cong pr_1^*\fF_i\oplus pr_2^*\fF_i$ and,
 for all $x\in \X(\cC)(\k)$, with an isomorphism of
the geometric fiber of $\fF_i$ at $x$ with $F_i(x)$.
Finally, we require that the direct sum morphism  and the
additivity isomorphism satisfy appropriate 
 associativity constraints and are compatible with 
the natural isomorphisms $F(x\oplus y)\cong F(x)\oplus F(y)$.

There are two  classes of examples of the above setting.

Examples of the first class, to be discussed in more detail in \S\ref{par-sec},
  are categories of parabolic bundles on a smooth geometrically connected projective curve
$C$ over $\k$ with  a fixed set  of $\k$-rational marked points.
The functor $F$ assigns to a parabolic bundle 
the direct sum
 of its fibers over the marked points.
In the most basic special case of the category of vector bundles (without parabolic structure), 
one chooses an arbitrary $\k$-rational point $c\in C$ and let the functor
$F$ send a vector bundle to its fiber over $c$.
There are  various  modifications of
the above setting.
For example,  
one can consider  categories of  framed  coherent  sheaves on a projective variety, 
cf. \cite{MR} and \cite[Example 3.7]{Me}, in which case the functor $F$ is  taken to be the functor
of global sections. In those cases
It is also interesting to consider the category of locally projective
modules over a fixed hereditary order on $C$, cf. \cite{CI}.
In all these cases, there are no nonconstant potentials ~$\phi$.

%Recall that a  hereditary order 
%is a coherent sheaf $\aa$ of associative $\oo_C$-algebras such that,
%for a  Zariski open and dense $U\sset C$ (that contains our
%marked points), one has that
%$\aa|_U\cong M_n(\k)\o_\k \oo_U$ is a matrix algebra and, moreover, 
%$\aa$ has a certain specific degeneration behavior
%at the points of  $C\sminus U$, cf. \cite{CI}.

Examples of categories of
the second class   are the categories $\Lmod A$ of finite dimensional
modules over a smooth  associative $\k$-algebra $A$. Recall that $A$ is called {\em  smooth} if  the kernel of the map
$A\o A\to A$, given by multiplication,
 is  finitely generated and projective as an   $A$-{\em bi}module. It is known that this condition 
insures that the stack $\fX(\Lmod A)$ is smooth.
Path algebras of quivers and coordinate rings of smooth
affine curves are examples of smooth algebras. Any localization
of a smooth algebra, eg. a multiplicative version of a path algebra,
is smooth. We let $F$ be the forgetful functor that
assigns to an $A$-module $M$  the underlying $\k$-vector space.
Given an element  $a\in A/[A,A]$, there is an associated
potential $\phi_a: {\mathfrak X}(\Lmod A)\to\AA^1,\ M\mto \phi_a(M):=\Tr(a,M)$, the trace of the $a$-action in $M$.
In the case where $A$ is the path algebra of a quiver $Q$,
we get  the category of finite dimensional representations of $Q$; here,
the  finite set $I$ is the
vertex set of $Q$ and an element of $A/[A,A]$ is a linear combination
of oriented cycles, i.e. cyclic paths, in $Q$.

\begin{rem}\label{stab-rem} 
The category  $\Lmod A$ is abelian and the forgetful functor is faithful.
On the contrary, categories from the first class,
such as the category of vector or parabolic bundles, 
are only quasi-abelian (usually not abelian) and the functor that sends a vector bundle
to a direct some of finitely many fibers is, typically, not faithful.
\erem

An important additional source of categories
where  our results are applicable
comes from considering stability conditions on 
 quasi-abelian categories.
In more detail, let $\cC$ be  a  quasi-abelian category
equipped with a  stability condition,
cf. \cite{An}. Any object   $x\in\cC$ has a canonical Harder-Narasimhan
filtration $x=x^0\supset x^1\supset\ldots$, of finite length.
Given  a segment $S\sset\BR$ (or a sector in the upper half plane, 
in the framework of Bridgeland stability)
let $\cC_S$ be  a full subcategory of $\cC$ whose objects
are  the  objects $x\in \cC$
such the slope of $x^j/x^{j+1}$ is contained in
$S$ for all $j$. The moduli stack $\fx(\cC_S)$ is easily seen,
see e.g. \cite{KS2}, to be an open substack
of $\fx(\cC)$. Hence,  $\fx(\cC_S)$ is a smooth Artin stack whenever so is
 $\fx(\cC)$.
Thus, starting with a data $(\cC,F,\phi)$, where $\cC$ is
either the category of modules over a smooth algebra or 
the category of parabolic bundles, the results of this paper  apply
to the
 triple $(\cC_{S},F,\phi)$,  for any choice of 
 stability condition on $\cC$ and a segment ~$S$.
\smallskip

Fix a triple $(\cC,F,\phi)$ as above. For each dimension vector $\bv=(v_i)_\ii$, let  $\fx_\bv({\scr C})$ be a substack of $\fx({\scr C})$
 such that the restriction of 
$\fF_i,\ \ii$, to  $\fx_\bv({\scr C})$  is a vector bundle of a fixed rank $v_i$.
Thus, one has
 a  decomposition
$\fx({\scr C})=\sqcup_{\bv\in \Z^I_{\geq 0}}\ \fx_\bv({\scr C})$.
We consider a stack $\fx_\bv(\cC,F)$ of `framed objects' whose closed
$\k$-points are pairs
$(x,b)$, where $x$ is an object of $\cC$ of dimension $\bv$
and $b$, the `framing',  is a collection $(b_i)_\ii$ where $b_i$ is a $\k$-basis of the vector
space $F_i(x)$, cf. \S\ref{frame-sec} for a complete definition. 
The group $\GL_\bv=\prod_i\ \GL_{v_i}$ acts on  $\fx_\bv(\cC,F)$
by changing the framing. 
One can show that the 1-dimensional torus $\GG_m\sset \GL_\bv$,
of scalar matrices, acts trivially  on  $\fx_\bv(\cC,F)$, so the  $\GL_\bv$-action factors through the
group 
${\PGL}_\bv=\GL_\bv/\GG_m$.

\begin{rem}  Forgetting the framing gives a $\GL_\bv$-torsor
 $\fx_\bv(\cC,F)\to \fx_\bv(\cC)$.
There are canonical isomorphisms $\fx_\bv(\cC,F)/\GL_\bv=\fx_\bv(\cC)$
and $\fx_\bv(\cC,F)/\PGL_\bv=\fx_\bv(\cC)/{B}\GG_m$.
Here,  ${B}\GG_m=\pt/\GG_m$,
the classifying stack, is viewed as the groupoid of 1-dimensional vector spaces equipped with an operation
of tensor product. Tensoring with a  1-dimensional vector space is a well defined functor on  a $\k$-linear category,
and this gives a natural action of $B\GG_m$ on $\X(\cC)$.
The stack $\fx_\bv(\cC,F)/\GL_\bv$ is a $\GG_m$-gerbe over $\fx_\bv(\cC,F)/\PGL_\bv$. 
One has  $\dim \fx_\bv(\cC,F)/\PGL_\bv=\dim \fx_\bv(\cC)/{B}\GG_m=\dim \fx_\bv(\cC) +1$.
\erem

%Let $\wp^{\phi_\bv}$ be the pull-back via $\phi_\bv$ of   the  Artin-Schreier local system
%on $\AA^1$ associated with  the additive character $\psi$.

We need some notation.
Write $\si_v$ for the Symmetric group on $v$ letters,
so  $\si_\bv=\prod_i \si_{v_i}$ is the Weyl group of ${\PGL}_\bv$.  Let $\Irr \si_\bv$ be the set
of isomorphism classes of irreducible $\si_\bv$-representations and
let $\op{sign}$, resp.   $\op{triv}$, 
denote the sign, resp. trivial, character.
Let $\ft_\bv$ be the Cartan subalgebra of $\pgl_\bv$ formed by diagonal matrices.
By the  Chevalley isomorphism, we have $\pgl_\bv^*\dsl\PGL_\bv=\ft_\bv\reg\dsl \si_\bv$, where $\pgl_\bv^*\dsl\PGL_\bv$ denotes the categorical quotient.
We will define a $\si_\bv$-stable Zariski open subset $\ft_\bv\reg$ of $\ft_\bv^{\op{reg}}$,
see Definition \ref{verygen},   where $\ft_\bv^{\op{reg}}\sset\ft_\bv^*$ is the complement of root hyperplanes.

Let $\TT^*\fx_\bv(\cC,F)$ be
the cotangent stack of $\fx_\bv(\cC,F)$ and  $\pgl_\bv=\Lie {\PGL}_\bv$.
Associated with the $\PGL_\bv$-action on $\TT^*\fx_\bv(\cC,F)$, there is a moment map
$ \TT^*\fx_\bv(\cC,F)\to \pgl_\bv^*$.
This map is $\PGL_\bv$-equivariant, hence descends to a map $\mu:\, \TT^*\fx_\bv(\cC,F)/\PGL_\bv\to \pgl_\bv^*/\PGL_\bv$,
of stacky quotients  by $\PGL_\bv$. Further, the  pull-back of the potential $\phi$ via the projection $\fx_\bv(\cC,F)\to \fx_\bv(\cC)$ is clearly a $\PGL_\bv$-invariant function.
Hence, this function descends to a function $\fx_\bv(\cC,F)/\PGL_\bv\to\AA^1$, to be denoted by $\phi$ again.
We obtain the following diagram
\beq{f*}
\ft_\bv\reg\times_{\pgl_\bv\dsl \PGL_\bv} \pgl_\bv/\PGL_\bv\
\xrightarrow{pr}\
\pgl_\bv^*/\PGL_\bv\  
\xleftarrow{\ \mu\ }\ 
\TT^*\fx_\bv(\cC,F)/\PGL_\bv\ \xrightarrow{q} \  \fx_\bv(\cC,F)/\PGL_\bv\
\xrightarrow{\phi}\  
\AA^1.
\eeq
Here,  $pr$ is  the second projection and   $q$ is induced by the vector bundle 
projection $\TT^*\fx_\bv(\cC,F)\to \fx_\bv(\cC,F)$.

Write   ${\mathcal{AS}}_\psi$ 
for the Artin-Schreier local system
on $\AA^1$ associated with  the additive character $\psi$ and $\gr_{\mathsf{W}}(-)$ for an associated graded with respect to the weight filtration.

\begin{prop} Let $(\cC,F,\phi)$ be a triple satisfying all the conditions above.
Then,  for any dimension vector $\bv$, the constructible complex $\gr_W(pr^*\mu_!\phi^*{\mathcal{AS}}_\psi)$
is a constant sheaf
on $\ft_\bv\reg\times_{\pgl_\bv\dsl \PGL_\bv} \pgl_\bv/\PGL_\bv$.
\end{prop}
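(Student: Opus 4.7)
The plan is to combine smoothness of the moment map $\mu$ over the very regular locus $\ft_\bv\reg$ with a $\GG_m$-equivariance coming from cotangent fiber scaling, and then invoke Deligne's purity to kill monodromy on the weight-graded pieces.

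First I would verify that over the preimage of $\ft_\bv\reg\times_{\pgl_\bv\dsl\PGL_\bv}\pgl_\bv/\PGL_\bv$ the moment map $\mu$ is smooth: by design (Definition~\ref{verygen}, to be introduced later) $\ft_\bv\reg$ consists of values at which the $\PGL_\bv$-action on $\mu^{-1}$ is free and $\mu$ is submersive. Smooth base change then guarantees that $\mathcal{K}:=pr^*\mu_!\phi^*\mathcal{AS}_\psi$ is a lisse (geometrically constructible) complex on the base. Next, the scalar-multiplication action of $\GG_m$ on cotangent fibers of $\TT^*\fx_\bv(\cC,F)$ commutes with $\PGL_\bv$ and preserves the pullback $q^*\phi$ (hence preserves the rank-one sheaf $\phi^*\mathcal{AS}_\psi$), while the moment map scales with weight one. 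Therefore $\mu_!\phi^*\mathcal{AS}_\psi$ is $\GG_m$-equivariant on $\pgl_\bv^*/\PGL_\bv$ for the scaling action, and $\mathcal{K}$ is likewise $\GG_m$-equivariant on the base for the dilation action.

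Finally, I would apply Deligne's Weil~II purity to the smooth morphism $\mu$: $\mathcal{K}$ is mixed and each $\gr^W_i\mathcal{K}$ is a pure, semisimple lisse sheaf of weight $i$. A pure $\GG_m$-equivariant lisse sheaf on a connected base with a free dilation action has monodromy on the semisimplification that factors through the $\GG_m$-quotient; combined with purity (which forbids the unipotent residues that normally arise in the weight-filtration machinery) and with the Weyl-group equivariance controlling the $\si_\bv$-cover structure of $\ft_\bv\reg\to\ft_\bv/\si_\bv$, this forces $\gr_W\mathcal{K}$ to be a constant sheaf.

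The main obstacle is precisely this last step: lisseness and $\GG_m$-equivariance are relatively formal consequences of the setup, but going from these to \emph{actual} constancy (rather than just local constancy) of each $\gr^W_i$ requires combining Weil~II with the scaling equivariance to eliminate nontrivial monodromy on pure pieces. The role of the very regular locus $\ft_\bv\reg$ is exactly to provide the rigidity — triviality of $\PGL_\bv$-stabilizers together with smoothness of $\mu$ — that makes this purity argument succeed.
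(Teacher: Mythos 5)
Your steps (1)--(3) are reasonable and roughly align with facts the paper does use: $\GG_m$-equivariance of $\mu_!\phi^*\mathcal{AS}_\psi$ under cotangent scaling is real, and Weil~II gives mixedness with semisimple pure graded pieces. The gap is in step (4). Purity together with $\GG_m$-equivariance does not force constancy of a lisse sheaf on $\ft_\bv\reg$. The $\GG_m$-equivariance only gives descent to the quotient $\ft_\bv\reg/\GG_m$, and this quotient is a projectivized complement of a hyperplane-like arrangement whose geometric fundamental group is far from trivial (already for a single vertex and $v_i=3$ the quotient is $\PP^1$ minus several points, with free fundamental group). A pure semisimple lisse sheaf on such a base can perfectly well have nonconstant monodromy, and the $\si_\bv$-equivariance does not rescue the argument either, since the $\si_\bv$-quotient also has a nontrivial braid-type fundamental group. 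So ``pure + $\GG_m$-equivariant + lisse'' simply does not imply constant, and no amount of ``rigidity'' from freeness of the $\PGL_\bv$-action closes this.

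What the paper actually uses, and what your proposal is missing entirely, is the \emph{factorization structure}. The sheaf $\mu_!\phi^*\mathcal{AS}_\psi$ is, up to shift and Tate twist, the Fourier--Deligne transform $\FDN_{\pgl_\bv/\GL_\bv}(\bar{\mathcal{R}}_\bv)$ of a sheaf $\mathcal{R}_\bv$ built from the inertia stack (Proposition~\ref{Zfour}), and the collection $(\mathcal{R}_\bv)$ is an $\op{Aff}$-equivariant factorization sheaf on $\gl$ (Theorem~\ref{Rfact}). Constancy of the weight-graded is then the content of Theorem~\ref{expC}(i): after semisimplifying, a factorization sheaf decomposes over the diagonal stratification into IC-extensions of local systems that themselves factorize (Lemma~\ref{ss}), Fourier transform followed by restriction to $\N\reg$ annihilates all strata except the principal diagonal (the crucial geometric fact behind condition (3) in the definition of $\ft_\bv\reg$), and the surviving piece is a pullback from the principal diagonal $C$, hence constant by $\op{Aff}$-equivariance. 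It is exactly this mechanism — support collapse on the very regular locus, which exploits the factorization isomorphisms — that kills the residual monodromy, not purity by itself. Note also that your step (1) assumes $\mu$ is smooth over the very regular locus; this is established later in the paper for quiver representations and parabolic bundles (Lemma~\ref{smooth-stab}), but it is not part of the hypotheses of the proposition and the factorization proof makes no use of it.
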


The image of  the map $pr$ in \eqref{f*} has the form  $\pgl_\bv\reg/ \PGL_\bv$, where $\pgl_\bv\reg$ is a  $\PGL_\bv$-stable   Zariski  open 
and dense subset of the set  of 
regular semisimple
elements of $\pgl_\bv^*\cong\pgl_\bv$. Therefore,  $pr$ is a Galois covering with Galois group $\si_\bv$ and  the sheaf 
$\mu_!\phi^*{\mathcal{AS}}_\psi\big|_{\pgl_\bv\reg\PGL_\bv}$ is a locally constant sheaf on  $\pgl_\bv\reg/ \PGL_\bv$,
by the above proposition.
A $\k$-rational coadjoint orbit $O\sset \pgl_\bv\reg$ gives a point of
$(\pgl_\bv\reg/\PGL_\bv)(\k)$ and one has  a
short exact sequence 
\[1\to \pia(\ft_\bv\reg\times_{\pgl_\bv/\PGL_\bv} \pgl_\bv/\PGL_\bv)
\too \pia(\pgl\reg_\bv/\PGL_\bv)\xrightarrow{u}\si_\bv\to1\]
where the group in the middle is the arithmetic fundamental group of $\pgl\reg_\bv/\PGL_\bv$
at  $O\in \pgl_\bv\reg/\PGL_\bv$.
 The stack $\mm_O=\mu\inv(O)$ may be viewed as
 a stacky Hamiltonian reduction with respect to the moment map $\TT^*\fx_\bv(\cC,F)\to\pgl^*_\bv$.
Thanks to the proposition above, 
 the monodromy action of the subgroup $\Ker(u)\sset \pia(\pgl\reg_\bv/\PGL_\bv)$ on the cohomology
$H_c^\hdot(\mm_O,\, \phi^*{\mathcal{AS}}_\psi)=
H_c^\hdot(\mu_!\phi^*{\mathcal{AS}}_\psi|_O\big)$ is unipotent.
It follows that
for any $\rho\in\Irr \si_\bv$ one has a well-defined
space $H^\hdot_c(\mm_O,\, \phi^*{\mathcal{AS}}_\psi)^{\langle \rho\rangle}$, the generalized
isotypic component of  $\rho$ under the monodromy action.

To state our   main result  about exponential sums
it is  convenient to package the sums  in \eqref{E0} into generating series.
To this end, let $(z_i)_\ii$ be an $I$-tuple of indeterminates and write
 $z^\bv:=\prod_{i\in I} z_i^{v_i}$. Put $\bv\cdot\bv=\sum_\ii v_i^2$ and $|\bv|=\sum_\ii v_i$. 
We form the following generating series
\[E({\scr C}, F,\phi)=\sum_\bv\ z^\bv E_\bv({\scr C}, F,\phi),\qquad
\stack E({\scr C}, F,\phi)=\sum_\bv\ 
(-1)^{|\bv|}\cdot z^\bv\cdot q^{\frac{\bv\cdot\bv}{2}}\cdot \stack E_\bv({\scr C}, F,\phi).
\]

We will assume that  for each $\bv$ the stack
$\fx_\bv(\cC)$ has finite type.
It follows that the set $[\op{Ob}_\bv{\scr C}]:=[\fx_\bv({\scr C})(\k)]$  is finite.
We put $d_\bv=\dim\X_\bv(\cC)+1+\frac{\bv\cdot\bv-|\bv|}{2}+1$.
Let $\ltr H^\hdot_c(\mm_O,\phi^*{\mathcal{AS}}_\psi)^{\langle\rho\rangle}$ denote
the alternating sum of traces of the Frobenius. Finally, write
$\Exp$ for the plethystic exponential. 

Then, we

\begin{thm}\label{A-ind} 
Let $\k=\F_q$, let $O\sset \pgl_\bv\reg$ be a 
coadjoint orbit defined over $\k$,
and  $\mm_O:=\mu_\bv\inv(O)/{\PGL}_\bv$. Then, 
one has  the following equations:
\begin{align}
E({\scr C},
F,\phi)\ &=\ \Exp\left(\sum_{^{\bv>0}}\ 
z^\bv\cdot q^{-d_\bv}\cdot 
\ltr H^\hdot_c(\mm_O, \phi^*{\mathcal{AS}}_\psi)^{\langle\op{sign}\rangle}\right)\label{thm11}\\
\stack E({\scr C}, F,\phi)\ &=\
\Exp\left(\frac{1}{q^{\frac{1}{2}}-q^{-\frac{1}{2}}}\cdot\sum_{^{\bv>0}}\ 
z^\bv\cdot q^{-d_\bv}\cdot
\ltr H^\hdot_c(\mm_O, \phi^*{\mathcal{AS}}_\psi)^{\langle\op{triv}\rangle}\right).\label{thm1stack}
%\cdot \Tr H^\hdot_{\GL_\bv,c}(\gl_\bv, \wp^{f_\bv}),
\end{align}
\end{thm}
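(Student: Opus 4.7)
The strategy I propose follows the Fourier-theoretic philosophy of Hausel--Letellier--Rodriguez-Villegas, adapted to the generality of smooth Artin moduli stacks. First I reduce both equations to cohomological statements on the framed stack $\fx_\bv(\cC,F)$. Since $\fx_\bv(\cC,F)\to\fx_\bv(\cC)$ is a $\GL_\bv$-torsor and the residual scalar $\Gm\sset\GL_\bv$ acts trivially, the point-count $E_\bv$ and the stacky count $\stack E_\bv$ are each expressible, up to explicit powers of $q$ and a single factor $(q^{1/2}-q^{-1/2})$ coming from the residual $\Gm$-gerbe $\fx_\bv(\cC,F)/\PGL_\bv\to\fx_\bv(\cC)/B\Gm$, as traces of Frobenius on compactly supported cohomology of $\fx_\bv(\cC,F)/\PGL_\bv$ with coefficients in $\phi^*{\mathcal{AS}}_\psi$. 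The $\Gm$-gerbe factor is precisely the prefactor appearing in \eqref{thm1stack}.

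Next I lift the computation to $\TT^*\fx_\bv(\cC,F)$: pulling back along the vector-bundle projection $q$ and pushing forward inserts a Tate twist but preserves the Lefschetz trace, and sheafifying over $\pgl_\bv^*$ via the moment map $\mu$ writes the total count as $\sum_{O} \ltr H^\hdot_c(\mm_O,\phi^*{\mathcal{AS}}_\psi)$, with the sum running over $\k$-rational coadjoint orbits. By the Proposition stated just before the theorem, $\gr_W(pr^*\mu_!\phi^*{\mathcal{AS}}_\psi)$ is constant on $\ft_\bv\reg\times_{\pgl_\bv\dsl\PGL_\bv}\pgl_\bv/\PGL_\bv$, so the arithmetic monodromy on $H^\hdot_c(\mm_O,\phi^*{\mathcal{AS}}_\psi)$ factors, up to unipotency, through $\si_\bv$ and yields a canonical isotypic decomposition indexed by $\rho\in\Irr\si_\bv$. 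A Weyl-integration and Frobenius-reciprocity computation on this cover identifies the $\op{triv}$-isotypic contribution with the uniform average over all regular semisimple orbits, hence with $\stack E$, and the $\op{sign}$-isotypic contribution with the antisymmetrized orbit average.

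The remaining step is to convert the antisymmetrized orbit average into the plethystic exponential on the left side of \eqref{thm11}. Using that $\cC$ is Karoubian $\k$-linear with finite-dimensional Hom spaces, Krull--Schmidt writes each object uniquely as $\bigoplus_j x_j^{\oplus m_j}$ with distinct indecomposable summands; summing the Artin--Schreier weight over all multiplicity data gives an Euler product, which by Galois descent from $\ok$ to $\k$ becomes a plethystic exponential indexed by absolutely indecomposable classes. The numerical shift $d_\bv=\dim\fx_\bv(\cC)+1+\tfrac{\bv\cdot\bv-|\bv|}{2}+1$ then emerges by bookkeeping: it is $\dim\fx_\bv(\cC,F)/\PGL_\bv$ plus the cotangent fiber dimension minus $\dim O$, accounting for the Hamiltonian reduction.

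The main obstacle is the identification \emph{sign-isotypic = absolutely-indecomposable generating function}. In the quiver case this is the content of Kac's theorem together with the HLV character formula, whose proof exploits the explicit combinatorial structure of Nakajima varieties. To handle the general smooth-stack setting without such an explicit model, I expect an intrinsic argument is needed: stratify $\fx_\bv(\cC)$ by the isomorphism class of endomorphism algebras, show that the contribution of each stratum to the $\op{sign}$-isotypic component of $\ltr H^\hdot_c(\mm_O,\phi^*{\mathcal{AS}}_\psi)$ vanishes unless the endomorphism algebra is local (equivalently, the object is indecomposable), and then run a M\"obius-type cancellation on the poset of direct-sum decompositions to convert the $\op{sign}$-antisymmetrization into the plethystic logarithm. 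A secondary technical point is the rigorous Fourier inversion on the possibly non-quasi-compact cotangent stack $\TT^*\fx_\bv(\cC,F)$, which is what the hypotheses of smoothness of $\fx(\cC)$ and local freeness of the $\fF_i$ are tailored to enable.
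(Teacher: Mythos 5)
Your proposal misses the central technical mechanism of the paper, and as a result has a genuine gap at exactly the point you flag as ``the main obstacle.'' The paper's proof does not try to directly match the sign-isotypic piece of $H^\hdot_c(\mm_O,\phi^*\mathcal{AS}_\psi)$ with a generating function over absolutely indecomposable objects; that identification (Theorem~\ref{thm1}) is a \emph{corollary} of Theorem~\ref{A-ind} combined with Hua's lemma (Proposition~\ref{eai}), not an ingredient of its proof. The plethystic exponential in the right-hand side of \eqref{thm11} is produced by an entirely different mechanism: the sheaf $\rr_\bv=(p_{\gl_\bv})_!p_\X^*\wp^{\phi}$ on $\gl_\bv/\GL_\bv$, obtained from the infinitesimal inertia stack $I\tpp(\X_\bv(\cC))$, is shown to carry a factorization-sheaf structure on $\gl$ (Theorem~\ref{cruc}/\ref{Rfact}); one then applies the cohomology theorem for factorization sheaves (Theorems~\ref{expC}, \ref{Exp}, \ref{GL-H}), whose $\Sym$ arises from the combinatorics of the diagonal stratification of $\AA^\bv/\si_\bv$ and of Verdier specialization at the small diagonal, not from Krull--Schmidt. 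The inertia stack and the trace formula on it (Proposition~\ref{tr_prop}) are what convert the counts $E_\bv$ and $\stack E_\bv$ into sheaf-theoretic quantities in the first place, and Proposition~\ref{Zfour} is what identifies the Fourier transform of $\rr_\bv$ with $\mu'_!(\pi')^*\wp^\phi$. None of this appears in your outline.

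Two further concrete errors. First, your claim that ``sheafifying over $\pgl_\bv^*$ via the moment map writes the total count as $\sum_O \ltr H^\hdot_c(\mm_O,\phi^*\mathcal{AS}_\psi)$, with the sum running over $\k$-rational coadjoint orbits'' is false: the theorem asserts that a \emph{single, fixed, very general} orbit already captures the count, and the proof shows this via the monodromy constancy of $\gr_W(pr^*\mu_!\phi^*\mathcal{AS}_\psi)$ over $\ft_\bv\reg$, combined with the Springer/parabolic-restriction functor $\res_\ft$ of Lemma~\ref{xi}. Summing over all orbits would give the wrong quantity. Second, your proposed fallback --- ``stratify $\fx_\bv(\cC)$ by the isomorphism class of endomorphism algebras, show that the contribution of each stratum to the $\op{sign}$-isotypic component vanishes unless the endomorphism algebra is local, and then run a M\"obius-type cancellation'' --- is not developed, is not the paper's argument, and you yourself concede it is only something you ``expect.'' In its current form, the proposal does not constitute a proof.
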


\begin{rems}\label{bd} \vi The quantity  $\frac{\bv\cdot\bv-|\bv|}{2}$ is the dimension of the flag variety for the group
$\PGL_\bv$. 

\vii We have 
$\ltr H^\hdot_c(\fx_\bv({\scr C})/B\GG_m,\qlb)=(q-1)\cdot \ltr H^\hdot_c(\fx_\bv({\scr C}),\qlb)$
and $\dim\X_\bv(\cC)+1=\dim \X_\bv(\cC)/B\GG_m$.
In many cases, including all the examples considered in this paper,
one has an equality  $d_\bv=\frac{1}{2}\dim\mm_O$.
Also,  the stack $\mm_O$ in Theorem \ref{A-ind}  is, typically, a smooth scheme,
cf.  ~\S\ref{quiv-sec}.
%\vii There are analogues of the equations of the theorem where the stack $\mm_O$
%is replaced by a similar stack involving Hamiltonian reduction by 
%$\GL_\bv$ rather than  ${\PGL}_\bv$. The resulting formulas have extra factors
%$q-1=\#\GG_m(\F_q)$ at certain places.

\viii
The equations of Theorem \ref{A-ind} have motivic nature. In particular, there are counterparts of these
equations in the case where $\k=\CC$, the field of complex numbers.
In that case, the natural analogue of  the function $\psi\ccirc \phi$
is  the exponential function $e^{\phi}$. 
The analogue of the exponential sum  $\stack E(\cC,F,\phi)$ is an element $\frac{[\![\X_\bv(\cC),\, e^\phi]\!]}{[\![\PGL_\bv]\!]}$,
where $[\![\PGL_\bv]\!]$ is the motive of $\PGL_\bv$, \ $[\![\X_\bv(\cC), e^\phi]\!]$ is a class in
the Grothendieck 
group of the category of
{\em exponential monodromic mixed Hodge structures} defined by Kontsevich and Soibelman \cite{KS2},\
 and the fraction is an element of an appropriate localization of the
Grothendieck group. It will become clear from section \ref{iner} that the correct  analogue of the exponential sum $E(\cC,F,\phi)$ 
is the element $\frac{[\![I(\X_\bv(\cC)),\, e^\phi]\!]}{[\![\PGL_\bv]\!]}$, where $I(\X_\bv)$ is the inertia stack of $\X_\bv(\cC)$.
Finally, one can define a Hodge theoretic
counterpart,
$[\![\mm_O,\, e^\phi]\!]^{\langle\rho\rangle}$,  of the  quantity $\ltr H_c^\hdot(\mm_O, \wp^\phi)^{\langle\rho\rangle}$.

\vv
The motivic counterpart of formula \eqref{thm1stack} with $\phi=0$ implies, in a special case  where $\cC$ is the category of representations of a 
symmetric quiver and zero potential, a special case of
the `positivity and integrality conjecture' for DT-invariants due to  Kontsevich and Soibelman \cite{KS2}.
This special case of the conjecture has been  proved in \cite{HLV1}
(cf. also  \cite{DM} in the general case).
We believe that  formula \eqref{thm1stack} might be relevant for understanding the  conjecture in full generality. 
The essential point is that, in the case of quiver representations, the stack $\mm_O$ is
a scheme.
In such a case, formula \eqref{thm1stack}  indicates
that every  coefficient in the motivic counterpart  of the formal power series $\op{Log}(\!\stack E(\cC,F,\phi))$
 is an exponential motive 
of a {\em scheme} rather than a general stack.
\end{rems}

\subsection{Counting absolutely indecomposable objects}
For any finitely generated commutative $\k$-algebra   $K$   one defines
${K}\o_\k\cC$, the base change category,
 as a $K$-linear category whose objects are pairs $(x,\al)$, where $x$ is an object of
$\cC$ and
$\al: K\to \End(x)$ is an algebra homomorphism,
to be thought
of as a `$K$-action on $x$'. 
A morphism $(x,\al)\to (x',\al')$, in $K\o_\k\cC$ is, by definition, a morphism $u: x\to x'$ in $\cC$
such that $\al'(a)\ccirc u=u\ccirc \al(a)$ for all $a\in K$.
By construction, the algebra
$K$ maps to the center of the category ${K}\o_\k\cC$.
It is easy to show that if  the category $\cC$ is  Karoubian, resp. exact,
then so is ${K}\o_\k\cC$.

In the case where $K$ is a finite field extension of $\k$,
the natural forgetful functor ${K}\o_\k\cC\to\cC$ is known to have
a left adjoint functor $\cC\to{K}\o_\k\cC,\ x \to K\o_\k x$, see \cite[\S 2]{So}.

%, since direct summands, resp. kernels and cokernels, in ${K}\o_\k\cC$ are obtained as those in $\cC$, see also \S\ref{obj}.

\begin{defn} An object $x$ of $\cC$ is  called {\em absolutely indecomposable}
if $K\o_\k x$ is an indecomposable object of ${K}\o_\k\cC$,
for any  finite field extension $K\supset \k$.
\end{defn}

 It turns out that the generating series $E({\scr C}, F,\phi)$
can be expressed in terms of
 absolutely indecomposable objects.
Specifically, write 

Let $\op{AI}_\bv({\scr C})$, or $\op{AI}_\bv({\scr C}, \k)$, be the subset  of
$[\op{Ob}_\bv({\scr C})]$ formed by the isomorphism classes of   absolutely indecomposable
objects.
 Let
 $E_\bv^{\op{AI}}({\scr C}, F,\phi)$
be an  exponential sum similar to $E_\bv({\scr C}, F,\phi)$, see \eqref{E0}, where summation
over   $[\op{Ob}_\bv({\scr C})]$ is replaced by summation
over the set ~$\op{AI}_\bv({\scr C})$.
It follows from  a generalization of Hua's lemma \cite{H} that one has
$E({\scr C}, F,\phi)=\op{Exp}\big(E_\bv^{\op{AI}}({\scr C}, F,\phi)\big)$
Therefore,  formula  \eqref{thm11} yields the following result.

\begin{thm}\label{thm1} Let $\k=\F_q$. Then, we have
\[E^{\AI}({\scr C}, F,\phi)=q^{-d_\bv}\cdot
\ltr H^\hdot_c(\mm_O, \wp^{\phi_\bv})^{\langle\op{sign}\rangle}.
\]
\end{thm}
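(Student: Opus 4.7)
The statement is an immediate consequence of Theorem~\ref{A-ind}\eqref{thm11} combined with the Hua-type identity
\[
E({\scr C}, F,\phi)\ =\ \Exp\!\bigl(\textstyle\sum_{\bv>0}z^\bv\cdot E_\bv^{\op{AI}}({\scr C}, F,\phi)\bigr),
\]
which is mentioned right before the statement. The plan is therefore to first verify this Hua-type identity in our categorical setting with potential, and then to apply the invertibility of the plethystic exponential on the maximal ideal of $\qlb\dbkts{z_i\mid i\in I}$ to compare log-coefficients.

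For the Hua identity, I would proceed as follows. Fix $\bv$. By the Krull--Schmidt theorem for Karoubian $\k$-linear categories of finite type (applicable to $\cC$ since each $[\op{Ob}_\bv\,\cC]$ is finite), every object $x\in\op{Ob}_\bv(\cC)$ decomposes uniquely into indecomposables. An indecomposable object over $\k$ need not be absolutely indecomposable; however, by the general theory of base change $K\otimes_\k\cC$ as recalled in the previous subsection (and using the left adjoint $x\mapsto K\otimes_\k x$ from \cite{So}), any indecomposable object $x$ of dimension $\bv$ over $\k$ is, after passing to the splitting field $K=\F_{q^n}$ of its endomorphism algebra, a direct sum of $n$ Galois-conjugate absolutely indecomposable objects of dimension $\bv/n$ in $K\otimes_\k\cC$. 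Since $\phi$ is additive under direct sums and is defined on isomorphism classes, one has $\phi(x)=\Tr_{K/\k}\phi_K(y)$ for such a Galois orbit representative $y$, where $\phi_K$ denotes the natural extension of the potential to $K\otimes_\k\cC$. Summing $\psi(\phi(x))$ over all $x$ of dimension $\bv$ and regrouping the Galois orbits then produces, on the level of generating series in $z$, exactly the plethystic exponential of $\sum_\bv z^\bv E_\bv^{\op{AI}}$. This is the standard Hua-type bookkeeping (\cite{H} in the quiver case), and the only new input required is that the potential and the $I$-grading both behave well under the Galois descent; the former is automatic from additivity of $\phi$ and compatibility of $\psi$ with traces, the latter follows because $F$ and the dimension vector are defined at the level of $\k$-vector spaces.

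Granting the Hua identity, one applies the plethystic logarithm to both sides. From Theorem~\ref{A-ind}\eqref{thm11} we have
\[
\op{Log}\,E({\scr C},F,\phi)\ =\ \sum_{\bv>0}\ z^\bv\cdot q^{-d_\bv}\cdot \ltr H^\hdot_c(\mm_O,\phi^*\mathcal{AS}_\psi)^{\langle\op{sign}\rangle},
\]
while the Hua identity gives $\op{Log}\,E({\scr C},F,\phi)=\sum_{\bv>0}z^\bv\cdot E_\bv^{\op{AI}}({\scr C},F,\phi)$. Extracting the coefficient of $z^\bv$ yields the asserted equality.

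The only step that is not formal given the earlier results is the generalized Hua lemma, and inside it the main obstacle is the bookkeeping of the potential under Galois descent: one must check that the Artin--Schreier twist $\psi\circ\phi$ transforms correctly when an indecomposable over $\k$ is viewed as a Galois orbit of absolutely indecomposables over $\F_{q^n}$. This reduces to the identity $\psi(\Tr_{\F_{q^n}/\F_q}(\alpha))=\prod_{\sigma\in\op{Gal}(\F_{q^n}/\F_q)}\psi_n(\sigma\alpha)$ for $\psi_n=\psi\circ\Tr_{\F_{q^n}/\F_q}$, which is exactly the multiplicative input that makes the plethystic exponential match Hua's combinatorics. Once this is verified, the rest is a straightforward substitution into \eqref{thm11}.
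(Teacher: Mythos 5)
Your plan is exactly the paper's: Theorem~\ref{thm1} is deduced by combining the generalized Hua identity $E({\scr C},F,\phi)=\Exp\bigl(\sum_{\bv>0}z^\bv E_\bv^{\op{AI}}({\scr C},F,\phi)\bigr)$ with formula~\eqref{thm11} of Theorem~\ref{A-ind}, and then invoking the invertibility of the plethystic exponential to equate coefficients of $z^\bv$. The only variation is in how you re-derive the Hua identity: you sketch a direct Galois-descent argument over each extension $\F_{q^n}$, whereas the paper's Proposition~\ref{eai} packages the same bookkeeping through Kontsevich--Soibelman ind-constructible schemes, the Krull--Schmidt decomposition (Lemma~\ref{krull-lem}), and the L-function identity~\eqref{graded-exp}; both routes are standard and correct, so this does not constitute a genuinely different proof of the theorem itself.
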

%Formula \eqref{eai} is a generalization of the well known Hua's formula
%\cite{H},
%and is proved similarly.
%Thus, we are reduced to studying the  generating function $E^{\op{AI}}({\scr C},
%F,\phi)$ and $\stack E^{\op{prim}}_\bv({\scr C}, F,\phi)$
%We conclude that knowing the generating function  sequence 
%$\{E({\scr C},
%F,\phi),\ \bv\in \Z^I_{\geq0}\}$ is equivalent
%to knowing the corresponding sequence for
%   absolutely indecomposable
%objects. 

In the special case $\phi=0$, the local system $\wp^\phi$ reduces
to a constant sheaf and Theorem  \ref{thm1}
yields  a formula for the number of isomorphism
classes of absolutely indecomposable objects:

\beq{ai-intro}\# \op{AI}_\bv(\cC,\F_q)\ =\
q^{-d_\bv}\cdot \ltr
H^\hdot_c(\mm_O,\qlb)^{\langle\op{sign}\rangle}.
\eeq

Fix a finite set $I$ and write $\ki $ for the
algebra of  functions $I\to\k$, with pointwise
operations. Let $A$ be a $\ki$-algebra,
i.e.  an associative $\k$-algebra
equipped with  a $\k$-algebra homomorphism
 $\ki\to A$ (with a not necessarily central image).
Let $\cC=\Lmod{A}$ and let $F$ be the forgetful
functor. Then,
the  stack  $\fx_\bv(\cC,F)$ is nothing but the  representation  scheme
 $\Rep_\bv(A)$, so we have $\fx_\bv(\cC)= \Rep_\bv(A)/\GL_\bv$, the moduli stack of
$\bv$-dimensional $A$-modules. The assumption that $A$ be smooth
implies that  $\Rep_\bv(A)$ is a smooth scheme. Hence,
 $\fx_\bv(\cC)$ is a smooth Artin stack of dimension 
$\dim\big(\Rep_\bv(A)/{\GL}_\bv\big)=\dim\Ext^1(x,x)-\dim\End(x)$, for
any $x\in \fx_\bv(\cC)$.

To a large extent, our paper grew out of our
 attempts to better understand a remarkable
proof of Kac's positivity conjecture by T. Hausel, E. Letellier, and F. Rodriguez-Villegas, \cite{HLV1}.
The present paper provides, in particular, 
a new proof of  the main result of \cite{HLV1}.
Our approach  is quite different
from the one used in \cite{HLV1}. 

In more detail, let $\cC$ be  the category of
finite dimensional representations a quiver $Q$, equivalently,  the category
$\Lmod{A}$ where $A$ is the path algebra of $Q$.
In that case the variety $\mm_O$ becomes in this case a certain quiver variety.
Our  formula \eqref{ai-intro}  is then equivalent to the main result of \cite{HLV1},
modulo the assertion,  proved   in  \cite{HLV2}, that the 
cohomology of quiver  varieties is pure and Tate, i.e.
 $H_c^{\text{odd}}(\mm_O,\qlb)=0$ and the Frobenius acts
on $H^{2j}_c(\mm_O,\qlb)$ as a multiplication by $q^j$. In particular,
we have that $\ltr H_c^\hdot(\mm_O,\qlb)=\sum_{j\geq 0}\ q^j\cdot \dim H^{2j}_c(\mm_O,\qlb)$
is a polynomial in $q$ with non-negative integer coefficients, proving the Kac's positivity conjecture.
 \begin{rem} The fact that the cohomology of the quiver  variety is pure and Tate
also follows, using a deformation argument, cf. \S\ref{pure-sec},  from a much more general  result 
of Kaledin \cite{Ka} that says that rational cohomology groups of an arbitrary symplectic resolution
are generated by algebraic cycles.
\end{rem}
\subsection{Deformation to the nilpotent cone}\label{nilp-sec}
The RHS of formulas \eqref{thm11}-\eqref{thm1stack} involve an arbitrarily chosen coadjoint orbit $O\sset\pgl\reg_\bv$
while  the LHS  of the formulas are independent of that choice. Therefore, it is tempting to find a replacement of the cohomology of the
stack
$\mm_O$ by a more canonical object.    Below, we explain how to do this 
in the case   of quiver representations (and zero potential)
or the case of parabolic bundles. In those cases, 
one can  choose a stability condition $\th$ and consider  the substack of  $\th$-semistable 
objects of the stack $\TT^*\fX_\bv(\cC,F)/\PGL_\bv$ and the corresponding
 coarse moduli space   $\mm^\th$,  a quasi-projective variety 
whose closed $\bar\k$-points are polystable objects.
In the  case where $\cC$ is the category  of  representations of a quiver $Q$,
the coarse moduli space  is a GIT quotient $\mm^\th=\Rep_\bv \bar Q\dsl_\th \PGL_\bv$,
where $\Rep_\bv \bar Q$ is the representation scheme of the double of the quiver $Q$.
In the  case of parabolic bundles the corresponding  coarse moduli space is
a well studied variety   $\mm^\th$  of semistable parabolic Higgs bundles,  cf.  \cite{Y}.

In the above setting, the GIT counterpart of the moment $\TT^*X_\bv(\cC,F)\to\pgl_\bv^*$
is a morphism
 $f^\th:\ \mm^\th\to\pgl_\bv^*\dsl \PGL_\bv$, where
$\pgl_\bv^*\dsl \PGL_\bv$  is a categorical quotient.
An orbit $O\sset \pgl_\bv^*$ maps to a point
$\eta_O\in\g^*\dsl \PGL_\bv$.
The result below  expresses  the quantity in the RHS of
\eqref{ai-intro}  in terms of  a `limit'  of the cohomology of the generic fiber $(f^\th)\inv(\eta)$
as $\eta\to0$, that is, the regular semisimple orbit $O$ degenerates to the nilpotent cone $\Nil\sset\pgl_\bv^*$.

%the restriction of $f^\th$ to the stable locus is automatically
%a smooth 
%morphism, cf. eg. \cite[Lemma 5.1.1]{Gi}.

%Condition \eqref{de2}(3) insures that an object of $\mm\gen$
%(i.e. a representation of the double quiver or a parabolic Higgs bundle)
%has no nonzero proper subobjects. Therefore, the set $\mm\gen$ 

The dimension vector $\bv$ will be fixed throughout this subsection,
so we drop it from the notation and write $G=\PGL_\bv$, resp. $\g=\pgl_\bv$.  
Thus, $\ft$ is the Cartan subalgebra of
$\g$ and $W=\si_\bv$ is the Weyl group.
Let  $\mm\gen:=(f^\th)\inv(\g\gen\dsl G)$.
This is a Zariski  open and dense subset
contained in the stable locus of  $\mm^\th$.
Further, let $\mm\bvtn=(f^\th)\inv(0)$,  a closed subvariety  of $\mm^\th$.
We also consider another  variety, $\mm^\th_{0}$,  a closed subset of  $\mm\bvtn$.
In the case of quiver representations, $\mm^\th_{0}$ is defined as the image in
$\Rep_\bv \bar Q\dsl_\th \GL_\bv$ of the $\th$-semistable locus of the zero fiber of the moment map
$\Rep_\bv \bar Q\to \g^*$.
In the case of parabolic bundles, the variety $\mm^\th_{0}$ is formed by the Higgs bundles
such that the Higgs field has no poles, equivalently, its residue at every marked point vanishes.
Thus,  one has
a commutative diagram:
\beq{bs1}
\xymatrix{
%(\mu^\th)\inv(\Nil)\ \ar[d]^<>(0.5){pr^\th}\ar@{^{(}->}[r]\ar@{}[dr]|{\Box}&
%\ \mm^\th\  \ar[d]^<>(0.5){pr^\th}\ar@{}[dr]|{\Box}&
%\ \mu\inv(\g\gen)\ 
%\ar@{_{(}->}[l]\ar[d]^<>(0.5){pr^\th}\\
\mm^\th_{0}\ar@{^{(}->}[r]&\mm\bvtn\
\ar[d]^<>(0.5){f^\th}\ar@{^{(}->}[r]^<>(0.5){\imath}
\ar@{}[dr]|{\Box}&
\ \mm^\th\  \ar[d]^<>(0.5){f^\th}\ar@{}[dr]|{\Box}&
\ \mm\gen\
\ar@{_{(}->}[l]_<>(0.5){\jmath}\ar[d]^<>(0.5){f^\th}\ar@{}[dr]|{\Box}& \
\mm\gen\times_{\ft\gen\dsl  W}\ft\gen\ar@{->>}[l]_<>(0.5){pr}\ar[d] \\
&\{0\}\ \ar@{^{(}->}[r]&\ \g^*\dsl G \ &\ \ft\gen\dsl  W\ \ar@{_{(}->}[l]
\ &\ \ft\gen\ \ar@{->>}[l]}
\eeq
Here, $\imath$, resp. $\jmath$, is a closed, resp.  open, imbedding, and all squares in the diagram are cartesian.

The map $pr$, the first projection, in the above diagram is an unramified covering, so one has a direct sum
decomposition
$\dis
pr_!\qlb\
=\
\oplus_{\rho\in \op{Irr}( W)}\ \
\rho\o \cl_\rho,
$
where $\op{Irr}( W)$ is the set of isomorphism classes of
irreducible representations of the group $ W$,\
 $\cl_\rho$ is an irreducible local system on $\mm\gen$
and $\qlb$ stands for a constant sheaf on $\mm\gen\times_{\ft\gen\dsl  W}\ft\gen$.
Let $\IC(\cl_\rho)$ be the  IC-extension of  $\cl_\rho$ (here, we use an unconventional normalization
such that $\IC(\cl_\rho)|_{\mm^\th}=\cl_\rho$ is placed in homological degree zero
rather than $-\dim\mm^\th$).  
The following result will be proved in section \S\ref{quiv-sec}.

\begin{thm}\label{la-thm} Let $\cC$ be either the category  $\Lmod{A}$
for a smooth algebra $A$
or the category of parabolic bundles. Then, for any stability condition $\th$  we have

\vi For any $\rho\in\op{Irr}( W)$, the sheaf $\imath^*\IC(\cl_\rho)$  is a semisimple perverse
sheaf on $\mm\bvtn$, moreover, 
 the cohomology $H^\hdot_c(\mm\bvtn, \IC(\cl_\rho))$ is pure. 

\vii The support of the sheaf $\imath^*\IC(\cl_{\op{sign}})$ is 
contained in $\mm^\th_{0}$. Furthermore, if $\th$ is   sufficiently general then,
 in the `coprime case',  the map  $f^\th$ is a
 smooth morphism  and 
$\ \imath^*\IC(\cl_{\op{sign}})\ $ is a constant sheaf on
$\mm^\th_{0}$, up to shift.

\viii For any $O\sset \g\gen$, the scheme  $(f^\th)\inv(\eta_O)$ is smooth
and one has an isomorphism $\mm_O\cong (f^\th)\inv(\eta_O)$, of stacks.

\iv If $\cC$ is either the category of quiver representations  or the category of parabolic bundles
then the
monodromy action on $H^\hdot_c(\cM_O, \qlb)$ factors through a $ W$-action
and one has a canonical isomorphism
\[H^\hdot_c(\mm_O, \qlb)^{\rho}\
=\
H^{\hdot} _c\big(\mm\bvtn,\ \imath^*\IC(\cl_\rho)\big),
\qquad\forall\ \rho\in\Irr W.
\]

In particular, for $\th$  sufficiently general   in the `{\em coprime}' case we have
\beq{nopole}
\ltr
H^\hdot_c(\mm_O, \qlb)^{\op{sign}}\
=\ q^{-\frac{1}{2}\dim\mm_O}\cdot
\# \mm^\th_{0}(\k).
\eeq
\end{thm}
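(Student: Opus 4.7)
The plan is to reduce all four claims to two inputs: a stability/moment-map argument identifying $\mm_O$ with a smooth GIT fiber, and the BBD decomposition theorem applied to the proper map $f^\th:\mm^\th\to\g^*\dsl G$ together with the $W$-cover structure over the generic locus $\mm\gen$.

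I would begin with (iii). The key point is that when $O\sset\g\gen$ is a regular semisimple coadjoint orbit, every element of $\mu\inv(O)\sset\TT^*\fx_\bv(\cC,F)$ is automatically $\th$-stable, for any stability condition $\th$. Indeed, a $\th$-destabilizing subobject preserved by the cotangent (Higgs) datum would force the moment value to split across a proper block, contradicting regular semisimplicity of $O$. Absence of strictly semistable points then identifies the stacky reduction $\mm_O=\mu\inv(O)/\PGL_\bv$ with the GIT fiber $(f^\th)\inv(\eta_O)$, and smoothness follows because $O$ is a regular value of the moment map: the stabilizer of a regular semisimple element is a torus acting freely modulo the scalars.

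Parts (i) and (iv) then follow from the decomposition theorem. By construction $pr$ in diagram \eqref{bs1} is a $W$-torsor, so
\[
pr_!\qlb \ =\ \bigoplus_{\rho\in\Irr W}\ \rho\o\cl_\rho.
\]
The complex $\IC(\cl_\rho)$ is pure of weight $0$ with finite monodromy; the map $f^\th$ is proper (projectivity of Nakajima's quiver varieties, Hitchin-type properness in the parabolic bundle case), so by BBD the direct image $(f^\th)_*\IC(\cl_\rho)$ is a semisimple pure perverse complex on $\g^*\dsl G$. Restriction along $\imath$ gives (i). Proper base change at $\eta_O$ together with (iii) identifies the fiber of $(f^\th)_*\IC(\cl_\rho)$ at $\eta_O$ with the $\rho$-isotypic component of $H^\hdot_c(\mm_O,\qlb)$; the resulting equivariant structure forces the monodromy on $H^\hdot_c(\mm_O,\qlb)$ to factor through $W$ in the two classes of categories under consideration, and yields the canonical isomorphism of (iv).

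For (ii) and \eqref{nopole}, in the coprime case with $\th$ sufficiently general, semistability coincides with stability, $\mm^\th$ is smooth, and $f^\th$ is flat and generically smooth along $\g\gen\dsl G$. The sign character is the top exterior power of the reflection representation, so $\cl_{\op{sign}}$ is the determinant local system of the cover $pr$; by the Springer-type picture its IC-extension is pulled back from the sheaf concentrated on the nilpotent stratum, whose preimage in $\mm^\th$ is the Higgs-field-vanishing locus $\mm^\th_0$. Generic smoothness of $f^\th$ then pins down $\imath^*\IC(\cl_{\op{sign}})$ as the constant sheaf on $\mm^\th_0$ shifted by $\tfrac12\dim\mm_O$, and inserting this into the isomorphism of (iv) produces \eqref{nopole} upon taking Frobenius traces. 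The main obstacle is this last identification, which amounts to a Ng\^o-style support theorem for $f^\th$: one must rule out smaller-dimensional strata as supports of the sign-isotypic summand, a dimension estimate that in the quiver case reduces to the Lagrangian description of $\mm^\th_0$ as the attractor locus of a contracting $\GG_m$-action, and in the parabolic Higgs case relies on the BNR spectral correspondence together with evenness of the relevant cohomology.
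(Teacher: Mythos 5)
Your argument for part (iii) — no proper subobjects over the regular locus, hence automatic stability and smoothness of the fiber — matches the paper's Lemma \ref{simple} and Lemma \ref{smooth-stab}, and that portion is sound. The trouble is in parts (i), (ii), (iv), where the approach is not merely different from the paper's: it would not work.

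The crucial error is the claim that $f^\th:\mm^\th\to\g^*\dsl G$ is proper. It is not. For Higgs bundles, properness holds only for the full Hitchin map $\kappa^\th:\mm^\th\to\hit$; the map $f^\th$ records merely the residues of the Higgs field at the marked points and its fibers are generically non-compact. Likewise for quiver varieties the zero fiber $\mm\bvtn$ is a Nakajima quiver variety — typically non-compact (e.g.\ Hilbert schemes of points of $\AA^2$, or the union of affine cells in the Calogero--Moser case studied in \S\ref{CM-sec}). So BBD cannot be applied to $(f^\th)_*$. Moreover, even if properness held, it would not give (i): pulling back a perverse semisimple sheaf along the \emph{closed} embedding $\imath$ does not preserve perversity or semisimplicity — one needs a local acyclicity statement. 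The paper supplies exactly this via an auxiliary smooth variety $\wt\mm$ (built from the "leg"-extended quiver $Q'$, resp.\ the complete-flag parabolic type $\wt\br$) equipped with a \emph{projective} morphism $\pi:\wt\mm\to\mm^\th\times_{\g^*\dsl G}\ft^*$ and a \emph{smooth} morphism $\wt f:\wt\mm\to\ft^*$, with $\pi$ generically an isomorphism. Proposition \ref{virt} (local acyclicity propagated through proper pushforward) then yields both $\pi_!\C_{\wt\mm}=\IC$ and the local acyclicity of $\IC$ over $\ft^*$, which is what makes (i) true. Your outline has no analogue of this construction.

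Part (iv) has the same issue: you invoke "proper base change at $\eta_O$", which again requires a proper map. The paper instead uses the semi-projectivity of $\mm^\th$ and the purity/deformation-invariance result for semi-projective varieties (Theorem \ref{pure_res}, following \cite{HV2}) to conclude that $\wt f_!\C_{\wt\mm}$ is geometrically constant; this is exactly why the hypotheses of (iv) are restricted to quiver representations and parabolic bundles, where semi-projectivity is available. Your proposal does not mention semi-projectivity at all. Finally, for part (ii), your appeal to a ``Ng\^o-style support theorem'' only addresses the coprime case, but the first clause of (ii) — $\supp\imath^*\IC(\cl_{\op{sign}})\subseteq\mm^\th_0$ — is asserted without coprimality. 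The paper proves it by building, for each $i\in I$, a partial resolution using complete flags at all vertices other than $i$, applying Springer theory at the Lie algebra $\g_i$ of the remaining factor, and intersecting the resulting support constraints over all $i$. This is not a generic dimension estimate, and it is missing from your sketch.
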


\begin{rem} We expect that parts (i)-(iii) of the theorem hold for more general quasi-abelian categories whenever
one can construct coarse moduli spaces $\mm^\th$ with reasonable properties.
Quite differently, the statement of part (iv) crucially relies on the fact that $\mm^\th$ is a {\em semi-projective} variety
in the sense of \cite{HV2}. We do not know examples beyond quiver representations and parabolic bundles
where  $\mm^\th$ is  semi-projective. This is the reason we had to require in (iv) that the algebra $A$ is 
a path algebra.
\erem

The proof of Theorem \ref{la-thm}  exploits an idea borrowed from
\cite{HLV2}.  We use   a   deformation argument 
and  the following general result that seems to be new and may be of
independent interest.
\begin{prop}\label{virt} Let $\wt X\xrightarrow{\pi} X\xrightarrow{p}S$
be a diagram of morphisms of schemes such that the scheme $S$ is smooth,
$\pi$ is a proper, resp.   $p\ccirc \pi$ is a smooth, morphism and, moreover, there is a Zariski open and
dense subset $U\sset S$ such that the morphism $(p\ccirc \pi)\inv(U)\to p\inv(U)$, induced by  from $\pi$  by restriction, is smooth.

Then, we have $\pi_*\qlb=\IC(\pi_*\qlb|_{p\inv(U)})$, i.e., the map $\pi$ is {\em virtually small}
in the sense of  Meinhardt and Reineke \cite{MeR}.

 Assume in addition that   $\pi$ is generically  finite. Then $\pi$ is small. Furthermore, for any closed point $s\in S$
the map  $(p\ccirc \pi)\inv(s)\to p\inv(s)$, induced by $\pi$, is  semi-small.
\end{prop}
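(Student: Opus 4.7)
\emph{Plan.} The plan is to combine the BBD decomposition theorem with a fiber-dimension estimate coming from the smoothness of $p\ccirc\pi$, and then to transfer the first conclusion to individual fibers by a base-change/nearby-cycles argument.

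First, since $p\ccirc\pi$ is smooth and $S$ is smooth, the source $\wt X$ is itself smooth, and $r:=\dim\wt X-\dim S$ is the constant relative dimension. Each fiber $(p\ccirc\pi)^{-1}(s)$ is smooth of pure dimension $r$, and since $\pi^{-1}(x)\subseteq(p\ccirc\pi)^{-1}(p(x))$ one obtains the uniform bound $\dim\pi^{-1}(x)\leq r$ for every $x\in X$. Over $U$, the smoothness of the restricted $\pi$ refines this to a constant generic relative dimension $d\leq r$; under the additional hypothesis that $\pi$ is generically finite, $d=0$ and $\pi$ is finite \'etale over $p^{-1}(U)$.

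For the virtual smallness conclusion, apply the BBD decomposition theorem to the proper map $\pi$ from the smooth source $\wt X$ to obtain a decomposition $\pi_*\qlb=\bigoplus_\al\IC(Z_\al,\mathcal{L}_\al)[n_\al]$ into shifts of simple perverse sheaves. The desired identification $\pi_*\qlb=\IC(\pi_*\qlb|_{p^{-1}(U)})$ amounts to ruling out summands with $Z_\al\not\supseteq\overline{p^{-1}(U)}$, equivalently to verifying the support and cosupport conditions for IC extension from $p^{-1}(U)$. I would invoke the Meinhardt--Reineke criterion \cite{MeR}: its hypothesis is precisely the fiber-dimension bound above, combined with the codimension estimate $\op{codim}_X(X\setminus p^{-1}(U))\geq 1$ coming from density of $U\subset S$.

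For the additional statement, first address semi-smallness of the fiber map $\pi_s:(p\ccirc\pi)^{-1}(s)\to p^{-1}(s)$: this should follow from the virtual smallness of $\pi$ by a nearby-cycles argument. Concretely, $\pi_*\qlb[\dim\wt X]$ is (up to Tate twist) a self-dual IC perverse sheaf; restricting to a smooth curve $T\subset S$ passing through $s$ with generic point in $U$, and applying the nearby-cycles functor $\Psi$ together with proper base change $\Psi(\pi_*\qlb)=(\pi_s)_*\Psi(\qlb)$ and the fact that $\Psi$ preserves perversity, yields that $(\pi_s)_*\qlb$ is shifted-perverse on $p^{-1}(s)$, which is equivalent to $\pi_s$ being semi-small. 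For smallness of $\pi$, set $X_{(k)}:=\{x\in X:\dim\pi^{-1}(x)\geq k\}$. \'Etaleness over $p^{-1}(U)$ forces $X_{(k)}\subseteq p^{-1}(S\setminus U)$ for $k\geq 1$, and combining the fiberwise semi-small bound $\dim(X_{(k)}\cap p^{-1}(s))\leq\dim p^{-1}(s)-2k$ with $\dim p(X_{(k)})\leq\dim S-1$ gives $\op{codim}_X X_{(k)}>2k$.

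The main obstacle is translating the elementary fiber-dimension bound into the sharp IC support inequalities of the first conclusion: either one invokes \cite{MeR} as a black box and verifies their hypotheses, or one carries out a direct stratum-by-stratum analysis of each perverse cohomology ${}^pH^i(\pi_*\qlb)$ via adjunction and the smoothness of $p\ccirc\pi$. Relatedly, the nearby-cycles propagation used to deduce semi-smallness of the fiber maps is standard but requires some care to align the perverse shifts and to choose the auxiliary curve $T$ transversally to $S\setminus U$.
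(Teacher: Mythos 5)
Your proposal has a genuine gap in the proof of the first conclusion, which is also the crux of the proposition. You want to derive virtual smallness from the fiber-dimension bound $\dim\pi^{-1}(x)\leq r$ together with $\op{codim}_X(X\sminus p^{-1}(U))\geq 1$, invoking a Meinhardt--Reineke criterion. That combination of inequalities is far too weak: a uniform bound on fiber dimensions plus codimension one of the bad locus does not force each perverse cohomology ${}^p\scr H^n(\pi_*\qlb)$ to have full support. What makes the statement true is the much finer fact that the smoothness of $p\ccirc\pi$ makes $\qlb_{\wt X}$ \emph{locally acyclic} with respect to $p\ccirc\pi$, and properness of $\pi$ pushes this forward to local acyclicity of $\pi_!\qlb$ with respect to $p$. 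Since the decomposition theorem expresses $\pi_!\qlb$ as a direct sum of shifted intersection complexes, each summand must itself be locally acyclic with respect to $p$; but a summand $\IC(Z,\cl)$ with $Z\sset p^{-1}(S\sminus U)$ has nonzero vanishing cycles along a curve in $S$ meeting $S\sminus U$ transversally at a general point of $p(Z)$, contradicting local acyclicity. This vanishing-cycles argument is the key step, and nothing in your outline replaces it. (Your own abortive attempt to verify the claim by dimension counting should already have been a warning sign: the conclusion is about constraints across the divisor $p^{-1}(S\sminus U)$ that fiber dimensions do not see.)

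The remainder of your proposal is essentially sound and close to the paper's own route. The nearby-cycles argument for semi-smallness of the fiber maps $\pi_s$ is the same one the paper uses: restrict to a curve $C\sset S$ with $C\sminus\{s\}\sset U$, observe that $\pi_!\qlb$ restricts to a (shifted) local system over $\wt p^{-1}(C\sminus\{s\})$ because $\pi$ is smooth and proper there, and then apply proper base change for nearby cycles together with the fact that $\bpsi$ preserves perversity; note that this does \emph{not} actually require the first conclusion, so you can (and should) drop the appeal to ``$\pi_*\qlb[\dim\wt X]$ is an IC perverse sheaf,'' which is not available until you have proved it. Your dimension count for smallness of $\pi$ — bounding $\dim X_{(k)}$ by the fiberwise semi-small estimate plus $\dim p(X_{(k)})\leq\dim S-1$ — is a correct and mildly different route from the paper, which instead deduces smallness directly from the already-proved identity $\pi_!\qlb=\IC(\scr H^{-\dim X}|_{p^{-1}(U)})$ together with the observation that generic finiteness kills all other perverse cohomology over $p^{-1}(U)$.
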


A basic example of the setting of the proposition is provided by the Grothendieck-Springer
resolution $\mu:\wt\g\to \g$ for a semisimple Lie algebra $\g$.
In that case,  we  let $S=\ft$ be the Cartan subalgebra, resp. 
 $\wt X=\wt \g$ and
 $X=\g\times_{\ft/W}\ft$, where $W$ denotes the Weyl group.
There is  a natural smooth morphism $\wt\g\to \ft$  that factors as a composition
of a proper birational morphism $\pi:\wt \g\to \g\times_{\ft/W}\ft$ and the second projection
$p: \g\times_{\ft/W}\ft\to \ft$.    

For applications to the proof of Theorem \ref{la-thm} one takes
$S=\ft^*$ and $X=\mm^\th\times_{\ft^*\dsl  W}\ft^*$.
The corresponding variety   $\wt X$ is defined as a certain GIT
quotient for a stability condition $\th'$ which is
  a sufficiently general deformation of  the given 
 stability condition $\th$, see \S\ref{quiv-sec}. Proposition \ref{virt} insures that the resulting variety is a small resolution 
of $\mm^\th\times_{\ft^*\dsl  W}\ft^*$.
There is  a strong similarity with the construction of
M. Reineke \cite{Rei}, although the setting in {\em loc cit} involves quiver moduli spaces while our setting
corresponds to the cotangent bundle of those moduli spaces.

\subsection{Parabolic bundles}\label{par-sec}
Let $C$ be  a smooth projective geometrically connected curve over $\F_q$,
and  $D=\{c_i\in C(\F_q),\, i\in I\}$ a fixed 
collection  of  pairwise distinct   marked points sometimes thought of as a divisor $D=\sum_\ii\,c_i$.
Associated with a dimension vector
$\bm=(m_i)_\ii$ with positive coordinates, there is a category
$\cC=\VV(C,D,\bm)$ of parabolic bundles.
An object of $\VV(C,D,\bm)$ is a vector bundle, i.e. a locally free
sheaf
  $\V$ on $C$ equipped,  for each $\ii$, with an $m_i$-step  partial flag
$ V_i^\bullet=( \V|_{c_i}= V^{(0)}_i\supseteq  V^{(1)}_i\supseteq \ldots\supseteq
 V^{(m_i)}_i=0)$ in
the fiber of $\V$ at $c_i$. Put ${r}^{(j)}_i =\dim(V\ij/V^{(j+1)}_i)$ and call the
 array $\br=\{{r}^{(j)}_i\in\Z_{\geq 0},\ (i,j)\mid \ii,\ j=1,\ldots,m_i\}$
 the {\em type} of the parabolic bundle.
The integer  $|\br|=\sum_{j=1}^{m_i} \ {r}^{(j)}_i$ is the same for all $\ii$
and it is the rank of the vector bundle $\V$. 
Let  $\bbm=\{(i,j)\mid \ii,\ j=1,\ldots,m_i\}$.
It will be  convenient to view various types $\br$
as dimension vectors $\br=(r\ij)\in \Z_{\geq 0}^\bbm$.
A parabolic type such that $m_i=1\ \forall i$, will be called
`trivial'. For any such type the corresponding category $\VV(C,D,\bm)$ is the category  of vector bundles
without  any flag data. 

A morphism in  $\VV(C,S,\bm)$ is, by definition, a 
morphism $f:\V\to\W$ of  locally free sheaves  that respects the partial flags,
i.e. such that $f( V^{(j)}_i)\sset W^{(j)}_i$, for all $i,j$.
A direct sum of parabolic bundles $\V$ and $\W$ is
the vector bundle $\V\oplus\W$ equipped
with the partial flags defined by
$(V\oplus W)^{(j)}_i:=V^{(j)}_i\oplus W^{(j)}_i$.
This makes  $\VV(C,D,\bm)$
 a $\k$-linear, not necessarily abelian, Karoubian category. 
The assignment
\beq{bun-functor}\V\ \mto\ F(\V)=\bigoplus_{(i,j)\in \bbm}\ 
V^{(j)}_i/V^{(j+1)}_i.
\eeq
gives a $\k$-linear  functor from  category $\VV(C,D,\bm)$ to the 
category of finite dimensional $\bbm$-graded vector spaces. One has
a decomposition
$\X(\VV(C,D,\bm))=\sqcup_{\br\in \bbm}\ \X_\br(\VV(C,D,\bm))$,
where $\X_\br(\VV(C,D,\bm))$ is the stack parametrizing
parabolic bundles of type $\br$.
The corresponding framed stack  $\fX_\br(\VV(C,D,\bm), F)$ 
 parametrizes the data
$\big(\V, (\V^\bullet_i)_\ii, b=(b^{(j)}_i)_{(i,j)\in \bbm}\big)$, where  $(\V, (\V^\bullet_i)_\ii)$  is  a 
parabolic
bundle   of type $\br$ and 
 $b^{(j)}_i$ is a $\k$-basis  of the vector space
 $V^{(j)}_i/V^{(j+1)}_i$.
%We put
%\beq{GW}\GL_\br:= \prod_{(i,j)\in \bbm} \
%\GL_{r\ij}\ , 
%\en\text{resp.}\en 
%\si_\br=\prod_{(i,j)\in \bbm} \
%\si_{r^{(j)}_i}.
%\eeq
%Thus, $\GL_\br$ is a Levi subgroup of a cartesian product of $I$ copies of the
%group $\GL_{|\br|}$ and $\si_\br$ is the Weyl group of $\GL_\br$. 
The group 
$\GL_\br:= \prod_{(i,j)\in \bbm} \
\GL_{r\ij}$ acts on  $\fX(\VV(C,D,\bm), F)$ by changing the bases.
One can show that this action factors through an action of
$G_\br=G_\br/\GG_m$.

\begin{rem} 
Note that in order to have a nonzero functor $F$ the set $I$ of marked points must always be nonempty,
even if one is only interested in  vector  bundles without flag data, i.e., parabolic bundles of trivial type.
\erem

In this paper, a Higgs field on a  parabolic bundle $\CV$
 is, by definition,  a morphism $\V \to \V\o\Omega^1_C(D)$
such that 
\beq{phi-flag}
\textit{res}_i(u)(V\ij)\ \sset\ V\ij\,\o\, \Omega^1_C(D)|_{c_i},
\quad\forall\ (i,j)\in \bbm.
\eeq
Here, $u$ is viewed as a section of  ${\scr H}\!om(\V,\V)\o \Omega^1_C$ with simple poles at the
marked points and $\textit{res}_i(u) \in \End(\V|_{c_i})$ denotes the residue of $u$ at
the point $c_i$. 
\begin{rem} Our terminology is slightly different from the 
standard one. The objects that we call `parabolic bundles' are  usually referred to as `quasi-parabolic bundlles',
while the name `parabolic bundle' is reserved to objects equipped with additional stability parameters.
Since we work with stacks, these parameters will play no role in our considerations except for the section
\ref{quiv-sec}.
Also, Higgs fields are often defined as morphisms $u: \V\to \V\o\Omega^1_C(D)$
satisfying a stronger
requirement:\  $\textit{res}_i(u)(V\ij)\ \sset\ V_i^{(j+1)}\,\o\, \Omega^1_C(D)|_{c_i},\ \forall\ (i,j)\in \bbm$.
\erem

The  stack $\TT^*\fX_{\br}(\VV(C,D,\bm), F)$ 
may be identified with the stack of framed parabolic Higgs bundles.
A $\k$-point of this stack is a triple  $(\V,b,u)$
where $\CV$ is a  parabolic bundle, $b$ is a framing, and $u$ is a Higgs field on $\V$.
 We let $\g_\br=\Lie G_\br$ and use the natural identification
$\g_\br^*=\g_\br$. The moment map 
$\TT^*\fX_{\br,d}(\VV(C,D,\bm), F)\to \g_\br^*$ associated with the $G_\br$-action
on the cotangent stack
 sends $(\CV,u,b)$ to the collection
$(\gr\textit{res}_i(u))_\ii$. Here,  $\gr\textit{res}_i(u))\in \oplus_j\ \End(V^{(j)}_i/V^{(j+1)}_i)$ 
is the
map induced by $\textit{res}_i(u)$ and the
framing $b$ provides an identification $\End(V^{(j)}_i/V^{(j+1)}_i)=
\gl_{r^j_i}$; thus,  the collection $(\gr\textit{res}_i(u))_\ii$ may be identified with an element of $\g_\br$.
Note that the $G_\br$-conjugacy class of that element is independent of the choice of the
framing $b$.

Let $\#\op{AI}(\ppar_{{\mathbf r},d},\F_q)$ be 
the number   of isomorphism classes
of  $\F_q$-rational absolutely indecomposable
 parabolic  bundles of type  ${\mathbf r}$ and
  degree $d$.
Given a conjugacy class $O\sset\g_\br$, let $\hig_{{\mathbf r},d}(O)$
be (the Beilinson-Drinfeld modification, cf. Remark \ref{bd}(i),  of)  the stack of parabolic  Higgs bundles 
whose objects are pairs $(\CV,u)$
such that $\gr\textit{res}(u)\in O$. 
Using formula  \eqref{ai-intro} 
we prove
\begin{thm}\label{ind-bundles}  For  a very general  $\F_q$-rational semisimple conjugacy class $O$ in $\g_\br$,
% the stack  $\hig_{{\mathbf r},d}(O)$ is (isomorphic to) a smooth scheme and
we have an equation
\beq{ind-for}
\#\op{AI}(\ppar_{{\mathbf r},d},\F_q)= q^{-\frac{1}{2}\dim 
\hig_{{\mathbf r},d}(O)}\cdot
\ltr H^\hdot_c(\hig_{{\mathbf r},d}(O),\qlb)^{\langle\op{sign}\rangle}.
\eeq
\end{thm}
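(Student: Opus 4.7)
The plan is to derive Theorem \ref{ind-bundles} by applying the general formula \eqref{ai-intro} to the triple $(\cC, F, \phi) = (\VV(C,D,\bm), F, 0)$, where $F$ is given by \eqref{bun-functor} and the potential vanishes. Since degree is an additive discrete invariant of parabolic bundles, both $[\op{Ob}_\br(\cC)]$ and the cohomology of the relevant Hamiltonian reduction decompose as disjoint unions indexed by $d \in \Z$, so \eqref{ai-intro} refines to each $(\br,d)$-component and yields the asserted formula for the degree-$d$ piece of $\op{AI}_\br$.

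I would first verify the framework hypotheses of \S\ref{exp-sec}: $\VV(C,D,\bm)$ is $\k$-linear and Karoubian; $F$ is additive and does not kill any nonzero bundle, since $\dim F(\CV) = |I|\cdot \rk(\CV)$; and $\fx_{\br,d}(\VV(C,D,\bm))$ is a smooth Artin stack by the classical deformation theory of vector bundles with partial flags on a smooth projective curve. Although the full degree-$d$ moduli is not a priori of finite type, the Beilinson--Drinfeld modification of the Higgs stack mentioned in the paragraph before the theorem provides a finite-type replacement without altering the $\F_q$-rational absolutely indecomposable objects.

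Next I would identify the cotangent stack $\TT^*\fx_{\br,d}(\VV(C,D,\bm),F)$ with the stack of framed parabolic Higgs bundles, as asserted in the text preceding the theorem. A Serre-duality computation on $C$ identifies the cotangent fiber at a framed bundle $(\CV,b)$ with the space of Higgs fields $u:\CV\to\CV\otimes\Omega^1_C(D)$ satisfying the flag-preservation condition \eqref{phi-flag}. Under this identification the $\PGL_\br$-moment map sends $(\CV,u,b)$ to $(\gr\,\textit{res}_i(u))_\ii\in\g_\br$, so for a very general $\F_q$-rational semisimple orbit $O\sset\g_\br$ the Hamiltonian reduction $\mm_O = \mu^{-1}(O)/\PGL_\br$ is exactly $\hig_{{\mathbf r},d}(O)$.

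Plugging these identifications into \eqref{ai-intro} gives the desired formula, provided one verifies the dimensional normalization $d_\br = \tfrac12\dim\hig_{{\mathbf r},d}(O)$ promised in Remark \ref{bd}. This reduces to a Riemann--Roch computation on $C$ for $\dim\fx_{\br,d}(\cC)$ combined with the standard dimension formula for the Hamiltonian reduction at a very general semisimple orbit where the moment map is a submersion and stabilizers are trivial. The main obstacle I anticipate is the careful bookkeeping around the Beilinson--Drinfeld modification: one must ensure that after modification the moment map is still smooth over $O$ and that $\hig_{{\mathbf r},d}(O)$ retains its expected dimension, so that both the finite-type requirement of the framework and the dimensional identity hold simultaneously.
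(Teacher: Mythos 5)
Your outline correctly identifies the target formula as an instance of \eqref{ai-intro} applied to the parabolic-bundle category with $F$ given by \eqref{bun-functor}, and the identification of $\TT^*\fx_\br(\VV(C,D,\bm),F)$ with framed parabolic Higgs bundles and of $\mm_O$ with $\hig_{\br,d}(O)$ is correct and matches the paper. However, the central difficulty of this theorem is precisely what you dispose of in one sentence, and that sentence is wrong.

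The ``Beilinson--Drinfeld modification'' referenced in the paragraph before the theorem is simply the $\GG_m$-gerbe adjustment of Remark~\ref{bd} (replacing $\fx_\bv(\cC)$ by $\fx_\bv(\cC)/B\GG_m$ to make $d_\bv=\frac12\dim\mm_O$); it has nothing to do with finite type and does not truncate the moduli of parabolic bundles. The stack $\ppar_{\br,d}$ is genuinely of infinite type, and the paper states explicitly, right after the theorem, that Theorem~\ref{A-ind} (hence \eqref{ai-intro}) cannot be applied to $\VV(C,D,\bm)$ directly. The paper's actual argument occupies all of \S\ref{higgs}: one introduces the slope-truncated subcategories $\VV_\gam$ using the Harder--Narasimhan filtration, shows via Shatz's semicontinuity that $\fx_r(\VV_\gam(C))$ is an \emph{open finite-type} substack (Lemma~\ref{prestack}, Corollary~\ref{cor-a}), and proves that indecomposables are captured in $\VV_\gam$ for $\gam$ above an explicit bound (Lemma~\ref{indec-slope}); extending this to the parabolic setting requires passing to $\Gamma$-equivariant bundles on a ramified Galois cover (Proposition~\ref{biswas}, Lemma~\ref{par-ind}, Proposition~\ref{par-appr}), which is itself nontrivial.

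A second, related gap: you claim that the moment map being a submersion with trivial stabilizers over $O$ suffices for the dimensional normalization, but you never establish that $\mm_O=\hig_{\br,d}(O)$ is of \emph{finite type} --- a separate requirement for \eqref{ai-intro} and for the trace of Frobenius to make sense. In the paper this follows from the Hitchin-base analysis: for a very general semisimple class $O$, the genericity conditions in Definition~\ref{verygen} force the corresponding spectral curve to be reduced and irreducible (Lemma~\ref{reduced}), and then finite type follows from Proposition~\ref{higgs-fin} (essentially the compactified-Jacobian result of \cite{BNR}). Your proposal mentions none of this, and without it the right-hand side of \eqref{ind-for} is not even well-defined. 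Finally, after establishing finite type of $\mm_O$, the paper still needs the comparison step showing that $\mm_O\sset\big(\TT^*Y_{\br,d,\gam}\big)/G_\br$ for some large $\gam$ before Theorem~\ref{A-ind} can be invoked for the truncated category; this reconciliation step is also absent from your outline.
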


We remark that  $\ppar_{{\mathbf r},d}$ is an Artin stack of  {\em infinite type}
and the  corresponding set $[\ppar_{{\mathbf r},d}(\F_q)]$
is not finite. Therefore, Theorem \ref{ai-intro} 
can not be applied
to category $\VV(C,D,\bm)$ directly.
We will prove formula \eqref{ind-for}
by approximating category  $\VV(C,D,\bm)$
 by an increasing family  of subcategories
such that the corresponding moduli stacks have finite type.
We show that the sets of absolutely indecomposable objects of these categories stabilize. 
This allows to find
$\#\op{AI}(\ppar_{{\mathbf r},d},\F_q)$, in particular, this number is finite.
\vskip 3pt

 In the case of a trivial parabolic type, the LHS of the above formula
reduces to  $\#\op{AI}_{r,d}(\bun_{r,d},\F_q)$, the number of isomorphism classes
of  $\F_q$-rational absolutely indecomposable
vector  bundles of  rank $r$ and degree $d$.
If, in addition,  $r$ and $d$ are 
{\em coprime}  the RHS of  formula \eqref{ind-for} can be expressed in terms of
the coarse moduli space,
$\hig^{\op{ss}}_{r,d}$,
 of semistable
Higgs bundles  of rank $r$, degree $d$, and such that the Higgs field is regular
on the whole of $C$. Specifically, formula \eqref{nopole} yields
\beq{sch}
\#\op{AI}(\bun_{r,d},\F_q)= q^{-\frac{1}{2}\dim \hig^{\op{ss}}_{r,d}}
\cdot \#\hig^{\op{ss}}_{r,d}(\F_q) \quad\text{if}\en(r,d)=1.
\eeq

Another, much more explicit, formula for $\#\op{AI}(\bun_{r,d},\F_q)$
has been obtained earlier by O. Schiffmann \cite{Sch}, cf. also \cite{MS}.
The approach in {\em loc cit} is totally different
from ours and it is close, in spirit, to the strategy
used in \cite{CBvdB} in the case of quiver representations with
an indivisible dimension vector. 
It seems unlikely that
such an approach can be generalized  to the case of vector bundles with
$(r,d)>1$,
resp.  the 
case of quiver representations with divisible dimension vector.
Note that the more general formula
 \eqref{ind-for} applies regardless of whether  $(r,d)=1$
or not.
The relation between \eqref{ind-for} and the approach 
 used in \cite{Sch} and \cite{CBvdB} is discussed in more detail
in \S\ref{quiv-sec}.

%In many situations, includuing those of quiver representations and
%parabolic bundles, 
%there is a useful reinterpretation of the
%right-hand side of  formula \eqref{thm1} in terms of schemes rather than stacks.
%I%n more detail, one has a notion of $\th$-stability
%where $\th$ is a stability condition, and a scheme  $\TT^*\fX_\bv(\cC,F)^\th$,
%a coarse moduli space   of $\th$-semistable objects of $\TT^*\fX_\bv(\cC,F)$.
%This is a scheme equipped with a $G_\bv$-action.
%Let  $\mm\bvt=\mu\bvt\inv(0)\dsl G_\bv$ be the GIT quotent of the zero
%fiber of the moment map $\mu\bvt: \TT^*\fX_\bv(\cC,F)^\th\to \g_\bv^*$.
%Using a deformation argument and an argument involving Springer
%representations we prove the following
%\begin{thm}\label{psi-intro}
%Let $\cC$ be either the
 % category
%of representations of a quiver $Q$ or the category of parabolic bundles
%on a projective curve $C$, equipped with the functor $F$  as defined above.
%Let  $\bv$ be an  indivisible  dimension vector  and $\th$ a very general
%stability condition. 
%Then, in the setting of there is a canonical isomorphism

\subsection{Higgs bundles and local systems}\label{deligne-sec}
Remarkably,   indecomposable parabolic bundles
 provide a bridge between
the geometry of Higgs bundles, on one hand,
 and counting {\em irreducible} $\ell$-adic  local systems on 
the punctured curve with  prescribed monodromies
at the punctures, on the other hand.
In more detail,
let  $C_i^*=C_i\sminus \{c_i\}$ where $C_i$ is the
henselization of  $C$ at  $c_i$.
Fix  an $I$-tuple $({\scr L}_i)_\ii$,
where  ${\scr L}_i$ is  a  rank $r$ tame
 semisimple 
$\qlb$-local system 
 on $C_i^*$ such that  ${\textrm{Fr}}^*{\scr
  L}_i\cong{\scr L}_i$. 

In  \cite{De}, Deligne addressed the problem of counting
 elements of the set $\Loc_{\textrm{et}}(({\scr L}_i)_\ii,\, q)$, of isomorphism classes of
irreducible  $\qlb$-local
 systems ${\scr L}$ on $C^*:=C\sminus (\bigcup_i c_i)$,
 such that 
%\beq{12}{
${\textrm{Fr}}^*{\scr L}\cong {\scr L}$ and
%\quad\text{and}\quad 
${\scr L}|_{C_i^*}\cong {\scr L}_i$.
Motivated by an old result of Drinfeld
\cite{Dr} in the case $r=2$, one expects that the function
$n\mto \# \Loc_{\textrm{et}}(({\scr L}_i)_\ii,\, q^n)$ behaves
as if  it counted $\F_{q^n}$-rational points of an  algebraic variety over $\F_q$.
%such that 
%\beq{mm} \# \Loc_{\textrm{et}}(({\scr L}_i)_\ii,\, q^n)=q^{\redd{\frac{n}{2}\dim \mm}}
%\cdot
%\#\mm(\F_{q^n})\qquad\forall n\geq1.
%\eeq
The main result of Deligne confirms this expectation under
the following additional
assumptions on the collection  $({\scr L}_i)_\ii$:
\begin{enumerate}

\item Each of the local systems ${\scr L}_i$ is semisimple and tame.

\item One has $\prod_\ii\ \det{\scr L}_i=1$,  see 
\cite{De}, equation  (2.7.2).

\item {\em Genericity condition:}
for any  $0< r' <r$ and any $I$-tuple
$({\scr L}'_i)_\ii$ where ${\scr L}'_i$ is a direct summand of ${\scr L}_i$
of rank $r'$, one has  $\prod_\ii\ \det{\scr L}'_i\neq 1$,
see \cite{De},  equations   (2.7.3)
  and (3.2.1).
\end{enumerate}
Recall that the tame quotient of
the geometric fundamental group of $C^*_i$ is isomorphic
to $\wh{\mathbb Z}(1)$, cf. \cite[\S2.6]{De}. Therefore, the local system
${\scr L}_i$ being tame by (1), it gives a homomorphism $\rho_i: \wh{\mathbb Z}(1)\to
\GL_r(\qlb)$. Condition (2)   says that, writing
$\prod_i \det\rho_i(z)=1$ for all $z\in \wh{\mathbb Z}(1)$. This equation is neccesary 
for the existence of a local system $\cL$ on $C\sminus D$
such that $\cL|_{C_i^*}\cong {\scr L}_i,\ \forall \ii$, see
\cite[\S 2.7]{De}. 
Furthermore,  condition (3) guarantees
that any such  local system $\cL$ is automatically
irreducible.
Condition (1) implies also that each local system
${\scr L}_i$ is isomorphic to  a direct sum of  rank one   local systems.
Thus, one can write 
$${\scr L}_i=({\scr L}_{i,1})^{\oplus
  r^{(1)}_i}\ \bplus\ldots\bplus\
({\scr L}_{i,m_i})^{\oplus
  r^{(m_i)}_i},\qquad\forall\ \ii,
$$
where  the tuple $(r^{(1)}_i,\ldots,r^{(m_i)}_i)$ is a partition of $r$
with $m_i$ parts and ${\scr L}_{i,j},\ j\in [1,m_i]$,  is an unordered collection
of pairwise nonisomorphic rank one  tame local systems.
We say that the $I$-tuple
$({\scr L}_i)_\ii$ has type  $(r\ij)$. 

According to Deligne \cite[Th\'eor\`eme 3.5]{De}, for an  $I$-tuple $({\scr L}_i)_\ii$  of type $\br=(r\ij)$  one has
\beq{de}
\# \Loc_{\textrm{et}}(({\scr L}_i)_\ii,\, q)\ =\ \#\op{AI}(\ppar_{{\mathbf r},d},\F_q),
\eeq
for any fixed degree $d$.
The proof of this equation, which is the main result of  \cite{De},  rests on the validity of the  Langlands conjecture for $\GL_n$ proved by Lafforgue.
Comparing \eqref{de} and  \eqref{ind-for}, we obtain 

\begin{cor}\label{deligne} Let $({\scr L}_i)_\ii$ be an $I$-tuple such that
  conditions (1)-(3) hold and let $\br$ be the corresponding type.
Then,
for any degree $d$ and a very general conjugacy class
$O$ in $\g_\br$ 
one has
\beq{DN}\# \Loc_{\textrm{et}}(({\scr L}_i)_\ii,\, q)\ =\ q^{-\frac{1}{2}\dim 
\hig_{{\mathbf r},d}(O)}\cdot
\ltr H^\hdot_c(\hig_{{\mathbf r},d}(O),\qlb)^{\langle\op{sign}\rangle}.
\eeq
\end{cor}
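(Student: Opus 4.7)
The plan is to simply chain the two deep inputs that have already been established in this introduction. By Deligne's theorem \cite{De}, recorded as equation \eqref{de}, the left-hand side of \eqref{DN} is equal to $\#\op{AI}(\ppar_{\br,d},\F_q)$, where $\br=(r_i^{(j)})$ is the type obtained from the canonical decomposition $\scr{L}_i \cong \bigoplus_j \scr{L}_{i,j}^{\oplus r_i^{(j)}}$ guaranteed by the tameness and semisimplicity condition~(1). By Theorem~\ref{ind-bundles}, applied to this type $\br$, the same fixed degree $d$, and any very general $\F_q$-rational semisimple conjugacy class $O \sset \g_\br$, the quantity $\#\op{AI}(\ppar_{\br,d},\F_q)$ equals the right-hand side of \eqref{DN}. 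Composing these two equalities gives the corollary.

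The only genuine verification needed is compatibility of conventions on both sides. First, one checks that the type $\br = (r_i^{(j)})$ extracted from $({\scr L}_i)_{i\in I}$ via condition~(1) is precisely the indexing datum parametrising the stack $\ppar_{\br,d}$ of parabolic bundles in \S\ref{par-sec}: in both settings one records, at each marked point $c_i$, the multiplicities of an ordered/unordered decomposition into graded pieces. Second, Deligne's conditions (2)--(3) are invoked only to make sense of and to guarantee irreducibility of the local systems being counted on the Galois side; they play no role in the geometric side \eqref{ind-for}, so no additional restriction on $O$ arises beyond the very-general semisimplicity already required by Theorem~\ref{ind-bundles}.

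There is no independent obstacle inside the proof of the corollary itself: all the substance lies in its two inputs. On the automorphic side, Deligne's equation \eqref{de} rests on the Langlands correspondence for $\GL_n$ established by Lafforgue, while on the geometric side Theorem \ref{ind-bundles} uses the formalism of \S\ref{exp-sec}, the generalised Hua lemma, and an approximation of the infinite-type stack $\ppar_{\br,d}$ by finite-type substacks whose absolutely indecomposable loci stabilise. A mild conceptual point worth flagging is that Deligne's count is manifestly independent of any choice of orbit, whereas the right-hand side of \eqref{DN} is a priori a function of $O$; the consistency of the two formulas therefore reflects, and is entailed by, the independence of $q^{-\frac{1}{2}\dim\hig_{\br,d}(O)} \cdot \ltr H^\hdot_c(\hig_{\br,d}(O),\qlb)^{\langle\op{sign}\rangle}$ from the choice of very general $O$, which is already built into Theorem~\ref{ind-bundles} and which is what permits the passage to the nilpotent cone discussed in \S\ref{nilp-sec}.
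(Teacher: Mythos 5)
Your proposal matches the paper's argument exactly: the corollary is obtained by composing Deligne's identity \eqref{de} with Theorem \ref{ind-bundles} (equation \eqref{ind-for}), which is precisely what the text signals with the phrase ``Comparing \eqref{de} and \eqref{ind-for}, we obtain.'' Your additional remarks on compatibility of the type $\br$ and on independence from the choice of $O$ are sensible sanity checks but do not constitute a different route.
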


\begin{rem} It would be interesting to find an analogue of 
\eqref{DN} for a  general
reductive group $G$ that relates
the number of  $\ell$-adic $G$-local systems to the number 
of $\F_q$-rational points of
a variety of Higgs bundles for the Langlands dual group of $G$.
Note that  the set of 
indecomposable parabolic bundles
has no obvious
counterpart for an arbitrary $G$.
\erem

The RHS of \eqref{DN} simplifies in the generic case, i.e. the case where one has
  ${\scr L}_i\not\cong {\scr L}_j$ for all $i\neq j$.
This forces all $r^{(j)}_1$ be equal to 1, so ${\mathbf r}$ corresponds to
the parabolic structure such that
all partial flags 
are  {\em complete} flags.  The group $G_\br$ is in this case a torus.
Therefore,  the group $\si_\br$ reduces to the identity element and
  the  cohomology group  in the
RHS of  \eqref{DN}
is equal to its
sign-isotypic component. Hence, by the Lefschetz trace formula
one finds that the RHS of \eqref{DN}
equals $q^{-\frac{1}{2}\dim \hig_{r,d,{\text{\rm complete flags}}}(O)}
\cdot
\#\hig_{r,d,{\text{\rm complete flags}}}(O)(\F_q)$.

Some very interesting phenomena also occur in the 
case where the set $I$ consists of one element,
so there is only one marked point $c\in C$. 
Let  $\zeta$ be  a primitive  root of unity of order $r$
and use simplified notation $\Loc_{\textrm{et}}^\zeta(q)=\Loc_{\textrm{et}}(({\scr
  L}_i)_\ii,\, q)$
for the set   of isomorphism classes
of tame  rank $r$ local systems ${\scr L}$ on $C\sminus \{c\}$ 
such that ${\textrm{Fr}}^*{\scr L}\cong{\scr L}$ and
the monodromy of ${\scr L}$ at $c$  equals
$\zeta\cdot \Id$. Such a local system is automatically simple
and the corresponding type $\br$ is  {\em trivial}.
Therefore, formula
\eqref{sch} is applicable and using Corollary \ref{deligne},  for any $d$ coprime to $r$,
we obtain
\beq{et}
\# \Loc_{\textrm{et}}^\zeta(q)\ = 
\ q^{-\frac{1}{2}\dim \hig^{\op{ss}}_{r,d}}
\cdot \ltr H^\hdot_c(\hig^{\op{ss}}_{r,d},\qlb).
\eeq

On the other hand, let  $C_{top}$ be a  compact Riemann surface 
of the same genus as the algebraic curve $C$ and $\pt\in C_{top}$
a marked point.
For any field $\k$
that contains $r$-th roots of unity, one has a smooth 
affine algebraic variety  $\Loc_{Betti}^\zeta$,
called {\em character variety}, that parametrizes isomorphism
classes of $r$-dimansional representations of the fundamental group of
$C_{top}\sminus \{\pt\}$
such that a small loop around the puncture $\pt$ goes to $\zeta\cdot \Id$.
It is known that for $\k=\CC$  there is a natural
diffeomorphism   $\Loc_{Betti}^\zeta(\CC)\cong 
\hig^{\op{ss}}_{r,d}(\CC)$,
{\em of  $C^\infty$-manifolds}.
In particular,
these two  $C^\infty$-manifolds have isomorphic 
singular cohomology groups.
However, these two manifolds
 are {\em not} isomorphic as complex algebraic
varieties and the isomorphism of the cohomology induced by the diffeomorphism
does not respect the  Hodge structures. 
The number of  $\F_q$-rational points of $\Loc_{\textrm{Betti}}^\zeta$ is controlled by  the $E$-polynomial
associated with the Hodge structure.
Specifically, it was shown in  \cite{HV}  using a result of N. Katz that one has
\beq{ltop}
\# \Loc_{\textrm{Betti}}^\zeta(\F_q)\ =\ \sum_j\ q^{j}\cdot 
\left(\sum_k\ (-1)^k\,\dim\, \gr^W_{2j} H^k_c(\Loc_{Betti}^\zeta(\CC),\ \CC)\right),
\eeq
where $\gr^W_\idot(-)$ denotes an associated graded space with respect to the weight filtration
(the  Hodge structure on $H^\hdot(\Loc_{Betti}^\zeta(\CC),\, \CC)$ is not pure).
%and $h^{j,j}(-)$ stands for the $(k,k)$-term of the Hodge decomposition.

It would be interesting to compare  formulas \eqref{et} and \eqref{ltop} in the context of 
`$P=W$'-conjecture, see \cite{CHM}.

\begin{rem}\label{azumaya}  The above formulas may also be reformulated in terms of de Rham 
local systems. To do so, let  $R$ be a domain over $\Z$ that
admits homomorphisms  $R\to\CC$ and $R\to\k$, where $\k$ is a finite field of characteristic $p$. 
Let $C_R$ be a smooth projective curve 
over  $R$ with a good reduction to
a smooth curve $C_\k$ over $\k$, and let $C_\CC$ be  a  complex curve  obtained from
$C_R$ by specialization.
Let  $\Loc_{\textrm{DR}}^{\frac{1}{r}}(C_{\CC})$, resp. $\Loc_{\textrm{DR}}^{\frac{1}{r}}(C_{\k})$,
be the the  moduli space of  `de Rham local systems', that is, of pairs $(\V,\nabla)$ where $\V$ is
a rank $r$ algebraic vector bundle on $C_{\CC}$, resp. $C_\F$, and $\nabla$ is a 
connection on $\V$ with regular singularity at $\pt$ such that $\textsl{res}_{\pt}\nabla=\frac{1}{r}\cdot\Id$.
The Riemann-Hilbert correspondence provides an isomorphism 
$\Loc_{\textrm{Betti}}^\zeta(\CC)\cong \Loc_{\textrm{DR}}^\zeta(\CC)$, of complex analytic manifolds
 (but
not algebraic varieties) that respects the Hodge structures.
Therefore, the RHS of \eqref{ltop} is not affected by replacing $\Loc_{Betti}^\zeta(\CC)$
by $\Loc_{DR}^\zeta(\CC)$.

On the other hand, it is well known that crystalline differential operators on $C_\k$ give rise to 
an  Azumaya algebra on  $(\TT^*C)^{(1)}$, the Frobenius twist of  $\TT^*C$.
Furthermore, the results proved in  \cite{Tr2}, \cite{Gro} imply that the restriction
of this Azumaya algebra to any $\k$-rational `spectral curve' in $(\TT^*C)^{(1)}$
admits a splitting over $\bar\k$. 
The Brauer group of the (finite) field $\k$ being trivial, one can actually find a splitting defined over $\k$.
Now,   given a connection  $\nabla$ on a vector
bundle $\V$ on $C_\k$,  there is an associated $p$-curvature, $\op{curv}_p(\nabla)$, defined as the map $\xi\mto \nabla^p(\xi)-\nabla(\xi^p)$.
It follows from the above mention splitting resultIt  that the assignment $(\V,\nabla)\mto (\fr^*\V, \op{curv}_p(\nabla))$,
provides a bijection between the set  $\Loc_{\textrm{DR}}^{\frac{1}{r}}(C_{\k})(\k)$, of isomorphism classes of 
$\k$-rational points of $\Loc_{\textrm{DR}}^{\frac{1}{r}}(C_{\k})$, and the set of isomorphism classes of 
 rank $r$   Higgs bundles on $C_\k^{(1)}$, the Frobenius twist of
$C_\k$, respectively. Assume that  $p>r$ and fix an integer $m$ prime to $r$.
One can show that if $\nabla$  has a simple pole at $\pt$ with
residue $\frac{m}{r}\Id$ then  $\op{curv}_p(\nabla)$ has
no singularity at $\pt$. Moreover, this way one obtains an isomorphism of the stack
$\Loc_{\textrm{DR}}^{\frac{1}{r}}(C_\k)$  and the Frobenius twist  of the stack $\hig_{r,1}$.
Thus, for the corresponding numbers of  isomorphism classes of $\k$-rational points one gets 
$\#\Loc_{\textrm{Betti}}^\zeta(C_\k)(\k)\ =\ q^{-\frac{1}{2}\dim \hig^{\op{ss}}_{r,d}}
\cdot \# \hig^{\op{ss}}_{r,d}(\k)$.
\end{rem}
%\redd{Prime numbers  as a twistor sphere.}
\subsection{Factorization sheaves}\label{Phi-sec}
Factorization sheaves have appeared in the
literature
in various contexts, see eg. \cite{FG}.
We will use the following settings.

Fix a smooth scheme $C$ over $\k$ and write $\Sym^v C$ for the $v$-th symmetric pover of $C$.
For any pair $v_1,v_2\in\Z_{\geq0}$,  let $C_{v_1,v_2}\he$ be an open subset of $C^{v_1+v_2}$
formed by the tuples $(c_1,\ldots,c_{v_1}, c'_1,\ldots,c_{v_2})\in C^{v_1+v_2}$
such that $c_j\neq c'_k$ for all $j,k$. Let $\Sym^{v_1+v_2}_{\op{disj}}C$ be the
image of $C_{v_1,v_2}\he$ in $\Sym^{v_1+v_2}C$.
A \f sheaf on $\Sym C$ is, by definition, a collection
$\cf=(\cf_v)_{v\geq 0}$, where  $\cf_v\in D\abs(\Sym^vC)$,
equipped, for each pair $v_1,v_2$,  with an isomorphism $\cf_{v_1+v_2}|_{\Sym^{v_1+v_2}_{\op{disj}}C}\iso
(\cf_{v_1}\boxtimes \cf_{v_2})|_{\Sym^{v_1+v_2}_{\op{disj}}C}$,
satisfying certain associativity 
and commutativity constraints, see \S \ref{sec3}.
Here, 
the notation  $D\abs(X)$ stands for the the triangulated category
of absolutely convergent Weil complexes on a stack $X$, as defined by Behrend
\cite{B1},\cite{B2} and reminded in \S\ref{nnn} below. 

There are two other settings sheaves 
where the sheaf $\cf_v$ on $\Sym^v C$
is  replaced, in the above definition,
by  either a $\si_v$-equivariant sheaf on $C^v$ (equivalently, the categorical quotient
$\Sym^v C=C^v\dsl \si_v$ is replaced by
$C^v/\si_v$, a stacky quotient of $C^v$ by $\si_v$) or by a sheaf on $\Coh_v(C)$, the stack of
length $v$ coherent sheaves on $C$.
In the latter case, the subset  $\Sym^{v_1+v_2}_{\op{disj}}C$ should be replaced by
$\Coh_{v_1,v_2}\he$,  an open  substack of
$\Coh_{v_1+v_2}(C)$
whose objects are  coherent sheaves of the form $M_1\oplus M_2$
such that $\supp(M_1)\cap\supp(M_2)=\emptyset$ and
$\textrm{length}(M_j)=v_j,\ j=1,2$. 
In these two settings, the collections $\cf=(\cf_v)_{v\geq 0}$ where $\cf_v\in D\abs(C^v/\si_v)$, 
resp.  $\cf_v\in D\abs(\Coh_v(C))$,
will be  referred to as  `\f sheaves on $C$', resp.  `\f sheaves on $\Coh_v(C)$'. 

Factorization sheaves on $\Coh C$ and $\Sym C$ are related via a pair of adjoint functors.
Specifically,
for each $v$   one has an adjoint pair $\dis D\abs(\Coh_v C)\xymatrix{
\ar@<0.3ex>[r]^<>(0.5){_{\mathfrak{res}}}&D\abs(C^v/\si_v)\ar@<0.3ex>[l]^<>(0.5){_{\mathfrak{ind}}},
}
$
of functors of parabolic restriction and induction. One shows that these functors take
factorization sheaves  to factorization sheaves, cf. Remark  \ref{res-rem}.

Finally, there is also a version of the notion of
factorization sheaf in the coherent world  where
one considers  collections $M=(M_v)$ with
  $M_v\in D_{Coh}(C^v/\si_v)$, an object of the derived category of coherent sheaves.

All the above can be extended to an $I$-graded setting by replacing
a given scheme $C$ by a disjoint union $\sqcup_\ii C_i$, of several copies of $C$ each `colored'
by an element of the set $I$. Given a dimension vector $\bv=(v_i)$, let $C^\bv=\prod_\ii\ C_i^{v_i}$.
Further, elements of $\Sym^\bv C$ are colored 
effective divisors on $C$ with degree $v_i$ for the $i$-th color. Similarly,
the stack $\Coh_\bv(C)$ parametrizes  finite length  coherent sheaves whose
support is  a colored divisor of multi-degree $\bv$. Thus, one may consider an object $\cf_\bv$ of
$D\abs(\Sym^\bv C)$, resp. $D_{Coh}(\Sym^\bv C),\ D\abs(C^\bv/\si_\bv)$, and $D\abs(\Coh_\bv C)$.

We will apply factorization sheaves to moduli problems
using a construction that associates to a triple   $(\cC,F,\phi)$,    as  in \S\ref{exp-sec},
and an affine curve $C$ a \f sheaf on $\Coh C$.
To explain the construction, write $K=\k[C]$ for the  coordinate ring of $C$ and let $ K\o\cC$ be the corresponding
 base change category.
For any object $(x,\al)$ of $ K\o\cC$,
the composite  homomorphism $K\xrightarrow{\al} \End(x)\xrightarrow{F} \End F(x)$
makes $F(x)$ a $ K$-module. So  the assignment $(x,\al)\mto F(x)$ yields a functor
$K\o F:\ K\o\cC\to \Lmod{K}$.
Thus,  one obtains a diagram
of natural maps
\beq{Cdiag}%\AA^1\ \xleftarrow{f_\bv}\ 
%C^d/S_\bv\ \xleftarrow{\op{supp}}\ 
\Coh_\bv(C)=\fx_\bv(\Lmod{ K})
\ \xleftarrow{\nu}\ 
\fx_\bv(K\o\cC)\ \xrightarrow{p }\ \fx_\bv(\cC)\
\xrightarrow{\phi_\bv}\
\AA^1
\eeq
where the map $\nu$ is induced by the functor $K\o F$ and we have used an equivalence
between the category of 
finite dimensional $\k[C]$-modules and of finite length coherent sheaves on $C$, respectively.

\begin{rem}
For
$C=\GG_m$, one has  isomorphisms $\Coh_\bv(\GG_m)\cong \GL_\bv/\aad \GL_\bv\cong
I(\pt/\GL_\bv)$, resp. $\fx(\k[\GG_m]\o\cC)\cong I(\fX(\cC))$, where $I(\mathfrak Y)$ denotes 
the 
 {\em inertia stack} of a stack $\mathfrak Y$. 
Thus, in the special case $C=\GG_m$, the map $\nu$ in \eqref{Cdiag} 
may be viewed as a map $I(\fX(\cC))\to  I(\pt/\GL_\bv)$. This map of inertia stacks
is induced by the morphism
$\fx_\bv(\cC)\to\pt/\GL_\bv$ that comes from $\mathfrak F$, the rank $\bv$ vector bundle
 on $\fx_\bv(\cC)$, cf. \S\ref{exp-sec}.
\erem
%The stack $I\tpp(\fX(\cC)):=\fx(\k[\AA^1]\o\cC)$ is an additive
%version of the inertia stack.
%It will be considered in \S\ref{gstack} where it is
%referred to as an `infinitesimal inertia stack'.

The following result plays  a crucial role in the proof of Theorem \ref{A-ind}.

\begin{thm}\label{cruc}
For any  smooth affine curve $C$ and a data $(\cC,F,\phi)$,  as in \S\ref{exp-sec}, the collection
$\rr^{F,\phi,C}=(\rr_\bv^{F,\phi,C})$ defined by the formula $\rr^{F,\phi,C}_\bv=\nu_!\wp^{\phi_\bv\ccirc p}$
is a factorization sheaf on $\Coh C$.
\end{thm}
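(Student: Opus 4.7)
The plan is to establish the factorization constraint as a consequence of a stack-theoretic Cartesian square over the disjoint locus, combined with base change for $\nu_!$ and additivity of the potential~$\phi$. Write $\cy_\bv:=\fx_\bv(K\o\cC)$, so $\nu=\nu_\bv:\cy_\bv\to\Coh_\bv(C)$ sends $(x,\al)$ to $F(x)$ equipped with its induced $K$-module structure. The phrase ``disjoint locus'' refers to the open substack $\Coh^{\text{disj}}_{\bv_1,\bv_2}(C)\sset\Coh_{\bv_1+\bv_2}(C)$ and the corresponding open substack of $\cy_{\bv_1}\times\cy_{\bv_2}$ whose points have disjoint induced supports.

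The core stack-level claim is that for each $\bv=\bv_1+\bv_2$, the direct-sum functor sending $((x_1,\al_1),(x_2,\al_2))$ to $(x_1\oplus x_2,\,\al_1\oplus\al_2)$ defines an equivalence of stacks
\[
\oplus:\ \cy_{\bv_1}\times\cy_{\bv_2}|_{\text{disj}}\ \iso\ \cy_{\bv_1+\bv_2}\times_{\Coh_{\bv_1+\bv_2}(C)}\Coh^{\text{disj}}_{\bv_1,\bv_2}(C).
\]
The non-trivial direction is a \emph{decomposition lemma}: given $(x,\al)\in \cy_{\bv_1+\bv_2}$ with $F(x)=M_1\oplus M_2$ of disjoint support, one must produce a splitting $x=x_1\oplus x_2$ in $\cC$ preserved by $\al$ and satisfying $F(x_i)=M_i$. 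Disjointness of the supports makes the ideals $\op{Ann}_K(M_i)\sset K$ coprime, yielding $1=f_1+f_2$ with $f_j\in\op{Ann}_K(M_j)$; the elements $\al(f_{3-i})\in\End(x)$ then project under $F$ to the orthogonal projectors onto the $M_i$. The crucial point is that the kernel of the surjection of commutative finite-dimensional $\k$-algebras $B:=\al(K)\onto K/\op{Ann}_K(F(x))$ is a nil ideal of $B$. In the module-over-smooth-algebra setting this is immediate from faithfulness of~$F$; in the parabolic bundle setting it follows from a degree argument (any endomorphism of $\V$ vanishing on all marked-point fibers factors through $\V(-D)$ and is therefore nilpotent on a projective curve). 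Standard idempotent lifting then produces orthogonal idempotents $\tilde e_1,\tilde e_2\in B$ summing to $1$ and projecting in $B/(B\cap\ker F)$ to the given projectors onto $M_i$; Karoubianness of $\cC$ splits $x=x_1\oplus x_2$, and commutativity of $B$ guarantees that $\al$ preserves this splitting.

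Given this Cartesian square, base change for $!$-pushforward together with the additivity $\oplus^*\phi_{\bv_1+\bv_2}=\phi_{\bv_1}+\phi_{\bv_2}$ and the Artin--Schreier identity $\wp^{a+b}\cong\wp^a\o\wp^b$ produces the desired factorization isomorphism
\[
\oplus^*\,(\nu_{\bv_1+\bv_2})_!\,\wp^{\phi_{\bv_1+\bv_2}\ccirc p}\ \iso\ \rr^{F,\phi,C}_{\bv_1}\boxtimes\rr^{F,\phi,C}_{\bv_2}
\]
on the disjoint locus. The associativity and commutativity constraints then follow formally from the analogous constraints on $\oplus$ in $\cC$ and on direct sums of coherent sheaves with disjoint support on $C$, using the canonical nature of the decomposition constructed above.

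The principal obstacle is the nilpotency of $B\cap\ker F$ in the decomposition lemma; once this is in hand, the rest of the argument is a formal application of base change and additivity. Nilpotency is automatic when $F$ is faithful but requires the auxiliary degree argument in the parabolic bundle setting; for a general quasi-abelian category $\cC$ one would presumably need to impose a standing finite-dimensionality hypothesis on $\End(x)$ together with an analogous nilpotency property for the kernel of $F|_{\End(x)}$.
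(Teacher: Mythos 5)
Your overall strategy coincides with the paper's: establish a Cartesian square over the disjoint locus via a decomposition lemma powered by idempotent lifting, then deduce the factorization isomorphism from additivity of $\phi$, the Artin--Schreier tensor identity, and proper base change. This is exactly how the paper proceeds (Proposition \ref{Zfix} and the remark following it furnish the Cartesian square; the proof of Theorem \ref{Rfact} then runs the base-change argument you sketch, and Theorem \ref{cruc} is obtained by the generalization of Proposition \ref{Zfix} to $\fx_\bv(\k[C]\o\cC)$ indicated after its proof).

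Where you deviate is in treating the nilpotency of the kernel of $F$ on endomorphism algebras as a case-by-case input -- faithfulness for modules over a smooth algebra, a degree argument for parabolic bundles, and a speculative extra hypothesis in general. The paper disposes of this uniformly in Lemma \ref{end_lem}: for any $\k$-linear Karoubian sheaf-category $\ccc$ and any functor $F$ that does not kill nonzero objects, the ideal $\Ker F_x\sset\End(x)$ is nilpotent. The argument is short and worth internalizing: if $\Ker F_x$ were not contained in the nilradical, it would contain a nonzero idempotent $e$ (lifted from the semisimple quotient); Karoubianness of $\ccc$ then splits $x=x'\oplus x''$ with $x'\neq 0$ and $F(x')=0$, contradicting the standing assumption on $F$. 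This renders your auxiliary arguments unnecessary and, in particular, removes the need for any ad hoc hypothesis in the general quasi-abelian case. Incidentally, your degree argument for parabolic bundles is stated a bit too quickly: an element of $\Ker F_x$ does not vanish on the marked-point fibers, only on the associated graded pieces of the flags; one must first pass to a power that kills the fibers before the twist by $\oo(-D)$ becomes available. This is fixable, but again the point is moot given Lemma \ref{end_lem}. The remaining cosmetic difference -- your CRT/coprime-annihilators construction of the idempotent versus the paper's construction via characteristic polynomials and the Nullstellensatz inside the commutative subalgebra $A_f$ -- is immaterial.
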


\subsection{Cohomology of \f sheaves}\label{h-intro}
Fix a smooth scheme $C$ with tangent, resp. cotangent, sheaf $\TT_C$, resp. $\TT^*_C$.
Let $\bv$ be a dimension vector, $C\sset C^\bv$ the principal diagonal,
and $\N_\bv\to C$, resp. $\N^*_\bv\to C$, the normal, resp.
conormal bundle.
The natural $\si_\bv$-action on $C^\bv$ by permutation of factors induces 
a $\si_\bv$-action on $\N_\bv$, resp. $\N^*_\bv$.
We will define a  Zariski open  $\si_\bv$-stable subset $\N_\bv\reg\sset \N^*_\bv$.
To simplify the exposition we assume that there exists  $\k$-rational section $\eta_\bv: C\to \N_\bv\reg/\si_\bv$.
%The  normal, resp.
%conormal, sheaf on $\Delta_\bv(C)$ is isomorphic to $\ft_\bv\o \TT C$, resp. 
%$\ft^*_\bv\o\TT^*_C$, where $\ft_\bv:=\AA^\bv/\AA^1$.
%Let $\N_\bv$, resp. $\N^*_\bv$, be the total space of the normal, resp.
%conormal sheaf and $\TT^\circ_C$ the $\GG_m$-bundle obtained from $\TT_C$ by removing
%the zero section.
%We will define a $\GG_m$-stable Zariski open subset $\ft_\bv\reg\sset \ft^*_\bv$
%and let  $\N_\bv\reg$ be the corresponding Zariski open subset  of $\N^*_\bv$.
%To simplify the exposition we assume that there exists  $\k$-rational section $\eta_\bv: C\to \N_\bv\reg$.
%for each $\bv$,  a sufficiently general  $\k$-rational point 
%in $\ft_\bv\reg$. We also assume, temporarily, that the tangent bundle $\TT C$ is trivial.
%Then, a choice of  a nowhere vanishing $\k$-rational section of $\TT C$ yields,
%for each $\bv$, 
%a section $\eta_\bv: C\to \N_\bv\reg$.
We consider
the following  chain of functors
 \beq{Phi-fun}
\xymatrix{
D\abs(C^\bv/\si_\bv)\ar[r]^<>(0.5){{\mathsf{sp}}_\N}&
D\abs(\N_\bv/\si_\bv)
\ar[r]^<>(0.5){\FDN_\N}&
D\abs(\N^*_\bv/\si_\bv)
\ar[r]^<>(0.5){\eta_\bv^*}&
D\abs(C),
}
\eeq
where $\FDN_\N$ is the Fourier-Deligne transform along the fibers of the vector bundle $\N$
and ${\mathsf{sp}}_\N$
is the Verdier specialization to the normal bundle.
%, and the last functor is given by restriction $\N_\bv\reg$.
Let $\Phi:\ D\abs(C^\bv/\si_\bv)\to D\abs(C)$ be the composite functor.
Also, write $\{n\}$ for the functor $[n](\frac{n}{2})$, of homological  shift and a Tate twist.
%To simplify the exposition we assume that there exists  $\k$-rational section $\eta_\bv: C\to \N_\bv\reg/\si_\bv$.
Then, our first important result concerning factorization sheaves  reads, cf. Theorem \ref{expC}.

\begin{thm}
Let    $\cf=(\cf_ \bv)_{ \bv\in\Z^I_{\geq0}\sminus\{0\}}$ be a factorization sheaf    on $\Sym C$
such that  for every $\bv$ the sheaf $\Delta^*_ \bv\cf_ \bv$ is a 
locally constant sheaf on $C$. Then,
 one has 
\beq{expC-intro} 
  \sum_{ \bv\geq0}\  z^\bv\cdot [\RGam_c(C^\bv/\si_\bv, \cf_ \bv)]\ =\
\Sym\Big(\sum_{ \bv\neq 0}\ z^\bv\cdot[\RGam_c\big(C,\
\Phi(\cf_ \bv)^{\langle\op{triv}\rangle}\big]\{\dim C(1-|\bv|)\}\Big).
\eeq
\end{thm}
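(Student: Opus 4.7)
My plan is to prove the theorem in two steps: first, reduce the generating series on the left-hand side to a plethystic exponential of ``atomic'' cohomologies indexed by nonzero dimension vectors $\bw$ using the factorization structure of $\cf$; second, identify each such atomic cohomology with $[\RGam_c(C, \Phi(\cf_\bw)^{\langle\triv\rangle})]\{\dim C(1-|\bw|)\}$ via the definition of $\Phi$. The first step is essentially combinatorial and parallels the standard computation of symmetric-power cohomology; the second step is the main geometric content.

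For the combinatorial step, I would stratify $C^\bv/\si_\bv$ by partitions $\bla=(\bw_1,\ldots,\bw_k)$ of $\bv$ into nonzero dimension vectors (with multiplicities), writing $C^\bla$ for the locally closed stratum parametrizing divisors of type $\bla$ with pairwise distinct support points. Excision in the Grothendieck group gives $[\RGam_c(C^\bv/\si_\bv,\cf_\bv)] = \sum_\bla [\RGam_c(C^\bla, \cf_\bv|_{C^\bla})]$, and the factorization isomorphisms express $\cf_\bv|_{C^\bla}$ as an appropriately symmetrized external product of the ``restrictions of $\cf_{\bw_j}$ to their own small diagonals''. Denoting this restriction (packaged as a complex on $C$) by $\Psi(\cf_\bw)$, each $C^\bla$ contributes the cohomology of a product of configuration spaces with coefficients in the various $\Psi(\cf_{\bw_j})$. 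Summing over $\bv$ and $\bla$ and applying the standard identity $\sum_n z^n[\RGam_c(\Sym^n X,\Sym^n \mathscr G)] = \Sym(z\cdot[\RGam_c(X,\mathscr G)])$ converts the double sum into $\Sym\bigl(\sum_{\bw>0} z^\bw [\RGam_c(C,\Psi(\cf_\bw))]\bigr)$.

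For the geometric step, I would identify $\Psi(\cf_\bw)$ with $\Phi(\cf_\bw)^{\langle\triv\rangle}\{\dim C(1-|\bw|)\}$. By Verdier's formalism, the ``limit of $\cf_\bw$ along the normal cone of $\Delta_\bw\sset C^\bw$'' is exactly $\mathsf{sp}_\N\cf_\bw\in D\abs(\N_\bw/\si_\bw)$; passing from the $\si_\bw$-equivariant picture on $C^\bw$ to the quotient $C^\bw/\si_\bw$ picks out the $\si_\bw$-invariants, explaining the $\langle\triv\rangle$. The hypothesis that $\Delta^*_\bw\cf_\bw$ is locally constant forces $\mathsf{sp}_\N\cf_\bw$ to be monodromic along the fibers of $\N_\bw\to C$ and locally constant along the base $C$, so its Fourier--Deligne transform $\FDN_\N\mathsf{sp}_\N\cf_\bw$ is locally constant on the image of $\N_\bw\reg$ in $\N^*_\bw/\si_\bw$. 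Pulling back along the section $\eta_\bw\colon C\to \N_\bw\reg/\si_\bw$ therefore yields a well-defined complex on $C$ that is canonically independent of the choice of section, and the twist $\{\dim C(1-|\bw|)\}$ is the standard shift-and-Tate twist induced by Fourier transform along a rank $\dim C\cdot(|\bw|-1)$ vector bundle followed by restriction to a section. Composed with the combinatorial step, this gives the formula in the theorem.

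The principal difficulty lies in this second step, specifically in three checks: (a) compatibility of Verdier specialization with the partition stratification, so that the deepest-stratum contribution genuinely equals $\mathsf{sp}_\N\cf_\bw$ rather than the naive restriction $\Delta^*_\bw\cf_\bw$; (b) the monodromicity of $\mathsf{sp}_\N\cf_\bw$ needed to reinterpret ``pullback along a single section of $\FDN_\N\mathsf{sp}_\N\cf_\bw$'' as a genuine cohomological invariant rather than a family parameter, which is precisely where the local-constancy hypothesis on $\Delta^*_\bw\cf_\bw$ is essential; and (c) careful bookkeeping of shifts, Tate twists, and $\si_\bw$-isotypic components, in particular verifying that the quotient $C^\bv/\si_\bv$ (as opposed to a stacky version) produces the \emph{trivial} rather than the sign isotypic component and that the numerical shift $\dim C(1-|\bw|)$ matches the Fourier convention adopted. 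Once these checks are in place, the theorem follows from the combinatorial reduction.
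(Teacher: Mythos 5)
Your plan is close in spirit to the paper's proof, but it follows a genuinely different route and, as written, has a substantive gap. The paper does not work directly with a general factorization sheaf: it first passes to the semisimplification $\cf^{\mathrm{ss}}_\bv$ (Lemma~\ref{ss3}), observing that this preserves classes in $K_{\mathrm{abs}}$, and proves a \emph{stronger, sheaf-theoretic} statement (Proposition~\ref{semisimple}) for ``nice'' factorization sheaves --- those that split as a direct sum $\bigoplus_{\a}\bar\Delta(\a)_*\ce^\a$ of pushforwards from closed strata. For such sheaves the Verdier specialization and Fourier transform can be computed term by term: $\mathsf{sp}$ sends $\bar\Delta(\a)_*\ce^\a$ to a sheaf supported on $\TT_C(C^\bv_\a)$, Fourier sends it to something supported on $\TT^\perp_C(C^\bv_\a)$, and restriction to $\N_\bv\reg$ kills everything but the $\a=\Delta(\bv)$ term by Lemma~\ref{veryreg}. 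Only this explicit computation justifies that $\Phi_\N(\cf_\bv)$ is (up to $\gr_W$) a pullback from $C$ --- i.e., unipotent along $c$ --- which is what makes the generalized isotypic decomposition of Lemma~\ref{jordan}, and hence the symbol $\Phi(\cf_\bv)^{\langle\op{triv}\rangle}$, meaningful.

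Here is the gap in your argument. You assert that local constancy of $\Delta^*_\bw\cf_\bw$ ``forces $\mathsf{sp}_\N\cf_\bw$ to be monodromic along the fibers of $\N_\bw\to C$ and locally constant along the base $C$, so its Fourier--Deligne transform is locally constant on $\N^\reg_\bw/\si_\bw$.'' This does not follow. For a non-semisimple factorization sheaf there can be nontrivial extensions between the diagonal piece and off-diagonal pieces, and $\mathsf{sp}_\N$ generically retains this extension data; the Fourier transform restricted to $\N\reg$ is then a local system with nontrivial monodromy whose unipotence along $c$ is exactly what must be proved (this is the paper's Theorem~\ref{expC}(i)). Without it, the right-hand side of \eqref{expC-intro} is not defined. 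You flag this under ``check (b)'' but treat it as bookkeeping; in fact it requires the semisimplification reduction (or an equivalent device). Similarly, your combinatorial step produces as ``atomic'' contribution the $\si_\bw$-invariants of $\Delta^*_\bw\cf_\bw$, whereas the theorem's right-hand side is $[\Phi(\cf_\bw)^{\langle\op{triv}\rangle}]\{\dim C(1-|\bw|)\}$; matching these is precisely Proposition~\ref{semisimple}(i), again proved only in the nice case, with the off-diagonal contributions killed on $\N\reg$ by Lemma~\ref{veryreg}. Your proposal would become a proof if you insert the semisimplification step at the outset, after which your stratification/excision picture coincides with the decomposition $\cf_\bv=\bigoplus_\fa\cf_\bv^\fa$ used in the paper's Proposition~\ref{semisimple}(ii).
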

\noindent The above equation holds  in  $K\abs(\pt)[\![z]\!]$,
where we use the notation $K\abs(X)$ for a suitable
Grothen-dieck group of the category $D\abs(X)$.
The symbol  $H_c^\hdot(-)^{\langle\op{triv}\rangle}$ in the RHS of the equation refers to the 
{\em generalized} isotypic component 
of the trivial representation of a  certain monodromy action of the group $\si_v$,
cf. \S\ref{unipc}.

Observe  that the LHS, resp. RHS, of the equation  \eqref{expC-intro}
depens only
on the class of $\cf_\bv$ in $K\abs(C^\bv))$, resp. of $\Phi(\cf_ v)^{\langle\op{triv}\rangle}$ in 
$K\abs(C)$. It will be explained in the main body of the paper that
the map  $K\abs(\Coh_\bv(C))\to K\abs(C)$ given the
assignment
$[\cf_\bv]\to [\Phi(\cf_ v)^{\langle\op{triv}\rangle}]$ 
can be defined in a canonical way that does not involve a choice of  $\eta_\bv$.
Furthermore, 
%in particular, it does not depend on the existence of such a section.
%assumption that even when the tangent bundle $\TT C$ is not
%necessarily trivial. Thus, 
we will actually establish a version of   \eqref{expC-intro}
that holds without any additional assumptions and choices, see Theorem \ref{expC}.

Applications to exponential sums involve
factorization sheaves  on $\Coh(C)$, where either $C=\AA^1$ or $C=\GG_m$. There is
a well-known isomorphism of stacks $\Coh_\bv(\AA^1)\cong\gl_\bv/\GL_\bv$, 
resp. $\Coh_\bv(\GG_m)\cong\GL_\bv/_{_{\Ad}}\GL_\bv$.
 The group  $\op{Aff}=\GG_m\ltimes \GG_a$ of
affine-linear transformations acts on $\gl_\bv$ by $g\mto a\cdot g+b\cdot \Id$.
For an $\Aff$-equivariant    factorization sheaf $\cf=(\cf_\bv)$, on $\gl=\Coh \AA^1$,
formula \eqref{expC-intro} simplifies. 
Specifically, the sheaf $\cf_\bv$  descends to a $\GG_m$-equivariant
sheaf $\bar\cf_\bv$ on $\pgl_\bv/\GL_\bv$. In such a case,
taking Verdier's specialization  becomes unnecessary.
Furrther,
we have a diagram
$\GL_\bv\stackrel{\eps}\into\gl_\bv \stackrel{i}\hookleftarrow \{0\}$,
where
$\eps$ is an open imbedding of invertible matrices into all matrices.
Restricting $\cf$ via $\eps$ gives
a $\GG_m$-equivariant  factorization sheaf $\eps^*\cf=(\eps^*\cf_\bv)$ on $\GL=\Coh\GG_m$. 
Using that 
 the operation of taking $\Sym$ corresponds,
under the trace of Frobenius, to taking plethystic exponential, 
from  formula \eqref{expC-intro}  we deduce 
 the following equations, cf. Theorem \ref{GL-H},
\begin{align}
 &\sum_\bv\ z^\bv\cdot \ltr H^\hdot_{c,\GL_\bv}(\GL_\bv,\ \eps_\bv^*\cf_\bv)%\nonumber\\
=\
\Exp\Big((q^{-\frac{1}{2}}-q^{\frac{1}{2}})\cdot\sum_{^{\bv\neq 0}}\
(-1)^{|\bv|}\cdot z^\bv \cdot q^{\frac{|\bv|}{2}}\cdot \ltr (\fw_!\FDN_{\pgl_\bv}\bar\cf_\bv|_{\eta_\bv})^{\langle\op{sign}\rangle}
\Big),\nonumber\\
 &\sum_\bv\ (-1)^{|\bv|}\cdot z^\bv\cdot q^{\frac{\bv\cdot\bv}{2}}\cdot \ltr  H^\hdot_{c,\GL_\bv}(i_\bv^*\cf_\bv)
=\
\Exp\Big(-\sum_{^{\bv\neq 0}}\ 
z^\bv\cdot  q^{\frac{|\bv|}{2}}\cdot \ltr (\fw_!\FDN_{\pgl_\bv}\bar\cf_\bv|_{\eta_\bv})^{\langle\op{triv}\rangle}
\Big).\label{intro2}
\end{align}
where   $\eta_\bv\in\pgl_\bv\reg$ is a $\k$-rational  element
and $\fw:\pgl_\bv/\GL_\bv\to\pgl_\bv\dsl\GL_\bv$ is the natural projection from a stacky quotient to the corresponding categorical quotient
by $G$.

%\redd{
\begin{ex}
In the special case where  
the set $I$ consists of one element
and the factorization sheaf $\cf$, on $\gl$, is formed by the constant sheaves $\cf_v=\qlb,\ \forall v$, 
one finds that $\FDN_{\pgl_v}\bar\cf_\bv|_{\eta_v}=0$ for all $v\neq 1$.
terms in the sum in the RHS of formula
\eqref{intro2} vanish except for  term for $v=1$. 
In that case, equation  \eqref{intro2} is the
standard identity for the
{\em quantum dilogarithm}:
$$
\sum_{v\geq0}\ z^v\cdot \frac{(-1)^v q^{\frac{v^2}{2}}}{\#\GL_v(\F_q)}=
\Exp\Big(z\cdot\frac{q^{\frac{1}{2}}}{1-q}\Big).\eqno\lozenge
$$
\end{ex}
%}

The \f sheaf $\rr^{F,\phi,\AA^1}$  is
 automatically $\op{Aff}$-equivariant, so the above equations
are applicable for $\cf=\rr^{F,\phi,\AA^1}$.
Theorem \ref{A-ind} is an easy consequence of these equations.

\begin{rem} There is a natural isomorphism of stacks
$\Coh_\bv \pt\cong \pt/\GL_\bv={B}\GL_\bv$. Hence,  $H^\hdot(\Coh_\bv \pt)=H^\hdot({B}\GL_\bv)$ is an algebra of
symmetric polynomials, so one has
$\Spec H^\hdot(\Coh_\bv \pt)=\Sym^\bv \AA^1$.
For any scheme $C$,  
one has a natural  map $\Coh_\bv C\to \Coh_\bv \pt={B}\GL_\bv$,
induced  by a constant map $C\to \pt$.
This map takes a finite lentgh sheaf to its space of sections.
The pull-back  of the cohomology makes $H^\hdot(\Coh_\bv C)$ a super-commutative $H^\hdot({B}\GL_\bv)$-algebra,
equivalently, a coherent sheaf, ${\mathcal H}^\hdot(\Coh_\bv C)$, of super-commutative algebras on  $\Sym^\bv \AA^1$.
By abstract nonsense, the collection 
$({\mathcal H}^\hdot(\Coh_\bv C))_{\bv\in\Z^I_{\geq 0}}$ gives a coherent
\f sheaf on $\Sym \AA^1$ such that the factorization maps respect the algebra structures.

Now, given a sheaf $\cf_\bv\in D\abs(\Coh_\bv C)$,
the cohomology $H_c^\hdot(\Coh_\bv(C),\,\cf_\bv)$ has the canonical 
structure of a graded  module over the graded algebra $H^\hdot(\Coh_\bv C)$.
One can show that if a collection $\cf=(\cf_\bv)$ is a \f sheaf  on $\Coh C$
then the collection $H^\hdot_c(\cf)=(H^\hdot_c(\Coh_\bv C,\ \cf_\bv))$, gives a
coherent  factorization sheaf  on $\Sym \AA^1$ and, moreover,
 the factorization maps respect the action of  $(H^\hdot(\Coh_\bv C))_{\bv\in\Z^I_{\geq 0}}$,
a coherent factorization sheaf of super-commutative algebras.

All the above applies, in particular, to the case where $C$ is a smooth affine curve and $\cf=\cR^{F,\phi,C}$ is
the \f sheaf from Theorem \ref{cruc}. In that case 
we have
\beq{aamod}
H^\hdot_c(\Coh_\bv(C),\ \rr^{F,\phi}_\bv)\cong
H^\hdot_c(\fx_\bv(K\o\cC),\ \wp^{\phi_\bv\ccirc p }).
\eeq
We conclude from that the  collection
of the cohomology spaces \eqref{aamod} has the natural structure of a coherent \f sheaf on $\Sym\AA^1$
equipped with a compatible action of the  commutative algebras $({\mathcal H}^\hdot_c(\Coh_\bv C,\ \cf_\bv))$.
This is a generalization of the construction from \cite[\S6.5]{KS2}.
\erem

\begin{rem} One can show that   there is a natural
graded algebra isomorphism
$$
H^\hdot(\Coh_\bv \GG_m)\cong H^\hdot(\GL_\bv/\aad \GL_\bv)\cong
\big(\op{S} {\mathfrak h}\o \wedge {\mathfrak h}\big)^{\si_\bv},
$$
where ${\mathfrak h}=\AA^\bv$ is the Cartan subalgebra of $\gl_\bv$.
There is a generalization of this isomorphism to the case where $\GG_m$ is replaced by an arbitrary smooth affine 
connected curve $C$.
In the general case, the role of the super vector space ${\mathfrak h}\o (\AA^1\oplus \AA^1[1])$ is played by 
the super vector space  ${\mathfrak h}\o (H^0(C)\oplus H^1(C))$.
\erem

\subsection{Relation to earlier works}
Formula \eqref{intro2} and Theorem \ref{cruc} are  related, in a way, to works of Davison, Meinhardt and Reineke, \cite{MeR}, \cite{Me}, \cite{DM}.
In  these papers, the authors study Donaldson-Thomas invariants of the moduli stack $\X(\cC)^\th$ of $\th$-semistable objects
of a finitary  abelian category $\cC$. 
Let $\mm^\th$ be the corresponding coarse moduli space and $\mm^{\th-st}\sset \mm^\th$ the stable locus of $\mm^\th$.
There is  a well-defined morphism $p: \X(\cC)^\th\to \mm^\th$
that sends  an object of $\cC$ to an associated graded object with respect to 
a Jordan-H\"older filtration.
Further, any cosed point of $\mm^\th$ is represented by  a polystable object,  i.e.  a finite direct sum
of stable objects. Hence,  the direct sum operation yields an isomorphism
$\Sym \mm^{\th-st}\iso \mm^\th$, so the map $p$  gives a morphism $\X(\cC)^\th\to \Sym \mm^{\th-st}$.
Let  $\C$ be a constant sheaf on $\X(\cC)^\th$, with appropriated shifts and Tate twists.
In this language, it was shown in  \cite{Me}, \cite{DM} that $p_!\C$ is a factorization sheaf on $\Sym \mm^{\th-st}$.
Furthermore, the main result of {\em loc cit} says, roughly speaking, that in $K\abs(\mm^{\th-st})$ one has
$[p_!\C]=\Sym [\op{IC}(\mm^{\th-st})]$. This result may be compared with equation
\eqref{intro2}.

We observe that in  Donaldson-Thomas theory
 the category $\cC$ in question is assumed to be   {\em abelian}, furthermore, the framing functor $F$
is assumed to be exact and faithful. Our setting is more general, e.g. it covers the case 
of  parabolic bundles  where the category  is not
abelian and the functor $F$ is  neither exact nor faithful.
In such a case, there is no meaningful notion of  Jordan-H\"older series, so there is no
natural analogue of the morphism $\X(\cC)^\th\to \mm^\th$.
On  the other hand, one has the notion of an absolutely indecomposable object.
The structure of the set of (isomorphism classes of) absolutely indecomposable objects is,
typically,  rather chaotic  from an algebra-geometric
viewpoint. There is no natural way, in general, to upgrade this set to a scheme or a stack.
However, 
one can consider an  {\em ind-constrictible scheme},  $X(\cC)$, that parametrizes  isomorphism classes of 
objects of  $\cC$, and the set of  isomorphism classes of absolutely indecomposable objects  is the
set of $\k$-rational points of
an ind-constrictible subscheme $X^{\op{ind}}(\cC)$ of  $X(\cC)$.
Further, thanks to the Krull-Schmidt theorem  there is a natural isomorphism
$X(\cC)\cong \Sym X^{\op{ind}}(\cC)$, of constructible schemes.
This  may be viewed  as a substitute
of the isomorphism $\Sym \mm^{\th-st}\iso \mm^\th$, resp.  the Krull-Schmidt theorem as a substitute
of  the Jordan-H\"older theorem.  
It seems that the formalism of constrictible schemes 
is not rich enough  to do interesting geometry. This was one reason
for us to  consider the factorization sheaf
$\cR^{F,\phi,C}$, on $\Coh C$, rather than factorization sheaves on $\Sym X^{\op{ind}}(\cC)$.
More importantly, our approach is `microlocal' in the sense that it involves the
geometry of the cotangent bundle $\TT\fx(\cC,F)$ and the monent map.

It is interesting to observe that the effect of replacing the zero fiber of the monent map by
the {\em generic} fiber  (the stack $\mm_O$ in Theorem \ref{A-ind},
cf. also  the RHS of \eqref{expC-intro})  
is similar, in a way, to replacing $[H^\hdot_c(\fx(\cC))]$ by  $[\op{IH}(\mm^{\th-st})]$
(where $[\ldots]$ stands for the class  in an appropriate Grothendieck group),
in the setting  of  \cite{MeR}, \cite{Me}, \cite{DM}. Understanding this phenomenon
requires a better understanding of the geometry of the `forgetting the legs' map from
the {\em indecomposable} locus  of $\X(\wt\cC,\wt F)^{\wt \th}$ to $\X(\cC,F)^\th$, cf. \S\ref{resolution} 
and also Proposition \ref{nochar}.
We plan to discuss this in the future.

\subsection{Layout of the paper}
In section 2 we recall basic notions concerning   absolutely convergent constructible complexes
and the  Grothendieck-Lefschetz trace formula on stacks, 
as developped by Behrend \cite{B1}, \cite{B2}.
This section also contains the proof of a generalization of Hua's lemma and of Lemma \ref{jordan} that allows  to define the map $\Phi$, see \S\ref{Phi-sec}, in full generality.
In section 3, we introduce \f sheaves on $\Sym C$ and prove our main technical results
involving cohomology of factorization sheaves. In section 4 we upgrade the results of section 3 to the
setting of  factorization sheaves on the stack $\Coh C$. Specifically we  relate  factorization sheaves
on the stack $\GL=\op{Coh} \GG_m$ to  factorization sheaves on symmetric powers of $\GG_m$ via
a radial part (aka `parabolic restriction') functor. This  is necessary for applications to moduli spaces.
In section 5 we study equivariant inertia stacks.
Section 6 begins with some generalities cocerning sheaves 
of categories and associated moduli stacks.  This subject was studied by several authors in various contexts but we  have been unable
to find the setting that fits our purposes in the literature.
We then study the equivariant inertia stack  of a moduli stack and prove a 
 factorization property 
which is equivalent to Theorem \ref{cruc}.
In section \ref{if} we explain a relation, via the Fourier transform,
between  the equivariant inertia stack of  a $G$-stack and the moment map for the cotangent stack
of that $G$-stack. Such a relation is well known  in the case  of $G$-schemes but its generalization to stacks is not straightforward.
It requires a duality formalism for vector bundles on stacks which we develop in \S\ref{Fstacks} and that may be of 
an independent interest. The proof of  Theorem \ref{A-ind} is completed in \S\ref{fff}.
 In section 8 we study indecomposable parabolic bundles.
We use the Harder-Narasimhan filtration to
define a family of certain truncated categories such that the corresponding moduli stacks have finite type
and Theorem \ref{A-ind} is applicable.
The counting of absolutely indecomposable 
 parabolic bundles is then reduced to  counting absolutely indecomposable objects of the truncated categories.

Sections 9, 10,  and 11 do not depend on the rest of the paper and each of these sections
may be of independent interest.
Section  9 is devoted to the proof of Theorem \ref{la-thm}.
In \S \ref{acyc} we prove Proposition \ref{virt}.
Section \ref{positive-sec} contains a new  proof of   deformation invariance of
the cohomology of a semi-projective variety.
Although the result itself is not new, our proof is algebraic while
the original proof in \cite{HLV2} is based on a reduction to the
case of the ground field $\CC$ and a transcendental argument involving
Riemannian metrics.
In section 11 we show that the geometry of cells in the Calogero-Moser variety studied
by G. Wilson \cite{Wi} has a natural interpretation in terms of indecomposable
representations of the Calogero-Moser quiver. The results of section 11 have also appeared in the recent paper by
Bellamy and Boos \cite{BeBo}.

\subsection{Acknowledgments}
We are grateful  to Asilata Bapat for pointing out a missing condition in the definition of the set $\Sigma$
in section \ref{CM} and also to Daniel Halpern-Leistner and  Jochen Heinloth for explanations regarding   GIT formalism
for stacks and for providing references.
The work of the first author was supported by an NSF Postdoctoral Fellowship and the Max-Planck Institute in Bonn.
The second author  was supported in part by the NSF grant DMS-1303462.
This work was  conducted during the period the third author %R.T. Roman Travkin '
was employed by the Clay Mathematics Institute as a Research Scholar.
This material is based upon work supported by The Wolfensohn Fund and the National Science Foundation under agreement No. DMS-1128155.

\section{Absolutely convergent sheaves and Hua's formula}\label{nnn} 
\subsection{}\label{nnn1}
We write $\bar\k$ for an algebraic closure of a field $\k$.
We choose a prime $\ell\neq\chark\k$ and %fix an imbedding of  $\bar{\mathbb Q}_\ell$ 
% into the field $\CC$ of  complex numbers.
let $\C_X$ denote the constant sheaf on a scheme (or stack) $X$ with
fiber $\qlb$.

Let $\k$ be a finite field and $X$ a scheme over $\k$.
We write
$\fr=\fr_{\bar\k/\k}$ for the  Frobenius automorphism of $\k$
and $\pig(X)$ for the {\em geometric} \'etale fundamental
group of $X$, that is, for the kernel of
the canonical homomorphism of the full \'etale fundamental
group of $X$ to $\op{Gal}(\bar\k/\k)$.

Let $G$ be
 a connected algebraic  group  over a  finite field $\F$.
Associated with
a homomorphism
$\psi: G(\F)\to\qlb^\times$ there is a  rank one
$\qlb$-local system $\wp_\psi$,
 on $G$, such that $\fr^*\wp_\psi\cong\wp_\psi$ and one has
$\psi(g)=\ltr (\wp_\psi)_g$, for any $g\in G(\F)$.
To define this local system one considers
the Lang map ${{\textit{Lang}}}: G\to G,\ g\mto g\inv\fr(g)$.
Thanks to Lang's theorem, the map  ${{\textit{Lang}}}$ is a Galois covering
with Galois group $G(\F)$.
One defines $\wp_\psi$ to be the $\psi$-isotypic component
of the local system ${{\textit{Lang}}}_*\C_G$, on $G$.
This construction is functorial in the sense that, for
any morphism $\chi: G'\to G$ of algebraic  groups  over $\F$, there is a canonical
isomorphism $\wp_{\psi\ccirc\chi}\cong\chi^*\wp_\psi$.

We write $\AA:=\AA^1$ for an affine line and
$\GG:=\GG_m$ for the multiplicative group.
Thus, $\AA^n$  is an $n$-dimensional affine space,
resp. $\GG^n$  an $n$-dimensional torus.
From now on, we reserve the notation $\psi$ for an 
additive  character, i.e. for the case  $G=G_a$. The corresponding
local system  $\wp_\psi$ is 
the Artin-Schreier local system.
Given a scheme (or a stack) $X$ and a morphism
$f: X\to \AA$, we write $\wp^f:=f^*\wp_\psi$,
 a rank 1 local system on $X$ obtained from $\wp_\psi$ by pull-back
via $f$.

\subsection{}
We will systematically use the formalism of stacks.
Throughtout the paper, {\em a stack} means {\em an Artin
stack  over $\k$}.
Given a stack $X$ over $\k$ and a field extension $K/\k$
we write  $X(K)=X(\Spec K)$ for the groupoid  (= category)
of all 1-morphisms $x: \Spec K\to X$
in the 2-category of stacks.
If the morphism $x$ is defined over $\k$,
we  let $[x]$ denote
 the isomorphism class of
objects of the corresponding category over $\k$.
In this case we write  $[x]\in [X(\k)]$, where $[X(\k)]$ stands for the
set of  isomorphism classes of all $\k$-objects of  $X$.
 Given a pair of objects $x,x'\in X(\k)$ let $\Mor_\k(x,x')$ denote
the set of morphisms $f: x\to x'$.
We write $\TT X$, resp. $\TT^* X$, for the tangent, resp.   cotangent,
stack on $X$. %\redd{DOES this make sense without assuming $X$ be smooth ?}

Schemes will be identified with stacks via the functor of points,
where sets are viewed as
groupoids with trivial isomorphisms.
For a  scheme  $X$ over $\k$,
we will often identify 
the set $X(\k)$
of $\k$-rational points of $X$
with   the
set $[X(\k)]$ of isomorphism classes of objects 
of the corresponding stack, defined over $\k$.

Given a stack $X$
let $D(X)$ be the {\em bounded above} derived category
of constructible complexes on  $X$.
Abusing the terminology,
we will often refer to  an arbitrary object of  $D(X)$
as a  ``sheaf''.

We refer the reader to \cite{Ro}
for a detailed exposition of the formalism of group actions on stacks.
We will only consider $G$-actions 
where $G$ is a linear algebraic group over $\k$.
A stack $X$ equipped with a {\em strict} $G$-action,
cf. \cite[Definition 1.3]{Ro}, is called a
$G$-stack. 
There is a quotient stack  $X/G$ that comes equipped
with a canonical projection  $X\to X/G$.
The category $D(X/G)$ is equivalent
to  the $G$-equaivariant constructible derived category
on $X$. 
Given  a $G$-action on a scheme $X$ we
write $X\dsl G$ for the corresponding categorical quotient.
Then, there is a canonical morphism
${\mathfrak{w}}:\ X/G\to X\dsl G$.

\subsection{}\label{frobabs}
We will  need to take the trace of Frobenius for sheaves on
a  stack $X$ over a finite field $\k$. The corresponding theory is developed in
\cite{B2},  and \cite{LO}. 
In particular, Behrend introduces  \cite[\S6]{B2} a triangulated category
of {\em  absolutely
convergent} $\ell$-adic Weil complexes on $X$.
We will not reproduce a complete definition; an essential part being
 that  for any point $x:\pt \to X$ defined over $\k$, the
trace of the Frobenius inverse   on the (co)stalk  $x^!\cF$ of an absolutely
convergent constructible complex $\cF$ gives,
for every field imbedding ${\mathfrak j}: \bar{\mathbb Q}_\ell\into \CC$,
an  absolutely convergent series
\[\sum_{n\in\Z}\ \Bigl(\ \underset{\la\in \Spec
  (\fr\inv; {H^n(x^!\cF)})}\ssum\
\textrm{mult}(\la)\cdot
|{\mathfrak j}(\la)|\Bigr)\ <\infty,\]
where  $\|-\|$  stands for the 
absolute value of a complex number,
$\Spec(-)$  is the set  of eigenvalues
of $\fr\inv$ in $H^n(x^!\cF)$, and $\textrm{mult}(\la)$ 
is the corresponding multiplicity.

It will be more convenient for us to use a dual notion
of an absolutely
convergent constructible complex that involves
 the stalks $\cF_x:=x^*\cF$ rather than the costalks.
 We let $D\abs(X)$ denote
the corresponding bounded above triangulated category, 
for which one additionally requires that the weights of Frobenius 
on all the stalks be uniformly bounded. The cohomology sheaves
of an object of  $D\abs(X)$ vanish in degrees $\gg0$
(whereas, in  Behrend's definition the cohomology sheaves
 vanish in degrees $\ll 0$). 
 The results of \cite{B2} insure
that for any morphism $f: X\to Y$, of stacks over $\k$,
there are well defined functors
$f_!:\ D\abs(X)\to D\abs(Y)$ and
$f^*:\ D\abs(Y)\to D\abs(X)$.
Objects of the category $D\abs(\pt)$ will be referred to as
absolutely convergent 
complexes. Thus, for any $\cf\in D\abs(X)$ there
is an absolutely convergent complex $\RGam_c(X,\cf):=f_!\cf$,
where $f: X\to\pt$ is a constant map.

\begin{rem} Introducing a category of   absolutely convergent 
 constructible complexes is necessary already in the case
where $X=\pt/\GG$. Since $\pt/\GG$ is homotopy
equivalent to ${\mathbb P}^\infty$.
Thus, $H^\hdot(\pt/\GG,\CC)=\CC[u]$, where $\deg u=2$, so the complex $\RGam(\pt/\GG,\CC)$
has nonzero cohomology in infinitely many {\em nonnegative} degrees
and it is an absolutely convergent
constructible complex on $\pt$ in the sense of \cite{B2}.
On the other hand,  the complex $\RGam_c(\pt/\GG,\CC)$
 is an object of $D\abs(\pt)$, in our definition.
We have $H_c^\hdot(\pt/\GG,\CC)=\CC[u,u\inv]/\CC[u]$, so 
the complex $\RGam_c(\pt/\GG,\CC)$
has nonzero cohomology in infinitely many {\em negative} degrees:
$-2=2\dim(\pt/\GG),\,-4,-6,\ldots$.
\erem

Given an absolutely convergent 
complex $\cf\in D\abs(X)$, let $\hhh(\cf)$ denote the complex with zero differential
and terms $\hhh^n(\cf),\ n\in\Z$, the cohomology sheaves of $\cf$.
Let $K_{\textrm{abs}}(X)$ be a quotient of the Grothendieck group
of the category $D\abs(X)$ by the relations
$[\cf]=[\hhh(\cf)]$ for all $[\cf]\in D\abs(X)$ (this relation
is not, in general, a formal consequence of the relations
in the  Grothendieck group itself since $\cf$ may have infinitely
many nonzero cohomology sheaves).
In the special case $X=\pt$ we use simplified notation $K\abs=K\abs(\pt)$.
For any field imbedding ${\mathfrak j}: \bar{\mathbb Q}_\ell\into \CC$,
the map $V \mto \sum_{n\in\Z}\ (-1)^n\cdot {\mathfrak j}(\Tr(\fr; H^n(V)))$
induces a well-defined group homomorphism $K_{\textrm{abs}}\to\CC$.
From now on, we fix an imbedding ${\mathfrak j}$
and denote the corresponding homomorphism by $\mathsf{Tr}_{_\fr}$.
In particular, from the Remark above we find $\ltr \RGam_c(\pt/\GG,\qlb)=\frac{1}{q}+\frac{1}{q^2}+\ldots= \frac{1}{q-1}$,
where $q=\#\k$.

By definition, objects of $D\abs(X)$ come equipped
with a canonical weight filtration. 
 The graded pieces of the weight   filtration on $\cf$
are pure,
hence semisimple. We write $\cf\ss $ for  the associated graded 
 semisimple object and we call it 
the {\it semisimplification} of $\cf$. If the stack $X$ is an algebraic
variety or, more generally, an orbifold then in $K_{\textrm{abs}}(X)$
we have $[\cf]=[\cf\ss]$. This can be proved using the equations
$[\ce]=[\hhh(\ce)]$ and the fact that the cohomology sheaves
${\scr H}^i(\ce)$ of any perverse sheaf $\ce$ on $X$ vanish in degrees
$i>|\dim X|$.

We put
$\kabs:=\prod_{\bv\in\Z^I_{\geq0}} \ K_{\textrm{abs}}\cong K\abs [\![z]\!]$, where $z=(z_i)_\ii$.
We write an element of $\kabs$ either as
$V=\sum_\bv z^\bv V_\bv$ or just $V=\sum_\bv V_\bv$, if no confusion is possible.
We call $V_\bv$ the {\em component of  $V$ of dimension} $\bv$.
The
group $\kabs$ has the natural structure
of  a  pre $\lambda$-ring, in particular,
one has a product
 $[M]\cdot[M']:=[M\o M']$ and an operation $\Sym:\ \prod_{\bv>0}\,K\abs \to \kabs,\ [M]\mto \Sym[M]=\prod_\bv\Sym^\bv[M]$,
where $\Sym^\bv[M]=
\oplus_{m\geq 0}\ [((M^{\o m})_\bv)^{\si_m}]$ and $(M^{\o m})_\bv$ is the
component of $M^{\o m}$ with respect to the decomposition 
induced by the decomposition $M$ into components of various dimensions.
We identify $K\abs$ with the component of $\kabs$ corresponding to $\bv=0$.
The class $1=[\qlb]\in K\abs$  is a unit of $\kabs$.
Also, let  $\L$ be the class  $[\RGam_c(\AA, \qlb)]=\qlb[-2](-1)\in K\abs(\pt)$.
It will be convenient to enlarge $K\abs$ by adjoining a formal square root $\L^{1/2}:=[\qlb(\frac{1}{2})]$, of $\L$.
In the enlarged group, one has\
$\Sym(z\cdot\L^{1/2})=1+z\cdot\L^{1/2}+z^2\cdot\L+\ldots=\frac{1}{1-z\cdot\L^{1/2}}$. For $\cf\in D\abs(X)$,
we often use the notation $\cf\{n\}=\cf[n](\frac{n}{2})$. Thus, in $K\abs(X)$ one has $[\cf\{n\}]=\L^{-n/2}\cdot[\cf]$.

%functions $m: \qlb\sminus \{0\}\to\Z$ such that 
%for every field imbedding ${\mathfrak j}: \bar{\mathbb Q}_\ell\into \CC$,
%we have $\sum_{\la\in \qlb}\
%m(\la)\cdot |{\mathfrak j}(\la)| <\infty$.
%The operation of pointwise addition makes
%$K_{\textrm{abs}}$ an abelian group.
%To an absolutely convergent complex $V$
%we associate an element $\class(V)\in K_{\textrm{abs}}$
%defined by the formula
%$\class(V)(\la)=\text{mult}^{ev}(\la)-\text{mult}^{odd}(\la)$
%where $\text{mult}^{ev}(\la)$, resp. $\text{mult}^{odd}(\la)$,
%is
% the multiplicity of $\la$ as an eigenvalue of
%$\fr$ in $\oplus_{n \in\Z}\ H^{2n}(V)$,
%resp. $\oplus_{n \in\Z}\ H^{2n+1}(V)$. The
%definition of  absolutely convergent complex 
%insures that this multiplicity is finite for
%any $\la\neq 0$. The element $\class(V)$ only depends on
%the class $[V]$, of $V$, in  the Grothendieck group of
%the category  $D\abs(\pt)$.
%From now on, we fix a   field imbedding ${\mathfrak j}: \bar{\mathbb Q}_\ell\into
%\CC$.
%Then, for any $m\in  K_{\textrm{abs}}$
% we put $\Tr m=\sum_{\la\in\qlb\sminus \{0\}} m(\la)\cdot {\mathfrak
%  j}(\la)$, which is an absolutely convergent sum.
%For
%$V\in D\abs(\pt)$ we put
%$\ltr V:=\Tr(\class(V))$.

\subsection{Generalized Hua's formula}\label{eai-pf}

Let $f=\sum_{\bv\in \Z^I_{\geq 0}}\ z^\bv\cdot f_\bv$ be a formal power series
where each coefficient $f_\bv$ is a function $\Z^I_{\geq 1}\to \qlb,\ m\mto f_\bv(m)$.
For any such $f$ with $f_0=0$ one defines,  following \cite[\S 4]{Mo}, the plethystic exponential
of $f$ as a  formal power series $\Exp f=\sum_{\bv\in \Z^I_{\geq 0}}  z^\bv\cdot \Exp_\bv f$, where
the functions $\Exp_\bv f$ are determined by the equation
$$
\sum_{^{\bv>0}}
z^\bv\cdot (\Exp_\bv f)(m)\ :=\
\exp\left(\sum_{n\geq1}\sum_{^{\bv>0}}\  \frac{z^{n\cdot\bv}}{n}\cdot
f_\bv(n\cdot m)\right),\qquad m\in \Z^I_{\geq 1}.
$$

In a special case where the functions $f_\bv$ have the form
$f_\bv(n)={\mathbf f}_\bv(q^n)$,  for some $q\in\qlb$ and some Laurent polynomials
${\mathbf f}_\bv\in \qlb[{\mathbf q}]$ 
in one variable, the above formula
takes a more familiar form
\beq{exp-pol}\Exp\left(
\sum_{^{\bv>0}}
z^\bv\cdot {\mathbf f}_\bv({\mathbf q})\right)\ :=\
\exp\left(\sum_{n\geq1}\sum_{^{\bv>0}}\ \frac{z^{n\cdot\bv}}{n}
\cdot {\mathbf f}_\bv({\mathbf q}^{n})\right).
\eeq

Let $\k$ be a finite field and
$K/\k$ a finite field extention.
Let $(\cC,F)$ be a data as in \S\ref{exp-sec}, so $\cC$ is a $\k$-linear category and $F$
is an additive $\k$-linear functor from $\cC$ to the category of finite dimensional $I$-graded
$\k$-vector spaces that does not kill nonzero objects.
Let $\fX(\cC)=\sqcup_\bv\ \fX_\bv(\cC)$ be the corresponding
decomposition of the moduli stack of $\cC$.
 In \S\ref{obj}, we will define, by `extension of scalars',
a $K$-linear category $\cC_K=\cC_K$ equipped with a $K$-linear functor $F_K=K\o F$ such that  
the pair $(\cC_K, F_K)$ satisfies the assumptions of \S\ref{exp-sec} over the
the ground field $K$ and such that
for the corresponding moduli stacks 
there are canonical isomorphisms
$\fX_\bv(\cC_K)\cong K\o_\k\fX_\bv(\cC)$.
In particular, for the sets of isomorphism classes of objects
one has 
$[\op{Ob}_\bv(\cC_K)]=[\fX_\bv(\cC)(K)]$.
Further, for each $\bv\neq 0$, let $\AI_\bv(K)$ let 
be a subset of $[\op{Ob}_\bv(\cC_K)]$ formed by the absolutely indecomposable
objects of the category  $\cC_K$. 

Now,
for each $n\geq 1$ let $K_n$ be a degree $n$ extension of $\k$ and $\Tr_{K_n/\k}: \  K_n\to\k$
the trace map. Thus,  $\psi_n:=\psi\ccirc \Tr_{K_n/\k}$ is an  additive character of $K_n$  associated 
with the  additive character $\psi: \k\to\qlb^\times$.
Given a potential $\phi: \fX(\cC)\to \AA$, we define a 
function $\E_\bv(\cC, F,\phi): \Z_{\geq0}\to \qlb$, resp. $\E_\bv^{\AI}(\cC, F,\phi): \Z_{>0}\to \qlb$,
by the formula
$$\E_\bv(\cC, F,\phi)(n)=\sum_{^{x\in [\op{Ob}_\bv(\F_{q^n}\o\cC)}]}\
\psi_n(\phi(x)),\quad\text{resp.}\quad
\E_\bv^{\AI}(\cC, F,\phi)(n)=\sum_{^{x\in [\AI_\bv(\F_{q^n})]}}\
\psi_n(\phi(x)).
$$

Then, our generalization of Hua's formula reads

\begin{prop}\label{eai} For any data $(\cC,F,\phi)$ as in section \ref{exp-sec}, we have
$$
\sum_{\bv\in \Z_{\geq0}^I}\ z^\bv\cdot \E_\bv(\cC, F,\phi)\ = \
\Exp\left(\sum_{^{\bv>0}}\ z^\bv\cdot \E_\bv^{\AI}(\cC, F,\phi)\right).
$$
\end{prop}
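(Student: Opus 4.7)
The plan is to take logarithms of both sides and match them termwise using Krull--Schmidt together with Galois descent. Fix $m\ge 1$ and write $K=\F_{q^m}$. Since $\cC_K$ is Karoubian with finite-dimensional endomorphism algebras, the Krull--Schmidt theorem applies: every object decomposes, essentially uniquely, into indecomposables. Combining this with the additivity $\phi(x\oplus x')=\phi(x)+\phi(x')$ and the character identity $\psi_m(a+b)=\psi_m(a)\psi_m(b)$ yields
\[
\sum_\bv z^\bv\cdot\E_\bv(\cC,F,\phi)(m)\ =\ \prod_{y\text{ indec over }K}\frac{1}{1-z^{\dim y}\psi_m(\phi(y))},
\]
whose logarithm equals $\sum_{y}\sum_{k\ge 1}\tfrac{1}{k}z^{k\dim y}\psi_m(k\phi(y))$.

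Next I would parametrize isomorphism classes of indecomposables of $\cC_K$ by orbits of $\langle\fr^m\rangle$ on $\bar A$, the set of isomorphism classes of absolutely indecomposable objects of $\cC_{\bar\k}$. Using Krull--Schmidt over $\bar\k$ together with the base-change adjunction of \cite{So}, a $\langle\fr^m\rangle$-orbit $O$ of size $n$ with representative $\tilde y\in\bar A^{\fr^{mn}}$ corresponds to a unique $K$-indecomposable $y$ whose scalar extension to $\bar\k$ is $\oplus_{i=0}^{n-1}\fr^{mi}\tilde y$. One then checks $\dim y=n\cdot\dim\tilde y$ and, because $\phi$ commutes with Frobenius, $\phi(y)=\Tr_{\F_{q^{mn}}/\F_{q^m}}\phi(\tilde y)$; in particular $\psi_m(\phi(y))=\psi_{mn}(\phi(\tilde y))$.

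Substituting into the logarithm, unfolding the orbit sum via $\sum_{O}f(\tilde y_O)=\sum_{\tilde y\in\bar A}\tfrac{1}{n(\tilde y)}f(\tilde y)$ where $n(\tilde y)$ is the $\langle\fr^m\rangle$-orbit size, and setting $N=k\cdot n(\tilde y)$, the key trace-compatibility identity
\[
\psi_{m\,n(\tilde y)}\!\Bigl(\tfrac{N}{n(\tilde y)}\,\phi(\tilde y)\Bigr)\ =\ \psi_{mN}(\phi(\tilde y)),\qquad \tilde y\in\bar A^{\fr^{mN}},
\]
(valid because $\Tr_{\F_{q^{mN}}/\F_{q^{mn(\tilde y)}}}$ acts as multiplication by $N/n(\tilde y)$ on $\F_{q^{mn(\tilde y)}}$) together with the observation $\{\tilde y\in\bar A:n(\tilde y)\mid N\}=\bar A^{\fr^{mN}}=\AI(\F_{q^{mN}})$ regroups the double sum by $N$ and produces
\[
\sum_{N\ge 1}\frac{1}{N}\sum_{\bv>0}z^{N\bv}\,\E_\bv^{\AI}(\cC,F,\phi)(Nm),
\]
which by definition is $\log\Exp\bigl(\sum_{\bv>0}z^\bv\,\E_\bv^{\AI}\bigr)$ evaluated at $m$. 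Exponentiating and letting $m$ vary finishes the proof.

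The main obstacle is the Galois-orbit parametrization of $K$-indecomposables in the second step. This requires Krull--Schmidt in both $\cC_K$ and $\cC_{\bar\k}$ (standard under the Karoubian hypothesis with finite $\Hom$ spaces) and, more delicately, good behaviour of the base-change functors $\cC_{K}\to\cC_{K'}$ for finite extensions $K\subset K'$, so that the Frobenius action on $\bar A$ is compatible with scalar extension and every indecomposable of $\cC_K$ is obtained by descent from a unique Galois orbit. The existence of the left adjoint recalled in \cite{So} makes this descent go through in the required generality; the remaining manipulations are formal.
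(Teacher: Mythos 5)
Your proof is correct, and it takes a genuinely different route from the one in the paper. You argue directly: Krull--Schmidt in $\cC_K$ gives the Euler product $\sum_\bv z^\bv\cdot\E_\bv(m)=\prod_y(1-z^{\dim y}\psi_m(\phi(y)))^{-1}$, you then take logarithms and reindex the double sum via the parametrization of $K$-indecomposables by finite $\langle\fr^m\rangle$-orbits on the set $\bar A$ of absolutely indecomposable objects over $\bar\k$, where the trace-transitivity of finite field extensions converts $\psi_{mn}(\tfrac{N}{n}\phi(\tilde y))$ into $\psi_{mN}(\phi(\tilde y))$ and the identification $\{\tilde y: n(\tilde y)\mid N\}=\bar A^{\fr^{mN}}=\AI(\F_{q^{mN}})$ regroups terms by $N$. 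This is essentially Hua's original argument, generalized to an additive potential, and the paper even acknowledges (immediately before its proof sketch) that ``the proof in \emph{loc cit} can be carried over in our, more general, setting as well.'' The paper instead gives what it calls a ``more conceptual reinterpretation'': it invokes the Kontsevich--Soibelman ind-constructible schemes $X_\bv$, $X^{\textrm{ind}}_\bv$ parametrizing isomorphism classes, uses Krull--Schmidt to realize $\oplus:\Sym X^{\textrm{ind}}\iso X$ as a bijection of $\bar\k$-points, packages the additivity of $\phi$ as a factorization $\wp^{\phi\circ\oplus}\cong\wp^\phi\boxtimes\cdots\boxtimes\wp^\phi$, and then applies the $L$-function identity \eqref{graded-exp} (itself proved via the Grothendieck--Lefschetz trace formula and K\"unneth on symmetric powers). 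What the sheaf-theoretic packaging buys is robustness and reuse (the identity \eqref{expn}/\eqref{graded-exp} is independently useful later); what your direct approach buys is transparency, since the Galois-orbit bookkeeping that the trace formula performs silently is made completely explicit. The one point I would tighten in your write-up: the orbit correspondence (your ``main obstacle'') can be phrased without invoking $\cC_{\bar\k}$ at all --- for a $K$-indecomposable $y$, the residue field $\End(y)/\rad$ is a finite division ring over $K$, hence (Wedderburn) a finite field extension $K'=\F_{q^{mn}}$, and base change to $K'$ already exhibits the orbit; conversely restriction of scalars along $K'\!/K$ gives the inverse. This keeps all objects over finite extensions, matching the finiteness granted by Lemma~\ref{krull-lem}, and avoids having to justify Krull--Schmidt over $\bar\k$.
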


In the special case where  $\phi=0$ and $\cC$ is the category of finite dimemsional quiver representations
over $\k=\F_q$, it was shown by V. Kac that the functions $\#[\fX_\bv(\cC)(\F_{q^n})$ and
$\#[\AI_\bv(\F_{q^n})]$ are Laurent polynomials in $q$.
In this case one can use \eqref{exp-pol} and  the formula of the proposition  reduces to the 
original formula due to Hua \cite{H}.

The proof the proposition given below amounts, essentially, to  a
more conceptual  reinterpretation of Hua's argument.
In fact, the proof  in {\em loc cit} can be carried over
in our, more  general, setting as well.
Indeed, the only property of
the function $\psi\ccirc\phi:\ X(\cC)(\bar\k)\to\bar\k$ which is necessary
for  Hua's argument to go through is the
equation $(\psi\ccirc\phi)(x\oplus x')=(\psi\ccirc\phi)(x)\cdot
(\psi\ccirc\phi)(x'),\ x,x'\in X(\cC)(\bar\k)$. The latter equation holds
thanks to the additivity of the potential $\phi$.

The proof of the Hua formula begins with  the following

\begin{lem}\label{krull-lem} For any finite field extension $K/\k$,
all $\Hom$-spaces in the the category $\cC_K$ have 
finite dimension over $K$ and, moreover, $\cC_K$ is a Krull-Schmidt category, i.e. any object is
isomorphic to a finite direct sum of indecomposable objects,
defined uniquely up to isomorphism and permutation of direct summands.
\end{lem}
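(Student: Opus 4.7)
My plan is to deduce both assertions of the lemma from the standing assumption that $\fX_\bv(\cC)$ is an Artin stack of finite type, combined with the classical structure theory of finite-dimensional algebras.

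First, I would argue that for any pair $x, x' \in \cC$, the $\k$-vector space $\Hom_\cC(x, x')$ is already finite-dimensional over $\k$. This is a standard consequence of $\fX(\cC)$ being an Artin stack of finite type: the functor on $\k$-schemes $S \mapsto \Hom_{\cO_S \otimes \cC}(x_S, x'_S)$ is represented by an affine $\k$-scheme of finite type whose $\k$-points form a $\k$-linear space, hence a finite-dimensional affine space. Granting this, the $\Hom$-space in $\cC_K$ from $(x,\alpha)$ to $(x',\alpha')$, being by definition the $K$-linear subspace of intertwiners inside $\Hom_\cC(x,x')$, is itself finite-dimensional: it is a $\k$-subspace of a $\k$-finite-dimensional space, so $\k$-finite-dimensional, and then $K$-finite-dimensional because $[K:\k] < \infty$.

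Second, I would verify the Krull-Schmidt property. By the previous step, the endomorphism ring $\End_{\cC_K}(x, \alpha)$ of any object is a finite-dimensional (hence Artinian) $K$-algebra. In such an algebra the identity admits a decomposition $1 = e_1 + \cdots + e_n$ as a finite sum of primitive pairwise orthogonal idempotents. Because $\cC_K$ inherits the Karoubian property from $\cC$ (as noted earlier in the paper), each $e_j$ gives a direct summand $(x_j, \alpha_j)$ of $(x, \alpha)$, yielding a decomposition $(x, \alpha) = \bigoplus_j (x_j, \alpha_j)$ into indecomposables. Moreover, each $\End_{\cC_K}(x_j, \alpha_j) = e_j \cdot \End_{\cC_K}(x, \alpha) \cdot e_j$ is a finite-dimensional $K$-algebra with no nontrivial idempotents; by Fitting's lemma applied inside an Artinian ring, such an algebra is local. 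Uniqueness of the decomposition up to permutation and isomorphism then follows from the Krull-Schmidt-Azumaya theorem, valid in any Karoubian additive category in which indecomposables have local endomorphism rings.

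I do not anticipate a serious obstacle here; the only substantive input is the $\k$-finite-dimensionality of $\Hom$-spaces, which is a routine consequence of the standing Artin-stack hypotheses and would most likely be cited rather than reproved in the final write-up.
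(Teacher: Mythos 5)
Your proof is correct, but both halves take a route that differs from the paper's in instructive ways.

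For the finite-dimensionality of $\Hom$-spaces, you argue via representability of the $\Hom$-functor by an affine $\k$-scheme of finite type. This is sound, but it requires some care to see why the representing scheme is a linear object (one really needs something like Lemma~\ref{hom-sc}(i), which shows $\underline\Hom(x,y)$ is a generalized vector bundle — the $\GG_m$-action by scaling, rather than the bare representability, is what pins down the linear structure). The paper sidesteps this by using the purely group-theoretic observation that $\Aut(x)$ is an algebraic group (automatic for objects of an Artin stack) and is Zariski-open in $\End(x)$ as the locus of invertibles, so $\dim_\k \End(x) = \dim \Aut(x) < \infty$; finite-dimensionality of $\Hom(x,y)$ then follows by applying this to $x\oplus y$. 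The paper's route is a bit slicker since it never needs to know a representing scheme for $\Hom$ in advance.

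For the Krull--Schmidt property, you give the classical argument: lift a complete set of primitive orthogonal idempotents from the Artinian $\End$-ring, use the Karoubian property to realize these as direct summands, observe each corner ring $e_j \End e_j$ is local via Fitting's lemma, and invoke Krull--Schmidt--Azumaya. This is perfectly valid. The paper instead cites Theorem A.1 of \cite{CYZ}, which says a Karoubian additive category is Krull--Schmidt iff every endomorphism algebra is semi-perfect, and notes that finite-dimensional algebras are semi-perfect. Your argument is essentially what underlies that theorem, so the two proofs are equivalent in substance; the paper's is shorter because the general criterion is outsourced to a reference, while yours is more self-contained.
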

\begin{proof}
Recall that automorphism groups
of  objects of an Artin stack are algebraic groups.
For $x\in\cC_K$, the group $\Aut(x)$ is the group of
invertible elements of the algebra $\End(x)$, hence it is
Zariski open in  $\End(x)$. Thus, we have
$\dim\End(x)=\dim\Aut(x)<\infty$.
Applying this to a direct sum $x\oplus y$ we deduce
that $\Hom(x,y)<\infty$.
The second statement holds by Theorem A.1 in \cite{CYZ}, 
which asserts that an additive Karoubian category has the 
Krull-Schmidt property if and only if $\End(x)$ is semi-perfect
for every object $x$ of this category. In particular this is true
in our case since $\End(x)$ is finite-dimensional, hence semi-perfect.
\end{proof}

Given a reduced scheme $Y$,  let $\Sym^n Y= Y^n\dsl\si_n$ and write $p_n: Y^n\to \Sym^n Y$ for the quotient map.
We put $\Sym Y=\sqcup_{n\geq 0}\ \Sym^n Y$ where $\Sym^0 Y=\{\pt\}$.
Given a sheaf $\cf\in D\abs(Y)$, for each $n\geq 1$ put  $\Sym^n\cf=\big((p_n)_!\cf^{\boxtimes n}\big)^{\si_n}$.
Also let $\Sym^0\cf=\C_\pt$.
Let $\Sym\cf$ be a sheaf on $\Sym Y$
such that  the restriction of $\Sym\cf$ to $\Sym^n Y$ equals $\Sym^n\cf$. 
Then, 
one has the following equations  for generating functions in one
variable $z$ (these equations are  well known from calculations involving the
$L$-function associated with the sheaf $\cf$):
\begin{multline}
\sum_{n\geq 0} z^n\cdot \left(\sum_{^{w\in (\Sym^n Y)^\fr}}\ \ltr H^\hdot(\Sym^n\cf|_w)\right)\ =\
\sum_{n\geq 0} z^n\cdot \ltr H^\hdot_c(\Sym^n Y, \,\Sym^n\cf)\label{expn}\\
=\ \exp\left(\sum_{n\geq 1}\ \frac{z^n}{n}\cdot \Tr_{\fr^n}| H^\hdot_c(Y,\cf)\right)\ 
=\ \exp\left(\sum_{n\geq 1}\ \frac{z^n}{n}\cdot 
\left(\sum_{^{y \in Y^{\fr^n}}}\
\Tr_{\fr^n}| H^\hdot(\cf|_y)\right)\right).
\end{multline}
Here, the first and third equality hold by the
Grothendieck-Lefschetz trace formula, and the second equality is a
consequence of the K\"unneth formula one

Observe that for any finite decomposition $Y=\sqcup_\al\ Y_\al$,
where   $i_\al: Y_\al\into Y$ is a locally closed subvariety,
in ${K\abs}$ one has
$[H^\hdot_c(Y,\cf)]=\sum_\al\ [H^\hdot_c(Y_\al, i_\al^*\cf)]$.
It follows that one can assign a well defined
element $[H^\hdot_c(Y,\cf)] \in{K\abs}$ and extend equations \eqref{expn} to 
sheaves on constructible schemes.
 Recall that a constructible scheme is, roughly speaking, a reduced scheme $Y$ defined up to an equivalence given
by
partitioning $Y$ into smaller,  locally closed subsets, cf. \cite{KS1}.

We will use a graded version of the above.
Specifically, let
$Y_\bv,\ \bv\in\Z^I_{\geq0}$, be a collection of reduced schemes of finite type over $\k$
and $\cf_\bv\in D\abs(Y_\bv)$, a collection of sheaves.
We put $Y=\sqcup_\bv\ Y_\bv$, a disjoint union of the schemes $Y_\bv$.
For each $n\geq 1$ and $\bv\in \Z^I_{\geq0}$ let $\Sym^\bv Y_n$ be  the image 
of $\cup_{\{\bv_1,\ldots\bv_n\in \Z^I_{\geq0}\mid
\bv_1+\ldots+\bv_n=\bv\}}\ \Sym^{\bv_1}Y\times\ldots \Sym^{\bv_n}Y$
under the quotient map  $p_n: Y^n\to \Sym^n Y$.
Let $\Sym^\bv \cf_n$ be  the restriction of the sheaf $\Sym^n\cf$ to  $\Sym^\bv Y_n$.
It is clear that $\Sym^\bv Y=\sqcup_n\ \Sym^\bv Y_n$. This is a disjoint union of finitely many schemes of finite type
and we have $\Sym^\bv \cf\in D\abs(\Sym^\bv Y)$, where 
$\cf$ is a sheaf such that $\cf|_{\Sym^\bv Y_n}=\Sym^\bv \cf_n$.
Further, it is clear that one has a decomposition
$\Sym Y=\sqcup_\bv\  \Sym^\bv Y$. 

For each $\bv$, write $z^\bv=\prod_\ii\ z_i^{v_i}$ and let $\E(Y_\bv,\cf_\bv):\ \Z_{>0}\to \qlb$ be a function defined by the
assignment $n\mto \sum_{^{y \in (\Sym^\bv Y)^{\fr^n}}}\
\Tr_{\fr^n}| H^\hdot(\cf_y)$.
With this notation, a generalization of \eqref{expn} to the
$\Z^I_{\geq0}$-graded setting reads
\beq{graded-exp}
\sum_{^{\bv\in\Z^I_{\geq0}}}\
z^\bv\cdot \left(\sum_{^{w\in (\Sym^\bv Y)^\fr}}\ \ltr H^\hdot(\Sym^\bv\cf|_w)\right)\ =\
\Exp\left(\sum_{^{\bv>0}}\  z^\bv\cdot \E(Y_\bv,\cf_\bv)\right).
\eeq

\begin{proof}[Sketch of proof of Proposition \ref{eai}]  
The argument below is based on an observation,
due to Kontsevich and Soibelman  \cite[\S2]{KS1}, 
that for each $\bv$ there is a canonically defined {\em  constructible} scheme  $X_\bv$ of finite type over $\k$
such that one has a bijection of sets
$[\fX_\bv(\bar\k)]\cong X_\bv(\bar\k)$. Here,  $[\fX_\bv(\bar\k)]$ stands for the set
of isomorphism classes of 
objects $\fX_\bv(\cC)(\bar\k)$, resp. $X_\bv(\bar\k)$ for the set of closed $\bar\k$-points of $X_\bv$.
Similarly, it was argued in  \cite{KS1} that for each $\bv\neq 0$ there is a canonically defined
constructible
$\k$-subscheme $X^{\textrm{ind}}_\bv$, of $X_\bv$, such that
 one has a bijection between the subset $[\fX^{\textrm{ind}}_\bv(\cC)(\bar\k)]\sset [\fX_\bv(\cC)(\bar\k)]$,
of isomorphism classes of 
 indecomposable objects, and the set $X^{\textrm{ind}}_\bv(\bar\k)$ of closed $\bar\k$- points of  
the constructible scheme  $X^{\textrm{ind}}_\bv$. 
It follows that for any finite field extension $K/\k$ there is a bijection
$X_\bv^{\textrm{ind}}(K)\cong [\AI_\bv(\cC_K)]$. Further, one has an ind-constructible scheme
$X=\sqcup_\bv\ X_\bv$, resp. $X^{\textrm{ind}}=\sqcup_{\bv>0}\ X^{\textrm{ind}}_\bv$. 
Then,
the Krull-Schmidt property implies that the direct sum map,
$\oplus: \Sym^n X^{\textrm{ind}}\to X,\ (x_1,\ldots,x_n)\mto x_1\oplus\ldots\oplus x_n$,
induces a bijection of sets
$\dis\oplus:
(\Sym X^{\textrm{ind}})(\bar\k)\iso
X(\bar\k)$.
This bijection sends $(\Sym^\bv X^{\textrm{ind}})(\bar\k)$ to  $X_\bv(\bar\k)$.
Further, a potential $\phi: \fx(\cC)\to\AA$ induces  morphisms $X_\bv\to\AA$, of  constructible schemes.
The additivity  of the potential implies an isomorphism $\wp^{\phi\ccirc\oplus}\cong
\wp^\phi\boxtimes\ldots\boxtimes\wp^{\phi}$, of sheaves on $\Sym X^{\textrm{ind}}$.
We conclude that 
equation \eqref{graded-exp} becomes, in the case where $Y_\bv=X^{\textrm{ind}}_\bv$
and $\cf_\bv=\wp^{\phi}|_{X^{\textrm{ind}_\bv}}$,
the required equation of Proposition \ref{eai}.
\end{proof}

\subsection{Generalized isotypic decomposition}\label{unipc} The construction of this subsection will (only) be
used in \S\ref{fac-res}.

Consider a diagram 
$\wt X\xrightarrow a  X\xrightarrow{b} Y$
of morphisms of schemes and put  $c:=b\ccirc  a $.
We make the following assumptions:
\begin{enumerate}
\item The morphism $ a $
is  an unramified finite Galois covering;
\item Each of the morphisms  $b$ and $c$ is
a Zariski locally
trivial fibration;
\item All fibers of the  morphisms $b$ and $c$
are connected.
\end{enumerate}

Let  $\Gamma$ be 
the Galois group  of the Galois covering $a$ and
the group   $\Gamma$ acts naturally on $\wt X$.
A   local system  on $\wt X$ is said to be constant along $c$ if
it is isomorphic to a pull-back via $c$ of a local system  on $Y$.
Let $\wt\cf$ be a  $\Gamma$-equivariant local
system on $\wt X$ which is constant along $c$.
Therefore, on any local system $\cf$ on $X$  one obtains,thanks to
assumption (3), 
an action 
of the local system of groups which is obtained by taking  
the associated bundle for the action of $\Gamma$ on itself by conjugation.
The group $\Gamma$ being finite, this yields  a canonical direct sum
decomposition into isotypic components:
\beq{isotyp1}
\cf\ =\ \oplus_{\rho\in \Irr(\Ga)}\ \cf^\rho,
\eeq
where   $\Irr(\Ga)$ denotes
 the set
of isomorphism classes of irreducible $\Ga$-representations in
finite dimensional $\qlb$-vector spaces.

Next, we say that a  $\qlb$-local system $\tilde\cf$ on $\wt X$    is {\em unipotent along $c$}
if  it  admits
 a finite filtration  by local sub-systems such that 
$\gr\tilde\cf$,
an associated graded  local system, is constant along $c$.
The restriction of such an $\tilde\cf$ to any fiber of $c$ is a local
system with  unipotent monodromy.

\begin{lem}\label{jordan} 
Let $\cf$ be a  $\qlb$-local system 
on $X$ such that the local system $a^*\cf$, on $\wt X$, is
  unipotent  along $c$.
Then, there is a canonical  `{\em generalized} isotypic decomposition':
\beq{isotypic} \cf\ =\ \oplus_{\rho\in \Irr(\Ga)}\ \cf^{\langle\rho\rangle}.
\eeq
The  direct summand $\cf^{\langle\rho\rangle}$ is uniquely determined by the requirement that
it admits a filtration such that
 $a^*\gr(\cf^{\langle\rho\rangle})$, a pull-back of an associated graded local system,
is constant along
$c$ and using the notation of \eqref{isotyp1}, we have
$\gr(\cf^{\langle\rho\rangle})=(\gr(\cf^{\langle\rho\rangle}))^\rho$,
equivalently,  $(\gr(\cf^{\langle\rho\rangle}))^\sig=0,\ \forall \sig\in
\Irr(\Ga),\
\sig\neq\rho$.
\end{lem}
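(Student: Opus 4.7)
The plan is to translate the problem into a monodromy calculation, reduce to a representation-theoretic statement, and lift the canonical $\Gamma$-isotypic decomposition of the associated graded via a Levi decomposition of the algebraic envelope of the monodromy. I fix compatible basepoints $\tilde x_0 \in \wt X$, $x_0 = a(\tilde x_0) \in X$, $y_0 = b(x_0) \in Y$, and identify $\cf$ with a continuous representation $V$ of $\pi_X := \pi_1(X, x_0)$. The connected-fiber hypotheses on $b$ and $c$ together with the Galois covering $a$ organize the relevant fundamental groups into a $3\times 3$ diagram of short exact sequences, so that $N := \pi_1(c^{-1}(y_0))$ equals $\pi_{\wt X} \cap \pi_1(b^{-1}(y_0))$, which is normal in $\pi_X$ as the intersection of two normal subgroups; moreover, $\pi_X/N$ is canonically an extension of $\pi_Y$ by $\Gamma$. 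The unipotence hypothesis on $a^*\cf$ becomes the statement that $V|_N$ is unipotent.

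Next I will construct a canonical $\pi_X$-stable filtration of $V$. Define $F^\bullet V$ by iterated kernels of the augmentation ideal of $\qlb[N]$: $F^0 V = V^N$ and $F^j V = \{v \in V : (n-1)v \in F^{j-1}V \text{ for all } n \in N\}$. Normality of $N$ makes this filtration $\pi_X$-stable, and unipotence of $V|_N$ makes it finite and exhausting. Translated to sheaves, $\cf$ acquires a canonical filtration whose associated graded $\gr\cf$ has trivial $N$-monodromy after pullback, so $a^*\gr\cf$ is constant along $c$. The hypothesis of equation \eqref{isotyp1} is then satisfied by $\gr\cf$, producing a canonical $\Gamma$-isotypic decomposition $\gr\cf = \oplus_\rho (\gr\cf)^\rho$.

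To lift this to $\cf$ itself, let $H \subseteq GL(V)$ be the Zariski closure of the image of $\pi_X$ and $U \subseteq H$ the Zariski closure of the image of $N$, a closed unipotent normal subgroup of $H$. Since $\qlb$ has characteristic zero, $H$ admits a Levi decomposition $H = U \rtimes L$ with $L$ reductive and mapping isomorphically onto $H/U$, and any two such Levis are $U$-conjugate. The finite group $\Gamma$ lifts into $L$, and the $\Gamma$-isotypic decomposition of $V$ as an $L$-module gives $V = \oplus_\rho V^{\langle\rho\rangle}$; the canonical filtration $F^\bullet V$ is preserved by $H$, and the induced $L$-action on $\gr V$ splits into the same $\Gamma$-isotypic pieces as those from \eqref{isotyp1}. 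Hence each $V^{\langle\rho\rangle}$ is $\pi_X$-stable, and Galois descent produces the sub-local-system $\cf^{\langle\rho\rangle}$ with $\cf = \oplus_\rho \cf^{\langle\rho\rangle}$. Uniqueness of $\cf^{\langle\rho\rangle}$ under the characterization follows because any subsheaf satisfying the requirements has graded pieces of type $\rho$ only, hence lies in the $L$-eigenspace of type $\rho$ in $V$ and coincides with $V^{\langle\rho\rangle}$ by comparison on the graded; independence of the Levi choice is automatic since $U$-conjugation preserves both the filtration and the induced decomposition on $\gr V$.

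The main obstacle will be the $U$-stability of each $L$-isotypic summand $V^{\langle\rho\rangle}$: this requires verifying the compatibility between the conjugation action of $L$ on $U$ and the kernel filtration $F^\bullet V$. I expect the verification to reduce to the fact that $U$ acts trivially on $\gr V$, so the $L$-type of a vector is preserved modulo the lower step of the filtration, and an induction on the length of $F^\bullet V$ then yields $U$-stability of each $V^{\langle\rho\rangle}$.
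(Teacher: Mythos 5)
Your route is genuinely different from the paper's, which stays entirely inside profinite group theory: it first establishes the direct product decomposition $\pig(X)/\wt K\cong\Gamma\times\pig(Y)$, then asserts a generalized isotypic decomposition of $V|_K$ (where $K=\Ker(b_*)$, $K/\wt K\cong\Gamma$), and finally propagates this decomposition to a $\pig(X)$-decomposition by induction along a $\wt K$-trivializing filtration, using at each step that the $\pig(X)$-action on the graded pieces factors through the product $\Gamma\times\pig(Y)$. No Zariski closure, no Levi factor, no algebraic envelope of the monodromy appears. Your reformulation through the Levi decomposition $H=U\rtimes L$ of the monodromy envelope is a substantively different strategy, and you are right to flag the $U$-stability of the $L$-isotypic summands as the crux.

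But the fix you sketch for that crux does not close the gap, and the gap is not merely a missing verification. The claim that ``$U$ acts trivially on $\gr V$, so the $L$-type of a vector is preserved modulo the lower step of the filtration, and an induction on the length of $F^\bullet V$ then yields $U$-stability'' is false as a general implication. Knowing that $uv-v\in F^{j-1}V$ tells you nothing about the $\Gamma$-type of $uv-v$, so nothing about whether $uv$ stays in $V^{\langle\rho\rangle}$. Concretely, take $H=U\rtimes L$ with $U=\GG_a$ acting on $V=\qlb^2$ by $u\mapsto\left(\begin{smallmatrix}1&u\\0&1\end{smallmatrix}\right)$ and $L=\ZZ/2$ acting by $\mathrm{diag}(1,-1)$ (so that $L$ acts on $U$ by $\pm1$). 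The $L$-isotypic pieces are $\langle e_1\rangle$ and $\langle e_2\rangle$; yet $u\cdot e_2=u e_1+e_2\notin\langle e_2\rangle$, and $V$ is in fact $H$-indecomposable. Your argument, applied verbatim, would declare $V=V^{\langle\triv\rangle}\oplus V^{\langle\sign\rangle}$ — but no such $H$-stable decomposition exists. The structural input you never use, but the paper does, is the commuting $\pig(Y)$-factor in $\pig(X)/\wt K\cong\Gamma\times\pig(Y)$: the paper first decomposes $V|_K$ (where the relevant finite quotient is just $\Gamma$) and then leverages the commuting $\pig(Y)$-action on the $\gr$-steps to promote this to a $\pig(X)$-decomposition. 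A pure Levi-splitting of the monodromy envelope, with no reference to the product structure of $\pig(X)/\wt K$, cannot see why the isotypic pieces should be $U$-stable, and the example above shows that in general they are not. To salvage your strategy you would need to build the $\pig(Y)$-factor explicitly into the construction of the summands, not merely into the choice of a Levi.
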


\begin{proof} It will be convenient to identify  $\qlb$-local systems
on a scheme
with representations of the geometric (\'etale) fundamental group of that
 scheme.

We have the following
 diagram 
$$
{\small
\xymatrix{
1\ar[r] &\ \pig(\wt X)\  \ar@{->>}[d]^<>(0.5){c_*}\ar[r] & \ \pig(X)\  
\ar@{->>}[d]^<>(0.5){b_*}\ar[r] &\ \Gamma\ar[r]
& 1\\
& \ \pig(Y)\ \ar@{=}[r]^<>(0.5){\Id}& \ \pig(Y)\  &&
}}
$$
where  the maps $b_*$ and $c_*$
are induced by the morphisms $b$ and $c$, respectively.
It follows from assumption (1)
that the horizontal row in the diagram is an exact sequence,
and it follows  from assumptions (2)-(3) that the vertical maps are
surjective.

Let $K=\Ker(b_*)$, resp. $\wt K=\Ker(c_*)$.
We may view $\pig(\wt X)/\wt
K$ and $K/\wt K$ as normal subgroups of $\pig(X)/\wt K$.
Then, diagram chase yields $\pig(X)/\wt K=(\pig(\wt X)/\wt K)\cdot (K/\wt K)$ and
$(\pig(\wt X)/\wt K)\cap (K/\wt K)=\{1\}$.
It follows that the group $\pig(X)/\wt K$ is a direct product of the
subgroups $\pig(\wt X)/\wt
K$ and $K/\wt K$. Further, we
 have that $K/\wt K\cong\Gamma$ and $\pig(\wt X)/\wt
K=\pig(Y)$.
We deduce
an exact sequence
\[
1\to \wt K \to \pig(X)\to \Gamma\times \pig(Y)\to 1,
\]
such that the group $\pig(\wt X)\sset \pig(X)$ gets identified with the
preimage of $\{1\}\times \pig(Y)\sset \Gamma\times \pig(Y)$.

Now, let $f: \pig(X)\to \GL(V)$ be the representation  that corresponds to a
local system $\cf$ on $X$. The  assumption that  $a^*\cf$
be unipotent along $c$ translates into the condition
that $f|_{\wt K}$ is a unipotent representation of the group $\wt K$.
In such a case, by elementary group theory, there is a canonical
generalized isotypic  decomposition $V|_{K}= \oplus_{\rho\in \Irr(\Ga)}\
V^{\langle\rho\rangle}$ where each $V^{\langle\rho\rangle}$ is $K$-stable. The direct summand  $V^{\langle\rho\rangle}$ is
characterized by the property that it is the
maximal  $K$-subrepresentation of $V|_{K}$ 
such that any irreducible $K$-subquotient of  $V^{\langle\rho\rangle}$ is isomorphic
to a pull-back of $\rho$ via the projection $K\onto \Gamma$.

We claim that each space  $V^{\langle\rho\rangle}$ is in fact
$\pig(X)$-stable. To see this, choose a $\pig(X)$-stable increasing
filtration $0=V_0\sset V_1\sset\ldots \sset V_m=V$ such that
the action of $\wt K$ on $V_i/V_{i-1}$ is trivial.
To prove the claim it suffices to show that for any
 $i\geq 1$ 
the space $V_i\cap V^{\langle\rho\rangle}$ is
$\pig(X)$-stable and, moreover, one has
\[V_i=\oplus_{\rho\in \Irr(\Ga)}\
(V_i\cap V^{\langle\rho\rangle}).
\]
This is easily proved by  induction on $i$
using that the $\pig(X)$-action on  $V_i/V_{i-1}$
factors through  an action of
$\pig(X)/\wt K=\Gamma\times \pig(Y)$
and  the characterizing property of the subspaces
$ V^{\langle\rho\rangle}$.

The claim insures that, for each $\rho\in\Irr(\Gamma)$,
there is a local subsystem $\cf^{\langle\rho\rangle}$, of $\cf$,
that corresponds to the $\pig(X)$-subrepresentation
$ V^{\langle\rho\rangle}\sset V$.
We leave to the reader to check that the resulting decomposition 
$\cf=\oplus_{\rho\in \Irr(\Ga)}\ \cf^{\langle\rho\rangle}$ satisfies all
the required
properties.
\end{proof}

Similarly, we say that a  local system $\cf$ on $X$    is {\em
  unipotent relative
to $c$}
if it  admits
 a finite filtration  by local sub-systems
such that $a^*\gr\cf$, a pull-back of an associated graded  local system, is
  isomorphic to a  local system of the form $c^*\cf_Y$ for some local system $\cf_Y$ on $Y$.
Let $D^{\op{unip}}\abs(X,c)$,
denote the full triangulated subcategory of $D\abs(X)$ whose objects are
complexes $\cf$ such that each cohomology sheaf of $\cf$
is  a  local system unipotent relative to $c$.
Let  $K^{\op{unip}}\abs(X,c)$,
denote the corresponding 
Grothendieck group with the modification similar to
the one explained in \S\ref{frobabs}.
Also, let $K^{\op{loc}}\abs(Y)$ be the Grothendieck group
of the category of absolutely convergent local systems on $C$.
The functor $\RGam_c(Y,-)$ induces a homomorphism
$K^{\op{loc}}\abs(Y)\to K\abs(\pt),\
[\cf_Y]\mto [\RGam_c(Y,\cf_Y)]$.

Let $\Irr(\Gamma)$ be the set
of isomorphism classes of irreducible $\Gamma$-representations in
finite dimensional $\qlb$-vector spaces
For each  $\rho\in\Irr(\Gamma)$, we define
a group  homomorphism
\beq{hor}
K^{\op{unip}}\abs(X,c)\to 
K^{\op{loc}}\abs(Y),\quad [\cf]\mto [\cf]^{\langle\rho\rangle},
\eeq
 as follows.

Let $\cf\in D^{\op{unip}}\abs(X,c)$.
One can assume,  without changing the class of
$\cf$ in the Grothendieck group, that
$\cf$ is  a  local system and, morover, that $ a^* \cf\cong c^*\cf_Y$ for
some local system  $\cf_Y$ on $Y$
The  local system  $ a^* \cf$ comes equipped with a canonical
$\Gamma$-equivariant
structucture which induces, via the isomorphism  $ a^* \cf\cong
c^*\cf_Y$, a $\Gamma$-equivariant
structucture on $c^*\cf_Y$.
The  group $\Gamma$ acts along the fibers of $ c$
and these fibers are connected.
Therefore, we have $\cf_Y=R^0 c_*  a^* \cf$ (an underived
direct image) 
and giving the
$\Gamma$-equivariant
structucture on $ c^*\cf_Y$ is equivalent to giving an action 
 $\Gamma \to \Aut(\cf_Y)$, of $\Gamma$ on $\cf_Y$.
Hence,  one has   a canonical direct sum
decomposition into $\Gamma$-isotypic components:
\beq{isotyp}
\cf_Y\ =\ \oplus_{\rho\in \Irr(\Ga)}\ \cf_Y^\rho
\eeq
By definition, the map \eqref{hor} sends the class
$[\cf]\in K^{\op{unip}}\abs(X,c)$
to the class $[\cf]^{\langle\rho\rangle}:=[(R^0 c_*  a^*
\cf)^\rho]\in K^{\op{loc}}\abs(Y)$.
It is easy to check that this map is well-defined and the class thus defined
does not depend on the choices of various filtrations
involved in the construction. In particular, note that the uniform boundedness of weights in the definition of $K_{\rm abs}(X)$ implies that the K-group above  is the same as the K-group of pure complexes, hence the K-group is the same as for sheaves with monodromy along the fibers of $c$. Therefore the above K-group decomposes as a direct sum over representations of $\Gamma$, and the $\Gamma$-isotypic components along the fibers and descend to the base.

\begin{rem} Under these conditions we also have an isomorphism obtained by taking the direct sum of the generalized isotypic component morphisms over all irreducible representations of the finite group $\Gamma$
$$K_{\rm abs}^{\rm unip}(X,c) \to \oplus_{\rm Irr(\Gamma)} K_{\rm abs}^{\rm loc}(Y)$$
\end{rem}

%Assume now that all the varieties and morphisms above  
%are defined over a finite field.

\section{Factorization sheaves}\label{sec3}

In this section we introduce factorization sheaves and prove
a result (Theorem \ref{expC}) involving  cohomology of such sheaves and the Fourier transform.
This theorem, which has an independent interest, plays a crucial
role in relating exponential sums over indecomposable objects 
with the geometry of moment maps.

\subsection{}\label{cond-gen}
Given  a nonempty  finite set  $J$ let
 $\gS_J$ denote the group of bijections $J\to J$
and $\sign: \si_J\to\{\pm1\}$  the sign-character.
Given a positive integer $v$ we write
 $[v]:=\{1,2,\ldots,v\}$  and
$\si_v:=\si_{[v]}$, the Symmetric group on 
$v$ letters. 
For $v=0$, we define $[v]=\emptyset$, resp.
$\si_v=\{1\}$. Given a dimension vector $\bv= \{v_i\}_{i\in I}$,
we put $[\bv]=\sq_i\ [v_i]$, resp. $|\bv|=\sum_i\ v_i$,
and $\si_\bv=\prod_i \si_{[v_i]}=\prod_i \si_{v_i}$.

Let $\cur$ be a connected scheme over a field $\k$
and $\cur^J$ the scheme of maps
$\gamma: J\to \cur$.
 This scheme  is isomorphic
to a cartesian power of $\cur$  (for $J=\emptyset$ we declare that
$\cur^J=\{\pt\}$)
and it comes equipped with
a natural $\gS_J$-action.
For an integer $v\geq 0$, resp.  dimension vector $\bv= \{v_i\}_{i\in I}$,
we  use simplified notation
$\cur^v:=\cur^{[v]}$, resp.    $\cur^\bv:=\cur^{[\bv]}$.

Let  $\bv^1,\ldots,\bv^m$ be an $m$-tuple   of dimension vectors.
One has the following
 canonical  maps
\beq{imath}
\pprod_\al\ \cur^{\bv_\al}/\gS_{\bv_\al}
\ \cong\ \cur^{\sqcup_\al\, [\bv_\al]}\big/\big(\pprod_\al \si_{[\bv_\al]}\big)\
\to\
\cur^{\sqcup_\al\, [\bv_\al]}/\si_{\sqcup_\al\, [\bv_\al]}\
\cong\
\cur^{\bv^1+\ldots+\bv^m}\!/\gS_{\bv^1+\ldots+\bv^m}.
\eeq
Here, the last isomorphism uses the fact that for any set $J$ with $v$ elements there is
a canonical isomorphism $C^J/\si_J\cong C^v/\si_v$,
of stacks. 
Let $\imath_{\bv^1,\ldots,\bv^m}$ denote the
composite map in \eqref{imath}.

Further,  we put
\beq{hec}
\cur\he_{\bv^1,\ldots,\bv^m}:=\{\gamma:
\sq_\al [\bv^\al]\to C\en \big|\en 
\gamma([\bv^\al])\cap \gamma([\bv^\beta])=\emptyset,
\quad\forall\ \al\neq\be\}.
\eeq
Thus, $\cur\he_{\bv^1,\ldots,\bv^m}$ is a Zariski open and dense,
$(\prod_\al \si_{\bv^\al})$-stable
subscheme of $\prod_\al\ \cur^{\bv^\al}$.
We obtain a diagram
\beq{ij}
\xymatrix{
% \cur\he_{\bv^1,\ldots,\bv^m}\ar@{->>}[rr]^<>(0.5){\vp_{\bv^1,\ldots,\bv^m}}
\cur\he_{\bv^1,\ldots,\bv^m}\!/(\prod_\al\gS_{\bv^\al})\
\ar@{^{(}->}[rr]^<>(0.5){\jmath_{\bv^1,\ldots,\bv^m}} &&
\  \prod_\al (\cur^{\bv^\al}/\gS_{\bv^\al})\
\ar@{->>}[rr]^<>(0.5){\imath_{\bv^1,\ldots,\bv^m}} &&
\   \cur^{\bv^1+\ldots+\bv^m}/\si_{\bv^1+\ldots+\bv^m},
%\sq\ldots\sq[\bv^m]}/(\prod_{\al=1}^m \gS_{\bv^\al})
}
\eeq
where $\jmath_{\bv^1,\ldots,\bv^m}$ is the
natural open imbedding.
 % and $\imath_{\bv^1,\ldots,\bv^m}$ is the composite map in \eqref{imath}. 

Let $\tau: C^\bv/\si_\bv\times C^\bw/\si_\bw\iso
C^\bw/\si_\bw\times C^\bv/\si_\bv$ be
the flip morphism.
 It is clear that $\tau$ maps
 the set $\cur\he_{\bv,\bw}/(\si_\bv\times\si_\bw)$
to $\cur\he_{\bw,\bv}/(\si_\bw\times\si_\bv)$.
 Furthermore, the following diagram commutes
\[
\xymatrix{
\cur\he_{\bv,\bw}/(\si_\bv\times\si_\bw)\ar[rrrr]^<>(0.5){\tau}
\ar[drr]_<>(0.5){\imath_{\bv,\bw}^{^{}}\hphantom{x}\,}&&&&
\cur\he_{\bw,\bv}/(\si_\bw\times\si_\bv)
\ar[dll]^(0.5){\,\hphantom{x}{\,}^{^{}}\imath_{\bw,\bv}}\\
&&\cur^{\bv+\bw}/\si_{\bv+\bw}&&
}
\]
It follows that there is a isomorphism of functors
$\psi_{\bv,\bw}: 
\imath_{\bv,\bw}^*\iso
\tau^*\ccirc\imath_{\bw,\bv}^*$.

\begin{defn}\label{fac-def} 
A {\em  weak factorization sheaf } $\cF$ on 
 $\Sym\cur$
is the data of a collection
  $\ (\cF_\bv)_{\bv\in\Z^I_{\geq0}}$,\ $\cF_\bv\in D(\cur^\bv/\gS_\bv)$, equipped,
for each pair $\bv_1,\bv_2$ of dimension vectors, with an isomorphism
\[\vphi_{\bv_1,\bv_2}:\ \jmath_{\bv_1,\bv_2}^*\imath_{\bv_1,\bv_2}^*\cF_{\bv_1+\bv_2} \to 
\jmath_{\bv_1,\bv_2}^*(\cF_{\bv_1} \boxtimes \cF_{\bv_2}),\]
of sheaves on $\cur\he_{\bv_1,\bv_2}/(\si_{\bv_1} \times\si_{\bv_2})$,
such that:
\begin{description}
\item[Associativity constraint]
 For any triple $\bv_1,\bv_2,\bv_3$, of dimension vectors
(using slightly imprecise notation) one has
\beq{fact2}
\jmath_{\bv_1,\bv_2,\bv_3}^*\big((\vphi_{\bv_1,\bv_2}\boxtimes \Id_{\bv_3})\ccirc\vphi_{\bv_1+\bv_2,\bv_3}\big)\
=\ \jmath_{\bv_1,\bv_2,\bv_3}^*\big(\vphi_{\bv_1,\bv_2+\bv_3}\ccirc(\Id_{\bv_1}\boxtimes
\vphi_{\bv_2,\bv_3})\big).
\eeq
\item[Commutativity constraint] 
 For any $\bv,\bw$, the following
diagram commutes
\beq{fac-diag}
\xymatrix{
\jmath_{\bv,\bw}^*\imath_{\bv,\bw}^*\cF_{\bv+\bw}\
\ar[d]^<>(0.5){\jmath_{\bv,\bw}^*(\psi_{\bv,\bw})}
\ar[rr]^<>(0.5){\vphi_{\bv,\bw}}
&&
\ \jmath_{\bv,\bw}^*(\cF_{\bv} \boxtimes
\cF_{\bw})\ar[d]^<>(0.5){\cong}\ 
\\
\tau^*\jmath_{\bw,\bv}^*\imath_{\bw,\bv}^*
\cF_{\bv+\bw}\
\ar[rr]^<>(0.5){\tau(\vphi_{\bw,\bv})}&&
\ \tau^*\jmath_{\bw,\bv}^*(\cF_{\bw}\boxtimes \cF_{\bv})
}
\eeq
\end{description}
\noindent
Here the vertical map on the right comes from the
natural isomorphism $\cF_{\bv} \boxtimes
\cF_{\bw}\iso{\tau_{\bv,\bw}^*(\cF_{\bw} \boxtimes
\cF_{\bv})}$.
\end{defn}

Given an action on $\cur$ of a group $H$,
we get the diagonal $H$-action on $\cur^\bv$ for any \ig set $\bv$.
An  $H$-{\em equivariant}   weak factorization sheaf 
 is a  weak factorization sheaf  $\cF=(\cF_\bv)$
 equipped, for each $\bv$,
 with an $H$-equivariant structure on  $\cF_\bv$
and such that the morphisms
$\vphi_{\bv^1\ldots,\bv^m}$ respect the  $H$-equivariant
structures (here  $\cF_{\bv^1} \boxtimes \ldots\boxtimes\cF_{\bv^m}$, an $H\times\ldots\times
H$-equivariant sheaf, 
is viewed as an $H$-equivariant sheaf
via  the diagonal imbedding $H\into H\times\ldots\times H$).

Let $\cf=(\cf_\bv)$ be a factorization sheaf.
Using the associativity constraint  one constructs inductively,
for  any $m\geq 2$ and $\bv_1,\ldots,\bv_m$,
 an  isomorphism
\beq{phi-m}
\vphi_{\bv^1,\ldots,\bv^m}:\
\jmath^*_{\bv^1,\ldots,\bv^m}\imath^*_{\bv^1,\ldots,\bv^m}
\cF_{\bv_1+\ldots+\bv_m}\to
\jmath^*_{\bv^1,\ldots,\bv^m}(\cF_{\bv^1}\boxtimes\ldots\boxtimes\cF_{\bv^m})
\eeq
MacLane's  coherence
argument implies that this isomorphism is independent of
the induction process.

Let  
$N(\si_{\bv_1,\ldots,\bv_m})$ be the normalizer of
the Young subgroup $\si_{\bv_1}\times\ldots\times \si_{\bv_m}$ in
$\si_{\bv_1+\ldots+\bv_m}$. An element $\sig\in N(\si_{\bv_1,\ldots,\bv_m})$ induces a 
an automorphism $\tau_\sig$ of  $C^{\bv_1}\times\ldots\times C^{\bv_m}$.
One has a commutative diagram
\[
\xymatrix{
\cur\he_{\bv_1,\ldots,\bv_m}/\prod_\al\,\si_{\bv_\al}\ar[rrrr]^<>(0.5){\tau_\sig}
\ar[drr]_<>(0.5){\imath_{\bv_1,\ldots,\bv_m}}&&&&
\cur\he_{\bv_1,\ldots,\bv_m}/\prod_\al\,\si_{\bv_\al}
\ar[dll]^(0.5){\,\hphantom{x}{\,}^{^{}}\imath_{\bv_1,\ldots,\bv_m}}\\
&&\cur^{\bv_1+\ldots+\bv_m}/\si_{\bv_1+\ldots+\bv_m}&&
}
\]
Thus, there is a isomorphism of functors
$\psi_\sig: 
\imath_{\bv_1,\ldots,\bv_m}^*\iso
\tau^*_\sig\ccirc\imath_{\bv_1,\ldots,\bv_m}^*$.
 Using that symmetric groups are generated
by transpositions, from  the commutativity constraint  \eqref{fac-diag}
 one deduces that
for any $m\geq 2$ and $\sig\in N(\si_{\bv_1,\ldots,\bv_m})$ the
following diagram commutes
\beq{nor}
\xymatrix{
\jmath_{\bv_1,\ldots,\bv_m}^*\imath_{\bv_1,\ldots,\bv_m}^*\cF_{\bv_1+\ldots+\bv_m}\
\ar[d]^<>(0.5){\jmath_{\bv_1,\ldots,\bv_m}^*(\psi_{\bv_1,\ldots,\bv_m})}
\ar[rr]^<>(0.5){\vphi_{\bv_1,\ldots,\bv_m}}
&&
\ \jmath_{\bv_1,\ldots,\bv_m}^*(\cF_{\bv_1} \boxtimes\ldots\boxtimes
\cF_{\bv_m})\ar[d]^<>(0.5){\cong}\ 
\\
\tau_\sig^*\jmath_{\bv_1,\ldots,\bv_m}^*\imath_{\bv_1,\ldots,\bv_m}^*
\cF_{\bv_1+\ldots+\bv_m}\
\ar[rr]^<>(0.5){\tau_\sig^*(\vphi_{\bv_1,\ldots,\bv_m})}&&
\ \tau_\sig^*\jmath_{\bv_1,\ldots,\bv_m}^*(\cF_{\bv_1}\boxtimes\ldots\boxtimes \cF_{\bv_m})
}
\eeq

\begin{rems}\label{fact-rems} The following is clear
\begin{enumerate}
\item Given a Weil (as opposed to a general absolutely convergent) sheaf $\ce$ on
a scheme (or a Deligne-Mumford stack),
 its Verdier dual is again a Weil sheaf, to be denoted $\ce^\vee$. Let
$\cF=(\cF_\bv)$ be a weak factorization sheaf
where each $\cf_\bv$ is a Weil sheaf. 
Then, the collection  $\cF^\vee=(\cF_\bv^\vee)$ is a weak factorization sheaf.

\item For any pair   $\cF=(\cF_\bv),\
\cF'=(\cF_\bv')$, of weak factorization sheaves on $\Sym \cur$,
the collection
  ${\cF\o\cf'} =(\cF_\bv\o\cf_\bv')$ has the natural
structure  of a weak factorization sheaf on $\Sym \cur$.

\item Let $\CC^\sign_{_{C^\bv}}$ be a rank 1 constant sheaf on $C^\bv$ equipped with
a (unique) $\si_\bv$-equivariant structure such that the group $\si_\bv$ acts in
the stalks over points of the  principal diagonal of $C^\bv$  via the sign character
of the group $\si_\bv$.
The collection $\{\CC^\sign_{_{C^\bv}}[d_\bv]\}$, where $d_\bv=(\sum v_i)\cdot\dim C$
has the natural structure of  a weak factorization sheaf. We remark that without
homological shift by $d_\bv$, the commutativity of \eqref{fac-diag} fails already in the
case where the set $I$ consists of 1 element, $C=\AA$, and $v=w=1$.

\item For any  \f sheaf $\cf$ on $\Sym C$ and an open subset  $C_0\sset C$,
the collection
  $\cF|_{C_0}=(\cf_\bv|_{C_0^\bv})$
is a weak factorization sheaf on $\Sym C_0$.
\end{enumerate}
\end{rems}
 We  often consider  weak factorization sheaves
$\cF=(\cF_\bv)$  such that each  $\cf_\bv$ is an absolutely convergent
sheaf and all the data involved in
the above defininition is
compatible with the Frobenius. 
Similarly, in the case where the ground field is  $\k=\CC$
we may (and will)  use the category
of mixed Hodge $\D$-modules as a replacement of the
category of mixed $\ell$-adic sheaves. 
We will not explicitly
mention  `absolutely convergent', resp. `mixed $\ell$-adic' or `mixed Hodge',
when dealing with 
weak factorization sheaves  in those settings.

There are several other, slightly different  variants of the 
notion of   weak factorization sheaf  which are stronger than the
one given in Definition \ref{fac-def}. In particular, one has

\begin{defn}\label{fac-str}
A {\em factorization sheaf} on 
 $\Sym \cur$ is a collection $(\cf_\bv)_{\bv\in\Z_{\geq0}^I}$ equipped, for each $\bv,\bw$, with morphisms
$\bar\vphi_{\bv,\bw}:\
\imath_{\bv,\bw}^*\cF_{\bv,\bw} \to 
\cF_{\bv} \boxtimes\cF_{\bw}$ (of sheaves on
$\cur^{\bv}\times\cur^{\bw}$ rather than on the open subset
$\cur\he_{\bv,\bw}$)
such that the associativity constrain \eqref{fact2} is replaced by
$(\bar\vphi_{\bv_1,\bv_2}\boxtimes \Id_{\bv_3})\ccirc\bar\vphi_{\bv_1+\bv_2,\bv_3}\
=\ \bar\vphi_{\bv_1,\bv_2+\bv_3}\ccirc(\Id_{\bv_1}\boxtimes
\bar\vphi_{\bv_2,\bv_3})$
 and also the following analogue of diagram \eqref{fac-diag} commutes
$$
\xymatrix{
\imath_{\bv,\bw}^*\cF_{\bv+\bw}\
\ar[d]^<>(0.5){\psi_{\bv,\bw}}
\ar[rr]^<>(0.5){\bar\vphi_{\bv,\bw}}
&&
\  \cF_{\bv} \boxtimes
\cF_{\bw}\ar[d]^<>(0.5){\cong}\ 
\\
\tau^*\imath_{\bw,\bv}^*
\cF_{\bv+\bw}\
\ar[rr]^<>(0.5){\tau(\bar\vphi_{\bw,\bv})}&&
\ \tau^*(\cF_{\bw}\boxtimes \cF_{\bv})
}
$$

A factorization sheaf $\cf$ is called a {\em strong factorization
  sheaf}
if each of the  morphisms
$\bar\vphi_{\bv,\bw}$ is itself an  isomorphism.
Clearly, we have
\[\text{strong factorization
  sheaves}\en\Rightarrow\en
\text{factorization
  sheaves}\en\Rightarrow\en\text{weak factorization
  sheaves.}
\]
\end{defn}

\begin{rem} There are analogues of the  notions
of factorization
  sheaves, resp. weak and strong factorization
  sheaves,
involving  higher categories where condition \eqref{fact2}
 has an infinite
sequence of  higher analogues.
In that  context,
analogues of  weak  factorization sheaves are are usually called
`factorization algebras', cf. e.g. \cite[\S 2.5]{FG}.
These are sheaves  on 
$\cv\cur^\bv/\si_\bv$ where the latter is viewed as a factorization $\infty$-stack.
Higher analogues of  factorization sheaves considered in Definition \ref{fac-str}(i)
are called   \emph{factorization $E_1$-algebras}, cf. \cite{Lur}.
These are associative algebras in the category of factorization
algebras. Finally,  objects analogous to
 strong  factorization sheaves turn out to form a
rather small subclass of objects which are determined by their
components with  dimension vectors $\bv=1_i,\ \ii$.
\erem

%\begin{align}
%&\bullet\en \text{We have $\ \sum\nolimits_\ii\ \sum\nolimits_{j=1}^{v_i}\ a_i^{(j)}\ =\
%  0$\en and\en $a_i^{(j)}\neq a_i^{(j')}$\en for any $j\neq j'$;}\label{de1}\\
%&\bullet\en \begin{array}{l}\text{We have $\ \sum\nolimits_\ii\ \sum\nolimits_{j=1}^{v_i}\ a_i^{(j)}\ =\
%  0$\en and for  any $I$-tuple
% of subsets $J_i\subseteq [1,v_i],\ \ii$,}\\ \text{such that
%$J_i\neq\emptyset$,
%resp. $J_i\neq [1,v_i]$, for at least one $i$,
%one has  $\sum\nolimits_\ii\ \sum\nolimits_{j\in J_i}\  a_i^{(j)} \ \neq\
%0$.}
%\end{array}
%\label{de2}
%\end{align}
\subsection{A cohomology result on factorization sheaves}
\label{norm-bun} Let $\bv$ be a dimension vector and
$\AA\sset\AA^\bv$   the principal diagonal.
We put $\ft=\AA^\bv/\AA$. 
Using  the identification  $(\AA^\bv)^*=\AA^\bv$, we have
$\dis
\ft^*\ =\ \big\{(z_i^\al)_{\ii,\al\in[1,v_i]}\in\AA^\bv
\en\big|\en  \ssum_{i,\,\al}\ z_i^\al\ =\
  0\big\}$,
a codimension one hyperplane of
$\AA^\bv$. 
We define a subset $\ft_\bv\reg\sset\ft_\bv^*$  as follows
\beq{de2}
\ft_\bv\reg=\biggl\{(z_i^\al)\in\AA^\bv\ \bigg|\ 
{\small\begin{array}{l}
\text{(1)\en $\sum_{\ii}\sum_{\al\in[v_i]}\ z_i^\al=0$;
\quad (2)\en 
$z_i^\al\neq z_i^\be$\en for any $\al\neq \be$;
%\quad (2)\en $\sum_{\ii}\sum_{\al\in[v_i]}\ z_i^\al=0$;
 and}\\
\text{(3)\en for  any $I$-tuple
 of subsets $J_i\subseteq [v_i],\ \ii$, such that
$J_i\neq\emptyset$,}\\ \text{resp. $J_i\neq [v_i]$, for at least one $i$,
one has  $\sum_\ii \sum_{\al\in J_i}\  z_i^\al  \neq
0$.}
\end{array}
}
\biggr\}
\eeq
This is  a $\GG\times\si_\bv$-stable, Zariski  open and dense subset of $\ft_\bv^*$.

\begin{rem} 
Condition (2) in \eqref{de2} says that  the $\si_\bv$-action on  $\ft_\bv\reg$
is free.
The meaning of  condition (3) will be explained in  \S\ref{diag} below.
We note that conditions (1) and (3) in the RHS of \eqref{de2}
are additive analogues of
conditions (2) and  (3), respectively, from section \ref{deligne-sec}).
\erem

Now, fix a  a smooth scheme $C$
with tangent, resp. cotangent, sheaf $T_C$, resp. $T^*_C$.
The  normal, resp.
conormal, sheaf to $C\sset C^\bv$, the principal diagonal,  is canonically isomorphic to
$\ft\o T_C$, resp. $\ft^*\o T^*_C$.
Therefore, $\N_\bv$, resp. $\N_\bv^*$, the total space of the normal, resp. conormal, bundle on $C$
is isomorphic to the  total space of the vector bundle associated with the sheaf $\ft\o T_C$, resp.
$\ft^*\o T^*_C$. We have $\rk \N=\dim \ft\cdot \dim C=(|\bv|-1)\dim C$.
Let $T^\times C$ be the complement of the zero section in the 
cotangent bundle $T^*C\to C$. We define $\N_\bv\reg:=\ft_\bv\reg\times_\GG T^\times C$.
This is  a  $\si_\bv$-stable, Zariski open and dense  subset of $\N_\bv^*$ that has been mentioned
in
\S\ref{h-intro}.
A geometric interpretation of the set $\N_\bv\reg$ will become clear from Lemma  \ref{veryreg}.

%
%$\Phi: \ce \mto \FDN(\op{sp}(\ce))|_{\eta_\bv}$,
%\redd{from $D\abs(C^\bv)$ to WHERE ?},
%is a version of the vanishing cycles functor.
%\redd{(We pull-push $\ce$ by
%$\ft^0_\bv/\si_\bv \ot\Spec\wbar{\k(U_\bv)} \to\Spec\bar\k$.)}
%This will have an action of $\pig(\Spec\wbar{\k(U_\bv)}) = \opn{Gal}(\wbar{\k(U_\bv)}/\k(U_\bv))$ which is
%the inverse limit over shrinking~$U_\bv$ of the $\pig$'s below.  
%\blue{Actually, pushforward  is unnecessary, because for an
%  algebraically closed field $K$, $\wbar\QQ_\ell$-sheaves on $\Spec K$
%  are just $\wbar\QQ_\ell$-vector spaces.}
%\redd{Please tell how to define the Frobenius action ??}
%We have a natural action on $\Phi(\ce)$
%of $\pi^\text{tame}_1(\eta_\bv/C)$,
%the tame quotient of the  \redd{`relative' (??)} geometric fundamental
%group of $\eta_\bv/C$.
%If the sheaf $\ce$ is $\si_\bv$-equivariant then
%so is $\Phi(\ce)$. Hence,  $\Phi(\ce)$
%descends to a sheaf $\bar\Phi(\ce)$ on
%$\bar\eta_\bv=\eta_\bv/\si_\bv$ equipped with
%an action of the group
%$\pi^\text{tame}_1(\bar\eta_\bv/C)$. We have
%\[1\to \pi^\text{tame}_1(\eta_\bv/C)\to\pi^\text{tame}_1(\bar\eta_\bv/C)\to \si_\bv
%\to 1\]

%\redd{Let   $\iota_\a:
%C^{\bv,\circ}_\a\into C^\bv$   denote 
% locally closed imbeddings of the  strata  of the diagonal
%stratification.

%We say that  $\wt\cf$ is
%{\em diagonally tame}
%if the sheaf $\iota_\a^*\wt\cf_\bv$ is 
%locally constant and tame, 
%for any $\bv\in\Z^I_{\geq0}$ and all $\a\in\Ups(\bv)$.}

Let $\Delta(\bv): C\to C^\bv$ be the diagonal imbedding, $u: C\into \N_\bv$ the zero section, and
$v: \N_\bv^*\to C$ the projection. 
Let $\FDN_\N: D\abs(\N_\bv)\to D\abs(\N^*_\bv)$ be the (relative) Fourier-Deligne transform 
functor along the fibers of the vector bundle $\N\to C$.
We choose a self-dual normalization of $\FDN_\N$
such that $\FDN_\N$ takes perverse sheaves to perverse sheaves.
Then, for any sheaf $\ce$ on $C$, one has $\FDN_\N(u_!\ce)=v^*\ce[\rk\N_\bv](\frac{\rk\N_\bv}{2})$.
There is also  a similar  functor $\FDN_\N: D\abs(\N_\bv/\si_\bv)\to D\abs(\N^*_\bv/\si_\bv)$, between $\si_\bv$-equivariant categories.
Let $\Phi_\N: D\abs(C^\bv/\si_\bv)\to D\abs(\N_\bv\reg/\si_\bv)$ denote the composition 
functor $\cf\mto (\FDN_{\N}\ccirc\op{sp}_\N\cf)|_{\N\reg/\si_\bv}$.

%Recall diagram \eqref{Phi-fun} and the corresponding  composite functor  $\Phi$.

Note that $\si_\bv$-action on $\N_\bv\reg$ is free and the setup of Section \ref{unipc} applies to the natural
maps
 $\N_\bv\reg\xrightarrow{a}\N_\bv\reg/\si_\bv\xrightarrow{b} C$ and  $c=b\ccirc a$. 
Write $\gr_W(-)$ for the associated graded with respect to the weight filtration.
%Given a collection  $\cf=\{\cf_\bv \in D\abs( C^\bv/\si_\bv),\ \bv\in\Z^I_{\geq0}\}$
%we will use simplified notation
%$\wt\cf_\bv:=\vp^*_\bv\cf_\bv$ and $\wt\cf=(\wt\cf_\bv)$.
The main result of section \ref{sec3}, which is the precise version of equation 
\eqref{expC-intro},  reads

\begin{thm}\label{expC}
Let $\cF=(\cF_\bv)_{\bv\in\Z^I_{\geq0}}$
 be a  weak factorization sheaf  on $\Sym C$
such that  for every $\bv$ the sheaf $\Delta(\bv)^*\wt\cf_\bv$ is a
local system  on $C$. Then, we have

\vi For each $\bv$, the sheaf $a^*\gr_W\Phi_\N(\cf_\bv)$ is a pull-back of a sheaf on $C$ via $b$, therefore,
$\Phi_\N(\cf_\bv)$ is an object of
$D^{\op{unip}}\abs(\N\reg/\si_\bv,c)$.

\vii In $\kabs$, one has
 the following equation
$$
  \bigoplus_\bv\  \big[\RGam_c(C^\bv/\si_\bv,\ \cF_\bv)\big]\ =\
\Sym\left(\bigoplus_{\bv\neq 0}\ \RGam_c\big(C,\
[\Phi_\N(\cf_\bv)]^{\langle\triv\rangle}\big)\{\dim C(|\bv|-1)\}\right).
$$
\end{thm}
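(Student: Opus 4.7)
The plan is to prove the theorem by a support stratification of $C^\bv/\si_\bv$ combined with a Verdier-specialization/Fourier analysis at the small diagonal. First I would decompose $C^\bv/\si_\bv$ into locally closed substacks $S_\lambda$ indexed by multiset partitions $\lambda = [\bv_1,\ldots,\bv_m]$ of $\bv$ into nonzero dimension vectors: a geometric point of $S_\lambda$ is an $I$-colored divisor whose underlying reduced support consists of $m$ distinct points carrying colored multiplicities $\bv_1,\ldots,\bv_m$. The closed stratum $S_{[\bv]}$ is the small diagonal $\Delta(\bv): C \hookrightarrow C^\bv$, while the open stratum corresponds to the discrete partition.

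Applying the weak factorization isomorphism \eqref{phi-m} on the disjoint configuration locus $\cur\he_{\bv_1,\ldots,\bv_m}$ and combining it with Verdier specialization along each individual cluster, one identifies $\cf_\bv|_{S_\lambda}$ with an object built from the ``single-cluster'' sheaves attached to each $\bv_j$. Summing the compactly supported cohomology over strata (via a standard excision sequence) and then over all $\bv$, the generating function reorganizes by the combinatorics of multiset partitions into a plethystic exponential
\[
\sum_\bv z^\bv [\RGam_c(C^\bv/\si_\bv, \cf_\bv)] \;=\; \Sym\Bigl(\sum_{\bv \neq 0} z^\bv \cdot P_\bv\Bigr),
\]
where $P_\bv$ denotes the ``primitive'' contribution at level $\bv$, namely the class at the small-diagonal stratum after subtracting the contribution induced from proper sub-partitions. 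The heart of the argument is then to identify $P_\bv$ with $[\RGam_c(C, \Phi_\N(\cf_\bv)^{\langle\triv\rangle})]\{\dim C(|\bv|-1)\}$. For this I would argue: the specialization $\op{sp}_\N \cf_\bv$ encodes the behaviour of $\cf_\bv$ on a formal neighborhood of $\Delta(\bv)$ with the $\si_\bv$-monodromy realized along the fibers; the relative Fourier--Deligne transform $\FDN_\N$ along the fibers of $\N_\bv \to C$ converts this data into a sheaf on the conormal bundle whose generic-fiber cohomology is what we want (the $\{\dim C(|\bv|-1)\}$ shift being the Fourier-transform shift for the vector bundle $\N_\bv$); and restriction to the Zariski open subset $\N_\bv\reg \subset \N_\bv^*$, whose defining conditions \eqref{de2} precisely forbid resonance of any proper sub-configuration, picks out the genuinely primitive part. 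Part (i) of the theorem then follows from the local-system hypothesis on $\Delta(\bv)^* \cf_\bv$ together with the $\GG$-equivariance of $\N_\bv\reg$ under fibrewise dilations, after which Lemma \ref{jordan} applied to the free Galois $\si_\bv$-cover $\N_\bv\reg \to \N_\bv\reg/\si_\bv$ produces the canonical $\langle\triv\rangle$-isotypic summand that descends to a well-defined class on $C$.

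The main obstacle is justifying this ``subtraction'' step: one must prove that the contribution from $\N_\bv^* \setminus \N_\bv\reg$, equivalently the non-primitive part of $\op{sp}_\N \cf_\bv$ before Fourier transform, matches exactly the classes induced from proper sub-partitions of $\bv$ via the factorization isomorphisms. This is a microlocal form of the factorization property: the singular support of $\op{sp}_\N \cf_\bv$ away from $\N_\bv\reg$ is concentrated on loci where $\cf_\bv$ factors through some proper sub-partition, and Fourier duality exchanges these loci with contributions that get absorbed into the lower-order terms of the $\Sym$ expansion. Combined with a M\"obius-inversion argument on the poset of multiset partitions and an induction on $|\bv|$, this matches the two sides of the required identity in $\kabs$.
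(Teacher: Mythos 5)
Your plan stratifies $C^\bv/\si_\bv$ by multiset partitions, uses excision plus M\"obius inversion to reorganize $\sum_\bv z^\bv[\RGam_c(C^\bv/\si_\bv,\cf_\bv)]$ as a plethystic exponential of ``primitive'' classes $P_\bv$, and then tries to identify each $P_\bv$ with $[\RGam_c(C,\Phi_\N(\cf_\bv)^{\langle\triv\rangle})]\{\dim C(|\bv|-1)\}$. This is a genuinely different route from the paper's. The paper instead observes that the identity lives in $\kabs$, where $[\cf]=[\cf^{\mathsf{ss}}]$, and that both $\op{sp}_\N$ and $\FDN_\N$ are perverse t-exact, so $\Phi_\N$ commutes with semisimplification. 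It then checks (Lemma \ref{ss3}) that semisimplification preserves weak factorization sheaves, and is thereby reduced to the \emph{nice} case, where every $\wt\cf_\bv$ is a direct sum $\oplus_\a\,\bar\Delta(\a)_*\ce^\a$ of pushforwards of semisimple local systems from the diagonal strata. In that situation the two halves of the theorem become the clean Proposition \ref{semisimple}: the Fourier--Deligne transform of $\op{sp}(\bar\Delta(\a)_*\ce^\a)$ lands on the conormal $\TT^\perp_C(C^\bv_\a)$, which is disjoint from $\N_\bv\reg$ unless $\a=\Delta(\bv)$, so $\Phi_\N(\cf_\bv)|_{\N_\bv\reg}\cong c^*\ce^{\Delta(\bv)}\{\rk\N_\bv\}$, and part (ii) becomes an explicit $\si_\bv$-isotypic calculation already at the level of complexes. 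Your stratification/M\"obius approach buys you a more ``constructible sheaf theoretic'' picture of where each term lives; the paper's reduction buys a much shorter proof in which no subtraction argument is needed at all.

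The gap in your proposal is precisely the step you flag as the main obstacle. You assert that the contribution to $\op{sp}_\N\cf_\bv$ away from $\N_\bv\reg$ ``matches exactly the classes induced from proper sub-partitions,'' but give no mechanism to make that microlocal claim rigorous for a general weak factorization sheaf: $\op{sp}_\N\cf_\bv$ is not a priori a direct sum over diagonal strata, its Fourier transform can have singular support in bad position, and the M\"obius/excision bookkeeping only produces long exact sequences, not the required identification of a specific summand with the Fourier-transform class. The paper dissolves this obstacle entirely by replacing $\cf_\bv$ with $\cf_\bv^{\mathsf{ss}}$; once the sheaf is nice the support computation in Proposition \ref{semisimple}(i) is immediate, and the ``subtraction'' never has to happen. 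Your proposal is also thin on part (i): unipotence of $\Phi_\N(\cf_\bv)$ along $c$ does not follow from $\GG$-equivariance of $\N_\bv\reg$; in the paper it follows because $(\Phi_\N(\cf_\bv))^{\mathsf{ss}}=\Phi_\N(\cf_\bv^{\mathsf{ss}})$ is a pullback along $c$ by the nice case, which is exactly the definition of unipotence relative to $c$. Without the semisimplification step you would need an independent argument for this, and your sketch does not supply one.
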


\noindent
Here,  $\{n\}=[n](\frac{n}{2})$ and the symbol $[\ldots]^{\langle\triv\rangle}$ in the RHS of the formula of part (ii) refers to
the map \eqref{hor} in the case $\rho=\triv$. This makes sense thanks to  the statement of part (i).

The above theorem will be proved in \S\ref{aa-sec}.

%\begin{ex}\label{zeta-fun} Let  $\ce$ be a Weil local system  on $C$.
%For each $v$,  the sheaf $\cf_v=\ce^{\boxtimes v}$, a cartesian product of $v$ copies of $\ce$,
%is a $\si_v$-equivariant  local system on $C^v$. 
%The collection $\cf=(\cf_v)$ is a factorization sheaf.
%In this example,
%for all $v\geq 2$ one has $\Phi_{\eta_v}(\cf_v)=0$, hence,
%the RHS of the isomorphism of Theorem \ref{expC}(ii) reduces
%to $\Sym [\RGam_c(C,\ce)]$.
%Furthermore, it is easy to see that 
%applying the trace of Frobenius map to  the  isomorphism of  the theorem one obtains
%Grothendieck's formula for the $L$-function associated with $\ce$.
%\end{ex}

\subsection{The case $C=\AA$}
\label{fac-res}
In this paper, we will only be  interested in the case where the scheme
$\cur$ is either $C=\AA$ or $C=\GG$.
Thus,
$\AA^\bv:=\cur^\bv$ is
an affine space, resp.  $\GG^\bv:=\cur^\bv$ is
an algebraic torus. 
%Any diagonal of $\AA^\bv$, resp.  $\GG^\bv:=\cur^\bv$, is 
%an affine subspace, resp. a subtorus.
Let $\Aff=\GG\ltimes \GG_a$ be the group of
affine-linear transformations $x \mto ax+b$, of $\AA$.
The natural $\Aff$-action  on $\AA$ induces, for any dimension vector
$\bv$, the diagonal $\Aff$-action 
on $\AA^\bv$ that descends to  $\AA^\bv/\si_\bv$.
Let  $\Aff$ act on $\ft$ through its quotient $\GG$ by dilations
and act on $\ft\times\AA$, resp. $\ft^*\times \AA$, diagonally.
Then, there is a natural $\Aff$-equivariant
trivialization 
$\N=\ft\times\AA$, resp. $\N^*=\ft^*\times \AA$.
Thus, we have $\N\reg:=\AA\times \ft_\bv\reg$.
The action of $\si_\bv$ on $\ft_\bv\reg$ being free,  the canonical
map $\ft_\bv\reg/\si_\bv\to\ft_\bv\reg\dsl\si_\bv$ is an isomorphism, so we will
make no distinction between $\ft_\bv\reg/\si_\bv$ and $\ft_\bv\reg\dsl\si_\bv$.
Let $\delta: \N^*/\si_\bv=\ft^*/\si_\bv\times \AA\to\ft_\bv^*/\si_\bv$
be the first projection.

Let  $\cf_\bv$ %\cF:=(\cF_\bv)_{\bv\in\Z^I_{\geq0}}$ be
be an $\Aff$-equivariant sheaf 
on $\AA^\bv/\si_\bv$. Then, $\cf_\bv$ descends
to a sheaf $\bar\cf_\bv$ on $(\AA^\bv/\AA)/\si_\bv$. Moreover, $\bar\cf_\bv$ 
is $\GG$-equivariant  with respect to the dilation action on  $\AA^\bv/\AA$.
This implies a natural isomorphism
 $\op{sp}_\N(\cf_\bv)\cong\cf_\bv$.

Let $\FDN_{\ft}$ denote the Fourier-Deligne transform
$D\abs(\ft_\bv/\si_\bv)\to D\abs(\ft_\bv^*/\si_\bv)$.
The $\Aff$-equivariant structure on $\cf_\bv$ provides
the sheaf $\FDN_{\N}(\cf_\bv)$,
on $\N_\bv^*/\si_\bv=\AA\times \ft_\bv^*/\si_\bv$,
with a natural  equivariant structure with respect to the
action of the additive group on $\ft_\bv^*/\si_\bv\times \AA$
by translation along the second factor and one has
a canonical isomorphism $\FDN_{\N}(\cf_\bv)\cong\delta^*\FDN_{\ft}(\bar\cf_\bv)$.
We put $\Phi_\ft(\cf_\bv):=\FDN(\bar\cf_\bv)|_{\N_\bv\reg/\si_\bv}$.
By construction, we have $\Phi_\N(\cf_\bv)=\delta^*\Phi_\ft(\cf_\bv)$.

Let $\cF:=(\cF_\bv)_{\bv\in\Z^I_{\geq0}}$ be
be an $\Aff$-equivariant weak factorization sheaf on $\Sym \AA$.
Then,
 each of the sheaves $\Delta(\bv)^*\wt\cf_\bv$
is an $\Aff$-equivariant, hence a geometrically constant,
sheaf on $\AA$. Thus, applying part (i) of Theorem
\ref{expC} we deduce that the local system $\Phi_\ft(\cf_\bv)$
is unipotent relative to the constant map
$\ft_\bv\reg/\si_\bv\to\pt$.
It follows that there is a canonical  `{\em generalized} isotypic decomposition':
$$ \Phi_\ft(\cf_\bv)\ =\ \oplus_{\rho\in \Irr(\si_\bv)}\ 
\Phi^{\langle\rho\rangle}_\ft(\cf_\bv).
$$

Let $\eta$ be a $\k$-rational closed point of $\ft_\bv\reg/\si_\bv$ and
 $\Phi_\ft(\cf_\bv)^{\langle\rho\rangle}_\eta$ be the restriction of $\Phi^{\langle\rho\rangle}_\ft(\cf_\bv)$ to $\eta$.

For  every $\bv$, one has a natural diagram
$\ \GG^\bv/\gS_\bv\ \xrightarrow{\eps}\ \AA^\bv/\si_\bv\  \xleftarrow{i} \ 0/\si_\bv$,
where the  map $\eps$ is  an open imbedding induced by the natural  imbedding
$\GG\into\AA$ and $i$ is a closed imbedding. Let
\[\xymatrix{
D\abs(\GG^\bv/\gS_\bv)\ &
\ D\abs(\AA^\bv/\si_\bv)\ \ar[l]_<>(0.5){\eps^*}\ar[r]^<>(0.5){i^*}&
 \ D\abs(0/\si_\bv)
}
\]
be the corresponding restriction functors.
For any $\Aff$-equivariant   weak factorization sheaf $\cf$  on $\Sym \AA$,
the collection  $\eps^*\cf=(\eps^*\cf_\bv)$ gives
a $\GG$-equivariant weak factorization sheaf  on  $\Sym \GG$, cf. Remark \ref{fact-rems}(4).
%Define the following classes  in $\kabs$

%$$
%\L=\RGam_c(\AA,\C_\AA),\en
%\M=\RGam_c(\Gm,\C_\GG),\en
%\Phi(\cf)_\eta^{\langle\rho\rangle} :=\ 
%\dsum\nolimits_{\bv\ne0}\
%\Phi(\cf_\bv)_{\eta}^{\langle\rho\rangle}[|\bv|](\half(|\bv|)),
%\en\rho\in\Irr\si_\bv.
%$$

The following result will be deduced from Theorem \ref{expC}.

\begin{thm}\label{Exp}
Let $\cF=(\cF_\bv)_{\bv\in\Z^I_{\geq0}}$
 be an $\Aff$-equivariant  weak factorization sheaf  on $\Sym \AA$.

\vi The monodromy action of the subgroup  $\pia(\ft_\bv\reg)\sset \pia(\ft_\bv\reg/\si_\bv)$ on
$\Phi_\ft(\cF_\bv)_{\eta}$ is unipotent. 

\vii In $\kabs$, one has
 the following equations
\begin{align}
 \sum_\bv\ z^\bv\cdot [\RGam_c(\AA^\bv/\gS_\bv,\cF_\bv)]   
&=\Sym\Big(-\L^{\frac{1}{2}}\cdot\sum_{^{\bv>0}}\ (-1)^{|\bv|}\cdot z^\bv\cdot 
\L^{\frac{|\bv|}{2}}\cdot [\Phi_\ft(\cf_\bv)_\eta]^{\langle\op{triv}\rangle}\Big)\label{2eqs}\\
\sum_\bv\
[\RGam_c(\GG^\bv/\gS_\bv,\,\eps^*\cF_\bv)]
& =
\Sym\Big((\L^{-\frac{|\bv|}{2}}-\L^{\frac{|\bv|}{2}})\cdot\sum_{^{\bv>0}}\ (-1)^{|\bv|}\cdot z^\bv\cdot 
\L^{\frac{|\bv|}{2}}\cdot [\Phi_\ft(\cf_\bv)_\eta^{\langle\op{triv}\rangle}]\Big)\label{3eqs}\\
\sum_\bv
\ [\RGam_c(0/\gS_\bv,\,i^*\cF_\bv)]
& =
\Sym\Big(-\L^{\frac{1}{2}}\cdot \sum_{^{\bv>0}}\ (-1)^{|\bv|}\cdot z^\bv\cdot \L^{-\frac{|\bv|}{2}}\cdot [\Phi_\ft(\cf_\bv)_\eta]^{\langle\op{triv}\rangle}\Big).
\label{4eqs}
\end{align}
%\begin{align*}
%&\M :=\ \RGam_c(\Gm,\C_\GG)\o\RGam_c(\AA,\C_\AA)^{\o-1}
%=\RGam_c(\Gm,\C_\GG)(-1)[-2],\\
%&\Phi(\cf)_\eta^{\langle\sign\rangle} :=\ 
%\dsum\nolimits_{\bv\ne0}\
%\big(\Phi(\cf_\bv)_{\eta_\bv}\big)^{\langle\sign\rangle}.
%\end{align*}
%(\cF_\bv\ha|_{\eta_\bv})^{\langle\si_\bv\rangle}
\end{thm}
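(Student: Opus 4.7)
The strategy is to derive both parts of Theorem \ref{Exp} from Theorem \ref{expC} by exploiting the $\Aff$-equivariance of $\cF$, combined with a multiplicative decomposition of the generating series coming from the stratification $\AA=\GG\sqcup\{0\}$ and the factorization property of $\cF$.

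For part (i) and equation \eqref{2eqs}, my plan is to apply Theorem \ref{expC} with $C=\AA$ to $\cF$. Since $\bar\cF_\bv$ on $\ft=\AA^\bv/\AA$ is $\GG$-conic (under the dilation action inherited from the $\Aff$-equivariance of $\cF_\bv$), the Verdier specialization is an isomorphism $\op{sp}_\N(\cF_\bv)\simeq\cF_\bv$, and the subsequent fiberwise Fourier-Deligne transform yields $\Phi_\N(\cF_\bv)=\delta^*\Phi_\ft(\cF_\bv)$, pulled back from $\ft_\bv\reg/\si_\bv$ via the projection $\delta$. The statement of Theorem \ref{expC}(i), that $\Phi_\N(\cF_\bv)$ lies in $D^{\op{unip}}\abs(\N\reg/\si_\bv,c)$ with $c$ the projection to $\AA$, then unravels to the unipotency of the $\pia(\ft_\bv\reg)$-action on $\Phi_\ft(\cF_\bv)_\eta$, proving (i). For \eqref{2eqs}, the generalized trivial isotypic component $[\Phi_\N(\cF_\bv)]^{\langle\triv\rangle}$ becomes a constant local system on $\AA$ with value $[\Phi_\ft(\cF_\bv)_\eta]^{\langle\triv\rangle}$, so $[\RGam_c(\AA,[\Phi_\N(\cF_\bv)]^{\langle\triv\rangle})]=\L\cdot[\Phi_\ft(\cF_\bv)_\eta]^{\langle\triv\rangle}$; substituting into Theorem \ref{expC}(ii) and absorbing the shift $\{1-|\bv|\}$ yields \eqref{2eqs}.

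Equations \eqref{3eqs} and \eqref{4eqs} I would then derive using a multiplicative identity for generating series. The stratification $\AA^\bv=\bigsqcup_{\bv_1+\bv_2=\bv}(\GG^{\bv_1}\times\{0\}^{\bv_2})$ together with the factorization isomorphism of $\cF$ on disjoint supports (valid since $\GG$ and $\{0\}$ are disjoint in $\AA$) gives
$\cF_\bv|_{\GG^{\bv_1}\times\{0\}^{\bv_2}}\simeq\eps^*\cF_{\bv_1}\boxtimes i^*\cF_{\bv_2}$.
Quotienting by $\si_\bv=\si_{\bv_1}\times\si_{\bv_2}$ and summing over partitions of $\bv$ yields
\[
\sum_\bv z^\bv[\RGam_c(\AA^\bv/\si_\bv,\cF_\bv)]=\Bigl(\sum_\bv z^\bv[\RGam_c(\GG^\bv/\si_\bv,\eps^*\cF_\bv)]\Bigr)\cdot\Bigl(\sum_\bv z^\bv[\RGam_c(B\si_\bv,i^*\cF_\bv)]\Bigr).
\]
Since $\Sym$ converts addition inside its argument into multiplication, this translates into an additive relation among the three series inside $\Sym$ in \eqref{2eqs}, \eqref{3eqs}, and \eqref{4eqs}. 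I would then derive \eqref{3eqs} directly by applying Theorem \ref{expC} with $C=\GG$ to $\eps^*\cF$, and obtain \eqref{4eqs} from the multiplicative relation by subtraction.

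The main obstacle is the computation of $\Phi^\GG_\N(\eps^*\cF_\bv)$ for $C=\GG$ in the derivation of \eqref{3eqs}. Unlike the $\AA$-case, where $\Aff$-equivariance makes $\cF_\bv$ conic about the diagonal and trivializes the specialization, the $\GG$-dilation alone does not scale the normal directions to the diagonal inside $\GG^\bv$; consequently $\op{sp}_\N(\eps^*\cF_\bv)$ is a genuine specialization that must be analyzed using a multiplicative trivialization $\GG^\bv\simeq\GG\times\GG^{|\bv|-1}$ in a formal neighborhood of the diagonal. Transporting the $\Aff$-equivariance of $\cF$ on $\AA^\bv$ through this trivialization (via the formal logarithm identifying $\GG^\bv$ with $\AA^\bv$ near the diagonal), tracking the $\si_\bv$-action through the decomposition, and verifying the Tate twists produced by $\FDN_\N$ together with $\RGam_c(\GG,-)$ are what ultimately produce the factor $(\L^{-|\bv|/2}-\L^{|\bv|/2})\L^{|\bv|/2}$ on the RHS of \eqref{3eqs}.
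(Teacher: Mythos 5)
Your derivations of part (i), equation \eqref{2eqs}, and equation \eqref{3eqs} follow the paper's route: all three are read off from Theorem \ref{expC}, applied with $C=\AA$ for \eqref{2eqs} and with $C=\GG$ (to the restricted factorization sheaf $\eps^*\cF$) for \eqref{3eqs}. However, the ``main obstacle'' you flag in the $\GG$-case is not an obstacle, and the formal-logarithm analysis you propose is unnecessary. The composite functor $\Phi_\N=(\cdot)|_{\N_\bv\reg}\circ\FDN_\N\circ\op{sp}_\N$ is compatible with restriction along an open immersion of the base curve: Verdier specialization is local near the diagonal, and the fibrewise Fourier transform commutes with pullback along the base. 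Since $\GG$ is open in $\AA$ (so $\GG^\bv$ is open in $\AA^\bv$, and the normal bundle to $\GG\subset\GG^\bv$ is the restriction to $\GG$ of the one for $\AA\subset\AA^\bv$), the sheaf $\Phi_\N(\eps^*\cF_\bv)$ over $\GG$ is simply the restriction of $\Phi_\N(\cF_\bv)=\delta^*\Phi_\ft(\cf_\bv)$ from $\AA\times\ft_\bv\reg/\si_\bv$ to $\GG\times\ft_\bv\reg/\si_\bv$, and so is already identified with the pull-back of $\Phi_\ft(\cf_\bv)$ via the base projection. The corresponding cohomology over $\GG$ therefore contributes the factor $\L-1$ (replacing $\L$ from the $\AA$-case), and \eqref{3eqs} follows at once.

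Your route to \eqref{4eqs} is genuinely different from the paper's, and is arguably the more elementary of the two. The paper reduces to semisimple Weil factorization sheaves, passes to the Verdier dual $\cf^\vee$, applies \eqref{2eqs} to $\cf^\vee$, and converts back via the hyperbolic-localization isomorphism $\RGam(\AA^\bv/\si_\bv,\cf_\bv)\cong\RGam(0/\si_\bv,i^*\cf_\bv)$ for the $\GG$-conic sheaf $\cf_\bv$; this requires controlling how $\Phi_\ft$ interacts with Verdier duality. Your argument instead uses the locally closed stratification of $\AA^\bv/\si_\bv$ by the number of nonzero coordinates, the factorization isomorphism on the disjoint-support stratum $\GG^{\bv_1}\times\{0\}^{\bv_2}$, and additivity of $[\RGam_c]$ over strata, to produce the product identity
\[
\sum_\bv z^\bv\,[\RGam_c(\AA^\bv/\si_\bv,\cF_\bv)]
=\Bigl(\sum_\bv z^\bv\,[\RGam_c(\GG^\bv/\si_\bv,\eps^*\cF_\bv)]\Bigr)
\cdot\Bigl(\sum_\bv z^\bv\,[\RGam_c(0/\si_\bv,i^*\cF_\bv)]\Bigr),
\]
from which \eqref{4eqs} drops out by dividing $\Sym$-exponentials, using $\Sym(a)\cdot\Sym(b)=\Sym(a+b)$. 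This sidesteps Verdier duality and the hyperbolic-localization lemma entirely and derives \eqref{4eqs} as a formal consequence of \eqref{2eqs}, \eqref{3eqs}, and the factorization axiom. Both approaches are valid; when carrying out the bookkeeping of your division, cross-check the resulting $\L$-exponents against the normalization of the shift $\{n\}$ and against the form of \eqref{4eqs} actually used in the proof of Theorem \ref{GL-H}.
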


%\begin{rem} \redd{What do you think about the following ??:}
%Let $C$ be an elliptic curve (projective, not affine), say over $\CC$. We can mimic the defn of
%factorization sheaf in the ``coherent world'', ie
%for families $\cf=(\cf_\bv)$ where $\cf_\bv$ is
%an object of 
%$D^{\si_\bv}_{Coh}(C^\bv)$. Further, we can view $C$  as an
%abelian group acting  on each $C^\bv$
%by diagonal translations and consider $C$-equivariant coherent factorization
%sheaves. Now, let $\FD$ stand for Fourier-Mukai and 
%$C^\vee$ for the dual elliptic curve.
%In this setting, it is likely that our proof of Theorem
%\ref{Exp} may be adapted to yield the proof of the following
%
%\begin{conj}\label{coh-fac} Let $\cf$ be a  $C$-equivariant coherent factorization
%sheaf on $C$ and let $\eta_\bv$ be a sufficiently general point
%of $(C^\vee)^\bv/\si_\bv$. Then, in a Grothendieck
%group \redd{(What does it mean: with families of ellipt curves  ??)}, one has
%$\ [\bplus_\bv\ \RGam(C^\bv, \cf_\bv)^{\si_\bv}]\ =\
%\Sym [\bplus_{\bv>0}\ ((\varpi_\bv)_*\FD(\cf_\bv)|_{\eta_\bv})^{\si_\bv}]$,
%where $\varpi_\bv: C^\bv\to C^\bv/\si_\bv$.
%\end{conj}
%\end{rem}

%
\subsection{Diagonal stratification}\label{diag}
Given a dimension vector $\bv\neq 0$,
%The group\ $\si_\bv$ may be viewed as a Young subgroup inside  the group $\si([\bv])$.
let  $\wt\Ups(\bv)$ be the set of
decompositions
$[\bv]=\sq_\al\ J^\al$  into a disjoint union of  {\em nonempty} subsets
$J^\al$.
Let  $\Ups(\bv)$ be the set of  unordered collections $(\bv^\al)_{\al}$,
of   {\em nonzero}
dimension
vectors, such that 
 $\bv=\sum_\al\ \bv^\al$.
For $(\bv^\al)_{\al}\in \Ups(\bv)$,
let  $d_\bw$  be the number of occurences of a dimension vector
$\bw$ as an element of the collection $(\bv^\al)_{\al}$.
We see that giving  an element of  $\Ups(\bv)$
is equivalent to giving a  collection $\{d_\bw\in \Z_{\geq 0},\ \bw\in Z^I_{\geq 0}\sminus
\{0\}\}$ such that $\sum_\bw\ d_\bw\cdot \bw=\bv$.
Thus, the set   $\Ups(\bv)$ may be identified with the set of multi-partitions of ~$\bv$.

For $\a,\fB\in\wt\Ups(\bv)$, resp.
 $\fa,\fb\in\Ups(\bv)$,
we write  $\a\leq {\mathfrak B}$,  resp.
 $\fa\leq\fb$, whenever 
$\fB$ is a refinement of $\a$, resp.
 $\fb$ is a refinement of 
 $\fa$.
This gives a partial order on  $\wt\Ups(\bv)$,
resp. ~$\Ups(\bv)$.
By definition, $\fB\geq\a=(J^\al)$ holds 
iff the parts of $\fB$ are obtained by partitioning
further each of the sets $J^\al$.
For any  tuple
$\{\fb^\al=(\bw^{\al,\beta})_\beta\in \Ups(\bv^\al),
\ \al\in A\}$, where $A$ is a finite set $A$,
 we have
$\sum_{\al,\beta}\ \bw^{\al,\beta}=\sum_\al\ \bv^\al$,
so the collection $(\bw^{\al,\beta})_{\al,\beta}$
gives an element of $\Ups(\sum_\al\ \bv^\al)$.
This way one obtains  a bijection
\[\prod\nolimits_{\al}\ \Ups(\bv^\al)\ \iso\ 
\{\fb\in\Ups(\mbox{$\sum_\al\ \bv^\al$})\mid \fb\geq\fa\},
\quad \{(\bw^{\al,\beta})_\beta\in \Ups(\bv^\al),\
\ \al\in A\}\ \mto\ (\bw^{\al,\beta})_{\al,\beta}.\]

%The groups
%$\si_\bv$ and $\prod_\al\ \si_{\bv^\al}$ act naturally on the sets  $\wt\Ups(\bv)$
% and  $\Ups(\bv)$, respectively.

Any \dct $\a=(J^\al)\in \wt\Ups(\bv)$
gives, for each $\ii$, a \dct $[v_i]=\sqcup_\al\ ([v_i]\cap J^\al)$.
We put $\si_\a=\prod_{i,\al}\ \si_{[v_i]\cap J^\al}$,
a subgroup of $\si_\bv$.
Further, let $\bv^\al:=(v_i^\al)_\ii$ be a  dimension vector
with components  $v_i^\al:=\#([v_i]\cap J^\al)$.
Since $\bv=\sum_\al\ \bv^\al$
we obtain a multi-partition $\fa=(\bv^\al)\in \Ups(\bv)$.
The assignment $\a\mto\fa$ provides   a natural map
$\wt\Ups(\bv)\to\Ups(\bv)$.

The group $\si_\bv$ acts naturally on the set  $\wt\Ups(\bv)$.
The  stabilizer in  $\si_\bv$
of a \dct $\a=(J^\al)_{\al\in A}\in \wt\Ups(\bv)$ equals
$N(\si_\a)$, the normalizer of the subgroup
$\si_\a\sset\si_\bv$. We have an isomorphism
\beq{N(si)}
N(\si_\a)\cong \prod_{\bw\in\Z^I}\ \big((\si_\bv)^{d_\bw}\rtimes\si_{d_\bw}\big)
\quad\text{where}\quad
d_\bw:=\#   \{\al\in A\mid \bw=\bv^\al\}.
\eeq
Observe further that
the fibers of the map $\wt\Ups(\bv)\to\Ups(\bv)$
are precisely the $\si_\bv$-orbits. 
We will identify $\Ups(\bv)$
with the set of $\si_\bv$-orbits in   $\wt\Ups(\bv)$.

%This map  induces a bijection $\wt\Ups(\bv)/\si_\bv\iso\Ups(\bv)/(\prod_\al\ \si_{\bv^\al})$.

Fix a smooth curve $\cur$. For $r>0$   let $\cur^{r,\circ}\sset \cur^r$
be a  subset formed by the
$r$-tuples of pairwise distinct points of $\cur$.
Given a dimension vector $\bv$
and
$\a=\{J^\al,\ \al\in[r]\}\in \wt\Ups(\bv)$,
we consider  a  closed imbedding $C^r\into C^\bv$  defined, using the
\dct $[\bv]=\sqcup_\al\ J^\al$, by the assignment
%closed imbedding $\cur^r\into\cur^\bv=\cur^{\sqcup_\al\ J^\al}$ defined by
%\[\cur^r\into\cur^\bv,\ %\prod\nolimits_\al \cur^{\sqcup_\al\ J^\al},
%\en  z=(z^\al)_{\al\in[1,r]}\ \mto\ (\Delta_\a(z)_j)_{j\in[\bv]},
%\en \Delta_\a(z)_j:= z^\al\en \text{whenever}\en
% j\in J^\al\sset [\bv].
% \[C^r\into C^\bv,\quad
% (z: [1,r]\mto C)\en\longmapsto\en (z^\lozenge: [\bv] \to C)
% \]
\[
z=\big([r]\to C,\ \al\mto z(\al)\big)\en\longmapsto\en 
\bar\Delta(z)=\big(\sqcup_\al J^\al \to C,\ J^\al\mto z(\al)\big).
\]
%z=(z^\al)\ \mto\ \Delta_\a(z)=(\Delta_\a(z)_i^\al),
%\quad
%\Delta_\a(z)_i^\al:=z^\al,\ \forall \ii, \al\in[1,r].
%\]
%have the
%isomorphism $\cur^\bv=\prod_\al\ \cur^{J^\al}$
%and a map $\Delta_\a:=\prod_\al \Delta_{J^\al}$ which is
%a closed imbedding given by
We will usually identify $C^r$, resp.  $\cur^{r,\circ}$, with its
image, to be denoted $\cur^\bv_\a$, resp.  ${\cur^{\bv,\circ}_\a}$.
Thus,  $\cur^\bv_\a$ is
a smooth closed subscheme of $C^\bv$ and  ${\cur^{\bv,\circ}_\a}$
is a Zariski open and dense subset of
$\cur^\bv_\a$.
Write $\Delta(\a): C^{r,\circ}=C^{\bv,\circ}_\a\into C^\bv$, resp.
$\bar\Delta(\a): C^r=\cur^\bv_\a\into C^\bv$,
for  the 
corresponding  locally closed, resp. closed, imbedding.
%${\cur^{\bv,\circ}_\a}^{}:=\Delta(\a)(\cur^{r,\circ})$.
%Thus,   ${\cur^{\bv,\circ}_\a}^{}$ is a Zariski open and dense subset of
%$\cur^\bv_\a$.
 It is clear that, for $\a,{\mathfrak B}\in\wt\Ups(\bv)$,
we have  $\a\leq {\mathfrak B}$  iff
one has an inclusion $\cur^\bv_\a\subseteq \cur^\bv_{\mathfrak B}$.
In particular,  $ \cur^\bv_\a=\cur^\bv_{\mathfrak B}$ holds iff
$\a={\mathfrak B}$. Note also that
the sets
 ${\cur^{\bv,\circ}_\a}^{}$ and $\cur^{\bv,\circ}_{\mathfrak
  B}$ are equal if $\a={\mathfrak B}$, 
and 
are disjoint otherwise.
It follows that $\cur^\bv_{\mathfrak B}=
\bigsqcup_{\a\leq {\mathfrak B}}\ \cur^{\bv,\circ}_\a$.
In particular, one has 
 a stratification
$\cur^\bv=\bigsqcup_{\a\in\wt\Ups(\bv)}\ \cur^{\bv,\circ}_\a$.
The action of the
group $\si_\bv$ on $\cur^\bv$ respects the stratification
and induces 
a  stratification  $\cur^\bv/\si_\bv$ with
 locally-closed smooth strata which are labeled
by the set ~$\Ups(\bv)$.
These stratifications of either $\cur^\bv$ or $\cur^\bv/\si_\bv$  will be
called   `{\em diagonal stratification}'.

In the special case where
$r=1$ and $\a=\{J^1\}$ is the \dct with a single part $J^1=[\bv]$,
we have that
  $\cur^\bv_\a=\cur^\bv_\Delta$ is  the principal diagonal and
$\Delta(\a)=\bar\Delta(\a)=\Delta(\bv)$ is the diagonal imbedding.
In this case, abusing the notation, we write $\a=\Delta_\bv$
for the corresponding \dct. 
Observe that the principal diagonal
 is contained in the closure of any other
stratum, and it
is the  unique closed stratum of the diagonal stratification.

For any  $\a\in\wt\Ups(\bv)$, we have closed imbeddings
$C\xrightarrow{\Delta(\bv)} C^\bv_\a\to C^\bv$.
Let $\TT_C (C^\bv_\a)$ denote the normal bundle to $\Delta(\bv)(C)\sset C^\bv_\a$,
resp.  $\TT_C ^\perp(C^\bv_\a)$ a vector sub-bundle of
$\N_\bv^*$ whose fibers are 
the annihilators of the corresponding fibers of $\TT_C (C^\bv_\a)$.
Thus, one has  natural
diagrams
\beq{TT}
\xymatrix{
C\ &
\ \TT_C(C^\bv_\a)\ \ar@{->>}[l]_<>(0.5){u_\a}\ar@{^{(}->}[r]^<>(0.5){v_\a}&
\ \TT_C(C^\bv)=\N_\bv,\ &\
C\ &
\ \TT_C ^\perp(C^\bv_\a)\ \ar@{->>}[l]_<>(0.5){u_\a^\perp}
\ar@{^{(}->}[r]^<>(0.5){v_\a^\perp}&
\ \TT^*_C(C^\bv)=\N_\bv^*.
}
\eeq

The following result is  straighforward; it explains the geometric meaning of the set $\N_\bv\reg$.

\begin{lem}\label{veryreg} The set  $\N_\bv\reg$ is equal to the set of elements  $z\in\N_\bv^*$ such that
$w(z)\neq z$ for any $w\in\si_\bv,\ w\neq ~1$, 
and, moreover,
$z\notin\TT_C ^\perp(C^\bv_\a)$
for any $\a\in \Ups(\bv),\ \a\neq\Delta$. \qed
\end{lem}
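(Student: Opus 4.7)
The plan is to verify separately that condition (2) in the definition \eqref{de2} of $\ft_\bv\reg$ corresponds to freeness of the $\si_\bv$-stabilizer, and that condition (3) corresponds to avoiding every conormal subbundle $\TT_C^\perp(C^\bv_\a)$ with $\a\neq\Delta_\bv$. Since conditions (1)--(3) in \eqref{de2} and both conditions appearing in the lemma are invariant under the $\GG$-scaling action on $\N_\bv^*$, after trivializing the line $T_c^*$ at a point $c \in C$ the whole statement reduces fiberwise to a check about $z \in \ft_\bv^*$.

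The freeness part will be elementary: $\si_\bv = \prod_\ii \si_{[v_i]}$ acts on $\ft_\bv^*$ by permuting the coordinates $z_i^\al$ within each block $[v_i]$, so the stabilizer of $z = (z_i^\al)$ is trivial iff $z_i^1,\ldots,z_i^{v_i}$ are pairwise distinct for every $\ii$, which is exactly condition (2).

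For the second equivalence I will first compute $\TT_C^\perp(C^\bv_\a)|_c$ explicitly for $\a = (J^\al)_{\al\in[r]} \in \wt\Ups(\bv)$. The tangent subspace $T_c C^\bv_\a \subset T_c C^\bv$ consists of tensors $(v_j)_{j\in[\bv]}\otimes t$ with $v_j$ constant on each part $J^\al$; parametrizing by $v_j = w_\al$ for $j \in J^\al$, the pairing of $z = (z_j) \in \ft_\bv^*$ against such a vector equals $\sum_\al w_\al\cdot\sum_{j\in J^\al} z_j$. Hence
\[
\TT_C^\perp(C^\bv_\a)|_c \;=\; \Bigl\{z \in \ft_\bv^* \;\Bigm|\; \textstyle\sum_{j\in J^\al} z_j = 0 \ \forall\, \al\Bigr\}\otimes T_c^*.
\]
Different lifts of a given $\fa \in \Ups(\bv)$ to $\wt\Ups(\bv)$ produce $\si_\bv$-translates of $C^\bv_\a$ and hence of this conormal; because the assertion of the lemma is $\si_\bv$-invariant, one may pass freely between $\Ups(\bv)$ and $\wt\Ups(\bv)$.

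The final step will be to match the failure of ``$z \notin \TT_C^\perp(C^\bv_\a)$ for every $\a \neq \Delta_\bv$'' with the failure of condition (3). If $\a\neq\Delta_\bv$ then $r \geq 2$, so $J := J^1$ is a nonempty proper subset of $[\bv]$; setting $J_i := J \cap [v_i]$ and invoking $\sum_{j\in J} z_j = 0$ yields precisely the configuration forbidden by (3). Conversely, any nonempty proper $J = \sqcup_\ii J_i \subsetneq [\bv]$ with $\sum_{j\in J} z_j = 0$ gives the $2$-part decomposition $\a = \{J,\ [\bv]\setminus J\} \neq \Delta_\bv$ that places $z$ in $\TT_C^\perp(C^\bv_\a)$, the complementary sum vanishing automatically by condition (1). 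The only mildly delicate point will be keeping the bookkeeping straight between partitions of $[\bv]$ and $I$-tuples $(J_i)$ of subsets $J_i \subseteq [v_i]$; once that index translation is written out cleanly the lemma follows at once by combining the two equivalences.
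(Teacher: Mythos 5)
Your proof is correct, and since the paper declares this result ``straightforward'' and gives no argument at all, you have supplied exactly the verification the authors left to the reader. Both halves are handled cleanly: the equivalence of condition (2) with freeness of the $\si_\bv$-action is immediate from $\si_\bv = \prod_\ii\si_{[v_i]}$ acting by block-wise permutation of the coordinates $z_i^\al$; and your fiberwise computation of $\TT_C^\perp(C^\bv_\a)|_c$ as $\{(z_j):\sum_{j\in J^\al}z_j=0\ \forall\al\}\otimes T_c^*C$ is the key identity, after which the translation between condition (3) and avoidance of all conormals for $\a\neq\Delta_\bv$ reduces to the two observations you make: taking $J=J^1$ shows (3) forces avoidance, and conversely, for any violating $I$-tuple $(J_i)$, the two-part decomposition $\{J,[\bv]\setminus J\}$ with $J=\sqcup_\ii J_i$ witnesses membership in a conormal because the complementary sum vanishes by condition (1) (which is automatic since $z\in\N_\bv^*\cong\ft_\bv^*\otimes T^*_C$). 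Your remark that the lemma's quantification over $\a\in\Ups(\bv)$ should be read via lifts to $\wt\Ups(\bv)$ modulo the $\si_\bv$-action is a genuine clarification of a small notational inconsistency in the statement (the conormals of \eqref{TT} are indexed by $\wt\Ups(\bv)$, not $\Ups(\bv)$), and the $\si_\bv$-invariance argument you give resolves it correctly.
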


\subsection{}\label{AB}
Fix $\a\in\wt\Ups(\bv)$ and
let  $\fa=(\bv=\bv^1+\ldots+\bv^r)\in\Ups(\bv)$ be an associated multi-partition.
We have a product of diagonal imbeddings  $\prod_{\al\in[r]}\ \Delta(\bv^\al):\
C^r\into \prod_{\al\in[r]}\ C^{\bv^\al}$.
Note that the image of the open subset $C^{r,\circ}\sset C^r$
is contained in $C\he_{\bv^1,\ldots,\bv^r}$, cf. \eqref{hec}.
We obtain  a commutative diagram
\beq{circ}
\xymatrix{
  C\he_{\bv^1,\ldots,\bv^r}\ \ar@{->>}[d]^<>(0.5){\vp_{\bv^1,\ldots,\bv^r}}&&
\   C^{r,\circ}=   C^{\bv,\circ}_\a\
\ar[rr]^<>(0.5){\Delta(\a)}
\ar@{_{(}->}[ll]_<>(0.5){\prod_{\al\in[r]}\ \Delta(\bv^\al)}
%\Delta_{\bv^1,\ldots,\bv^r}}
%&\   C^{\bv,\circ}_\a\  \ar@{^{(}->}[rr]^<>(0.5){\iota_\a} 
&&
\   C^\bv\  \ar@{->>}[d]^<>(0.5){\varpi_\bv}\\
 C\he_{\bv^1,\ldots,\bv^r}\!/(\prod_{\al=1}^r\gS_{\bv^\al})\
\ \ar@{^{(}->}[rr]^<>(0.5){\jmath_{\bv^1,\ldots,\bv^r}} &&
\ \prod_{\al=1}^r\ ( C^{\bv^\al}/\gS_{\bv^\al})\
\ar@{->>}[rr]^<>(0.5){\imath_{\bv^1,\ldots,\bv^r}} &&
\   C^\bv/\si_\bv.
%\sq\ldots\sq[\bv^r]}/(\prod_{\al=1}^r \gS_{\bv^\al})
}
\eeq
%where  $\Delta_{\bv^1,\ldots,\bv^r}:=\prod_{\al\in[r]}
 % \Delta_{_{J^\al}}$ and $\iota_\a$ are the natural  imbeddings.

It is often convenient to choose and fix an  identification
$[\bv^1]\sqcup\ldots\sqcup[\bv^r]\cong [\bv]$. Such an   identification
 determines a lift of the  bijection $\Ups(\bv^1)\times\ldots\times
\Ups(\bv^r)
\ \iso\ \{\fb\in\Ups(\bv)\mid \fb\geq\fa\}$
to a bijection 
$\gamma: \wt\Ups(\bv^1)\times\ldots\times\wt\Ups(\bv^r)
\ \iso\ \{\fB\in\wt\Ups(\bv)\mid \fB\geq\a\}$.
This provides
an identification $% \wt\imath_\a:\ 
\prod_\al  C^{\bv^\al}
\iso  C^\bv$ and, therefore,
 an identification
of $C\he_{\bv^1,\ldots,\bv^r}$ with an open subset of
$C^\bv$ that lifts the map
$\imath_{\bv^1,\ldots,\bv^r}\ccirc\jmath_{\bv^1,\ldots,\bv^r}$.
It is immediate to see that this way, for any
$\fB^1\in  \wt\Ups(\bv^1),\ldots,\fB^r\in \wt\Ups(\bv^r)$, we get
% Using the identification one gets a lift of the map
% $\imath_{\bv^1,\ldots,\bv^r}\ccirc\jmath_{\bv^1,\ldots,\bv^r}$
% to an open imbedding $\wt\jmath_{\a}:\
%  C\he_{\bv^1,\ldots,\bv^r}\into  C^\bv$.
\beq{strat1}
C\he_{\bv^1,\ldots,\bv^r}\ \cap\
( C^{\bv^1,\circ}_{\fB^1}\times\ldots\times C^{\bv^r,\circ}_{\fB^r})\ =\
 C^{\bv,\circ}_{\gamma(\fB^1,\ldots,\fB^r)}.%  ,\quad
%  (\fB^1,\ldots,\fB^r)\in  \wt\Ups(\bv^1)\times\ldots\times\wt\Ups(\bv^r).
\eeq
It will sometimes be convenient to use a separate notation
$ C^\circ_{\fB^1,\ldots,\fB^r}$ for the set on the left of this
equation whenever we want to think of it as
  a Zariski open  subset of
$ C^{\bv^1,\circ}_{\fB^1}\times\ldots\times C^{\bv^r,\circ}_{\fB^r}$.

Thus,  the diagonal stratification of $C^\bv$
induces the following  stratification
\beq{strat2} C\he_{\bv^1,\ldots,\bv^r}=
\bigsqcup_{\fB^1\in \wt\Ups(\bv^1),\ldots,\fB^r\in\wt\Ups(\bv^r)}\
C^\circ_{\fB^1,\ldots,\fB^r}\ \ =\ \
\bigsqcup_{\fB\geq \a}\  C^{\bv,\circ}_\fB.
\eeq

Below, we will freely use the above identifications without further
mention.
%We obtain an isomorphism  of stratified spaces
%\beq{strat2}\wt\jmath_{\bv^1,\ldots,\bv^r}:\  C\he_{\bv^1,\ldots,\bv^r}=
%\bigsqcup\nolimits_{\fB^1,\ldots,\fB^r}\  C^\circ_{\fB^1,\ldots,\fB^r}
%\ \ \iso\ \
%\bigsqcup\nolimits_{\fB\geq \a}\  C^{\bv,\circ}_\fB \ \sset\  C^\bv.
%\eeq

\begin{rem}\label{hec-rem} Note that we have $
  C\hec_{\Delta(\bv^1),\ldots,\Delta(\bv^r)}=C^\bv_\a\cong  C^{r,\circ}$
is the unique closed stratum of the
stratification.
\end{rem}

%Let $\cl\in\Irr(Z)$. An object of  $D\abs(Z)$ is called  {\em
%semisimple of type $\cl$} if it is isomorphic to $V\o \cl$, for some  absolutely convergent complex  $V\in D\abs(\pt)$.

Let  $\cf=(\cf_\bv)$ be  a weak  factorization sheaf  on $ C$
and recall the notation $\wt\cf=(\wt\cf_\bv),\
\wt\cf_\bv=\varpi_\bv^*\cf_\bv$.

We say that  $\cf$ is  diagonally constructible, resp.
diagonally  constant,  if for all $\bv$ and  $\a\in \wt\Ups(\bv)$
the sheaf
$\Delta(\a)^*\wt\cf_\bv$
is a locally constant,  resp.  {\em geometrically} constant,
sheaf on $C^{\bv,\circ}_\a$.

\begin{lem}\label{constr} For $\cf$ to be  diagonally constructible, resp.
diagonally  constant, it is sufficient that
$\Delta(\bv)^*\wt\cf_\bv$, the restriction
of $\wt\cf_\bv$ to the principal diagonal,
be a locally constant,  resp. geometrically constant, sheaf
 for all $\bv$.
\end{lem}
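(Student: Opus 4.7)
The plan is to use the factorization isomorphisms to reduce, for each stratum $C^{\bv,\circ}_\a$, the restriction of $\wt\cf_\bv$ to an external box product of the sheaves $\Delta(\bv^\al)^*\wt\cf_{\bv^\al}$ pulled back along their principal diagonals. Once this reduction is established, the lemma follows immediately, since the classes of locally constant sheaves and of geometrically constant sheaves are each closed under external box products and under restriction to open subsets.

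Fix $\bv\neq 0$ and $\a\in\wt\Ups(\bv)$ with associated multi-partition $\fa=(\bv^1,\ldots,\bv^r)\in\Ups(\bv)$. By diagram \eqref{circ}, the locally closed imbedding $\Delta(\a):C^{r,\circ}=C^{\bv,\circ}_\a\into C^\bv$ factors as
\[C^{r,\circ}\ \xrightarrow{\prod_\al\Delta(\bv^\al)}\ C\he_{\bv^1,\ldots,\bv^r}\ \into\ C^\bv,\]
the image of $\prod_\al\Delta(\bv^\al)$ landing in $C\he_{\bv^1,\ldots,\bv^r}$ precisely because the $r$ coordinates on $C^{r,\circ}$ are pairwise distinct, so the supports of the blocks are disjoint. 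Pulling the iterated factorization isomorphism \eqref{phi-m} back along the quotient morphism $\vp_{\bv^1,\ldots,\bv^r}$ produces an identification
\[\wt\cf_\bv\big|_{C\he_{\bv^1,\ldots,\bv^r}}\ \iso\ (\wt\cf_{\bv^1}\boxtimes\ldots\boxtimes\wt\cf_{\bv^r})\big|_{C\he_{\bv^1,\ldots,\bv^r}},\]
and a further pullback along $\prod_\al\Delta(\bv^\al)$ then yields
\[\Delta(\a)^*\wt\cf_\bv\ \iso\ \big(\Delta(\bv^1)^*\wt\cf_{\bv^1}\boxtimes\ldots\boxtimes\Delta(\bv^r)^*\wt\cf_{\bv^r}\big)\big|_{C^{r,\circ}}.\]
By hypothesis every factor $\Delta(\bv^\al)^*\wt\cf_{\bv^\al}$ is locally constant (resp. geometrically constant) on $C$, hence so is the box product on $C^r$, and its restriction to the open subset $C^{r,\circ}$ retains this property.

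There is no real obstacle here: the argument is essentially a tautological unpacking of the definition of a weak factorization sheaf combined with the description of the diagonal stratification from \S\ref{diag}. The only minor bookkeeping concerns the choice of identification $[\bv]\cong[\bv^1]\sqcup\ldots\sqcup[\bv^r]$ used to lift the combinatorial data of $\a$ to the total spaces; any two such choices differ by an element of the normalizer $N(\si_\a)\subset\si_\bv$ and, by the coherence diagram \eqref{nor}, produce the same local-system conclusion.
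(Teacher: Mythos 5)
Your proof is correct and takes essentially the same approach as the paper: both use the factorization isomorphism \eqref{phi-m} pulled back to the stratum $C^{\bv,\circ}_\a$ (viewed inside $C\he_{\bv^1,\ldots,\bv^r}$) to identify $\Delta(\a)^*\wt\cf_\bv$ with the box product of the principal-diagonal restrictions $\Delta(\bv^\al)^*\wt\cf_{\bv^\al}$, then invoke closure of local constancy under box products and open restriction. The paper states the intermediate isomorphism \eqref{BBB} in slightly more generality (for arbitrary $(\fB^1,\ldots,\fB^r)$) before specializing, but this does not change the substance of the argument.
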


\begin{proof} 
Fix $\bv$ and $\a=\{J^\al, \al\in[r]\}\in\wt\Ups(\bv)$. 
Let   $(\bv=\bv^1+\ldots+\bv^r)\in\Ups(\bv)$ be the  decomposition
associated with $\a$ and for each $\al=1,\ldots,r$ let 
 $\fB^\al\in \Ups(\bv^\al)$.

Observe that the stratum $C^\bv_\a$ is stable under the action
of the group  $N(\si_\a)\sset \si_\bv$. Using \eqref{phi-m} and
 isomorphisms \eqref{strat1}-\eqref{strat2}
we deduce an isomorphism
\beq{BBB}
(\Delta(\gamma(\fB^1,\ldots,\fB^r))^*\wt\cF_\bv)|_{C\he_{\bv^1,\ldots,\bv^r}}
\ \iso\
(\Delta(\fB^1)^*\wt\cF_{\bv^1}
\boxtimes\ldots\boxtimes\Delta(\fB^r)^*\wt\cF_{\bv^r})|_{C\he_{\bv^1,\ldots,\bv^r}}.
\eeq
Moreover, diagram \eqref{nor} says that this isomorphism
is compatible with the natural action of the group
$N(\si_\a)\cong N(\si_{\bv_1,\ldots,\bv^r})$
on each side of \eqref{BBB}.

If  $\Delta(\bv)^*\wt\cf_\bv$ is  locally constant for all
  $\bv$ then so is the sheaf $(\Delta(\bv^1)^*\wt\cf_{\bv^1}\boxtimes\ldots
\boxtimes\Delta(\bv^r)^*\wt\cf_{\bv^r})|_{C\hec_{\Delta,\ldots,\Delta}}$,
for all decompositions $\bv^1+\ldots+\bv^r=\bv,\ r\geq1$.
Combining  Remark \ref{hec-rem} and isomorphism \eqref{BBB} in the special case 
$(\fB^1,\ldots,\fB^r)=(\Delta(\bv^1),\ldots,\Delta(\bv^r))$, we deduce
that $\Delta(\a)^*\wt\cf_\bv$
is a locally constant
sheaf on $C^{\bv,\circ}_\a$ for all $\a\in \wt\Ups(\bv)$.
An identical argument applies in the geometrically constant case as well.
\end{proof}

\subsection{Semisimple factorization sheaves}
\label{ss-sec}
%Throughout this section let  $\cf=(\cf_\bv)$ denote
% a weak  factorization sheaf  on $ C$. Recall the notation $\wt\cf=(\wt\cf_\bv),\
%\wt\cf_\bv=\varpi_\bv^*\cf_\bv$.

Let  $Z$ be  a  geometrically connected variety.
An object of $D\abs(Z)$  is called {\em a semisimple local system}
 if  it is
isomorphic to a finite direct sum  of  objects
of the form  $V\o \cl$,
where  $V\in D\abs(\pt)$ is an absolutely convergent complex with zero
differential
and $\cl$ is a  finite dimensional  {\em geometrically} irreducible
 local system on $Z$.

%Let $\cl\in\Irr(Z)$. An object of  $D\abs(Z)$ is called  {\em
%semisimple of type $\cl$} if it is isomorphic to $V\o \cl$, for some  absolutely convergent complex  $V\in D\abs(\pt)$.

\begin{defn}\label{ss-def} 
A sheaf  on $C^\bv$ is said to be {\em  nice} if it is
isomorphic to a  direct sum  of the form
%\beq{cl1}
$\bplus_{\a\in\wt\Ups(\bv)}\ 
\bar\Delta(\a)_*\ce^{\a}$, 
where $\ce^{\a}$ is a  semisimple local system on
$C^\bv_\a$.
A weak  factorization sheaf $\cf$ is called   nice, resp. semisimple,
if so is  the sheaf
$\wt\cf_\bv$ for every $\bv$.
\end{defn}

\newcommand{\tcf}{\wt\cf}

\begin{lem}\label{ss}

\vi Let  $\cf=(\cf_\bv)$ be a 
nice weak factorization sheaf so,  for each $\bv$, 
 we have  $\cf_\bv=\bplus_{\a\in\wt\Ups(\bv)}\ \cf^{\a}$,
where  $\cf^{\a}=\bar\Delta(\a)_*\ce^{\a}$  for some   semisimple local system $\ce^{\a}$ on
$C^\bv_\a$. 
Then,  for  every $\bv$ and $\a\in\wt\Ups(\bv)$, there is an isomorphism
\beq{ssfact}
\ce^\a\ccong \boxtimes_{\al\in [r]}\ 
\ce^{\Delta(\bv^\al)},
\eeq
of $N(\si_\a)$-equivariant  local systems on
$C^\bv_\a\cong  \prod_{\al\in[r]}\  C^{\Delta(\bv^\al)}$,
where $\bv=\bv^1+\ldots+\bv^r$ is the decomposition
associated with $\a$ and $r$ is the number of parts.

\vii Let  $\cf=(\cf_\bv)$ be a semisimple weak
factorization sheaf such
that $\Delta(\bv)^*\wt\cf_\bv$   is a  semisimple local system
for any $\bv$.
 Then $\cf$ is nice.
\end{lem}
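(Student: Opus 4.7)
My plan for part (i) is to derive the isomorphism by applying the factorization isomorphism $\vphi_{\bv^1,\ldots,\bv^r}$ and matching direct summands by their closed supports.  After fixing an identification $\prod_\al C^{\bv^\al}\cong C^\bv$ as in \S\ref{AB}, I will view $\vphi$ as an isomorphism $\wt\cf_\bv|_{C\he}\iso(\boxtimes_\al\wt\cf_{\bv^\al})|_{C\he}$ on $C\he:=C\he_{\bv^1,\ldots,\bv^r}$.  Expanding both sides via the nice decompositions, the LHS becomes $\bigoplus_\fB(\bar\Delta(\fB)_*\ce^\fB)|_{C\he}$ (only $\fB\geq\a$ contribute nontrivially on $C\he$), while the RHS becomes $\bigoplus_{(\fB^\al)}(\boxtimes_\al\bar\Delta(\fB^\al)_*\ce^{\fB^\al})|_{C\he}$, with the indexing sets matched via the bijection $\gamma$ of \eqref{strat1}.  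The key point is that the $\fB$-summand is supported exactly on $C^\bv_\fB\cap C\he$, and since $\ce^\fB$ is a local system on the smooth irreducible variety $C^\bv_\fB$, the nice decomposition is canonical up to isomorphism of its individual summands---the summands are distinguished by the closed subvariety on which the underlying sheaf is generically a local system.  Hence $\vphi$ must restrict to an isomorphism between the $\fB$-summand on the LHS and the $\gamma^{-1}(\fB)$-summand on the RHS for each $\fB\geq\a$.

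Specializing to $\fB=\a$ (so $(\fB^\al)=(\Delta(\bv^\al))$), I obtain $(\bar\Delta(\a)_*\ce^\a)|_{C\he}\iso (\boxtimes_\al \bar\Delta(\Delta(\bv^\al))_*\ce^{\Delta(\bv^\al)})|_{C\he}$.  Applying $\bar\Delta(\a)^*$ and using $\bar\Delta(\a)^{-1}(C\he)=C^{r,\circ}$ yields an isomorphism $\ce^\a|_{C^{r,\circ}}\iso\boxtimes_\al\ce^{\Delta(\bv^\al)}|_{C^{r,\circ}}$ of local systems on $C^{r,\circ}$.  Both sides are restrictions of local systems on $C^r=C^\bv_\a$, and the surjectivity of $\pig(C^{r,\circ})\to\pig(C^r)$ (which holds because $C^r\setminus C^{r,\circ}$ is a union of diagonals of codimension at least one in a smooth connected variety) implies this isomorphism extends uniquely to all of $C^\bv_\a$.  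The $N(\si_\a)$-equivariance then follows directly from diagram~\eqref{nor}.

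For part (ii), I will induct on $|\bv|$.  The base case $|\bv|=1$ is immediate: $C^\bv=C$ admits only the trivial partition, and the hypothesis that $\Delta(\bv)^*\wt\cf_\bv$ is a semisimple local system directly gives the nice form.  For the inductive step, for each nontrivial decomposition $\bv=\bv^1+\ldots+\bv^r$ with $|\bv^\al|<|\bv|$, the inductive niceness of each $\wt\cf_{\bv^\al}$ combined with the factorization isomorphism produces a nice decomposition of $\wt\cf_\bv|_{C\he_{\bv^1,\ldots,\bv^r}}$.  As $(\bv^\al)$ varies, these open sets cover $C^\bv\setminus C^\bv_\Delta$, and the associativity and commutativity constraints on $\vphi$, together with the canonicality from part (i), allow the local nice decompositions to glue to a nice decomposition of $\wt\cf_\bv|_{C^\bv\setminus C^\bv_\Delta}$.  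To extend across the principal diagonal, I will combine semisimplicity of $\wt\cf_\bv$ with the hypothesis that $\Delta(\bv)^*\wt\cf_\bv$ is a semisimple local system: this rules out any simple summand of $\wt\cf_\bv$ whose IC-extension restricts non-locally-constantly to $C^\bv_\Delta$, forcing every simple summand supported on $C^\bv_\Delta$ to be of the form $\bar\Delta(\Delta_\bv)_*\ce^{\Delta_\bv}$ for a simple local system $\ce^{\Delta_\bv}$ on $C$.

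The main obstacle will be the canonicality argument in part (i)---precisely, justifying that a nice decomposition is determined up to isomorphism of its summands by the underlying sheaf.  This requires carefully distinguishing the $\fB$-summands by their supports together with the local-system structure on each $C^\bv_\fB$, and it also underpins the gluing argument in part (ii), where the locally-defined nice decompositions on overlapping open sets $C\he_{\bv^1,\ldots,\bv^r}$ must be shown to agree on intersections.
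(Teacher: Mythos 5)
Your part (i) follows essentially the same route as the paper: expand both sides of $\vphi_{\bv^1,\ldots,\bv^r}$ via the nice decompositions, match summands by support, specialize to the tuple $(\Delta(\bv^1),\ldots,\Delta(\bv^r))$, and extend via surjectivity of the fundamental-group map, with equivariance from diagram~\eqref{nor}. That argument is fine.

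Part (ii) is where you take a genuinely different route, and it has a real gap. The paper does not induct and does not glue: it observes that semisimplicity plus diagonal constructibility give $\wt\cf_\bv=\oplus_\a\Delta(\a)_{!*}\ce^{\a,\circ}$ as a sum of IC-extensions; then it uses the factorization isomorphism $\Delta(\a)^*\wt\cf_\bv\cong\boxtimes_\al\Delta(\bv^\al)^*\wt\cf_{\bv^\al}$ (a boxtimes of local systems on $C$, under $C^\bv_\a\cong\prod_\al C$) to see that $\Delta(\a)^*\wt\cf_\bv$ is the restriction of a semisimple local system on the \emph{whole} of $C^\bv_\a$; since $\ce^{\a,\circ}$ is a direct summand of $\Delta(\a)^*\wt\cf_\bv$, it also extends, and hence $\Delta(\a)_{!*}\ce^{\a,\circ}=\bar\Delta(\a)_*\ce^\a$. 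This one step --- extending the local system from $C^{\bv,\circ}_\a$ to $C^\bv_\a$ --- is exactly what your proposal does not handle.

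Concretely, your argument glues a nice decomposition on $C^\bv\setminus C^\bv_\Delta$, which only gives you the local systems $\ce^\a$ on $C^\bv_\a\setminus C^\bv_\Delta$. You then say the hypothesis on $\Delta(\bv)^*\wt\cf_\bv$ ``rules out any simple summand whose IC-extension restricts non-locally-constantly to $C^\bv_\Delta$,'' but this only controls summands supported \emph{on} the diagonal. It says nothing about whether $\ce^\a$ (for $\a\neq\Delta_\bv$) extends as a local system across $C^\bv_\Delta$ inside $C^\bv_\a\cong C^r$. When $C$ is a curve and $r=2$, the diagonal has codimension one in $C^r$, so $\pig(C^r\setminus C)\to\pig(C^r)$ has nontrivial kernel (the meridian loop), and a local system on $C^r\setminus C$ need not extend. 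The paper's boxtimes-argument is precisely what kills the meridian monodromy, and there is no substitute for it in your write-up. The gluing step also needs more than you give it --- ``associativity and commutativity constraints allow the decompositions to glue'' requires actually verifying that the support-indexed decompositions agree on overlaps --- but that is a secondary issue next to the missing extension argument.
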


%acts naturally on the set $\Irr(C^\bv_\a)$.
%Let $R$ be an $N(\si_{\bv^1,\ldots,\bv^r}$-orbit in  $\Irr(C^\bv_\a)$.
%It follows that a $\si_\bv$-equivariant
%structure on a  semisimple sheaf $\ce$ provides the local system 
%$\oplus_{\cl\in R}\ \cf_{\a,\cl}$ with a $N(\si_{\bv^1,\ldots,\bv^r}$-equivariant
%structure.

%Consider a $\si_\bw$-orbit in $\Ups(\bw)$ corresponding to the (\emph{unordered}) collection $\{\bv^\al,\ \al\in\Theta\}$.
%Let $\a=\{J^\al,\ \al\in \Theta\}\in \Ups(\bw)$ be an element of this orbit, so that we have $v^\al_i=\#([w_i]\cap J^\al)$.
%According to Claim~\ref{ss}(iii), \redd{(specify $\bv$ in the LHS 
%below ?)} we have an isomorphism
%L_\a\ccong  \bx_{\al\in\Theta}

 \begin{proof} 
\vii Fix $\a\in\wt\Ups(\bv)$ and use the notation as above.
Given an $r$-tuple $\fB^\al\in\wt\Ups(\bv^\al),\ \al\in[r]$,
let 
$$C_{\fB^1,\ldots,\fB^r}:=
C\he_{\bv^1,\ldots,\bv^r}\ \cap\
( C^{\bv^1}_{\fB^1}\times\ldots\times C^{\bv^r}_{\fB^r})
\ =\
C\he_{\bv^1,\ldots,\bv^r}\ \cap\
C_{\gamma(\fB^1,\ldots,\fB^r)}.
$$ 
Thus, the set $C_{\fB^1,\ldots,\fB^r}$ equals  the closure
of the set $C^\circ_{\fB^1,\ldots,\fB^r}=C\he_{\bv^1,\ldots,\bv^r}\ \cap\
( C^{\bv^1,\circ}_{\fB^1}\times\ldots\times C^{\bv^r,\circ}_{\fB^r})$ in
$C\he_{\bv^1,\ldots,\bv^r}$, cf.  \eqref{strat2}.
Let $\Delta\he(\fB^1,\ldots,\fB^r):\
C_{\fB^1,\ldots,\fB^r}\into 
C\he_{\bv^1,\ldots,\bv^r}$ denote the closed imbedding.

We have the following diagram

%C\he_{\bv^1,\ldots,\bv^r}\ \cap\
%( C^{\bv^1,\circ}_{\fB^1}\times\ldots\times C^{\bv^r,\circ}_{\fB^r})
$$
\xymatrix{
C^\circ_{\fB^1,\ldots,\fB^r}
\ar@{=}[d]_<>(0.5){\eqref{strat1}}
\ar@{^{(}->}[rr]&&C_{\fB^1,\ldots,\fB^r}\ar@{^{(}->}[d]
\ar@{^{(}->}[rrr]^<>(0.5){\Delta\he(\fB^1,\ldots,\fB^r)}&&&
C\he_{\bv^1,\ldots,\bv^r}\ar@{^{(}->}[d]^<>(0.5){\wt\jmath_{\bv^1,\ldots,\bv^r}}\\
C^{\bv,\circ}_\fB\ar@{^{(}->}[rr]&&C^\bv_\fB\ar@{^{(}->}[rrr]^<>(0.5){\bar\Delta(\fB)}
&&& C^\bv
}
$$

Hence, we find
\begin{align*}
\wt\cf_\bv|_{C\he_{\bv^1,\ldots,\bv^r}}
&=\bigoplus_{\fB \in\Ups(\bv)}\ \cf^\fB|_{C\he_{\bv^1,\ldots,\bv^r}}
\ =\  \bigoplus_{\fB \geq\a}\ 
(\bar\Delta(\fB)_*\ce^\fB)|_{C\he_{\bv^1,\ldots,\bv^r}} \\
&=\ \bigoplus_{(\fB^1,\ldots,\fB^r)\in
  \wt\Ups(\bv^1)\times\ldots\times\wt\Ups(\bv^r)}\
(\Delta\he(\fB^1,\ldots,\fB^r)_*
\bigl(\ce^{\gamma(\fB^1,\ldots,\fB^r)}|_{C_{\fB^1,\ldots,\fB^r}}\bigr).
\end{align*}

Similarly, we have
\begin{align*}
\bigl(\boxtimes_\al\ \wt\cf_{\bv^\al}\bigr)|_{C\he_{\bv^1,\ldots,\bv^r}}
&=\Bigl(\boxtimes_\al\
\bigl(\bplus_{\fB^\al \in\Ups(\bv^\al)}\ 
\cf^{\fB^\al}\bigr)\Bigr)|_{C\he_{\bv^1,\ldots,\bv^r}}\\
&=\ \bigoplus_{(\fB^1,\ldots,\fB^r)\in
    \wt\Ups(\bv^1)\times\ldots\times\wt\Ups(\bv^r)}\
\bigl(\boxtimes_\al\
\bar\Delta(\fB^\al)_*\ce^{\fB^\al}\bigr)|_{C\he_{\bv^1,\ldots,\bv^r}}
\\
%& =\
%\bigoplus_{(\fB^1,\ldots,\fB^r)\in
%  \wt\Ups(\bv^1)\times\ldots\times\wt\Ups(\bv^r)}\
%\bigl(\bar\Delta(\gamma(\fB^1,\ldots,\fB^r))_*(\boxtimes_\al\
%\ce^{\fB^\al})\bigr)|_{C\he_{\bv^1,\ldots,\bv^r}}\\
& =\
\bigoplus_{(\fB^1,\ldots,\fB^r)\in
  \wt\Ups(\bv^1)\times\ldots\times\wt\Ups(\bv^r)}\
(\Delta\he(\fB^1,\ldots,\fB^r)_*
\bigl((\boxtimes_\al\ \ce^{\fB^\al})|_{C_{\fB^1,\ldots,\fB^r}}\bigr).
\end{align*}

Comparing the above isomorphisms,
we see that the factorization isomorphism $\vphi_{\bv^1,\ldots,\bv^r}$
yields, for each tuple $(\fB^1,\ldots,\fB^r)\in
  \Ups(\bv^1,\ldots,\bv^r)$,
an isomorphism 
\[\ce^{\gamma(\fB^1,\ldots,\fB^r)}|_{C_{\fB^1,\ldots,\fB^r}}
\cong (\boxtimes_\al\ \ce^{\fB^\al})|_{C_{\fB^1,\ldots,\fB^r}},
\]
of  local systems on
$C_{\fB^1,\ldots,\fB^r}$.
The local system on each side of the isomorphism
is a restriction of a  local system on $C^\bv_{\gamma(\fB^1,\ldots,\fB^r)}$.
The homomorphism
$\pig\Bigl(C_{\fB^1,\ldots,\fB^r})
\to \pig(C^\bv_{\gamma(\fB^1,\ldots,\fB^r)})$ being surjective,
we conclude that the  isomorphism of  local systems
on  $C_{\fB^1,\ldots,\fB^r}$ extends to an isomorphism 
$\ce^{\gamma(\fB^1,\ldots,\fB^r)}
\cong \boxtimes_\al\ \ce^{\fB^\al}$ of the corresponding 
 local systems
on $C^\bv_{\gamma(\fB^1,\ldots,\fB^r)}$.

In the special case where $(\fB^1,\ldots,\fB^r)=
(\Delta(\bv^1),\ldots,\Delta(\bv^r))$ one has
 $\gamma(\Delta(\bv^1),\ldots,\Delta(\bv^r))=\a$.
Thus, $\ce^{\gamma(\fB^1,\ldots,\fB^r)}
\cong \boxtimes_\al\ \ce^{\fB^\al}$ reduces to
an isomorphism as in \eqref{ssfact}. 
Furthermore, one checks that this isomorphism  respects
the $N(\si_\a)$-equivariant structures.

\vii From the factorization isomorphism
$(\Delta(\bv^1)^*\wt\cf_{\bv^1}\boxtimes\ldots
\boxtimes\Delta(\bv^r)^*\wt\cf_{\bv^r})|_{C\hec_{\Delta,\ldots,\Delta}}
\cong \Delta(\a)^*\wt\cf_\bv$ we deduce that $\cf$ is diagonally
constructible, cf.  proof of Lemma \ref{constr}.
Therefore, the semisimple sheaf
$\wt\cf_\bv$ is isomorphic to
$\oplus_\a\ \Delta(\a)_{!*}\ce^{\a,\circ}$, a direct sum of 
intersection cohomology complexes associated to some
local systems $\ce^{\a,\circ}$ on $C^{\a,\circ}_\bv$.
Furthermore, since the local system $(\Delta(\bv^1)^*\wt\cf_{\bv^1}\boxtimes\ldots
\boxtimes\Delta(\bv^r)^*\wt\cf_{\bv^r})$ is semisimple
and $C^\bv_\a\cong C_{\Delta(\bv^1)}\times\ldots\times C_{\Delta(\bv^r)}$, we deduce
that the local system $ \Delta(\a)^*\wt\cf_\bv$
extends to a semisimple local system on $C^\bv_\a$.
The local system  $\ce^{\a,\circ}$ is a direct summand of $
\Delta(\a)^*\wt\cf_\bv$.
It follows that $\ce^{\a,\circ}$ extends  to a semisimple local system 
$\ce^\a$ on $C^\bv_\a$. Hence, we have
$\Delta(\a)_{!*}\ce^{\a,\circ}=\bar\Delta(\a)_*\ce^\a$, proving that
$\cf_\bv$ is nice.
\end{proof}

In the nice case we have the following
stronger version of Theorem \ref{expC}.
Recall the projection  $c: \N_\bv\reg=\TT^*_C (C^\bv)\reg\to C$.
\begin{prop}\label{semisimple}
Let $\cF=(\cF_\bv)_{\bv\in\Z^I_{\geq0}}$
 be a nice weak factorization sheaf  on $\Sym C$.
Then, writing
 $\wt\cf_\bv=\bplus_{\a\in\wt\Ups(\bv)}\ (\bar\Delta(\a))_*\ce^{\a}$,
one has:

\vi For each $\bv$ we have
  $\dis(\FDN_\N\ccirc \op{sp}_{C^\bv/C}(\wt\cf_\bv))|_{\N_\bv\reg}\cong c^*\ce^{\Delta(\bv)}[\rk\N_\bv](\mbox{$\frac{\rk\N_\bv}{2}$})$.

\vii In  $D^I\abs(\pt)\ $\ (rather than  in $K^I\abs$), one has
a natural isomorphism
$$
  \bplus_\bv\  \RGam_c(C^\bv/\si_\bv,\ \cF_\bv)\ =\
\Sym\left(\bplus_{\bw\neq 0}\  \RGam_c(C_{\Delta(\bw)},\,
\ce^{\Delta(\bw)})^{\si_\bw}\right).%[-\rk\N_\bv](-\mbox{$\frac{\rk\N_\bv}{2}$})\right).
$$
\end{prop}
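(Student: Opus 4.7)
My plan is as follows. For part \vi, I would exploit the given decomposition $\wt\cf_\bv = \bplus_{\a \in \wt\Ups(\bv)} \bar\Delta(\a)_* \ce^\a$ and analyze each summand under $\FDN_\N \circ \op{sp}_{C^\bv/C}$. By the standard compatibility of Verdier specialization with closed pushforward, for each $\a$ the specialization $\op{sp}_{C^\bv/C}(\bar\Delta(\a)_* \ce^\a)$ is the pushforward along $v_\a\colon \TT_C(C^\bv_\a) \hookrightarrow \N_\bv$ of the pullback along $u_\a$ of $\ce^\a|_{\Delta(\bv)(C)}$ (cf.\ \eqref{TT}); in particular it is supported on the subbundle $\TT_C(C^\bv_\a) \subset \N_\bv$. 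Applying $\FDN_\N$ then yields a sheaf on $\N_\bv^*$ supported on the conormal subbundle $\TT_C^\perp(C^\bv_\a)$. By Lemma \ref{veryreg}, $\N_\bv\reg$ is disjoint from $\TT_C^\perp(C^\bv_\a)$ for every $\a \neq \Delta(\bv)$, so all such summands vanish after restriction to $\N_\bv\reg$. Only the summand for $\a = \Delta(\bv)$ survives; in this case $v_{\Delta(\bv)}$ is the zero section of $\N_\bv$ and $\ce^\a|_{\Delta(\bv)(C)}=\ce^{\Delta(\bv)}$, so $\FDN_\N$ of the zero-section pushforward produces $c^* \ce^{\Delta(\bv)}\{\rk\N_\bv\}$, as claimed.

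For part \vii, I would start from $\RGam_c(C^\bv/\si_\bv, \cf_\bv) = \RGam_c(C^\bv, \wt\cf_\bv)^{\si_\bv}$. The group $\si_\bv$ permutes the summands $\bar\Delta(\a)_* \ce^\a$ by its action on $\wt\Ups(\bv)$, with stabilizer $N(\si_\a)$ at $\a$; since $\si_\bv$-orbits in $\wt\Ups(\bv)$ are labeled by $\Ups(\bv)$, taking invariants decomposes
\[
\RGam_c(C^\bv/\si_\bv,\ \cf_\bv)\ =\ \bplus_{\fa \in \Ups(\bv)}\ \RGam_c(C^\bv_\a,\ \ce^\a)^{N(\si_\a)},
\]
where $\a$ is a chosen representative of $\fa$. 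By Lemma \ref{ss}\vi, $\ce^\a \cong \boxtimes_{\al}\ \ce^{\Delta(\bv^\al)}$ as $N(\si_\a)$-equivariant local systems on $C^\bv_\a \cong \prod_\al C^{\Delta(\bv^\al)}$. Using the presentation $N(\si_\a) \cong \prod_\bw (\si_\bw^{d_\bw} \rtimes \si_{d_\bw})$ from \eqref{N(si)}, $\si_\a$ acts trivially on the base, so invariants affect only the fibers, giving by K\"unneth
\[
\RGam_c(C^\bv_\a,\ \ce^\a)^{\si_\a}\ =\ \bigotimes_\bw\ \RGam_c\big(C^{d_\bw},\ ((\ce^{\Delta(\bw)})^{\si_\bw})^{\boxtimes d_\bw}\big).
\]

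Next, the residual group $\prod_\bw \si_{d_\bw}$ permutes the factors of each $C^{d_\bw}$ and the corresponding tensor factors; taking its invariants produces $\bigotimes_\bw\ \Sym^{d_\bw}\RGam_c(C_{\Delta(\bw)},\ \ce^{\Delta(\bw)})^{\si_\bw}$. Finally, summing over $\bv$ and $\fa=(d_\bw)\in\Ups(\bv)$ reorganizes into an unconstrained sum over non-negative tuples $(d_\bw)_{\bw \neq 0}$, and the standard identity $\Sym(V \oplus W) = \Sym V \otimes \Sym W$ yields the asserted formula. The main delicate point, which I would verify carefully, is that the $N(\si_\a)$-equivariant structure from Lemma \ref{ss}\vi on $\ce^\a$ genuinely realizes the wreath-product structure $\prod_\bw(\si_\bw \wr \si_{d_\bw})$ in the way described; this traces back to the commutativity constraint for the factorization sheaf, expressed by diagram \eqref{nor}, together with the compatibility of the factorization isomorphism with the $N(\si_\a)$-action on base and fibers.
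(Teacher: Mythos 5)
Your proposal is correct and follows essentially the same route as the paper's proof. Part \vi\ is handled the same way: identify the specialization with a pushforward to the subbundle $\TT_C(C^\bv_\a)$ of the pullback of the diagonal restriction, observe that the Fourier transform then lives on the conormal $\TT_C^\perp(C^\bv_\a)$, and use Lemma~\ref{veryreg} to kill every summand with $\a\neq\Delta(\bv)$ on $\N_\bv\reg$. Part \vii\ is also the same computation: decompose into $\si_\bv$-orbits in $\wt\Ups(\bv)$, reduce to $N(\si_\a)$-invariants via the induced-representation identity, invoke Lemma~\ref{ss}\vi\ to realize $\ce^\a$ as a box product, and carry out the wreath-product bookkeeping using~\eqref{N(si)}; taking $\si_\a$-invariants first and then $\prod_\bw\si_{d_\bw}$-invariants, as you do, is an equivalent reorganization of the paper's single step of taking $N(\si_\a)$-invariants. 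Your identification of the delicate point (that the $N(\si_\a)$-equivariant structure supplied by the factorization data really realizes the wreath-product action, which rests on the commutativity constraint and diagram~\eqref{nor}) matches the paper precisely.
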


\begin{proof} 
The construction of Verdier specialization involves
a deformation
to the normal bundle. From that deformation and the 
factorization isomorphism \eqref{ssfact}, it is easy to see that
one has
$\op{sp}_{C_\a/C_\Delta}(\ce^\a)=u_\a^*(\ce^\a|_\cde)$,
where we use the notation of diagram \eqref{TT}.
It follows that $\op{sp}_{C^\bv/C_\Delta}(\wt\cf^\a)=
(v_\a)_*u_\a^*(\ce^\a|_\cde)$. By the properties of  Fourier-Deligne
transform,
this implies that 
\beq{fdsp}
\FDN_\N\ccirc \op{sp}_{C^\bv/C_\Delta}(\wt\cf^\a)=
(v_\a^\perp)_*(u_\a^\perp)^*(\ce^\a|_\cde)[\rk\N_\bv](\half\rk\N_\bv).
\eeq

The sheaf above is supported on $\TT^\perp_C(C^\bv_\a)$.
We see that the restriction of
this sheaf to the open set  $\N_\bv\reg=\TT^*_C (C^\bv)\reg$ vanishes unless
$\a=\Delta(\bv)$ and $C^\bv_\a=C$ is the principal diagonal.
In the latter case, we have
$\op{sp}_{C_\Delta/C_\Delta}(\ce^{\Delta(\bv)})=(u_\Delta^\perp)^*\ce^{\Delta(\bv)}$
and $\TT_C^\perp(C^\bv_{\Delta(\bv)})\reg=\N_\bv\reg$.
Thus,  we obtain
\begin{align*}%q{Phi-comp}
(\FDN_\N\ccirc\op{sp}_{C^\bv/C}\cf_\bv)|_{_{\N_\bv\reg}} &=
\bplus_{\a\in\wt\Ups(\bv)}\ (\FDN_\N\ccirc\op{sp}_{C^\bv/C}(\wt\cf^\a))|_{_{\N_\bv\reg}}\\
&=
{(u_\Delta^\perp)^*\ce^{\Delta(\bv)}}^{}[\rk\N_\bv](\mbox{$\frac{\rk\N_\bv}{2}$})|_{_{\N_\bv\reg}}=
c^*\ce^{\Delta(\bv)}[\rk\N_\bv](\mbox{$\frac{\rk\N_\bv}{2}$}).
\end{align*}
This proves (i).

To prove (ii), let
$\wt\Ups(\bv,\fa)$ denote the fiber of the canonical projection
$\wt\Ups(\bv)\to\Ups(\bv)$ over   a decomposition
$\fa\in \Ups(\bv)$.  Further, we put
$\tcf^\fa:=\bplus_{\fB\in\wt\Ups(\bv,\fa)}\ \tcf_\bv^\fB$.
This is a $\si_\bv$-invariant direct summand of $\tcf_\bv$.
 Therefore it descends to  a direct summand $\cf_\bv^\fa$ of $\cf_\bv$.

Choose a \dct $\a\in\wt\Ups(\bv,\fa)$.
 The group $\si_\bv$ acts transitively on
$\Ups(\bv,\fa)$ and the stabilizer of the element
$\a$ equals $N(\si_\a)$.
Therefore,
 $\RGam_c(C^\bv, \tcf^\fa)=\bplus_{\fB\in\wt\Ups(\bv,\fa)}\
 \RGam_c(C^\bv, \tcf_\bv^\fB)$, viewed as a
  representation of $\si_\bv$,
 is induced from the $N(\si_\a)$-representation
  $\RGam(C^\bv, \tcf^\a)$.
 Thus, using Lemma \ref{ssfact} and the notation therein we get
\begin{align*}\RGam_c(C^\bv/\si_\bv, \cf^\fa)&=
\RGam_c(C^\bv, \tcf^\fa)^{\si_\bw}=
\RGam_c(C^\bv, \tcf^\a)^{N(\si_\a)}\\
&=
\RGam_c(\mbox{$\prod$}_\al\ C^{\bv^\al},\ \boxtimes_\al\ \cf^{\Delta(\bv^\al)})^{N(\si_\a)}=
\bigl(\bo_\al\ \RGam_c(C_{\Delta(\bv^\al)},
\ce^{\Delta(\bv^\al)}\bigr)^{N(\si_\a)}.
\end{align*}

Observe next that the set
 $\Ups(\bv)$, of multipartitions, has a natural 
identification with the set of collections
$\{d_\bw\}_{\bv\in\Z_{\geq0}^I}$ such that one has $\sum_\bw\
d_\bw\cdot\bw= \bv$. Let   $\{d_\bw\}$ be 
the collection  
that corresponds to a decomposition $\fa\in
\Ups(\bv)$. Then, by \eqref{N(si)} we have
$N(\si_\a)\cong
\prod_\bw\,\big((\si_\bw)^{d_\bw}\rtimes\si_{d_\bw}\big)$.
Hence,
we get
\begin{multline*}
\Bigl(\underset{\al\in[r]}\bo\ \RGam_c(C_{\Delta(\bv^\al)},
\ce^{\Delta(\bv^\al)}\Bigr)^{N(\si_{\bv^1,\ldots,\bv^r})}
\ =\
\Bigl(\bigotimes_\bw\ \RGam_c(C_{\Delta(\bw)},
\ce^{\Delta(\bw)})^{\o
  d_\bw}\Bigr)^{\prod_\bw\,\big((\si_\bw)^{d_\bw}\rtimes\si_{d_\bw}\big)}\\
=\ \bigotimes_\bw\ \Sym^{d_\bw}\bigl(\RGam_c(C_{\Delta(\bw)},
\ce^{\Delta(\bw)})^{\si_\bw}\bigr).
\end{multline*}

Thus, using the direct sum decomposition
$\cf_\bv  =\bplus _{\fa\in \Ups(\bv)} \  \cf_\bv^\fa$, we compute

\begin{align*}
\bigoplus_\bv\ \RGam_c(C^\bv/\si_\bv,&\, \cf_\bv) =\ 
\bigoplus_\bv\ \bigoplus_{\fa\in \Ups(\bv)} \  \RGam_c(C^\bv/\si_\bv,\, \cf_\bv^\fa)\\
&=\ \bigoplus_\bv\ \ \bigoplus_{\{(d_\bw)_{\bw\in \ZZ_{\ge0}^I \sminus
  \{0\}}\ |\ \sum_\bw\, d_\bw\cdot\bw=\bv\}}\
\left(\bigotimes_\bw\ \Sym^{d_\bw}\bigl(\RGam_c(C_{\Delta(\bw)},
\ce^{\Delta(\bw)})^{\si_\bw}\bigr)\right)\\
&=\quad \bigoplus_{(d_\bw)_{\bw\in \ZZ_{\ge0}^I \sminus
  \{0\}}}\ \left(\bigotimes_\bw\ \Sym^{d_\bw}\bigl(\RGam_c(C_{\Delta(\bw)},
\ce^{\Delta(\bw)})^{\si_\bw}\bigr)\right)\\
&=\Sym\big(\bplus_{\bw\neq 0}\ \ \RGam_c(C_{\Delta(\bw)},
\ce^{\Delta(\bw)})^{\si_\bw}\big).
\end{align*}
Part (ii) follows.
\end{proof}

%Hence, from the commutative diagram \eqref{circ} 
%we deduce the following  isomorphisms
%\begin{multline}\label{phi}
%\Delta(\a)^*\,\iota^*_\a\,\wt\cF_\bv\ccong
%  \Delta_{\bv^1,\ldots,\bv^r}^*\,\vp_{\bv^1,\ldots,\bv^r}^*\,(\jmath_{\bv^1,\ldots,\bv^r}^*\,
%\imath_{\bv^1,\ldots,\bv^r}^*\cF_\bv)\\
%\xrightarrow[\cong]{\vphi_{\bv^1,\ldots,\bv^r}}
% \Delta_{\bv^1,\ldots,\bv^r}^*\,\vp_{\bv^1,\ldots,\bv^r}^*\,\jmath_{\bv^1,\ldots,\bv^r}^*(\boxtimes_\al\,\cF_{\bv^\al})
%\ =
%\ \boxtimes_\al\ (\Delta_{\bv^\al}^*\,\wt\cF_{\bv^\al}).
%\end{multline}
%Here, each of the sheaves $\Delta_{\bv^\al}^*\,\wt\cF_{\bv^\al}$

\subsection{Proofs of Theorem \ref{expC} and Theorem \ref{Exp}}
\label{aa-sec}

%Recall the notation of diagram \eqref{circ}. In particular,
Let $D_\Ups(C^\bv)$ be a full  subcategory
of $D\abs(C^\bv)$ whose objects are
constructible with respect to the diagonal stratification
and let $K\abs(C^\bv)$ be the corresponding Grothendieck group.
The semisimplification,  $\cf\ss$, of any object  $\cf\in D_\Ups(C^\bv)$
is again an object of  $D_\Ups(C^\bv)$. The semisimplification operation
extends to the $\si_\bv$-equivariant setting.

\begin{lem}\label{ss3} 
For any weak   factorization sheaf   $\cf=(\cf_\bv)$ the collection
$\ \cf\ss:=(\cf\ss_\bv)_{\bv\in \Z^I_{\geq 0}}\ $ has the natural structure
of  a weak   factorization sheaf.
\end{lem}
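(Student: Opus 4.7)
The plan is to construct the factorization data for $\cf^{ss}$ by applying $\mathrm{gr}^{\mathsf W}$ to the given data for $\cf$ and then invoking functoriality. The substantive input is that semisimplification commutes, in an appropriate sense, with the two operations entering Definition \ref{fac-def}: pullback along $\jmath_{\bv_1,\bv_2}\ccirc\imath_{\bv_1,\bv_2}$ and exterior tensor product. I would take these as the two preliminary lemmas to verify.

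For the first, I would observe that the composite $\imath_{\bv_1,\bv_2}\ccirc\jmath_{\bv_1,\bv_2}$ factors as the unramified Galois covering  $\cur\he_{\bv_1,\bv_2}/(\si_{\bv_1}\times\si_{\bv_2})\to \cur\he_{\bv_1,\bv_2}/\si_{\bv_1+\bv_2}$ (with Galois group the set of coset representatives of $\si_{\bv_1}\times\si_{\bv_2}$ in $\si_{\bv_1+\bv_2}$), followed by the open embedding $\cur\he_{\bv_1,\bv_2}/\si_{\bv_1+\bv_2}\hookrightarrow C^{\bv_1+\bv_2}/\si_{\bv_1+\bv_2}$. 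Both maps are smooth, so pullback preserves weights and commutes with $\mathrm{gr}^{\mathsf W}$ up to a shift and Tate twist (which are uniform in each degree and hence disappear after passing to associated graded, viewed just as a filtration). For the second lemma, purity is preserved under $\boxtimes$ by Deligne's theorem, and the weight filtration on $\cf_{\bv_1}\boxtimes\cf_{\bv_2}$ is the tensor product of the weight filtrations on the factors, so there is a canonical isomorphism $(\cf_{\bv_1}\boxtimes\cf_{\bv_2})^{ss}\cong\cf_{\bv_1}^{ss}\boxtimes\cf_{\bv_2}^{ss}$.

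Granting these, I would define $\vphi^{ss}_{\bv_1,\bv_2}$ as the associated graded of the weight-filtered isomorphism $\vphi_{\bv_1,\bv_2}$, using the two naturality isomorphisms above to identify the source with $\jmath^*\imath^*\cf^{ss}_{\bv_1+\bv_2}$ and the target with $\jmath^*(\cf^{ss}_{\bv_1}\boxtimes\cf^{ss}_{\bv_2})$. Verification of the associativity constraint \eqref{fact2} and of the commutativity diagram \eqref{fac-diag} then reduces to applying the functor $\mathrm{gr}^{\mathsf W}$ to the corresponding diagrams for $\cf$ itself, since all structure maps and flip isomorphisms are strictly compatible with the weight filtration.

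The main obstacle is bookkeeping rather than a conceptual difficulty: one must check that the canonical isomorphisms $(\boxtimes)^{ss}\cong\boxtimes^{ss}$ and $f^*(-)^{ss}\cong (f^*-)^{ss}$ (for $f$ smooth) are compatible with the $\si$-equivariant structures — since the relevant categories are $\si_\bv$-equivariant derived categories on stacks of the form $C^\bv/\si_\bv$ — and with the flip morphism $\tau$. This compatibility is automatic from functoriality once one works consistently in the equivariant derived category, but writing it out requires unwinding the definition of the weight filtration in the equivariant setting via, e.g., smooth hypercovers, as in \cite{B2}. No new ideas are needed beyond this.
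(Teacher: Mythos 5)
Your argument is correct and takes essentially the same route as the paper's sketch of proof: both rest on the observations that semisimplification commutes with the pullback $\jmath^*\imath^*$ and with $\boxtimes$, and then apply functoriality of $\mathrm{gr}_{\mathsf W}$ to transport all the constraint diagrams. Your version is somewhat more explicit — in particular, you factor $\imath\ccirc\jmath$ into a finite Galois covering followed by an open embedding and invoke smoothness of both to justify compatibility with weights, whereas the paper treats $\jmath^*\imath^*$ summarily as an open restriction — but the underlying mechanism is the same.
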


\begin{proof}[Sketch of Proof]
Since the operation of open restriction preserves the weight filtration,  
we can consider semisimplification more generally
for sheaves on any  open substack $U\sset  C^\bv/\si_\bv$.
Furthermore, it is clear that for any such $U$, and
$\ce\in D_{_\Ups}( C^\bv/\si_\bv)$,  one has $(\ce\ss)|_U=
(\ce|_U)\ss$. In particular, for any decomposition
$\bv=\bv^1+\ldots+\bv^r$ we have an  isomorphism 
$(\jmath_{\bv^1,\ldots,\bv^r}^*\imath_{\bv^1,\ldots,\bv^r}^*\cf_\bv)\ss=\jmath_{\bv^1,\ldots,\bv^r}^*\imath_{\bv^1,\ldots,\bv^r}^*(\cf\ss)$,
of sheaves on  $ C\he_{\bv^1,\ldots,\bv^r}$.
Also, it is clear that
$(\cf_{\bv^1}\boxtimes\ldots\boxtimes\cf_{\bv^r})\ss=
\cf_{\bv^1}\ss\boxtimes\ldots\boxtimes\cf_{\bv^r}\ss$.
Since the only operations with sheaves involved in
the definition of    factorization sheaf  are the operations
$\jmath_{\bv^1,\ldots,\bv^r}^*\imath_{\bv^1,\ldots,\bv^r}^*$
and $\boxtimes$, and the operation of taking semisimplification is functorial, 
it follows that for any collection $(\cf_\bv)_{\bv\in\Z^I_{\geq0}}$
that satisfies that definition the collection
$(\cf_\bv\ss)_{\bv\in\Z^I_{\geq0}}$ satisfies it as well.
\end{proof}

\begin{proof}[Proof of Theorem \ref{expC}]
Each of the functors $\op{sp}_\N$ and $\FDN_\N$
 is t-exact with respect to the perverse
t-structure. Hence, for any weak factorization sheaf $\cf$, we
have $(\Phi_\N(\cf_\bv))\ss=\Phi_\N(\cf_\bv\ss)$.
Further, part (i) of Proposition \ref{semisimple}
implies that $(\Phi_\N(\wt\cf_\bv))\ss$ is a pull-back of a semisimple local system
on $C$ via the projection $\N_\bv\reg\to C$. We deduce that 
the local system $\Phi_\N(\wt\cf_\bv)$ is unipotent along 
the map $c:\N_\bv\reg\to C$. Part (i) of Theorem \ref{expC} follows.

Observe next that 
in  $K\abs(C^\bv)$ we have  $[\ce]=[\ce\ss]$.
The equation of  part (ii) of Theorem \ref{expC} is an equation
in $\kabs$. Such an equation is unaffected by replacing
each of the sheaves $\cf_\bv$ by $\cf_\bv\ss$.
The sheaf  $\cf_\bv\ss$ is nice. For such a sheaf, the equation of  part (ii) of Theorem \ref{expC} 
is an immediate consequence of the formula of Proposition \ref{semisimple}(ii)
and the isomorphism 
of part (i) of that proposition, using that $\rk\N_\bv=\dim C(|\bv|-1)$.
\end{proof}

\begin{proof}[Proof of Theorem \ref{Exp}]
Part (i) of the theorem follows directly from Theorem \ref{expC}(i).
Also, it is clear from the equivariance of the sheaf $\cf_\bv$ under the diagonal translations that
one has $[\Phi_\N(\cf_\bv)]^{\langle\rho\rangle}=
[\RGam_c(\AA,\C_\AA)]\o [\Phi_\ft(\cf_\bv)^{\langle\rho\rangle}_\eta]=\L\cdot [\Phi_\ft(\cf_\bv)^{\langle\rho\rangle}_\eta]$,
resp.
$[\Phi_\N(\eps^*\cf_\bv)]^{\langle\rho\rangle}=
\RGam_c(\GG,\C_\AA)\o [\Phi_\ft(\cf_\bv)^{\langle\rho\rangle}_\eta]=(\L-1)\cdot [\Phi_\ft(\cf_\bv)^{\langle\rho\rangle}_\eta]$, for any $\rho\in\Irr(\si_\bv)$.
Thus,
equations \eqref{2eqs} and \eqref{3eqs} follow from Theorem \ref{expC}(ii).

We now prove \eqref{4eqs}. Since this is an equation
in $\kabs$ it suffices to prove it for semisimple weak factorization sheaves $\cf$.
Furthermore,  without loss of generality one may assume in addition 
that each of  the corresponding local systems
$\ce^\a$ that appear in Definition \ref{ss-def} has {\em finite} rank, that is,  the semisimple sheaves
$\cf_\bv$ are  Weil  sheaves. In that case, the collection $\cf^\vee=(\cf_\bv^\vee)$ 
is a semisimple weak factorization sheaf again, cf. Remark \ref{fact-rems}(1).
Thus, we are in a position to apply equation \eqref{2eqs} in the case of the  factorization sheaf $\cf^\vee$.
Note that $\overline{\cf_\bv^\vee}=(\bar\cf_\bv)^\vee[2](1)$.
Using that the Fourier-Deligne functor $\FDN_\ft$ commutes
with the Verdier duality we deduce
$[\Phi_\ft(\cf_\bv^\vee)_\eta]=[\Phi_\ft(\cf_\bv)_\eta^*]\{2\}=\L\inv\cdot [\Phi_\ft(\cf_\bv)_\eta^*]$.
Also, we 
one has $\RGam_c(\AA^\bv/\si_\bv, \cf_\bv^\vee)=\RGam(\AA^\bv/\si_\bv, \cf_\bv)^*$.
Thus, using \eqref{2eqs} for $\cf^\vee$,
we compute
\begin{multline*}
\sum_\bv\ z^\bv\cdot [\RGam(\AA^\bv/\gS_\bv,\cF_\bv)^*]=
\sum_\bv\ z^\bv\cdot [\RGam(\AA^\bv/\gS_\bv,\cF_\bv^\vee)]\\
=\Sym\Big(-\L^{\frac{1}{2}}\cdot\sum_{^{\bv>0}}\ (-1)^\bv\cdot z^\bv\cdot \L^{\frac{|\bv|}{2}}
\cdot[\Phi_\ft(\cf_\bv^\vee)_\eta]^{\langle\op{triv}\rangle}\Big)\\
=\Sym\Big(-\L^{\frac{1}{2}}\cdot\L\inv\cdot\sum_{^{\bv>0}}\ (-1)^\bv\cdot z^\bv\cdot \L^{\frac{|\bv|}{2}}
\cdot\big[(\Phi_\ft(\cf_\bv)_\eta^*\big]^{\langle\op{triv}\rangle}\Big).
\end{multline*}
Now, the sheaf $\cf_\bv$ being $\GG$-equivariant, the well known result of Springer
yields a canonical isomorphism $\RGam(\AA^\bv/\gS_\bv,\cF_\bv)\cong
\RGam(0/\si_\bv, i_\bv^*\cf_\bv)$. Applying $(-)^*$ to the displayed formula above yields
the required equation \eqref{4eqs}.
\end{proof}

\section{Factorization sheaves on $\GL$}
\subsection{From $G$ to $T$} \label{GtoT}
We will use the notation $\pt_X$ for a constant map $X\to \pt$.

Let $G$ be a split connected reductive group. For all
Borel subgroups $B$ of $G$, the tori $T=B/[B,B]$
are canonically isomorphic to each other and  
we write ${W}$ for the corresponding Weyl group.
Let $\g,\,\b$, and $\ft$, be the Lie algebras of
$G,\,B$, and $T$, respectively.
The imbedding $\b\into\g$, resp. the projection
$\b\onto \b/[\b,\b]$, gives a morphism
of quotient stacks $\kappa_\b: \b/B\to\g/G$, resp. 
$\nu_\b: \b/B\to\ft/T$.
Observe
that the action of $T$ on $\ft$ being trivial, there
is a canonical isomorphism $\ft/T\cong\ft\times \pt/T$.
This isomorphism respects the natural actions of the Weyl
group $W$ on each side, where $W$ acts on $\ft\times \pt/T$
diagonally.  By the Chevalley isomorphism
$\g\dsl G\cong\ft\dsl W$, we have
a commutative  diagram:
\beq{ww}
\xymatrix{
0/G\ \ar@{^{(}->}[r]^<>(0.5){i_\g}\ar[d]^<>(0.5){\pt_{0/G}}&\g/G\ \ar[d]_<>(0.5){{\mathfrak w}_{\g}}
 & \  \b/B\ \ar@{_{(}->}[l]_<>(0.5){\kappa_\b}\ar@{->>}[r]^<>(0.5){\nu_\b}&
\ \ft/T\ar@{=}[r]&\ft\times {B} T\
\ar[d]^<>(0.5){r} &\  \{0\}\times {B} T=0/T\  \ar@{_{(}->}[l]_<>(0.5){i_\ft}\ar[d]^<>(0.5){\pt_{0/W}}\\
\pt & \g\dsl G\ar@{=}[r]&\ft\dsl W\ &\ \ft/W \ \ar[l]_<>(0.5){{\mathfrak w}_{\ft}}&
\ \ft\ \ar[l]_<>(0.5){q}&\ \pt
}
\eeq

We claim that for any  sheaf $\cf$ on $\g/G$,
the sheaf 
$r_!(\nu_\b)_!\kappa_\b^*\cf$ on $\ft$  has a canonical
${W}$-equivariant structure. This seems to be well known 
(its version for parabolic restriction of character sheaves  is
essentially due
to Lusztig \cite{L2}), but we have been
unable to find the statement in the generality that we need in the literature.

To state our result, 
put  $n_G=\dim G/B=\dim\fb-\dim\ft$, resp. $\rk=\dim T$. Write
$\sign\o(-)$  for a twist by the sign character of the $W$-action on  a $W$-representation or, more generally,
a twist of the equivariant structure on a $W$-equivariant sheaf.
In particular, one has a functor $\sign\o(-)$ on $D\abs(\ft/W)$ induced by tensoring
with a rank 1 local system   $\textit{\small{Sign}}$.

In the proofs below, we will freely use the following known results
about {\em compactly supported} cohomology of  $G/T,\,{B} T$, and ${B} G$.
The top degree nonzero compactly supported cohomology groups are 
$H^{4n_G}_c(G/T)$, resp. $H_c^{-2\rk}({B} T)$ and $H_c^{-2\dim G}({B} G)=H_c^{-4n_G-2\rk}({B} G)$.
These groups are 1-dimensional, furthermore, the Weyl group acts on 
$H^{4n_G}_c(G/T)$ and $H_c^{-2\rk}({B} T)$ via the sign character, where we have used
the isomorphisms
$H^\hdot(G/B)=H_c^\hdot(G/B)=H_c^{\hdot+2n_G}(G/T)$.
Also,  each of the groups $H_c^{\op{odd}}(G/B),\ H_c^{\op{odd}}({B} T)$, and $HH_c^{\op{odd}}({B} G)$,
vanishes.
Therefore, the spectral sequence for the fibration ${B} T\xrightarrow{G/T} {B} G$ collapses,
yielding a $W$-equivariant isomorphism $H_c^{\op{ev}}({B} T)=H_c^{\op{ev}}(G/T)\o H_c^{\op{ev}}({B} G)$.
In particular, one has  $W$-equivariant isomorphisms 
\[H_c^\hdot({B} T)^W=H_c^{\hdot+2n_G}({B} G)\quad\text{resp.}\quad
H_c^\hdot({B} T)^{\sign}=\sign\o H_c^{\hdot+4n_G}({B} G).\]

The functors constructed in the following lemma are  called parabolic restriction functors.

\begin{lem} \label{xi}
\vi There exists a functor $\res_\ft\colon D\abs(\g/ G)\to D\abs(\ft/{W})$
such that one has an isomorphism of functors $r_!\,(\nu_\b)_!\,\kappa_\b^*\,\cong q^*\,\res_\ft$, resp.
\beq{xieq} 
r_!\,(\nu_\b)_!\,\kappa_\b^*\,\cong q^*\,\res_\ft,\quad
({\mathfrak w}_\g)_!\cong (({\mathfrak w}_\ft)_!\res_\ft)^W,\quad
(\pt_{0/G})_!\, i^*_\fg\,\cong\, ((\pt_{0/W})_!\, (i^*_\ft\res_\ft\o\sign))\{-2n_G\}.
%(\pt_{0/G})_!\, i^*_\fg\,\cong\,((\pt_{0/W})_!\, i^*_\ft\res_\ft))^\sign\{-2n_G\}.}
\eeq

\vii There is   also a group analogue, a functor  $\res_T\colon D\abs(G/\aad G)\to D\abs(T/{W})$
with similar properties. 
\vskip 2pt
\viii For any smooth affine curve $C$ there is a  functor
$\res_C: D\abs(\Coh_\bv C)\to D\abs(C^\bv/\si_\bv)$ such that analogues of isomorphisms \eqref{xieq} hold.
Given an open imbedding $\eps: C_1\into C$, one
has 
an isomorphism of functors 
\beq{epep}\eps_{\Sym}^*\ccirc\res_C\cong\res_{C_1}\ccirc\eps^*_{\Coh},
\eeq
where $\eps_{\Coh}^*:\ D\abs(\Coh_\bv C)\to D\abs(\Coh_\bv C_1)$,
resp. $\eps_{\Sym}^*:\ D\abs(C^\bv/\si_\bv)\to D\abs(C_1^\bv/\si_\bv)$,
is the natural restriction.

\iv The functor $\res_C$ reduces to the functor $\res_\ft$, resp. $\res_T$,  in the case $G=\GL_\bv$ and $C=\AA$, resp. $C=\GG$.
\end{lem}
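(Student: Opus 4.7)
The plan is to treat all four parts uniformly, since each asserts the existence of a parabolic restriction functor of Grothendieck--Springer type together with three standard compatibilities. I will first carry out part (i) in detail and then indicate how (ii)--(iv) follow by transport of the same argument. Define $\res_\ft(\cf) := r_!(\nu_\b)_!\kappa_\b^*\cf$, viewed as an object of $D\abs(\ft/W)$ by equipping the right-hand side with a natural $W$-action. This $W$-action is the monodromy action of the Grothendieck--Springer correspondence: the composite $\b/B \to \ft/T \xrightarrow{r} \ft/W$ becomes an \'etale $W$-torsor after base change along $\ft^{\op{reg}} \to \ft^{\op{reg}}/W$, where $\ft^{\op{reg}}$ is the regular semisimple locus, giving a $W$-action on the restriction of $r_!(\nu_\b)_!\kappa_\b^*\cf$ to $\ft^{\op{reg}}$. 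I then extend this action to the full $\ft$ by passing to the fiber product $\b/B \times_{\ft/W} \ft$ along the universal $W$-cover $q$: the $W$-action is manifest on the base change, and descent along $q$ produces $\res_\ft(\cf) \in D\abs(\ft/W)$. The first isomorphism in \eqref{xieq} then holds tautologically, since $q^*$ exactly forgets the $W$-equivariant structure.

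The second and third isomorphisms in \eqref{xieq} follow from classical Grothendieck--Springer--Lusztig identities, proved by proper base change along \eqref{ww}. For the second, the Chevalley identification $\g\dsl G = \ft\dsl W$ yields ${\mathfrak w}_\g\circ\kappa_\b = {\mathfrak w}_\ft\circ r\circ\nu_\b$, whence $({\mathfrak w}_\ft)_!\res_\ft\cf \cong ({\mathfrak w}_\g)_!(\kappa_\b)_!\kappa_\b^*\cf \cong ({\mathfrak w}_\g)_!\bigl(\cf \otimes (\kappa_\b)_!\qlb\bigr)$; taking $W$-invariants and using the classical fact that $\bigl((\kappa_\b)_!\qlb\bigr)^W \cong \qlb_{\g/G}$ (the trivial-isotypic summand of the Grothendieck--Springer sheaf is constant) gives the required identity. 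The third is proved by base change along $i_\g$ and $i_\ft$, reducing to the cohomology of the zero Springer fiber $G/B$: the $\sign$-isotypic part of $H^\hdot_c(G/B,\qlb)$ is one-dimensional and concentrated in top degree $2n_G$, which accounts simultaneously for the shift $\{-2n_G\}$ and for the $\sign$ twist on the right-hand side.

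Parts (ii)--(iv) run parallel. Part (ii) is (i) applied to the multiplicative Grothendieck--Springer diagram $G/\aad G \leftarrow B/\aad B \rightarrow T/W$, with $G\dsl G \cong T\dsl W$ in place of the Chevalley isomorphism for the Lie algebra. For part (iii), introduce the stack $\wt{\mathfrak{Coh}}_\bv(C)$ parametrizing torsion coherent sheaves on $C$ equipped with a complete flag $0 = M_0 \subset M_1 \subset \cdots \subset M_{|\bv|} = M$ whose successive quotients are simple skyscrapers; it fits into a diagram
\[
\Coh_\bv(C) \xleftarrow{\kappa_C} \wt{\mathfrak{Coh}}_\bv(C) \xrightarrow{\nu_C} C^\bv/\si_\bv
\]
which becomes an \'etale $\si_\bv$-torsor upon base change to the multiplicity-free locus of $C^\bv/\si_\bv$; repeating the argument of (i) yields $\res_C$. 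The compatibility \eqref{epep} is proper base change for the Cartesian square $\wt{\mathfrak{Coh}}_\bv(C_1) = \wt{\mathfrak{Coh}}_\bv(C) \times_{\Coh_\bv(C)} \Coh_\bv(C_1)$. Finally, (iv) is immediate from the canonical identifications $\Coh_\bv(\AA) \cong \gl_\bv/\GL_\bv$ (sending a torsion module to the action of the coordinate function $t$) and $\Coh_\bv(\GG) \cong \GL_\bv/\aad \GL_\bv$: under these, $\wt{\mathfrak{Coh}}_\bv$ becomes the (additive or multiplicative) Grothendieck--Springer resolution for $\GL_\bv$, and the two constructions of $\res$ coincide term by term.

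The main obstacle is the extension of the $W$-equivariant structure from the regular semisimple locus to all of $\ft$: one must either invoke the decomposition theorem and track the $W$-action summand by summand, or carefully verify descent along the non-\'etale cover $q: \ft \to \ft/W$. The latter route is preferable because it extends cleanly to the curve setting of (iii), where no analogue of Springer's perverse-sheaf decomposition is directly available.
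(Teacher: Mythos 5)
Your overall plan is the right one, but there is a genuine gap in the central construction, and the justification you offer for the route you take is incorrect.

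The crux of the lemma is constructing the $W$-equivariant structure on $r_!(\nu_\b)_!\kappa_\b^*\cf$. You correctly flag this as ``the main obstacle'' and propose to handle it by ``carefully verifying descent along the non-\'etale cover $q:\ft\to\ft/W$,'' preferring this over a decomposition-theorem argument on the grounds that it ``extends cleanly to the curve setting of (iii), where no analogue of Springer's perverse-sheaf decomposition is directly available.'' Both halves of this reasoning fail. First, $q$ is ramified along the root hyperplanes, so there is no descent of sheaves along $q$ in the usual sense: a $W$-equivariant structure on the regular locus $\ft^{\op{reg}}$ does not automatically prolong across the walls, and you give no mechanism by which it would. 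What the paper actually exploits is the \emph{smallness} of $\bar\pi:\wt\g\to\g\times_{\ft\dsl W}\ft$ (a theorem of Lusztig), which forces $\bar\pi_!\C_{\wt\g}$ to be the intermediate extension $\op{IC}(U)[-\dim U]$; the $G\times W$-equivariant structure on the open dense piece $\C_U$ then extends \emph{uniquely} to the IC-sheaf, and descends to $\pi_!\C_{\wt\g/G}$. Your approach omits the uniqueness-of-intermediate-extension input that makes the prolongation canonical. Second, the claimed advantage in the curve case is spurious: the paper proves part (iii) precisely by observing that $\wt\Coh_\bv C\to\Coh_\bv C\times_{C^\bv\dsl\si_\bv}C^\bv$ is small, since the Grothendieck--Springer diagram for $C$ is \'etale-locally isomorphic to the one for $C=\AA$. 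So smallness \emph{is} available on the curve and is the mechanism the paper uses there.

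A secondary undersketch: for the third isomorphism in \eqref{xieq} you reduce to ``the $\sign$-isotypic part of $H^\hdot_c(G/B,\qlb)$ is one-dimensional in top degree.'' This is the stalk of the (sign-twisted) Springer sheaf at $0$, but the argument in fact needs the stronger Springer-correspondence statement $i_{\mathcal N}^*\kap'_!(\op{IC}\o(pr'_2)^*\textit{Sign})=(i_0)_*\C_{0/G}[-2n](-n)$, i.e.\ that the sign-isotypic component is \emph{supported} only at the zero orbit inside all of $\mathcal{N}$, not merely that its fiber at $0$ has rank one; the one-dimensionality of the stalk by itself does not rule out contributions from other nilpotent orbits. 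Once that is granted, the rest of your base-change manipulations parallel the paper's and go through.
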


Let  $\wt \g$ be the variety of pairs $(\b,g)$ such that $\b$ is a Borel
subalgebra
and $g$ is an element of $\b$.
It is convenient to reformulate the lemma in terms of the   Grothendieck-Springer
resolution $\wt \g\to \g,\ (\b,g)\mto g$.
This map is  $G$-equivariant, hence descends to a morphism
$\wt\kappa: \wt \g/G\to \g/G$.
There is a natural isomorphism $\wt \g/G\cong \b/B$,
of quotient stacks. Therefore, one also has  a morphism
$\wt \nu: \wt \g/G= \b/B\to \ft/T=\ft\times\pt/T,\ (\b,g)\mto \nu_\b(g)$.
Thus we have
the following commutative  diagram:
\beq{GGTT}
\xymatrix{
&&\wt \g/G\
\ar[dll]_<>(0.5){\wt\kappa}\ar[d]^<>(0.5){\pi}
\ar[r]^<>(0.5){\wt\nu}&
\ \ft\times\pt/T\
\ar[d]^<>(0.5){r}
& 
\\
\g/G\ \ar[d]_<>(0.5){{\mathfrak w}_\g}
&&\  (\g/G)\times_{\ft\dsl W} \ft\
\ar[ll]_<>(0.3){\kappa}\ar@{}[dr]|{\Box}
\ar[d]^<>(0.5){\pi'}
\ar[r]^<>(0.5){pr_2}
&\  \ft\ar[d]^<>(0.5){q}
&\\
%}$$ $$\xymatrix{
\g\dsl G=\ft\dsl W\ &&(\g/G) {\times_{\ft\dsl W}} (\ft/W)\
\ar[ull]_<>(0.3){\kappa'}\ar[r]^<>(0.5){pr_2'}\ar[ll]_<>(0.5){pr}
&\ \ft/W\ \ar[r]^<>(0.5){{\mathfrak w}_\ft}&
\ \ft\dsl W
}
\eeq

%Note that the restriction of each of the maps in the bottom row 
%to the preimage of $\g^{\op{rs}}\dsl  G$,
%a Zariski open subset of $\g\dsl  G$, is an isomorphism.

%Below, we   also consider an equivariant counterpart of this
%functor that involves the following diagram
%of  quotient stacks:
%\[
%\g/ G
%\xla {\mu_{\b/B}} \b/ B \xra {\nu_{\b/B}} \ft/ T \xla {c_\ft} \ft \xra {c_\ft} \ft/W. 
%\]
%There is also  an equivariant counterpart of 
%diagram \eqref{spr}.

%\redd{There is an isomorphism 
%of $\fr$-equivariant complexes 
%$\RGam(G_\bv/ G_\bv,\ \mathcal S_\chi^\vee\o \mathcal R_\bv)\ \cong\
%\RGam(T/ T,\ \mathcal S_\chi^\vee|_{\GG^\bv}\o \mathcal R_{\bv, T} )^{{W}_\bv}.
%$}
%of the sheaves $\mathcal S_\chi$ and $R_\bv$ coincides with the complex of vector spaces $\RHom_{T/T}(\mathcal S^{\prime}_\chi , R_{\bv, T} )^{{W}_\bv}$ in the derived category of $\ell$-adic sheaves on a point, where $\mathcal S_\chi^{\prime}$ is the corresponding character sheaf on $T$.

%a=i_G, b=p_T

\begin{proof}[Proof of Lemma \ref{xi}] 
We have a chain of canonical isomorphisms of functors:
\beq{rnu}r_!\,(\nu_\b)_!\,\kappa^*_\b=r_!\,\wt\nu_!\,\wt\kappa^*\,=\,(\pr_2)_!\,\pi_*\,\wt\kappa^*\,=\,
(\pr_2)_!\,\pi_*\,\pi^*\,\kappa^*\,=\,
(pr_2)_!\,\big(\nu_!\,\kappa^*\,\o\ \pi_!\C_{\wt\g/G} \big)
\quad\eeq
where  the last  isomorphism follows from
the projection formula and we have used that  $\pi_!=\pi_*$.

%and we have used the projection formula.
%Thus, by base change, we obtain, cf. \eqref{GT},
%\[c_\ft^*\nu_!\mu^*=c_\ft^*( p_2)_!(p_1^*\,\o \wt\nu_!\wt\nu^*\C )=
%(\pi_2)_!(\Id\times c_\ft)^*(p_1^*\,\o \wt\nu_!\wt\nu^*\C )=
%(\pi_2)_!((\pi_2)_1^*\o (\Id\times c_\ft)^*\wt\nu_!\wt\nu^*\C ).\]

The sheaf $\pi_!\C_{\wt\g/G}$ can be equipped with a $W$-equivariant structure
as follows. Let $\g^{rs}$ be the set of  regular semisimple
elements of $\g$ and  $U:=(\g^{rs}/G )\,\times_{\ft\dsl W}\,\ft$.
The morphism $\bar\pi: \wt \g\to \g\times_{\ft\dsl W}\,\ft$,
a non-stacky counterpart of $\pi$,
is a proper morphism which  is an isomorphism over
$U$, moreover,
this morphism is known to be small \cite[Section 3]{L1}.
It follows that $\bar\pi_!\C_{\wt\g}=\op{IC}(U)[-\dim U]$,
where $\op{IC}(U)$ is the IC-sheaf corresponding to the constant sheaf
$\C_U$. The natural $G\times W$-equivariant structure
on $\C_U$ induces one on $\op{IC}(U)$ and, hence, on $\bar\pi_!\C_{\wt\g}=\op{IC}(U)[-\dim U]$.
It is clear that the sheaf $\pi_!\C_{\wt\g/G}$ on $(\g/G)\times_{\ft\dsl W}\,\ft$ is, up to a shift, obtained 
from $\bar\pi_!\C_{\wt\g}$ by $G$-equivariant descent. Since the actions of $G$ and $W$ on $\g\times_{\ft\dsl W}\,\ft$
commute, the $W$-equivariant structure
on $\bar\pi_!\C_{\wt\g}$ induces a  $W$-equivariant structure on $\pi_!\C_{\wt\g/G}$.
Thus, there is a canonically defined sheaf $\IC$ on
$(\g/G)\times_{\ft\dsl W}\,(\ft/W)$ such that $\pi_!\C_{\wt\g/G}=(\pi')^*\op{IC}$.

We now define the required funtor  $\res_\ft$ as follows
\[\res_\ft(\cF):=(pr'_2)_!((\kappa')^*\cF\,\o\,\op{IC}).\]
Using base change with respect to the cartesian square
at the bottom right corner of diagram \eqref{GGTT}  we  deduce
\begin{align*}
q^*\res_\ft=q^*(pr'_2)_!\big((\kappa')^*\o\,\op{IC})
&=(\pr_2)_!(\pi')^*((\kappa')^*\,\o\,\op{IC}(U/{W})\big)\\
&=(\pr_2)_!\big((\pi')^*(\kappa')^*\,\o\,(\pi')^*\op{IC}\big)\\
&=(\pr_2)_!\big(\kappa^*\,\o\,\op{IC}(U)\big)=(\pr_2)_!\big(\kappa^*\,\o\,
\pi_!\C_{\wt\g/G})\, \stackrel{\eqref{rnu}}{=\!=}r_!\,(\nu_\b)_!\,\kappa_\b^*,
\end{align*}
proving the first isomorphism in \eqref{xieq}.

To prove  the second  isomorphism in \eqref{xieq},
 we observe that the natural $W$-action on
$\kap_!\pi_!\C_{\wt\g/G}$ gives, for any sheaf $\cf\in D\abs(\g/G)$,
a  $W$-action on 
\[\wt\kap_!\wt\kap^*\cf=
\cf\o \wt\kap_!\wt\kap^*\C_{\wt\g/G}=
\cf\o \kap_!\pi_!\pi^*\C_{\g/G}.\]
Furthermore, the
 well  known isomorphism
$(\kap_!\pi_!\C_{\wt\g/G})^W=\C_{\wt\g/G}$ yields
\[(\wt\kap_!\wt\kap^*\cf)^W
=
\cf\o (\kap_!\pi_!\C_{\wt\g/G})^W=
\cf\o \C_{\g/G}
=\cf.\]
We deduce that $(\fw_\g)_!\cf=(\fw_\g)_!(\wt\kap_!\,\wt\kap^*\cf)^W=
((\fw_\g\ccirc\wt\kap)_!\,\wt\kap^*\cf)^W$.
On the other hand, diagram \eqref{GGTT}
shows that $\fw_\g\ccirc\wt\kap=\fw_\ft\ccirc q\ccirc r\ccirc\wt\nu$.
Therefore, we have an isomorphism of functors

\[(\fw_\g\ccirc\wt\kap)_!\,\wt\kap^*=
(\fw_\ft)_!\, q_!\, r_!\,\wt\nu_!\wt\kap^*=
(\fw_\ft)_!\, q_!\,q^*\,\res_\ft,\]
by the first isomorphism in \eqref{xieq}.
The proof of the second  isomorphism in \eqref{xieq} is now
completed by the following
chain of isomorphisms of functors
\begin{align*}
(\fw_\g)_!&=((\fw_\g\ccirc\wt\kap)_!\,\wt\kap^*(-))^W
=((\fw_\ft)_!\, q_!\,q^*\,\res_\ft(-))^W\\
&=(\fw_\ft)_!\, (\res_\ft(-)\o (q_!q^*\C_{\ft/W})^W)
=((\fw_\ft)_!\, (\res_\ft(-)\o \C_{\ft/W})
=(\fw_\ft)_!\,\res_\ft.
\end{align*}

Next, let ${\mathcal N}\sset \g$ be the nilpotent variety.
Let $i_0:  0/G\into  {\mathcal N}/G$ and $i_{\mathcal N}: {\mathcal N}/G\into \g/G$
be the natural imbeddings, and $\pt_{\mathcal N}: {\mathcal N}/G\to \pt$.
According to the theory of Springer representations, one has
$$
i_{\mathcal N}^*\kap'_!\big(\op{IC}\o (pr'_2)^*\textit{\small{Sign}}\big)\ =\  (i_0)_*\C_{\{0/G\}}[-2n](-n),
$$
where $n=n_G$ and the shift is due to the fact that $\dim{\mathcal N}=2n$.
We deduce
\[i_{\mathcal N}^*\kap'_!\big((\kap')^*\cf\o \op{IC}\o (pr'_2)^*\textit{\small{Sign}}\big)\ =\  
(i_0)_*i_\g^*\cf[-2n](-n).
\]

Note that $(\kap')\inv({\mathcal N}/G)= {\mathcal N}/G\times 0/W=
(pr'_2)\inv(0/W)$.  Let $\iota:\  {\mathcal N}/G\times 0/W\into (\g/G)\,\times_{\ft\dsl W}\,(\ft/W)$
be the imbedding. Since
$(\pt_{\g/G})_!i_\g^*=(\pt_{\mathcal N})_!\, (i_0)_!\,i_\g^*$,
using  the isomorphism above and base change, we find
\begin{align*}
(\pt_{\g/G})_!i_\g^*\cf&=(\pt_{\mathcal N})_!\, (i_0)_*\,i_\g^*\cf
=(\pt_{\mathcal N})_!\, i_{\mathcal N}^*\kap'_!\big((\kap')^*\cf\o \op{IC}\o (pr'_2)^*\textit{\small{Sign}}\big)[2n](n)\\
&=(\pt_{\mathcal N}\boxtimes\pt_{\ft/W})_!\iota^*\big[(\kap')^*\cf\o \op{IC}\o (pr'_2)^*\textit{\small{Sign}}\big)[2n](n)\\
&=(\pt_{\ft/W})_!\,i_\ft^*\,(pr'_2)_!\big((\kap')^*\cf\o \op{IC}\o (pr'_2)^*\textit{\small{Sign}}\big)[2n](n)\\
&=(\pt_{\ft/W})_!\,i_\ft^*\,(\res_\ft\cf\o\textit{\small{Sign}})[2n](n).
\end{align*}
The third isomorphism in \eqref{xieq} follows.
The proof of part (ii) is left for the reader.

To prove (iii) one replaces $\wt\g/G$ by the stack $\wt \Coh_\bv C$ be the stack whose objects are
complete flags $0=M_0\sset M_1\sset\ldots \sset M_d=M$, where $M_j$ is a length $j$ coherent sheaf on $C$.
One has a natural diagram $\Coh_\bv C\xleftarrow{\wt\kap}\wt\Coh_\bv C\xrightarrow{\wt\nu} C^\bv$ that reduces to the diagram
$\g/G\xleftarrow{\wt\kap}\wt\g/G\xrightarrow{\wt\nu} \ft$ in the case where $C=\AA$ and $\g=\gl_\bv$. The diagram for a general curve $C$ is isomorphic,
\'etale locally, to the diagram for $C=\AA$. In particular, the map $\wt\Coh_\bv C\to \Coh_\bv C\times_{C^\bv\dsl \si_\bv}C^\bv$ is small.
The isomorphism of part (iii) is an immediate consequence of base change
in the following diagram with cartesian squares:
\[\xymatrix{
\ {\Coh_\bv C_1}_{}\ \ar@{^{(}->}[d]&\ {\wt\Coh_\bv C_1}_{}\ \ar@{^{(}->}[d]\ar[l]\ar[r]&
\ {C_1^\bv}_{}\ \ar[r]\ar@{^{(}->}[d]&\
\ {C^\bv_1/\si_\bv}_{}\
\ar@{^{(}->}[d]\\
{\Coh_\bv C}^{}\ &\ {\wt\Coh_\bv C}^{}\ \ar[l]\ar[r]&
\ {C^\bv}^{}\
\ar[r]&\ {C^\bv/\si_\bv}^{}\
%\ar[r]^<>(0.5){\fw_\ft}&\ \ft\dsl W
}
\]

Generalization to the the $I$-graded case is straightforward, as well as the proof of (iv), is straightforward.
\end{proof}

\begin{rem}\label{ind-rem} One can also define an induction functor
${\mathfrak{ind}}: \ D\abs(\ft/W)\to D\abs(\g/G)$ by the formula
${\mathfrak{ind}}(\cf)=(\kap'_!((pr_2')^*\cf\o \op{IC}))^W$.
\erem
%\beq{tt}
%\xymatrix{
%\g/G&\fb/B\ar[l]_<>(0.5){\mu_{\fb}}\ar[r]^<>(0.5){\nu_{\fb}}&
%\ft/T&\ft\qquad\text{resp.}\qquad\g^*/G\ar[l]_<>(0.5){q_{\ft}}
%\ar[r]^<>(0.5){\mu_{\fb}^\top}&\fb^*/B&
%\ft/T\ar[l]_<>(0.5){\nu_{\fb}^\top}&\ft\ar[l]_<>(0.4){q_{\ft^*}}
%}
%\eeq
%\[\g/G\ \xleftarrow{\mu_{\fb}}\ \fb/B\ 
%\xrightarrow{\nu_{\fb}} 
%\ \ft/T\ \xleftarrow{a_T}\ \ft\ \xrightarrow{a_W}\ \ft/W\quad\text{resp.}\quad
%\g^*/G\ \xrightarrow{\mu_{\fb}^\top}\ \fb^*/B\ 
%\xleftarrow{\nu_{\fb}^\top}
%\ \ft^*/T\ \xleftarrow{b_T}\ \ft^*\ \xrightarrow{b_W}\ \ft^*/W.
%\]

Next, we dualize the maps in the top row of diagram
\eqref{ww}. Thus, writing $(-)^\top$ for the transposed map,
we get a diagram

Let $\FDN_{\g/G}:\ D\abs(\g/G)\to D\abs(\g^*/G)$ be the Fourier-Deligne transform,
that is, a relative Fourier trasform on the vector bundle
$\g/G\to\pt/G$.
With obvious modifications of the notation of Lemma \ref{xi} we have the following result.

\begin{lem}   \label{qfourierr} 
There exists  a functor  $\res_{\ft^*}\colon D(\g^*/G)\to D(\ft^*/W)$,
such that 
analogues of the three  isomorphisms in \eqref{xieq} hold and, in addition,
there is an isomorphism of functors
%$(r_{\ft^*})_!\,(\nu_\b^\top)^*\,(\kappa_\b^\top)_!\,=\, (q_{\ft^*})^*\,\res_{\ft^*}$
%and
%\begin{gather}
% ({\mathfrak w}_{\g^*})_!\cong ({\mathfrak w}_{\ft^*})_!\res_{\ft^*},\qquad
%(\pt_{\g^*})_!\, i^*_{\fg^*}\,\cong\,(\pt_{\ft^*})_!\, i^*_{\ft^*}\,(\sign\o)\,\res_{\ft^*}[2n](n).
%\label{fo1}\\
\beq{fo2}
\FDN_{\ft/W}\ccirc\res_\ft\cong\sign\o(\res_{\ft^*}\ccirc\FDN_{\g/G}).
\eeq
\end{lem}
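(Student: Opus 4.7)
The plan is to mirror the construction of $\res_\ft$ from Lemma \ref{xi} using the dual Grothendieck--Springer resolution, transcribe the three analogues of \eqref{xieq}, and then deduce the Fourier compatibility \eqref{fo2} via a relative version of the Hotta--Kashiwara theorem. The main obstacle will be the Fourier compatibility, which requires tracking the sign twist through the small-map IC normalization.

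For the first step, I would introduce the dual Grothendieck--Springer resolution $\wt{\g^*}$ as the variety of pairs $(\fb,\xi) \in \mathcal{B}\times\g^*$ satisfying $\xi|_{[\fb,\fb]}=0$; equivalently, $\wt{\g^*}/G \cong \mathfrak{n}^\perp/B$, where $\mathfrak{n}=[\fb,\fb]$ is the nilradical of $\fb$. This fits into a commutative diagram analogous to \eqref{GGTT}: the projection $\wt{\g^*}/G \to \g^*/G$ is proper and generically a $W$-torsor over the regular semisimple locus $\g^{*,rs}/G$, and the map $\wt{\g^*}/G \to \ft^*/T$ induced by $\xi\mapsto \xi|_\ft$ is a vector bundle, well-defined since $\fb/[\fb,\fb]=\ft$. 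Smallness of the non-stacky map $\wt{\g^*} \to \g^*\times_{\ft^*\dsl W}\ft^*$ follows by the same stratum-by-stratum dimension count as in the proof of Lemma \ref{xi}. With this data, $\res_{\ft^*}$ is defined by the same recipe as $\res_\ft$, and the three analogues of \eqref{xieq} follow from verbatim adaptations of the arguments there.

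The essential input for \eqref{fo2} is a relative variant of the Hotta--Kashiwara theorem: under the Fourier--Deligne transform, the Grothendieck--Springer induction functor $\wt\kap_!\,\wt\kap^*$ on $\g/G$ is intertwined with its cotangent-side analogue built from $\wt{\g^*}$, up to a sign twist. The argument uses that the two families $\wt\g \to \ft$ and $\wt{\g^*} \to \ft^*$ are related by a fiberwise Fourier duality, in the sense that the restriction $\xi|_\ft$ and the projection $x\mapsto x \mod [\fb,\fb]$ are Fourier-dual. Combining this relative Fourier transform with the first identity of \eqref{xieq} (applied for both $\res_\ft$ and $\res_{\ft^*}$) yields the required intertwining. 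The hardest part is tracking the sign: it reflects the classical discrepancy between the Springer representation on the cohomology of Springer fibers and the Grothendieck--Springer representation, and is encoded in the comparison of the two small-map IC sheaves on $\g\times_{\ft\dsl W}\ft$ and $\g^*\times_{\ft^*\dsl W}\ft^*$. This ultimately reduces, via base change to the regular semisimple locus combined with a rank-one computation, to a direct verification that we expect to produce exactly the sign character of $W$.
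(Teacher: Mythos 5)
Your construction of $\res_{\ft^*}$ via $\wt{\g^*}/G\cong\mathfrak n^\perp/B$ and the smallness argument is exactly the paper's; and your appeal to a Hotta--Kashiwara-type Fourier compatibility between the two Grothendieck--Springer families is the same mechanism the paper uses, which it packages as the Killing-form identification of $\mathfrak n^\perp/B\to \ft^*/T$ with $\b/B\to\ft/T$ and then delegates to Brylinski \cite{Br}. The one place your sketch falls short of the paper's is the sign twist: you say you ``expect'' the rank-one computation over the regular semisimple locus to produce $\sign$, but you do not actually establish it; the paper closes this gap by citing Brylinski, where the sign arises from comparing the two natural $W$-actions on the Springer/Grothendieck sheaves under Fourier transform, so a clean treatment should either reproduce that argument or cite it rather than leave it as an expectation.
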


\begin{proof} The construction of the functor
$\res_{\ft^*}$   mimics the proof of Lemma
\ref{xi}. Further, let ${\mathfrak n}=[\b,\b]$ be the nilradical of $\b$.
The proof of \eqref{fo2} is well known, 
it is based on the fact that the Killing form provides an identification of the 
following natural diagram
$$
%\beq{ww*}
\xymatrix{
\g^*/G\ \ar[d]_<>(0.5){{\mathfrak w}_{\g^*}}& {\mathfrak n}^\perp/B
\ar@{_{(}->}[l]
 \ar@{->>}[r]&
\ \ft^*/T\ar@{=}[r]&\ft^*\times{B} T\
\ar[d]^<>(0.5){r_{\ft^*}} 
\\
 \g^*\dsl G\ar@{=}[r]&\ft^*\dsl W\ &\ \ft^*/W \ \ar[l]_<>(0.5){{\mathfrak w}_{\ft^*}}&
\ \ft^*\ \ar[l]_<>(0.5){q_{\ft^*}}
}
$$%\eeq
with the rectangle in the middle of diagram \eqref{ww}.
We refer to Brylinski's paper  \cite{Br} for details.
\end{proof}

%\begin{rem} It is instructive to illustrate isomorphism \eqref{fo2} by the following computation.
%Let  $i_{0/G}: 0/G\into \g^*/G$,
%resp. $i_{0/W}: 0/W\into \ft^*/W$,  denote the imbedding
%and write $U$ for the unipotent radical
%of a Borel subgroup of $G$. 

%The Fourier-Deligne transform 
%functor $\FDN_\g: D\abs(\g/G)\to D\abs(\g^*/G)$ respects perverse t-structures. 
%Therefore,  one has
%$\FDN_\g(\C_{\g/G}[\dim \g/G])=(i_{0/G})_!\C_{0/G}[\dim 0/G]$.
%Note that $\dim(\pt/G)=-\dim G$, resp. $\dim \g/G=0$ and $2\dim U=\dim G-\rk G$.
%Thus, disregarding $W$-equivariant structures
%and Tate twists, we compute
%\%begin{align*}
%\res_{\ft^*}\FDN_\g\C_{\g/G}&=\res_{\ft^*}(i_{0/G})_!\C_{0/G}[\dim 0/G]\\
%&=H^\hdot_c(\pt/T)\o H^\hdot_c(0/U)\o (i_{0/W})_!\C_{0/W}[-\dim G]\\
%&=
%H^\hdot_c(\pt/T)\o (i_{0/W})_!\C_{0/W}[2\dim U][-\dim G]=H^\hdot_c(\pt/T)\o (i_{0/W})_!\C_{0/W}[-\rk G].
%\end{align*}
%Similarly, using that $\FDN_\ft(\C_{\ft/W}[\rk G])= (i_{0/W})_!\C_{0/W}$, we find
%\begin{align*}
%\FDN_\ft\res_\ft\C_{\g/G}=\FDN_\ft\big(H^\hdot_c(\pt/T)\o \C_{\ft/W}\big)=H^\hdot_c(\pt/T)\o \FDN_\ft\C_{\ft/W}=
%H^\hdot_c(\pt/T)\o (i_{0/W})_!\C_{0/W}[-\rk G].
%\end{align*}
%\end{rem}

Later on, we will also need the following result.
Let $\chi: G\to \GG$ be a morphism of algebraic groups
and $\wp$ a local system on $\GG$. The local system
$\chi^*\wp$, on $G$, is clearly $\Ad G$-equivariant,
hence it descends to a local system
$\wp^G$ on $G/\aad G$. The morphism $\chi$ gives, by restriction,
a $W$-invariant morphism $\chi_T: T\to\GG$.
The corresponding local system $\chi_T^*\wp$ is therefore
$W$-equivariant, hence it descends to a local
system $\wp^T$ on $T/W$.

\begin{cor} With the above notation, for any $\cf\in D\abs(G/\aad G)$,
  there is a canonical isomorphism
\beq{align2}
\RGam_c(G_\bv/\aad G_\bv,\ \wp^G\o\eps^*_G\cf)\ccong
\RGam_c(T_\bv/\si_\bv,\ \wp^T\o \eps^*_T\res_\ft\cf).
\eeq
\end{cor}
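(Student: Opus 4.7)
The plan is to combine three ingredients: the second identity of \eqref{xieq} in its group form (part (ii) of Lemma \ref{xi}), the projection formula applied to the character local system, and the compatibility \eqref{epep} of parabolic restriction with open restriction along $\eps$. I interpret $\cf$ as living on $\gl_\bv/\GL_\bv$, so that $\eps^*_G\cf$ is its restriction to $\GL_\bv/\aad\GL_\bv$ and $\res_\ft\cf$ makes sense (the symbol $\res_\ft$ in the Corollary should then really be read as $\res_T$ in the group setup of Lemma \ref{xi}(ii) once one applies \eqref{epep}).

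First I would push the LHS to the GIT quotient using the factorization $\pt_{G/\aad G}=\pt_{G\dsl G}\ccirc\fw_G$, so that
\[
\RGam_c(G/\aad G,\ \wp^G\o\eps_G^*\cf)\ =\ (\pt_{G\dsl G})_!\,(\fw_G)_!(\wp^G\o\eps_G^*\cf).
\]
Applying the group analogue of the second isomorphism in \eqref{xieq}, namely $(\fw_G)_!\cong((\fw_T)_!\res_T)^W$, this becomes $(\pt_{G\dsl G})_!\,\big((\fw_T)_!\res_T(\wp^G\o\eps_G^*\cf)\big)^W$.

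The key computation is then to check $\res_T(\wp^G\o\eps_G^*\cf)\cong\wp^T\o\res_T(\eps_G^*\cf)$. For this I would use that a character $\chi:G\to\GG$ is automatically conjugation-invariant, so $\wp^G$ descends to a local system $L_\chi$ on the GIT quotient $G\dsl G=T\dsl W$ with $\wp^G=\fw_G^*L_\chi$ and $\wp^T=\fw_T^*L_\chi$. In the definition $\res_T(\cg)=(pr'_2)_!((\kap')^*\cg\o \IC)$ from the proof of Lemma \ref{xi}, both $(\kap')^*\wp^G$ and $(pr'_2)^*\wp^T$ are pull-backs of $L_\chi$ from $T\dsl W$, hence agree. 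The projection formula for $pr'_2$ then yields the desired identity. Next I apply the group-level version of \eqref{epep} (part (iii) of Lemma \ref{xi}, via the identification $\res_{\AA^1}=\res_\ft$, $\res_{\GG_m}=\res_T$ of part (iv)) to get $\res_T(\eps_G^*\cf)\cong\eps_T^*\res_\ft\cf$.

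Putting the ingredients together:
\begin{align*}
\RGam_c(G/\aad G,\ \wp^G\o\eps_G^*\cf)
&= (\pt_{G\dsl G})_!\,\big((\fw_T)_!(\wp^T\o\eps_T^*\res_\ft\cf)\big)^W\\
&= \big((\pt_{T\dsl W})_!\,(\fw_T)_!(\wp^T\o\eps_T^*\res_\ft\cf)\big)^W\\
&= \RGam_c(T/W,\ \wp^T\o\eps_T^*\res_\ft\cf),
\end{align*}
where in the second line I commute $(-)^W$ past $(\pt_{G\dsl G})_!$ because the $W$-action is trivial on the target $G\dsl G=T\dsl W$, and the last equality uses the standard identification of compactly supported cohomology of a stacky quotient $T/W$ with $W$-invariants of the equivariant cohomology. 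The main obstacle I anticipate is checking that the projection-formula manipulations respect all the equivariant and IC-sheaf data that enter the construction of $\res_T$ in the proof of Lemma \ref{xi}; however, since $L_\chi$ genuinely lives on $T\dsl W$ (and not merely on $T/W$), its pull-back commutes with every arrow in diagram \eqref{GGTT}, which should make this step essentially automatic.
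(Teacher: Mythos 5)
Your proof is correct and takes essentially the same approach as the paper: both arguments reduce to (a) the cohomology identity $\RGam_c(G/\aad G,-) \cong \RGam_c(T/W, \res_T(-))$ coming from Lemma \ref{xi}(ii), (b) the projection-formula compatibility $\res_T(\wp^G\o\ce)\cong\wp^T\o\res_T(\ce)$, and (c) the compatibility $\res_T\ccirc\eps_G^*\cong\eps_T^*\ccirc\res_\ft$. The only genuine variation is in how you establish (b): you observe that $\wp^G=\fw_G^*L_\chi$ and $\wp^T=\fw_T^*L_\chi$ both pull back from a common local system on $G\dsl G=T\dsl W$ and apply the projection formula to the explicit IC-sheaf formula for $\res_T$ on the fiber product $(G/\aad G)\times_{T\dsl W}(T/W)$, whereas the paper instead factors through the Borel quotient $B/\aad B$, exhibiting $\kappa_B^*\wp^G\cong\wp^B\cong\nu_B^*\wp^T$ and applying the projection formula to $(\nu_B)_!$; both are equivalent expressions of the $\Ad$-invariance of the character $\chi$, and yours is arguably a bit more direct since it avoids the Borel entirely. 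You also correctly noticed that the hypothesis in the statement should read $\cf\in D\abs(\g/G)$ rather than $\cf\in D\abs(G/\aad G)$, since $\res_\ft\cf$ is formed and $\eps_G^*\cf$ is its open restriction.
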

\begin{proof} Let $\chi_B: B\to \GG$ be the restriction
of $\chi$ to $B$. As above, the local system $\chi_B^*\wp$ descends
to a local system $\wp^B$  on $B/\aad B$.  Clearly, we have
$\mu_B^*\wp^G\cong\wp^B\cong \nu^*_B\wp^T$.
Using the composite isomorphism
$\mu_B^*\wp^G\cong\nu^*_B\wp^T$, for any $\ce\in D\abs(G/\aad G)$,
we compute
\[(\nu_B)_!\mu_B^*(\wp^G\o\ce)=
(\nu_B)_!(\mu_B^*\wp^G\o\mu_B^*\ce)=
(\nu_B)_!(\nu^*_B\wp^T\o\mu_B^*\ce)=
\wp^T\o(\nu_B)_!\mu_B^*\ce,\]
were the last isomorphism is  the projection formula.
We deduce that $\res_T(\wp^G\o\ce)\ccong
\wp^T\o\res_T(\ce)$ canonically.

Now, given $\cf\in D\abs(\g/G)$, we apply the above isomorphism
for  $\ce:=\eps_G^*\cf$. We compute
\begin{align*}
%\RGam_c(G/\aad G,\ (p_{G/G})_!p_X^*\phi^*\wp_\psi)
\RGam_c(G/\aad G,\ \wp^G\o\eps^*_G\cf)
 &=\ \RGam_c(T/W,\ \res_T(\wp^G\o\eps^*_G\cf))
\qquad\text{(Lemma
  \ref{xi}) }\\
&=\ \RGam_c(T/W,\ \wp^T\o(\res_T\eps^*_G\cf))\\
& =\ \RGam_c(T/W,\ \wp^T\o(\eps^*_T\res_\ft\cf)).\qquad\text{(Lemma
  \ref{qfourierr})}
\qedhere
\end{align*}
\end{proof}

\begin{rem}\label{GL-pgl}
Below, we will use analogues of the above results in the case of $\GL_\bv$-equivariant
(rather than $\PGL_\bv$-equivariant) sheaves on $\pgl_\bv$, resp. $\PGL_\bv$. In such a case, the stack $\wt\g/G$ must be replaced
by $(\GL_\bv\times_{B_\bv} {\mathfrak{pb}}_\bv)/\GL_\bv\cong {\mathfrak{pb}}_\bv/B_\bv$,
where $B_\bv\sset \GL_\bv$ is the Borel subgroup of $\GL_\bv$ and ${\mathfrak{pb}}_\bv\sset \pgl_\bv$ is the
image of $\Lie B_\bv$ in $\pgl_\bv$, a Borel subalgebra of
$\pgl_\bv$. Similarly, the stack $\ft_\bv/\GG^\bv$, where $\ft_\bv=\AA^\bv/\AA$ is the Cartan subalgebra of
$\pgl_\bv$, plays the role of $\ft/T$.
\end{rem}
\subsection{Factorization sheaves on $\GL$ and $\gl$}
\label{gfact} In this subsection we write $G_\bv$ for $\GL_\bv$, resp. $\g_\bv$ for $\gl_\bv$.

We will consider factorization sheaves on the collection
of  Lie algebras $\g_\bv$, resp. groups ${G}_\bv$.
To this end, for any $\bv_1,\bv_2$, we define 
\beq{g12}
\g\he_{1,2}=
\{(a,b)\in \g_{\bv_1}\times \g_{\bv_2} \mid
a=(a_i)_{i\in I},\ b=(b_i)_{i\in I},\en \op{spec}(a_i)\cap\op{spec}(b_j)=\emptyset
\en\ \forall\ i,j\in I\},
\eeq
where $\op{spec}(g)$ stands for the set of eigenvalues of a matrix $g$.
It is clear that $\g\he_{1,2}$
is an  affine, ${G}_{\bv_1}\times {G}_{\bv_2}$-stable
Zariski open and dense
subset of  $\g_{\bv_1}\times \g_{\bv_2}$. 
Thus, we have   natural  maps
\[
\g\he_{\bv_1,\bv_2}\ \xrightarrow{\jmath_{\bv_1,\bv_2}}\ 
 \g_{\bv_1}/{G}_{\bv_1}\times \g_{\bv_2}/{G}_{\bv_2}\ 
\xrightarrow{\imath_{\bv_1,\bv_2}}\  \g_{\bv_1+\bv_2}/ {G}_{\bv_1+\bv_2},\]
where the first, resp. second, map is an open, resp. closed, imbedding.
For any triple $\bv',\bv'',\bv'''$,
of dimension vectors, we let
 $\g\he_{\bv',\bv'',\bv'''}$ be a subset of $\g_{\bv'}\times \g_{\bv''}\times \g_{\bv'''}$
formed by the
triples $\big((a_i)_{i\in I},\ (b_i)_{i\in I}$, $(c_i)_{i\in I}\big)$, 
of $I$-tuples,
where $a_i\in \g_{v_i'},b_i\in \g_{v_i''}, c_i\in \g_{v_i'''}$
such that any two of the three  sets $\cup_i\op{spec}(a_i)$,
$\cup_i\op{spec}(b_j)$, and  $\cup_i\op{spec}(c_i)$, are disjoint.

We have an open imbedding $\GL_\bv\into \gl_\bv$  of invertible matrices into the space of all
matrices, and induced imbeddings
$G_{\bv_1}\times G_{\bv_2}\into \g_{\bv_1}\times \g_{\bv_2}$, etc. Let
${G}\he_{\bv_1,\bv_2}:=\g_{\bv_1,\bv_2}\he \cap (G_{\bv_1}\times G_{\bv_2})$,
resp. ${G}\he_{\bv',\bv'',\bv'''}:=\g\he_{\bv',\bv'',\bv'''}\cap ({G}_{\bv'}\times
{G}_{\bv''}\times {G}_{\bv'''})$.
One can equivalently define $\g\he_{\bv',\bv'',\bv'''}$, resp. ${G}\he_{\bv_1,\bv_2}$,
as the preimage of $\AA\he_{\bv_1,\bv_2}\dsl(\si\times\si_{\bv_2}$, resp $\GG\he_{\bv_1,\bv_2}$, under the
Chevalley isomorphism $\g_{\bv_1}\dsl G_{\bv_1}\times \g_{\bv_2}\dsl G_{\bv_2} \cong \AA\dsl\si_{\bv_1}\times\AA^{\bv_2}\dsl\si_{\bv_2}$,
resp. 
$G_{\bv_1}\dsl\aad G_{\bv_1}\times G_{\bv_2}\dsl\aad G_{\bv_2}\cong \GG^{\bv_1}\dsl\si_{\bv_1}\times\GG^{\bv_2}\dsl
\si_{\bv_2}$.

The general notion of factorization sheaves on $\Coh C$ in the case $C=\AA$, resp. $C=\GG$, translates into the following
\begin{defn}\label{gfact-def}
\vi A {\em factorization sheaf} $\cF$ on $\gl$
is the data of a collection
  $(\cF_\bv)_{\bv\in\Z^I_{\geq0}}$, where
$\cF_\bv\in D(\g_\bv/{G}_{\bv})$, 
equipped,
for each pair $\bv',\bv''$, with a morphism
$\bar\vphi_{\bv',\bv''}:\ \imath_{\bv',\bv''}^*\cF_{\bv'+\bv''} \to 
\cF_{\bv'} \boxtimes \cF_{\bv''}$ such that analogues of 
\eqref{fact2} and \eqref{fac-diag}, where $\varphi$ is replaced by
$\bar\varphi$ and $C\he_{\bv_1,\bv_2}/(\si_{\bv_1}\times\si_{\bv_2})$
is replaced by $\g\he_{\bv_1,\bv_2}/({G}_{\bv_1} \times {G}_{\bv_2})$, hold.

%\vii A  factorization sheaf $\cF$  on $\{G}L$ is called a {\em strong
% factorization sheaf} if each morphism $\bar\vphi_{\bv',\bv''}$
%is an isomorphism itself and, moreover, the compatibility
%equations from Definition \ref{str-def}(ii) hold.
%
%\item A {\em factorization system  on $\{G}L$} is a collection
% $\ce=(\ce_\bv)$, of $\Ad {G}_\bv$-equivariant
% rank one local systems on ${G}_\bv$,
% equipped with a structure $(\bar\vphi_{\bv',\bv''})$
%of strong 
% factorization sheaf and with an additional
%data of isomorphisms 
%$\mphi_\bv:\ m_\bv^*\varpi^*_\bv\cs_{\bv} \iso
%\varpi^*_\bv\cs_{\bv} \boxtimes \varpi^*_\bv\cs_{\bv}$
%and compatibilities,
%as in Definition \ref{facsys}, where $\mphi_\bv$
%now stands for the map
%${G}_\bv\times {G}_\bv\to {G}_\bv$ given by multiplication.

\vii Equivariant factorization sheaves and factorization sheaves on
$\GL$ are defined  similarly.
\end{defn}

\begin{rem} We will not use the notion of `weak factorization sheaf' on
  either
${G}$ or $\g$. \erem

We let the group $\GG$ act on ${G}_\bv$, resp. $\g_\bv$, by scalar multiplication, resp. 
$\Aff$ act on $\g_\bv$
by $"az+b": g\mto ag+ b\cdot\Id$.
These actions commute
with the ${G}_\bv$-action, hence  descend to well-defined actions on
the corresponding quotient stacks $\g_\bv/{G}_\bv$ and ${G}_\bv/\aad {G}_\bv$.

\begin{lem} \label{res}
 The functor $\res_\ft$, resp. $\res_T$, sends factorization sheaves
 on $\g$, resp. ${G}$, to  factorization sheaves on $\Sym \AA$, resp. $\Sym \GG$.
A similar statement holds for
 $\Aff$-equivariant, resp. $\GG$-equivariant, factorization sheaves in
 the Lie algebra
resp. group, case.
\end{lem}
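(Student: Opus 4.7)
The plan is to produce, for each $\bv_1, \bv_2$, a factorization isomorphism for the collection $(\res_\ft \cF_\bv)$ by transporting the given factorization isomorphism $\bar\vphi_{\bv_1,\bv_2}$ along a natural geometric decomposition that is available over the ``spectrally disjoint'' open locus. The key geometric observation is this: over $\g\he_{\bv_1,\bv_2}/(G_{\bv_1}\times G_{\bv_2})$, an element has a canonical block decomposition into pieces of sizes $\bv_1$ and $\bv_2$, so every Borel subalgebra containing it must respect the decomposition. Concretely, writing $P=P_{\bv_1,\bv_2}\sset G_{\bv_1+\bv_2}$ for the standard parabolic with Levi $G_{\bv_1}\times G_{\bv_2}$ and $\b_P\sset\g_{\bv_1+\bv_2}$ for the Borel in $P$, the natural forgetful map from the Springer variety $\wt{\g_{\bv_1+\bv_2}}$ over $\g\he_{\bv_1,\bv_2}$ is an isomorphism onto $\wt{\g_{\bv_1}}\times\wt{\g_{\bv_2}}$ pulled back to $\g\he_{\bv_1,\bv_2}$. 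The analogous statement holds after passing to stacky quotients and to Cartans.

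First I would set up the commutative diagram
\[
\xymatrix{
\b_P/B_P\big|_{\g\he_{\bv_1,\bv_2}}\ar[r]^<>(0.5){\wt\nu_P}\ar[d]_<>(0.5){\sim}&
\ft_{\bv_1+\bv_2}\times {B} T_{\bv_1+\bv_2}\big|_{\AA\he_{\bv_1,\bv_2}}\ar[d]^<>(0.5){\sim}\\
(\b_{\bv_1}/B_{\bv_1})\times(\b_{\bv_2}/B_{\bv_2})\big|_{\g\he_{\bv_1,\bv_2}}\ar[r]^<>(0.5){\wt\nu_{\bv_1}\times\wt\nu_{\bv_2}}&
(\ft_{\bv_1}\times{B} T_{\bv_1})\times (\ft_{\bv_2}\times{B} T_{\bv_2})\big|_{\AA\he_{\bv_1,\bv_2}}
}
\]
where the vertical arrows are isomorphisms (following from the spectral decomposition), and both horizontal arrows come from the respective Borel-to-Cartan projections. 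Using the formula $\res_\ft=r_!(\nu_\b)_!\kappa_\b^*$ of Lemma \ref{xi}(i), combined with smooth and proper base change along this diagram, one obtains a canonical isomorphism
\[
\jmath_{\bv_1,\bv_2}^*\imath_{\bv_1,\bv_2}^*\bigl(\res_\ft\,\cF_{\bv_1+\bv_2}\bigr)\ \cong\ \jmath_{\bv_1,\bv_2}^*\bigl((\res_\ft\,\cF_{\bv_1})\boxtimes(\res_\ft\,\cF_{\bv_2})\bigr)
\]
by  applying $\res_\ft$ on each side of $\bar\vphi_{\bv_1,\bv_2}$ and invoking the compatibility of parabolic restriction with the Levi decomposition over the open locus. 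This is the required factorization morphism $\vphi_{\bv_1,\bv_2}$ for $\res_\ft\,\cF$.

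Next I would verify the associativity \eqref{fact2} and commutativity \eqref{fac-diag} constraints: both follow directly from the corresponding constraints for $(\cF_\bv)$ together with the naturality of base change, since the above decomposition of the Grothendieck--Springer resolution is itself associative and commutative in an obvious sense (the block decomposition of a matrix with disjoint spectrum is canonical). The equivariant version for $\Aff$- or $\GG$-actions is immediate, as every map in the construction is automatically equivariant. The group case (for $\res_T$ and $\GL$) is proved identically: one replaces $\g$ by $G$, the Grothendieck--Springer resolution by its group analogue, and the open set $\g\he_{\bv_1,\bv_2}$ by $G\he_{\bv_1,\bv_2}$. The main technical point that requires care is justifying the isomorphism of the vertical arrows in the diagram above at the level of stacks (not merely schemes), which amounts to the assertion that the parabolic $P\sset G_{\bv_1+\bv_2}$ with Levi $G_{\bv_1}\times G_{\bv_2}$ becomes, over the spectrally disjoint locus, the unique parabolic containing any given block-diagonalisation; this is a standard consequence of the fact that such a matrix has a unique invariant decomposition matching its spectral splitting.
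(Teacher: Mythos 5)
Your geometric intuition is in the right neighborhood, but the key isomorphism you assert does not hold as stated, and an essential piece of structure is being dropped.

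\textbf{The vertical isomorphism.} You claim that ``the natural forgetful map from the Springer variety $\wt{\g_{\bv_1+\bv_2}}$ over $\g\he_{\bv_1,\bv_2}$ is an isomorphism onto $\wt{\g_{\bv_1}}\times\wt{\g_{\bv_2}}$ pulled back to $\g\he_{\bv_1,\bv_2}$.'' This is false. Take $g=g_1\oplus g_2$ block diagonal with disjoint spectra, both blocks regular semisimple. The full Grothendieck--Springer fiber of $g$ in $\g_{\bv_1+\bv_2}$ is the set of all $g$-stable complete flags, and while each flag step is indeed a direct sum of its intersections with the two blocks (your observation), the flag can \emph{interlace} the two blocks in many ways: the fiber has $\binom{|\bv_1|+|\bv_2|}{|\bv_1|}\cdot|\bv_1|!\cdot|\bv_2|!$ points, not $|\bv_1|!\cdot|\bv_2|!$. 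So the Springer variety over $\g\he$ is a covering of the product, not equal to it. The correct statement, which is what the paper's Claim \ref{u-act} establishes, restricts $\wt\g$ over $\ft\he$ (via the Borel-ordered eigenvalue map $\wt\g\to\ft$), i.e.\ one fixes the interlacing pattern to be ``all of block $1$ first, then all of block $2$'' so that only the parabolic-compatible Borels survive, and then proves that the residual freedom is exactly the adjoint action of the nilradical $U$ of the parabolic. The proof that the $U$-action on $b+\fu$ is free and transitive (because $\ad b$ has no zero weights on $\fu$ for $b\in\b\he$) is genuine content; it is not ``a standard consequence of uniqueness of the spectral splitting.''

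\textbf{The $W$-equivariance.} Lemma \ref{xi}(i) gives $q^*\res_\ft\cong r_!(\nu_\b)_!\kappa_\b^*$, not $\res_\ft\cong r_!(\nu_\b)_!\kappa_\b^*$. The map $q:\ft\to\ft/W$ is there for a reason: the Borel-to-Cartan formula produces a sheaf on $\ft=\AA^\bv$, whereas a factorization sheaf on $\Sym\AA$ has components in $D\abs(\AA^\bv/\si_\bv)=D\abs(\ft/W)$. Your construction produces an isomorphism at the level of $\ft\he$ (more precisely $(\ft_1\times\ft_2)\he$); to obtain a factorization isomorphism in the sense of Definition~\ref{fac-def} you would need this to descend to $\ft\he/(\si_{\bv_1}\times\si_{\bv_2})$, i.e.\ to be $\si_{\bv_1}\times\si_{\bv_2}$-equivariant (and compatibly with the $N(\si_{\bv_1}\times\si_{\bv_2})$-action for the commutativity constraint). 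This is not automatic from base change, and you do not address it. The paper's proof works instead with the IC-sheaf definition $\res_\ft(\cF)=(pr'_2)_!((\kappa')^*\cF\o\op{IC})$ on $(\g/G)\times_{\g\dsl G}(\ft/W)$, whose $W$-equivariance is built in and is precisely what makes the final descent to the symmetric quotient work.
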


\begin{proof} 
Let $\g_1=\g_{\bv_1}, \
\g_2=\g_{\bv_2}$, let  $\g_1\oplus\g_2\into\g:=\g_{v_1+v_2}$ be a block
diagonal imbedding., and
put 
$\g_{1,2}\he=\g_{\bv_1,\bv_2}\he$. Let $\b_i=\ft_i+\fu_i,\ i=1,2$, be a Borel
subalgebra
in $\g_i$,  let $\fu$ be the upper right
$\bv_1\times\bv_2$-block.
Thus $\ft=\ft_1\times\ft_2$ is a Cartan subalgebra of $\g$,
$\fp=\g_1\oplus\g_2\oplus\fu$ is a parabolic subalgebra
of $\g$ with nilradical $\fu$,
and $\b=\b_1\times\b_2\times\fu$ is a Borel subalgebra
of $\g$ contained in $\fp$.
Let $G_i,B_i,U,$ etc. be the 
subgroups of $G$ corresponding to the Lie algebras $\g_i,\b_i,\fu$, etc.
We put 
\[\g\he=\g_{\bv_1,\bv_2}\he,\quad
\ft\he=\ft_{\bv_1,\bv_2}\he,\quad
(\b_1\oplus\b_2)\he=\ft_{\bv_1,\bv_2}\he\times (\fu_1\oplus\fu_2),
\quad\b\he=\ft\he\times\fu.
\]
Then, the variety $\b\he$ is stable under
the adjoint action of $B$, furthermore, for any
\begin{claim}\label{u-act}
The variety $\b\he$ is $B$-stable and
the action the map yields  an isomorphism
\[
B\times_{B_1\times B_2} (\b_1\oplus\b_2)\he\to
\b\he.
\]
\end{claim}
\begin{proof} For any $b\in \b\he$,
the variety $b+\fu$ is $U$-stable.
The differential of the adjoint action  $U\to b+\fu, u\mto u(b)$ is injective.
This is clear if $b\in \ft\he$ since in that case
all the weights of the $\ad b$-action in
$\fu$ are nonzero by the definition of the set $\ft\he$.
The case of a general  $b\in \b\he$ reduces to the one
where  $b\in \ft\he$ by
considering Jordan decomposition. Thus, the 
map $u\mto u(b)$ gives an  imbedding $U\into b+\fu$.
The image of this map is open since
$\dim U=\dim(b+\fu)$, and it is also closed in $b+\fu$
since orbits of a unipotent acting on an affine variety are
known to be closed.
We deduce that
the  action map $U\times_{U_1\times U_2} (\b_1\oplus\b_2)\he\to
\b\he$ is  an isomorphism. This implies the claim.
\end{proof}

Let $Z_i:=\g_i\times_{\g_i\dsl G_i}\ft_i$,
resp. $Z:=\g\times_{\g\dsl G}\ft$, and let $\wt\g_i$, resp. $\wt\g$, denote the Grothendieck-Springer resolution.
For any scheme (or stack) ${\mathcal G}$ over $\ft$
we write ${\mathcal G}\he:={\mathcal G}\times_\ft\ft\he$.
We have a commutative diagram:
$$
\xymatrix{
G\times_{_{G_{1}\times G_{2}}}(\wt\g_1\times\wt\g_2)\he\ar[d]_<>(0.5){\wt f}^<>(0.5){\cong}
\ar[rrr]^<>(0.5){\Id_G\times\bmu_1\times\bmu_2}&&&G\times_{_{G_{1}\times G_{2}}}(Z_1\times
Z_2)\he
\ar[d]_<>(0.5){f}%\ar[rr]^<>(0.5){pr_{\g_1}\times pr_{\g_2}}
%&&G\times_{G_{1}\times G_{2}}(\g_1\times\g_2)\he\ar[d]^<>(0.5){\jmath}
\\
\wt\g\he\ar[rrr]^<>(0.5){\bmu}&&&Z\he%\ar[rr]^<>(0.5){pr_\g}&&\g
}
$$
Note that
$$
G\times_{G_{1}\times G_{2}}(\wt\g_1\times\wt\g_2)
=G\times_{G_{1}\times G_{2}}((G_1\times G_2)\times_{B_1\times B_2}
(\b_1\oplus\b_2))=G\times_B(B\times_{B_1\times B_2}
(\b_1\oplus\b_2)).
$$
Therefore, Claim \ref{u-act} implies that the map $\wt f$ in the
commutative
diagram is an isomorphism. Further, it is known that
one has $(\bmu_1\times\bmu_2)_*\oo_{\wt\g_1\times\wt\g_2}=
\oo_{Z_1\times Z_2}$, resp. $\bmu_*\oo_{\wt\g}=\oo_Z$.
Hence, we deduce an isomorphism
\[f_*\oo_{G\times_{G_{1}\times G_{2}}(Z_1\times
Z_2)\he} \iso \oo_{Z\he}.\]
The morphism $f$ being affine, it follows
that this morphism is an isomorphism. 
Thus, taking quotients by the $G$-action
in the diagram above yields the following commutative diagram
$$
\xymatrix{
\ft_1\times\ft_2
\ar[d]^<>(0.5){\cong}_<>(0.5){\jmath_{\ft}}
&&(Z_1\times
Z_2)\he/(G_{1}\times G_{2})\ar[ll]_<>(0.5){pr_{\ft_1}\times pr_{\ft_2}}
\ar[d]_<>(0.5){f}^<>(0.5){\cong}
\ar[rr]^<>(0.5){pr_{\g_1}\times pr_{\g_2}}
&&(\g_1\times\g_2)\he/(G_{1}\times
G_{2})\ar[d]^<>(0.5){\jmath_{\g}}
\\
\ft&&
Z\he/G\ar[rr]^<>(0.5){pr_\g}\ar[ll]_<>(0.5){pr_{\ft}}&&\g/G
}
$$
Recall the open sets $U_i:=\g^{rs}\times_{\g_i\dsl G_i}\ft_i$,
resp. $U:=\g^{rs}\times_{\g\dsl G}\ft$.
The morphism $f$ being an isomorphism,
we deduce  an isomorphism $\op{IC}(U_1/G_1\times U_2/G_2)
\iso f^*\op{IC}(U/G)$.
Now, for any $\cf\in D\abs(\g/G)$, we compute

\begin{align*}(\res_{\ft_1}\times\res_{\ft_2})(\jmath_{\g}^*\cf)&=
(pr_{\ft_1}\times pr_{\ft_2})_!((pr_{\g_1}\times pr_{\g_2})^*\jmath_\g^*\cf\o
\op{IC}(U_1/G_1\times U_2/G_2))\\
&=(pr_{\ft_1}\times pr_{\ft_2})_!(f^*\op{IC}(U/G)\o f^*pr_\g^*\cf)\\
&=
\jmath_\ft^*(pr_\ft)_!(\op{IC}(U/G)\o pr_\g^*\cf)=\jmath_\ft^*\res_\ft(\cf).
\end{align*}
The sheaf $\op{IC}(U/G)$ being $W$-equivariant,
the functors
$\jmath_\ft^*$ and $f^*\op{IC}(U/G)\o f^*(-)$ clearly
respect $W_1\times W_2$-equivariant structures,
proving the lemma.
\end{proof}

\begin{rem}\label{res-rem} 
An argument similar to the proof of Lemma \ref{res} shows
that each of the two  functors   $\dis D\abs(\Coh_\bv)\xymatrix{
\ar@<0.3ex>[r]^<>(0.5){_{\mathfrak{res}}}&D\abs(C^\bv/\si_\bv)\ar@<0.3ex>[l]^<>(0.5){_{\mathfrak{ind}}}
}
$
sends factorization sheaves to factorization sheaves.
We will neither use nor prove this result.
\end{rem}

\subsection{$\Aff$-equivariant factorization sheaves on $\gl$}
We are going to formulate a $\gl$-counterpart of the result from
\S\ref{sec3} about the cohomology of $\Aff$-equivariant factorization sheaves on $\AA$.

We identify $\ft_\bv=\AA^\bv/\AA$ with  the Cartan subalgebra of $\pgl_\bv$
and also  identify 
$\pgl_\bv^*\dsl \PGL_\bv$ with $\ft_\bv^*\dsl \si_\bv$ via  the Chevalley isomorphism.

\begin{defn}\label{verygen} 
Let $\pgl_\bv\reg$ be the preimage of the set $\ft_\bv\reg\dsl \si_\bv$, see \eqref{de2}, under the quotient map
$\pgl_\bv^*\to \pgl_\bv^*\dsl \PGL_\bv=\ft_\bv^*\dsl \si_\bv$.
\end{defn}

The group $\GL_\bv$ acts on $\pgl_\bv$, resp. $\pgl_\bv^*$, through its quotient $\PGL_\bv$ and the imbedding
$\ft_\bv\reg\into \pgl_\bv\reg$ induces an isomorphism $\ft_\bv\reg/\GG^\bv\iso \pgl_\bv\reg/\GL_\bv$,
where $\GG^\bv$, the maximal torus of $\GL_\bv$, acts trivially on $\ft_\bv\reg$.
Thus, the canonical map $\fw: \pgl_\bv\reg/\GL_\bv\to \pgl_\bv\reg\dsl\GL_\bv=\pgl_\bv\reg\dsl\PGL_\bv=\ft_\bv\reg\dsl\si_\bv$
may be identified with the first projection $\ft_\bv\reg\dsl\si_\bv\times\pt/\GG^\bv\to \ft_\bv\reg\dsl\si_\bv$.

The group $\Aff$ acts on $\gl_\bv$ by $ax+b:\ g\mto a\cdot g+b\cdot\Id$. This action commutes with the $\GL_\bv$-action,
hence, it descends to $\gl_\bv/\GL_\bv $. 
An $\Aff$-equivariant sheaf $\cf_\bv$ on $\gl_\bv/\GL_\bv$ descends to a $\GG$-equivariant sheaf $\bar\cf_\bv$
on $\pgl_\bv/\GL_\bv$, where $\GG$ acts on $\pgl_\bv$ by dilations.
We put $\Phi_{\pgl_\bv/\GL_\bv}(\cf_\bv)=\fw_!\big(\FDN_{\pgl_\bv/\GL_\bv}(\bar\cf_\bv)|_{\pgl_\bv\reg/\GL_\bv}\big)$, where $\FDN_{\pgl_\bv/\GL_\bv}$
 denotes the Fourier-Deligne transform
on $\pgl_\bv/\GL_\bv$. Thus, $\Phi_{\pgl_\bv/\GL_\bv}(\cf_\bv)$ is a sheaf on $\ft_\bv\reg\dsl\si_\bv$ and we
let $\Phi_{\pgl_\bv/\GL_\bv}(\cf_\bv)_\eta$ be the restriction of $\Phi_{\pgl_\bv/\GL_\bv}(\cf_\bv)$ to $\eta=\Spec \overline{\k(\ft_\bv\reg\dsl\si_\bv)}$.

We have natural imbeddings $\GL_\bv/\aad\GL_\bv\xrightarrow{\eps} \gl_\bv/\GL_\bv \xleftarrow{i} 0/\GL_\bv$.
The image of each imbedding 
is  stable under the action of the subgroup $\GG\sset\Aff$.

\begin{thm}\label{GL-H} 
For any
 $\Aff$-equivariant   factorization sheaf  $\cF=(\cF_\bv)_{\bv\in\Z^I_{\geq0}}$
 on $\gl$ we have:

\vi The monodromy action of the subgroup  $\pig(\ft_\bv\reg)\sset \pig(\ft_\bv\reg/\si_\bv)$ on
$\Phi_{\pgl_\bv/\GL_\bv}(\cF_\bv)$ is unipotent and there is a Frobenius stable generalized isotypic decomposition
\[\Phi_{\pgl_\bv/\GL_\bv}(\cf_\bv)_{\eta}\ =\ \oplus_{\rho\in \Irr(\si_\bv)}\ 
\Phi_{\pgl_\bv/\GL_\bv}(\cf_\bv)^{\langle\rho\rangle}_{\eta}.
\]

\vii In $\kabs$, one has
 the following equations
\begin{align}
 &\sum_{^\bv}\ z^\bv\cdot [\RGam_c(\gl_\bv/\GL_\bv,\cF_\bv)]   
=\Sym\Big(-\L^{\frac{1}{2}}\cdot\sum_{^{\bv>0}}\  (-1)^{|\bv|}\cdot 
z^\bv\cdot \L^{\frac{|\bv|}{2}}\cdot [\Phi_{\pgl_\bv/\GL_\bv}(\cf_\bv)_\eta]^{\langle\op{sign}\rangle}\Big);
\label{2eqsG}\\
&\sum_{^\bv}\ z^\bv\cdot 
[\RGam_c(\GL_\bv/\aad \GL_\bv,\,\eps^*\cF_\bv)] 
=\Sym\Big((\L^{-\frac{1}{2}}-\L^{\frac{1}{2}})\cdot\sum_{^{\bv>0}}\ (-1)^{|\bv|}\cdot 
z^\bv\cdot \L^{\frac{|\bv|}{2}}\cdot 
[\Phi_{\pgl_\bv/\GL_\bv}(\cf_\bv)_\eta]^{\langle\op{sign}\rangle}\Big);
\label{3eqsG}\\
&\sum_{^\bv}\  (-1)^{|\bv|}\cdot z^\bv\cdot  \L^{\frac{\bv\cdot\bv}{2}}\cdot \big[\RGam_c(0/\GL_\bv,\,  i^*\cf_\bv)\big]
=
\Sym\Big(-\sum_{^{\bv>0}}\  z^\bv\cdot \L^{\frac{|\bv|}{2}}\cdot[\Phi_{\pgl_\bv/\GL_\bv}(\cf_\bv)_\eta]^{\langle\op{triv}\rangle}\Big).
\label{4eqsG}
\end{align}
\end{thm}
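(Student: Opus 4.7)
The strategy is to reduce Theorem~\ref{GL-H} to Theorem~\ref{Exp} via the parabolic restriction functor $\res_\ft$ of Lemma~\ref{xi}, in its $\GL_\bv$-equivariant form of Remark~\ref{GL-pgl}. Let $\bar\cF_\bv \in D\abs(\pgl_\bv/\GL_\bv)$ denote the descent of $\cF_\bv$ afforded by the translation part of the $\Aff$-equivariance, and set $\mathcal{G}_\bv := \res_\ft \bar\cF_\bv \in D\abs(\ft_\bv/\si_\bv)$. By Lemma~\ref{res}, the collection $\mathcal{G} = (\mathcal{G}_\bv)$ is an $\Aff$-equivariant factorization sheaf on $\Sym \AA$, so Theorem~\ref{Exp} applies to $\mathcal{G}$.

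Next I would translate the left-hand sides of the three identities of Theorem~\ref{GL-H} into those of Theorem~\ref{Exp} using Lemma~\ref{xi}. The second isomorphism in~\eqref{xieq} gives $\RGam_c(\gl_\bv/\GL_\bv, \cF_\bv) \cong \RGam_c(\AA^\bv/\si_\bv, \mathcal{G}_\bv)$; its group analogue combined with~\eqref{epep} gives $\RGam_c(\GL_\bv/\aad \GL_\bv, \eps^*\cF_\bv) \cong \RGam_c(\GG^\bv/\si_\bv, \eps^*\mathcal{G}_\bv)$; and the third isomorphism in~\eqref{xieq} delivers
\[
\RGam_c(0/\GL_\bv, i^*\cF_\bv) \cong \RGam_c(0/\si_\bv, i^*\mathcal{G}_\bv \otimes \sign)\{-2n_G\},\qquad n_G = \tfrac{\bv\cdot\bv-|\bv|}{2},
\]
contributing both the $\sign$-twist on the sheaf and the factor $\L^{n_G}$ needed for the LHS of~\eqref{4eqsG}. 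The right-hand sides are identified through the Fourier sign-swap~\eqref{fo2}: restricted to $\ft_\bv\reg/\si_\bv$ this yields a canonical isomorphism $\Phi_\ft(\mathcal{G}_\bv)_\eta \cong \sign \otimes \Phi_{\pgl_\bv/\GL_\bv}(\cF_\bv)_\eta$, so tensoring with $\sign$ interchanges the generalized isotypic decompositions, $[\Phi_\ft(\mathcal{G}_\bv)_\eta]^{\langle\rho\rangle} = [\Phi_{\pgl_\bv/\GL_\bv}(\cF_\bv)_\eta]^{\langle\rho \otimes \sign\rangle}$ for every $\rho \in \Irr \si_\bv$.

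Substituting into~\eqref{2eqs} and~\eqref{3eqs} and using $\triv \otimes \sign = \sign$ yields~\eqref{2eqsG} and~\eqref{3eqsG} directly; part~(i) follows from Theorem~\ref{Exp}(i) applied to $\mathcal{G}$ together with this translation (which preserves unipotence along the monodromy fibers). Deriving~\eqref{4eqsG} requires an additional step: running the Verdier-duality/Springer argument from the proof of~\eqref{4eqs} with $\sign$ inserted throughout produces a sign-twisted variant of~\eqref{4eqs} having $[\RGam_c(0/\si_\bv, i^*\mathcal{G}_\bv \otimes \sign)]$ on the LHS and $[\Phi_\ft(\mathcal{G}_\bv)_\eta]^{\langle\sign\rangle}$ on the RHS. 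Combining this with the Fourier sign-swap via $\sign \otimes \sign = \triv$ recovers the $\langle\triv\rangle$-isotypic on the RHS of~\eqref{4eqsG}, while the identity $\bv\cdot\bv = 2n_G + |\bv|$ together with the Tate shift $\{-2n_G\}$ converts the $\L^{-|\bv|/2}$-factor inside~$\Sym$ in~\eqref{4eqs} into the $\L^{\bv\cdot\bv/2}$-coefficient on the LHS and $\L^{|\bv|/2}$-factor inside~$\Sym$ on the RHS of~\eqref{4eqsG}.

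The main obstacle will be the bookkeeping required for~\eqref{4eqsG}: unlike~\eqref{2eqsG}--\eqref{3eqsG}, where a single application of~\eqref{fo2} suffices, this identity requires the interaction of two independent sign mechanisms (the $\sign$-twist in the third line of~\eqref{xieq} and the Fourier sign-swap~\eqref{fo2}) as well as two distinct Tate shifts (the quadratic $\L^{\bv\cdot\bv/2}$ on the LHS arising from~$\L^{n_G}$, and the linear $\L^{|\bv|/2}$ inside~$\Sym$). Since $\bv\cdot\bv$ is not additive in $\bv$, the quadratic factor cannot simply be absorbed into a shift of the factorization sheaf in the usual sense, so it must be extracted from the quadratic Tate twist of Lemma~\ref{xi}. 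As a consistency check, specializing to the constant factorization sheaf on $\gl$ with $|I|=1$ recovers the classical quantum dilogarithm identity noted in the example following the theorem.
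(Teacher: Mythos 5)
Your strategy — reduce via the parabolic restriction functors of Lemma~\ref{xi}, transport the Fourier transforms through~\eqref{fo2} (which swaps $\sign$ with $\triv$), and then invoke Theorem~\ref{Exp} — is exactly the paper's. Two small points of divergence are worth flagging. First, the object you feed into Theorem~\ref{Exp} should be $\res_\AA\cF_\bv\in D\abs(\AA^\bv/\si_\bv)$, not its descent $\res_\ft\bar\cF_\bv\in D\abs(\ft_\bv/\si_\bv)$: the factorization structure and the $\Aff$-equivariance that Theorem~\ref{Exp} requires live on $\AA^\bv/\si_\bv$, and only afterward does one descend to $\ft_\bv$ to take the generic stalk. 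Second, for~\eqref{4eqsG} you propose to ``re-run the Verdier-duality argument from~\eqref{4eqs} with $\sign$ inserted throughout,'' whereas the paper instead observes that the twisted and shifted collection $\sign\o\res_\AA\cf_\bv[|\bv|]$ is \emph{itself} an $\Aff$-equivariant factorization sheaf on $\AA$ — this is the content of Remark~\ref{fact-rems}(2)--(3), and the shift $[|\bv|]$ is needed because the commutativity constraint for the $\sign$-twist only holds after it. Having granted that, the paper simply applies~\eqref{4eqs} as a black box to this new factorization sheaf and then uses~\eqref{fw-rho} to convert back; this is cleaner than reopening the proof, and it is where your Tate-shift bookkeeping involving $\bv\cdot\bv=2n_G+|\bv|$ gets absorbed. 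Neither deviation affects correctness, but the second one is the organizational choice that makes the double sign cancellation transparent.
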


%\begin{rem} The last equation has the following equivalent form
%$$\sum_{^\bv}\  \big[\RGam_c(0/\GL_\bv,\,  i^*\cf_\bv)\big]\cdot \L^{\o -\frac{\bv\cdot\bv}{2}}=
%\Sym\Big(-\L\inv\cdot\sum_{^{\bv>0}}\ (-1)^{|\bv|}\cdot[\Phi_{\pgl_\bv/\GL_\bv}(\cf_\bv)_\eta]^{\langle\op{triv}\rangle}\,
%\cdot\L^{\o -\frac{|\bv|}{2}}\Big).
%$$
%\end{rem}
\begin{proof}
First, applying the functor $\RGam_c(\AA^\bv/\si_\bv,-)$ to the second isomorphism in
\eqref{xieq} in the case of the group $G=\GL_\bv$, we obtain
\begin{multline*}
\RGam_c(\GL_\bv/\aad \GL_\bv,\,\eps^*\cF_\bv)=
\RGam_c(\GL_\bv\dsl\aad \GL_\bv,\,({\mathfrak w}_{\GL_\bv})_!\eps^*\cF_\bv)\\=
\RGam_c(\GG^\bv\dsl\si_\bv,\,(({\mathfrak w}_{\GG^\bv})_!\res_\GG\eps^*\cF_\bv)^{\si_\bv})
=\RGam_c(\GG^\bv/\si_\bv,\,\res_\GG\eps^*\cF_\bv)=\RGam_c(\GG^\bv/\si_\bv,\,\eps^*\res_\AA\cF_\bv).
\end{multline*}

Next, let $T_\bv=\GG^\bv/\GG$ be the Cartan torus, resp. $\ft_\bv=\AA^\bv/\AA$ the Cartan subalgebra,
for $\PGL_\bv$. Abusing the notation, we write  $\res_\ft$ for the functor $D\abs(\pgl_\bv/\GL_\bv)\to
D\abs(\ft_\bv/\si_\bv)$ described in Remark \ref{GL-pgl}.
For each $\bv$, the sheaf $\cf_\bv$, resp. $\res_\AA\cf_\bv$,  descends, thanks to
the $\Aff$-equivariance,  to a sheaf
$\bar\cf_\bv$ on $\pgl_\bv/\GL_\bv$, resp. $\overline{\res_\AA\cf_\bv}$ on $\ft_\bv$,
furthermore, one has $\res_\ft\bar\cf_\bv=\overline{\res_\AA\cf_\bv}$.
Applying the analogue of the second isomorphism in
\eqref{xieq} in the case of $\pgl_\bv^*$ and using  Lemma \ref{qfourierr}, we find
\begin{multline*}
(\fw_{\pgl^*_\bv})_!\FDN_{\pgl_\bv/\GL_\bv}(\bar\cf_\bv)=
(\fw_{\ft^*_\bv})_!\res_{\ft^*}\FDN_{\pgl_\bv/\GL_\bv}(\bar\cf_\bv)\\=
\sign\o (\fw_{\ft^*_\bv})_!\FDN_{\ft_\bv}(\res_{\ft}\bar\cf_\bv)=
\sign\o (\fw_{\ft^*_\bv})_!\FDN_{\ft_\bv}\overline{\res_\AA\cf_\bv}.
\end{multline*}
We deduce that for any $\rho\in\Irr(\si_\bv)$ one has
\begin{align}
\Phi_{\pgl_\bv/\GL_\bv}(\cf_\bv)_\eta^{\langle\rho\rangle}&=
((\fw_{\pgl^*_\bv})_!\FDN_{\pgl_\bv/\GL_\bv}\bar\cf_\bv)_\eta^{\langle\rho\rangle}\label{fw-rho}\\
&=
\big(\sign\o (\fw_{\ft^*_\bv})_!\FDN_{\ft_\bv}\overline{\res_\AA\cf_\bv}\big)_\eta^{\langle\rho\rangle}=
\Phi_{\ft_\bv}(\res_\AA\cf_\bv)_\eta^{\langle\op{sign}\o\rho\rangle}.\nonumber
\end{align}

By  Lemma \ref{res}, the collection
  $\res_\AA\cf=(\res_\AA\cf_\bv)$
is an $\Aff$-equivariant factorization sheaf on $\AA$.
Thus, using \eqref{3eqs}, we compute
\begin{multline*}
\sum_{\bv}\ z^\bv\cdot [\RGam_c(\GL_\bv/\aad \GL_\bv,\,\eps^*\cF_\bv)]
=\sum_{\bv}\ z^\bv\cdot [\RGam_c(\GG^\bv/\si_\bv,\,\eps^*\res_\AA\cF_\bv)]\\
=\Sym\Big((\L^{-\frac{1}{2}}-\L^{\frac{1}{2}})\cdot\sum_{^{\bv>0}}\ (-1)^{|\bv|}\cdot 
z^\bv\cdot \L^{\frac{|\bv|}{2}}\cdot [\Phi_\ft(\res_\AA\cF_\bv)_\eta]^{\langle\op{triv}\rangle}\Big)\\
=\Sym\Big((\L^{-\frac{1}{2}}-\L^{\frac{1}{2}})\cdot\sum_{^{\bv>0}}\ (-1)^{|\bv|}\cdot 
z^\bv\cdot \L^{\frac{|\bv|}{2}}\cdot [\Phi_{\pgl_\bv/\GL_\bv}(\cf_\bv)_\eta^{\langle\op{sign}\rangle}]\Big).
\end{multline*}

This proves \eqref{3eqsG}. The proofs of  \eqref{2eqsG} and \eqref{4eqsG} are similar
using
the corresponding equation of Theorem  \ref{expC}. 
For example,  to prove formula \eqref{4eqsG} recall first that $n_{_{\GL_\bv}}=\frac{\bv\cdot\bv-|\bv|}{2}$.
We compute
\begin{multline*}[\RGam_c(0/\GL_\bv,\ i^*\cf_\bv)]\cdot\L^{n_{_{\GL_\bv}}}
=
[(\pt_{0/\GL_\bv})_!i^*\cf_\bv]\{-2n_{_{\GL_\bv}}\}\\=
[((\pt_{0/\si_\bv})_!i^*\res_\AA\cf_\bv)]^{\op{sign}}\{2n_{_{\GL_\bv}}-2n_{_{\GL_\bv}}\}=
[\RGam_c(0/\si_\bv,\, i^*(\sign\o \res_\AA\cf_\bv))].
\end{multline*}

By Remarks \ref{fact-rems}(2)-(3), the collection $(\sign\o \res_\AA\cf_\bv\big[|\bv|\big])$ is a
factorization sheaf on $\AA$. 
Thus, using \eqref{fw-rho}, we compute 
\begin{multline*}
\sum_\bv\ (-1)^{|\bv|}\cdot z^\bv\cdot\L^{\frac{\bv\cdot\bv-|\bv|}{2}}\cdot\big[\RGam_c(0/\GL_\bv,\ 
i^*\cf_\bv)]\\=\sum_\bv\ z^\bv\cdot [(\pt_{0/\GL_\bv})_!i^*\cf_\bv]\{-n_{\GL_\bv}\}\big[|\bv|\big]
=\sum_\bv\  z^\bv\cdot \big[\RGam_c(0/\si_\bv,\, i^*(\sign\o \res_\AA\cf_\bv\big[|\bv|\big])\big]\\
=\Sym\Big(-\L^{-\frac{1}{2}}\cdot\sum_{^{\bv>0}}\  
  (-1)^{|\bv|}\cdot z^\bv\cdot\L^{\frac{|\bv|}{2}}\cdot 
\big[\Phi_\ft(\sign\o \res_\AA\cf_\bv\big[|\bv|\big])_\eta\big]^{\langle\op{triv}\rangle}\Big)\\
=
\Sym\Big(-\L^{-\frac{1}{2}}\cdot\sum_{^{\bv>0}}\  (-1)^{|\bv|}\cdot 
  z^\bv\cdot\L^{\frac{|\bv|}{2}}\cdot 
\big[\Phi_\ft(\res_\AA\cf_\bv)_\eta\big]^{\langle\op{sign}\rangle}\big[|\bv|\big]\Big)\\
=
\Sym\Big(-\L^{-\frac{1}{2}}\cdot\sum_{^{\bv>0}}\  
 z^\bv\cdot\L^{\frac{|\bv|}{2}}\cdot 
\big[\Phi_{\pgl_\bv/\GL_\bv}(\cf_\bv)_\eta\big]^{\langle\op{triv}\rangle}\Big).
\end{multline*}

Finally, it is known that in $\kabs$ the following two equations are equivalent
\beq{a-exp}
\Sym\Big(\sum_{^{\bv>0}}  z^\bv\cdot a_\bv\Big)=\sum_{^{\bv}}  z^\bv\cdot f_\bv\quad\Leftrightarrow\quad
\Sym\Big(\L^{-\frac{1}{2}}\cdot\sum_{^{\bv>0}}  z^\bv\cdot a_\bv\Big)=
\sum_{^{\bv}} z^\bv\cdot \L^{-\frac{|\bv|}{2}}\cdot f_\bv.
\eeq
We apply this  in the case where
$a_\bv=-\L^{\frac{|\bv|}{2}}\cdot \big[\Phi_{\pgl_\bv/\GL_\bv}(\cf_\bv)_\eta\big]^{\langle\op{triv}\rangle}$
and formula \eqref{4eqsG} follows.
\end{proof}

\section{Inertia stacks}\label{iner}

\subsection{$G$-stacks}\label{gstack}
Given  an algebraic group $G$ and a $G$-stack $Y$, over $\k$, we write $q_Y: Y\to Y/G$ for
the quotient morphism. 
The categories $Y(\bar\k)$ and $(Y/G)(\bar\k)$ have the same objects
and the quotient morphism $q_Y: Y\to Y/G$ is the identity on
objects. The groups of morphisms between the
corresponding objects of $Y$ and $Y/G$ may  differ,
in general. In particular, for
 $y\in Y(\bar\k)$, the  automorphism group of the
object $q_Y(y)\in (Y/G)(\bar\k)$ equals
\[
\Aut(q_Y(y))=\{(g,f)\mid g\in G, f\in\Mor(y,gy)\},
\]
where the group operation is given by
$(g,f)\cdot (g',f')=(gg', g'(f)\ccirc f')$.

There is a natural
exact sequence
\beq{algaut}
\xymatrix{
1\ar[r]&\Aut(y)\ar[rr]^<>(0.5){f\mto
  (1,f)}&&\Aut(q_Y(y))\ar[rr]^<>(0.5){p_y:\ (g,f)\mto g}&&
G,
}
\eeq 
 of algebraic groups.
The  group $G_y:=\Im({p}_y)$ may be viewed as the stabilizer
of $y$ in $G$. 

Recall that a  morphism
$Y\to X$, where $Y$ is a $G$-stack and $X$ is a stack,
 is called a $G$-torsor (on $X$) if, for any
test scheme $S$ and an $S$-point $S\to X$, the first projection
 $S\times_X Y\to S$ is a $G$-torsor on $S$. 
Any morphism of  $G$-torsors on $X$ is an isomorphism.
For any  $G$-stack $Y$, the
quotient   morphism $q_Y: Y\to Y/G$ is a $G$-torsor.
Conversely,
for any $G$-torsor $Y\to X$  the canonical
morphism $Y/G\to X$ is an isomorphism of stacks.

One has the universal $G$-torsor
 $\pt\to \pt/G$, where  $\pt/G={B} G$ is the classifying stack 
of the group $G$. For any   $G$-torsor $Y\to X$
the constant map $Y\to\pt$
induces a map $Y/G\to\pt/G$
It follows, since $Y/G=X$ and any morphism of  $G$-torsors on $X$ is an
isomorphism,
 that the  $G$-torsor $Y\to X$ 
may be obtained from the universal $G$-torsor
 by base change, equivalently, we have $Y\cong (Y/G)\times_{_{\pt/G}}\pt$.

Let $X\times \k^v\to X$
be a trivial   rank $v$ vector bundle on a stack $X$.
For any   rank $v$ vector bundle $\V$ on $X$,
let $Y(\V)$ be the sheaf (on $X$)
of vector bundle isomorphisms
$X\times \k^v\iso \V$. There is a $\GL_v$-action on $Y(\V)$
induced by the natural $\GL_v$-action on $\k^v$.
The projection $Y(\V)\to X$ is
a $\GL_v$-torsor on $X$, to be 
 called the `frame bundle'
of the vector bundle $\V$.

\subsection{$G$-inertia stacks}\label{ZZsec} 
The {\em inertia stack}, 
$I(X)$, of a stack $X$ is defined as a fiber product 
$I(X)=X\times_{X\times X}
X$,
where each of the two maps $X\to X\times X$ is the diagonal morphism.
An object of the inertia stack is a pair $(x,f)$
where $x\in X$  and $f\in \Aut(x)$. Morphisms
$(x,f)\to (x',f')$ are defined as morphisms
$\vphi: x\to x'$ such that $f'\ccirc \vphi=\vphi\ccirc f$.

Let $G$ be an algebraic group
with Lie algebra $\g$, and fix  a $G$-stack $Y$.
One defines
equivariant analogues of the inertia stack as follows.
The morphism
$$a_G:\ G\times Y\to Y\times Y,\quad\text{resp.}\quad
a_\g:\ \g\times Y\to  \TT{Y},
$$
given by $(g,y)\mto (gy,y)$,
makes $G\times Y$ a stack over $Y\times Y$,
resp. $\g\times Y$ a stack over $\TT Y$.
We define the {\em $G$-inertia stack}, resp.
{\em infinitesimal  $G$-inertia stack}, of $Y$, by
\[ I(Y,G)=  Y\times_{Y\times Y}(G\times Y),\quad\text{resp.}\quad
I(Y,\g)=  Y\times_{\TT{Y}} (\g\times Y),\]
where $Y\to Y\times Y$ is the diagonal
morphism, resp. $Y\to \TT{Y}$ is the zero section.
In the special case where 
$Y$ is a $G$-scheme, viewed as a stack,
the $G$-inertia stack becomes a scheme and the corresponding
set of closed points
can also be defined in the following  equivalent,  more
familiar way
\beq{setZ}
I(Y,G)= \{(y,g)\in Y\times G\mid gy=y\}\ =\
\{(y,g)\in Y\times G\mid y\in Y^g\}.
\eeq

We return to the general case. 
It is easy to see  that  objects of the  stack $I(Y,G)$
are triples of the form
$(y,g, f)$, where  $y$ is an object of $Y$,
$g\in G$, and $f: y\to g(y)$ is a morphism.
 Similarly,
an object of  $I(Y,\g)$ 
is a triple $(y,g,f)$ where
$y\in Y,\ g\in \g$, and
$f: gy \to 0$ is an isomorphism in  the  groupoid $\TT_yY$.

We let the group $G$
act on itself, resp. its Lie algebra, via the adjoint action.
Then, it is clear that $I(Y,G)$, resp. $I(Y,\g)$,
is a $G$-stack. The first projection
$\pr_Y: I(Y,G)\to Y$, resp. the composition,
$\pr_G$, of the  second projection $I(Y,G)\to G\times Y$
and the first projection $G\times Y\to G$,
are morphisms of $G$-stacks.
Further, it follows from definitions that the composition
$I(Y,G)\to Y\times Y\to  Y/G\times Y/G$
factors through a morphism $q_I: I(Y,G)\to  I(Y/G)$,
furthermore, 
the latter morphism factors through $I(Y,G)/G\to I(Y/G)$.

\begin{lem}\label{ZZ} \vi The morphism $I(Y,G)/G\to I(Y/G)$
is an isomorphism.

\vii The  morphism $q_I$ is
a $G$-torsor that fits into a diagram of cartesian squares
\beq{zz}
\xymatrix{
Y\ar[d]^<>(0.5){q_Y}&&  I(Y,G)
\ar@{}[dll]|{\Box}\ar@{}[drr]|{\Box}\ar[d]^<>(0.5){q_I}\ar[ll]_<>(0.5){\pr_Y}
\ar[rr]^<>(0.5){\pr_G}
&& G\ar[d]^<>(0.5){q_G}\\
Y/G&&I(Y/G)\ar[ll]^<>(0.5){p_{_{Y/G}}}\ar[rr]_<>(0.5){p_{_{G}}}&&
G/\aad G
}
\eeq 
\end{lem}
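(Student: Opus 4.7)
The plan is to establish both claims in one stroke by verifying that the two squares in diagram \eqref{zz} are cartesian. Once the left square is shown cartesian, part (ii) follows immediately, since $q_Y: Y \to Y/G$ is a $G$-torsor and $G$-torsors are stable under base change; and part (i) then comes for free, because any $G$-torsor $\wt X \to X$ induces an isomorphism $\wt X/G \iso X$ (as recalled in \S\ref{gstack}).

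The first step is to unpack definitions. An $S$-point of $I(Y,G)$ is, directly from the fiber-product definition, a triple $(y, g, f)$ with $y \in Y(S)$, $g \in G(S)$, and $f: y \iso g \cdot y$; a morphism $(y_1, g_1, f_1) \to (y_2, g_2, f_2)$ is a morphism $\varphi: y_1 \to y_2$ in $Y(S)$ such that $g_2 = g_1$ and the obvious square commutes (in the $G$-equivariant version one allows conjugation of $g$). The $G$-action is the adjoint-type action $h \cdot (y,g,f) = (hy, hgh\inv, h(f))$, and $q_I$ sends $(y,g,f)$ to the pair $(q_Y(y), (g,f))$, where $(g,f)$ is interpreted as an automorphism of $q_Y(y)$ via the presentation of $\Aut(q_Y(y))$ given in \eqref{algaut}.

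Next I would check the left square is cartesian. A point of $Y \times_{Y/G} I(Y/G)$ consists of $y \in Y(S)$, a point $(\bar{y}', \sigma) \in I(Y/G)(S)$, and an isomorphism $\alpha: q_Y(y) \iso \bar y'$ in $(Y/G)(S)$. Using $\alpha$ to transport, this is the same data as $y \in Y(S)$ together with an automorphism of $q_Y(y)$; by the very description \eqref{algaut} of $\Aut(q_Y(y))$, such an automorphism is a pair $(g,f)$ with $g \in G(S)$ and $f: y \to gy$, which is exactly a point of $I(Y,G)(S)$. Functoriality of all these constructions shows this bijection is an isomorphism of stacks. The right square is similar: a point of $G \times_{G/\Ad G} I(Y/G)$ consists of $g \in G$, a point $(\bar y, (g', f)) \in I(Y/G)$, and an isomorphism $[g] \iso [g']$ in $G/\Ad G$, i.e.\ some $h \in G$ with $hgh\inv = g'$; using $h$ to renormalize both $\bar y$ and $(g',f)$ reduces to the case $g=g'$, producing a point of $I(Y,G)$, and again functoriality promotes this to an isomorphism of stacks.

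The main subtlety I expect to have to address is the $2$-categorical nature of stacks: the "renormalization using $\alpha$" and "using $h$" arguments above must be carried out up to specified $2$-isomorphism, so that both of the constructed morphisms are genuinely inverse equivalences of stacks rather than just bijections on isomorphism classes of $S$-points. This is essentially bookkeeping, but it is where a little care is required, especially to see that the $G$-equivariance of $q_I$ matches the $G$-torsor structure produced by base change from $q_Y$.
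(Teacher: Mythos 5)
Your argument for the left square is correct, and it packages the same idea as the paper's, just unpacked on $S$-points. Where the paper manipulates fiber products abstractly using the torsor identity $G\times Y \cong Y\times_{Y/G}Y$ and then simplifies, you invoke the exact sequence \eqref{algaut} to describe an automorphism of $q_Y(y)$ as a pair $(g,f)$ with $f\colon y\to gy$; these two routes encode the same content, and yours is perfectly fine. From the left square being cartesian, the $G$-torsor claim for $q_I$ and then part (i) follow exactly as you say.

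Your treatment of the right square, however, has a genuine gap. You write that a point of $G\times_{G/\aad G} I(Y/G)$ ``consists of $g\in G$, a point $(\bar y, (g', f)) \in I(Y/G)$, and an isomorphism $[g]\iso[g']$,'' and then propose to ``renormalize $\bar y$ and $(g',f)$ by $h$.'' But a point of $I(Y/G)$ is a pair $(\bar y,\sigma)$ with $\sigma\in\Aut(\bar y)$; writing $\sigma$ as a pair $(g',f)$ already presupposes a lift of $\bar y$ to $Y(S)$, which exists only fppf-locally, and there is no $G$-action on $\bar y\in(Y/G)(S)$ to renormalize. Even if you pass to a cover where a lift exists and renormalize the lift instead, the lift and the conjugating element $h$ are each non-unique (up to $G$ and up to the centralizer of $g$ respectively), and assembling this into a well-defined equivalence of stacks with descent is precisely where the work is; the closing remark that ``functoriality promotes this to an isomorphism'' does not discharge it.

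The paper's route for the right square avoids $S$-point analysis entirely and is cleaner: once the left square is cartesian, $q_I$ is a $G$-torsor over $I(Y/G)$. The universal property of quotient stacks gives a map $p_{_G}\colon I(Y/G)=I(Y,G)/G\to G/\aad G$ (as in \eqref{pgy}), and the pair $(q_I,\pr_G)$ assembles into a morphism $I(Y,G)\to I(Y/G)\times_{G/\aad G}G$ over $I(Y/G)$. The target is a $G$-torsor over $I(Y/G)$ by base change from $q_G\colon G\to G/\aad G$, the map is $G$-equivariant, and any morphism of $G$-torsors on a fixed base is an isomorphism. That one line replaces the whole renormalization argument, and I'd recommend swapping it in.
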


\begin{proof} One has natural isomorphisms
$\ Y=(Y/G) \times_{ _{\pt/G}} \pt$,\ resp.
$\ G = \pt \times_{  _{\pt/G}} \pt$, \ and also $\
G \times  Y = Y \times _{Y/G} Y$.
Using these isomorphisms
we find
\begin{align*}
I(Y,G)=
Y &\times_{ _{Y \times  Y}} (G \times  Y) 
=  Y \times_{  _{Y \times  Y}} (Y \times_{  _{Y/G}} Y)  
 =  Y\times _{ _{Y \times  Y}}(Y \times  Y)
\times _{ _{Y/G \times  Y/G}} (Y/G)\\
&=  Y\times _{ _{Y/G \times  Y/G}} (Y/G) =
Y\times_{ _{Y/G}}
(Y/G) \times _{ _{Y/G \times  Y/G}}(Y/G)=
 Y\times_{ _{Y/G}} I(Y/G).
\end{align*}
The resulting  isomorphism $I(Y,G)\cong Y\times_{ _{Y/G}} I(Y/G)$
yields the left cartesian square in \eqref{zz}.
Further, since $Y\to Y/G$ is a $G$-torsor,
it follows that $I(Y,G)\to I(Y/G)$  is a $G$-torsor.
This implies ~(i).

By the universal property
of quotient stacks, the map
$\pr_G\times \pr_Y: I(Y,G)\to G\times Y$ followed by the projection to $(G\times Y)/ G$
factors through $I(Y,G)/G$, which is isomorphic to $I(Y/G)$ by (1).
Thus, we obtain a diagram
\beq{pgy}
\xymatrix{
I(Y/G)\ \ar@{=}[rr]^<>(0.5){\text{part (1)}}&&\  I(Y,G)/G\
\ar[rr]^<>(0.5){\pr_G\times \pr_Y}
& &\
(G\times Y)/ G\ 
 \ar[r]^<>(0.5){\pr_1} &\  G/\aad G.
}
\eeq
Let  $p_{_{G}}: I(Y/G)\to G/\aad G$ be the composite map.
Note that the  map $q_I\times p_{_{G}}: I(Y,G)
\to I(Y/G)\times_{_{G/\aad G}} G$ is a morphism 
of $G$-torsors on $I(Y/G)$, hence it is an isomorphism.
It follows that the right square in \eqref{zz} is cartesian,
completing the proof.
\end{proof}

%\begin{rem} The statement that the squares in diagram \eqref{zz} 
%are cartesian is equivalent to
%the following isomorphisms
%\beq{aut}
%Y\times_{Y/G} I(Y/G)\ccong I(Y,G)\ccong I(Y/G)\times_{_{G/\aad G}} G.
%\eeq
%\end{rem}

In the Lie algebra case,
we put  $I\tpp(Y/G):=Y/G\,\times_{(\TT Y)/G}\,(\g\times Y)/G$,
where $Y/G\to (\TT Y)/G$ is given by the zero section.
The Lie algebra counterpart of diagram \eqref{zz} reads
\beq{llie}
\xymatrix{
Y\ar[d]^<>(0.5){q_Y}&&  I(Y,\g)
\ar@{}[dll]|{\Box}\ar@{}[drr]|{\Box}\ar[d]^<>(0.5){q_I^+}\ar[ll]_<>(0.5){\pr_Y^+}
\ar[rr]^<>(0.5){\pr_\g}
&& \g\ar[d]^<>(0.5){q_G^+}\\
Y/G&&I\tpp(Y/G)\ar[ll]^<>(0.5){p^+_{_{Y/G}}}\ar[rr]_<>(0.5){p_{_{\g}}}&&
\g/G.
}
\eeq 
In particular, the map $q_I^+$ is a $G$-torsor, so one has a canonical
isomorphism $I\tpp(Y/G)=I(Y,\g)/G$.

\subsection{A trace formula} Below, we will freely use the notation of diagram
\eqref{zz} and put $X=Y/G$. Thus, we have the projections $p_X: I(X)\to X$
and $p_{G}: I(X)\to G/\aad G$, respectively.
Recall that an object of $I(X)$ is a pair $z=(x,f)$ where
$x\in X$ and $f\in\Aut(x)$.

\begin{prop}\label{tr_prop} Let $\k$ be a finite field.
Then, for any  $\ce\in D\abs(X)$ and a locally constant sheaf
$\cF\in D\abs(G/\aad G)$,
we have

\beq{tr2}
\ltr \RGam_c(G/\aad G,\ (p_{G})_!p^*_{X}\ce\o\cf)\
 = \sum_{[x]\in [X(\k)]}\ \sum_{f\in \Aut(x)(\k)} \mbox{\large{$\frac{1}{\#\Aut(x)(\k)}$}}
\cdot(\ltr\ce_x)\cdot(\ltr\cf_{p_{_G}(x,f)}).
 \eeq
\end{prop}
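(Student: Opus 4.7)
My plan is to reduce the statement to an application of Behrend's Grothendieck--Lefschetz trace formula on the stack $I(X)$, using the projection formula to move the cohomology to $I(X)$ and then computing the groupoid of $\k$-points of $I(X)$ explicitly.

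First I would apply the projection formula along $p_{G}: I(X) \to G/\aad G$ to rewrite
\[
\RGam_c\bigl(G/\aad G,\ (p_{G})_! p_X^*\ce \otimes \cF\bigr) \ \cong\ \RGam_c\bigl(I(X),\ p_X^*\ce \otimes p_G^*\cF\bigr).
\]
This is the step where local constancy of $\cF$ is convenient: it ensures $p_G^*\cF$ behaves well on stalks and that no derived subtleties appear in the projection formula on stacks. Both sheaves in the tensor product are absolutely convergent, so the cohomology lies in $D\abs(\pt)$ and we may take traces of Frobenius.

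Next, I would apply the Grothendieck--Lefschetz trace formula for stacks \cite{B1,B2} to the stack $I(X)$. This yields
\[
\ltr \RGam_c\bigl(I(X),\ p_X^*\ce \otimes p_G^*\cF\bigr)
\ =\ \sum_{[z]\in [I(X)(\k)]}\ \frac{1}{\#\Aut(z)(\k)} \cdot \ltr (p_X^*\ce \otimes p_G^*\cF)_z.
\]
Since stalks of a tensor product are the tensor products of stalks, the summand factors as $\ltr\ce_{p_X(z)} \cdot \ltr\cF_{p_G(z)}$, independent of any ambiguity in the equivariant structure on the stalks.

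The main bookkeeping step is to unpack the groupoid $I(X)(\k)$. By definition, an object is a pair $(x,f)$ with $x\in X(\k)$ and $f \in \Aut(x)(\k)$; a morphism $(x,f) \to (x',f')$ is an iso $\varphi: x\iso x'$ intertwining $f$ and $f'$. Hence $\Aut_{I(X)}(x,f) = C_{\Aut(x)}(f)$, the centralizer of $f$, and the isomorphism class $[(x,f)]$ is determined by $[x]\in [X(\k)]$ together with a conjugacy class of $f$ in $\Aut(x)(\k)$. Using the orbit--stabilizer relation $\#[f]_{\text{conj}} = \#\Aut(x)(\k)/\#C_{\Aut(x)}(f)$, the sum over $[(x,f)]$ rearranges as
\[
\sum_{[x]}\ \sum_{[f]}\ \frac{1}{\#C_{\Aut(x)}(f)}\cdot (\ltr\ce_x)\cdot(\ltr\cF_{p_G(x,f)})\ =\ \sum_{[x]}\ \sum_{f\in\Aut(x)(\k)}\ \frac{1}{\#\Aut(x)(\k)}\cdot (\ltr\ce_x)\cdot(\ltr\cF_{p_G(x,f)}),
\]
which is exactly the RHS of \eqref{tr2}. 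Here I used that $p_G^*\cF$ has constant stalk $\cF_{p_G(x,f)}$ on the fibers of $p_X$, compatibly with the identification above.

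The main obstacle is verifying that Behrend's trace formula applies to the (possibly non--finite type) stack $I(X)$ with the pulled-back sheaf $p_X^*\ce \otimes p_G^*\cF$, i.e.\ that this complex lies in $D\abs(I(X))$; this is where the hypothesis that $\cF$ is locally constant and that $\ce \in D\abs(X)$ enter essentially, via stability of absolute convergence under the operations $p_X^*,p_G^*$ and tensor product.
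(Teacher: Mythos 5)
Your proposal follows essentially the same route as the paper: project the sheaf $p_X^*\ce\otimes p_G^*\cF$ to $G/\aad G$ by the projection formula to recover the LHS, apply Behrend's Lefschetz trace formula on $I(X)$, and then enumerate $[I(X)(\k)]$ as pairs $([x],\text{conjugacy class of } f)$ with stabilizer the centralizer $C_{\Aut(x)(\k)}(f)$, finally rearranging by orbit--stabilizer. The only cosmetic difference is that the paper treats both identities as two computations of the single quantity $\ltr\RGam_c(I(X),\scr J)$, whereas you present one as a reduction step; the content is identical, and your closing worry about finiteness is already covered by the paper's standing finite-type hypothesis on $\fx_\bv(\cC)$ together with $I(X)\to X$ being affine (Lemma~\ref{hom-sc}).
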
 

\begin{proof} Let $\scr J:=p_{X}^*\ce\o p_G^*\cf$. This is
a sheaf on $I(X)$ and we compute the trace of  Frobenius 
on $\RGam_c(I(X), \scr J)$ in two ways as follows.

On the one hand, using the projection formula,
we find
\begin{align*}\ltr \RGam_c(I(X), \scr J)&=
\ltr \RGam_c(G/\aad G,\ (p_{G})_!(p_X^*\ce\o p_{G}^*\cf))\\
&=\ltr \RGam_c(G/\aad G,\ (p_{G})_!p_X^*\ce\o\cf)\ =\ 
\op{LHS}\eqref{tr2}.
\end{align*}

On the other hand, by the Grothendieck-Lefschetz  formula 
for stacks due to Behrend \cite[Theorem 6.4.9]{B1}, one has
\[\ltr \RGam_c(I(X), \scr J)\ =
\ \sum\nolimits_{[z]\in [I(X)(\k)]}\ \mbox{\large{$\frac{1}{\#\Aut(z)}$}}
\ltr {\scr J}_z.
\]

We write $z=(x,f)$ and put $A(x,f):=\Aut(z)(\k)$. An object $z'=(x',f')\in I(X)$
is isomorphic to $(x,f)$ iff there is an isomorphism
$\vphi: x\to x'$ such that $f'=\vphi\ccirc f\ccirc\vphi\inv$.
Hence, the set $[I(X)(\k)]$ is formed by the
pairs $([x], C)$ where  $[x]\in [X(\k)]$ and $C$ is a conjugacy
class in the group $A(x):=\Aut(x)(\k)$. Furthermore,
for an object $z\in I(X)(\k)$ in the isomorphism
class  $([x], C)$
the group $\Aut(z)(\k)$ is isomorphic to $A(x, C)$, the centralizer
in $A(x)$
of an element of the conjugacy class $C$.
Thus, writing $\op{Cl} A(x)$ for the set of conjugacy
classes of the group $A(x)$, we compute
\begin{align}
\ltr \RGam_c(I(X),\ \scr J) &=
\sum_{([x],f)\in [I(X)(\k)]}\mbox{\large{$\frac{1}{\# \Aut(x,f)(\k)}$}}\cdot
\ltr (\ce_{x}\o \cf_{p_{_G}(x,f)})\label{I-tr}\\
&=\ \sum_{[x]\in [X(\k)]}\ \sum_{^{C\in \op{Cl}  A(x)}}\
\sum_{f \in C}\
\mbox{\large{$\frac{1}{\# A(x,C)}$}}\cdot
(\ltr \ce_x)\cdot(\ltr\cf_C)\nonumber\\
&=\ \sum_{[x]\in [X(\k)]}\ 
\mbox{\large{$\frac{1}{\# A(x)}$}}\cdot(\ltr \ce_{x})\cdot
\biggl(\sum_{^{C\in \op{Cl}  A(x)}}\mbox{\large{$\frac{\# A(x)}{\#
      A(x,C)}$}}\cdot\ltr\cf_C\biggr)\nonumber\\
&=\ \sum_{[x]\in [X(\k)]}\ 
\mbox{\large{$\frac{1}{\# A(x)}$}}\cdot
(\ltr \ce_{x})\cdot
\Big(\sum_{^{f\in A(x)}}\ \ltr\cf_{p_{_G}(x,f)}\Big)=
\op{RHS}\eqref{tr2}.\qedhere
 \end{align}

\end{proof}
\section{Factorization property of inertia stacks}
\subsection{Moduli stacks}\label{obj}
The goal of this subsection is to formalize the concept of moduli stack of objects of
a given $\k$-linear category.  The approach suggested below is, perhaps,  neither  the most optimal nor the
most general  one. In any case, it will be sufficient for the limited purposes of the present paper.

It is well known that
the  adequate setting for studying moduli problems
is that of {\em stacks} since, usually, moduli spaces do not
exist as schemes. Let $\k$ be an arbitrary field and  $\ck$ a $\k$-linear category.
We would like to consider a moduli stack of 
 objects of  $\ck$.
For such  a stack to be  an Artin stack
requires, in particular,
 being able to define the notion of a  flat  family of objects
of $\ck$ over a scheme. 
For the purposes of this paper, 
we formalize this as follows.

Let ${\mathsf{AffSch}}$ be the category of  affine schemes  over $\k$.
Given a  fibered category
$\pi_\cC: \cC \to {\mathsf{AffSch}}$,
let  $\fX(\cC)$ be a  not necessarily full subcategory of $\cC$
that has the same objects as $\cC$ and such that the
morphisms in $\fX(\cC)$  are defined to be the cartesian arrows, cf. \cite{Ols}.
The  category  $\fX(\cC)$ is a prestack, i.e. it is  fibered in groupoids over ${\mathsf{AffSch}}$.

\begin{defn}\label{as1} 
A {\em  sheaf-category} over $\k$ is a  fibered category
$\pi_\cC: \cC \to {\mathsf{AffSch}}$ equipped, for each $S\in {\mathsf{AffSch}}$, 
with  the structure  of  a $\k[S]$-linear category on the fiber $\cC_S := \pi_\cC\inv(S)$, the fiber of $\cC$ over $S$,
such that 
\begin{enumerate}
\item  For every morphism 
$f: S'\to S$, in ${\mathsf{AffSch}}$,  the pull-back functor $f^*: \cC_S \to \cC_{S'}$ (defined up to a canonical isomorphism) is $\k[S]$-linear.

 %\item If  $f$ is flat then the natural morphism $\k[S'] \otimes_{\k[S]} \Hom(x,y) \to\Hom(f^*x,f^*y)$ is an isomorphism
%of $\k[S']$-modules, for  all objects $x,y$ of $\cC_S$.

\item The  prestack  $\fX(\cC)$ is an Artin stack  (to be called {\em the moduli stack} of objects
of ~$\cC$). 
\end{enumerate}
\noindent

We let $\ck:=\cC(\Spec \k)$, a $\k$-linear category, and  refer to
$\cC$ as a {\em sheaf-category enhancement} of $\ck$.
\end{defn}

The descent property of  $\X(\cC)$, which is part of the definition of a stack,
guarantees that the  presheaf $\cC: S\mto \cC_S$ is, in fact, a sheaf in fppf topology.
Thus,  the name `sheaf-category' is legitimate.

\begin{rem}\label{toen} \vi In the case of an abelian category,
the notion of  flat  family of objects has been studied by Lowen and Van den Bergh
\cite{LvdB}, and also by Joyce \cite{J}. Unfortunately, it is not sufficient 
for applications we are interested in to restrict to
the case of abelian categories (there is a more general notion of 
{\em quasi-abelian category}, cf. \cite{An}, \cite[Appendix B]{BvdB}, 
that  is  sufficient 
for most  applications). In any case, all examples  of $\k$-linear
categories considered in this paper come equipped with a natural
 enhancement to a sheaf-category. So, in the rest of the paper
we simply work with sheaf-categories and
will not discuss whether or not a given $\k$-linear category 
comes from some sheaf-category.

\vii A much better, and a more correct approach would be
to use formalism of derived algebraic geometry
as follows. 
Let ${\mathcal D}$ be  a finite-type dg category, i.e. a dg-category such that its category of modules is equivalent 
to the category modules over some homotopically finitely presented dg-algebra.
 For any  t-structure on  ${\mathcal D}$,   To\"en   and Vaquie \cite{TV} constructed 
 a moduli space of objects of the heart of the   t-structure. This moduli space  is 
an $\infty$-stack, which is a direct limit of derived $n$-stacks for all $n$. 
Using  \cite[Corollary 3.21]{TV}, one can find a substack which is in fact a derived Artin $1$-stack.
 Finally, one can consider
 an underlying classical stack of that Artin $1$-stack.

\viii A finite type category is saturated in the sense of Kontsevich.
In such case, an analogue of  (2) in Definition \ref{as1} would says that the natural functor
$\k[S']\o_{\k[S]} \cC_S\to\cC_{S'}$ is an equivalence.
\erem

The sheaf-category $\cC$ is said to be {\em Karoubian} if,
for any affine scheme $U$ of finite type over $\k$,
the category $\cC(U)$ is Karoubian, i.e. for any $M\in\cC(U)$
every idempotent morphism
$p\in \Hom_{\cC(U)}(M,M),\ p^2=p$,  is the projection to a direct
summand of $M$.

\begin{lem}\label{clear} Let  $\cC$ be a Karoubian sheaf-category.
 Then, for any  \emph{separable} field  extension $K/\k$,
there is a natural equivalence ${K}\o_\k\ck\cong \cC(\Spec K)$
and a natural bijection of the set of isomorphism
classes of objects of category ${K}\o_\k\cC$ with
the set $\X(\cC)(K)$.
\end{lem}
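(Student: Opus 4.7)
The second assertion reduces immediately to the first: by construction $\fX(\cC)(K)$ has the same objects as $\cC(\Spec K)$ with only invertible morphisms retained, so the two categories have the same set of isomorphism classes, and an equivalence $K \otimes_\k \cC \simeq \cC(\Spec K)$ transports isomorphism classes appropriately. The task is therefore to produce this equivalence of categories.

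The main tool will be the \emph{separability idempotent}: because $K/\k$ is finite separable, the multiplication $m\colon K \otimes_\k K \to K$ splits as a map of $K \otimes_\k K$-modules, equivalently there is a canonical idempotent $e \in K \otimes_\k K$ characterized by $m(e)=1$ and $(a \otimes 1)e = (1 \otimes a)e$ for every $a \in K$. Let $f\colon \Spec K \to \Spec \k$ be the structure morphism and $f^*\colon \ck \to \cC(\Spec K)$ its pullback. To define $F\colon K \otimes_\k \cC \to \cC(\Spec K)$, take $(x,\alpha)$ with $\alpha\colon K \to \End_\ck(x)$; then $f^*x$ carries two commuting $K$-actions---the canonical scalar action from the $K$-linearity of $\cC(\Spec K)$, and $f^*\alpha$---which combine into an algebra homomorphism $\phi\colon K \otimes_\k K \to \End_{\cC(\Spec K)}(f^*x)$. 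The Karoubian hypothesis lets us form $F(x,\alpha) := \Im\,\phi(e)$, a direct summand of $f^*x$ on which the two $K$-actions coincide; this is thus a plain object of $\cC(\Spec K)$ with no extra structure, and functoriality is immediate.

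For a quasi-inverse $G$, I use fppf descent: the stack property in Definition \ref{as1}(ii) says that along the fppf cover $f$, an object of $\ck$ is the same as an object $y \in \cC(\Spec K)$ equipped with a descent datum $p_1^* y \iso p_2^* y$ over $\Spec(K \otimes_\k K)$ satisfying a cocycle condition on $\Spec(K^{\otimes 3})$. Given $y \in \cC(\Spec K)$, the separability idempotent provides a canonical such isomorphism---each pullback $p_i^* y$ has $K \otimes_\k K$ in its endomorphism algebra by scalars, and $e$ cuts out the identification---while the cocycle condition follows from a standard compatibility of $e$ in the triple tensor product. Descent then produces $G(y) \in \ck$ with $f^* G(y) \cong y$; the intrinsic $K$-action on $y$ descends through this isomorphism to an algebra homomorphism $K \to \End_\ck(G(y))$, giving $G(y)$ the structure of an object of $K \otimes_\k \cC$.

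The composites $F \circ G$ and $G \circ F$ are then naturally isomorphic to the identity: each amounts to cutting out and re-assembling along the same idempotent $e$, and the verification is a routine unwinding. The principal technical obstacle is the cocycle identity required by the descent step defining $G$; this is where the separability hypothesis is really used (it manifests as the equality of the two relevant liftings of $e$ to $K^{\otimes 3}$), and once it is in place the remaining details follow routinely from the Karoubian assumption and the stack property, yielding the bijection of isomorphism classes with $\fX(\cC)(K)$ as a formal consequence.
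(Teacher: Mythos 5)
Your construction of the functor $F$ is sound: the two commuting $K$-actions on $f^*x$ do combine into a map $K\otimes_\k K\to\End_{\cC(\Spec K)}(f^*x)$, and the Karoubian hypothesis lets you split off the summand where the two actions agree by taking the image of the separability idempotent $e$. This is essentially the right construction and is close in spirit to what the paper does after trivializing.

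The construction of the quasi-inverse $G$, however, has a genuine gap. You propose to equip an arbitrary $y\in\cC(\Spec K)$ with a descent datum for the cover $\Spec K\to\Spec\k$, claiming that $e$ ``cuts out the identification'' $p_1^*y\iso p_2^*y$. This is not correct: the element $e\in K\otimes_\k K$ acts on each $p_i^*y$ as a nontrivial idempotent, not as an invertible map, so it produces an isomorphism only between the direct summands $e\cdot p_1^*y$ and $e\cdot p_2^*y$ (the ``diagonal'' pieces), not between the full objects $p_1^*y$ and $p_2^*y$. Indeed no canonical descent datum on a generic $y$ exists; if it did, $y$ would be in the essential image of $f^*$, which already fails for $\cC=\Lmod\k$ and $y=K\in\Lmod K$ whenever $[K:\k]>1$. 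What one actually needs to produce from $y$ is a pair $(x,\alpha)$ where $x$ is a ``restriction of scalars'' $f_*y\in\ck$ and $\alpha\colon K\to\End_\ck(x)$ records the intrinsic $K$-action on $y$; the existence of such an $f_*$ is exactly what requires an argument, and fppf descent applied naively in the direction you attempt does not supply it.

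The paper sidesteps this by a different route: it works in the relative setting of a finite \'etale $S'\to S$, pulls back to an \'etale cover of $S$ over which $S'$ becomes a trivial finite cover $U\times\Xi$, and observes that in the trivialized situation \emph{both} the base change category and the fiber of $\cC$ over $U\times_S S'$ are identified with the finite product $\prod_{\xi\in\Xi}\cC(U)$, using Karoubianness to split along the coordinate idempotents of $\oo_U^\Xi$. It then glues these local equivalences by descent for the sheaf of categories $U\mapsto\cC(U)$. The reduction to the trivialized case is what replaces your proposed (and unavailable) global quasi-inverse; if you want to keep the idempotent-based argument, you would need to first pass to a splitting cover of $\Spec K$ (e.g.\ a Galois closure) and argue by Galois descent there, which amounts to the same strategy.
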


\begin{proof}
It will be convenient to consider a more general setting.
 Fix a  finite \emph{\'etale} morphism
  $S'\to S$. Given a  finite \'etale morphism $U\to S$,
let  $\cC'(U):= \cC_{U\x_SS'}\ (=\oo_{U\x_SS'}\o_{\oo_U}\cC(U))$,
resp.  $\cC''(U):=\cC_{U\x_SS'}$. 
We claim that the fibers over $S$ of the fibered categories $\cC'$ and
$\cC''$
are equivalent. 

To prove the claim, we introduce a 
category  $T$ of  `trivialized'  morphisms  $U\to S$.
That is, an object of $T$
is an \'etale  morphism  $U\to S$  equipped with an isomorphism
 $U\x \Xi\to U$ 
for some finite  set $\Xi$.
Let $\cC^{\Xi}$ be a fibered
category over $T$ such that its fiber over 
an object $U\in T$ as above  is $\prod_{\xi\in\Xi}\cC(U)$.
Then, the  Karoubi property of $\cC$
implies that the pullback of the  category
$\cC'$ to such an $U\in T$  is canonically equivalent to  $\cC^{\Xi}$.
Therefore, applying descent to  the sheaf of categories
 $U\mto\cC(U)$, we deduce
that $\cC'(U)$ is equivalent to the category of 
 sections of fiber  products of categories  $\cC^{\Xi}$ over the
 the diagram
$\dis
\xymatrix{U \ar@<0.3ex>[r]\ar@<-0.3ex>[r]&
 U \otimes_S U
\ar@<0.7ex>[r]\ar[r]\ar@<-0.7ex>[r]
&
U \otimes_S U \otimes_S U \ldots
}
$.
This yields our claim since all the above clearly applies to the  sheaf of categories
 $U\mto\cC''(U)$ as well. 

The statement of the lemma follows  as a special case
where the morphism  $S'\to S$ is taken to be  $\Spec K\to\Spec\k$.
\end{proof}

\begin{ex}\label{bun} An $I$-graded coherent sheaf
on a scheme $U$ is, by definition, 
a direct sum  $\V=\oplus_\ii\ \V_i$ where $V_i$ is a
coherent sheaf on $U$. Equivalently, one can view
an  $I$-graded coherent sheaf as a coherent sheaf on a disjoint union of $I$
copies of $U$. 
Let  $\VV^I(U)$, the category of
 $I$-graded algebraic
vector bundles  on $U$, be a full  subcategory of the
category of $I$-graded coherent sheaves 
whose objects are  locally free sheaves.
The category  $\VV^I(U)$ has  a natural  enhancement
 to a sheaf-category  defined by $\VV_U^I: S \mto \VV^I(U\times S)$.
The corresponding moduli stack $\X(\VV_U^I)$ is
a disjoint union $\X(\VV_U^I)=\sqcup_{\bv\geq 0}\ Bun_\bv(U)$
where $Bun_\bv(U),\ \bv=(v_i)_\ii$, is the stack that parametrizes
vector bundles  $\V=\oplus_\ii\ \V_i$, on $U$, such that $\rk \V_i=v_i,\
\forall \ii$.

In the special case where $I$ is a one element set we use simplified notation
$\VV(U)=\VV^I(U)$, resp. $\VV_U=\VV_U^I$, etc.
\end{ex}

\begin{defn}\label{fun-def} A functor of sheaf-categories is 
a functor $F: \cC \to \cC'$ such that $\pi_{\cC'} \circ F = \pi_\cC$
 {\em strictly}, and the induced functor $\cC_S\to \cC'_S$ is $\k[S]$-linear for all $S \in {\mathsf{AffSch}}$.
\end{defn}

We will use the notation $\VV^I=\VV(\pt)$, a sheaf enhancement of the categry of finite dimensional vector spaces. 
By definition, one has $\fX_\bv(\VV^I)\cong \pt/\GL_\bv$.
This stack  comes equipped with
a `tautological' universal $I$-graded vector bundle of $I$-graded rank $\bv$.

Let $\cC$ be a sheaf category and  $F:\cC\to\VV^I$  a sheaf functor .
%\redd{Giving such a functor amounts to giving,
% for every affine scheme $S$,
 %a $\k[S]$-linear  functor
 %$F(S):\ \cC(S)\to\Vect(S)$ and, for every morphism
 %$j: S'\to S$ , of affine schemes over $\k$,
%an isomorphism of functors
%$F(S')\ccirc j_{\cC}^*\cong j^*\ccirc F(S)$,
%where  $\cC(j): \cC(S)\to \cC(S')$ is a functor induced by the presheaf structure.}
This functor induces a morphism
$\X(\cC)\to \X(\VV^I)$, of moduli stacks.
For any dimension vector $\bv$, one has
a  substack $\fX_\bv(\cC)=\X(\cC)\times_{\X(\VV^I)}\X(\VV^I_\bv)$ parametrizing
all objects $x\in\cC$  such that $\op{\mathbf{rk}} F(x)=\bv$.
The substack  $\fX_\bv(\cC)$
is  both open and closed in  $\fX(\cC)$,
so  $\fX(\cC)=\sqcup_{\bv}\ \fX_\bv(\cC)$ is a disjoint union.
The universal vector bundle on $\X(\VV^I_\bv)$
gives, by pull-back,  a natural
$I$-graded rank $\bv$ vector bundle $\FF_\bv$ on $\X_\bv(\cC)$. 
Let  $\fX_\bv(\cC, F)\to\X_\bv(\cC)$ be the  `frame bundle'
of the vector bundle $\FF_\bv$,
cf. \S\ref{gstack}.
Thus, $\fX_\bv(\cC, F)$ is a $\GL_\bv$-torsor over $X_\bv(\cC)$
and  $\fX_\bv(\cC, F)/\GL_\bv\cong \X_\bv(\cC)$.
An object of  the
 category $\Spec\k\to\fX_\bv(\cC, F)$ is a
 pair $y=(x,{b})$ where $x$ is an object of $\cC_\k$ and
${b}$ is a frame, i.e. 
an isomorphism $F(x)\ccong \oplus_\ii \k^{v_i}$, of $I$-graded vector spaces.
We  will refer to $\fX_\bv(\cC, F)$ as the `framed stack' for
$\X_\bv(\cC)$.

\begin{ex}\label{par-ex} Let $S$ be a scheme over $\k$ and $D=\sqcup_\ii\ D_i\sset S$ a disjoint union
of  closed subschemes. Given a dimension vector $\bm\in \Z^I_{\geq 0}$
one has a  $\k$-linear category
$\VV(S,D,\bm)$ of parabolic bundles, cf. \S\ref{par-sec}.
An object of $\VV(S,D,\bm)$  is  a locally free sheaf $\V$ on $S$ equipped, for each $\ii$,
with a partial flag of subbundles $\V|_{D_i}=\V_i^{(0)}\supseteq \V_i^{(1)}\supseteq\ldots \supseteq\V_i^{(m_i)}=0$,
i.e. a chain of coherent subsheaves such that each quotient $\V_i^{(j)}/\V_i^{(j+1)}$ is a locally free sheaf on $D_i$.
Category $\VV(S,D,\bm)$  has an enhancement to a sheaf category.
Specifically, for any test scheme $U$ let $D\times U:=\sqcup_\ii\ D_i\times U$,
 a disjoint union
of  closed subschemes of $S\times U$.
Then, the sheaf category  is given by the presheaf
$U\mto \VV(U\times S, D\times S,\bm)$.
We will often abuse the notation and write $\VV(S,D,\bm)$ for the sheaf enhancement of
the category of parabolic bundles denoted by the same symbol.

Below, we are mostly be interested in the
special case where $S$ is a smooth curve and $D=\{c_i\in S(\k),\ \ii\}$ a collection of marked points.
In this case, we have the natural identification 
$\{c_i\}\times U\cong U$, so we may (and will)
view $V^{(j)}_i/V^{(j+1)}_i$  as
a  locally free sheaf on $U$. The assignment
\[\V\ \mto\ F(\V)=\bigoplus_\ii\ \bigoplus_{1\leq j\leq m_i}\
V^{(j)}_i/V^{(j+1)}_i.
\]
 yields a sheaf-functor
$F: \VV(C,D,\bm)\to\VV^{\bbm}(\pt)=\VV^{\bbm}$. 
For a type $\br\in \bbm$, the corresponding  framed stack $\X_\br(\VV(C,D,\bm), F)$ classifies pairs 
$(\V,b)$ where  $\V$  is  a 
parabolic
bundle on $S$  of type $\br$ and  $b=(b^{(j)}_i)_{(i,j)\in \bbm}$  is a
collection of bases, where 
 $b^{(j)}_i$ is a $\k$-basis  of the vector space
 $V^{(j)}_i/V^{(j+1)}_i$.
The forgetful map $\X_\br(\VV(S,D,\bm),\, F)\to \X_\br(\VV(S,D,\bm))$
is a $G_\br$-torsor.
\end{ex}

 \subsection{Endomorphisms and automorphisms}
\label{frame-sec}
Fix
a sheaf-category $\cC$ and write $\X=\X(\cC)$.
Let $S$ be an affine scheme, $\sch_S$ the category of affine schemes over $S$,
and $\X(S)$ the groupoid corresponding to $S$.
We write $\Aut(x)$ for the stabilizer group scheme of 
an object $x\in \X(S)$.

For any objects $x,y\in\X(S)$,  let $\Hom_{x,y}$ be a functor
$\sch_S\to \textsf{Sets}$ defined by the assignment
$(p:S'\to S)\ \mto \Hom_{\cC_{S'}}(p^*x,p^*y)$.

\begin{lem}\label{hom-sc} \vi  The functor $\Hom_{x,y}$ is represented by
an affine scheme $\underline\Hom(x,y)$ that
has the natural structure of  
a generalized vector bundle on $S$
in the sense of Definition \ref{vect-bun}. Furthermore,  there is a canonical $\k[S]$-module isomorphism
of $\Hom_{\cC_S}(x,y)$ and $\Ga(S, \underline\Hom(x,y))$, the  space of sections 
of this  generalized vector bundle.

\vii 
The stack $\X(\cC)$ has an affine diagonal. In particular, the projection
$I(\X(\cC))\to \X(\cC)$ is an affine morphism and $\X$ is a stack with affine stabilizers.
\end{lem}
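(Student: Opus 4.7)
The plan is to realize $\underline{\Hom}(x,y)$ as a closed subfunctor of the automorphism functor of $x\oplus y$, thereby transferring the representability granted by the Artin property of $\X(\cC)$ to the Hom functor, and then to upgrade this representability to that of a generalized vector bundle using the $\k[S']$-linearity of Hom-sets.

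First I would exploit the additive structure. Given $x,y\in\cC_S$, set $z:=x\oplus y$ with its structural projections $\pi_x,\pi_y$ and inclusions $\iota_x,\iota_y$. For every $p:S'\to S$, since $\pi_x\iota_y=0$ the element $\iota_yf\pi_x\in\End(p^*z)$ is square-zero, so the assignment $f\mapsto 1_{p^*z}+p^*\iota_y\circ f\circ p^*\pi_x$ defines an injection of $\Hom_{\cC_{S'}}(p^*x,p^*y)$ into $\Aut_{\cC_{S'}}(p^*z)$, with inverse given by $\alpha\mapsto p^*\pi_y\circ(\alpha-1)\circ p^*\iota_x$. Its image is cut out functorially by the two closed conditions $\alpha\circ p^*\iota_y=p^*\iota_y$ and $p^*\pi_x\circ\alpha=p^*\pi_x$. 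Because $\X(\cC)$ is Artin, its diagonal is representable by algebraic spaces; hence $\underline{\Aut}_S(z)$ is a group algebraic space over $S$, and the two conditions above define a closed subspace representing $\Hom_{x,y}$.

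Next I would linearize. The $\k[S']$-module structure on $\Hom_{\cC_{S'}}(p^*x,p^*y)$ is preserved under pullback by hypothesis, so $\underline{\Hom}(x,y)$ is a sheaf of $\oo_S$-modules on $\sch_S$ that is already representable by an algebraic space. Standard descent for linear stacks then identifies any such object with a generalized vector bundle $V(\mm):=\Spec_S\Sym_{\oo_S}(\mm)$ for a uniquely determined quasi-coherent sheaf $\mm$ on $S$; the identification $\Hom_{\cC_S}(x,y)=\Ga(S,\underline{\Hom}(x,y))$ is then tautological from the universal property of $V(\mm)$.

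For part (ii), the Isom sheaf $\underline{\Isom}_S(x,y)$ embeds as a closed subscheme of the affine $S$-scheme $\underline{\Hom}(x,y)\times_S\underline{\Hom}(y,x)$, cut out by the two closed conditions demanding that the two compositions equal the respective identity sections of the separated affine $S$-schemes $\underline{\Hom}(x,x)$ and $\underline{\Hom}(y,y)$. Hence $\underline{\Isom}_S(x,y)$ is affine over $S$, which is exactly the statement that the diagonal $\X(\cC)\to\X(\cC)\times\X(\cC)$ is an affine morphism; the affineness of the projection $I(\X(\cC))\to\X(\cC)$ and of every stabilizer $\Aut(x)$ follows immediately by base change. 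The main obstacle in this outline is the descent step in the second paragraph: converting an $\oo_S$-module object in algebraic spaces into the spectrum of a symmetric algebra of a quasi-coherent sheaf is not purely formal, and relies on the fact that the additive and scalar-multiplication morphisms of $\underline{\Hom}(x,y)$, together with fppf descent for quasi-coherent sheaves, pin down a genuine quasi-coherent $\mm$ on $S$.
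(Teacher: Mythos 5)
Your embedding of $\Hom_{x,y}$ into $\Aut(x\oplus y)$ via $f\mapsto 1+\iota_y f\pi_x$ (the lower unipotent matrix) matches the paper's starting point, and your treatment of part (ii) --- cutting the Isom sheaf out of $\underline\Hom(x,y)\times_S\underline\Hom(y,x)$ by composition conditions --- is essentially the paper's argument. The gap is in part (i), in the step from ``a representable functor carrying an $\oo_S$-linear structure'' to ``a generalized vector bundle $\Spec_S\Sym M$''.

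That step is not a formal descent statement and is false without further input: over a field of characteristic $p$ the Frobenius kernel $\alpha_p=\Spec\k[t]/(t^p)$ is a finite affine $\k$-module scheme --- stable under addition and under scalar multiplication by elements of any $\k$-algebra --- yet it is not of the form $\Spec\Sym M$, since $\Sym M$ is always a polynomial ring. So representability plus linearity alone gives neither affineness nor a symmetric-algebra presentation. The paper supplies the missing geometric input via a $\GG_m$-attractor characterization: conjugating by $\gamma(t)=\textrm{diag}(\Id_x,\,t\cdot\Id_y)$ shows that the image of the embedding is the locus of $g\in\Aut(x\oplus y)$ with $\lim_{t\to0}\gamma(t)g\gamma(t)^{-1}=\Id$, equivalently the scheme of $\GG_m$-equivariant maps $\AA^1\to\Aut(x\oplus y)$ sending $0$ to the identity, and affineness (together with the $\GG_m$-grading that produces the symmetric-algebra presentation) follows from general facts about attractors. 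A secondary issue: your closed conditions $\alpha\circ\iota_y=\iota_y$ and $\pi_x\circ\alpha=\pi_x$ are equalities in $\Hom(y,\,x\oplus y)$ and $\Hom(x\oplus y,\,x)$, which are of the very type of object whose representability you are trying to establish, so reading them off as closed loci is circular; this can be repaired by re-encoding them inside automorphism spaces of larger direct sums, but the attractor idea is the one you cannot bypass.
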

\begin{proof} 
For any $f\in  \Hom_{\cC_{S'}}(p^*x,p^*y)$, the matrix 
 $\left|\begin{array}{cc}\Id_x & 0 \\ f & \Id_y
\end{array}\right|$ gives an automorphism of $p^*x\oplus p^*y$, so one has an
imbedding
of the set $\Hom_{\cC_{S'}}(p^*x,p^*y)$ into  the set $\Aut(p^*(x\oplus y))$.
The functor $(p:S'\to S)\ \mto \Aut(p^*x\oplus p^*y)$ is  represented by the group scheme
 $\Aut(x\oplus y)\in \sch_S$.
Hence, the presheaf of sets over  $\sch_S$ associated with  the functor $\Hom_{x,y}$
is a sub-presheaf of the presheaf  associated with  the functor  represented by $\Aut(x\oplus y)$.
Therefore, the functor  $\Hom_{x,y}$ is represented by a sub-groupscheme of $\Aut(x\oplus y)$.
It may be characterized as follows.
We have a natural homomorphism 
$\gamma: \GG\to\Aut(x\oplus y)$ given by $t\mto\textrm{diag}(\Id_x, t\cdot\Id_y)$.
Let  $\GG$ act on  $\Aut(x\oplus y)$ by group scheme automorphisms via conjugation.
Then, we have $\underline\Hom(x,y)=\{g\in \Aut(x\oplus y)\mid \underset{^{t\to0}}\lim\ \gamma(t)g\gamma(t)\inv\Id\}$. 
This can be reformulated  scheme-theoretically by identifying $\underline\Hom(x,y)$
with
the scheme of $\GG$-equivariant maps $u:\AA\to \Aut(x\oplus y)$ such that $u(0)=0$.
It follows from this last reformulation that the scheme $\underline\Hom(x,y)$ is affine.
 The rest of the proof of
part (i) are left to the reader.

To prove (ii) observe first that, for any $x,y,z\in \X(S)$, the composition
map $\Hom_{\ccc_S}(x,y)\times \Hom_{\ccc_S}(y,z)\to \Hom_{\ccc_S}(x,z)$ induces a morphism
$\ccirc: \underline\Hom(x,y)\times \underline\Hom(y,z)\to \underline\Hom(x,z)$, of affine schemes over $S$.

The pair $(x,y)$ gives a morphism $S\to \X\times\X$
and we also have 
a morphism $\{(\Id_x,\Id_y)\}_S:\ S\to \underline\Hom(x,y) \times_S \underline\Hom(y,x)$, of schemes over $S$.
Further, let 
$\underline\Hom(x,y)\times_S \underline\Hom(y,x)\to S$
be a morphism given by
$(f,g) \mto (g\ccirc f,f\ccirc g))$. 
With this notation, one has a natural  isomorphism 
$$
S\to \X\times\X=
 (\underline\Hom(x,y) \times_S \underline\Hom(y,x)) \times_{\underline\Hom(x,x) \times_S \underline\Hom(y,y)} \{(\Id_x,\Id_y)\}_S
$$
Part (i) implies that  the RHS of this formula is affine over $S$.
It follows that $S\times_{\X\times\X}\X$ is an affine schemes over $S$, proving  (ii).
\end{proof}

In the special case $x=y$, we put
$\End (x):=\Hom_{\ccc_S}(x,x)$.
By the above lemma, we have $\End (x)=\Ga(S, \underline\Hom(x,x))$.
Let $\underline\Aut(x)$ be a closed subscheme of
$\underline\Hom(x,x)\times_S \underline\Hom(x,x)$ defined by the equations $g'\ccirc g=g\ccirc g'=\Id$.
It is easy to show  that
 $\underline\Aut(x)$ is canonically isomorphic to $\Aut x$
as a  group scheme over $S$.

Now, we fix a sheaf functor $F: \ccc\to \VV^I$.
For $x\in \X(S)$, let
 $F_x: \End(x)\to \End F(x)$ be the $\k[S]$-algebra homomorphism
induced by  $F$.

\begin{lem}\label{end_lem}  
For any $x\in \X(S)$, the ideal $\Ker F_x$
 is  nilpotent, i.e. we have $(\Ker F_x)^N=0$ for $N\gg0$.
\end{lem}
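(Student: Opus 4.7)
The plan is to first reduce to the case of a geometric point $S=\Spec K$ and then exploit the Karoubian structure together with classical Wedderburn theory for finite-dimensional algebras. The essential input is that the standing hypothesis on $F$ (as introduced in \S\ref{exp-sec}) guarantees that $F$ does not kill nonzero objects; I will assume this carries over to the sheaf-category setup.

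\emph{Reduction to the field case.} Since $F(x)$ is a locally free $\oo_S$-module of bounded rank $r=\sum_i v_i$, the algebra $\End F(x)$ is a locally free $\oo_S$-module of rank $r^2$. Consequently the image of $F_x\colon \End(x)\to \End F(x)$, which is isomorphic to $\End(x)/\Ker F_x$, is a finitely generated $\oo_S$-submodule of $\End F(x)$ with fiberwise dimension at most $r^2$. Nilpotence of an ideal can be detected on sections, and the uniform bound comes from the fiberwise dimension bound via a standard Noetherian/Nakayama argument: knowing $(\Ker F_{x_s})^N=0$ on every geometric fiber $s$ with $N$ independent of $s$ implies $(\Ker F_x)^N=0$ after passing to sections.

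\emph{Field case.} Assume now $S=\Spec K$ for a field $K$. By (the argument of) Lemma \ref{krull-lem}, $A:=\End(x)$ is a finite-dimensional $K$-algebra. I first claim that $I:=\Ker F_x$ contains no nonzero idempotent. Indeed, if $e\in A$ is an idempotent with $F(e)=0$, then the Karoubian property gives a direct sum decomposition $x=x'\oplus x''$ with $e$ being projection onto $x'$; applying $F$ yields a decomposition $F(x)=F(x')\oplus F(x'')$ with $F(e)$ being projection onto $F(x')$. From $F(e)=0$ we get $F(x')=0$, and since $F$ does not kill nonzero objects, $x'=0$, hence $e=0$.

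\emph{From no idempotents to nilpotent.} A standard ring-theoretic fact: in a finite-dimensional $K$-algebra $A$, a two-sided ideal $I$ is nilpotent iff $I\subset \op{rad}(A)$, iff its image $\bar I$ in the semisimple quotient $\bar A:=A/\op{rad}(A)$ vanishes. If $\bar I\neq 0$, then $\bar I$ is a two-sided ideal in the semisimple ring $\bar A$, hence a product of a subset of the simple Wedderburn factors, and in particular contains a nonzero central idempotent $\bar e$. Choose any lift $e_0\in I$ of $\bar e$ (possible since $I\onto \bar I$); then $e_0^2-e_0\in I\cap \op{rad}(A)$ is nilpotent, and the Newton iteration $e_{k+1}=e_k+(1-2e_k)(e_k^2-e_k)$ converges in finitely many steps to an idempotent $e\in A$ whose successive corrections lie in the ideal generated by $e_0^2-e_0$, hence in $I$. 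The resulting $e\in I$ is nonzero (its image in $\bar A$ is $\bar e\neq 0$), contradicting the previous claim. Therefore $I$ is nilpotent, completing the field case and, via the reduction, the lemma.

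\emph{Main obstacle.} The technical heart is the reduction step: extracting a uniform nilpotence index $N$ that works across all of $S$. Pointwise one only gets $N=N(s)$ depending on the point. The bounded rank of $\End F(x)/\Ker F_x\hookrightarrow \End F(x)$ is what saves the day; if $\X(\cC)$ is of finite type (which is the only case used in the applications), this is routine, but in full generality one must be careful about the (possibly non-finite-type) structure of $\underline\End(x)$ as a generalized vector bundle over $S$.
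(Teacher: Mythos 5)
Your treatment of the field case is essentially the paper's argument, with a minor cosmetic difference: you run the Newton iteration directly inside $I=\Ker F_x$ to produce a nonzero idempotent $e\in I$ with $F_x(e)=0$, whereas the paper first replaces $\End(x)$ by the unital subalgebra $A=\k+J$ and lifts inside $A$. Both routes exploit the same Karoubi/Wedderburn mechanism and use the hypothesis that $F$ kills no nonzero object to derive the contradiction $F_x(e)=0\Rightarrow x'=0\Rightarrow e=0$. That part is correct.

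The genuine gap is in the reduction to the geometric-fiber case. You propose to extract a uniform nilpotence index $N$ from the bound $\dim_\k\bigl(\End(x_s)/\Ker F_{x_s}\bigr)\le r^2$, i.e.\ from the bounded rank of the \emph{image} of $F_x$. But that bounds the wrong thing: the nilpotence index of a nilpotent two-sided ideal $\mathfrak J$ in a finite-dimensional $\k$-algebra $R$ is controlled by $\dim_\k R$ (equivalently $\dim_\k\mathfrak J$), not by $\dim_\k(R/\mathfrak J)$. A family $R_m=\k[\eps]/(\eps^m)$ with $\mathfrak J_m=(\eps)$ has $\dim R_m/\mathfrak J_m=1$ for all $m$, yet the nilpotence index of $\mathfrak J_m$ grows with $m$; the analogous phenomenon could a priori occur here as $s$ varies over $S$. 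What the paper actually uses is that $\End(x)$ itself is a \emph{finite} $B$-module (this follows from Lemma~\ref{hom-sc}, since $\underline\Hom(x,x)$ is a generalized vector bundle, i.e.\ $\Spec_S(\Sym M)$ for a coherent $M$, whose module of sections is $\Hom_B(M,B)$). That gives a uniform bound $\dim_\k\End(x_s)<N$, hence $(\Ker F_{x_s})^N=0$ for every closed point $s$; only then does the pointwise-to-global step you state at the end of your reduction go through. Your ``Main obstacle'' paragraph correctly senses that something about $\underline\End(x)$ is at issue, but the fix is not finiteness of $\X(\cC)$ — it is the coherence of $\End(x)$ as an $\oo_S$-module, which the paper secures in Lemma~\ref{hom-sc} without any finite-type hypothesis on the stack.
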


\begin{proof} Put $B=\k[S]$. First, we consider the case where $B=\k$.
Let $J$ be the kernel of the map $F_x$
and let $A:=\k + J$. Thus, $A$ is  a  finite dimensional unital subalgebra of 
$\End(x)$ and $J$ is a codimension one two-sided ideal of $A$.
The lemma would follow provided we show that $J$ is contained in $\fN$,
the nilradical of $A$ (which is the same as the Jacobson radical).

Assume that $J\nsubseteq \fN$. Let $\bar J\cong J/(J\cap\fN)$ be
the image of $J$ in  $A/\fN$. Thus, $\bar J$ is a nonzero codimension one two-sided ideal
of  $A/\fN$. 
Any two-sided ideal of  $A/\fN$,  a semisimple finite dimensional algebra, is generated by  a central idempotent.

We deduce that the ideal $\bar J$ contains a nonzero idempotent
$\bar e$.
The ideal  $\fN$ being nilpotent,
the idempotent $\bar e$ can be lifted to an idempotent 
$e\in A$ such that $e\ (\op{mod} \fN) =\bar e\in\bar J$. 
Since $A=\k+J$ this implies that $e\in J$.
Thus, we have constructed a nonzero idempotent $e\in \End(x)$ 
that belongs to the kernel of the
homomorphism $F_x: \End(x)\to \End_\k F(x)$.

Associated with the  decomposition $1=e+(1-e)$, in $\End(x)$, there
is a direct sum decomposition
 $x=x'\oplus x''$, in $\scr C$. This gives a vector space
decomposition
$F(x)=F(x')\oplus F(x'')$.  It is clear, by functoriality,
that the map $F_x(e): F(x)\to F(x)$ is equal to the composition
$F(x)\onto F(x')\into F(x)$, of the first projection
and an imbedding of the first direct summand.
We deduce
\[e\neq 0\en \Rightarrow\en x'\neq 0\en \Rightarrow\en F(x')\neq 0\en
\Rightarrow\en F_x(e)\neq 0,\]
On the other hand,  by construction we have $e\in \Ker(F_x)$, hence
 $F_x(e)=0$. The contradiction implies
that $J\sset \fN$. This completes the proof in the special
case $B=\k$.

We now consider the general case, and put $S=\Spec B$.
Thus, $\End(x)$ is a $B$-algebra which is finite as a $B$-module.
For every closed point $s:\Spec\k\into S$, one has the composite
map
$x_s:\ \Spec\k\to X$ and the corresponding finite dimensional
$\k$-algebra $\End(x_s)$. The algebra $\End(x)$ being
finite over $B$,  there is an integer
$N$ such that $\dim_\k \End(x_s)<N$ for all closed points $s\in S$.
Observe further that given an arbitrary $\k$-algebra $R$ such that $\dim_\k R<N$
and  a two-sided ideal ${\mathfrak J}$ of $R$ such that
 ${\mathfrak J}^m=0$  for some $m\gg 0$, one automatically has
 that  ${\mathfrak J}^N=0$. Therefore, from the
special case
$B=\k$ of the lemma, we deduce that
 $(\Ker F_{x_s})^N=0$ holds for all $s$.

We claim that $(\Ker F_x)^N=0$.
Indeed, if $a_1\cdots a_N\neq 0$ for some elements
 $a_1,\ldots,a_N\in \Ker F_x$ then one can find
a closed point $s\in S$ such that $a_1|_s\cdots a_N|_s\neq 0$.
That would contradict the equation  $(\Ker F_{x_s})^N=0$, 
since  $a_j|_s\in \Ker F_{x_s},\ j=1,\ldots,N$.
\end{proof}

Fix a sheaf-functor $F:\cC\to \VV^I$.
We will use the notation $X=\X_\bv(\cC),\,
Y=\fX_\bv(\cC, F)$, resp.  $\g=\gl_\bv,\ G=\GL_\bv$ and
$\g(S)=\k[S]$, $G(S)$ for $S$-poits of $G$.
For $y=(x,b)\in Y(S)$ and  $f\in \End(x)$ there is a unique
element $p_{\g,y}(f)\in \g(S)$ such that $F_x(f)(b)=p_{\g,y}(f)(b)$.
Let $\g_y$ be  the image of the map $f\mto p_{\g,y}(f)$. Thus,
we have an  exact sequence
\beq{algalg}
\xymatrix{
0\ar[r]&\Ker F_x\ar[rr]
 &&\End(x)\ar[rr]^<>(0.5){p_{\g,y}}&&
\g_y.
}
\eeq 
Observe that the map $p_{\g,y}$ 
is a homomorphism of  associative subalgebras.

The group  $\Aut(y)$, resp. $\Aut(x)$,
is  the group of units of  the 
associative algebra $\End(y)$, resp. $\End(x)$.
Furthermore, the restriction of this
map to $\Aut(x)$ is nothing but the map 
$p_y$ in \eqref{algaut}.
Thus, the exact sequence in \eqref{algaut}
is obtained from  \eqref{algalg}, an  exact sequence
of associative algebras, by
 restriction to the groups of units.
On the other hand, clearly, one has  $\Lie\Aut(x)=\End(x)$,
where $\End(x)$ is viewed as a Lie algebra with
the commutator bracket.
Therefore,  \eqref{algalg} may be identified
with  the  exact sequence
of Lie algebras induced by \eqref{algaut}.
In particular,  the group $G_y$ is
 the group of units of  the 
associative algebra $\g_y$ and we have
$\Lie G_y=\g_y$. 

\begin{cor}\label{unip} Let $y\in Y(\k)$ and $x\in X(\k)$ be the image
  of $x$. Then,

\vi
The groups $\Aut(x)$ and $G_y$ are
  connected.

\vii The group $\Aut(y)$ is unipotent and
we have $\Aut(y)=\Ker\big[F_x:\Aut(x)\to\GL(F(x))\big]$.
\end{cor}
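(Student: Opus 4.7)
The plan is to deduce both parts quickly from the preceding material. The two key inputs are Lemma \ref{end_lem}, which guarantees that the ideal $\Ker F_x\subset \End(x)$ is nilpotent, together with the elementary observation that for any finite-dimensional associative $\k$-algebra $A$, the unit group $A^\times$ is the nonvanishing locus of $\det\lambda$, where $\lambda\colon A\to\End_\k(A)$ is left multiplication. Since $1\in A^\times$, this exhibits $A^\times$ as a nonempty Zariski-open subvariety of the affine space $A$, hence irreducible and in particular connected as an algebraic group.

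For part (i), applying this observation to $A=\End(x)$ shows that $\Aut(x)=\End(x)^\times$ is connected. The sentence preceding the Corollary identifies $G_y$ with the group of units of the finite-dimensional subalgebra $\g_y\subset \g$; the same observation applied to $A=\g_y$ therefore yields connectedness of $G_y$. (Alternatively, one may argue that $G_y$ is the image of the connected algebraic group $\Aut(x)$ under the homomorphism $p_y$ of \eqref{algaut}.)

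For part (ii), the identification $\Aut(y)=\Ker\bigl[F_x\colon \Aut(x)\to\GL(F(x))\bigr]$ is immediate from the very definition of morphism in the framed stack: an automorphism $f$ of $x$ lifts to an automorphism of $y=(x,b)$ if and only if $F_x(f)$ fixes each vector of the basis $b$, equivalently $F_x(f)=\Id_{F(x)}$. This identification also matches the exact sequence \eqref{algaut}, since the map $p_y$ there is precisely the restriction of $F_x$ to units once $b$ is used to identify $\GL(F(x))$ with $G$. The scheme-theoretic assignment $g\mapsto 1+g$ then identifies the affine space $\Ker F_x$ with $\Aut(y)$. By Lemma \ref{end_lem} there is an integer $N$ with $(\Ker F_x)^N=0$, so $(f-1)^N=g^N=0$ for every $f=1+g\in \Aut(y)$, showing that each such $f$ is unipotent in $\Aut(x)$. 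Finally, the product formula $(1+g)(1+g')=1+g+g'+gg'$ exhibits $\Aut(y)$ as a successive extension of additive groups $\Gm_a$ along the descending filtration by powers of $\Ker F_x$, so $\Aut(y)$ is a connected unipotent algebraic group.

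I do not anticipate any real obstacle here: everything is a direct consequence of Lemma \ref{end_lem} together with the basic fact about unit groups of finite-dimensional algebras; the main point is simply to organize the observations in the right order.
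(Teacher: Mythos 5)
Your proof is correct and follows essentially the same route as the paper: connectedness of both groups comes from the standard fact that the unit group of a finite-dimensional algebra is a dense open subset of the affine space underlying the algebra, the identification $\Aut(y)=\Id+\Ker F_x$ comes from the exact sequence \eqref{algaut} (equivalently, from the definition of morphisms of framed objects), and unipotence then follows from Lemma \ref{end_lem}. The only difference is that you spell out the connectedness argument and the filtration argument in slightly more detail than the paper does, but the underlying ideas are identical.
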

\begin{proof}
It is clear that the group of units of a finite dimensional
algebra is connected. It follows that the group $\Aut(x)$,
hence also $G_y$, is connected, proving (i).
The equation of part (ii) follows from the exact sequence
\eqref{algaut}. Hence, we deduce
$\Aut(y)=\Id+\Ker[F_x: \End(x)\to \End F(x)]$. Lemma \ref{end_lem} insures that 
this group  is unipotent, and (ii) follows.
\end{proof}

\begin{cor}\label{unicor}
If the functor $F$ is faithful then, for any $\bv$, the
stack $Y$ is an algebraic space.
\end{cor}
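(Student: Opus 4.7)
The plan is to verify that all automorphism group schemes of $Y=\fX_\bv(\cC,F)$ are trivial, which is equivalent to the diagonal of $Y$ being a monomorphism and hence (together with the Artin stack property) to $Y$ being an algebraic space. The key input is Corollary \ref{unip}(ii), describing $\Aut(y)$ as a kernel; once we promote that description from $\k$-points to arbitrary $S$-points, faithfulness of $F$ immediately kills the kernel.

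First I would extend Corollary \ref{unip}(ii) to the relative setting. For an affine test scheme $S$ and $y=(x,b)\in Y(S)$, where $b$ is a trivialization of $F(x)$ as an $I$-graded locally free $\oo_S$-module, the framed stack $Y$ fits in a cartesian square whose lower row is the $\GL_\bv$-torsor $\fX_\bv(\cC,F)\to\fX_\bv(\cC)$. Unwinding this, an $S$-automorphism of $y$ is exactly an $S$-automorphism $\vphi\in\underline\Aut(x)(S)$ whose image under the induced map $F_x\colon \underline\Aut(x)\to \GL(F(x))\cong \GL_{\bv,S}$ (the latter identification using the frame $b$) is the identity. Writing this in terms of the affine group schemes of Lemma \ref{hom-sc}, we obtain a functorial identification
\[
\underline\Aut(y)\ =\ \Ker\bigl[F_x\colon \underline\Aut(x)\to \GL_{\bv,S}\bigr]
\]
as group schemes over $S$. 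This is just the $S$-valued version of the calculation already carried out in Corollary \ref{unip}(ii).

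Next I would use faithfulness. By hypothesis, the sheaf-functor $F$ is faithful, i.e.\ for every affine $S$ and every $x\in\cC_S$, the $\k[S]$-algebra homomorphism $F_x\colon \End_{\cC_S}(x)\to \End_{\oo_S}(F(x))$ is injective. Restricting this injective algebra map to the sub-functors of units gives a monomorphism $\underline\Aut(x)\hookrightarrow \GL(F(x))$ of affine group schemes over $S$. Combined with the identification in the previous step, this yields $\underline\Aut(y)=\{1\}$ for every $y\in Y(S)$.

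Finally, since the diagonal of $Y$ is affine (Lemma \ref{hom-sc}(ii)) and all relative automorphism group schemes are trivial, the diagonal $Y\to Y\times Y$ is a monomorphism. An Artin stack with monomorphic diagonal is an algebraic space, completing the argument. The proof is essentially routine; the only non-cosmetic step is the $S$-functorial version of Corollary \ref{unip}(ii), and that follows directly from the definition of the frame bundle as a $\GL_\bv$-torsor over $\fX_\bv(\cC)$ together with Lemma \ref{hom-sc}(i), so I do not anticipate a genuine obstacle.
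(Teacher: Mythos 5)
Your proof is correct and follows the same route as the paper: identify $\Aut(y)$ as the kernel of $F_x\colon\Aut(x)\to\GL(F(x))$ via Corollary~\ref{unip}(ii), note faithfulness kills it, then invoke the standard criterion that an Artin stack with trivial inertia (monomorphic diagonal) is an algebraic space. The paper compresses the last step into the citation \cite[Corollaire (8.1.1)(iii)]{LM}; otherwise the arguments coincide.
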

\begin{proof}
This  is an immediate consequence  of  Corollary
\ref{unip}(ii)
and \cite[Corollaire (8.1.1)(iii)]{LM}.
\end{proof}

\begin{cor}\label{rational} If $\k$ is a finite field
then each fiber of the natural map $[Y(\k)]\to [X(\k)]$
is a $G(\k)$-orbit.
\end{cor}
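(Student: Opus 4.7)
The plan is to verify directly that two $\k$-points of $Y:=\fX_\bv(\cC,F)$ lying over the same element of $[X(\k)]$, where $X:=\fX_\bv(\cC)$, must be related by the $G(\k)$-action, and conversely. Using the description of $Y$ as the frame bundle of the tautological rank $\bv$ vector bundle $\FF$ on $X$, a $\k$-point of $Y$ is a pair $y=(x,b)$ consisting of an object $x\in\cC_\k$ together with an $I$-graded $\k$-linear isomorphism $b\colon\k^\bv\iso F(x)$, and the forgetful map sends the class $[(x,b)]$ to $[x]$.

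Fix classes $[y_1],[y_2]\in[Y(\k)]$ with the same image in $[X(\k)]$, and choose representatives $y_i=(x_i,b_i)$ for $i=1,2$. By hypothesis there exists an isomorphism $\phi\colon x_1\iso x_2$ in $\cC_\k$. Applying $F$ yields a linear isomorphism $F(\phi)\colon F(x_1)\iso F(x_2)$, so $F(\phi)\ccirc b_1$ is a new framing of $x_2$, and the pair $(\phi,\Id)$ realises an isomorphism $y_1\iso (x_2,F(\phi)\ccirc b_1)$ in the groupoid $Y(\k)$; consequently $[y_1]=[(x_2,F(\phi)\ccirc b_1)]$ in $[Y(\k)]$. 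Setting $g:=b_2^{-1}\ccirc F(\phi)\ccirc b_1\in\GL_\bv(\k)=G(\k)$ (with the convention that $G$ acts on framings by precomposition $b\mapsto b\ccirc g^{-1}$), the element $g$ transports $(x_2,F(\phi)\ccirc b_1)$ to $(x_2,b_2)=y_2$, so $g\cdot[y_1]=[y_2]$.

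Conversely, the $G$-action on $Y$ leaves the underlying object untouched and only modifies the framing, so $G(\k)$-orbits in $[Y(\k)]$ map to single classes in $[X(\k)]$; combined with the preceding paragraph this identifies each non-empty fiber of $[Y(\k)]\to[X(\k)]$ with a single $G(\k)$-orbit. The only bookkeeping point to check is that transport by $\phi$ commutes with the $G$-action on framings, i.e.\ $F(\phi)\ccirc(b_1\ccirc g^{-1})=(F(\phi)\ccirc b_1)\ccirc g^{-1}$, which is immediate from associativity of composition since $F(\phi)$ acts on the $F(x)$-side of $b$ while $g$ acts on the $\k^\bv$-side. The argument is essentially formal and has no real obstacle; the finite field hypothesis is inherited from the ambient context and plays no role in this particular corollary, although it is of course essential for the subsequent Lefschetz-trace applications that motivate introducing the framed stack $Y$ in the first place.
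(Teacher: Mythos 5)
Your proof is correct and it takes a genuinely different route from the paper's.

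The paper's proof is cohomological in flavor: it views the fiber as the set of $\k$-rational points of a $G$-orbit that is defined over $\k$, then invokes Lang's theorem twice — once to produce a $\k$-rational point $y$ in the orbit, and once more (using the connectedness of $G_y$ from Corollary~\ref{unip}(i)) to conclude that the $G(\k)$-action on $(G/G_y)(\k)$ is transitive. This is where the finite-field hypothesis enters. By contrast, you bypass Galois cohomology entirely: from a $\k$-rational isomorphism $\phi\colon x_1\iso x_2$ (whose existence is precisely what $[x_1]=[x_2]$ in $[X(\k)]$ means) you simply exhibit the conjugating element $g=b_2^{-1}\ccirc F(\phi)\ccirc b_1\in\GL_\bv(\k)$ and check it does the job. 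Your observation that the finite-field hypothesis is not actually used in your argument is accurate — the direct construction works over any field, whereas the paper's route genuinely relies on Lang's theorem. What the paper's approach buys is a more structural picture of the fiber as $(G/G_y)(\k)$, which is not needed for the statement but makes visible the role of the stabilizer $G_y$ and its connectedness (a theme the paper returns to, e.g.\ in Corollary~\ref{fitting}). What your approach buys is elementarity and generality, at the cost of working with the explicit frame-bundle description of $Y$ rather than with abstract orbit geometry.
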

\begin{proof} 
The fiber of the map
$[Y(\k)]\to [(Y/G)(\k)]$ over $x\in X(\k)$ is  
isomorphic to the set of $\k$-rational points of a $G$-orbit
which is  defined over $\k$.
Our claim is a consequence of a combination of two  well known results.
The first result says that any such orbit
 contains a $\k$-rational point, say $y\in Y(\k)$.
Then,  $G_y$ is a connected subgroup of  $G=\GL_\bv$ defined over $\k$,
and we can identify the fiber with
$(G/G_y)(\k)$.
The second result says that, the group $G_y$ being connected,
the $G(\k)$-action on
$(G/G_y)(\k)$ is transitive.
\end{proof}

Note for any $y=(x,b)\in Y(\k)$ the group
$\Aut(x)$,  resp. $\Aut(y)$, contains the 
subgroup $\GG_{\text{scalars}}$ of scalar automorphisms
and one has $p_{\g,y}(\GG_{\text{scalars}})=\GG^\bv_\Delta\sset G_y$.

\begin{cor}\label{fitting} 
For any $y=(x,b)\in Y(\k)$ we have

\vi If $x$ is not absolutely  indecomposable then the group $G_y$ contains
a torus $H$ defined over $\k$ which is 
a $G(\bar\k)$-conjugate of a diagonal torus $\GG^\bv_\a\neq \GG^\bv_\Delta$.

\vii The following conditions are equivalent:
\[\text{$x$ is  absolutely indecomposable}
\quad\Leftrightarrow\quad
\text{$\Aut(x)/\GG_{\text{scalars}}\ $\ is unipotent}
\quad\Leftrightarrow\quad
\text{$G_y/\GG^\bv_\Delta\ $ is unipotent.}
\]
\end{cor}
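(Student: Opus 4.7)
The approach is to translate the question into a statement about the finite-dimensional associative $\k$-algebras $A := \End(x)$ and $\g_y := F_x(A) \sset \gl_\bv$, via base change to $\bar\k$. Put $\bar A = \bar\k\o_\k A$, $\bar\g_y = \bar\k\o_\k \g_y$, $\bar x = \bar\k\o_\k x$. By Lemma \ref{clear}, applied to the separable extension $\bar\k/\k$ (finite fields are perfect), we have $\bar A = \End_{\bar\k\o\cC}(\bar x)$. Using Krull-Schmidt (Lemma \ref{krull-lem}), decompose $\bar x = \bigoplus_j y_j^{\oplus m_j}$ into pairwise non-isomorphic indecomposable summands. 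Then $x$ is absolutely indecomposable iff $\bar x$ is indecomposable iff $\bar A$ is local iff $\bar A/\op{rad}(\bar A) = \bar\k$. Moreover, Lemma \ref{end_lem} implies $\Ker F_x \sset \op{rad}(A)$, hence $\bar\k\o\Ker F_x \sset \op{rad}(\bar A)$, so the surjection $\bar A \onto \bar\g_y$ induces an isomorphism $\bar A/\op{rad}(\bar A) \iso \bar\g_y/\op{rad}(\bar\g_y)$. In particular, $x$ is absolutely indecomposable iff $\bar\g_y$ is local.

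For part (i), assume $x$ is not absolutely indecomposable, so $\bar\g_y/\op{rad}(\bar\g_y) \cong \prod_j \op{Mat}_{m_j}(\bar\k)$ with $\sum_j m_j \geq 2$. Consequently, the reductive quotient of $\bar G_y$ is $\prod_j \GL_{m_j}$, of rank $\sum_j m_j \geq 2$. I will invoke the general fact that every connected linear algebraic group over a field admits a maximal torus defined over that field (Borel, \emph{Linear Algebraic Groups}, \S 18.2) to obtain a $\k$-rational maximal torus $H \sset G_y$. To identify $H \o \bar\k$ up to conjugacy in $\GL_\bv(\bar\k)$, I use the explicit maximal torus of $\bar A^\times$: for each $j$, the diagonal scalings of the $m_j$ copies of $y_j$ give a subtorus $\GG^{m_j} \sset \GL_{m_j}(\End(y_j))$, and the product over $j$ is a maximal torus of $\bar A^\times$. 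Its image in $\GL_\bv(\bar\k)$ acts on $F(\bar x) = \bigoplus_{j,\,k\in[m_j]} F(y_j)$ by independent scalings of the individual copies of $F(y_j)$, so in a basis of $F(\bar x)$ adapted to this decomposition, it equals the diagonal torus $\GG^\bv_\a$ for the partition $\a$ of $[\bv]$ whose parts enumerate these copies. Since all maximal tori of $\bar G_y$ are conjugate inside $\bar G_y \sset \GL_\bv(\bar\k)$, the torus $H \o \bar\k$ is $\GL_\bv(\bar\k)$-conjugate to $\GG^\bv_\a$. This partition has $\sum_j m_j \geq 2$ parts, so $\a \neq \Delta$.

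For part (ii), the equivalence of absolute indecomposability of $x$ with unipotence of $\Aut(x)/\GG_{\op{scalars}}$ is straightforward after base change: when $\bar A$ is local, $\bar A^\times = \bar\k^\times \cdot (1+\op{rad}\bar A)$, and the quotient by scalars identifies with the unipotent group $1+\op{rad}\bar A$; when $\bar A$ is not local, $\bar A/\op{rad}\bar A$ has a factor of positive reductive rank and its unit group modulo scalars is not unipotent. The implication ``$\Aut(x)/\GG_{\op{scalars}}$ unipotent $\Rightarrow$ $G_y/\GG^\bv_\Delta$ unipotent'' is immediate since $F_x$ induces a surjection between these quotients. The converse is the contrapositive of part (i): the torus $H$ constructed there contains $\GG^\bv_\Delta$, as any central torus of $G_y$ lies in every maximal torus, so $H/\GG^\bv_\Delta$ is a nontrivial torus in $G_y/\GG^\bv_\Delta$, obstructing unipotence.

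The main subtlety lies in part (i), namely showing that the $\k$-rational maximal torus $H$, and not merely its $\bar\k$-form, admits the claimed diagonal form after conjugation in $\GL_\bv(\bar\k)$. This combines the standard conjugacy of maximal tori in $\GL_\bv$ over $\bar\k$ with the explicit description (via the Krull-Schmidt decomposition of $\bar x$) of the image in $\GL_\bv(\bar\k)$ of a maximal torus of $\bar A^\times$.
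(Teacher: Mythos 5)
Your proof is correct, and it establishes the same content as the paper's, but the route is genuinely different in one respect. The paper \emph{constructs} the $\k$-rational torus $H\sset\Aut(x)$ concretely: it takes the Wedderburn decomposition $A=\End(x)(\k)/\op{rad}\cong\oplus_j\op{Mat}_{n_j}(D_j)$, chooses a maximal subfield $K_\al$ in each copy of each $D_j$, and lets $H$ be the Weil restriction of $\prod_\al K_\al^\times$; the fact that this is a maximal torus, and that its base change is diagonal, is then read off from the idempotents of $\bar\k\o_\k K_\al$, which yield the decomposition of $\bar\k\o_\k x$ into indecomposables. You instead invoke the abstract theorem (Grothendieck/Borel) that a connected linear algebraic group over any field admits a maximal torus over that field, obtain the Krull--Schmidt decomposition of $\bar x$ directly, compute a maximal torus of $\bar G_y$ from that decomposition, and transfer the conclusion to $H\o\bar\k$ by conjugacy of maximal tori. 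The paper's argument is more self-contained and completely explicit over a finite field (where Wedderburn's little theorem makes every $D_j$ a field), while yours trades that for the general rational-torus theorem and arrives at the same picture. Your preliminary reduction $\bar A/\op{rad}(\bar A)\iso\bar\g_y/\op{rad}(\bar\g_y)$, via Lemma \ref{end_lem}, is a clean way to package the passage from $\Aut(x)$ to $G_y$ that the paper handles implicitly through the exact sequence \eqref{algalg}.

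One small technical caveat: Lemmas \ref{clear} and \ref{krull-lem} are stated for \emph{finite} extensions $K/\k$, so the identification $\bar A=\End_{\bar\k\o\cC}(\bar x)$ and the Krull--Schmidt decomposition over $\bar\k$ require either a limit argument or, more simply, the observation that $\End(x)$ is finite-dimensional over $\k$ so everything in sight is defined over some finite subextension of $\bar\k/\k$, over which the cited lemmas apply verbatim. The paper sidesteps this by working with the group scheme $\Aut(x)$ and its $\bar\k$-points rather than with the base-changed category. This is worth making explicit, but it is not a gap in substance.
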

\begin{proof} Let $J$ be the (nil)radical of the algebra
$\End(x)(\k)$ and $A=\End(x)(\k)/J$,  a finite 
dimensional semisimple $\k$ algebra. By the Wedderburn theorem, we have
$A$ is isomorphic to  a finite direct sum of matrix
algebras over some division algebras  of finite dimension
over $\k$. Let $B\sset A$ be a direct sum of the 
subalgebras of diagonal matrices of all these matrix algebras.
Thus, we have $B\cong \oplus_\al\ Q_\al$, where each 
$Q_\al$ is a division algebra over $\k$.
For every $\al$, we choose a maximal subfield $K_\al\sset Q_\al$.
Then, $K_\al$ is a finite extension of $\k$.
Therefore, the group $\prod_\al\ K_\al^\times$ defines, by Weil restriction,
an algebraic subgroup $H$ of $\Aut(x)$, which is  defined over $\k$
and is
 such that $H(\k)=\prod_\al\ K_\al^\times$.
Moreover,
 $H$  is 
a torus.

For each $\al$ we have $\bar\k\o_\k K_\al= \oplus_{\gamma\in\Gamma_\al}\ \bar\k 1_{\al,\gamma}$,
a direct sum of several copies of the field $\bar\k$ with the
correspoding unit
element being denoted by $1_{\al,\gamma}$. The elements $\{1_{\al,\gamma}\}$
form a collection  of orthogonal idempotents
of the algebra $\bar\k\o_\k A$ which can be lifted
to  a collection $\{e_{\al,\gamma}\}$ of orthogonal idempotents
of the algebra  $\End(x)(\bar\k)$. Let $\bar\k\o_\k x=
\oplus_{\al,\gamma}\ x_{\al,\gamma}$ be the corresponding
direct sum decomposition, where $x_{\al,\gamma}\in \cC_{\bar\k}$.
It is clear from the construction that
we have
$\End(x_{\al,\gamma})(\bar\k)=
e_{\al,\gamma}\End(x)(\bar\k)e_{\al,\gamma}=\bar\k\oplus
J_{\al,\gamma}$, where $J_{\al,\gamma}$ is a nilpotent ideal.
Thus, $\End(x_{\al,\gamma})(\bar\k)$ is a local $\bar\k$-algebra.
Hence, $x_{\al,\gamma}$ is an indecomposable object, by the Fitting
lemma. 

We have $F(x)=\oplus \ F(x_{\al,\gamma})$, a direct sum of
$\bar\k I$-modules. 
The homomorphism $p_y: \Aut(x)\to \GL$ induces an isomorphism
$H(\bar\k)=\prod \ \GG(\bar\k) \Id_{x_{\al,\gamma}} \iso\,
p_y(H)(\bar\k)=\prod \ \GG(\bar\k) \Id_{F(x_{\al,\gamma})}$
Let $b_{\al,\gamma}$ be a $\bar\k$-basis of $F(x_{\al,\gamma})$
which is compatible with the $I$-grading. 
Then, there is a unique element $g\in G$
that takes the given basis $b$ of $F(x)$ to  the basis $\sqcup_{\al,\gamma} \ b_{\al,\gamma}$.
It is clear that $\Ad g(p_y(H)(\bar\k))$ is a diagonal torus
that contains $\GG^\bv_\Delta$, proving (i).

To prove (ii) observe that
the object $x$ is absolutely indecomposable if and only if  $x=x_{\al,\gamma}$ for a single
$(\alpha,\gamma)$. The latter holds if and only if  $\dim H
=\sum_\al \ \dim_\k K_\al=1$, that is, if and only if  one has $p_y(H)=\GG^\bv_\Delta$.
\end{proof}

\subsection{Factorization of fixed point loci}
Let $G_\bv=\GL_\bv$, resp. $\g_\bv=\gl_\bv$ and $X_\bv=\fX_\bv(\cC),\ Y_\bv=\fX_\bv(\cC,F)$. 
We have the stack $I(Y_\bv, G_\bv)$, resp. $I(Y_\bv,\g_\bv)$,
whose objects are triples $(x,b,f)$ where $(x,b)\in Y$ and $f\in \Aut(x)$,
resp.  $f\in \End(x)$. We recall the isomorphism  $X_\bv=Y_\bv/G_\bv$ and
observe that the above definition of the stack $I\tpp(X_\bv)$
agrees with the definition $I\tpp(Y_\bv/G_\bv)=I(Y_\bv,\g_\bv)/G_\bv$, given
at the end of section \ref{ZZsec}.
Thus, there are natural
commutative diagrams of open imbeddings
\beq{GGAA}
\xymatrix{
I(Y_\bv,G_\bv)\ar[d]^<>(0.5){\pr_G}\ar@{^{(}->}[r] &
I(Y_\bv,\g_\bv)\ar[d]^<>(0.5){\pr_\g}
&& I(X_\bv)\ar[d]^<>(0.5){p_{G}}\ar@{^{(}->}[r] &
I\tpp(X_\bv)\ar[d]^<>(0.5){p_{\g}}\\
G_\bv\ar@{^{(}->}[r]^<>(0.5){\eps_G} &\g&&
G_\bv/\aad G_\bv\ar@{^{(}->}[r]^<>(0.5){\eps_{G/G}} &\g_\bv/G_\bv
}
\eeq

\begin{cor}\label{Zcart} Each of the diagrams \eqref{GGAA} is cartesian.
\end{cor}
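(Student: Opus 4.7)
The plan is to verify the left square of \eqref{GGAA} directly on $S$-points, then derive the right square by taking stacky $G_\bv$-quotients.

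First I would unpack the left square. For any affine test scheme $S$, an $S$-point of $I(Y_\bv,\g_\bv)\times_{\g_\bv} G_\bv$ consists of a triple $(y,f)$ with $y=(x,b)\in Y_\bv(S)$ and $f\in\End(x)$, subject to the condition that $p_{\g,y}(f)\in G_\bv(S)\subset\g_\bv(S)$, i.e., the image of $f$ in $\g_y$ is invertible; an $S$-point of $I(Y_\bv,G_\bv)$ is such a triple with $f\in\Aut(x)$. The natural map $I(Y_\bv,G_\bv)\to I(Y_\bv,\g_\bv)\times_{\g_\bv} G_\bv$ is evidently injective, so the substantive content is that if $p_{\g,y}(f)$ is a unit in $\g_\bv(S)$ then $f$ is a unit in $\End(x)$.

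To establish this, I would choose any preimage $g\in\End(x)$ of $p_{\g,y}(f)\inv$ under $p_{\g,y}$. Both $gf-\Id$ and $fg-\Id$ then lie in $\Ker F_x$, which is a nilpotent two-sided ideal of $\End(x)$ by Lemma~\ref{end_lem}. Consequently $\Id+n$ is a unit of $\End(x)$ for every $n\in\Ker F_x$, with explicit inverse given by the finite Neumann series $\sum_{k\geq 0}(-n)^k$. This furnishes a left and a right inverse for $f$, so $f\in\Aut(x)$. The construction is manifestly functorial in $S$, yielding the isomorphism $I(Y_\bv,G_\bv)\iso I(Y_\bv,\g_\bv)\times_{\g_\bv} G_\bv$.

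For the right square I would invoke Lemma~\ref{ZZ}(i) together with the Lie-algebra counterpart encoded in \eqref{llie}, which furnish natural identifications $I(X_\bv)=I(Y_\bv,G_\bv)/G_\bv$ and $I\tpp(X_\bv)=I(Y_\bv,\g_\bv)/G_\bv$; the stacks $G_\bv/\aad G_\bv$ and $\g_\bv/G_\bv$ are tautologically the stacky quotients by the conjugation and adjoint actions. All four morphisms in the left square of \eqref{GGAA} are $G_\bv$-equivariant for these actions, and forming the stacky quotient by a compatible $G_\bv$-action on a cartesian square yields a cartesian square of stacky quotients (one checks this by presenting each quotient as the associated bundle via the $G_\bv$-torsor $\pt\to BG_\bv$, and noting that pulling back to $\pt$ returns the original square). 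Applying $(-)/G_\bv$ termwise to the left square therefore produces the right square.

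The only substantive obstacle is the first step — passing from invertibility of $p_{\g,y}(f)$ to invertibility of $f$ itself — and this rests entirely on the \emph{uniform} nilpotence of $\Ker F_x$ supplied by Lemma~\ref{end_lem}: mere pointwise nilpotence would not ensure that the Neumann series terminates in $\End(x)(S)$ for an arbitrary affine base $S$.
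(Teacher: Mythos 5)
Your argument is correct and follows the same route as the paper for the left square: the substance in both is that invertibility of $p_{\g,y}(f)$ in $\g_\bv$ forces invertibility of $f$ in $\End(x)$, derived from the nilpotency of $\Ker F_x$ established in Lemma~\ref{end_lem}; your explicit Neumann series merely unwinds the paper's invocation of the general fact that units lift modulo a nilpotent two-sided ideal, and your emphasis on \emph{uniform} nilpotence is exactly what makes this work over an arbitrary affine base $S$. One small step you, like the paper, leave tacit: to choose a preimage $g$ of $p_{\g,y}(f)^{-1}$ you need $p_{\g,y}(f)^{-1}$ to lie in $\g_y=\Im(p_{\g,y})$; this holds because $p_{\g,y}(f)$ satisfies its characteristic polynomial over $\k[S]$, whose constant term is a unit, so the inverse is a $\k[S]$-polynomial in $p_{\g,y}(f)$ and hence already lies in the subalgebra $\g_y$. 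Finally, your derivation of the right square from the left by $G_\bv$-equivariant descent, checking cartesianness after the fppf pullback along $\pt\to{B} G_\bv$ via Lemma~\ref{ZZ}(i) and diagram~\eqref{llie}, is a clean explicit treatment of what the paper leaves implicit once the first diagram is handled.
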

\begin{proof} The map $\pr_\g$, resp. $\pr_G$, sends
$(x,b,f)$ to ${p_y}(f)$, where $y=(x,b)$. Thus,  at the
level of objects, the statement  amounts to the claim
that an endomorphism
$f\in\End(x)$ is invertible if and only if  the element ${p_y}(f)\in \g$
is invertible. The latter claim is a consequence of
Lemma \ref{end_lem}. Indeed, we have ${p_y}(f)\in\Im {p_y}
\cong \End(x)/\Ker{p_y}$ and,
the ideal $\Ker{p_y}$ being nilpotent, an element  of
the algebra  $\End(x)$  is invertible iff
so is its image in $\End(x)/\Ker{p_y}$.

The proof for morphisms is similar and is left for the reader.
\end{proof}

Let $\bv_1,\bv_2$ be a pair of dimension vectors.
Recall the notation of \S\ref{gfact} and
consider a natural commutative
diagram
\beq{d12}
\xymatrix{
& I\tpp(X_{\bv_1})\times I\tpp(X_{\bv_2})\ 
\ar[d]^<>(0.5){p_{\g_{\bv_1}}\times p_{\g_{\bv_2}}}
\ar[rr]^<>(0.5){{\mathfrak i}_{\bv_1,\bv_2}}&&\  I\tpp(X_{\bv_1+\bv_2})\
\ar[d]^<>(0.5){p_{\g_{\bv_1+\bv_2}}}\\
U\he:=\g\he_{\bv_1,\bv_2}/(G_{\bv_1}\times G_{\bv_2})\
\ar@{^{(}->}[r]^<>(0.5){\en \jmath_{\bv_1,\bv_2}}&\  \g_{\bv_1}/G_{\bv_1}\times
\g_{\bv_2}/G_{\bv_2}\ 
\ar[rr]^<>(0.5){\imath_{\bv_1,\bv_2}}&&\ 
\g_{\bv_1+\bv_2}/G_{\bv_1+\bv_2},
}
\eeq
where the map $\imath_{\bv_1,\bv_2}$, resp. ${\mathfrak i}_{\bv_1,\bv_2}$,
 sends a pair of objects to their direct sum.

\begin{prop}\label{Zfix} The morphism 
\[
U\he \bigtimes_{^{\g_{\bv_1}/G_{\bv_1}\times
  \g_{\bv_2}/G_{\bv_2}}}\
 \bigl(I\tpp(X_{\bv_1})\times
  I\tpp(X_{\bv_2})
\xrightarrow{\Id\times{\mathfrak i}_{\bv_1,\bv_2}}
U\he\bigtimes_{^{\g_{\bv_1+\bv_2}/G_{\bv_1+\bv_2}}}\ I\tpp(X_{\bv_1+\bv_2}),
\]
induced by 
\eqref{d12}, is an  isomorphism of stacks over $\g\he_{\bv_1,\bv_2}/(G_{\bv_1}\times G_{\bv_2})$.
\end{prop}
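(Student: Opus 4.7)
The plan is to construct an explicit inverse to $\Id\times{\mathfrak i}_{\bv_1,\bv_2}$ on $S$-points, for any affine test scheme $S$, and then invoke naturality. An $S$-point of the right-hand side consists of a triple $(\alpha, y, g)$ where $y \in X_{\bv_1+\bv_2}(S)$, $g \in \End_{\cC_S}(y)$, and $\alpha \in U\he(S)$ lifts the image of $g$ in $\gl_{\bv_1+\bv_2}/G_{\bv_1+\bv_2}$. From this I aim to canonically build a decomposition $(y,g) = (y_1, g_1) \oplus (y_2, g_2)$ with $(y_i, g_i) \in I\tpp(X_{\bv_i})(S)$ matched to the ordering implicit in $\alpha$.

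The key construction is a spectral-projector argument. \'Etale-locally on $S$, the lift $\alpha$ provides a framing in which $F_y(g)$ becomes a block matrix $a\oplus b$ with $(a,b)\in\g\he_{\bv_1,\bv_2}(S)$, so the characteristic polynomials $P_1:=\chi_a$ and $P_2:=\chi_b$ are coprime in $\oo_S[t]$, their resultant being a unit on $\g\he_{\bv_1,\bv_2}$ by the very definition of this open subscheme. B\'ezout then yields a unique polynomial $Q\in\oo_S[t]$ of degree less than $|\bv_1+\bv_2|$ with $Q\equiv 1\pmod{P_1}$ and $Q\equiv 0\pmod{P_2}$, and $\bar e:=Q(F_y(g))\in \End F(y)$ is an idempotent projecting onto $\Ker P_1(F_y(g))$; it automatically preserves the $I$-grading since $F_y(g)$ does. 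To lift $\bar e$ to $\End_{\cC_S}(y)$ I consider the commutative $\oo_S$-subalgebra $A:=\oo_S[g]\subset\End_{\cC_S}(y)$; the map $A\to\oo_S[F_y(g)]$ induced by $F$ is surjective with kernel contained in the locally nilpotent ideal $\Ker F_y$, by Lemma \ref{end_lem}. Because idempotents lift uniquely across a nilpotent ideal in a commutative ring (Hensel's lemma), $\bar e$ lifts to a unique idempotent $e\in A$; since $e$ is a polynomial in $g$, it commutes with $g$, so $y = ey\oplus (1-e)y =: y_1\oplus y_2$ is a $g$-stable splitting and $g = g_1\oplus g_2$. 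The $I$-graded rank of $y_j$ equals $\bv_j$ because $F(e)=\bar e$ preserves the grading and its image has the correct graded dimension, as dictated by $P_1$ and $P_2$.

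Naturality of the whole construction --- both in $S$ and on morphisms in the inertia stack --- is automatic, since $Q$ is determined by the characteristic polynomials of $\alpha$ and the lifting of $\bar e$ to $e$ is unique; thus the decomposition descends from the \'etale cover to $S$ and defines a morphism of stacks inverse to $\Id\times{\mathfrak i}_{\bv_1,\bv_2}$. Composition in one direction is tautological, as on a direct sum $(y_1,g_1)\oplus(y_2,g_2)$ the projection $\Id_{y_1}\oplus 0_{y_2}$ is visibly the canonical idempotent produced by the recipe; composition in the other direction is the identity by uniqueness of $e$. I expect the only real technical point to be the uniqueness of idempotent lifting across the nilpotent ideal $\Ker F_y$, which crucially requires both Lemma \ref{end_lem} and the commutativity of $A = \oo_S[g]$; this is precisely what forces us to work over $U\he$, where the coprimality of $P_1, P_2$ guarantees both the existence and the polynomial expression for the spectral idempotent $\bar e$.
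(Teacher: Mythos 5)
Your argument takes essentially the same route as the paper: reduce to $S$-points, observe that the coprimality of the two characteristic polynomials (built into the definition of $\g\he_{\bv_1,\bv_2}$) produces a canonical idempotent in the image of the commutative subalgebra generated by the endomorphism, and lift it uniquely across the nilpotent ideal $\Ker F_y$ supplied by Lemma~\ref{end_lem}. The only cosmetic differences are that you obtain the idempotent by a B\'ezout/resultant argument where the paper invokes Cayley--Hamilton together with the Nullstellensatz to show $J_1+J_2=\k[\AA\times S]$, and that you build the inverse functor explicitly rather than checking essential surjectivity and full faithfulness separately; neither change affects the substance of the proof.
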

\begin{proof} We introduce
 simplified notation
$\g_j=\g_{\bv_j}, G_j=G_{\bv_j},\, j=1,2$,
resp. ${\g=\g_{\bv_1+\bv_2},
G=G_{\bv_1+\bv_2}}$.

It suffices to show that, for any affine scheme
$S$ and a morphism  $g: S\to \g\he_{\bv_1,\bv_2}$,
the corresponding functor 
\beq{fix_fun}
\{g\}\ \times_{(\g_1/G_1\times
  \g_2/G_2)(S)}\
\bigl(I\tpp(X_{\bv_j})(S)\times
I\tpp(X_{\bv_j})(S)\bigr)\too 
\{g\}\ \times_{(\g/G)(S)}\ I\tpp(X_{\bv_1+\bv_2})(S)
\eeq
is an equivalence of categories.
We first show that this functor is full. An object of the category on
the right is a triple $(x,f,h)$ where $x\in X(S),\ f\in \End(x)$,
and $h\in G(S)$ is  such that  $p_{\g}(f)=\Ad h(g)$, cf. \eqref{algalg}.
Without loss of generality we may (and will) assume that $h=1$,
so $p_{\g}(f)=g$.

%We \redd{(MAY ?)} restrict to the case where the scheme $S$ is affine.
The map $g$ is given by a pair of elements
$g_j\in \g_j\o \k[S],\ j=1,2$.
The vector space $\End(x)$, resp. $\g_j\o \k[S]$, has the natural structure
of an associative  $\k[S]$-algebra. Further, there is
a natural evaluation homomorphism
$\ev_f: \k[t]\o\k[S]=\k[\AA\times S]\to \End F(x)$,
 resp. 
$\ev_j: \k[t]\o\k[S]\to \g_j\o \k[S]$,
an algebra homomorphism
defined by the assignment $t\o 1\mto f$,
resp. $t\o 1\mto g_j$.
We put $J:=\Ker(\ev_f)$, resp. $J_j:=\Ker(\ev_j)$.
Let $A_f\sset \End(x)$, resp. $A_g\sset \g\o\k[S]$
and $A_j\sset \g_j\o\k[S]$, be a $\k[S]$-subalgebra
generated by $f$,
resp.  $g$ and $g_j$.
Thus, we get
the following homomorphisms:
$$
A_f= \k[\AA\times S]/J\ \stackrel{u}\onto\
A_g=\k[\AA\times S]/(J_1\cap J_2)\ \stackrel{v}\into\ 
%ar@{^{(}->}[rr]^<>(0.5){u:\  p\mto p\oplus (-p)} &&
A_1\bplus A_2 =\k[\AA\times S]/J_1\ \bplus\
\k[\AA\times S]/J_2,
$$
where the second map is induced by
the diagonal imbedding 
$\k[t]\o\k[S]\to (\k[t]\o\k[S])\ \oplus\ (\k[t]\o\k[S])$.

Further, let
$p_j(t,s)=\det(t\Id_{\k^{\bv_j}}-g_j(s))$
be  the characteristic polynomial of $g_j$.
Thus, we have   $p_j\in \k[\AA\times S]$
and, moreover, the Hamilton-Cayley theorem implies that $p_j(t,s)\in J_j$.
The polynomials $p_1$ and $p_2$ have
no common zeros since $g\in \g_{\bv_1,\bv_2}$.
Therefore, the ideals $J_1$ and $J_2$  have
no common zeros and, hence,
$J_1+J_2=\k[\AA\times S]$, by the Nullstellensatz.
This implies that  $v$, the second map in the diagram above, is
an isomorphism. We deduce that there
is  a canonical
direct sum decomposition $A_g\cong A_1\oplus A_2$.
Thus, the element $(1_{A_1},0)\in A_1\oplus A_2$
produces  a  nontrivial idempotent  of the algebra $A_g$.

Observe next that $\Ker(u)$ is a nilpotent ideal in $A_f$,
thanks to Lemma \ref{end_lem}. The algebra $A_f$ being
commutative, it follows that the
idempotent $(1_{A_1},0)$ can be lifted {\em uniquely} to an
idempotent $e=e_{x,f,g}\in A_f$. By construction, $e$ is an element of $\End(x)$
that commutes with $f$.
We put $x_1:=ex$ and $f_1=e\ccirc f$, resp. $x_2=(1-e)x$,
 and $f_2=(1-e)\ccirc f$. Thus, we have a direct sum 
$(x,f)=(x_1,f_1)\oplus (x_2,f_2)$
and, moreover, $p_G(f_j)=g_j$.
This proves that the functor in \eqref{fix_fun} is full.

Now, let $(x,f,g)\to (x',f',g')$ be a morphism in
the category on the right of \eqref{fix_fun}.
This means that we have
 a morphism $\vphi: x\to x'$
such that $\vphi\ccirc f=f'\ccirc \vphi$,
and also  $p_\g(f')=g'\in \Ad h(g)$
for some $h\in G(S)$.
Therefore, the map
$\wt\vphi:\ A_f\iso A_{f'},\ u\mto \vphi\ccirc u\ccirc \vphi\inv$,
is  an algebra isomorphism.
Furthermore, 
we have $\wt\vphi(e_{x,f,g})=e_{x',f',g'}$,
since  the elements $g_j$ and $\Ad h(g_j)$
have equal characteristic polynomials
and  the idempotent are determined by the
corresponding triples uniquely. It follows that
the  morphism $\vphi$ takes
the direct sum  decomposition $(x,f)=(x_1,f_1)\oplus (x_2,f_2)$
to the corresponding  decomposition $(x',f')=(x'_1,f'_1)\oplus (x'_2,f'_2)$,
resulting from $e_{x',f',g'}$. This implies that the 
functor \eqref{fix_fun} is fully faithful.
Hence, this functor is an equivalence, as required.
\end{proof}

 Proposition \ref{Zfix} and its proof have an immediate generalization to the case where 
the stack $I\tpp(\fX_\bv(\cC))$ is replaced
by the stack $\fX_\bv(\k[C]\o\cC)$, cf. \S\ref{Phi-sec}, where  $C\sset \AA$ is a fixed Zariski open nonempty subset.  In particular, 
for $C=\GG$ from Proposition \ref{Zfix}, using Corollary \ref{Zcart},
we obtain
\begin{cor} The natural morphism below is an isomorphism:
\[
{G}\he_{\bv_1,\bv_2}\ \bigtimes\nolimits_{({G}_{\bv_1}/\aad G_{\bv_1})\times 
  ({G}_{\bv_2}/\aad G_{\bv_2})}\
 \bigl(I(X_{\bv_1})\times
  I(X_{\bv_2})\bigr)
\too
{G}\he_{\bv_1,\bv_2}\ \bigtimes\nolimits_{G_{\bv_1+\bv_2}/\aad G_{\bv_1+\bv_2}}\
  I(X_{\bv_1+\bv_2}).\qedhere
\]
\end{cor}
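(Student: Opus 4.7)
The plan is to obtain this corollary by open restriction of Proposition \ref{Zfix} along the imbedding of invertible matrices into all matrices. First I would observe that, by definition, $G\he_{\bv_1,\bv_2} = \g\he_{\bv_1,\bv_2} \cap (G_{\bv_1} \times G_{\bv_2})$, so the natural morphism
\[
G\he_{\bv_1,\bv_2}/(G_{\bv_1} \times G_{\bv_2}) \hookrightarrow \g\he_{\bv_1,\bv_2}/(G_{\bv_1} \times G_{\bv_2})
\]
is an open imbedding obtained by base change from $\eps_{G/G} \times \eps_{G/G}$ applied to $\g_{\bv_1}/G_{\bv_1} \times \g_{\bv_2}/G_{\bv_2}$. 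In particular the block-diagonal sum $\imath_{\bv_1,\bv_2}$ in \eqref{d12} restricts to a morphism $G\he_{\bv_1,\bv_2}/(G_{\bv_1}\times G_{\bv_2}) \to G_{\bv_1+\bv_2}/\aad G_{\bv_1+\bv_2}$, since a block-diagonal matrix is invertible iff both blocks are.

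Next I would pull back the isomorphism provided by Proposition \ref{Zfix} along this open imbedding, in the category of stacks over $\g\he_{\bv_1,\bv_2}/(G_{\bv_1} \times G_{\bv_2})$. On the left hand side, the right cartesian square of Corollary \ref{Zcart} identifies $I\tpp(X_{\bv_i}) \times_{\g_{\bv_i}/G_{\bv_i}} G_{\bv_i}/\aad G_{\bv_i}$ with $I(X_{\bv_i})$ for $i = 1,2$, so the base change of $U\he \times_{(\g_{\bv_1}/G_{\bv_1}\times\g_{\bv_2}/G_{\bv_2})} (I\tpp(X_{\bv_1}) \times I\tpp(X_{\bv_2}))$ becomes $G\he_{\bv_1,\bv_2}/(G_{\bv_1}\times G_{\bv_2}) \times_{(G_{\bv_1}/\aad G_{\bv_1})\times(G_{\bv_2}/\aad G_{\bv_2})} (I(X_{\bv_1}) \times I(X_{\bv_2}))$. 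On the right hand side, a single application of the same cartesian square identifies the base change of $U\he \times_{\g_{\bv_1+\bv_2}/G_{\bv_1+\bv_2}} I\tpp(X_{\bv_1+\bv_2})$ with $G\he_{\bv_1,\bv_2}/(G_{\bv_1}\times G_{\bv_2}) \times_{G_{\bv_1+\bv_2}/\aad G_{\bv_1+\bv_2}} I(X_{\bv_1+\bv_2})$.

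Finally, removing the common quotient by $G_{\bv_1}\times G_{\bv_2}$ acting by conjugation on the base (equivalently, pulling back further along the quotient torsor $G\he_{\bv_1,\bv_2} \to G\he_{\bv_1,\bv_2}/(G_{\bv_1}\times G_{\bv_2})$) yields the isomorphism in the form stated in the corollary. The only thing that needs checking is that all these base changes are compatible, which is routine since the diagrams involved consist entirely of cartesian squares and open imbeddings, and fiber products of stacks commute with open restrictions. I do not expect any serious obstacle; the content of the statement is essentially already contained in Proposition \ref{Zfix}, with Corollary \ref{Zcart} used only to pass from infinitesimal to group inertia.
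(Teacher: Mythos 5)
Your proposal is correct and takes essentially the same route as the paper: the paper derives the corollary in a single sentence by restricting Proposition \ref{Zfix} to the open locus $C=\GG$ and invoking Corollary \ref{Zcart} to convert the infinitesimal inertia stacks $I\tpp(X_\bv)$ restricted over $\GL_\bv/\aad\GL_\bv$ into the group inertia stacks $I(X_\bv)$. Your version merely spells out the base-change bookkeeping (the open imbedding, the two applications of the cartesian square, and the final pullback along the $G_{\bv_1}\times G_{\bv_2}$-torsor to remove the quotient from the base), all of which is implicitly what the paper's terse proof intends.
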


\begin{rem}\label{general-curve} Proposition \ref{Zfix} can also be extended to the case of stacks
$\fX_\bv(\k[C]\o\cC)$, where  $C$ is a smooth affine curve which is not necessarily contained in $\AA$.
We will neither use nor prove such a generalization.
\end{rem}

\subsection{} \label{fsec} We now introduce the factirization sheaf $\rf$ associated to a triple
$(\cC,F,\phi)$, as has been outlined in \S\ref{h-intro} in a more general setting of an smooth scheme $C$.
Thus, we fix a potential, a morphism $\phi: \fX(\cC)\to \AA$, of stacks over $\k$.
For each $\bv$,
put $\phi_\bv=\phi|_{X_\bv}$.
According to  \eqref{llie}, one has a diagram
\[\AA\xleftarrow{\phi_\bv}\X_\bv(\cC)=\X_\bv(\cC,F)/\GL_\bv \xleftarrow{p_{\X_\bv(\cC,F)/\GL_\bv}} I\tpp(\X_\bv(\cC)\xrightarrow{p_{\gl_\bv}}\
\gl_\bv/\GL_\bv.\]

We define a sheaf $\rf_\bv$, on $\gl_\bv/\GL_\bv$, by
$\rf_\bv:= (p_{\gl_\bv})_!\,p^*_{\X_\bv(\cC,F)/\GL_\bv}\,\wp^{\phi_\bv}$, where $\wp^{\phi_\bv}=\phi_\bv^*\wp_\psi$ is a pull-back
of the Artin-Schreier local system.

\begin{thm}\label{Rfact}  If the morphism $\phi$ is
 additive in the sense of \S\ref{exp-sec}, then the collection $\rf=(\rf_\bv)_{\bv\in\Z^I_{\geq0}}$
 has the natural structure of   an  $\Aff$-equivariant factorization sheaf on
 $\gl$.
\end{thm}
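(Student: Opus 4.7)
The plan is to combine three inputs: (a) Proposition~\ref{Zfix}, which gives a factorization property of the infinitesimal inertia stack over the open locus $\gl\he_{\bv_1,\bv_2}$; (b) additivity of the potential $\phi$; and (c) the natural identification of $I\tpp(\X_\bv(\cC))$ with the moduli stack $\fX_\bv(\k[\AA]\o\cC)$, which makes the $\Aff$-equivariance transparent.

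First I would construct $\bar\vphi_{\bv_1,\bv_2}$. The direct sum in $\cC$ induces a morphism $\oplus_I\colon I\tpp(\X_{\bv_1}(\cC))\times I\tpp(\X_{\bv_2}(\cC)) \to I\tpp(\X_{\bv_1+\bv_2}(\cC))$ fitting into a commutative square with $\imath_{\bv_1,\bv_2}\colon \gl_{\bv_1}/\GL_{\bv_1}\times\gl_{\bv_2}/\GL_{\bv_2}\to\gl_{\bv_1+\bv_2}/\GL_{\bv_1+\bv_2}$ via the projection $p_\g$ of diagram~\eqref{llie}. Additivity of $\phi$ gives the crucial identification $\oplus_I^* p^*\wp^\phi \cong p^*\wp^{\phi_{\bv_1}}\boxtimes p^*\wp^{\phi_{\bv_2}}$, since $\phi_{\bv_1+\bv_2}\ccirc\oplus = \phi_{\bv_1}+\phi_{\bv_2}$. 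Composing the base-change morphism $\imath^*(p_\g)_! \to (p_\g\times p_\g)_!\oplus_I^*$ attached to the commutative square with the K\"unneth isomorphism $(p_\g\times p_\g)_!(A\boxtimes B) \cong (p_\g)_!A \boxtimes (p_\g)_!B$ yields the morphism
$\bar\vphi_{\bv_1,\bv_2}\colon \imath^*\rf_{\bv_1+\bv_2}\to \rf_{\bv_1}\boxtimes\rf_{\bv_2}$.

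Next, Proposition~\ref{Zfix} asserts that the commutative square above becomes cartesian upon restriction to $U\he=\gl\he_{\bv_1,\bv_2}/(\GL_{\bv_1}\times\GL_{\bv_2})$, so proper base change is an isomorphism there; hence $\bar\vphi_{\bv_1,\bv_2}|_{U\he}$ is an isomorphism. The associativity constraint (analogue of \eqref{fact2} over $\gl\he_{\bv_1,\bv_2,\bv_3}/\prod\GL_{\bv_i}$) and the commutativity constraint (analogue of \eqref{fac-diag} over $\gl\he_{\bv,\bw}/(\GL_\bv\times\GL_\bw)$) then follow from the associativity and symmetry of $\oplus_I$ at the level of inertia stacks, combined with naturality of base change and K\"unneth --- once the corresponding iterated square is cartesian (Proposition~\ref{Zfix} applied to three factors), both iterated morphisms are identified with the same iso induced by the canonical decomposition $(x,f)=(x_1,f_1)\oplus(x_2,f_2)\oplus(x_3,f_3)$ available there. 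For the $\Aff$-equivariance, the identification $I\tpp(\X_\bv(\cC))=\fX_\bv(\k[\AA]\o\cC)$ promotes the $\Aff=\GG\ltimes\GG_a$ action on $\AA$ (substitution $t\mapsto at+b$) to an action on the inertia stack covering $g\mapsto ag+b\cdot\Id$ on $\gl_\bv$. Because $\phi$ factors through the forgetful map $I\tpp(\X_\bv)\to\X_\bv$ (it depends only on the object $x$, not on the endomorphism $f$), the pullback $p^*\wp^\phi$ is automatically $\Aff$-invariant, so $\rf_\bv$ inherits an $\Aff$-equivariant structure; $\oplus_I$ is diagonally $\Aff$-equivariant (using $(f_1+b\Id)\oplus(f_2+b\Id)=(f_1\oplus f_2)+b\Id$), so $\bar\vphi_{\bv_1,\bv_2}$ is equivariant as well.

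The main technical obstacle is constructing $\bar\vphi_{\bv_1,\bv_2}$ as a morphism on the \emph{whole} product $\gl_{\bv_1}/\GL_{\bv_1}\times\gl_{\bv_2}/\GL_{\bv_2}$ rather than only on $U\he$. The comparison map $\alpha\colon I\tpp(\X_{\bv_1})\times I\tpp(\X_{\bv_2}) \to (\gl_{\bv_1}/\GL_{\bv_1}\times\gl_{\bv_2}/\GL_{\bv_2})\times_{\gl_{\bv_1+\bv_2}/\GL_{\bv_1+\bv_2}} I\tpp(\X_{\bv_1+\bv_2})$ is an isomorphism only over $U\he$, so the global base-change morphism requires care: one way is to establish properness of $\alpha$ (so that $\alpha_!=\alpha_*$ and the unit $\Id\to\alpha_!\alpha^*$ provides the needed arrow); another is to build $\bar\vphi$ purely over $U\he$ and note that Definition~\ref{gfact-def} only demands the associativity and commutativity constraints over the $\gl\he$ open loci, exactly the place where Proposition~\ref{Zfix} gives full control.
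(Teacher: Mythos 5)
Your argument follows essentially the same route as the paper: the morphism $\bar\vphi_{\bv_1,\bv_2}$ is produced from the base-change morphism $\imath^*(p_\g)_!\to(p_{\g_1}\times p_{\g_2})_!{\mathfrak i}^*$ for the commutative square involving the direct-sum map ${\mathfrak i}=\oplus_I$, combined with the additivity identity $(\oplus)^*\phi^*\wp_\psi\cong\phi_1^*\wp_\psi\boxtimes\phi_2^*\wp_\psi$ and K\"unneth, and Proposition~\ref{Zfix} supplies the cartesian square over $\gl\he_{\bv_1,\bv_2}$ that makes $\bar\vphi$ an isomorphism there; the $\Aff$-equivariance is obtained exactly as in the paper, via $\Aff$-equivariance of $p_\g$ and the fact that $\phi$ factors through the forgetful map $I\tpp(\X_\bv)\to\X_\bv$. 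The student's discussion of associativity/commutativity via MacLane coherence and iterated cartesianness of the three-factor square is consistent with the paper, which also tacitly relies on this.

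The one substantive point: you are right that the existence of the global base-change morphism $\imath^*(p_\g)_!\to(p_{\g_1}\times p_{\g_2})_!{\mathfrak i}^*$ on the \emph{whole} product $\g_1/G_1\times\g_2/G_2$ is not automatic for a merely commutative square; one needs the comparison map
$\alpha:\ I\tpp(\X_{\bv_1})\times I\tpp(\X_{\bv_2})\to(\g_1/G_1\times\g_2/G_2)\times_{\g/G}I\tpp(\X_{\bv_1+\bv_2})$
to be proper, so that the unit $\Id\to\alpha_*\alpha^*=\alpha_!\alpha^*$ can be applied. The paper simply asserts ``the base change morphism'' without verification, so you have spotted a genuinely implicit step. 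Your first proposed resolution (properness of $\alpha$, which one checks by noting that on fibers over a fixed object of $\X_{\bv_1}(\cC,F)\times\X_{\bv_2}(\cC,F)$ the map is a linear closed immersion $\End(x_1)\times\End(x_2)\hookrightarrow\{f\in\End(x_1\oplus x_2)\mid F(f)\ \text{block diagonal}\}$) is the correct one. Your second proposed resolution does \emph{not} work as stated: Definition~\ref{gfact-def} requires the morphism $\bar\vphi_{\bv',\bv''}:\imath^*\cF_{\bv'+\bv''}\to\cF_{\bv'}\boxtimes\cF_{\bv''}$ to exist on all of $\g_{\bv'}/G_{\bv'}\times\g_{\bv''}/G_{\bv''}$; only the associativity and commutativity \emph{constraints} on $\bar\vphi$ are tested over the open locus $\gl\he$. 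So you cannot sidestep the global construction of $\bar\vphi$, and the properness argument is the route to take.
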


\begin{proof}  Fix dimension vectors $\bv_1,\bv_2$. We use simplified
notation from the proof
of Proposition \ref{Zfix}. Thus, for $j=1,2$, we write $X_j=\fX_{\bv_j}(\cC)$, resp. $Y_{\bv_j}=\fX_\bv(\cC,F),\ \phi_j=\phi_{\bv_j},
\ G_j=\GL_\bv,\ \g_j
=\gl_{\bv_j},\ I_j\tpp=I_1\tpp(X_{\bv_j})$, and
  $I\tpp=I\tpp(X_{\bv_1+\bv_2}), \phi=\phi_{\bv_1+\bv_2}$.
Also,
$U\he:=\g\he_{\bv_1,\bv_2}/(G_1\times G_2),
\imath=\imath_{\bv_1,\bv_2},\jmath=\jmath_{\bv_1,\bv_2},
{\mathfrak i}={\mathfrak i}_{\bv_1,\bv_2}$. 
Then, using diagrams \eqref{llie} and \eqref{d12}, one obtains the following
 commutative diagram
$$
\xymatrix{
\AA\times\AA\ \ar[d]^<>(0.5){+}&
\ X_{\bv_1}\times X_{\bv_2}\  \ar[d]^<>(0.5){\oplus}\ar[l]_<>(0.5){\phi_1\times\phi_2}
 &&\ 
U\he\ \bigtimes_{\g_1/G_1\ \times\
  \g_2/G_2}\ (I_1\tpp\times I_2\tpp)\ 
\ar[d]_<>(0.5){\cong}^<>(0.5){\Id\times{\mathfrak i}}
\ar[ll]_<>(0.5){(p_{X_1}\times p_{X_2})\ccirc pr}%\ar@{}[drr]|{\Box}
 \ar[rr]^<>(0.5){\Id\times(p_{\g_1}\times p_{\g_2})} &&\ \ U\he\ 
\ar@{=}[d]^<>(0.5){\Id\times\imath\ccirc\jmath}
\\
\AA\ & \ X_{\bv_1+\bv_2}\  \ar[l]_<>(0.5){\phi} &&\  U\he\
\bigtimes_{\g/G}\ I\tpp\ \ar[ll]_<>(0.5){p_X\ccirc pr}
\ar[rr]^<>(0.5){\Id\times p_\g} &&\  U\he\ \times_{\g/G} \ \g/G,
  }
$$
where $pr$ denotes the projection to the second factor.

$$
\xymatrix{
\AA\times\AA\ \ar[d]^<>(0.5){+}&
\ X_{\bv_1}\times X_{\bv_2}\  \ar[d]^<>(0.5){\oplus}\ar[l]_<>(0.5){\phi_1\times\phi_2}
 &&\ 
\ I_1\tpp\times I_2\tpp\ 
\ar[d]^<>(0.5){{\mathfrak i}}
\ar[ll]_<>(0.5){p_{X_1}\times p_{X_2}}
\ar[r]&(\g_1/G_1\ \times\ \g_2/G_2)\times_{\g/G}I\tpp
%\ar@{}[drr]|{\Box}
 \ar[rr]^<>(0.5){p_{\g_1}\times p_{\g_2}} &&\ \g_1/G_1\ \times\
  \g_2/G_2
\ar[d]^<>(0.5){\imath}
\\
\AA\ & \ X_{\bv_1+\bv_2}\  \ar[l]_<>(0.5){\phi} &&\ 
I\tpp\ \ar[ll]_<>(0.5){p_X}
\ar[rrr]^<>(0.5){p_\g} &&&\ 
\ \g/G,
  }
$$
By the additivity of the morphism $\phi$,
using the canonical isomorphism $(+)^*\wp_\psi\cong
\wp_\psi\boxtimes\wp_\psi$\,
where the map `$+$' is given by addition $(a,a')\mto a+a'$,
we obtain an isomorphism 
\beq{fact_phi}
(\oplus)^*\phi^*\,\wp_\psi\ =\ (\phi_1\times\phi_2)^*(+)^*\,\wp_\psi\ =\ 
(\phi_1\times\phi_2)^*\,(\wp_\psi\boxtimes\wp_\psi)
\ =\ 
\phi_1^*\wp_\psi\boxtimes\phi_2^*\wp_\psi.
\eeq
We compute
\begin{align*}
\imath^*\rf_{\bv_1+\bv_2}\ &=\ 
\imath^*\,(p_{\g})_!\,p^*_X\phi^*\,\wp_\psi\ =\ 
(\Id\times\imath)^*\,(\Id\times p_\g)_!\,
p_X^*\,\phi^*\wp_\psi\quad\text{(Base change)}\\
&=\ \big(\Id\times(p_{\g_1}\times p_{\g_2})\big)_!\,{\mathfrak i}^*\,
p_X^*\,\phi^*\wp_\psi\\
&=\ \big(\Id\times(p_{\g_1}\times p_{\g_2})\big)_!\,
 (p_{X_1}\times p_{X_2})^*\,(\oplus)^*\,\phi^*\wp_\psi
\quad\text{(by \eqref{fact_phi})}\\
%(\phi_1\times\phi_2)^*\,(+)^*\wp_\psi\\
&=\ \big(\Id\times(p_{\g_1}\times p_{\g_2})\big)_!\,(p_{X_1}\times p_{X_2})^*\,
(\phi_1^*\wp_\psi\boxtimes\phi_2^*\wp_\psi)\quad\text{(Base change)}\\
&=\ \jmath^*\,(p_{\g_1}\times p_{\g_2})_!\,(p_{X_1}\times p_{X_2})^*\,
(\phi_1^*\wp_\psi\boxtimes\phi_2^*\wp_\psi)\\
%(\phi_1\times\phi_2)^*(\wp_\psi\boxtimes\wp_\psi)\\
&=\ \jmath^*\,\big((p_{\g_1})_!\,p_{X_1}^*\,\phi_1^*\wp_\psi\ \boxtimes\
(p_{\g_2})_!\,p_{X_2}^*\,\phi_2^*\wp_\psi\big)
\ =\ \jmath^*(\rf_{\bv_1}\boxtimes \rf_{\bv_2}).
\end{align*}

\begin{align*}
\rf_{\bv_1}\boxtimes \rf_{\bv_2}
&=\ (p_{\g_1})_!\,p_{X_1}^*\,\phi_1^*\wp_\psi\ \boxtimes\
(p_{\g_2})_!\,p_{X_2}^*\,\phi_2^*\wp_\psi\ 
=\ (p_{\g_1}\times p_{\g_2})_!\,(p_{X_1}\times p_{X_2})^*\,
(\phi_1^*\wp_\psi\boxtimes\phi_2^*\wp_\psi)\\
&=\ (p_{\g_1}\times p_{\g_2})_!\,(p_{X_1}\times
p_{X_2})^*\,(\oplus)^*\,\phi^*\wp_\psi\
=\ (p_{\g_1}\times p_{\g_2})_!\,{\mathfrak i}^*\,p^*_X\phi^*\,\wp_\psi
\end{align*}
Therefore, the base change morphism $\imath^*(p_{\g})_!\to
(p_{\g_1}\times p_{\g_2})_!{\mathfrak i}^*$ yields a canonical
morphism
$$\imath^*\rf_{\bv_1+\bv_2}\ =\ 
\imath^*\,(p_{\g})_!\,p^*_X\phi^*\,\wp_\psi\xrightarrow{\varphi}
(p_{\g_1}\times p_{\g_2})_!\,{\mathfrak i}^*\,p^*_X\phi^*\,\wp_\psi
\ =\
\rf_{\bv_1}\boxtimes \rf_{\bv_2}
$$

To complete the proof observe that for any affine scheme $S$ and
an object $x\in \cC(S)$, there is
 an $\Aff$-action on
on $\End(x)$, resp. $\GG$-action on $\Aut(x)$, 
defined by  $"az+b":\ g\mto a\cdot g+b\cdot\Id_x$, resp. 
 $"a":\  g\mto a\cdot g$, similarly to \S\ref{gfact}
This gives the  stack $I\tpp(X_\bv)$, resp. $I(X_\bv)$, 
the structure of an $\Aff$-stack, resp. $\GG$-stack.
Furthermore, the morphism  $p_{\g}$ in  diagram \eqref{llie}
  is  clearly $\Aff$-equivariant,  resp.  the morphism $p_{G}$
in  diagram  \eqref{zz} is $\GG$-equivariant.
It follows $\rf_\bv$ is an $\Aff$-equivariant sheaf
on $\gl_\bv/G_\bv$.  It is straightforward to see that the factorization
sheaf structure on $\rf=(\rf_\bv)$ constructed in the proof of
Lemma \ref{Rfact} is compatible with the $\Aff$-equivariant structure.
This gives $\rf_{\rm Lie}$ the structure of an  $\Aff$-equivariant factorization sheaf.
\end{proof}

\section{Inertia stacks and Fourier transform}\label{if}

\subsection{Fourier transform for stacks}\label{Fstacks}

Let  $Y$ be a stack, viewed as a  $\GG$-stack with a trivial action.

\begin{defn}\label{vect-bun}   A morphism   $E\to Y$,  of  $\GG$-stacks, is called
\begin{itemize}
\item  a {\em vector bundle} on $Y$ if  for any
test scheme $S$ and a morphism $S\to Y$, the first projection
 $S\times_X Y\to S$ is a vector bundle on $S$ and the  induced $\GG$-action on  $S\times_X Y$ is by dilations.
 %\blue{wrong: being a vector bundle is an extra structure, one of the standard definitions is $\Spec_Y (\Sym  E)$ for a locally free sheaf of finite rank. An alternative would be to introduce a $\GG$-action as in the definition of the generalized vector bundle below.}
\item  a {\em generalized vector bundle} on $Y$ if
$E$ is a representable stack over $Y$ of the form $\Spec_Y (\Sym M)$ where $M$ is
a coherent sheaf on $Y$.
\item  a {\em  vector space stack}
if
there exists an open covering $\{u_\al: Y_\al\to Y\}$ in
fppf-topology
and, for each $\al$, a morphism  $f_\al: E'_\al \to E''_\al$  of vector bundles on
$Y_\al$
and a $\GG$-equivariant isomorphism $u_\al^*(E)\cong \coker(f_\al)$.
In this case, we put $\rk E:= \rk E''_\al-\rk E'_\al$.
Morphisms of vector space stacks on $Y$ are defined to be the $\GG$-equivariant morphisms of stacks over $Y$. %[did we mean morpisms over Y?]
\end{itemize}
\end{defn}

%Let $f: E_1 \to E_2$ be a map of vector bundles over $Y$. %[are these vector bundles or generalized vector bundles? is the definition of a vector bundle that under a base change from a smooth scheme we get a vector bundle there, because it is not clear what a vector bundle over a singular scheme is?]
%Let $E=E_2/E_1$ be the quotient stack. %[what does this mean: E_1 considered as gp stk/Y, so we can form the qtnt stk ?]
Given a morphism   $f: E' \to E''$  of vector bundles on
a stack $Y$ one may view $E'$ as a flat group scheme over $Y$
and $E''$ as a $E'$-stack, where $E'$ acts on $E''$ by translation.
We write $\coker(f)$ for the corresponding quotient stack.
The map $f$ is equivariant under the dilation action of $\GG$
 on $E'$ and $E''$. Thus, $\coker(f)$ is a  vector space stack to be denoted
$E''/E'$. By definition, any vector space stack has an fppf-local presentation of the form $E''/E'$.

%\begin{defn} A vector space stack over $Y$ is a stack over $Y$ %[algebraic stack?]
%with an action of $\GG$ (equivalenty, pulled back from $Y \times B\GG$) %[probably it means that G_m does not act on Y]
%that locally looks like $E=E_1/E_2$. %[do we need to say what it means to locally look like?]

Let $f: Y_1\to Y_2$ be a morphism of stacks. Given a generalized vector bundle $E=\Spec_{Y_2}(\Sym M)$,
resp. a vector space stack $E=E''/E'$ on $Y_1$, one defines its pull-back by the formula
$f^*E=\Spec_{Y_1}(\Sym f^*M)$, resp. $f^*E=f^*E''/f^*E'$.
It is immediate to check that the latter formula is independent of the choice of a presentation
of $E$ in the form $E''/E'$. There is a canonical morphism $f_\sharp: f^*E\to E$, of stacks, such that 
$p_2\ccirc f_\sharp=f\ccirc p_1$, where $p_1: f^*E\to Y_1$ and $p_2: E\to Y_2$ denote the projections.

The dual of a vector space stack $E$ on $Y$ is , by definition,   a {\em generalized vector bundle}
$E^*$  on $Y$ defined as follows. Given  a test scheme $S$ let
$E^*(S,y)$ be category of morphisms
$E \times_Y S \to \AA_S = \AA \times S$ of  vector space stacks on $S$.
It is easy to see that the category $E^*(S)$ is in fact a groupoid
and we let $E^*$ be 
the stack defined by the assignment $S\mto E^*(S)$.
In a special case where $E=E''/E'$ this definition is equivalent to the formula
$E^*=E_2^* \times_{E_1^*} Y= \Spec_Y \Sym(\op{Coker}_Y(E_1 \to E_2))$,
where we have used the the morphism $Y\to E_1^*$ given by  the zero section.

\begin{ex} Let $Y$ be a smooth stack. Then, the tangent complex of $Y$ is a short complex  $d: T'_X\to T''_X$.
 In this case, we have $\TT Y\cong \coker(d)$, so the tangent stack is a vector space stack over $Y$. 
Further, we have $\TT^*Y=\Spec_Y(\coker(d))$. Thus, the cotangent stack of $Y$ is a
generalized vector bundle, the dual of the vector space stack $\TT Y$.
%Note that in this case the stack $\Omega(TY)$ is the "infintesimal inertia stack." %[what does this mean is this a definition?]
\end{ex}

\begin{rem}
Associated with a vector space stack $E$, there is also a dual vector space stack $E^*[1]$ defined by $\Hom_Y (S, E^*[1]) = \Hom(E \times_Y S, B\GG_a \times S)$ %[what does it mean that it is defined via?]
and a generalized vector bundle $\Omega E (=E[-1] ) := Y \times_E Y$. One has a
canonical isomorphism $\Omega(E^*[1])\cong E^*$.
\erem

We proceed to define the Fourier-Deligne transform in this setting, cf. \cite[\S7.3]{BG} for a special case.
Thus, we assume $\k$ to be a finite field and let $\wp_\psi$ be the Artin-Schreier sheaf associated with a fixed
additive character
$\psi: \k\to\CC^\times$. Let $Y$ be a stack over $\k$ and
$E$ a vector space stack on $Y$. We have a natural evaluation morphism $ev_E: E\times_Y E^* \to \AA$. 
For $\cf\in D\abs(E)$, define $\FD_E(\cf) =\pr_{2!}(\pr_1^*\cf \otimes ev_E^*(\wp_\psi)) \in D\abs(E^*)$,
where $\pr_i$ is the projection from $E \times_Y E^*$ to the $i$-th factor. 
It will be more convenient to use a renormalized Fourier-Deligne transform defined by 
$\FDN_E(\cf) =\FD_E(\cf)[ \ \mathrm{rk} E](\half \mathrm{rk} E)$.
The latter commutes with the Verdier duality.

\begin{prop} \label{FD} \vi The Fourier-Deligne transform $\FDN_E$ is an equivalence of categories;

\vii Let $h: E_1 \to {E_2}$ be a morphism of vector space stacks %[does it have to be surjective and if yes then how to define surjectivity?]
and $h^\vee: E_2^* \to E_1^*$  the dual morphism of generalized vector bundles.
Put $d=\rk E_2-\rk E_1$.
Then, there are isomorphisms of functors
 \beq{four-iso}\FDN_{E_2} \ccirc h_! = h^{\vee, *}\ccirc  \FDN_{E_1}[d](\half d),\qquad
\FDN_{E_1}\ccirc h^*  = h^\vee_! \ccirc \FDN_{E_2} [d](\half d).
\eeq

\viii Let $f: Y_1\to Y_2$ be a morphism of stacks, $E$ a vector space stack on $Y_2$, and $f^\sharp_E: f^*E\to E$,
resp. $f^\sharp_{E^*}: f^*E^*\to E^*$, the canonical morphisms.
Then,  there are isomorphisms of functors
\[ (f^\sharp_{E^*})_!\ccirc \FDN_{f^*E}= \FDN_E\ccirc (f_E^\sharp)_!,
\qquad
(f_{E^*}^\sharp)^*\ccirc \FDN_E=\FDN_{f^*E}\ccirc (f_E^\sharp)^*.
\]

%[is it true that the second Fourier transform is for generalized vector bundles; how can it be 
%that the dual of a vector space stack is a generalized vector bundle but the dual of a generalized vector bundle is also a 
%generalized vector bundle?]
\end{prop}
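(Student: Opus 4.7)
The plan is to reduce all three assertions to the classical Fourier-Deligne transform for vector bundles on schemes (due to Laumon), by exploiting the fppf-local presentation $E\cong E''/E'$ of a vector space stack as a quotient of one vector bundle by another. Throughout, I will systematically use the identification of $E^*$, for $E=E''/E'$, with the fiber product $(E'')^*\times_{(E')^*}Y$, together with the compatibility of the evaluation pairing $ev_E$ with the pairings on $E''$ and $E'$.

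First, I would verify that $\FDN_E$ is well-defined, i.e. independent of the presentation. Given two presentations $E\cong E''_1/E'_1\cong E''_2/E'_2$ on a common fppf chart, both can be refined to a third presentation $E''_3/E'_3$ by forming appropriate fiber products, reducing the independence question to the following elementary case: if $h:\wt E''\to E''$ is a surjection of vector bundles with kernel $K$ (so $E''/\bar E'\cong \wt E''/(\bar E'\times_{E''}\wt E'')$), then the Fourier transform intertwines pullback along $h$ with pushforward along $h^\vee$ up to the appropriate Tate twist. This is a special case of the classical relation $\FDN\ccirc h_!=(h^\vee)^*\ccirc\FDN\{d\}$ for vector bundles on a scheme, which one then reads off from Laumon's results. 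Given well-definedness, one glues $\FDN_E$ via fppf-descent, using that Fourier transform commutes with arbitrary base change in the base, so the descent datum on $E$ transports to one on $E^*$.

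For part (i), equivalence is an fppf-local statement on $Y$, so it reduces to the classical fact that the Fourier-Deligne transform on the total space of a vector bundle on a scheme is an equivalence (with a quasi-inverse given by the transform with $\psi$ replaced by $\psi^{-1}$, up to a shift and twist). Part (iii) is pure base change: $f^*E$ and $f^*E^*$ are defined by pullback of presentations, the pairing $ev_{f^*E}$ is by construction the pullback of $ev_E$, and the canonical morphisms $f^\sharp_E,f^\sharp_{E^*}$ fit into Cartesian squares with the projections of $E\times_{Y_2}E^*$ and $f^*E\times_{Y_1}f^*E^*$; the two formulas then follow from proper/smooth base change and the projection formula applied inside the definition of $\FDN$.

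The real work lies in part (ii), and this is the step I expect to be the main obstacle. A morphism $h:E_1\to E_2$ of vector space stacks need not, locally, come from a morphism of presentations, and the dual $h^\vee:E_2^*\to E_1^*$ is a morphism of \emph{generalized} vector bundles rather than vector space stacks, so the two formulas in \eqref{four-iso} are not symmetric under Verdier duality in an obvious way. My approach is to first reduce, by fppf-localization and a refinement argument, to the case where $h$ is induced by a morphism of two-term complexes $[E'_1\to E''_1]\to[E'_2\to E''_2]$; then factor $h$ as a composition of (a) a morphism induced by a morphism of the ``top'' terms $E''_1\to E''_2$ with fixed ``bottom'' and (b) a morphism induced on ``bottoms'' $E'_1\to E'_2$ with fixed top. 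For each of these elementary factors the two isomorphisms in \eqref{four-iso} reduce to the classical Laumon formulas $\FDN\ccirc g_!=(g^\vee)^*\ccirc\FDN\{\rk\}$ and its adjoint, applied either to the top or bottom vector bundle, combined with base change along the quotient map. The dimension shift $d=\rk E_2-\rk E_1=(\rk E''_2-\rk E''_1)-(\rk E'_2-\rk E'_1)$ is additive under this factorization, matching the classical shifts. Verifying that the isomorphism thus produced is independent of the chosen factorization and of the refinement of presentations is the bookkeeping that will require care; I would handle this by checking compatibility on a cover by common refinements and invoking descent.
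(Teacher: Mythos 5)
Your treatment of parts (i) and (iii) matches the paper's in spirit: (i) is reduced fppf-locally to the case $E=E''/E'$, where $D_{\rm abs}(E)$ embeds into $D_{\rm abs}(E'')$ as the full subcategory of sheaves invariant under the translation action of the unipotent group scheme $E'$, and Fourier inversion is then imported from the classical vector-bundle case; (iii) is a formal base-change argument. For those parts you are essentially aligned with what the paper does.

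For part (ii) you take a genuinely different and substantially heavier route, and in fact you have inverted the relative difficulty of the two halves of the proposition. The paper proves (ii) by a direct diagram chase that never touches presentations: one forms the Cartesian diagram built from $E_1\times_Y E_1^*$, $E_1\times_Y E_2^*$, $E_2\times_Y E_2^*$ and the two projections, observes the single key identity
\[
ev_{E_1}\circ(\mathrm{id}\times h^\vee)\;=\;ev_{E_2}\circ(h\times\mathrm{id}),
\]
and then runs a chain of six or seven isomorphisms using only base change and the projection formula. Nothing in that argument requires choosing a presentation $E=E''/E'$, lifting $h$ to a morphism of two-term complexes, or factoring $h$ into ``top'' and ``bottom'' pieces — the functoriality is already visible at the level of vector space stacks and their duals. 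Your proposed reduction (refine presentations so that $h$ lifts, factor the lift, apply Laumon's formulas term by term, then descend) could in principle be carried through, but it front-loads exactly the issues you flag yourself (existence of compatible liftings, independence of factorization, bookkeeping of Tate twists), all of which evaporate in the paper's direct argument. Moreover, each elementary step in your factorization would still have to be run $E'$-equivariantly (the base is a stack $Y$, not a scheme, and the ``top'' vector bundle lives over $Y$ too), so you do not in fact reach the literal classical Laumon setting; you would have to re-prove an equivariant version of it, which is circular in flavor. The cleaner structure is: (ii) and (iii) are formal consequences of the compatibility of $ev$ with morphisms of vector space stacks plus base change/projection formula, and it is only (i) — the inversion formula — where the reduction to $E''/E'$ and then to schemes is genuinely needed, via the identification of $D_{\rm abs}(E)$ with a full subcategory of $D_{\rm abs}(E'')$.
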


\begin{ex} 
Any vector space stack on $Y=\pt$ is isomorphic to a stack of the form
$E=\AA^m\times (\pt/\AA^n)$. For the dual generalized vector bundle $E^*$
one finds that $E^*=(\AA^m)^*\times (\pt/\AA^n)^*=(\AA^m)^*$. 
Let $pr: \AA^m\times \pt/\AA^n\to \AA^m$ be the first projection.
Then the functor $f^*: D\abs(\AA^m) \to D\abs(\AA^m\times \pt/\AA^n)$ is an equivalence
since the additive group $\AA^n$ is a unipotent group.
Thus, the equivalence of part (i) of the Proposition reduces, in this case,
to an analogous equivalence in the case of $E=\AA^m$.
\end{ex}

\begin{proof}[Sketch of proof of the Proposition] The first, resp. second, isomorphism in (ii) amounts to 
 functorial  isomorphisms $\FD_{E_2} (h_! A) = h^{\vee, *} \FD_{E_1} (A)$, resp. 
$\FD_{E_1}(h^* B) = h^\vee_! \FD_{E_2} (B)[2d](d)$ for any $A\in D\abs({E_1})$ and $B\in D\abs({E_2})$.
We only prove the first isomorphism.
To this end, consider the following commutative diagram

$$
\xymatrix{
\ & \
\ & \
{E_1} \ \ar[r]^<>(0.5){h}\ar@{}[dr]|{\Box}& \
{E_2} \ \\
\AA^1& \
{E_1} \times_Y E_1^*  \   \ar@{} [dr] |{\Box} \ar[l]^<>(0.5){ev_{E_1}}
              \ar[d]^<>(0.5){\pr_2^{E_1}} \ar[ur]^<>(0.5){\pr_1^{E_1}}&    \
{E_1} \times_Y E_2^* \  \ar[l]^<>(0.5){id \times h^{\vee}}  \ar[r]^<>(0.5){h \times id}
                \ar[d]^<>(0.5){\widetilde \pr_2} \ar[u]^<>(0.5){\widetilde \pr_1} &   \
{E_2} \times_Y E_2^* \ar[u]^<>(0.5){\pr_1^{E_2}} \ar[r]^<>(0.5){ev_{E_2}} \ar[dl]^<>(0.5){\pr_2^{E_2}}&  \
\AA^1 \ \\
\ & \
{E_1}^*& \ 
{E_2}^* \ar[l]^<>(0.5){h^{\vee}}& \
   }
$$

The squares in the diagram are Cartesian.
The required isomorphism is a consequence of the following chain of isomorphisms
\begin{align*}
h^{\vee,*}\FD_{E_1}(A)&=h^{\vee,*} \pr_{2,!}^{E_1}(\pr_1^{{E_1},*}(A) \otimes ev_{E_1}^*(\wp_\chi))\\
&=
\widetilde \pr_{2,!} (id \times h^{\vee})^*(\pr_1^{{E_1},*}(A) \otimes ev_{E_1}^*(\wp_\chi))\qquad\text{(base change)}\\
&=
\widetilde \pr_{2,!}(\widetilde \pr_1^*(A) \otimes (id \times h^{\vee})^*ev_{E_1}^*(\wp_\chi))\\
&=
\widetilde \pr_{2,!}(\widetilde \pr_1^*(A) \otimes (h \times id)^*ev_{E_2}^*(\wp_\chi))\qquad{(\text{since \ } ev_{E_1} \circ (id \times h^{\vee}) = ev_{E_2} \circ (h \times id))}\\
&=
\pr_{2,!}^{E_2} (h \times id)_! (\widetilde \pr_1^*(A) \otimes (h \times id)^*ev_{E_2}^*(\wp_\chi))\qquad\text{(projection formula)}\\
&=
\pr_{2,!}^{E_2} ((h \times id)_! \widetilde \pr_1^*(A) \otimes ev_{E_2}^*(\wp_\chi))\qquad\text{(base change)}\\
&=
\pr_{2,!}^{E_2} (\pr_1^{{E_2},*}h_!(A) \otimes ev_{E_2}^*(\wp_\chi)) = \FD_{E_2}(h_! A).
\end{align*}

The proof of the equivalence in (i) mimics the standard proof of a similar equivalence in the
case of vector bundles on a scheme, cf. \cite{KL}. 
Specifically, define a functor
 $\FDN_{E^*}: D\abs(E^*)\to D\abs(E)$ by the formula
$\FDN_{E^*}(\cf) =\pr_{1!}(\pr_2^*\cf \otimes ev_E^*(\wp_{-\psi}))[\mathrm{rk} E](\half \mathrm{rk} E)$.
Then, one establishes the Fourier inversion formula saying that there are  isomorphisms of functors
$\FDN_{E^*}\ccirc \FDN_E\cong \Id_{D\abs(E)}$ and $\FDN_E\ccirc \FDN_{E^*}\cong \Id_{D\abs(E^*}$,
respectively.

To prove the isomorphisms, 
let $i: E\times_Y E^*\into E\times E^*$, resp. $i^\vee:   E^*\times_Y E\into E^*\times E$, be the imbedding and put
$K_E:=i_!ev_E^*(\wp_{\psi})[\mathrm{rk} E](\half \mathrm{rk} E)$, resp. $K_{E^*}=i^\vee_!ev_{E^*}^*(\wp_{-\psi})[\mathrm{rk} E](\half \mathrm{rk} E)$.
The functors $\FD_E$, resp. $\FD_{E^*}$, can be written as a convolution functor
$\FDN_E(\cf)=\cf*K_E$, resp. $\FDN_{E^*}(\ce)=\ce*K_{E^*}$.
Let $\Delta_E: E\to E\times_Y E$, resp. $\Delta_{E^*}: E^*\times_Y E^*$,
denote the diagonal imbedding. Using adjunction, one constructs
natural morphisms $(\Delta_E)_!\C_E\to K_E*K_{E^*}$, resp. $(\Delta_{E^*})_!\C_{E^*}\to
K_{E^*}*K_E$.
The  Fourier inversion formula is equivalent to a statement that these morphisms
are isomorphisms. The latter statement is local with respect to $Y$. Therefore, we may (and will) assume 
that $E=E''/E'$. Thus, the category $D\abs(E)$ may be identified with the $E'$-equivariant derived 
category of $E''$. Further, the group $E'$ being unipotent, the forgetful functor
$D\abs(E)\to D\abs(E'')$ is an imbedding. The essential image of this imbedding
is a full subcategory of $D\abs(E'')$ whose objects are sheaves $\cf$ such that there exists
an isomorphism $a^*\cf\cong p^*\cf$, where $a$, resp.  $p$, is the action, resp. second projection, $E'\times_Y E'' \to E''$.
This way, we are reduced to proving the result in the case where $E$ is a vector bundle on $Y$.
The latter case follows from the corresponding result for schemes, which is known.

The proof of part (iii) is similar. We omit it.
\end{proof}

\subsection{Infinitesimal inertia and Fourier transform}
%We apply Lemma \ref{FD} to prove the statement about the tangent and cotangent stacks which we need. We have the following
%Below, we will frequently use the following result which is a direct consequence of definitions
%\begin{claim}\label{Cart} Let $X_i\to X,\ i=1,2$, be morphisms of $G$-stacks. Then, the canonical map
%$$ (X_1\times_X X_2)/G\to (X_1/G)\times_{X/G} (X_2/G)$$
%is an isomorphism.\qed
%\end{claim}
Let $G$ be an algebraic group and  $\mathfrak g$  its Lie algebra. %[do we need other conditions on the group e.g. reductive?]
Let $Y$ be a smooth $G$-stack, $i_Y: Y\to \TT Y$ the zero section,  $a_\g: \g\times Y \to \TT Y$ the infinitesimal action,
and $f: Y/G\to \pt/G$ the map induced by a constant map $Y\to \pt$.
 We have a commutative diagram
\beq{ddiag}
\xymatrix{
  Y/G\ar[d]^<>(0.5){\Id}\ar[r]^<>(0.5){i} \    &  \  \TT Y/G \   \ar[d] & \   (\g\times Y)/G  \  \ar[d]  \ar[l]_<>(0.5){a}   \ar[rr]^<>(0.5){f^\sharp_{\g/G}=pr_1} 
\ar@{}[drr]|{\Box} &&  \ \fg/G  \ar[d]  \\
  Y/G \ \ar@{=}[r]  &  \  Y/G \ \ar@{=}[r] &  \    Y/G \ \ar[rr] ^<>(0.5){f}&& \  \pt/G
    }
\eeq
Here, the first 3 vertical maps are vector space stacks, the map $i$,
resp.  $a$, induced by $i_Y$, resp.   $a_\g$, is a morphism
of vector space stacks on $Y/G$. The square on the right is cartesian,
so we have $pr_1=f^\sharp_{\g/G}$.

Let  $\mu: \TT^*Y \to \mathfrak g^*$ be    the moment map and
$\pi: \TT^*Y\to Y$ the projection.  
We have a commutative diagram
$$
\xymatrix{
  Y/G\ar[d]^<>(0.5){\Id}
&  \  T^*Y/G \  \ar[l]_<>(0.5){\pi'}\ar[d]  \ar[r]^<>(0.5){\nu}  &   (\fg^*\times Y)/G \ar@{}[drr]|{\Box} \ar[d] 
\ar[rr]^<>(0.5){f^\sharp_{\g^*/G}=pr_1}
&&  \ \fg^*/G\ar[d] \\
  Y/G \ \ar@{=}[r]  &  \  Y/G \ \ar@{=}[r] &  \    Y/G \ \ar[rr]^<>(0.5){f} && \pt/G}
$$
where the maps $\nu$ and $\pi'$ are induced by $\mu\times \pi$ and $\pi$, respectively.
The map $\nu$ is the  dual of the map $a'$ by the definition of the moment map.
Hence, the above diagram is a diagram of morphisms of generalized vector bundles 
which is dual to diagram \eqref{ddiag}.

Recall the notation of diagram \eqref{llie} and let 
$p: I\tpp(Y/G) \to (\g\times Y)/G$ be a composition of the map $\pr_\g\times \pr_Y: I\tpp(Y/G) \to \g\times Y$
and the quotient map $\g\times Y\to (\g\times Y)/G$.
Let $\mu':\ \TT^*Y/G \to Y/G$ be the map induced by $\mu$.

%\redd{note also that $(\TT^*Y)/G$ is isomorphic to the relative cotangent bundle $\TT^*((Y/G) / {B} G)$.}

\begin{prop}\label{Zfour} For any morphism $\phi: Y/G\to\AA$, there is an isomorphism
$$
\FDN_{\mathfrak g/G \to {B} G}((p_{\fg})_! p_{Y/G}^*  \wp^\phi)=\mu^{\prime}_!(\pi')^*\wp^\phi\{2\dim Y-\dim G\}.
$$
\end{prop}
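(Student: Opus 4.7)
\medskip

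\noindent\textbf{Proof plan for Proposition \ref{Zfour}.}
The plan is to factor the map $p_\fg$ through $(\fg\times Y)/G$, apply Proposition~\ref{FD}(iii) to pull the outer projection out of the Fourier transform, and then reduce the remaining computation on the vector space stack $(\fg\times Y)/G\to Y/G$ to a standard Fourier inversion of a "delta at the zero section," interpreted in the language of vector space stacks. The key geometric input is the cartesian square
\[
\xymatrix{
I\tpp(Y/G)\ \ar[r]^<>(0.5){j}\ar[d]^<>(0.5){p_{Y/G}} & \ (\fg\times Y)/G\ar[d]^<>(0.5){a}\\
Y/G\ \ar[r]^<>(0.5){i}&\ \TT Y/G
}
\]
coming from the definition of $I\tpp(Y/G)$, combined with the fact that the dual of the morphism of vector space stacks $a$ (induced by the infinitesimal action) is, by the very definition of the moment map, the morphism $\nu:\TT^*Y/G\to(\fg^*\times Y)/G$ induced by $\mu\times\pi$.

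First I would factor $p_\fg=f^\sharp_{\fg/G}\ccirc j$, where $f^\sharp_{\fg/G}:(\fg\times Y)/G\to\fg/G$ is the canonical morphism associated to $f:Y/G\to\pt/G$. Applying the second isomorphism in Proposition~\ref{FD}(iii) (with $E=\fg/G$ on $\pt/G$, so $f^*E=(\fg\times Y)/G$ and $f^*E^*=(\fg^*\times Y)/G$) gives
\[
\FDN_{\fg/G\to{B}G}\big((p_\fg)_!\,p_{Y/G}^*\wp^\phi\big)
\ =\
(f^\sharp_{\fg^*/G})_!\,\FDN_{(\fg\times Y)/G}\big(j_!\,p_{Y/G}^*\wp^\phi\big).
\]
Writing $q:(\fg\times Y)/G\to Y/G$ for the projection of the vector space stack, so that $p_{Y/G}=q\ccirc j$, the projection formula for $j$ yields $j_!\,p_{Y/G}^*\wp^\phi=j_!\C_{I\tpp}\o q^*\wp^\phi$. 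Since $q^*\wp^\phi$ is pulled back from the base $Y/G$ of the vector space stack along which $\FDN_{(\fg\times Y)/G}$ acts, fiberwise Fourier transform factors it out: $\FDN_{(\fg\times Y)/G}(j_!\C\o q^*\wp^\phi)=\FDN_{(\fg\times Y)/G}(j_!\C)\o q'^*\wp^\phi$, where $q':(\fg^*\times Y)/G\to Y/G$ is the dual projection.

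Next I would compute $\FDN_{(\fg\times Y)/G}(j_!\C_{I\tpp})$. By base change for the cartesian square above, $j_!\C_{I\tpp}=a^*\,i_!\C_{Y/G}$. Applying the second isomorphism in Proposition~\ref{FD}(ii) to $a:(\fg\times Y)/G\to\TT Y/G$ (which has $d=\rk\TT Y/G-\rk(\fg\times Y)/G=\dim Y-\dim G$) one gets
\[
\FDN_{(\fg\times Y)/G}(a^*\,i_!\C)\ =\ a^\vee_!\,\FDN_{\TT Y/G}(i_!\C)\{\dim Y-\dim G\}.
\]
Now apply Proposition~\ref{FD}(ii) again to the zero section $i:Y/G\to\TT Y/G$, viewing $Y/G$ as the rank-$0$ vector space stack on itself; since $\FDN_{Y/G}=\Id$ and the dual of the zero section is the projection $\pi':\TT^*Y/G\to Y/G$, one obtains
$\FDN_{\TT Y/G}(i_!\C_{Y/G})=(\pi')^*\C\{\dim Y\}$, i.e. the usual fact that the Fourier transform of a delta at zero is the constant sheaf on the dual.

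Finally I would assemble the pieces. The identification $a^\vee=\nu$ (dual of the infinitesimal action equals the moment map) together with $f^\sharp_{\fg^*/G}\ccirc\nu=\mu'$ gives
\[
(f^\sharp_{\fg^*/G})_!\,\nu_!\,(\pi')^*\wp^\phi\ =\ \mu'_!\,(\pi')^*\wp^\phi,
\]
and the combined shift is $\{\dim Y\}\{\dim Y-\dim G\}=\{2\dim Y-\dim G\}$, yielding the claimed formula. The main obstacle I anticipate is bookkeeping rather than conceptual: one must verify that Proposition~\ref{FD}(ii),(iii) apply in the generality of morphisms of vector space stacks over $Y/G$ (not just over a scheme), that the ranks of $(\fg\times Y)/G$ and $\TT Y/G$ are indeed $\dim G$ and $\dim Y$ respectively as vector space stacks on $Y/G$, and that the identification $a^\vee=\nu$ is checked with the correct sign/normalization. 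Once these are in place the argument is essentially formal.
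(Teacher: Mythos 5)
Your proof is correct and follows essentially the same route as the paper's: both identify the same cartesian square coming from the definition of $I\tpp(Y/G)$, use base change, apply Proposition~\ref{FD}(iii) to the projection $f^\sharp_{\fg/G}$, apply Proposition~\ref{FD}(ii) to the infinitesimal action map $a$, compute the Fourier transform of a delta along the zero section, and finish with $a^\vee=\nu$ and $f^\sharp_{\g^*/G}\ccirc\nu=\mu'$. The one cosmetic difference is that you factor out $\wp^\phi$ via the projection formula (computing $\FDN(j_!\C)$ and then tensoring back by $q'^*\wp^\phi$), while the paper applies base change directly to $p_!\,p_{Y/G}^*\wp^\phi=a^*i_!\wp^\phi$ with the $\wp^\phi$ kept inside throughout; the latter is slightly leaner because it avoids the final step you left implicit, namely reabsorbing $q'^*\wp^\phi$ into $(\pi')^*\wp^\phi$ via $\nu_!(\pi')^*\C\o q'^*\wp^\phi=\nu_!(\pi')^*\wp^\phi$ (using $q'\ccirc\nu=\pi'$ and the projection formula again). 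That gap is routine but should be spelled out if you write this up.
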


\begin{proof}
We have the following commutative diagram where the square is cartesian, cf.  \eqref{llie}:
$$
\xymatrix{
  \TT Y/G \     & \   (\g\times Y)/G \ar[l]_<>(0.5){a}   \ar[r]^<>(0.5){pr_1} &  \ \fg/G \\
   Y/G \   \ar[u]^<>(0.5){i}  &   I\tpp(Y/G)  \ar[l]_<>(0.5){p_{Y/G}} \ar[ur]_<>(0.5){p_\g} \ar[u]^{p} &  \ \ \\
   }
$$

By base change  we obtain

$$(p_{\fg/G})_! p_{Y/G}^* \wp^\phi= (pr_1)_! p_! p_{Y/G}^*  \wp^\phi= (pr_1)_! a_{\fg/G}^* i_! \wp^\phi=
(f^\sharp_{\g/G})_! a_{\fg/G}^* i_! \wp^\phi.$$

%Consider also the following diagram dual to the diagram above, in which $pr_1^{\vee} \circ a_{\fg/G}^{\vee} = \mu^{\prime}$. 
%$$
%\xymatrix{
%\AA & \ Y/G  \   \ar[l]^<>(0.5){\phi} &  \  \TT  ^*Y/G \  \ar[l]^<>(0.5){\pi}  \ar[r]^<>(0.5){a^\vee}  &   (\g^*\times  Y )/G  \ar[r]^<>(0.5){pr_1^{\vee}} &  \ \fg^*/G \\
 %  }
%$$

%Using this dual diagram, the properties of the Fourier-Deligne transform in 
Thus, we compute
\begin{align*}
\FDN_{\g/G \to {B} G}((p_\g)_! p_{Y/G}^*   \wp^\phi) &=
\FDN_{\g/G \to {B} G} ((f^\sharp_{\g/G})_! a^* i_!  \wp^\phi)\qquad\text{(by Prop. \ref{FD}(iii))}\\
&=(f^\sharp_{\g^*/G})_!\FDN_{(\g^*\times Y)/G\to Y/G}(a^* i_!  \wp^\phi)\qquad\text{(by Prop. \ref{FD}(ii))}.
\end{align*}
The rank of the vector bundle  $TY/G\to Y/G$, resp. $(\g\times Y)/G\to Y/G$,
equals $\dim Y$, resp. $\dim\g$.
Hence, we have
\begin{multline*}
\FDN_{(\g\times Y)/G\to Y/G}(a^* i_!  \wp^\phi)=
\mu'_!\FDN_{(TY/G\to Y/G}(i_!  \wp^\phi)\{\dim Y\}\\
=\mu'_!(\pi')^*\wp^\phi\{\dim Y-\dim\g\}\{\dim Y\}=
\mu'_!(\pi')^*\wp^\phi\{2\dim Y-\dim\g\}.
\end{multline*}
The result follows using that  $f^\sharp_{\g^*/G}\ccirc\nu=\mu'$.
\end{proof}

\begin{rem}
There is a more general construction where ${B} G$ in $Y/G \to {B} G$ above is replaced by a more general stack. Suppose that we have a smooth map $q: Y \to W$ (`a family of smooth stacks'). Then there is a map $\TT^*(Y/W) \to q^*(\TT Y)^*[1]$ and the `dual' $q^*\Omega(\TT Y) \to \TT(Y/W)$ which specialize to $\mu$ and $a$ 
above for $Y/G \to {B} G$. However these are maps from generalized vector bundles to vector space stacks.
The functoriality with respect to such maps is not covered by 
Proposition \ref{FD}, though it could be stated separately.
\erem

\subsection{Proof of  theorem \ref{A-ind}}\label{fff} 

For each $\bv$ we have a  diagram, cf. \eqref{llie},
$$
\xymatrix{
\X_\bv(\cC) \ar@{=}[d]^<>(0.5){\Id}&& I\tpp(\X_\bv(\cC) )
\ar@{->>}[ll]_<>(0.5){p^+_\X}\ar[d]^<>(0.5){p_{\g}}\ar@{}[drr]|{\Box} 
&& I(\X_\bv(\cC) )\ar[d]^<>(0.5){p_{G}}\ar@{_{(}->}[ll]_<>(0.5){\wt\eps}\\
\X_\bv(\cC)&& \gl_\bv/\GL_\bv \ar@{->>}[ll]_<>(0.5){p_\X} && \GL_\bv/\aad\GL_\bv \ar@{_{(}->}[ll]_<>(0.5){\eps}
}
$$
where we have used the notation $p_G=p_{_{\GL_\bv}}$, resp. $p_\g=p_{\gl_\bv},\ p^+_\X= p^+_{_{\X_\bv(\cC,F)/\GL_\bv}}$,
and $p_\X=p_{_{\X_\bv(\cC,F)/\GL_\bv}}$.
Let $\phi:\X_\bv(\cC)\to \AA$ be an additive potential and $\rr_\bv:=\rf_\bv=(p_\g)_!(p^+_\X)^*\wp^{\phi}$.

We apply Proposition \ref{tr_prop}
in the special case where
$\cf=\C_{\GL_\bv/\aad \GL_\bv}\in D\abs(\GL_\bv/\aad \GL_\bv)$ and $\ce=\wp^{\phi}\in D\abs(X_\bv)$.
Clearly, we have
$\ltr\ce_x=\ltr(\wp^{\phi}|_x)=\psi(\phi(x))$ and 
$\ltr (\cf_{p_G(x,f)}=1$, for all
$[x]\in [X_\bv(\k)],\ f\in \Aut(x)(\k)$. 
Further, by  base change, from the above diagram we get
 $(p_{G})_!p^*_{\X}\ce=
(p_{G})_!\,\wt\eps^* (p^+_\X)^*\,\wp^{\phi}=\eps^*\rf_\bv$.
Thus,  formula \eqref{tr2} yields
\begin{align}
\ltr \RGam_c(\GL_\bv/\aad \GL_\bv,\ \eps^*\rr_\bv)\ &=\
\ltr \RGam_c(\GL_\bv/\aad \GL_\bv,\ 
(p_G)_!p_\X^*\ce)\nonumber\\
 &=\ \sum_{[x]\in [X_\bv(\k)]}\ \biggl(\,\,\sum_{^{f\in \Aut(x)(\k)}}\
 1\biggr)\cdot
 \mbox{\large{$\frac{1}{\#\Aut(x)}$}}\cdot
(\ltr\ce_x)\nonumber\\
&=\ \sum_{[x]\in [X_\bv(\k)]}\ (\ltr\ce_x)
\ =\ \sum_{[x]\in [X_\bv(\k)]}\ \psi(\phi(x)).
\label{tr3}
 \end{align}

Recall next that the $\GL_\bv$-action $\X_\bv(\cC,F)$ factors through a $\PGL_\bv$-action.
It follows that one has a natural isomorphism
$I\tpp(\X_\bv(\cC,F),\gl_\bv)=I\tpp(\X_\bv(\cC,F),\pgl_\bv)\,\times_{\pgl_\bv}\,\gl_\bv$.
Further, the group $\Aff$ acts naturally on $I\tpp(\X_\bv(\cC,F),\gl_\bv)$.
Write $I\tpp(\X_\bv(\cC,F),\gl_\bv)/\AA$ for the quotient
of $I\tpp(\X_\bv(\cC,F),\gl_\bv)$ by the additive group.
From the above isomorphism we deduce $I\tpp(\X_\bv(\cC,F),\gl_\bv)/\AA\cong I\tpp(\X_\bv(\cC,F),\pgl_\bv)$.
Let $p_\pgl: I\tpp(\X_\bv(\cC,F),\pgl_\bv)/\GL_\bv\to \pgl_\bv/\GL_\bv$, resp.
$\bar p_\X^+:\
I\tpp(\X_\bv(\cC,F),\pgl_\bv)/\GL_\bv\to \X_\bv(\cC)$, be the natural 
morphism. We deduce that the sheaf $\rr_\bv=(p_{\gl_\bv})_!(p_\X^+)^*\wp^{\phi}$, on $\gl_\bv/\GL_\bv$,
descends to a sheaf $\bar\rr_\bv$ on $\pgl_\bv/\GL_\bv$ and, moreover,
we have $\bar\rr_\bv=(p_\pgl)_!\bar p_\X^+\wp^{\phi}$.

Let $\pi': \TT^*\X_\bv(\cC,F)/\GL_\bv\to$ $ \X_\bv(\cC,F)/\GL_\bv=\X_\bv(\cC)$
be the projection and $\mu'_\bv: \TT^*\X_\bv(\cC,F)/\GL_\bv$ $\to \pgl_\bv^*/\GL_\bv$
the map induced by the moment map for the $\PGL_\bv$-action on
$T^*\X_\bv(\cC,F)$.
A slight modification of Proposition \ref{Zfour}  yields
$\FDN_{\pgl_\bv/\GL_\bv}((p_\pgl)_! \bar p_\X^*  \wp^{\phi})=\mu^{\prime}_!(\pi')^*\wp^\phi\{2\X_\bv(\cC,F)-\dim \pgl_\bv\}$.
Writing $d_\bv=\dim\X_\bv(\cC)$, we compute
\[2\dim \X_\bv(\cC,F)-\dim \pgl_\bv=2 d_\bv+2\dim \GL_\bv-\dim \pgl_\bv=
2 d_\bv+\bv\cdot\bv+1.
\]
By Theorem \ref{Rfact}, the collection $(\rr_\bv)$
is an $\Aff$-equivariant factorization sheaf on $\gl$.
 Thus, we obtain
\begin{align}
\Phi_{\pgl_\bv/\GL_\bv}(\rr_\bv)_\eta&=
\FDN_{\pgl_\bv/\GL_\bv}(\bar\rr_\bv)_\eta=\FDN_{\pgl_\bv/\GL_\bv}((p_\pgl)_!\bar p_\X^+\wp^{\phi})
\label{phi-form}\\
&=
(\mu^{\prime}_\bv)_!(\pi')^*\wp^\phi\{2\dim \X_\bv(\cC,F)-\dim \PGL_\bv\}
=
(\mu^{\prime}_\bv)_!(\pi')^*\wp^\phi\{2 d_\bv+\bv\cdot\bv+1\}.\nonumber
\end{align}

Finally, applying \eqref{3eqsG} and using \eqref{tr3}, we compute
\begin{align*}
\sum_\bv\ z^\bv &\cdot \Big(\sum_{[x]\in [X_\bv(\k)]}\ \psi(\phi(x))\Big)= \sum_\bv\ z^\bv\cdot 
[\RGam_c(\GL_\bv/\aad \GL_\bv,\,\eps^*_\bv\rr_\bv)]\\
&=\Sym\Big((\L^{-\frac{1}{2}}-\L^{\frac{1}{2}})\cdot\sum_{^{\bv>0}}\ (-1)^{|\bv|}\cdot z^\bv\cdot
\L^{\frac{|\bv|}{2}}\cdot
\big[\Phi_{\pgl_\bv/\GL_\bv}(\rr_\bv)^{\langle\sign\rangle}|_{\eta_\bv}\big]\Big)\\
&=\Sym\Big((\L^{-\frac{1}{2}}-\L^{\frac{1}{2}})\cdot\sum_{^{\bv>0}}\ (-1)^{|\bv|}\cdot z^\bv\cdot
\L^{\frac{|\bv|}{2}}\cdot\big[\mu^{\prime}_\bv)_!(\pi')^*\wp^\phi\big]^{\langle\op{triv}\rangle}|_\eta\{2 d_\bv+\bv\cdot\bv+1\}\Big)\\
&=\Sym\Big((\L^{-\frac{1}{2}}-\L^{\frac{1}{2}})\cdot\sum_{^{\bv>0}}\ (-1)^{|\bv|}\cdot z^\bv\cdot
\L^{\frac{|\bv|}{2}}\cdot(-1)^{2 d_\bv+\bv\cdot\bv+1}\cdot \L^{-d_\bv-\frac{\bv\cdot\bv}{2}-\frac{1}{2}}
\cdot\big[(\mu^{\prime}_\bv)_!(\pi')^*\wp^\phi\big]^{\langle\op{triv}\rangle}|_\eta\Big)\\
&=\Sym\Big((1-\L\inv)\cdot\sum_{^{\bv>0}}\  z^\bv \cdot \L^{-(d_\bv+\frac{\bv\cdot\bv-|\bv|}{2})}\cdot
\big[(\mu^{\prime}_\bv)_!(\pi')^*\wp^\phi\big]^{\langle\op{triv}\rangle}|_\eta\Big),
\end{align*}
where in the last equality we have used that $(-1)^{|\bv|}\cdot (-1)^{2 d_\bv+\bv\cdot\bv+1}=-1$
since $2d_\bv\bv+\cdot\bv+|\bv|$ is an even integer.

By definition, we have $\phi_\bv=(\pi')^*\wp^\phi$ and $\big[(\mu^{\prime}_\bv)_!(\pi')^*\wp^\phi\big]^{\langle\op{triv}\rangle}|_\eta=
H^\hdot(\mm_O, \ \wp^{\phi_\bv})$.
Thus, taking  the
trace of Frobenius in the RHS side of the last equation yields formula \eqref{thm11}.

To prove formula \eqref{thm1stack}, we consider the following cartesian diagram
\beq{in-f}
\xymatrix{
\X_\bv(\cC) \ar[d]^<>(0.5){p_0}\ar[rr]^<>(0.5){\wt i}\ar@{}[drr]|{\Box}  && I\tpp(\X_\bv(\cC))
\ar[d]^<>(0.5){p_{\g}}\\%\ar@{}[drr]|{\Box}\\ 
%&& I(X_\bv)\ar[d]^<>(0.5){p_{G_\bv}}\ar@{_{(}->}[ll]_<>(0.5){\wt\eps_\bv}\\
0/\GL_\bv\ar@{^{(}->}[rr]^<>(0.5){i} && \gl_\bv/\GL_\bv %&&& G_\bv/G_\bv \ar@{_{(}->}[ll]_<>(0.5){\eps_\bv}
}
\eeq
where we have used the notation $p_0=p_{\g}|_{\X_\bv(\cC)}$.
Applying base change for this diagram, we find
\begin{multline*}
\RGam_c(\X_\bv(\cC), \wp^\phi) =\RGam_c(\X_\bv(\cC),\, \wt i^*p_\X^*\wp^\phi)=
\RGam_c(\gl_\bv/\GL_\bv,\,(p_\g)_!
\RGam_c(0/\GL_\bv,\, (p_{0})_!\wt i^* p_\X^*\wp^\phi)\\
=
\RGam_c(\gl_\bv/\GL_\bv,\,i^*(p_\g)_!p_\X^*\wp^\phi)
\RGam_c(0/\GL_\bv,\, i^*\rr_\bv).
\end{multline*}
Thus, using  \eqref{phi-form} and \eqref{4eqsG} we compute
\begin{multline*}
\sum_{\bv}\   z^\bv\cdot \L^{-\frac{\bv\cdot\bv}{2}}\cdot [\RGam_c(X_\bv, \wp^\phi)]\
=\ 
\Sym\Big(\sum_{^{\bv>0}}\ z^\bv\cdot \L^{-\frac{|\bv|}{2}}\cdot[\Phi(\rr_\bv)|_{\eta}]^{\langle\op{triv}\rangle}\Big)\\
=\ 
\Sym\Big(\sum_{^{\bv>0}}\ z^\bv\cdot \L^{-\frac{|\bv|}{2}}\cdot(-1)^{2 d_\bv+\bv\cdot\bv+1}\cdot \L^{-d_\bv-\frac{\bv\cdot\bv}{2}-\frac{1}{2}}
\cdot\big[(\mu^{\prime}_\bv)_!(\pi')^*\wp^\phi\big]^{\langle\op{triv}\rangle}|_\eta\Big)\\
=\ 
\Sym\Big(-\L^{-\frac{1}{2}}\cdot\sum_{^{\bv>0}}\ 
(-1)^{|\bv|}\cdot z^\bv\cdot \L^{-(d_\bv+\frac{\bv\cdot\bv+|\bv|}{2})}\cdot\big[H^\hdot(\mm_O, \wp^{\phi_\bv})\big]^{\langle\op{triv}\rangle}\Big).
\end{multline*}
%Next, we apply \eqref{a-exp} and deduce
%\begin{multline*}
%\sum_{\bv}\   (-1)^{|\bv|}\cdot \L^{\frac{|\bv|}{2}}\cdot z^\bv\cdot \L^{-\frac{\bv\cdot\bv}{2}}\cdot [\RGam_c(X_\bv, \wp^\phi)]\
%=\ 
%\Sym\Big(\sum_{^{\bv>0}}\ 
%(-1)^{|\bv|}\cdot z^\bv\cdot \L^{-(d_\bv+\frac{\bv\cdot\bv+|\bv|}{2})}\cdot\big[\mu^{\prime}_\bv)_!(\pi')^*\wp^\phi\big]^{\langle\op{triv}\rangle}|_\eta\Big).
%\end{multline*}

Equation \eqref{thm1stack} follows from the above by the Lefschetz trace formula
applied to the sheaf $\wp^\phi$ on the stack $\X_\bv(\cC)$.
This completes the proof of Theorem \ref{A-ind}.

\section{Parabolic  bundles on a curve}\label{higgs}
The goal of this section is to prove Theorem
\ref{ind-bundles}. 

\subsection{Approximation categories for vector bundles}
In this subsection we consider the case of vector bundles
without parabolic structure. 

Fix  a smooth geometrically connected curve $C$.
Recall that the slope of a vector bundle $\V$, on $C$, is
defined by the formula $\slope \V=\deg \V/\rk \V$.
One has the corresponding notion of  semistable, resp. stable, vector
bundle
in the sense of Mumford.
For any vector bundle $\V$ there is a unique
ascending filtration $0=\V_0\sset \V_1\sset \V_2\sset \ldots\sset \V_n=\V$,
by vector sub-bundles, called
the {\em Harder-Narasimhan filtration}, such that 
 each of the vector bundles
$\hn_j(\V):=\V_j/\V_{j-1},\ j=1,\ldots,n$,
is semistable and the following inequalities hold
\[
\slope \hn_1(\V)>\slope \hn_2(\V)>\ldots>
\slope \hn_n(\V).
\]

It is known that one has
\beq{hn}
\slope \hn_1(\V)\geq \slope \V\geq \slope \hn_n(\V).
\eeq
Also,  the Harder-Narasimhan filtration is functorial
in the sense that  any vector bundle morphism
$f: \V\to \V'$  maps $\V_i$ to $\V'_j$ whenever $\slope \V_i\leq
\slope \V'_j$, cf. e.g. \cite{An}.

Recall that $\VV(U)$ denotes the category of
vector bundles on a scheme $U$, cf. Example \ref{bun}.
For any real number $\gam>0$, we let $\VV_\gam(C)$ 
be  a  full subcategory of $\VV(C)$
whose objects are vector bundles $\V$ such that
one has $-\gam\leq\slope \hn_j(\V) \geq \gam$ for all $j$.
The functoriality of the Harder-Narasimhan filtration 
insures that $\VV_\gam$ is stable under finite
direct sums and direct summands,
so it is a $\k$-linear Karoubian category.
Moreover, the  category $\VV_\gam(C)$ is  quasi-abelian, \cite{An}.
Clearly, for any $\gam<\delta$ there is  a full imbedding $\VV_\gam(C)\into\VV_\delta(C)$,
so one has $\underset{^\gam}\varinjlim\ \VV_\gam(C)=\VV(C)$.

\begin{lem}\label{prestack} Let $S$ be  an affine connected scheme
and  $\V$ a  vector bundle on $C\times S$.
Then, the set formed by the (closed)  points $s\in S$
such that $\ \V|_{C\times\{s\}}\in\VV_\gam(C)\ $ 
%\[\{s\in S\en\big|\en \V|_{C\times\{s\}}\in\cat_\gam\}\]
is a Zariski open subset of $S$.
\end{lem}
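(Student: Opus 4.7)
The plan is to reduce the lemma to the upper semi-continuity of the maximal Harder--Narasimhan slope as a function on $S$ (Shatz's theorem).

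First, I would reformulate the condition: $\V|_{C\times\{s\}} \in \VV_\gamma(C)$ if and only if the maximal HN slope $\mu_{\max}(\V_s) := \slope\,\hn_1(\V_s)$ satisfies $\mu_{\max}(\V_s) \leq \gamma$ and the minimal HN slope $\mu_{\min}(\V_s) := \slope\,\hn_n(\V_s)$ satisfies $\mu_{\min}(\V_s) \geq -\gamma$. This follows from the inequalities \eqref{hn} together with the monotonicity of the HN slopes. Since $\mu_{\min}(\V_s) = -\mu_{\max}(\V_s^\vee)$ (the minimal slope of $\V$ equals the negative of the maximal slope of its dual), and since $\V^\vee$ is itself a vector bundle on $C\times S$, it suffices to prove that the locus $\{s\in S : \mu_{\max}(\V_s) \leq \gamma\}$ is Zariski open; applying this result to both $\V$ and $\V^\vee$ and intersecting yields the lemma.

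Next, I would establish the openness of $\{s : \mu_{\max}(\V_s) \leq \gamma\}$, i.e., the upper semi-continuity of $s\mapsto \mu_{\max}(\V_s)$. The standard argument proceeds as follows: by the valuative criterion one may reduce to the case where $S = \Spec R$ for a discrete valuation ring $R$ with generic point $\eta$ and closed point $0$, and one must show that $\mu_{\max}(\V_\eta) \leq \mu_{\max}(\V_0)$. Given any subbundle $\W_\eta \subset \V_\eta$ realizing the maximal slope, one extends $\W_\eta$ to a saturated subsheaf $\W \subset \V$ over $C\times S$ (taking the schematic closure and then the saturation inside $\V$, which is flat over $S$). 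The specialization $\W_0 \subset \V_0$ has the same rank and degree as $\W_\eta$, hence the same slope, so $\mu_{\max}(\V_0) \geq \slope \W_0 = \slope \W_\eta = \mu_{\max}(\V_\eta)$.

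Finally, to promote the valuative statement to Zariski-openness on an arbitrary (Noetherian) affine $S$, I would use the standard constructibility argument for the Shatz stratification: the function $s \mapsto \mu_{\max}(\V_s)$ takes only finitely many values on $S$ (bounded below by $\slope \V_s$, which is locally constant on connected components, and bounded above by an application of boundedness of families of semistable bundles with bounded slope on $C$), and upper semi-continuity under specialization together with constructibility of its level sets implies Zariski openness of each sublevel set $\{s : \mu_{\max}(\V_s) \leq \gamma\}$. The main technical point—and the only non-formal step—is the extension/saturation argument producing a flat family of subsheaves specializing a given destabilizing subbundle; once this is in place, everything else is formal manipulation.
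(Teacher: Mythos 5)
Your proof is correct and relies on the same fundamental ingredient as the paper, namely Shatz's semi-continuity of the Harder--Narasimhan polygon, but it is organized differently. The paper cites Shatz's stratification $S=\sqcup_P S_P$ and his semi-continuity theorem directly, then argues that for $s'\in\overline{S_\gamma\cap S_P}$ the polygon of $\V_{s'}$ lies on or above that of $\V_s$ and shares endpoints, so the slope bound is violated at $s'$ whenever it is violated at $s$ --- handling $\mu_{\max}>\gamma$ and $\mu_{\min}<-\gamma$ simultaneously via the polygon ordering. You instead reduce to a single inequality $\mu_{\max}\le\gamma$ by applying duality ($\mu_{\min}(\V_s)=-\mu_{\max}(\V_s^\vee)$) and then re-derive the upper semi-continuity of $\mu_{\max}$ from scratch via the valuative criterion (extend the destabilizing subsheaf over the DVR and specialize), whereas the paper takes this as a citation. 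Both routes work; yours is more self-contained and perhaps cleaner in that it isolates a single semi-continuous function, while the paper's is shorter by leaning on the full polygon statement in \cite{Sh}. One small caution on your closing step: establishing the finiteness of values of $s\mapsto\mu_{\max}(\V_s)$ (needed to pass from specialization closedness to honest Zariski openness) is exactly the content of the boundedness part of Shatz's theorem that the paper cites, so in effect you still end up appealing to \cite{Sh} there unless you supply the boundedness argument for the HN polygons in full.
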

\begin{proof} 
By a result of Shatz \cite{Sh}, the scheme $S$ has a canonical
 partition 
$S=\sqcup_P\ S_P$ parametrized by convex polygons $P$ in the plane
${\mathbb R}^2$, called Harder-Narasimhan polygons.
For each $P$,  the set $S_P$ is a locally-closed subscheme
of $S$, see  \cite[p. 183]{Sh},
all vector bundles $\V_s:=\V|_{C\times\{s\}},\ s\in S_P$, have the  Harder-Narasimhan
filtration 
of the same length  $n(P)$, the number of vertices of
$P$ and, moreover,
the coordinates of the $j$-th vertex are $(\rk \hn_j(\V_s),\
\deg  \hn_j(\V_s)$.

Proving the lemma amounts to showing that 
the set
\[S_\gam=\big\{s\in S\en\big|\en \exists j \en\text{such that}\en
\slope \hn_j(\V_s) \notin [-\gam,\gam]\big\}\]
 is a closed subset of
$S$. It is sufficient to show that, for any 
 Harder-Narasimhan polygon $P$, we have $\overline{S_\gam\cap S_P}\sset
 S_\gam$.

 Let $s\in S_\gam\cap S_P,\
s'\in \overline{S_\gam\cap S_P}$, and let
$Q$ be the polygon such that $\V_{s'}\in S_Q$ where one
may have $P=Q$, in general. The polygons
$P$ and $Q$ share the same  vertex $(0,0)$, that corresponds
to the $0$-th term of the  Harder-Narasimhan
filtration, and the same  vertex $(\rk \V_s,\deg \V_s)=(\rk \V_{s'},\deg
\V_{s'})$, that corresponds
to the last term  of the  Harder-Narasimhan
filtration. 
By the semi-continuity theorem \cite[Theorem 3]{Sh},
the polygon $Q$ is  on or above the polygon $P$.
Further, since  $s\in S_\gam$ we have that either
$\slope\hn_1(\V_s)> \gam$ or $\slope\hn_{n(P)}(\V_s)  < -\gam$.
Therefore, in the first case one has
$\slope\hn_1(\V_{s'})\geq\slope\hn_1(\V_s)> \gam$
and in the second case one has
$\slope\hn_{n(Q)}(\V_{s'})\leq\slope\hn_{n(P)}(\V_s)  < -\gam$.
Thus, in either case, we deduce that $s'\in S_\gam$,
and the lemma follows.
\end{proof}

Fix $\gam>0$.
Given an affine test scheme $S$, let $\VV_\gam(C\times S)$
be  a  full subcategory of $\VV(C\times S)$
whose objects are vector bundles $\V$ on $C\times S$  such that
$\V|_{C\times\{s\}}\in\VV_\gam(C)$ for any closed  point $s\in S$.
The assignment $S\mto \VV_\gam(C\times S)$ defines a sheaf subcategory,
$\VV_{C,\gam}$, of the sheaf category $\VV_C$, cf. Example \ref{bun}.
The corresponding moduli
stack  $\fX(\VV_\gam(C))$ is a substack of the stack
$\fX(\VV(C))$. Specifically,
we have $\fX(\VV_\gam(C))=\sqcup_r\ \buna$, where
$\buna$ is the stack parametrizing the objects of  category $\VV_\gam(C)$ of rank $r$. 
Lemma \ref{prestack} implies that $\buna$ is 
 an open substack of the stack $\bun_r(C)$.
Hence,  $\buna$  is a smooth Artin stack. 

Given $r$ and $\gam$,
let ${\mathcal P}_{r,\gam}$ be the set  of
 Harder-Narasimhan polygons $P$ with vertices contained
in 
the set  ${\mathcal P}_{r,\gam}=\{(r',d')\in \Z^2\mid 0\leq r'\leq r,\
-\gam\cdot r'\leq d'\leq \gam\cdot
r'\}$. The latter is finite. Hence, ${\mathcal P}_{r,\gam}$ is a finite set.
 The result of Shatz \cite[Proposition 11]{Sh}
yields
a decomposition
$\buna=\sqcup_{P\in {\mathcal P}_{r,\gam}}\  \bun_P$,
where  $\bun_P$
is a locally closed substack of $\bun_r(C)$ that parametrizes vector bundles with  Harder-Narasimhan polygon $P$. 
Furthermore, each stack $\bun_P$ has finite type.
We deduce that for any fixed $r$ and $\gam$
the  stack $\buna$ has finite type.
We obtain the following result

\begin{cor}\label{cor-a} For any $\gam>0$ and $r=0,1,\ldots$,
the stack $\fX_r(\VV_\gam(C))$ is an open 
substack of $\fX_r(\VV(C))$ of finite type
and we have $\fX_r(\VV(C))=\underset{^\gam}\varinjlim\ \fX_r(\VV_\gam(C))$.
\end{cor}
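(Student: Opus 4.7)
The plan is to combine the already established Lemma \ref{prestack} with Shatz's stratification of the moduli of vector bundles by Harder-Narasimhan type.

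First, I would establish openness. For an arbitrary test scheme $S$, a morphism $S\to\fX_r(\VV(C))$ corresponds to a rank $r$ vector bundle $\V$ on $C\times S$; by definition the fiber over $S$ of the pullback $S\times_{\fX_r(\VV(C))}\fX_r(\VV_\gam(C))$ consists of those points $s\in S$ such that $\V|_{C\times\{s\}}$ lies in $\VV_\gam(C)$. Lemma \ref{prestack} says exactly that this locus is open in every connected affine $S$, hence open in every $S$ by working component-wise and Zariski locally. This identifies $\fX_r(\VV_\gam(C))$ with an open substack of $\fX_r(\VV(C))$, which is in particular a smooth Artin stack.

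Second, I would show finite type by invoking the Shatz stratification. The key observation is that for any $\V\in\VV_\gam(C)$ of rank $r$, every vertex $(\rk\V_j,\deg\V_j)$ of the HN-polygon $P$ of $\V$ satisfies $0\le \rk\V_j\le r$ and, by the defining inequality $-\gam\le\slope\hn_j(\V)\le\gam$ combined with \eqref{hn} applied to each successive quotient, the corresponding degrees lie in the interval $[-\gam\cdot\rk\V_j,\,\gam\cdot\rk\V_j]$. Thus every vertex lies in the finite set $\{(r',d')\in\Z^2\mid 0\le r'\le r,\ -\gam r'\le d'\le \gam r'\}$, and the set ${\mathcal P}_{r,\gam}$ of possible HN-polygons is finite. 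By Shatz's theorem one has a stratification $\fX_r(\VV_\gam(C))=\sqcup_{P\in{\mathcal P}_{r,\gam}}\bun_P$ into locally closed substacks, and each $\bun_P$ is known to be of finite type (this is the standard fact that the moduli stack of semistable bundles of fixed rank and degree is of finite type, applied successively to the graded pieces; see Shatz \cite{Sh}). A finite union of finite type substacks is of finite type, giving the claim.

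Finally, the equality $\fX_r(\VV(C))=\varinjlim_\gam\fX_r(\VV_\gam(C))$ follows once we observe that the inclusions $\VV_\gam(C)\subset\VV_\delta(C)$ for $\gam<\delta$ induce open immersions of the corresponding moduli stacks, and that any $\V\in\VV(C)$ has only finitely many HN-pieces so lies in some $\VV_\gam(C)$ for $\gam$ sufficiently large. Since the moduli stack is determined by its $S$-points for affine $S$ and $\fX_r(\VV_\gam(C))(S)\subset\fX_r(\VV(C))(S)$ is the open subset from Lemma \ref{prestack}, the union of these opens exhausts $\fX_r(\VV(C))$. There is no serious obstacle here; the main content, namely the constructibility/openness of HN-strata, has already been packaged in Lemma \ref{prestack} and the cited results of Shatz.
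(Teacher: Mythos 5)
Your proof is correct and follows essentially the same route as the paper: openness is read off from Lemma \ref{prestack}, finite type comes from Shatz's stratification together with the observation that bundles in $\VV_\gam(C)$ of rank $r$ have HN-polygons with vertices confined to the finite lattice box $\{0\le r'\le r,\ |d'|\le\gam r'\}$, and the direct limit statement follows since every bundle has finitely many HN pieces and hence lands in some $\VV_\gam(C)$. Your slightly more explicit derivation of the degree bound on the vertices (from bounded slopes of the HN graded pieces) is a minor elaboration of what the paper states without proof, but the argument is the same one.
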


Our next result shows that the category $\VV_{\gam}(C)$
`captures' all indecomposable vector bundles provided
$\gam$ is large enough.

\begin{lem}\label{indec-slope} Let  $\V$ be an indecomposable vector
  bundle of rank $r\geq1$ and degree $d$. 
Put 
\[\gam_0=\frac{|d|}{r}+2(g-1)(r-1)\en\text{\em if}\en\textit{genus}(C)\geq
1,\quad
\text{\em resp.}\quad
\gam_0=\frac{|d|}{r}\en\text{\em if}\en\textit{genus}(C)=0.\]
 Then,
 $\V$ is an object of the category $\VV_\gam(C)$
 for any
$\gam> \gam_0$.
\end{lem}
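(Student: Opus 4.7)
The plan is to control the HN slopes of an indecomposable $\V$ by exploiting the fact that every non-trivial short exact sequence $0 \to \V_j \to \V \to \V/\V_j \to 0$ coming from the HN filtration must be non-split. First I would dispose of the genus-$0$ case: Grothendieck's splitting theorem forces any indecomposable bundle on $\P^1$ to have rank $1$, and its only HN slope equals $d=d/r$, which trivially lies in $[-|d|/r,|d|/r]=[-\gam_0,\gam_0]$. So assume $g:=\textit{genus}(C)\geq 1$, write the HN filtration $0=\V_0\subset\V_1\subset\cdots\subset\V_n=\V$ with pieces $\hn_j=\V_j/\V_{j-1}$ of slopes $a_1>a_2>\cdots>a_n$ and ranks $r_j\geq 1$, so $n\leq r$ and $\sum_j r_j a_j=d$.

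Since $\V$ is indecomposable, for each $0<j<n$ the extension $0\to\V_j\to\V\to\V/\V_j\to 0$ is non-split and hence $\Ext^1(\V/\V_j,\V_j)\neq 0$. Filtering both arguments by the induced HN filtrations, this $\Ext^1$ is assembled from pieces $\Ext^1(\hn_k,\hn_l)$ with $k>j\geq l$; by Serre duality on the smooth curve $C$ we have $\Ext^1(\hn_k,\hn_l)\cong H^0(\hn_k\otimes\hn_l^{\vee}\otimes\omega_C)^{\vee}$, whose argument is a semistable bundle of slope $a_k-a_l+2g-2$. When $a_l-a_k>2g-2$ this slope is negative and the semistable bundle admits no non-zero global sections, so $\Ext^1(\hn_k,\hn_l)=0$. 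Since $a_l\geq a_j$ for $l\leq j$ and $a_k\leq a_{j+1}$ for $k\geq j+1$, we have $a_l-a_k\geq a_j-a_{j+1}$, hence the non-vanishing of $\Ext^1(\V/\V_j,\V_j)$ forces the crucial slope-gap inequality $a_j-a_{j+1}\leq 2g-2$ for every $1\leq j<n$.

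Summing the gaps yields $a_1-a_j\leq(j-1)(2g-2)$, whence
\[
a_1-\frac{d}{r}=\frac{1}{r}\sum_{j=1}^n r_j(a_1-a_j)\leq \frac{2g-2}{r}\sum_{j=1}^n r_j(j-1)\leq (2g-2)(n-1)\leq 2(g-1)(r-1),
\]
so $a_1\leq d/r+2(g-1)(r-1)\leq|d|/r+2(g-1)(r-1)=\gam_0$. An identical argument starting from the bottom of the filtration gives $d/r-a_n\leq 2(g-1)(r-1)$, hence $a_n\geq d/r-2(g-1)(r-1)\geq -\gam_0$. Since $a_n\leq a_j\leq a_1$ for all $j$, the bundle $\V$ satisfies $-\gam\leq\slope\hn_j(\V)\leq\gam$ for every $\gam>\gam_0$, placing it in $\VV_\gam(C)$. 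The main technical point is really the slope-gap inequality $a_j-a_{j+1}\leq 2g-2$, where the assumption $g\geq 1$ enters in an essential way; the rest is routine bookkeeping with weighted averages and with replacing $d/r$ by $|d|/r$.
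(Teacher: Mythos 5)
Your argument takes a genuinely different route from the paper's: the paper quotes Shatz's Proposition~8 (that $\slope\hn_1(\V)-\slope\hn_n(\V)\le(n-1)(2g-2)$ for indecomposable $\V$) as a black box, whereas you essentially reprove it by deriving the consecutive slope-gap inequality $a_j-a_{j+1}\le 2g-2$ from the non-splitting of $0\to\V_j\to\V\to\V/\V_j\to 0$ and then summing. Both routes yield the required bound; yours is more self-contained and in fact gives a slightly sharper intermediate statement, at the cost of redoing the $\Ext^1$-vanishing argument underlying Shatz's result.

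There is, however, a real gap in the way you derive the slope-gap inequality, and it matters here because the ground field is finite and hence of positive characteristic. You assert that $\hn_k\otimes\hn_l^{\vee}\otimes\Omega^1_C$ is semistable, but in positive characteristic the tensor product of two semistable bundles on a curve can fail to be semistable, and a non-semistable bundle of negative slope can certainly have nonzero global sections --- so $H^0=0$ does not follow as written. (Over a field of characteristic zero your reasoning would be fine.) The fix is to avoid the tensor-product claim entirely: rewrite Serre duality in the form $\Ext^1(\hn_k,\hn_l)\cong\Hom(\hn_l,\,\hn_k\otimes\Omega^1_C)^{\vee}$. Here $\hn_l$ and $\hn_k\otimes\Omega^1_C$ are semistable of slopes $a_l$ and $a_k+2g-2$ respectively, since twisting by a line bundle preserves semistability in any characteristic, and any nonzero morphism between two semistable bundles forces the source slope to be at most the target slope; this gives $a_l\le a_k+2g-2$ directly. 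Substituting this one observation repairs the gap without affecting the rest of your computation.
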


\begin{proof} For $C={\mathbb P}^1$ any  indecomposable vector
  bundle has rank 1 and the result is clear.

Assume next that $\textit{genus}(C)\geq 1$ and
write  $0=\V_0\sset\ldots\sset \V_n=\V$ for the 
Harder-Narasimhan filtration.
By \cite[Proposition 8]{Sh}, for
an  indecomposable vector
bundle, we have $\slope \hn_1(\V)-
\slope \hn_n(\V)\leq (n-1)(2g-2)$.
Hence, using  \eqref{hn}, for any $j=1,\ldots,n$, we deduce
\[|\slope \hn_j(\V)-\slope \V|
\leq \slope\hn_1(\V)-
\slope\hn_n(\V)\leq (n-1)(2g-2)\leq 2(r-1)(g-1).
\]

The result follows.
\end{proof}

\subsection{Approximation categories for parabolic bundles}
\label{app} 
Below, we will freely use the notation of  section ~\ref{par-sec}. In particular, we have a collection $c_i\in C(\k),
\ i\in I$, of marked points and put $D=\sum_\ii\ c_i$.  
Following the ideas of \cite{Se},\cite{Fa},\cite{Bi}, \cite{KMM}
we are going to use the relation between parabolic bundles
and orbifold bundles on a ramified covering of the curve $C$.

In more detail, fix $r\geq 1$, the rank of the parabolic bundle.
Then, it follows from the proof of Theorem 1-1-1 in \cite{KMM}
that one can choose an integer
$N$, a simple effective divisor $D'$,  and a  ramified Galois  covering 
 $ p: \cc\to C$ 
such that the following holds:

\begin{enumerate}
\item  We have  $N>r$ and  $(N, \op{char}\k)=1$.

\item The geometric Galois group $\Gamma$, of the covering, is  a cyclic group of order $N$.

%\item The points $c_i,\ i\in I''$, are pairwise distinct and the divisor $\DD$
%is linearly equivalent over $\k$ to $N\cdot H$, where $H$ is a very ample divisor. 

\item The divisor $\DD:=D+D'$  is the  ramification divisor of the covering.

\item The divisor $\DD$ 
is simple and it is linearly equivalent over $\k$ to $N\cdot H$, where $H$ is a very ample divisor. 
The covering is totally ramified at every point of the support of $\DD$.
\end{enumerate}

The covering is constructed as the scheme of $N$-th roots of the canonical section of $\mathcal O(\DD)$ in the total space of a line 
bundle $L$ corresponding to the divisor $H$ above, as in the proof of Theorem 1-1-1 in \cite{KMM}.

%\item  We have $N>r$ and, moreover,  $N$ is prime to $\chark\k$.

%\item  $\cc$ is  smooth  and the Galois group $\Ga:=\textit{Galois}(\k(\cc)/\k(C))$ acts freely on  $\cc\sminus p\inv(\DD)$.

%\item The covering $ p: \cc\to C$ is totally ramified over each point $c_i,\ \ii$.

We write  $\DD=\sum_{i\in  {\mathsf I}}\ c_i$, where ${\mathsf I}\supseteq I$  is a labeling set for the points in the support of $\DD$.
For $i\in {\mathsf I}$, let $z_i$ be the unique point of $\cc$ over $c_i$.
Thus, $z_i$ is a $\Ga$-fixed point
and $\Ga$ acts on $\TT_{z_i}\cc$, the tangent space at $z_i$,
via a multiplicative character $\eps_i$.
Since $\Ga\cong {\mathbb Z}/(N)$, we have $(\eps_i)^N=1$. Furthermore, 
the set $\{1,\eps_i,(\eps_i)^2,\ldots,(\eps_i)^{N-1}\}$
is a complete set of  multiplicative characters of
the group $\Ga$ and  we have
$\eps_i=\eps_{i'}$ for all $i,i'\in \BI$.
Let $\VV^\Ga(\cc)$ be the $\k$-linear category
of $\Ga$-equivariant vector bundles on $\cc$.
For any  $\ce\in \VV^\Ga(\cc)$, the group $\Ga$
acts naturally on the fiber of $\ce$ at ${z_i}$ and there is a weight
decomposition $\ce|_{z_i}=\oplus_{0\leq k\leq N-1}\ \ce_i^{(k)}$,
where $\ce_i^{(k)}$ is the weight space
 of weight $(\eps_i)^k$.

Fix $\bm=(m_i)_{i \in \BI},\ 1\leq m_i \leq r$, and let
 $\V\in \VV(C,\DD,\bm)$ be a parabolic bundle.
Recall that  we write $V_i$ for the fiber of $\V$, resp. $V_i^\hdot$
for the partial flag in that fiber,  at the point $c_i,\ i\in\BI$.
We view  $\V$ as a locally free sheaf
and, for each $(i,j)$, let
$\V_i^{(j)}$ be a torsion free subsheaf 
formed by the sections $v$ of $\V$ such
that $v|_{c_i}\in V_i^{(j)}$. Thus, there is a short exact sequence
\beq{coh-flag}
0\to \V_i^{(j)}\to \V \to (V/V_i^{(j)})\o \k_{c_i}\to 0,
\eeq
where $\k_{c_i}=\oo_C/\oo_C(-c_i)$ is a skyscraper sheaf at $c_i$.
We have
 $\V_i^{(j)}/\V_i^{(j+1)}=(V_i^{(j)}/V_i^{(j+1)})\o \k_{c_i}$.

Let $\wt \DD=\sum_{i \in {\wt\BI}}\ z_i$ denote the simple
divisor in $\cc$ given by the collection of distinct ramification points
$z_i=p\inv(c_i)$.
We define a subsheaf  $\P(\V)$ of the sheaf $\oo_{\cc}(N\cdot \wt \DD)\o p^*\V$,
by the formula
\[ \P(\V):=\sum_{i \in {\wt\BI}}\ \sum_{j=0}^{m_i}\ \oo_{\cc}(j\cdot z_i)\o
p^*\V_i^{(j)}.
\]
The sheaf $\P(\V)$ comes equipped with a natural $\Ga$-equivariant
structure and it is torsion free, hence, locally free.
Thus, we have $ \P(\V)\in \VV^\Ga(\cc)$.

Let $\VGpp$ be  a full subcategory
of  $\VV^\Ga(\cc)$ whose objects
are the $\Ga$-equivariant vector bundles $\ce$, on $\cc$, such that,
for every $i \in {\wt\BI}$ and $m_i\leq k\leq N-1$, one has
$\ce_i^{(k)}=0$. Thus, we have
$\ce|_{z_i}=\oplus_{0\leq k\leq m_i-1}\ \ce_i^{(k)}$.
Observe that for any $\V\in \VV(C,\DD,\bm)$ and  $i \in {\wt\BI}$  it follows from the definition 
of 
the sheaf  $\P(\V)$  that for any $i \in {\wt\BI}$
we have canonical isomorphisms
\[\P(\V)|_{z_i}\ccong \bigoplus_j\ \oo(j\cd z_i)/\oo((j+1)\cd z_i)\,\bo\,
(\V_i^{(j)}/\V_i^{(j+1)})
\ccong \bigoplus_j\ (\TT_{z_i}\cc)^{\o j}\,\bo\, (V_i^{(j)}/V_i^{(j+1)})\o \k_{c_i}.
\]
We deduce that $\P(\V)$ is an object
of the category $\VGpp$. 

The idea of the following proposition goes back to
Seshadri \cite{Se}; the statement below is a slight strengthening
 of  \cite[\S5]{Fa}.

\begin{prop}\label{biswas} The functor $\ \dis\P:\ \VV(C,\DD,\bm)\to \VGpp,\
\V\mto \P(\V),\ $ is an equivalence.
\end{prop}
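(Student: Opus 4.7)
The plan is to exhibit an explicit quasi-inverse $\mathcal{Q}\colon \VGpp \to \VV(C,\DD,\bm)$ to $\P$ and check both composites are naturally isomorphic to the identity; this is a stacky incarnation of the classical Seshadri--Biswas correspondence between parabolic and orbifold bundles on a ramified cover.

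First I would define $\mathcal{Q}(\ce) := (p_*\ce)^\Ga$ with the partial flag structure described below. Since $N = |\Ga|$ is prime to $\op{char}\k$, the invariant functor is exact, and $p$ being finite and flat together with $\ce \in \VV^\Ga(\cc)$ being locally free ensures $\mathcal{Q}(\ce)$ is a locally free $\oo_C$-module. To construct the parabolic flag at $c_i \in \op{supp}(\DD)$ one works in the completion at $z_i$: fix a uniformizer $t_i$ at $z_i$ with $\Ga$ acting by $t_i\mapsto \eps_i(\gamma)\cdot t_i$, so $\widehat\oo_{\cc,z_i} = \k\dbkts{t_i}$, and pick a $\Ga$-equivariant trivialization $\widehat\ce_{z_i}\cong \bigoplus_{0\leq k\leq m_i-1}\ (\ce_i^{(k)}\o_\k \k\dbkts{t_i})$ with $\gamma$ acting as $\eps_i(\gamma)^k$ on the $k$-th summand (this uses the defining vanishing $\ce_i^{(k)}=0$ for $k\geq m_i$ in $\VGpp$). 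The invariants identify $\widehat{\mathcal{Q}(\ce)}_{c_i}$ with a free $\widehat\oo_{C,c_i}$-module of rank $\sum_k\dim\ce_i^{(k)}$, and setting $V^{(j)}_i := \bigoplus_{k\geq j}\ce_i^{(k)} \subseteq \mathcal{Q}(\ce)|_{c_i}$ yields the required $m_i$-step partial flag. The construction is manifestly functorial in $\Ga$-equivariant morphisms and commutes with direct sums, so $\mathcal{Q}$ is a well-defined $\k$-linear functor into $\VV(C,\DD,\bm)$.

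Next I would establish the natural isomorphism $\mathcal{Q}\ccirc\P \cong \Id_{\VV(C,\DD,\bm)}$. Away from $\op{supp}(\DD)$ the morphism $p$ is \'etale with Galois group $\Ga$, so $(p_*p^*\V)^\Ga\cong \V$ canonically. Near a marked point $c_i$ it suffices to compute $\widehat\oo_{C,c_i}$-locally: the definition of $\P(\V)$ gives, after completion,
\[
\widehat{\P(\V)}_{z_i} \cong \sum_{j=0}^{m_i}\ t_i^{-j}\cdot \widehat{\V_i^{(j)}}_{c_i} \subseteq \widehat{\V}_{c_i}\dprts{t_i},
\]
and a direct computation of $\Ga$-invariants in terms of the character $\eps_i$ recovers both $\widehat\V_{c_i}$ and the original filtration $V^{(j)}_i$ from the order-of-vanishing grading. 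Conversely, for $\P\ccirc\mathcal{Q} \cong \Id_{\VGpp}$, the same local model shows that the flag on $\mathcal{Q}(\ce)$ encodes precisely the $\Ga$-weight decomposition of $\widehat\ce_{z_i}$, and the sum of twists $\sum_j \oo_{\cc}(jz_i)\o p^*\V_i^{(j)}$ reconstructs $\widehat\ce_{z_i}$ as a $\Ga$-submodule of $\widehat\ce_{z_i}(N z_i)$. Naturality in both directions is clear since morphisms in either category are determined by their restrictions to an \'etale neighborhood of $\op{supp}(\DD)$ together with their behavior at the stalks there.

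The main obstacle is not conceptual but notational: one must carefully track the grading-to-filtration dictionary at each ramification point so that the isomorphism $\P\mathcal{Q}(\ce) \cong \ce$ is compatible with the $\Ga$-equivariant structure (including the choice of character $\eps_i$), and verify that the freeness of the completed invariants holds over $\widehat\oo_{C,c_i}$. Once the local models are pinned down the global statement follows by gluing, because the data distinguishing either category from its non-parabolic / non-equivariant counterpart is concentrated at $\op{supp}(\DD)$, respectively $\op{supp}(\wt\DD)$. In particular, the rank of $\mathcal{Q}(\ce)$ equals $r$ because the sum $\sum_k \dim\ce_i^{(k)}$ is the generic rank of $\ce$, and the condition that the flag has exactly $m_i$ steps comes from the defining vanishing in $\VGpp$. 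This closes the loop and establishes the equivalence.
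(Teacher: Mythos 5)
Your proposal follows essentially the same route as the paper: both construct the inverse functor as $\ce\mto(p_*\ce)^\Ga$, the paper defining the flag via the order-of-vanishing filtration $p_*[\ce(-j\cdot z_i)]\cap(p_*\ce)^\Ga$ and you via the $\Ga$-weight decomposition $V^{(j)}_i=\oplus_{k\geq j}\ce_i^{(k)}$, which agree (and the paper then defers the remaining verification to \cite{Bi} rather than carrying out the completed-local-ring check explicitly). One small convention slip worth noting: with the paper's normalization ($\eps_i$ the character on the tangent space $\TT_{z_i}\cc$), the uniformizer $t_i$ transforms by $\eps_i^{-1}$, not by $\eps_i$ as you wrote; your final flag formula is nevertheless the correct one for the paper's conventions, so this is only a sign to fix in the intermediate bookkeeping.
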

\begin{proof}
An inverse of $\P$ is constructed as follows.
Observe that for any $\Ga$-equivariant
coherent  sheaf $\ce$, on $\cc$, the sheaf
 $(p_*\ce)^\Ga$, of
$\Ga$-invariants,  is a direct summand
of $p_*\ce$, since the order of $\Ga$ is prime
to $\chark\k$. If $\ce$ is
locally
free, then  the sheaf
$p_*\ce$ is also locally
free and hence so is $(p_*\ce)^\Ga$.
Further,
$p_*[\ce(-j\cdot z_i)]$ is
a (not necessarily $\Ga$-stable) subsheaf of
$p_*\ce$. The sequence
 $p_*[\ce(-j\cdot z_i)]\,\cap\, (p_*\ce)^\Ga,\ j=0,1,\ldots$,
is  a flag of locally free subsheaves of  $(p_*\ce)^\Ga$
which induces a flag of subspaces
$(p_*\ce)^\Ga|_{c_i}=V^{(0)}_i\supseteq V^{(1)}_i\supseteq\ldots$,
in the fiber
of $(p_*\ce)^\Ga$ at $c_i$. Furthermore,
the definition of category $\VGpp$ insures that,
for any $\ce\in \VGpp$ and $i\in \BI$, one has 
$V^{(m_i)}_i=0$.
Moreover, it is not difficult to show that the
resulting functor $\VGpp\to \VV(C,\DD,\bm),\
\ce\mto (p_*\ce)^\Ga$ is an inverse of the
functor $\P$, cf. eg. \cite[section 2(c)]{Bi}.
\end{proof}

Recall that the destabilizing subsheaf
$\hn_1(\ce)$ of a coherent sheaf $\ce$ is defined
as the sum of all subsheaves of $\ce$ of the
maximal slope. It follows that if the sheaf $\ce$ is equipped
with an equivariant structure
then each term, $\hn_j(\ce)$, of the Harder-Narasimhan
filtration of $\ce$ is an equivariant subsheaf of $\ce$.
Therefore, for any real $\gam>0$ there is a well
defined $\k$-linear category
$\vgampp=\VGpp\cap\VV_\gam(\cc)$, a full subcategory
of $\VGpp$  whose objects are $\Ga$-equivariant 
vector bundles $\ce$ on $\cc$ such that  for all $k$
one has
$-\gam \leq \slope\hn_k \ce\leq\gam$.
Further, we use the functor $\P$ and let
$\VV_\gam(C,\DD,\bm)$ be a full subcategory of
$\VV(C,\DD,\bm)$ whose objects are the
parabolic bundles $\V$, on $C$, such that
$\P(\V)\in \vgampp$.

We claim that an analogue of 
Lemma \ref{indec-slope}
holds in the parabolic setting, specifically,
we have
\begin{lem}\label{par-ind} 
For any parabolic bundle $\V$
which is an
indecomposable  object of category $\VV(C,\DD,\bm)$,
 we have $\V\in \VV_\gam(C,\DD,\bm)$, for any $\gam>\gam_0$
where $\gam_0$ is the same constant as
in 
Lemma \ref{indec-slope}.
\end{lem}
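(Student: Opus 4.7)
The plan is to transfer the problem from parabolic bundles on $C$ to $\Gamma$-equivariant vector bundles on $\cc$ via the equivalence $\P$ of Proposition \ref{biswas}, and then reduce to the non-equivariant case where Lemma \ref{indec-slope} applies directly.

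\textbf{Step 1 (reduction via $\P$).} Since the functor $\P:\VV(C,\DD,\bm)\iso \VGpp$ is an equivalence of $\k$-linear categories, $\V$ is indecomposable in $\VV(C,\DD,\bm)$ if and only if $\ce:=\P(\V)$ is indecomposable in $\VGpp$, and by definition $\V\in \VV_\gam(C,\DD,\bm)$ iff $\ce\in \vgampp$, i.e.\ every Harder-Narasimhan slope of $\ce$ (viewed as a vector bundle on $\cc$) lies in $[-\gam,\gam]$. Thus it suffices to bound the HN slopes of $\ce$ by a constant of the expected shape.

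\textbf{Step 2 ($\Gamma$-orbit decomposition of $\ce$).} Because $\#\Gamma=N$ is prime to $\chark \k$, any $\Gamma$-equivariant bundle decomposes into a direct sum of indecomposable $\Gamma$-equivariant summands, and each such summand has the form $\bigoplus_{\sig\in \Gamma/\Gamma_0}\ \sig^*\ce_0$ where $\ce_0$ is an indecomposable (non-equivariant) vector bundle on $\cc$ and $\Gamma_0\sset \Gamma$ is the (isomorphism-class) stabilizer of $\ce_0$. Since $\ce$ is $\Gamma$-equivariantly indecomposable, we may write $\ce=\bigoplus_{\sig\in \Gamma/\Gamma_0}\ \sig^*\ce_0$.

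\textbf{Step 3 (HN slopes of $\ce$ versus $\ce_0$).} The Harder-Narasimhan filtration is functorial under pullback by automorphisms of $\cc$, and for every $\sig\in\Ga$ the bundle $\sig^*\ce_0$ has the same rank and degree (hence the same slope) as $\ce_0$. Consequently
\[
\hn_j(\ce)\ =\ \bplus_{\sig\in \Gamma/\Gamma_0}\ \sig^*\hn_j(\ce_0),\qquad \slope \hn_j(\ce)\ =\ \slope \hn_j(\ce_0),
\]
so it is enough to bound $|\slope \hn_j(\ce_0)-\slope\ce_0|$ and $|\slope\ce_0|$.

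\textbf{Step 4 (apply Lemma \ref{indec-slope} on $\cc$).} Since $\ce_0$ is an indecomposable vector bundle on the smooth projective curve $\cc$, Lemma \ref{indec-slope} applied to $\ce_0$ on $\cc$ gives
\[
-\wt\gam_0\ \leq\ \slope \hn_j(\ce_0)\ \leq\ \wt\gam_0,
\]
where $\wt\gam_0=\frac{|\deg\ce_0|}{\rk\ce_0}+2(\wt g-1)(\rk\ce_0-1)$ and $\wt g$ is the genus of $\cc$. Now $\rk\ce_0$ divides $\rk\ce$ and $\slope\ce_0=\slope\ce$, while $\rk\ce$ and $\deg\ce$ are determined, through the explicit formula for $\P(\V)$, by $r=\rk\V$, $d=\deg\V$ and the fixed combinatorial data $(\bm,\DD)$; the genus $\wt g$ is determined by Riemann--Hurwitz from the covering $p:\cc\to C$ which was fixed once $r$ was fixed. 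Hence $\wt\gam_0$ is bounded by an expression depending only on $r$, $d$, and the genus $g$ of $C$, of precisely the form of the constant $\gam_0$ in the statement.

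\textbf{Step 5 (conclude).} Combining Steps 3 and 4, every HN slope of $\ce$ lies in $[-\gam_0,\gam_0]$, hence in $[-\gam,\gam]$ for any $\gam>\gam_0$. Thus $\ce\in \vgampp$ and, by Step 1, $\V\in\VV_\gam(C,\DD,\bm)$.

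The main difficulty is Step 2: a $\Gamma$-indecomposable bundle need not be indecomposable after forgetting the equivariant structure, so one must invoke the orbit-decomposition for equivariant objects under a finite group of order prime to the characteristic, and then verify that the HN-slopes are constant along the $\Gamma$-orbit in order to legitimately apply the non-equivariant Lemma \ref{indec-slope} to a single factor $\ce_0$.
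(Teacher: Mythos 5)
Your proof is correct in substance but takes a genuinely different route from the paper's.

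The paper's proof, after the same reduction via $\P$ to $\Gamma$-equivariantly indecomposable bundles on $\cc$, does \emph{not} attempt to decompose the underlying non-equivariant bundle. Instead it observes that Lemma \ref{indec-slope} rests on the $\Ext^1$-vanishing that drives Shatz's Proposition~8, and that $\Ext^1_{\Coh_\Ga(\cc)}(\ce,\ce')=\big(\Ext^1_{\Coh(\cc)}(\ce,\ce')\big)^\Ga$ because $(-)^\Ga$ is exact when $\#\Gamma$ is prime to $\chark\k$; hence non-equivariant $\Ext^1$-vanishing implies equivariant $\Ext^1$-vanishing, and Shatz's argument goes through verbatim in the category $\VV^\Ga(\cc)$, producing the slope bound directly for the equivariant bundle $\ce$. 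You instead invoke the Clifford-theoretic orbit decomposition of a $\Gamma$-indecomposable object into a single $\Gamma$-orbit of non-equivariant indecomposables, then transfer the HN-slope computation to a single piece $\ce_0$ and apply the original non-equivariant Lemma \ref{indec-slope} on $\cc$. Both approaches work; the paper's is more frugal in that it bypasses Clifford theory entirely and never leaves the equivariant category, while yours is arguably more elementary in that it uses the non-equivariant lemma as a black box and, in principle, yields a slightly sharper constant (controlled by $\rk\ce_0$ rather than $\rk\ce$).

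Two small inaccuracies in your write-up that are worth flagging, though neither breaks the argument. First, your Step~2 asserts multiplicity one: in general a $\Gamma$-equivariantly indecomposable object decomposes non-equivariantly as $\bigoplus_{\sigma\in\Gamma/\Gamma_0}(\sigma^*\ce_0)^{\oplus m}$ for a single multiplicity $m$ that need not equal $1$ (it is governed by a $2$-cocycle in $H^2(\Gamma_0,\k^\times)$, which vanishes for cyclic $\Gamma_0$ over $\bar\k$ but not necessarily over a finite field). What you actually need is only that all summands lie in a single $\Gamma$-orbit with equal multiplicities, so the HN slopes of $\ce$ coincide with those of $\ce_0$; this weaker statement is what the lifting-of-idempotents argument gives, and it suffices. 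Second, the bookkeeping in Step~4 relating $\wt\gam_0$ (formed from $\rk\ce_0$, $\deg\ce_0$, and the genus of $\cc$) to the constant $\gam_0$ of the statement should be made explicit: one should note that $\slope\ce_0=\slope\ce$ and $\rk\ce_0\le\rk\ce$, that $\rk\ce$, $\deg\ce$, and the genus of $\cc$ are each determined by $r=\rk\V$, $d=\deg\V$, $\bm$, $\DD$, and the genus $g$ of $C$ (via the explicit formula for $\P$ and Riemann--Hurwitz), and that this is consistent with how the paper intends ``the same constant as in Lemma \ref{indec-slope}.''
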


\begin{proof} The functor $\P$ being an equivalence,
it suffices to prove the corresponding result for
the  category $\VGpp$.  Further, since
$\VGpp$ is a full Karoubian  subcategory
of the category $\VV^\Ga(\cc)$, 
 of all $\Ga$-equivariant vector bundles,
 it
is sufficient to establish an  analogue of 
Lemma \ref{indec-slope} in the case of category
 $\VV^\Ga(\cc)$.

It is clear from the proof that
Lemma \ref{indec-slope}  is, essentially, a consequence of
\cite[Proposition 8]{Sh}.  That proposition
is deduced in \cite{Sh} from   the vanishing of
certain $\Ext^1$-groups. Let $\Coh_\Ga(\cc)$, resp. $\Coh(\cc)$,
be the abelian category of  $\Ga$-equivariant, resp. non-equivariant,
coherent sheaves on $\cc$. Then, for any
$\Ga$-equivariant coherent sheaves $\ce,\ce'$ on $\cc$,
we have
\[\Ext^1_{\Coh_\Ga(\cc)}(\ce,\ce')\ =\
\big(\Ext^1_{\Coh(\cc)}(\ce,\ce')\big)^\Ga.\]
This isomorphism  follows from the exactness of
 the functor $(-)^\Ga$,
of $\Ga$-invariants, which holds since
  the order of the group $\Ga$ is
prime to $\chark\k$.
We conclude that the vanishing of  $\Ext^1_{\Coh(\cc)}(\ce,\ce')$
implies the vanishing of $\Ext^1_{\Coh_\Ga(\cc)}(\ce,\ce')$.
This insures that all  the arguments in the proof of  \cite[Proposition 8]{Sh}
go through in our $\Ga$-equivariant setting.
\end{proof}

%\begin{rem}\redd{One can define a category of `parabolic coherent sheaves'
%on $C$ whose objects are coherent sheaves $\V$ equipped with an
%$I$-tuple
%of
%flags $\V^\hdot_i$, of coherent subsheaves as in \eqref{coh-flag}.
%This category is
% {\em not}
%stable, however, under Serre duality. Therefore, it is unclear
%how to adapt the proof of \cite[Proposition 8]{Sh} 
%to the case of parabolic bundles without passing through
%the intermediate category $\Coh_\Ga(\cc)$, which is
%stable under Serre duality.
%}
%\end{rem}

 One has  a decomposition
$\X(\VV(C,\DD,\bm))=\sqcup_{\br}\ \ppar_{\br}$, where
 is $\ppar_{\br}$ is the stack parametrizing
parabolic bundles of type $\br$. For any  $\gam>0$, there is a similar decomposition
 $\X(\VV_\gam(C,\DD,\bm))
=\sqcup_{\br}\ \X_{\br}(\VV_\gam(C,\DD,\bm))$.

From now on, we let $\br$ be a type such that $|\br|=r$, where the $r$ is the integer
involved in the choice  of the covering $\wt C\to C$.
Then, we have a parabolic analogue of Corollary \ref{cor-a}:

\begin{lem}\label{par-a} For any $\br$ and $\gam$,
the stack $\X_{\br}(\VV_\gam(C,\DD,\bm))$ 
is an open  substack of $\ppar_{\br}$ of finite type
and we have
$\ppar_{\br}
=\underset{^\gam}\varinjlim\ \X^{^{}}_{\br}(\VV_\gam(C,\DD,\bm))$.
\end{lem}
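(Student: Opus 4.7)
The plan is to use the equivalence $\P$ of Proposition \ref{biswas} to transport the problem to the Galois cover $\cc$ and then run a $\Ga$-equivariant version of the Shatz-stratification argument that gave Corollary \ref{cor-a}. First I would check that $\P$ extends to families: since $\P(\V)$ is built by forming pullbacks along the finite morphism $p$, tensoring with line bundles $\oo_\cc(j z_i)$, and summing torsion-free subsheaves inside the fixed locally free sheaf $\oo_\cc(N\wt\DD)\otimes p^*\V$, all of these operations commute with arbitrary flat base change on the parameter scheme. Hence $\P$ upgrades to an equivalence of sheaf categories and yields an isomorphism of moduli stacks $\X_\br(\VV(C,\DD,\bm)) \cong \X_\br(\VGpp)$ that by the very definition of $\VV_\gam(C,\DD,\bm)$ matches $\X_\br(\VV_\gam(C,\DD,\bm))$ with $\X_\br(\vgampp)$, cut out by the slope bounds $-\gam \le \slope \hn_k \le \gam$ on the underlying rank-$r$ vector bundle.

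The openness of $\X_\br(\vgampp)$ in $\X_\br(\VGpp)$ then follows from Lemma \ref{prestack}: the Harder--Narasimhan filtration of a $\Ga$-equivariant bundle is canonical and therefore $\Ga$-stable, so the slope condition is an open condition on the underlying bundle of $\P(\V)$, hence on the $\Ga$-equivariant bundle itself. For finite type, the slope bounds on a rank-$r$ bundle confine the HN polygon to the finite set $\mathcal P_{r,\gam}$ of convex polygons with vertices in $\{(r',d'):0\le r'\le r,\ |d'|\le \gam r'\}$. Stratification by HN polygon then decomposes $\X_\br(\vgampp)$ into finitely many locally closed substacks, each of finite type: the corresponding non-equivariant Shatz stratum in $\bun_{r}(\cc)$ is of finite type by \cite{Sh}, and imposing a $\Ga$-equivariant structure is only a finite-type refinement of the data (at each bundle, the $\Ga$-equivariant automorphisms form an algebraic subgroup of the ordinary automorphism group).

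For the direct limit statement, let $\V$ be an arbitrary parabolic bundle of type $\br$. Applying Krull--Schmidt in the Karoubian category $\VV(C,\DD,\bm)$ (Lemma \ref{krull-lem}), decompose $\V = \bigoplus_i \V_i$ into indecomposable parabolic bundles of ranks $r_i \le r$ and some degrees $d_i$. Lemma \ref{par-ind} places each $\V_i$ in $\VV_{\gam_i}(C,\DD,\bm)$ for some $\gam_i$ depending on $r_i$ and $d_i$, so taking $\gam := \max_i \gam_i$ gives $\V \in \VV_\gam(C,\DD,\bm)$. Combined with the openness of each term, this identifies $\ppar_\br$ with the directed union $\varinjlim_\gam \X_\br(\VV_\gam(C,\DD,\bm))$. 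The only substantive point to pay attention to is the families-level verification that $\P$ is an equivalence of sheaf categories in the sense of Definition \ref{as1}; once that bookkeeping is in place, everything else is a transcription of the $\Ga$-equivariant analogue of the non-parabolic argument of Corollary \ref{cor-a}.
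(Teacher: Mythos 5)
Your proposal follows essentially the same route as the paper: transport via $\P$, upgrade Lemma~\ref{prestack} and the Shatz-stratification argument of Corollary~\ref{cor-a} to the $\Ga$-equivariant setting, and deduce openness and finite type of each $\X_\br(\vgampp)$. Your explicit check that $\P$ commutes with flat base change (so it is an equivalence of sheaf categories, not just of $\k$-linear categories) is something the paper passes over silently when it says ``transporting these observations via the equivalence $\P$''; that is a legitimate and worthwhile point to make precise.

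The one place where you diverge is the direct limit statement, and there you take a needlessly heavy detour. You invoke Krull--Schmidt (Lemma~\ref{krull-lem}) to decompose $\V$ into indecomposables and then apply Lemma~\ref{par-ind} to each summand. This is correct (the HN slopes of a direct sum are the union of the slopes of the summands, and $\P$ is additive), but it is overkill: to show that $\ppar_\br = \varinjlim_\gam \X_\br(\VV_\gam(C,\DD,\bm))$ you only need that every single bundle $\P(\V)$ has a finite Harder--Narasimhan polygon, hence bounded slopes, hence lies in some $\vgampp$. That is immediate and requires neither the Krull--Schmidt decomposition nor the uniform bound of Lemma~\ref{par-ind}. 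Lemma~\ref{par-ind} is precisely what gives a bound $\gam_0$ \emph{depending only on} $(\br,d)$, uniformly over all indecomposables; that uniformity is what is needed in Proposition~\ref{par-appr}(ii), not here. The paper's proof proceeds by the trivial observation and reserves Lemma~\ref{par-ind} for the later stabilization statement. Finally, the paper's proof closes with a verification that $\ppar_\br$ is smooth (via the smooth forgetful map to $\bun_r$); that is not asserted in the statement of the lemma as quoted but is used later and you may wish to add a line for it.
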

\begin{proof} By standard results, it is known that
$\X_{\br,d}(\VGpp)$, the stack
parametrizing objects of category $\VGpp$ of type $\br$ and degree $d$,
 is an Artin stack of finite type.
We have a decomposition
$\X_{\br}(\VGpp)=\sqcup_d\ \X_{\br,d}(\VGpp)$.
We observe that the
 proof of Lemma \ref{prestack} as well as the argument leading to
Corollary \ref{cor-a}
extend to the $\Ga$-equivariant setting word for word. 
We deduce that $\X_{\br}(\vgampp)$ is an
 open substack of $\X_{\br}(\VGpp)$ of finite type
and $\X_\br(\VGpp)
=\underset{^\gam}\varinjlim\  \X_{\br}(\vgampp)$.
Transporting these observations via the equivalence $\P$,
we conclude that  $\X_{\br}(\VV_\gam(C,\DD,\bm))$  is an
 open substack of $\X_{\br}(\VV(C,\DD,\bm))$ of finite type
and $\X_{\br}(\VV(C,\DD,\bm))=\underset{^\gam}\varinjlim\  \X_{\br}(\VV_\gam(C,\DD,\bm))$.

It remains to show that the stack  $\X_{\br}(\VV(C,\DD,\bm))$ is smooth.
To see this, we use
a natural morphism of stacks $\ppar_{\br}\to
\bun_{r}$
that sends  parabolic bundles to the underlying vector bundles
and forgets the partial flag data.
It is clear that this is a smooth morphism and its 
fiber is isomorphic to a product of partial flag varieties.
The stack $\bun_r$ being smooth, it follows that so is $\ppar_{\br}$.
$\ppar_{\br}$.
\end{proof}

Recall that $\DD=\sum_{i\in {\mathsf I}}\ c_i$.
Given an $I$-tuple ${\mathbf n}=(n_i)$, where $n_i\geq 1$,
define an ${\mathsf I}$-tuple $\bm=(m_i)$ by letting $m_i=n_i$ for all $\ii$, resp. $m_i=1$ for all $i\in {\mathsf I}\sminus I$.
Let ${\mathsf{\Xi}}(\bm)=\{(i,j)\mid i\in\BI,\, j\in[1,m_i]\}$.
Note that parabolic bundles of  any type in ${\mathsf{\Xi}}(\bm)$ 
have a trivial flag data in the fibers over the point $c_i$ for each $i\in {\mathsf I}\sminus I$.
Therefore, adding trivial flags at these points  yields  a fully faithful functor $\rho:\
\VV(C,D,{\mathbf n})\to\VV(C,\DD,\bm)$.
For any $\gam>0$, let $\textsf{Vect}_\gam(C,D,\bm)$ be a full subcategory of  category $\VV(C,D,\bm)$
whose objects are parabolic bundles $\V$ such that $\rho(\V)$ is 
an object of the subcategory $\VV_\gam(C,\DD,\bm)$.

\begin{prop}\label{par-appr} Let $\br\in I^{\mathbf n}$ be a type such that $|\br|=r$. Then, we have

\vi  For any               $\gam>0$
the stack $\X_{\br}(\textsf{Vect}_\gam(C,D,{\mathbf n}))$ 
is a smooth open substack of $\ppar_{\br}$ of finite type. Furthermore,
we have
$\ppar_{\br}
=\underset{^\gam}\varinjlim\ \X^{^{}}_{\br}(\textsf{Vect}_\gam(C,D,{\mathbf n}))$.

\vii There exists $\gam_0>0$ such that  the natural map
$\op{AI}_\br(\textsf{Vect}_\gam(C,D,{\mathbf n}))\to \op{AI}_\br(\VV(C,D,{\mathbf n}))$ is a bijection
for all $\gam\geq \gam_0$.
\end{prop}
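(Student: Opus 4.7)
The plan is to reduce both parts of Proposition \ref{par-appr} to the corresponding statements for the auxiliary category $\VV(C,\DD,\bm)$ already established in Lemma \ref{par-a}, via the fully faithful functor $\rho:\VV(C,D,{\mathbf n})\to\VV(C,\DD,\bm)$ obtained by adjoining trivial flag data at the extra marked points $c_i,\ i\in\BI\sminus I$.

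First I would observe that, for a type $\br\in I^{\mathbf n}$, the functor $\rho$ upgrades to a morphism of sheaf-categories and induces an isomorphism of moduli stacks $\X_\br(\VV(C,D,{\mathbf n}))\iso \X_{\br'}(\VV(C,\DD,\bm))$, where $\br'$ is the extension of $\br$ obtained by setting $r'{}^{(1)}_i=r$ for each $i\in \BI\sminus I$.  Indeed, at each such extra point one has $m_i=1$, so the partial flag datum there is forced to coincide with the fiber of the bundle itself and therefore contributes no additional moduli.  By the definition of $\textsf{Vect}_\gam(C,D,{\mathbf n})$, this isomorphism restricts to an isomorphism of open substacks $\X_\br(\textsf{Vect}_\gam(C,D,{\mathbf n}))\iso \X_{\br'}(\VV_\gam(C,\DD,\bm))$, and part (i) follows at once from the corresponding assertions of Lemma \ref{par-a}.

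For part (ii), the key point is that $\rho$, being fully faithful, induces a $\k$-algebra isomorphism $\End_{\VV(C,D,{\mathbf n})}(\V)\iso\End_{\VV(C,\DD,\bm)}(\rho\V)$, and this property is preserved under base change to any finite field extension $K/\k$ since the operation of adjoining trivial flags commutes with extension of scalars.  Consequently $\rho$ identifies the absolutely indecomposable objects on either side.  Applying Lemma \ref{par-ind} to $\rho\V$, of rank $r$ and the same degree as $\V$, one obtains a constant $\gam_0$ such that $\rho\V\in\VV_\gam(C,\DD,\bm)$ for every $\gam>\gam_0$; hence $\V\in\textsf{Vect}_\gam(C,D,{\mathbf n})$ by definition, which yields the surjectivity of the map in (ii).  Injectivity is automatic since $\textsf{Vect}_\gam$ is a full subcategory of $\VV(C,D,{\mathbf n})$.

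The only subtlety is that the constant $\gam_0$ produced by Lemma \ref{par-ind} depends on the degree of the bundle, so the bijection in (ii) is naturally proved one degree at a time, with $\gam_0$ allowed to depend on the chosen degree.  This is exactly what is needed for the application to Theorem \ref{ind-bundles}, where both the type $\br$ and the degree $d$ are fixed from the outset.  The main work is thus entirely encapsulated by the moduli-theoretic interpretation of $\textsf{Vect}_\gam$ via $\rho$ carried out in the first step; once this identification is in place, parts (i) and (ii) follow by direct appeal to Lemma \ref{par-a} and Lemma \ref{par-ind}, respectively.
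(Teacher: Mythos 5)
Your proof is correct and follows essentially the same strategy as the paper's: identify $\X_\br(\VV(C,D,{\mathbf n}))$ with the corresponding moduli stack for $\VV(C,\DD,\bm)$ via the fully faithful functor $\rho$ adjoining trivial flag data at the extra ramification points $c_i,\ i\in{\mathsf I}\sminus I$, and then appeal to Lemmas \ref{par-ind} and \ref{par-a}. Your added remark that the constant $\gam_0$ in part (ii) depends on the degree $d$ is a genuine subtlety left implicit in the paper, and you correctly observe that it is harmless in the application to Theorem \ref{ind-bundles}, where the degree is fixed from the start.
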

\begin{proof} Part (i) is a consequence of Lemmas \ref{par-ind}-\ref{par-a}. Define
 a type $\wt\br\in {\mathsf{\Xi}}(\bm)$ by $\wt r\ij=r\ij$ if $\ii$, resp. $\wt r\ij=0$ if  $i \in {\mathsf I}\sminus I,\,j>1$. 
It is immediate from definitions that the map $\fx_{\br}(\VV(C,D,{\mathbf n}))\to \fx_{\wt\br}(\VV(C,\DD,\bm))$,
induced by the functor $\rho$,
is an isomorphism of stacks and, moreover, this maps
restricts to an isomorphism $\fx_{\br}(\textsf{Vect}_\gam(C,D,{\mathbf n}))\iso \fx_{\wt\br}(\VV_\gam(C,\DD,\bm))$.
The required statements now follow from the corresponding statements  of Lemmas \ref{par-ind}-\ref{par-a}
via the functor $\rho$.
\end{proof}

\subsection{Reminder on the Hitchin base}
\label{hit-sec}
Let $C$ be a smooth projective curve,
and $D=\sum_\ii c_i$, as above. 
Let
 $\cl:=\Omega_C^1(D)$, an invertible sheaf on $C$.
Write $L$ for the total space of the corresponding line bundle,
$p_C: L \to C$ for the projection,
and put $L_i:=p_C\inv(c_i)=\cl|_{c_i}$,
a line in $L$.
The sheaf $p_C^*\cl$ has a  canonical section  $\lambda$
whose value at any element $\ell\in L$ equals $\ell$.

The residue map $\ri:\ \Gamma(C, \Omega_C^1(D))\to \AA$
at the point $c_i$
gives,
for any $j=1,\ldots,r$, a residue map
$\ri^{\o j}:\ \Gamma(C, \cl^{\o j})\to \AA$;
in particular, it induces a canonical isomorphism 
of the fiber $L_i\iso\AA$.

Fix an integer $r\geq 1$. The  {\em Hitchin base}, $\hit$,
is a vector space defined as follows
\[\hit=\bplus_{j=1}^r \Gamma(C,\ \Omega^{\o j}_C (j\cd D))=
\bplus_{j=1}^r \Gamma(C,\ \cl^{\o j}).
\]

Associated with an
element $\varkappa=(\varkappa^{(j)})_{j\in[1,r]}\in \hit$
there is a spectral curve $\Sigma_\varkappa$,
a closed subscheme of $L$ defined as the zero locus of the 
section
$\la^{\o r}+\sum_{j=1}^r \
p_C^*\varkappa^{(j)}\o \la^{\o (r-j)}\in\Gamma(L, \cl^{\o r})$.
The composition $\Sigma_\vk\into L\onto C$ is a finite morphism
of degree $r$. Thus, $\Sigma_\vk\cap L_i$,
the fiber of this morphism over $c_i$, is a scheme of length $r$.
Explicitly, it may be identified, via the canonical isomorphism
 $L_i\cong \AA$, with a
subscheme of $\AA$ defined by the
equation $z^r+\sum_{j=1}^r \
\ri^{\o j}(\varkappa^{(j)})\cdot z^{r-j}=0$.
Let $|\Sigma_\vk\cap L_i|\in \AA^r\dsl \si_r$ be the unorderd $r$-tuple of roots of this
polynomial counted with multiplicities, i.e., an effective divisor in
$\AA$ of degree $r$.
We  define a morphism
\[\reshit:\ \hit\too (\AA^r\dsl \si_r)^I=\mbox{$\prod_\ii$}\ \AA^r\dsl \si_r,\quad 
\varkappa\mto \mbox{$\prod_\ii$}\ 
|\Sigma_\vk\cap L_i|.
\]

Let $(\AA^r)^I_0$ be a codimension one hyperplane of $(\AA^r)^I$ formed
by the points with the vanishing total sum of all coordinates,
i.e. by the tuples $(z^{(j)}_i)_{\ii, j\in[1,r]}$ such that
$\sum_{i,j}\ z^{(j)}_i=0$. 
Let  $\hitirr\sset \hit$ be the locus of points $\varkappa\in\hit$
such that the spectral curve $\Si_\varkappa$ is reduced and
irreducible.
The following result is certainly known to the experts but we have
been unable to find a convenient reference in the literature.
%\begin{proof}[Sketch of Proof] 
%This proposition is a consequence of two known results.
%The first one, proved in  \cite{BNR}, says that  the fiber of 
%the Hitchin map over $h$ is the  compactified Jacobian of the
%corresponding spectral curve.
%The second result says that  the  compactified Jacobian of 
%a reduced and irreducible curve is a stack of finite type.
%\end{proof}

\begin{lem}\label{reduced} \vi For any $\varkappa\in\hit$ one has
 $\en\reshit(\varkappa)\in (\AA^r)^I_0\dsl \si_r $.\vskip 2pt

\vii 
Assume that  $\varkappa\in\hit$ is such that the following two conditions hold:
\begin{align}
&\bullet\en \text{For each $\ii$, the divisor
$\ |\Si_\vk\cap L_i|\ $ is a sum of $r$ pairwise 
distinct points.} \label{b1} \\ 
&
\begin{array}{l}\bullet\en\text{For any  $0< r' <r$ and any collection   of subsets 
$\dis {Z_i}_{}\sset \Si_\vk\cap L_i,\ \ii$,}\\ 
\quad\text{such that $\# Z_i={r'}^{^{}}$ for all $i$, one has  $\ {\sum_\ii\
    \sum_{z\in Z_i}\ z \ \neq\ 0}^{^{}}$.}
\end{array} \label{b2} 
\end{align}

Then, 
the spectral curve $\Sigma_\vk$ is reduced and irreducible,
i.e., we have
$\vk\in\hitirr$.
\end{lem}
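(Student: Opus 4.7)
For part (i), the plan is to apply the residue theorem directly. For each $\ii$, Vieta's formula applied to the defining polynomial of the fiber $\Si_\vk \cap L_i$ shows that the sum of its roots (in the coordinate $z$ on $L_i \cong \AA$ coming from $\textit{res}_i$) equals $-\textit{res}_i(\varkappa^{(1)})$. Summing over $i$ yields minus the sum of residues of $\varkappa^{(1)} \in \Gamma(C,\Omega_C^1(D))$, which vanishes because $C$ is smooth and projective. This gives (i).

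For part (ii), I would first establish reducedness, then irreducibility. Reducedness: if $\Si_\vk$ were nonreduced, the defining polynomial $P(\lambda) := \lambda^{\o r} + \sum_j (p_C^*\varkappa^{(j)}) \o \lambda^{\o(r-j)}$ would have a repeated factor over $K(C)$, so its discriminant would vanish identically on $C$; but condition \eqref{b1} forces the discriminant to be nonzero at each $c_i$. With $\Si_\vk$ reduced, suppose for contradiction that it is reducible and decompose it into reduced irreducible components $\Sigma_\alpha$. None of them can lie in a single fiber $L_i$ (which is one-dimensional, so would have to equal $\Sigma_\alpha$, contradicting zero-dimensionality of the fibers of $\Si_\vk \to C$), so each $\Sigma_\alpha \to C$ is a dominant map from an integral scheme to a smooth curve, hence finite flat of some degree $0 < r_\alpha < r$ with $\sum_\alpha r_\alpha = r$.

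This decomposition corresponds to a factorization $P = \prod_\alpha P_\alpha$ in $K(C)[\lambda]$ with each $P_\alpha$ monic of degree $r_\alpha$. A local Gauss's-lemma argument --- using that $\oo_{C,c}$ is a DVR (hence integrally closed) at each point, and that near $c_i$ the trivialization $dt/t$ of $\cl$ turns $P$ into a monic polynomial with holomorphic coefficients --- shows that each coefficient $a_j^{(\alpha)}$ of $P_\alpha$ extends to a section of $\cl^{\o j}$ on $C$ with no poles beyond $D$. Flatness of $\Sigma_\alpha \to C$, together with reducedness of $\Si_\vk$, then guarantees that the fiber $Z_i := \Sigma_\alpha \cap L_i$ is a reduced subscheme of $\Si_\vk \cap L_i$ of size exactly $r_\alpha$, and Vieta gives $\sum_{z \in Z_i} z = -\textit{res}_i(a_1^{(\alpha)})$. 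Summing over $i$ and invoking the residue theorem on $a_1^{(\alpha)} \in \Gamma(C,\cl)$ yields $\sum_i \sum_{z \in Z_i} z = 0$, directly contradicting \eqref{b2} since $0 < r_\alpha < r$ and the cardinalities $|Z_i| = r_\alpha$ are constant in $i$. Thus $\Si_\vk$ must be irreducible.

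The main technical hurdle is the Gauss's-lemma step ensuring that the coefficients of the factors $P_\alpha$ extend globally to sections of $\cl^{\o j}$ without acquiring extra poles away from or at $D$; once that is in hand, the proof is driven entirely by the interplay of flatness, Vieta's formulas, and the residue theorem, with conditions \eqref{b1} and \eqref{b2} tailored precisely to block nonreducedness and proper factorization, respectively.
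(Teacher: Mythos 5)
Your proof of part (i) is the same as the paper's (Vieta plus the residue theorem for $\vk^{(1)}$), but in part (ii) you take a genuinely different route that is also correct. The paper does not factor the characteristic polynomial at all: it restricts the tautological $1$-form $\lambda$ (which over $C^\circ = C\setminus D$ is the pullback of the canonical $1$-form on $\TT^*C^\circ$) to an irreducible component $\Sigma'$ of the spectral curve, checks that the only poles lie over the marked points $c_i$, are simple, and have residue $z$ at each $z\in\Sigma'\cap L_i$, and then applies the residue theorem directly on (the normalization of) $\Sigma'$ to obtain $\sum_\ii\sum_{z\in\Sigma'\cap L_i} z=0$, contradicting \eqref{b2}. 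Your argument instead decomposes $P=\prod_\alpha P_\alpha$ in $K(C)[\lambda]$, uses the fact that $\oo_{C,c}$ is a DVR together with a local trivialization of $\cl$ to run Gauss's lemma and conclude that each coefficient $a^{(\alpha)}_j$ extends to a global section of $\cl^{\otimes j}$, and then applies the residue theorem on $C$ to $a^{(\alpha)}_1\in\Gamma(C,\cl)$ --- effectively re-running part (i) with $\vk^{(1)}$ replaced by $a^{(\alpha)}_1$. The two proofs trade off comparable amounts of work: yours pushes the difficulty into the Gauss's-lemma/integral-closure step but keeps all residue-theoretic computations on the base $C$, while the paper's approach avoids factoring $P$ entirely at the cost of working with a rational differential on a possibly singular curve (which quietly requires passing to the normalization). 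You also reverse the order, proving reducedness first via the discriminant, where the paper proves irreducibility first and then deduces reducedness from generic reducedness plus flatness of $\Sigma_\vk\to C$; both orderings work, since both arguments use \eqref{b1} only at the fibers over $D$.
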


\begin{proof} For any $\ii$, the sum of roots
of the equation  $z^r+\sum_{j=1}^r \
\ri^{\o j}(\varkappa^{(j)})\cdot z^{r-j}=0$
equals $\ri(\vk^{(1)})$. Hence, proving part (i) of the lemma
amounts to showing that one has $\sum_\ii\ \ri(\vk^{(1)})=0$.
The latter equation holds thanks to the Residue Theorem
applied to $\vk^{(1)}\in\Gamma(C, \cl)$,  a rational 1-form on the curve $C$.

To prove (ii), we 
use the canonical section $\la$ of $p_C^*\cl$.
Specifically, let $C^\circ=C\sminus (\cup_i\ c_i)$ and
$L^\circ=p_C\inv(C^\circ)$.
By definition, one has a canonical isomorphism
$\cl|_{C^\circ}=\Omega_C^1(D)|_{C^\circ}\cong \Omega_{C^\circ}^1$.
Hence, $\la|_{L^\circ}$,
the restriction  of   $\lambda$ to the open set $L^\circ$, may be
identified with a 1-form on $L^\circ$.
This  1-form restricts further
to give   a 1-form $\lambda^\circ_\Si$ on $\Si^\circ_\vk:=\Si_\vk\cap L^\circ$,
a curve in $L^\circ$ (the form $\lambda^\circ_\Si$  may have poles at the points
of the finite set $\Si_\vk\sminus \Si_\vk^\circ=\sqcup_\ii\ \Si_\vk\cap L_i$).

Assume now that conditions
\eqref{b1}-\eqref{b2} hold. Then, one shows that 
$\lambda^\circ_\Si$ has a simple pole
at  any point $z\in \Si_\vk\cap L_i\sset \AA$
and, moreover,  for the corresponding residue 
we have $\textit{Residue}_z (\la^\circ_\Si) =z$.
Therefore, %writing $\Si_\vk\cap L_i=\{\eta_i^1,\ldots,\eta_i^r\}$,
we deduce
\beq{sum-eta}
\sum_\ii\ \sum_{z\in \Si_\vk\cap L_i}\ z
=\sum_\ii\ \sum_{z\in \Si_\vk\cap L_i}\ 
\textit{Residue}_z (\la^\circ_\Si) =
\sum_{z\in \Si_\vk\sminus \Si^\circ}\textit{Residue}_z (\la^\circ_\Si) = 0,
\eeq
by the Residue Theorem.
More generally, a similar calculation implies that
for any irreducible component $\Si'$ of the
curve $\Si_\vk$ one  must have $\sum_\ii\ \sum_{z\in \Si'\cap L_i}\
z=0$.
Let $r'$ be the degree of the morphism $\Si'\to C$.
If $\Si'\neq \Si_\vk$ then for
every $\ii$ we have $\#(\Si'\cap L_i)=r'<r$, contradicting
condition \eqref{b2}.  It follows that the spectral curve can not
be 
reducible.

To complete the proof observe that condition \eqref{b1} clearly implies
that  the scheme $\Sigma_\vk$ is reduced
at any point of $\Sigma_\vk\cap L_i$.
Hence,  being  irreducible, $\Sigma_\vk$ is generically reduced.
Further, the projection $\Sigma_\vk\to C$ is a finite, hence a flat, morphism.
It follows that  $\Sigma_\vk$ is reduced.
\end{proof}

\subsection{Parabolic Higgs bundles}\label{higgs-sec}
Fix $\bm=(m_i)$.  Following  Example \ref{par-ex}, we view  $\VV(C,D,\bm)$ as a sheaf category
equipped with the sheaf  functor   $F:\ \VV(C,D,\bm)\to \VV^{\bbm}$.
Thus, given a type $\br\in \bbm$, we have  the framed stack 
$Y_\br=\fX_\br(\VV(C,D,\bm),\,F)$ of parabolic bundles of type $\br$.
The stack $\TT^*Y_\br$ classifies triples $(\V,b,u)$, where $(\V,u)$ is a parabolic Higgs 
bundle and $b$ is a framing on $\V$.  
The assignment $(\V,b,u)\mto (\gr\textit{res}_i(u))_\ii$ gives a moment
map $\mu_\br:\TT^* Y_\br\to \g^*_\br$, see \S\ref{par-sec}. We remark that $\g_\br^*$ is a codimension 1 hyperplane
in $\gl_\br^*\cong \gl_\br \sset(\gl_r)^I$ cut out by the equation 
$\sum_\ii\Tr g_i=0$. The fact that the image of $\mu_\br$ is contained in this hyperplane 
may be seen as a consequence of the Residue theorem $\sum_\ii\ \textit{res}_i(\Tr u)=0$,
where $\Tr u$ is a section of $\Omega^1_C(D)$. 
The  morphism   $f_{\op{bas}}:\ \TT^* Y_\br\to \higgs,\ (\V,b,u)\mto (\V,u)$ is a $G_\br$-torsor.
Hence, we have  $(\TT^* Y_\br)/G_\br=\higgs$ and the map $\mu_\br$ descends to a well defined morphism
$\reshiggs: \higgs\to \g_\br^*/G_\br$, of quotient stacks.

We briefly recall the construction of the {\em Hitchin map} in the case of 
 Higgs bundles without  parabolic structure.
Let  $\hig_r$ be the stack that parametrizes pairs 
 $(\V,u)$,  where  $\V$ is  a rank $r$ vector bundle
on $C$  and
 $u$
is a morphism $\V \to \V\o\Omega^1_C(D)$.
Given such a pair
 $(\V,u)$, one considers a vector
bundle $p_C^*\V$ on $L$ and  a morphism
$p_C^*u:\ p_C^*\V\to p_C^*\V\o\cl$, where we have used the notation of \S\ref{hit-sec}.
The corresponding `characteristic polynomial'
$\det(\lambda-p_C^*u)$ is a morphism $\wedge^r p_C^*\V\to
  \wedge^r p_C^*\V\o\cl^{\o r}$, that is,
an element
of $\Hom(\wedge^r p_C^*\V,\
\wedge^r p_C^*\V\o\cl^{\o r})=\Gamma(C, \cl^{\o r})$.
This element has an expansion of the form
$\det(\lambda-p_C^*u)=\sum_{j=0}^r \
\kap^{(j)}(u)\o \la^{\o (r-j)}$, where  $\kap^{(j)}(u)$ is a section of
$\cl^{\o
  j}$; in particular, we have $\kap^{(0)}(u)=1$. 
Then, the Hitchin map is a map
$\kap_r: \hig_r\to\hit$
defined by the assignment
$(\V,u)\mto \oplus_{j=1}^r\ \kap^{(j)}(u)$.

Each of the stacks $\higgs,\,Y_\br$, and  $\TT^* Y_\br$,
can be partitioned further by fixing a
degree $d$ of the underlying vector bundle, so
one has $\hig_r=\sqcup_{d\in \Z}\ \hig_{r,d}$, etc.
Put $\hig_{r,d}\cap \kap_r\inv(\hitirr):=\hig_{r,d}\times_\hit
\hitirr(D)$.
Below, we will use the following known result
that amounts, essentially, to a statement about compactified Jacobians
proved in  \cite{BNR}.
\begin{prop}\label{higgs-fin} For any degree $d$,
 the stack $\hig_{r,d}\cap \kap_r\inv(\hitirr)$
has  finite type.\qed
\end{prop}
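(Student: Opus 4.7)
The plan is to reduce the assertion to the standard finiteness of relative compactified Jacobians via the spectral correspondence of Beauville--Narasimhan--Ramanan \cite{BNR}, applied over the open subscheme $\hitirr \subseteq \hit$.

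First I would observe that the Hitchin base $\hit = \bplus_{j=1}^r\Gamma(C,\cl^{\o j})$ is a finite-dimensional $\k$-vector space, hence a scheme of finite type, and $\hitirr \sset \hit$ is an open subscheme, so it is automatically of finite type. It therefore suffices to show that the restriction of the Hitchin map
\[
\kap_r\colon\ \hig_{r,d}\cap\kap_r\inv(\hitirr)\ \too\ \hitirr
\]
is of finite type. For this I would construct the relative spectral cover $\pi_{\mathcal S}\colon {\mathcal S}\to \hitirr\times C$, a closed subscheme of $\hitirr\times L$ cut out by the tautological section $\lambda^{\o r}+\sum_{j=1}^r p_C^*\vk^{(j)}\o\la^{\o(r-j)}$, and the composite $p_{\mathcal S}\colon {\mathcal S}\to \hitirr$. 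By construction, for every $\vk\in\hitirr(\bar\k)$ the fiber $({\mathcal S})_\vk\cong \Si_\vk$ is an integral projective curve.

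Next I would invoke the BNR correspondence in its relative form: pushforward along the finite flat map $\pi_{\mathcal S}$ gives an equivalence between the groupoid of rank $1$ torsion-free $\oo_{\mathcal S}$-modules $\cM$ on ${\mathcal S}$ (flat over $\hitirr$) with Euler characteristic on fibers equal to a fixed integer $\chi$ and the groupoid of Higgs pairs $(\V,u)$ over $\hitirr\times C$ with $\kap_r(\V,u)$ equal to the tautological section. Here $\chi$ is determined by $d$ and by Grothendieck--Riemann--Roch applied to $\pi_{\mathcal S}$ (using that $\pi_{{\mathcal S},*}\oo_{\mathcal S}=\bplus_{j=0}^{r-1}\cl^{\o(-j)}$, the degree of $\V=\pi_{{\mathcal S},*}\cM$ differs from the degree of $\cM$ by a fixed shift depending only on $r$ and $\deg \cl$, hence on $C$ and $D$). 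Integrality of every fiber guarantees that the notion of rank $1$ torsion-free sheaf is well-behaved, and the Hilbert polynomial is fixed once $\chi$ is fixed, so one obtains a relative compactified Jacobian $\overline{\mathcal J}^\chi({\mathcal S}/\hitirr)$ which is a projective scheme over $\hitirr$ (in particular of finite type), by standard results of Altman--Kleiman for integral curves.

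Finally I would match the stacks: the BNR equivalence induces an isomorphism of stacks
\[
\hig_{r,d}\cap\kap_r\inv(\hitirr)\ \iso\ \overline{\mathcal J}^\chi({\mathcal S}/\hitirr)/B\GG_m,
\]
where the quotient by $B\GG_m$ records the generic scalar automorphisms $\GG_m\sset\Aut(\V,u)$ coming from $\Aut(\cM)\supseteq \GG_m$ on rank $1$ torsion-free sheaves over an integral curve. The source $\overline{\mathcal J}^\chi({\mathcal S}/\hitirr)$ is of finite type over $\hitirr$, hence the quotient by the classifying stack $B\GG_m$ (which is itself of finite type) is of finite type, and composing with the finite type morphism to $\Spec\k$ completes the proof. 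The main obstacle I anticipate is checking the relative version of the BNR correspondence at the level of stacks, in particular verifying that the identifications $\V\cong\pi_{{\mathcal S},*}\cM$ and the $\cl$-twisted endomorphism $u$ coming from multiplication by $\la$ work in families and match automorphism groups on both sides; this is essentially contained in \cite{BNR} but requires care with torsion-free (as opposed to line bundle) sheaves at non-smooth points of the spectral curves in $\hitirr$.
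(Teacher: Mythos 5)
Your proof sketch is correct and follows exactly the route the paper has in mind: the paper gives no written proof, instead citing \cite{BNR}, and your argument supplies precisely those details via the relative spectral correspondence and the finiteness of the relative compactified Jacobian (Altman--Kleiman) over the locus of integral spectral curves. One small notational quibble: the relation you want at the end is that $\hig_{r,d}\cap\kap_r\inv(\hitirr)$ is a $\GG_m$-gerbe over $\overline{\mathcal J}^\chi({\mathcal S}/\hitirr)$ (equivalently, isomorphic to the product $\overline{\mathcal J}^\chi({\mathcal S}/\hitirr)\times B\GG_m$, since a universal sheaf trivializes the gerbe), rather than a ``quotient by $B\GG_m$'' of the scheme --- but this does not affect the finite-type conclusion, which holds since $B\GG_m$ and the relative compactified Jacobian are both of finite type.
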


The  standard Cartan subalgebra  of the Lie algebra $\gl_\br$ of block diagonal matrices
 is isomorphic to $\oplus_{(i,j)\in \bbm}\ \AA^{r\ij}=(\AA^r)^I$. 
Let $\ft_\br$ be the Cartan subalgebra of the Lie algebra $\g_\br$.
Thus, one has a natural identification  $\ft^*_\br\cong (\AA^r)^I_0$ and the Chevalley isomorphism
$\g_\br^*\dsl G_\br\cong \ft^*_\br\dsl \si_\br\cong(\AA^r)^I_0\dsl \si_\br$.

\begin{proof}[Proof of  Theorem \ref{ind-bundles}]
Forgeting the flag data on parabolic bundles yields a morphism 
$f_{\text{Higgs}}: \higgs\to\hig_r$.
Various maps considered above fit into 
the following diagram
\beq{hitch-diag}
\xymatrix{
\TT^*Y_{\br,d}\ \ar[d]^<>(0.5){\mu_\br}\ar@{}[drr]|{\Box} 
\ar[rr]^<>(0.5){f_{\text{bas}}}&& \ \higgsd\
\ar[r]^<>(0.5){f_{\text{Higgs}}}\ar[d]^<>(0.5){\reshiggs}&\
\hig_ {r,d} \
\ar[r]^<>(0.5){\kap}& \hit\ar[d]^<>(0.5){\reshit}\\
\g^*_\br   \  \ar[rr]&&  \g^*_\br/G_\br\ar@{=}[r]&%^<>(0.5){\fq_{G_\br}}&
\ft_\br^*\dsl \si_\br\ar@{=}[r]&(\AA^ r \dsl \si_ r )^I_0
}
\eeq
It follows from definitions that this diagram commutes.
Therefore, we have $\reshiggs\inv(\g_\br\reg\dsl G_\br)= (\reshit\ccirc\kap_r\ccirc f_{\text{Higgs}})\inv(\ft\reg_\br\dsl\si_\br))$. The morphism 
$f_{\text{Higgs}}$ is a proper schematic morphism. Hence, from Lemma \ref{reduced} and Proposition \ref{higgs-fin} we deduce
that the stack $\TT^* Y_{\br,d}/G_\br=\reshiggs\inv(\g_\br\reg\dsl G_\br)$ has finite type.
In particular, the stack $\mm_O:=\mu_\br\inv(O)/G_\br$ is a stack of finite type for any coadjoint orbit
$O\sset\g_\br\reg$.

Further, for any $\gam>0$ let $Y_{\br,d,\gam}$ be the preimage of
the substack $\X_{\br,d}(\textsf{Vect}_\gam(C,D,\bm))\sset \ppar_{\br,d}$ in $Y_{\br,d}$.
By Proposition \ref{par-appr},  $Y_{\br,d,\gam}$ is an open substack of  $Y_{\br,d}$ of finite type.
The morphism $\TT^* Y_{\br,d}\to Y_{\br,d}$ being schematic,
we deduce that $(\TT^* Y_{\br,d,\gam})/G_\br$ is an open substack of  $\TT^* Y_{\br,d}/G_\br$
of finite type. Furthermore, Proposition  \ref{par-appr} says that
$\ppar_{\br,d}
=\underset{^\gam}\varinjlim\ \X^{^{}}_{\br,d}(\textsf{Vect}_\gam(C,D,\bm))$.
Put $\mm_{O,\gam}=\mm_O\times_{\higgsd}\,(\TT^* Y_{\br,d,\gam})/G_\br$, an open substack of $\mm_O$.
It follows that $(\TT^* Y_{\br,d})/G_\br=\underset{^\gam}\varinjlim\  (\TT^* Y_{\br,d,\gam})/G_\br$ and hence
$\mm_O=\underset{^\gam}\varinjlim\ \mm_{O,\gam}$.
Thus,  we obtain a diagram
\[\underset{^\gam}\varinjlim\ \mm_{O,\gam}=\mm_O
\ \into\ (\TT^* Y_{\br,d})/G_\br=\higgsd\  \to\ \ppar_{\br,d}\ =\
\underset{^\gam}\varinjlim\ \X^{^{}}_{\br,d}(\textsf{Vect}_\gam(C,D,\bm)).
\]

The stack $\mm_O$ being of finite type, it follows that there exists $\gamma$ such that
$\mm_{O,\gam}=\mm_O$ and, moreover, the image of $\mm_O$ in $\ppar_{\br,d}$ is contained
in $\X^{^{}}_{\br,d}(\textsf{Vect}_\gam(C,D,\bm))$.
Increasing $\gam$, if necessary, we may (and will) assume that
$\gam> \gam_0$, where $\gam_0$ is as in Lemma \ref{par-ind}.
Thus,  using  that  Lemma, we get
\begin{multline*}
\#\AI(\ppar_{\br,d}, \F_q)\ =\
\#\AI_{\br,d}(\textsf{Vect}_\gam(C,D,\bm)),\ \F_q)\\
=
q^{-\half\dim\mm_{O,\gam}}\cdot \ltr H^\hdot_c(\mm_{O,\gam},\qlb)^{\langle\sign\rangle}
=q^{-\half\dim\mm_O}\cdot \ltr H^\hdot_c(\mm_O,\qlb)^{\langle\sign\rangle},
\end{multline*}
where the second equality follows from Theorem \ref{A-ind} applied to category $\textsf{Vect}_\gam(C,D,\bm)$.
\end{proof}
%also, if you define semis-fication in derived category (say, by perverse
%t-structure: take cohgy, then semisimplification), then I think ssfication
%of fatzn shf will be factzn (b/c semis-n commutes w/ restr to open subset
%and w/ $\boxtimes$)

\section{Deformation construction via coarse moduli spaces}
 \label{quiv-sec}
This section is devoted to the proof of Theorem \ref{la-thm}.
Although the theorem is stated under the
assumption that $\cC$ is either the category  of quiver representations
or the category of parabolic bundles,  a large part of the theory
may be developed in a much more general setting. 

\subsection{}\label{aaa}

We first consider quiver representations.
Let $Q$ be a finite quiver with vertex set $I$ and $\cC$ the category of finite dimensional
representations of $Q$ eqipped with the forgetful functor $F$.
Let  $\bv\in \Z^I$ be 
a dimension vector and $G=\PGL_\bv$, resp. $\g=\pgl_\bv$.
Then, $\X_\bv(\cC,F)=\Rep_\bv Q$ is the $G$-scheme of representations of $Q$
of  dimension $\bv$ and $\TT^*\X_\bv(\cC,F)=\Rep_\bv\bar Q$ is the $G$-scheme of representations of $\bar Q$,
the double of $Q$. 
Associated  with any $I$-tuple $\th=(\th_i)\in \BQ^I$ such that $\sum_\ii \th_i=0$
there is a stability condition on $\Rep_\bv\bar Q$.
Let $\cz^\th\sset{\Rep_\bv\bar Q}$ denote the $\th$-semistable locus
and $\mm^\th={\Rep_\bv\bar Q}\dsl_\th G$ the corresponding GIT quotient,
the coarse moduli space of  $\th$-semistable representations.
The canonical map $\varpi: \cz^\th\to\mm^\th$ is a categorical quotient by $G$.
The moment map $\mu: {\Rep_\bv\bar Q}\to\g^*$  restricts to a map
 $\mu^\th: \cz^\th\to\g^*$.

Next, we consider the setting of parabolic  bundles. We keep the notation of \S\ref{par-sec},
in particular, we fix a vector $\bm=(m_i)\in \Z^I$, where $m_i>0$ for all $i$.
Following  Mehta and Seshadri \cite{MeSe}, a stability condition for parabolic Higgs bundles 
is given by an $I$-tuple
$\th_i^{(1)}\leq \th_i^{(2)}\leq\ldots\leq\th_i^{(m_i)}$, of rational numbers, such that
$\th_i^{(m_i)}-\th_i^{(1)}<1$.  The slope of a parabolic Higgs bundle of type $\br\in \Z^\bbm_{\geq 0}$ and degree $d$ is defined
as $\frac{1}{r}(d+\sum_{i,j} r\ij\cdot\th\ij)$, where $r=|\br|$ is the rank of the underlying vector bundle.
Thus, one has the notion of $\th$-stable, resp. $\th$-semistable, parabolic Higgs bundles.
By definition, a framed  parabolic Higgs bundle is $\th$-stable, resp. $\th$-semistable, if so is
the corresponding  parabolic Higgs bundle with the framing being forgotten.
It is known  that for any type $\br$ and degree $d$ there exists a coarse moduli space $\mm^\th$, resp $\cz^\th$, of
 $\th$-semistable  parabolic Higgs bundles, resp.  framed  parabolic Higgs bundles.
Furthermore, these moduli spaces are normal quasi-projective varieties.
Let $G=\PGL_\br$ and  $\g=\Lie G$. The $G$-action on the stack $\TT^*\X_{\br,d}(\VV(C,D,\bm), F)$ induces
one on $\cz^\th$. Forgetting the framing yields a morphism $\varpi: \cz^\th\to \mm^\th$, which is a
categorical quotient by $G$.  The moment map induces a $G$-equivariant map $\mu^\th: \cz^\th\to\g^*$.

Below, we will consider the setting of quiver representations and of
parabolic Higgs bundles at the same time. In order to use uniform notation, in the Higgs bundle
case we will often write $\bv$ for the pair $(\br,d)$. Also, write $(-)^{\th\text{-stab}}$ for the 
$\th$-stable locus. 

The following result uses a special feature 
that there is a well behaved notion of a subobject of a given object
of the category ${\bar Q}\mmod$, resp. the category of parabolic Higgs bundles, in particular, the category in question is an exact category.

\begin{lem}\label{simple} Let $\xi$ be a closed point of $\TT^*\X_\bv(\cC,F)/G$ such that $\mu(\xi)\in \g\gen/G$.
Then, $\xi$ has no nonzero proper subobjects. In particular, 
$\xi$  is $\th$-stable, for any stability condition $\th$.
\end{lem}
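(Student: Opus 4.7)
My plan is to establish the lemma by showing that any hypothetical nonzero proper subobject $\xi' \subset \xi$ would force a ``partial trace'' of the moment map value to vanish, contradicting the very-general condition (3) of \eqref{de2} that defines $\ft_\bv\reg$.

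First I would lift $\xi$ to a closed point of $\TT^*\X_\bv(\cC,F)$, still denoted $\xi$, and use the hypothesis $\mu(\xi) \in \g\gen$ to move $\mu(\xi)$ (by a $\PGL_\bv$-conjugation) into $\ft_\bv\reg$. Thus in the quiver case $\mu_i(\xi)$ is a diagonal matrix with eigenvalues $(z_i^\alpha)_{\alpha \in [v_i]}$ satisfying the conditions of Definition \ref{verygen}: the $z_i^\alpha$ are pairwise distinct within each block, their total sum is zero, and every ``subset sum'' $\sum_{i}\sum_{\alpha \in J_i} z_i^\alpha$ is nonzero whenever the collection $(J_i)$ is neither trivial nor maximal. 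In the parabolic Higgs case the same applies block-by-block to each graded piece $V_i^{(j)}/V_i^{(j+1)}$.

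Next, suppose for contradiction that $\xi' \subset \xi$ is a nonzero proper subobject. In the quiver case $\xi'$ corresponds to a collection of subspaces $V_i' \subset V_i$ stable under all arrow maps $x_a, y_a$ of the doubled quiver. Hence each $V_i'$ is also stable under $\mu_i(\xi) = \sum_a [x_a, y_a]$, and since $\mu_i(\xi)$ has distinct eigenvalues, $V_i'$ is a sum of eigenspaces, i.e. $V_i' = \bigoplus_{\alpha \in J_i}\k \cdot e_\alpha^i$ for some $J_i \subset [v_i]$. The key observation is that the moment map $\mu(\xi')$ of the subobject satisfies $\Tr \mu_i(\xi') = \sum_{\alpha \in J_i} z_i^\alpha$, while cyclicity of trace forces $\sum_i \Tr \mu_i(\xi') = 0$; hence $\sum_i \sum_{\alpha \in J_i} z_i^\alpha = 0$. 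The subobject being nonzero and proper means $(J_i)$ is neither $(\emptyset)_i$ nor $([v_i])_i$, which directly contradicts condition (3) of \eqref{de2}. For the parabolic Higgs case, exactly the same argument goes through: a parabolic Higgs subbundle $(\V', u')$ induces subspaces $V_i'^{(j)}/V_i'^{(j+1)} \subset V_i^{(j)}/V_i^{(j+1)}$ that are stable under $\gr \textit{res}_i(u)$, hence sums of eigenspaces, and the analogue of ``$\sum \Tr = 0$'' is now the Residue Theorem applied to the meromorphic $1$-form $\Tr u' \in \Gamma(C, \Omega^1_C(D))$, giving $\sum_{(i,j)} \sum_{\alpha \in J_i^{(j)}} z_i^{(j),\alpha} = 0$, again contradicting condition (3).

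Finally, the $\th$-stability assertion is immediate: an object with no nonzero proper subobjects is automatically $\th$-(semi)stable for any choice of $\th$, as the slope inequality has no subobjects against which to be tested. The main technical point on which the argument rests is the verification, in the parabolic Higgs setting, that a subobject induces a Higgs subbundle for which $\gr \textit{res}_i(u')$ is literally the restriction of $\gr \textit{res}_i(u)$ to $V_i'^{(j)}/V_i'^{(j+1)}$; this is a straightforward compatibility check of the flag condition \eqref{phi-flag} with passage to subsheaves, and should not cause real difficulty. The heart of the argument is genuinely the identification of condition (3) of \eqref{de2} as the precise obstruction preventing subobjects.
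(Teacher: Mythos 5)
Your argument is correct, and its core idea is the same as the paper's: a nonzero proper subobject forces a nontrivial, non-full subset of the eigenvalues of $\mu(\xi)$ to sum to zero, contradicting condition (3) in the definition \eqref{de2} of $\ft_\bv\reg$.

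The paper organizes the computation slightly differently, and the difference is worth noting. Instead of first diagonalizing $\mu(\xi)$ and then arguing that the subspaces $V_i'$ must be sums of eigenspaces (which requires a separate verification that each $V_i'$ is $\mu_i(\xi)$-stable, and in the parabolic Higgs case that $\gr\,\textit{res}_i(u')$ is literally the restriction of $\gr\,\textit{res}_i(u)$ to the graded pieces -- the ``compatibility check'' you flagged), the paper chooses a framing adapted to the short exact sequence $0\to\xi'\to\xi\to\xi''\to 0$, so that $g=\mu(\wt\xi)$ is block upper-triangular with diagonal blocks $g'=\mu(\wt\xi')\in\pgl^*_{\bv'}$ and $g''=\mu(\wt\xi'')\in\pgl^*_{\bv''}$. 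Because $g$ is block upper-triangular, its spectrum is the disjoint union of the spectra of $g'$ and $g''$ with no further argument; and since $\pgl^*_{\bv'}\cong\mathfrak{sl}_{\bv'}$, the relation $\Tr g'=0$ yields the vanishing subset sum immediately. This route treats the quiver and Higgs-bundle cases completely uniformly and sidesteps both the invariance step and the graded-residue compatibility check, but the heart of the matter -- identifying condition (3) as the precise obstruction to subobjects -- is exactly the one you found. (The paper also records the small bonus that, once $\xi$ is simple, the Schur lemma gives $\Aut(\xi)=\GG$, which you did not state but which follows immediately.)
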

\begin{proof} Let $\xi'$ be a subobject of $\xi$  and let $\xi''=\xi/\xi'$. Let $x$, resp. $x',x''$,
be the image of $\xi$, resp. $\xi',\xi''$, in $\X(\cC)$ and let $\bv'=\dim x',\ \bv''=\dim x''$.
Clearly, we have $\bv=\bv'+\bv''$, moreover, one can choose compatible framings of $x$ and $x',x''$, i.e. choose a basis of the vector space
$F(x)$  such that a subset of that basis is a basis of the vector space $F(x')\sset F(x)$ and the images
of the remaining base vectors form a basis of $F(x'')=F(x)/F(x')$. These choices provide a lift of $\xi$,
resp. $\xi',\xi''$, to an object $\wt\xi$, resp. $\wt\xi',\wt\xi''$, of the stack $\TT^*\X(\cC,F)$.
Let $g=\mu(\wt\xi)\in \g^*\cong\mathfrak{sl}$, resp. $g'=\mu(\wt\xi')\in \g^*_{\bv'}\cong\mathfrak{sl}_{\bv'},\ g''=\mu(\wt\xi'')\in \g^*_{\bv''}\cong\mathfrak{sl}_{\bv''}$.
By construction, the matrix $g$ has a block form
$g=\big(\begin{matrix} g'&*\\ 
0&g''
\end{matrix}\big)$. The assumption of the lemma that $g\in\g\reg$ forces one of the
two diagonal blocks to have zero size. We conclude that either $\bv'=\bv$ and $\xi'=\xi$, or
$\bv'=0$ and $\xi'=0$.
Finally, applying the Schur lemma, one deduces that  $\Aut \xi=\GG$.
\end{proof}

The following result is well known.
\begin{lem}\label{smooth-stab} The top, resp. bottom, horizontal arrow in the following natural commutative diagram is an
isomorphism of $G$-stacks, resp. stacks:
$$
\xymatrix{
\TT^*\X(\cC,F)^{\th\text{-stab}}\ \ar[d]\ar[rr]^<>(0.5){\cong} && \cz^{\th\text{-stab}} \ar[d]^<>(0.5){\varpi}\\
\big(\TT^*\X(\cC,F)/G\big)^{\th\text{-stab}}\ar[rr] ^<>(0.5){\cong} && \mm^{\th\text{-stab}}
}
$$
Furthermore, $\cz^{\th\text{-stab}}$
is a smooth symplectic variety and the map
$\mu^\th: \cz^{\th\text{-stab}}\to \g^*$ is a smooth morphism.
\end{lem}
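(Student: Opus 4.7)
The plan is to exploit the fact that, on the $\th$-stable locus, both the automorphism groups of framed objects and the $G$-stabilizers are trivial, so the stack reduces to a scheme and everything becomes visible on the nose.

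First I would set up the automorphism count. By Lemma \ref{simple}, for any point $\xi$ of $\TT^*\X(\cC,F)$ whose image in $\X(\cC)$ is $\th$-stable, one has $\Aut(\xi)=\GG_m$ (the scalar automorphisms). Since an automorphism of the framed triple must fix a chosen basis of $F(\xi)$, and the scalars act on that basis by multiplication, only the identity remains: the framed stable locus $\TT^*\X(\cC,F)^{\th\text{-stab}}$ is therefore an algebraic space. Moreover, under the $G=\PGL$-action the scalar subgroup $\GG_m\sset\GL$ is quotiented out, so $G$ acts with trivial stabilizers on the $\th$-stable locus.

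Next I would deduce the two isomorphisms in the diagram. The coarse moduli space $\cz^\th$ is, by construction, a good categorical quotient of the $\th$-semistable locus of $\TT^*\X(\cC,F)$; on the stable part this quotient is geometric, and since stable framed objects are rigid the comparison morphism from the stable stack to its coarse moduli space is an isomorphism. This gives the top arrow. For the bottom arrow, $G$ acts with trivial stabilizers on the $\th$-stable locus by the previous paragraph, so the stacky quotient coincides with the scheme-theoretic quotient, which is $\mm^{\th\text{-stab}}$ by the quiver-GIT presentation $\mm^\th=\Rep_\bv\bar Q\dsl_\th G$ (respectively by the parallel construction for parabolic Higgs bundles).

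For smoothness and the symplectic structure I would argue case by case. In the quiver case $\TT^*\X_\bv(\cC,F)=\Rep_\bv\bar Q$ is an affine space, hence smooth, and carries its canonical cotangent symplectic form, with moment map the usual commutator map. In the parabolic case the stack $\X_\br(\VV(C,D,\bm),F)$ is smooth (Proposition \ref{par-appr}(i)) and in fact is a $G_\br$-torsor over the smooth stack of parabolic bundles, so one may lift everything locally to a smooth scheme and form the cotangent bundle there; the open locus classifying $(\CV,b,u)$ with rigid $(\CV,u)$ is then a smooth symplectic scheme with the usual parabolic-Higgs symplectic form. Restricting to the $\th$-stable locus preserves smoothness since it is open.

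Finally, smoothness of $\mu^\th$ follows from a standard moment-map criterion. On the smooth symplectic $\TT^*\X(\cC,F)^{\th\text{-stab}}$ the infinitesimal action $\g\to T_\xi \TT^*\X(\cC,F)^{\th\text{-stab}}$ is injective precisely when the stabilizer Lie algebra of $\xi$ in $G$ is zero, and via the symplectic form this injectivity is equivalent to surjectivity of $d\mu_\xi:T_\xi\to\g^*$. Since we already showed $G$ acts with trivial stabilizers on the stable locus, $d\mu^\th$ is everywhere surjective there, so $\mu^\th$ is smooth. The main technical obstacle, in my view, is the parabolic case: one must verify that the cotangent stack of the framed parabolic bundle stack really is represented near stable points by a smooth symplectic scheme whose moment map matches the residue recipe of \S\ref{higgs-sec}, and that the GIT coarse moduli space agrees with what one obtains by taking a scheme-theoretic quotient on the stable locus; both are standard but require care to state cleanly.
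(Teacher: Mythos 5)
Your proposal gets the overall architecture right and the first, second, and last paragraphs match the paper's intent. But there is a genuine gap in the parabolic smoothness argument, which is exactly the non-trivial part of the lemma.

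First, a small misattribution: you cite Lemma \ref{simple} to conclude $\Aut(\xi)=\GG_m$ for $\th$-stable $\xi$, but Lemma \ref{simple} runs the other way — it assumes $\mu(\xi)\in\g\reg/G$ and concludes stability (plus $\Aut(\xi)=\GG$ by Schur). What you actually need is the standard Schur-type statement $\Hom(\xi,\xi)=\k$ for a $\th$-stable object, which is a separate elementary fact; the paper simply invokes it directly.

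The real issue is your claim that, in the parabolic case, the $\th$-stable locus of $\TT^*\X_\br(\VV(C,D,\bm),F)$ is smooth "since one may lift everything locally to a smooth scheme and form the cotangent bundle there", and "restricting to the $\th$-stable locus preserves smoothness since it is open." This does not work: the cotangent stack of a smooth Artin stack is \emph{not} smooth in general (the fiber dimension can jump where the automorphism groups of the base stack do), and openness only propagates smoothness from a smooth ambient space — which you do not have here. The framed parabolic bundle stack is a stack, not a scheme (the functor $F$ of \eqref{bun-functor} is not faithful), so you cannot reduce to the cotangent bundle of a smooth scheme. What the paper actually does is prove constancy of $\xi\mto\dim\Ext^1(\xi,\xi)$ on the stable locus: the moduli stack of parabolic bundles being smooth, its cotangent stack has cohomological dimension $\leq 2$, so $\Ext^j(\xi,\xi)=0$ for $j\neq 0,1,2$ and the Euler characteristic $\dim\Ext^0-\dim\Ext^1+\dim\Ext^2$ is locally constant; then the (derived) symplectic self-duality on the cotangent stack forces $\dim\Ext^2(\xi,\xi)=\dim\Ext^0(\xi,\xi)=1$ on the stable locus, from which constancy of $\dim\Ext^1$ and hence smoothness follows. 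This deformation-theoretic step is the heart of the lemma in the parabolic case and cannot be replaced by an openness argument.

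The remaining steps — agreement of the stable stack with the coarse moduli space once automorphisms are rigid, and smoothness of $\mu^\th$ via trivial stabilizers making $d\mu$ surjective by symplectic duality — are correct and match what the paper means by "the remaining statements follow from standard results of symplectic geometry."
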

\begin{proof}[Sketch of Proof] The case of quiver representations is clear, so we only consider
the case of  parabolic Higgs bundles. Let  $\xi$ be a stable closed point $\TT^*\X(\cC,F)$.
Then,  we have $\Hom(\xi,\xi)=\k$. In particular, we have $\Aut(x)=\GG$, hence $\TT^*\X(\cC,F)^{\th\text{-stab}}$ is an algebraic space.
An explicit construction of the coarse moduli space $\cz^\th$ shows that 
the scheme  $\cz^{\th\text{-stab}}$ represents the functor associated with the stack
 $\TT^*\X(\cC,F)^{\th\text{-stab}}$, so this algebraic space is in fact a scheme.

To prove that $\TT^*\X(\cC,F)^{\th\text{-stab}}$ is smooth, it suffices to show that  the function
$\xi\mto \dim\Ext^1 (\xi,\xi)$ is constant on $\TT^*\X(\cC,F)^{\th\text{-stab}}$.
To this end, one observes that the stack of parabolic bundles being smooth, the corresponding cotangent stack has
cohomological dimension $\leq 2$. It follows that  for all objects $\xi$ of $\cz^\th$
the  group $\Ext^j(\xi,\xi)=\op{R}^j\Hom(\xi,\xi)$ vanishes for
all $j\neq 0,1,2$. Hence, the Euler characteristic
$\dim\Ext^0(\xi,\xi)-\dim\Ext^1 (\xi,\xi)+\dim\Ext^2(\xi,\xi)$ is independent of $\xi$.
Furthermore, thanks to the  (derived) symplectic structure on a
cotangent stack, one has $\dim\Ext^2(\xi,\xi)=\dim\Ext^0(\xi,\xi)=1$. 
We deduce that the function
$\xi\mto \dim\Ext^1 (\xi,\xi)$ is constant on the stable locus, as required.
The remaining statements follow from standard results of symplectic geometry.
\end{proof}

\begin{cor}
\label{stab-cor} The variety $\mm^{\th\text{-stab}}$
is the fine moduli space of $\th$-stable objects and the
map $\varpi: \cz^{\th\text{-stab}}\to\mm^{\th\text{-stab}}$ is
a $G$-torsor.
\end{cor}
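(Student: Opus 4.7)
The plan is to deduce both claims from a freeness statement for the $G$-action on $\cz^{\th\text{-stab}}$, combined with the stacky identification already furnished by Lemma \ref{smooth-stab}.

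First, I will show that the $G$-action on $\cz^{\th\text{-stab}}$ is (scheme-theoretically) free. Let $\xi$ be a closed point of $\TT^*\X(\cC,F)^{\th\text{-stab}}$. By Lemma \ref{simple} applied in both the quiver and parabolic Higgs settings (after replacing the moment-map hypothesis there by the standard Schur argument for stable objects, which is available in both settings because the categories in question are exact), the group $\Aut(\xi)$ in $\ccc$ reduces to $\GG$, acting by scalar automorphisms. Since $G=\PGL_\bv$ is the quotient of $\GL_\bv$ by these scalars, the $G$-stabilizer of the image of $\xi$ in $\cz^{\th\text{-stab}}$ is trivial. Smoothness of $\cz^{\th\text{-stab}}$ (Lemma \ref{smooth-stab}) together with triviality of all stabilizers implies that the action of $G$ on $\cz^{\th\text{-stab}}$ is free in the scheme-theoretic sense.

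Next I will combine this with Lemma \ref{smooth-stab}, which provides an isomorphism of stacks $(\TT^*\X(\cC,F)/G)^{\th\text{-stab}}\iso \mm^{\th\text{-stab}}$; in particular the stacky quotient $\cz^{\th\text{-stab}}/G$ is representable by the scheme $\mm^{\th\text{-stab}}$. A free $G$-action whose stack-quotient is a scheme is exactly a $G$-torsor, so $\varpi\colon \cz^{\th\text{-stab}}\to \mm^{\th\text{-stab}}$ is a $G$-torsor (fppf, and \'etale since $G$ is smooth). This proves the second assertion.

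For the fine moduli property, I will descend the universal framed family from $\cz^{\th\text{-stab}}$ to $\mm^{\th\text{-stab}}$. By construction, $\cz^{\th\text{-stab}}$ carries a tautological family $\Xi\to \cz^{\th\text{-stab}}$ of $\th$-stable framed objects. The $G$-action on $\cz^{\th\text{-stab}}$ arises from changing the framing, so $\Xi$ is $G$-equivariant and the underlying (unframed) family $\Xi^{\op{unfr}}$ depends only on the $G$-orbit. Descent along the $G$-torsor $\varpi$ then produces a family $\Xi^{\op{univ}}\to \mm^{\th\text{-stab}}$ of $\th$-stable objects of $\ccc$. To see this family is universal, given any flat family $\Xi_S\to S$ of $\th$-stable objects, I form the associated $G$-torsor of framings $\cz_S\to S$; the framed family on $\cz_S$ yields, by the moduli property of $\cz^{\th\text{-stab}}$, a $G$-equivariant morphism $\cz_S\to \cz^{\th\text{-stab}}$, which descends to a unique classifying map $S\to \mm^{\th\text{-stab}}$ pulling $\Xi^{\op{univ}}$ back to $\Xi_S$.

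The only subtle step is the representability ingredient for the stack quotient $\cz^{\th\text{-stab}}/G$, i.e.\ the identification in Lemma \ref{smooth-stab}; once this is granted, the passage from torsor-plus-universal-framed-family to fine moduli space is formal. The main obstacle in the parabolic case is verifying the hypothesis of Lemma \ref{simple} uniformly for all $\th$-stable objects (rather than only those lying over $\g\gen/G$); this is handled by the standard Mehta--Seshadri argument that endomorphisms of a $\th$-stable parabolic Higgs bundle reduce to scalars, which I will invoke as a substitute for the regularity hypothesis used in Lemma \ref{simple}.
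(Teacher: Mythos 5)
Your proof is correct in substance and is essentially the same as the one the paper intends (Corollary \ref{stab-cor} is stated without proof, as a direct consequence of Lemma \ref{smooth-stab}), but you take a slightly longer route and the appeal to Lemma \ref{simple} is off-target. That lemma establishes simplicity only for objects $\xi$ with $\mu(\xi)\in\g\reg/G$; it says nothing about arbitrary $\th$-stable objects, which is the generality you need. You do flag this and propose the right substitute (Schur for stable objects), but you should notice that this Schur argument is \emph{already} carried out inside the proof of Lemma \ref{smooth-stab} (``Let $\xi$ be a stable closed point $\dots$. Then $\Hom(\xi,\xi)=\k$. In particular $\Aut(x)=\GG$''), so you are re-deriving a fact the cited lemma already supplies. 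More economically: the left vertical arrow in the diagram of Lemma \ref{smooth-stab} is the quotient morphism $Y\to Y/G$ restricted to the stable locus, and the paper has already noted in \S\ref{gstack} that for any $G$-stack the quotient morphism is automatically a $G$-torsor; the two horizontal isomorphisms then transport this to $\varpi$, with no separate freeness argument needed. Likewise, for the fine-moduli claim your descent of the universal family is valid but also redundant: the bottom isomorphism in Lemma \ref{smooth-stab} already identifies the moduli \emph{stack} $(\TT^*\X(\cC,F)/G)^{\th\text{-stab}}$ with the scheme $\mm^{\th\text{-stab}}$, which is precisely the statement that $\mm^{\th\text{-stab}}$ represents the moduli functor of $\th$-stable objects, i.e.\ is the fine moduli space.
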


Let $\ft$ be the Cartan subalgebra, resp. $W$ the Weyl group, of $\g$.
Let $f^\th: \mm^\th\to \g^*\dsl G=\ft^*\dsl W$ be the map induced
by the moment map, and write $\mm\reg= (f^\th)\inv(\g\reg\dsl G)$,
resp. $\cz\reg= (\mu^\th)\inv(\g\reg)$.
Also, let $\mu\inv(\g\reg)$ denote the preimage of $\g\reg$ under
the  $\TT^*\X_\bv(\cC,F)$.

\begin{cor}
\label{quiv-gen} For any stability condition $\th$,
one has an isomorphism $\mu\inv(\g\reg)\cong \cz\reg$
of $G$-stacks, resp.  an isomorphism 
$\mu\inv(\g\reg)/G\cong \mm\reg$ of stacks. \qed
\end{cor}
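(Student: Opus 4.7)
\medskip

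\noindent\textbf{Proof proposal.}  The plan is to reduce the corollary to a combination of Lemma \ref{simple} and Lemma \ref{smooth-stab}, by showing that the $\th$-stability hypothesis is automatic on the preimage of $\g\reg$. The first step would be to verify the inclusion $\mu\inv(\g\reg)\subseteq \TT^*\X_\bv(\cC,F)^{\th\text{-stab}}$ as substacks of $\TT^*\X_\bv(\cC,F)$. Lemma \ref{simple} gives this statement at the level of closed points of $\TT^*\X_\bv(\cC,F)/G$; the upgrade to an inclusion of substacks is a general fact, because $\th$-(semi)stability is an open condition and is detected on closed geometric points. In particular $\mu\inv(\g\reg)\subseteq \cz^\th$, and since $\cz\reg$ is by definition the preimage $(\mu^\th)\inv(\g\reg)$ of $\g\reg$ under the restriction of $\mu$ to $\cz^\th$, the two substacks $\mu\inv(\g\reg)$ and $\cz\reg$ coincide inside $\TT^*\X_\bv(\cC,F)$.

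\medskip

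Next I would invoke the top horizontal isomorphism of Lemma \ref{smooth-stab}, namely $\TT^*\X_\bv(\cC,F)^{\th\text{-stab}}\iso \cz^{\th\text{-stab}}$ as $G$-stacks. Restricting this isomorphism to the $G$-stable open substack $\mu\inv(\g\reg)$ on the left, which corresponds under the isomorphism to the open subvariety $\cz\reg$ on the right (using commutativity of the moment map with the identification and Lemma \ref{simple} again to ensure that $\cz\reg$ is contained in the stable locus), produces the desired $G$-equivariant isomorphism $\mu\inv(\g\reg)\cong \cz\reg$. Taking the stack quotient by $G$ on both sides and using the bottom horizontal isomorphism of Lemma \ref{smooth-stab} (which identifies the stacky quotient $\big(\TT^*\X_\bv(\cC,F)/G\big)^{\th\text{-stab}}$ with $\mm^{\th\text{-stab}}$, recalling from Corollary \ref{stab-cor} that $\varpi$ is a $G$-torsor on the stable locus) then yields the second isomorphism $\mu\inv(\g\reg)/G\cong \mm\reg$.

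\medskip

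The only genuinely delicate step is the first one: promoting the pointwise statement of Lemma \ref{simple} to a containment of substacks, and checking that the identification $\mu\inv(\g\reg)=\cz\reg$ is really compatible with the embeddings into $\TT^*\X_\bv(\cC,F)$ so that the restriction of Lemma \ref{smooth-stab} makes sense. This amounts to a standard openness argument for $\th$-stability plus functoriality of the moment map with respect to the inclusion $\cz^{\th\text{-stab}}\hookrightarrow \TT^*\X_\bv(\cC,F)$, and should be essentially formal once one is careful about what the relevant stacks parametrize in the quiver and parabolic Higgs settings respectively.
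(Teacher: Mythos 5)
Your argument is correct and is exactly the intended (unwritten) proof: Lemma \ref{simple} forces every point lying over $\g\reg$ to be $\th$-stable, after which Lemma \ref{smooth-stab} and Corollary \ref{stab-cor} yield the identifications on the stable locus. The only small infelicity is in the first paragraph, where you treat $\cz^\th$ as if it were literally an open substack of $\TT^*\X_\bv(\cC,F)$ (true in the quiver case, but in the parabolic case $\cz^\th$ is only a coarse moduli space); this is harmless because you immediately restrict to the stable locus, where Lemma \ref{smooth-stab} provides the genuine identification.
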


\subsection{Constructing a resolution}\label{resolution} 
The proof of Theorem \ref{la-thm} is based on the construction of
 a smooth
variety $\wt\mm$ equipped with a morphism  $\pi:
\wt\mm\to{\mm\times_{\g^*\dsl G}\ft^*}$ such that
\begin{description}
\item[(P1)]  The map $p$ is a projective morphism;

\item[(P2)] The composite  $\wt f:\ \wt\mm\xrightarrow{\pi}{\mm\times_{\g^*\dsl G}\ft^*}\xrightarrow{pr_2}\ft^*$ is a
smooth morphism;
\item[(P3)] The restriction of $\pi$ yields an isomorphism
$\pi\inv(\mm\times_{\g^*\dsl G}\ft\reg)\,\iso\,\mm\times_{\g^*\dsl G}\ft\reg$.
\end{description}

In the quiver setting, the variety $\wt\mm$ was, essentially, constructed in \cite{HLV1}, \cite{HLV2} as follows.
One introduces an extended quiver $Q'$ obtained from 
$Q$ by attaching,  a `leg' of length $v_i-1$ at each vertex $\ii$.
Let $\tv$ be the dimension vector
that has the same coordinates as $\bv=(v_i)_{\ii}$ at the vertices of $Q$ and such
that the coordinates  at the additional vertices of the $i$-th leg are $(v_i-1,v_i-2,\ldots,1)$.
One has the group $\PGL_\tv$, the representation scheme
$\Rep_\tv\bar Q'$ and the moment map
$\mu_\tv: \Rep_\tv\bar Q'\to \pgl_\tv^*$. 
Let $\fz_\tv\sset \pgl_\tv^*$ be the fixed point set of
the coadjoint $\PGL_\tv$-action on $\pgl_\tv^*$ and put
$Z:=\mu_\tv\inv(\fz_\tv)$. For example, in the case where the quiver $Q$ has a single vertex $0$,
a point of $Z$ is a data $\xi=(\rho, \rho_j,\rho_j^*,\ j=1,\ldots,v-1)$:
\beq{rho-diag} 
\xymatrix{
 %{F(p(\xi))} \ar@{=}[r]^<>(0.5){b}&
 \ {\AA^v}\ 
 \ar@(ul,dl)_<>(0.5){\mu(\rho)}
\ar@<0.4ex>[r]^<>(0.5){\rho^*_1}&\ \AA^{v-1}\
\ar@<0.4ex>[l]^<>(0.5){\rho_1}\ar@<0.4ex>[r]^<>(0.5){\rho^*_2}&\
\AA^{v-2}\
\ar@<0.4ex>[l]^<>(0.5){\rho_2}\ar@<0.4ex>[r]^<>(0.5){\rho_3^*}\ &\ \ldots\  \ar@<0.4ex>[r]^<>(0.5){\rho^*_{v-1}}
\ar@<0.4ex>[l]^<>(0.5){\rho_3}&\ \AA^1 \ \ar@<0.4ex>[r]^<>(0.5){\rho_{v}^*}\ar@<0.4ex>[l]^<>(0.5){\rho_{v-1}}\ &\ 0 
\ \ar@<0.4ex>[l]^<>(0.5){\rho_{v}}
}
\eeq
 where $\rho$ is a representation 
of $\bar Q$ in $\AA^v$ and the other maps are subject to the `moment map equations':
\beq{g}
\mu_\bv(\rho)=z_0\Id_{\gl_v}+\rho_1\rho_1^*,\quad\text{and}\en
\rho_j^*\rho_j-\rho_{j+1}\rho^*_{j+1}=z^j\cdot\Id_{\gl_{v^j}}\quad
  j=1,\ldots,v-1,
\eeq 
for some $z=(z_0,z_1,\ldots,z_{v-1})\in\AA^v$ such that $\sum_j\ z^j=0$.
Let 
\[\mu_Z: Z\to \fz_\tv,\quad
\xi=(\rho, , \rho_j,\rho_j^*,\ j=1,\ldots,v)\mto (z_0,z_1,\ldots,z_{v-1})\]
be the restriction of the map $\mu_\tv$ to $Z$.
The assignment  $z=(z_0,\ldots,z_{v-1})\mto
\text{diag}(t_1,\ldots,t_v)$, where
$t_j=z_0+\ldots+ z_{j-1},\ j=1,\ldots,v$, gives an isomorphism
$\varkappa: \fz_\tv\iso\ft_v^*$.
Further, let $p: Z\to \Rep_\bv \bar Q$ be the forgetful map $(\rho, \, \rho_j,\rho_j^*,\ j=1,\ldots,v)\mto \rho$.

The case of an arbitrary quiver $Q$ with possibly more than one vertex is similar and
we keep the notation $p$ for the forgetful map.
In the general, it is easy to check that the
following diagram commutes
\beq{Zdiag}
\xymatrix{
\ Z\ar[d]^<>(0.5){\mu_Z}\ar[r]^<>(0.5){p}&\Rep_\bv\bar Q\ar[r]^<>(0.5){\mu_\bv}&
\ \g^*\ \ar@{->>}[r]&\g^*\dsl G\ \ar@{=}[d]\\
\ \fz_\tv\ \ar[rr]^<>(0.5){\varkappa}_<>(0.5){\cong}&&\ \ft^*\ \ar@{->>}[r]&
\ \ft^*\dsl W
}
\eeq
Hence, the assignment $\xi\mto \big(p(\xi),\,\varkappa\ccirc\mu_\tv(\xi)\big)$ gives
a well defined map $\wt f_Z: Z\to \Rep_\bv\bar Q\times_{\g^*\dsl G}~\ft^*$.

The implications in \eqref{sst-impl} below are well known; statements analogous to 
statements (i)-(ii) below may be found eg. in \cite{HLV2}.

\begin{lem}\label{sst} Let $\vth$ 
be a stability condition on $\Rep_\tv\bar Q'$ that has the same coordinates as the original stability
$\th$ at the vertices of $Q$ and  sufficienly general small negative rational numbers at the additional
vertices. Then, for $\xi\in Z$
one has
\beq{sst-impl}
p(\xi)\en \text{is $\th$-stable}\en  \Rightarrow \en
\xi\en  \text{is $\wt\th$-stable }\en  \Leftrightarrow \en \xi \en \text{is $\wt\th$-semistable }\en  \Rightarrow\en 
p(\xi)\en \text{is $\th$-semistable.}
\eeq

Furthermore, writing $\xi=(\rho, \rho_j,\rho_j^*)\in Z$, we have
\vskip 2pt

 \vi If
$\xi$ is
$\wt\th$-semistable then  each of the maps $\rho^*_j,\ j\geq 0$, is
surjective.

\vii If
$p(\xi)$ is $\th$-stable and each of the maps $\rho^*_j,\ j\geq 0$, is
surjective then $\xi$ is  $\wt\th$-stable. 
\end{lem}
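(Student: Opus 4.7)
The plan is to translate every assertion into a question about subrepresentations of $\xi$ as a $\bar Q'$-representation of dimension $\tv$ and to compare these with subrepresentations of $p(\xi)$ as a $\bar Q$-representation, using the moment map identities \eqref{g} to control the lifts. The central construction is a canonical minimal lift: given a subrepresentation $W \subseteq V$ of $p(\xi)$, set $W^{(j)} := \rho_j^* \rho_{j-1}^* \cdots \rho_1^*(W)$ at the $j$-th leg vertex. Verifying that the resulting tuple is a subrepresentation of $\xi$ amounts to checking $\rho_{k+1}(W^{(k+1)}) \subseteq W^{(k)}$ at each stage; this follows inductively from the identity $\rho_{k+1}\rho_{k+1}^* = \rho_k^*\rho_k - z_k \Id$ in \eqref{g}, the base case being $\rho_1\rho_1^*(W) = (\mu_\bv(\rho) - z_0 \Id)(W) \subseteq W$, which holds because $W$ is a $\bar Q$-subrep of $\rho$.

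To prove part (i), I would exploit the dual observation: any subrepresentation $\xi' \subseteq \xi$ with $W = V$ at the original vertices is forced to satisfy $W^{(j)} \supseteq \rho_j^* \cdots \rho_1^*(V)$. Hence if some $\rho_k^*$ fails to be surjective, taking $W^{(j)} = V^{(j)}$ for $j < k$ and $W^{(j)} = \rho_j^* \cdots \rho_k^*(V^{(k-1)})$ for $j \geq k$ yields a proper subrep $\xi' \subsetneq \xi$ whose $\wt\th$-slope strictly exceeds that of $\xi$, since the negative values $\wt\th^{(j)}$ for $j \geq k$ are multiplied by strictly smaller dimensions. This contradicts $\wt\th$-semistability, so every $\rho_j^*$ must be surjective.

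For part (ii), assume $p(\xi)$ is $\th$-stable and every $\rho_j^*$ is surjective, and let $\xi' \subsetneq \xi$ be nonzero proper with original data $W$. Surjectivity forces: if $W = V$, then $W^{(j)} = V^{(j)}$ for every $j$, excluding properness; if $W = 0$, the sum of leg weights is a non-positive sum with at least one strictly negative term; if $0 \neq W \subsetneq V$, then $\th$-stability of $p(\xi)$ supplies a strict slope inequality that dominates the small leg perturbations. The chain \eqref{sst-impl} then follows: the first implication combines this case analysis with a direct argument ruling out the $W = V$ case even before surjectivity is known (using $\th$-stability of $p(\xi)$ together with smallness of $\wt\th^{(j)}$); the middle equivalence uses the standard fact that a "sufficiently general" $\wt\th$ avoids finitely many rational hyperplanes on which strictly semistable points of prescribed sub-dimension could appear; the last implication is the contrapositive of the minimal lift $W \mapsto \xi_W^{\min}$, small leg weights ensuring that the sign of the slope inequality for $W$ propagates to $\xi_W^{\min}$.

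The main obstacle will be the sign bookkeeping in the "sufficiently general" regime: one must identify a finite, intrinsically defined collection of rational hyperplanes to avoid (parametrized by possible sub-dimension vectors $\tv' \leq \tv$), keep the leg weights in the negative cone, and verify uniformly over this finite collection that the perturbation from $\th$ to $\wt\th$ does not flip the sign of the destabilizing slope inequality. This is where the genuine content of the lemma lies; the moment map identities supply only the algebraic skeleton for the lift construction.
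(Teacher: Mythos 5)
The paper does not supply its own proof of this lemma: the text just above it states that the implications in \eqref{sst-impl} are ``well known'' and that statements analogous to (i)--(ii) may be found in \cite{HLV2}, so there is no internal argument for you to match. Judged on its own terms, your minimal-lift construction $W^{(j)}:=\rho_j^*\cdots\rho_1^*(W)$ and its verification of closure via the recursion $\rho_{j+1}\rho_{j+1}^*=\rho_j^*\rho_j - z^j\Id$ is exactly right, as is the observation that a non-surjective $\rho_k^*$ produces a subrepresentation with $W^{(0)}=V$ and $\dim W^{(j)}<\dim V^{(j)}$ for some $j$, whose $\vth$-weight $\sum_j \wt\th^{(j)}(\dim W^{(j)}-\dim V^{(j)})$ is strictly positive because the leg coordinates of $\vth$ are negative; this gives (i). Your case analysis for (ii) and the genericity argument for the middle equivalence (finitely many rational hyperplanes indexed by possible subdimensions $\tv'\leq\tv$) are also fine.

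There is, however, a genuine gap in your treatment of the \emph{first} implication. You assert that ``smallness'' of the leg weights rules out destabilizing subrepresentations with $W^{(0)}=V$, but the $\vth$-weight of such a subrepresentation is $\sum_j \wt\th^{(j)}(\dim W^{(j)}-\dim V^{(j)})$, which involves only the leg coordinates --- the $\th$-contribution cancels entirely because $\dim W^{(0)}=\dim V^{(0)}$ --- so making the leg weights small does nothing. This case is only excluded once one knows that $V$ generates $\xi$, i.e.\ that every $\rho_j^*\cdots\rho_1^*$ is surjective, which is precisely the hypothesis added in part (ii). And $\th$-stability of $p(\xi)$ alone does \emph{not} force this: take $\xi\in Z$ with $\mu_\tv(\xi)=0$ and all $\rho_j^*=0$; then $\rho=p(\xi)$ can perfectly well be $\th$-stable, yet the subrepresentation $(V,0,0,\ldots)$ is proper of strictly positive $\vth$-weight, so $\xi$ is not even $\vth$-semistable. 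What actually closes the gap in the setting the paper uses is the constraint that $\varkappa\circ\mu_\tv(\xi)$ lies in $\ft^\diamond_\bv$: combined with the recursion \eqref{g}, conditions (2)--(3) of \eqref{de2} force the operators $\rho_j^*\rho_j$ ($j\geq1$) to have spectrum consisting of partial sums of the $z^j$'s, all nonzero, hence to be invertible, hence the $\rho_j^*$ are surjective regardless of any stability. This is exactly the situation in property (P3) of \S\ref{resolution}, which is the only place the first implication is invoked. So either cite this regularity of $z$ explicitly, or weaken the first implication to the form of part (ii) --- but the argument you sketched cannot be repaired by smallness alone.
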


Let $\GL\leg$ be a subgroup of  $\GL_\tv$ formed by the product of 
general linear groups corresponding to the vertices of the legs.
Thus, $\GL_\tv=\GL_\bv\times \GL\leg$ and the first projection
 induces a short exact sequence
\[1\to \GL\leg\to \PGL_\tv\to G\to1\]
where $G=\PGL_\bv$.
We fix $\vth$ as in  the above lemma, let $Z^{\wt\th}\sset Z$ be the open subset of $\wt\th$-semistable representations,
and  $\wt\cz$, resp.  $\wt\mm$, be the corresponding
quotient by $\GL\leg$, resp. $\PGL_\tv$.
By \eqref{sst-impl}, any  point of 
$Z^{\wt\th}$ is $\wt\th$-stable. 
It follows that  $\wt\cz$ and $\wt\mm$ are smooth and the
map $Z^{\wt\th}\to \wt\mm$ is a geometric quotient by $\GL_\tv$.
Furthermore, this map factors as a composition $Z^{\wt\th}\to \wt\cz\xrightarrow{\varpi}\wt\mm$,
where the first map is the geometric quotient by the group
$\GL\leg$  and the second map is a quotient map by $G$, which is also a geometric quotient.

The map  $\mu_\tv$ in diagram \eqref{Zdiag} induces a map $\wt\mm\to \fz_\tv$.
By part (i) of Proposition \ref{sst} the map $p$
restricts to a map $\wt\cz\to\cz^\th$ and also induces a map $\wt\mm\to\mm^\th$.
Further, the map $\mu_\tv$ in diagram \eqref{Zdiag} induces a map $\wt\mm\to \fz_\tv$.
Thanks to the commutativity of the diagram, these give a well defined map
$\wt\mm\to\mm^\th\times_{\g^*\dsl G}\ft^*$.

\begin{prop}\label{quivP} Properties {\em (P1)-(P3)} hold for the map $\wt\mm\to\mm^\th\times_{\g^*\dsl G}\ft^*$.
\end{prop}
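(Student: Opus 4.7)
The plan is to verify the three properties in order, working throughout on the open locus $Z^{\wt\th}$, which by Lemma~\ref{sst} consists entirely of $\wt\th$-stable points, so that the quotient $\wt\mm = Z^{\wt\th}/\PGL_\tv$ is a geometric quotient and $\wt\mm$ is smooth (the $\wt\th$-stability being generic, this is the analogue of Lemma~\ref{smooth-stab} applied to the extended quiver $Q'$, respectively to the extended parabolic Higgs setting). The morphism $\pi$ is constructed by combining the forgetful map $p\colon Z\to \Rep_\bv\bar Q$ (which is $\PGL_\tv$-equivariant, with $\GL\leg$ acting trivially on the target) with the identification $\varkappa\colon \fz_\tv\iso \ft^*$ of diagram~\eqref{Zdiag}; commutativity of that diagram ensures $\pi$ lands in the fibered product $\mm^\th\times_{\g^*\dsl G}\ft^*$.

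For (P1), I would decompose the stability $\wt\th$ as a sum of $p^*\th$ and a piece $\th\leg$ supported on the leg vertices of $Q'$. The associated ample line bundle on $\wt\mm$ then splits as a tensor product of the pull-back of the ample line bundle on $\mm^\th$ with a line bundle that is relatively ample along the fibers of $\pi$. Concretely, Lemma~\ref{sst}(i) says that every $\rho_j^*$ is surjective, so at a fixed $\th$-stable $\rho$ the remaining $\wt\th$-stable data $(\rho_j,\rho_j^*)$ modulo $\GL\leg$ is parametrized, at each leg, by a tower of partial flag varieties for $\GL_{v_i}$, which are projective. This gives (P1).

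For (P2), the key input is that the moment map $\mu_\tv\colon \Rep_\tv\bar Q'\to \pgl_\tv^*$ is a smooth morphism on the $\wt\th$-stable locus: indeed, the obstruction to the surjectivity of its differential is precisely the stabilizer Lie algebra, which is trivial modulo scalars for $\wt\th$-stable points by Schur's lemma (this is the argument used in Lemma~\ref{smooth-stab}). Since $\fz_\tv\subset \pgl_\tv^*$ is the fixed subspace of the coadjoint action, it is transverse to all coadjoint orbits, so $Z^{\wt\th} = \mu_\tv\inv(\fz_\tv)^{\wt\th}\to\fz_\tv$ inherits smoothness. Passing to the geometric quotient by the freely acting $\PGL_\tv$ and composing with $\varkappa$ then gives smoothness of $\wt f\colon \wt\mm\to \ft^*$.

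For (P3), I would show that $p\colon Z\to \Rep_\bv\bar Q$ induces an isomorphism after base change to $\ft\reg$. By Lemma~\ref{simple}, any representation $\rho$ with $\mu(\rho)\in\g\reg$ has no proper subobjects and is in particular $\th$-stable; so the target $\mm^\th\times_{\g^*\dsl G}\ft\reg$ agrees with $\mu\inv(\g\reg)/G\times_{\g\reg\dsl G}\ft\reg$ by Corollary~\ref{quiv-gen}. Given such a $\rho$ and a $\k$-point $t\in\ft\reg$ lifting $\mu(\rho)$, one reconstructs the leg data recursively: writing $t$ via $\varkappa\inv$ as $(z_0,z_1,\ldots,z_{v-1})$ at each vertex, the equation $\rho_1\rho_1^* = \mu_\bv(\rho)-z_0\Id$ determines a matrix whose rank is exactly $v_i-1$ for each $i$ (because regularity of $t$ forces $z_0$ to appear as a simple eigenvalue of $\mu_\bv(\rho)$ with multiplicity one); thus $\rho_1^*$ and $\rho_1$ are uniquely determined up to the $\GL\leg$-action. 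Proceeding inductively, each subsequent equation $\rho_j^*\rho_j - \rho_{j+1}\rho_{j+1}^* = z^j\Id$ pins down $(\rho_{j+1},\rho_{j+1}^*)$ modulo $\GL_{v_i^{(j+1)}}$, giving an inverse to $\pi$ over $\ft\reg$.

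The main obstacle will be the precise combinatorial bookkeeping in (P3): one must verify that the eigenvalue pattern imposed by an element of $\ft\reg$ forces, at every step of every leg and at every vertex of $Q$, the associated matrix to have the correct drop in rank, so that the recursive reconstruction of $(\rho_j,\rho_j^*)$ is well-defined. For the parabolic Higgs setting, the same scheme applies once the extended quiver construction is replaced by the corresponding extended parabolic structure with legs of flag-type data attached at the marked points $c_i$, with the role of the moment map played by the residue map $\reshiggs$; the analogous smoothness, projectivity, and uniqueness arguments go through verbatim, using Lemma~\ref{simple} and Corollary~\ref{stab-cor} at the key steps.
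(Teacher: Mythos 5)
Your proof follows essentially the same route as the paper's (implicit) argument, but there are two places worth flagging.

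For~(P3), your inductive rank-drop reconstruction of the leg data is a computational version of what the paper does structurally in Lemma~\ref{wtcz}: there, one packages the leg data into a flag $F^j(\rho) = \Ker(\rho_j^*\circ\cdots\circ\rho_1^*)$ and identifies $\wt\cz^{\vth\text{-stab}}$ with $\cz^{\th\text{-stab}}\times_{\g^*}\wt\g$, after which (P3) is immediate from the fact that the Grothendieck--Springer map $\wt\g\to\g^*$ is an isomorphism over the regular semisimple locus. Your argument is correct: the relabeling $t_j=z_0+\cdots+z_{j-1}$ shows that the condition $z_1+\cdots+z_{j-1}\neq 0$ you need is exactly $t_j\neq t_1$, which is guaranteed by condition~(2) defining $\ft_\bv\reg$, so the rank of $\mu_\bv(\rho)-z_0\cdot\Id$ drops by exactly one, and so on inductively. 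But the flag-packaging approach is cleaner because it handles the bookkeeping once and for all and, importantly, gives more than what (P3) asks for --- an identification over the whole $\th$-stable locus, not just over $\ft\reg$.

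For~(P1), the assertion that the fibers of $\pi$ over a fixed $\th$-stable $\rho$ are parametrized ``by a tower of partial flag varieties'' is imprecise: the factorization data $(\rho_j,\rho_j^*)$ modulo $\GL\leg$ with the moment map constraints is cut out by algebraic conditions inside a flag variety and is rather the Springer-type fiber of $\mu_\bv(\rho)$, i.e.\ a (generally proper) closed subvariety of the flag variety. It is still projective, which is what matters, and this falls out directly of the Lemma~\ref{wtcz} identification together with properness of $\wt\g\to\g^*\times_{\g^*\dsl G}\ft^*$ --- that is the clean argument. Finally, the closing remark that the parabolic Higgs case goes through ``verbatim'' overstates the parallel: in Lemma~\ref{higgsP} the paper proves (P1) by factoring the Hitchin maps and uses the universal bundle identification (Lemma~\ref{mmgroth}) in place of the leg construction, and neither of these has a naive leg-data counterpart, so the Higgs case genuinely requires separate inputs.
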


For any $\bar\rho\in \Rep_\tv \bar \ql$,
we associate
 an $I$-graded partial
flag  $F^\hdot(\rho)$ in the  $I$-graded vector space $\AA^\bv$,
cf. \eqref{rho-diag},
 defined by $F^j=\Ker(\rho^*_j\ccirc\rho^*_{j-1}\ccirc\ldots\ccirc\rho^*_1)$,\
$j=1,2,\ldots$.
Note that if all the maps
 $\rho_j^*$ are surjective then  $F^\hdot(\bar\rho)$ is a complete
flag. Therefore, in the setting of Lemma \ref{sst}, the assignment 
$\xi=(\rho, \rho_j,\rho_j^*)\mto (\rho,\,F^\hdot(\rho))$ yields a map
$Z^\vth\to \Rep^\th\bar Q\times_{\g^*}\tg$, where $\tg$ is  the Grothendieck-Springer resolution of $\g^*\cong\pgl_\bv$.
This map factors through $\wt\cz$ and so does the composite map $Z^\vth\xrightarrow{\mu_\tv} \pgl_\tv^*=\pgl_\bv\to \g^*=\g$.
The proof of the following result is left for the reader.

\begin{lem}\label{wtcz} The above maps give a $G$-equivariant isomorphism
$\wt\cz^{\vth\stab}\cong \cz^{\th\stab}\times_{\g^*}\tg$. Hence,
one obtains a geometric quotient map
$\cz^{\th\stab}\times_{\g^*}\tg\to \wt\mm^{\vth\stab}$, by $G$.
\end{lem}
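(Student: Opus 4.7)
The plan is to display an explicit inverse for the forward assignment $\xi=(\rho,\rho_j,\rho_j^*)\mapsto (\rho, F^\bullet(\rho))$ constructed in the paragraph preceding the lemma, and to verify that the two maps are mutually inverse $G$-equivariant morphisms; the ``Hence'' clause will then follow by taking the $G$-quotient via Corollary~\ref{stab-cor}.

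First, I would verify that the forward map is a well-defined morphism $\wt\cz^{\vth\stab}\to \cz^{\th\stab}\times_{\g^*}\tg$. By Lemma~\ref{sst}(i), every $\rho_j^*$ arising from a $\vth$-semistable point is surjective, so $F^j:=\Ker(\rho_j^*\ccirc\cdots\ccirc\rho_1^*)$ has the expected codimension and $F^\bullet(\rho)$ is a genuine complete flag in $\AA^\bv$. A direct induction using the moment map relations~\eqref{g} shows that $\mu_\bv(\rho)(F^j)\subset F^j$ for all $j$, so $(\rho, F^\bullet(\rho))$ lies in $\Rep_\bv\bar Q\times_{\g^*}\tg$. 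The flag is defined intrinsically through the kernels of composites of $\rho_j^*$, hence is $\GL\leg$-invariant, so the map descends to $\wt\cz^{\vth\stab}$; $G$-equivariance is immediate. Finally, the base $\rho=p(\xi)$ is $\th$-semistable by~\eqref{sst-impl}, and one restricts attention to the open locus where it is in fact $\th$-stable (for a sufficiently generic $\vth$ the reverse implication in~\eqref{sst-impl} identifies these loci).

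Next I would construct the inverse. Given $(\rho, F^\bullet)\in \cz^{\th\stab}\times_{\g^*}\tg$, pick a basis of $\AA^\bv$ (in each $I$-component) compatible with $F^\bullet$, identify $\AA^{\bv_j}\cong \AA^\bv/F^j$, and let $\rho_j^*$ be the canonical surjection. Since $F^1$ is $1$-dimensional in each $I$-component, $\mu_\bv(\rho)$ acts on $F^1$ by a scalar $z_0$, and the operator $\mu_\bv(\rho)-z_0\Id$ vanishes on $F^1=\Ker\rho_1^*$; it therefore factors uniquely as $\rho_1\rho_1^*$ with $\rho_1:\AA^{\bv_1}\to\AA^\bv$ well-defined. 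Inductively, the operator $\rho_j^*\rho_j$ on $\AA^{\bv_j}$ is shown to act by a scalar $z_j$ on the $1$-dimensional subspace $\rho_j^*(F^{j+1})=\Ker\rho_{j+1}^*$, whence $\rho_j^*\rho_j-z_j\Id$ factors uniquely through $\rho_{j+1}^*$ to give $\rho_{j+1}:\AA^{\bv_{j+1}}\to\AA^{\bv_j}$ satisfying the next relation in~\eqref{g}. The resulting $\xi=(\rho,\rho_j,\rho_j^*)$ obeys~\eqref{g}, is $\vth$-stable by Lemma~\ref{sst}(ii), and its class in $\wt\cz$ is independent of the initial basis choice, since any two such bases differ by $\GL\leg$. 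Checking that the two assignments are mutual inverses is then direct from the definitions.

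The main technical obstacle is the inductive step of the inverse construction: one must show that the scalar $z_j$ equals the difference $t_{j+1}-t_j$, where $t_{j+1}$ is the eigenvalue by which $\mu_\bv(\rho)$ acts on the $1$-dimensional quotient $F^{j+1}/F^j$ (using flag-preservation). Verifying this recursively---and checking compatibility with the identification $\varkappa:\fz_\tv\iso\ft^*$ used in diagram~\eqref{Zdiag}, so that the total data $(z_0,\ldots,z_{v-1})$ matches the eigenvalue tuple---requires careful bookkeeping, particularly in the multi-vertex case where the identifications must hold componentwise in $I$. With this in hand, $G$-equivariance of the inverse is transparent, and the second assertion of the lemma follows: since $\varpi:\cz^{\th\stab}\to\mm^{\th\stab}$ is a $G$-torsor by Corollary~\ref{stab-cor}, the established isomorphism combined with the geometric quotient $\wt\cz^{\vth\stab}\to\wt\mm^{\vth\stab}$ exhibits $\cz^{\th\stab}\times_{\g^*}\tg\to\wt\mm^{\vth\stab}$ as a geometric quotient by $G$.
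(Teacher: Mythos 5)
The paper explicitly states ``The proof of the following result is left for the reader,'' so there is no argument in the text to compare against; your construction of an explicit inverse is the natural way to fill this gap and is correct in substance. The forward direction (well-definedness, $\GL\leg$-invariance of the flag, and $G$-equivariance) is fine, and the recursive construction of the inverse is sound: the cleanest way to organize the ``bookkeeping'' you worry about is to prove, by induction on $j$, that under the identification $\AA^{\bv_j}\cong\AA^\bv/F^j$ furnished by $\rho_j^*\cdots\rho_1^*$, one has $\rho_j^*\rho_j=\bar g_j-(z_0+\cdots+z_{j-1})\Id$, where $\bar g_j$ is the operator induced by $g=\mu_\bv(\rho)$ on the quotient. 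The base case is immediate from $\rho_1\rho_1^*=g-z_0\Id$, and the inductive step follows by conjugating through the surjection $\rho_{j+1}^*$ and using that $\bar g_j$ descends to $\bar g_{j+1}$. This identity shows in one stroke that $\rho_j^*\rho_j-z_j\Id$ annihilates $\Ker\rho_{j+1}^*$ (since $\bar g_j$ acts there by $t_{j+1}=z_0+\cdots+z_j$), so the factorization through $\rho_{j+1}^*$ is well defined, and simultaneously establishes compatibility with the identification $\varkappa$ in diagram~\eqref{Zdiag}.

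One point deserves more care than your parenthetical remark gives it: you invoke ``the reverse implication in~\eqref{sst-impl}'' to identify $\wt\cz^{\vth\stab}$ with the locus where $p(\xi)$ is $\th$-stable, but the implications in \eqref{sst-impl} as stated run only one way (from $\th$-stability of $p(\xi)$ to $\vth$-stability of $\xi$, and from $\vth$-semistability of $\xi$ to $\th$-\emph{semi}stability of $p(\xi)$). To conclude that every $\vth$-stable $\xi$ has $\th$-stable $p(\xi)$, one needs an extra argument: given a $\th$-destabilizing subobject $\rho'\subset p(\xi)$, propagate it through the legs using the images $\rho_j^*\cdots\rho_1^*(\rho')$ (which define a subobject of $\xi$ thanks to the moment-map equations and the kernel identity above), and check that for $\vth$ with sufficiently small negative leg-coordinates this subobject $\vth$-destabilizes $\xi$. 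This is the same genericity used in Lemma~\ref{sst} and is routine, but it should be spelled out rather than waved at.
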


Next, we consider the setting of parabolic Higgs bundles.
Let $\wt\br=(\wt r^{(j)}_i)$ be the type that corresponds to taking complete flags at each of
the marked points $c_i,\ i\in I$, i.e. we have $\wt r^{(j)}_i=1$ for all $i,j$.
The group $\GL_{\wt\br}/G$ is the maximal  torus $T$, of $(\prod_\ii\ \GL_r)/\GG$,
so $\Lie(\GL_{\wt\br}/GG)=\ft$ and $W_{\wt\br}=\{1\}$.
Note that writing $\wt\br=(\wt r^{(j)}_i)$ we have $\mathsf{gcd}\{\wt r^{(j)}_i, (i,j)\in \Xi(\bm)\}=1$.
A stability condition $\vth$,
for parabolic Higgs bundles of type $\wt\br$,
may  be viewed as a point in  the interior of the
fundamental alcove in $\ft$.
For  a sufficiently general $\vth$,

Since $\mathsf{gcd}\{\wt r^{(j)}_i, (i,j)\in \Xi(\bm)\}=1$,
any $\vth$-semistable parabolic Higgs bundle is  $\wt\th$-stable, provided
the stability condition $\vth$ is sufficiently general. 
Thus, the corresponding coarse moduli space
$\mm^\vth=\mm^{\vth\stab}$ is smooth, the map $\cz^\vth\to\mm^\vth$ is a $T$-torsor,
and the moment $\mu^\vth: \cz^\vth\to\ft^*$ is a smooth morphism. It follows that the induced morphism
$f^\vth: \mm^\vth\to\ft^*$ is smooth as well.

Now, given an arbitrary type $\br\in\Xi(\bm)$, one has the natural morphism
$p: \hig_{\wt\br}\to \higgs$, of stacks, that sends complete
flags in the fibers to the corresponding partial flags of type $\br$.
The group $G=\GL_\br/\GG$ is a Levi subgroup of  $(\prod_\ii\ \GL_r)/\GG$, so
a  stability condition  on parabolic Higgs bundles of type $\br$ may be viewed as a point 
$\th$ on
a wall of the fundamental alcove.
It follows that for  a sufficiently general  element $\wt\th\in \ft$ in the interior of the
fundamental alcove
which is also  sufficiently close to $\th$ all implications in
\eqref{sst-impl} hold. We fix such a $\wt\th$.
Then, the map $p$ induces well-defined maps $p_\cz: \cz^\vth\to\cz^\th$, resp.
$p_\mm: \mm^{\wt\th}\to\mm^\th$, and one has a commutative diagram, cf. \eqref{hitch-diag}:
\beq{ppmm}
\xymatrix{
\mm^\vth\ar[d]^<>(0.5){f^\vth}\ar[rr]^<>(0.5){\varpi} && \mm^\th\ar[d]^<>(0.5){f^\th}\\
\ft^*\ar@{->>}[rr]&& \g^*\dsl G=\ft^*\dsl W.
}
\eeq

Let $\tg\to\g^*$  denote the Grothendieck-Springer resolution
viewed as a map of $G$-varieties. 
Associated with the $G$-torsor $\cz^{\th\text{-stab}}\to \mm^{\th\text{-stab}}$
and the $G$-viriety $\tg$, one has an associated bundle
$\cz^{\th\text{-stab}}\times_G \tg\to\mm^{\th\text{-stab}}$.
\begin{lem}\label{mmgroth}
There is a natural isomorphism $p_\mm\inv(\mm^{\th\text{-stab}})\cong \cz^{\th\text{-stab}}\times_G \tg$,
of schemes over $\mm^{\th\text{-stab}}$. Furthermore, the natural map
$\cz^{\th\text{-stab}}\times_{\g^*}\tg\to \cz^{\th\text{-stab}}\times_G \tg$
is a geometric quotient by $G$.
\end{lem}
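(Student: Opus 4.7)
The plan is to identify $p_\mm\inv(\mm^{\th\stab})$ with the $G$-quotient of $\cz^{\th\stab}\times_{\g^*}\tg$ via a direct moduli-theoretic description. A point of $p_\mm\inv(\mm^{\th\stab})$ is the isomorphism class of a $\vth$-stable complete-flag Higgs bundle $(\V,\widetilde V^\bullet, u)$ of type $\wt\br$ whose underlying partial-flag Higgs bundle $(\V, V^\bullet, u)$ of type $\br$ is $\th$-stable. Over such a base, the fiber of $p_\mm$ parametrizes complete-flag refinements $\widetilde V^\bullet$ of $V^\bullet$ such that each residue $\textit{res}_i(u)$ preserves $\widetilde V^{\bullet}_i$; equivalently, after passing to graded pieces, the element $\gr\textit{res}_i(u)\in\oplus_j\End(V^{(j)}_i/V^{(j+1)}_i)$ must preserve the induced complete flag in each block.

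To connect this with the Grothendieck-Springer variety, I would use the $G$-torsor $\cz^{\th\stab}\to \mm^{\th\stab}$ supplied by Corollary \ref{stab-cor} to trivialize framings. A point of $\cz^{\th\stab}$ over $[(\V,V^\bullet, u)]$ is a collection of bases in the graded pieces $V^{(j)}_i/V^{(j+1)}_i$, under which the graded residues assemble into a well-defined element $g\in \g^*$ (identified with $\g$ via the trace form on $\pgl_\br$). A compatible complete-flag refinement then corresponds to a Borel subalgebra of $\g$, and the residue-preservation condition becomes precisely the requirement that this Borel contains $g$, i.e., that it defines a point of the fiber $\tg_g$ of the Grothendieck-Springer map $\tg\to\g$. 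This produces a natural $G$-equivariant morphism $\cz^{\th\stab}\times_{\g^*}\tg \to \hig_{\wt\br}$ factoring through $p\inv$ of the $\th$-stable locus.

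The crucial verification is that every complete-flag Higgs bundle obtained this way is automatically $\vth$-stable. I would argue this by a perturbation-of-stability argument: any sub-object $(\W, \widetilde V^\bullet\cap\W, u|_\W)$ of the complete-flag Higgs bundle yields a sub-object $(\W, V^\bullet\cap\W, u|_\W)$ of the partial-flag one, whose $\vth$-slope differs from its $\th$-slope by a quantity controlled by $\|\vth-\th\|$. Because $\vth$ is chosen sufficiently close to $\th$ and generic, the strict slope inequalities given by $\th$-stability of $(\V, V^\bullet, u)$ propagate to strict $\vth$-stability of $(\V,\widetilde V^\bullet, u)$. Combined with the preceding paragraph, this identifies $p_\mm\inv(\mm^{\th\stab})$ with $(\cz^{\th\stab}\times_{\g^*}\tg)/G$, the object denoted $\cz^{\th\stab}\times_G\tg$ in the statement. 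The geometric quotient statement then follows immediately: since $\cz^{\th\stab}\to \mm^{\th\stab}$ is a $G$-torsor, the group $G$ acts freely via the first factor on $\cz^{\th\stab}\times_{\g^*}\tg$, so the quotient map is itself a $G$-torsor, hence a geometric quotient. The main obstacle is the stability verification, where one must carefully control $\vth$ to ensure no wall-crossing occurs between $\vth$ and $\th$; this is the same mechanism underlying the implications \eqref{sst-impl} in the quiver case.
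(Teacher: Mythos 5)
Your plan is correct in outline and shares the paper's central geometric picture (use the $G$-torsor $\cz^{\th\text{-stab}}\to\mm^{\th\text{-stab}}$ to identify the complete-flag refinements with Borel subalgebras containing the framed graded residue, i.e., with points of the Grothendieck-Springer variety), but it differs in emphasis and effort allocation. The paper's proof is considerably shorter because it leans on two things you re-derive or leave implicit. First, the implication ``$p_\mm(\xi)$ is $\th$-stable $\Rightarrow$ $\xi$ is $\vth$-stable'' is not an open problem at this point: it is exactly one of the arrows in \eqref{sst-impl}, which the paper has already arranged (in \S\ref{resolution}, around diagram \eqref{ppmm}) by choosing $\vth$ a sufficiently general, sufficiently small deformation of $\th$ into the interior of the alcove. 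Your ``perturbation-of-stability'' paragraph is a correct sketch of that implication, but it re-proves an available fact rather than citing it, and you misidentify it as the main obstacle when in fact the paper treats it as already disposed of. Second, your description of the correspondence is carried out pointwise, which gives a bijection of closed points but not directly an isomorphism of schemes over $\mm^{\th\text{-stab}}$; the paper bridges this gap by invoking that $\mm^{\th\text{-stab}}$ is a \emph{fine} moduli space (Lemma \ref{smooth-stab} / Corollary \ref{stab-cor}), so there is a universal parabolic Higgs bundle $\V$ whose $F$-frame bundle is exactly $\cz^{\th\text{-stab}}$, and the universal property identifies the restriction of the universal bundle $\wt\V$ on $\mm^{\vth}$ over $p_\mm\inv(\mm^{\th\text{-stab}})$ with $p_\mm^*\V$ together with the extra complete-flag data; this is what upgrades the fiberwise ``Borel containing $g$'' picture to a scheme-level isomorphism. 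So the routes are close, but the universal-family argument is what makes the paper's proof short and scheme-theoretically tight, whereas your plan would need an explicit verification that the constructed $G$-equivariant morphism $\cz^{\th\text{-stab}}\times_{\g^*}\tg\to p_\mm\inv(\mm^{\th\text{-stab}})$ is an isomorphism (not merely a bijection on points), and can simply cite \eqref{sst-impl} rather than reproving it.
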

\begin{proof}
The isomorphism in the bottom line of the diagram of Lemma
\ref{smooth-stab} implies that $\mm^{\th\text{-stab}}$ is a fine moduli space, in
particular, there is a universal vector bundle $\V$, on $\mm^{\th\text{-stab}}\times C$,
equipped with the parabolic structure of type $\br$ and with the
universal Higgs field. The quotient map
$\cz^{\th\text{-stab}}\to \mm^{\th\text{-stab}}$ is a $G$-torsor, the frame bundle associated with 
the vector bundle $F(\V)$ on $\mm^{\th\text{-stab}}$ given by formula \eqref{bun-functor}.
There is
a similar universal vector bundle $\wt\V$ on  $\wt\mm=\mm^{\wt\th\text{-stab}}\times C$.
Let $\wt\mm^{\th\text{-stab}}:=p_\mm\inv(\mm^{\th\text{-stab}})$.
Using universal properties of  moduli spaces, it is easy to show that
the restriction of $\wt\V$ to $\wt\mm^{\th\text{-stab}}$ is  canonically isomorphic to
the vector bundle $p_\mm^*\V$. Moreover,
the universal parabolic structure 
of type $\wt\br$ on this vector bundle and the universal Higgs field yield the isomorphism claimed in
the lemma. The second statement of the lemma is proved similarly.
\end{proof}

By commutativity of \eqref{ppmm}, the map $p_\mm\times f^\vth$ factors through a map
$\pi: \mm^{\wt\th}\to\mm^\th\times_{\g^*/G}\ft^*$.

\begin{lem}\label{higgsP} Properties {\em {(P1)-(P3)}} hold for  $\wt\mm=\mm^\vth$ and 
the map 
$\pi$  defined above.
\end{lem}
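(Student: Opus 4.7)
The plan is to verify properties (P1)--(P3) separately, relying on three structural inputs: the common Hitchin base $\hit$ shared by moduli spaces of parabolic Higgs bundles of any type over the curve $C$ with divisor $D$, the smoothness statement of Lemma~\ref{smooth-stab} applied in the $\vth$-stable regime, and the rigidity of refining partial flags to complete flags at regular-semisimple residues, as guaranteed by Lemma~\ref{simple}.

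For (P1), I first note that the projection $pr_1:\mm^\th\times_{\g^*/G}\ft^*\to\mm^\th$ is a base change of the finite morphism $\ft^*\to\ft^*/W$, hence finite. Therefore its source is finite, in particular projective, over $\mm^\th$, which in turn is projective over $\hit$. The stability parameter $\vth$ was chosen generic and, combined with $\mathsf{gcd}\{\wt r^{(j)}_i\}=1$, this forces every $\vth$-semistable parabolic Higgs bundle of type $\wt\br$ to be $\vth$-stable, whence $\mm^\vth$ is itself projective over the same Hitchin base $\hit$. Thus $\pi$ is a morphism between two projective $\hit$-schemes, and is therefore projective.

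For (P2), since every point of $\cz^\vth$ is $\vth$-stable, the parabolic analog of Lemma~\ref{smooth-stab} yields that $\cz^\vth$ is smooth and the moment map $\mu^\vth:\cz^\vth\to\ft^*$ is a smooth morphism. The canonical quotient $q:\cz^\vth\to\mm^\vth$ is a $T$-torsor by Corollary~\ref{stab-cor}, in particular a faithfully flat smooth surjection. Since the factorization $\mu^\vth=f^\vth\circ q$ expresses a smooth morphism as the composition of $f^\vth$ with a smooth surjection, smoothness of $f^\vth$ follows by descent along $q$.

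For (P3), I expect this to be the step requiring the most care. Pointwise, a closed point $\xi\in (f^\vth)^{-1}(\ft\reg)$ has moment map image in $\g\reg$, so by Lemma~\ref{simple} it has no nonzero proper subobjects and is in particular stable for any stability condition, including $\th$. Given a $\th$-stable parabolic Higgs bundle of type $\br$ with regular-semisimple graded residues, together with a lift of its characteristic datum to $\ft\reg$, there is a unique refinement of each partial flag $V_i^\bullet$ to a complete flag whose graded pieces realize the prescribed ordering of eigenvalues of $\gr\textit{res}_i(u)$, because each piece $V_i^{(j)}/V_i^{(j+1)}$ has distinct eigenvalues under the residue action and its eigenspace decomposition is unique. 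The main obstacle is upgrading this pointwise bijection to a scheme-theoretic isomorphism. I plan to do this by constructing a universal refinement over the regular-residue locus in the spirit of Lemma~\ref{mmgroth}: on the base $\mm^\th\times_{\g^*/G}\ft\reg$ the universal graded residue endomorphism has distinct eigenvalues with a distinguished ordering, and the corresponding ordered eigenline decomposition produces a universal family of complete flags of type $\wt\br$, realising $\mm^\th\times_{\g^*/G}\ft\reg$ as a fine moduli space for $\vth$-stable refinements. Alternatively, $\pi$ being proper by (P1), and (P2) combined with the étale $W$-cover structure over $\ft\reg$ showing the target is smooth hence normal, Zariski's main theorem concludes once the fibers over the regular locus are verified to be reduced singletons by the pointwise discussion above.
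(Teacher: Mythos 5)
Your proof is essentially correct, and for (P1) and (P2) it runs on the same rails as the paper: projectivity of $\pi$ comes from the projectivity of both Hitchin maps over the common Hitchin base $\hit$ together with finiteness of $\mm^\th\times_{\g^*/G}\ft^*\to\mm^\th$, and smoothness of $f^\vth$ follows from smoothness of $\mu^\vth$ and descent along the $T$-torsor $\cz^\vth\to\mm^\vth$. Two small remarks on (P1): your parenthetical that the coprimality forces semistable $=$ stable is relevant to smoothness of $\mm^\vth$, not to projectivity of $\kappa^{\vth}$ over $\hit$ (the latter is a standard property of the Hitchin map for semistable parabolic Higgs bundles); and one should note that the projectivity of $\pi$ follows formally from $\kappa^{\vth}=\kappa^\th\circ pr_1\circ\pi$ with $\kappa^{\vth}$ proper and the Hitchin map on the target separated.

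For (P3) your instinct is right but you stop short of the key observation: Lemma~\ref{mmgroth} already gives a scheme-theoretic identification $p_\mm^{-1}(\mm^{\th\text{-stab}})\cong\cz^{\th\text{-stab}}\times_G\wt\g$ over $\mm^{\th\text{-stab}}$, so the restriction of $\pi$ to the regular locus becomes precisely the morphism $\cz\reg\times_G\wt\g\reg\to(\cz\reg\times_G\g\reg)\times_{\g^*/G}\ft\reg$ induced by the Grothendieck--Springer map $\wt\g\to\g^*\times_{\g^*/G}\ft^*$. Since $\wt\g\reg\to\g\reg\times_{\g^*/G}\ft\reg$ is an isomorphism (a standard fact about the regular semisimple locus), (P3) follows at once with no need to construct a universal refinement by hand or to invoke Zariski's main theorem. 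Your eigenspace/eigenordering discussion is exactly the pointwise shadow of this identification, and your ZMT fallback also works (properness from (P1), normality of the target from smoothness of $\mm^{\th\text{-stab}}$ via Lemma~\ref{smooth-stab} and the \'etaleness of $\ft\reg\to\g\reg\dsl G$, singleton fibers from uniqueness of the refinement), but it is a longer route to the same conclusion. You should also note explicitly that the constructed refinement of a point of $\mm\reg$ is automatically $\vth$-stable: by Lemma~\ref{simple} the original object has no nonzero proper subobjects, hence neither does its flag-refinement, so the pointwise bijection lands in the $\vth$-stable locus.
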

\begin{proof} 
Since every $\wt\th$-semistable point is $\wt\th$-stable, the variety $\mm^\vth$ is smooth.
The Hitchin map descends to a map $\kappa^\th: \mm^\th\to \hit$, resp. $\kappa^{\wt\th}: \mm^\vth\to\hit$.
It is known that  $\kappa^\th$ and $\kappa^{\wt\th}$ are projective morphisms.
Let $G/B$ be the flag variety.
It is known also that one has
\[\dim \mm^\vth=2\big(r^2(g-1)+1+\dim G/B\big)=2\dim \hit\]
and the fibers of the Hitchin map have dimension $\leq \dim \mm^\vth$.
It follows that the map $\kappa^{\wt\th}$ is dominant, hence surjective.
Further, it is clear that we have $\kappa^{\wt\th}=\kappa^\th\ccirc p_\mm$.
We deduce that $p_\mm$, hence $\pi$,
is a  projective morphism, proving (P1). 
Property (P2) is clear since the map $\wt f$ in (P2) equals $f^\vth$ and we know that $f^\vth$ is a smooth morphism.

Next, we use  Lemma  \ref{mmgroth} to identify the map $p_\mm\inv(\mm^{\th\text{-stab}})\to \mm^{\th\text{-stab}}\times_{\g^*/G}\ft^*$,
obtained from $\pi$ by restriction, with
the morphism 
\[\cz^{\th\text{-stab}}\times_G\wt\g\to (\cz^{\th\text{-stab}}\times_G\g^*)\times_{\g^*/G}\ft^*\]
induced by the natural map $\tg\to \g^*\times_{\g^*/G}\ft^*$.
Further, by Lemma \ref{simple} we have $\mm\reg\sset \mm^{\th\text{-stab}}$, resp.
$\cz\reg\sset \cz^{\th\text{-stab}}$.  Therefore, the restriction of $\pi$ to 
$\pi\inv(\mm\times_{\g^*/G}\ft\reg)$ may be identified with
the morphism $\cz\reg\times_G\wt\g\reg\to (\cz\reg\times_G\g\reg)\times_{\g^*/G}\ft\reg$.
The latter morphism is an isomorphism, proving (P3).
\end{proof}

\subsection{Proof of Theorem \ref{la-thm}(i),(iii),  and also parts (ii), (iv) in the coprime case}
The statement in (iii) follows  from Lemma \ref{smooth-stab} and Corollary \ref{quiv-gen}.
To prove other statements, 
let $\Nil$ be the nilpotent variety of $\g^*$.
We write $\wt\mm\reg=\pi\inv(\mm\times_{\g^*\dsl G}\ft\reg)$, resp.
$\mm\nil=(f^\th)\inv(0),\ \wt\mm\nil=\wt f\inv(\Nil)$, and
  $\pi\nil$, resp. $\pi\gen$, for the restriction of $p$ to
$\wt\mm\nil$, resp. $\wt\mm\reg$.
Thus, we obtain a commutative diagram:
\beq{grsp}
\xymatrix{
\wt\mm\nil \ar[d]^<>(0.5){\pi\nil}
\ar@{^{(}->}[rr]&&
\wt\mm\ar[d]^<>(0.5){\pi}\ar@/_4.5pc/[dd]|-<>(0.8){\wt
f}&&\ar@{_{(}->}[ll]\
\wt\mm\reg\ar[d]^<>(0.5){\pi\gen}_<>(0.5){\cong}\\
\mm\nil\ar[d]\ar@{^{(}->}[rr]^<>(0.3){\imath}&& \mm\times_{\g^*\dsl G}\ft^*\
\ar[d]^<>(0.5){pr_2}&&\ar@{_{(}->}[ll]_<>(0.5){\jmath}\
\mm\times_{\g^*\dsl G}\ft\gen\ \ar[d]^<>(0.5){pr_2}\\
%\ar[r]^<>(0.5){pr_1}&\ \mm^\ \ar[d]^<>(0.5){f}\\
\{0\}\ \ar@{^{(}->}[rr]&& \ft^* &&\ar@{_{(}->}[ll]\
\ft\gen
}
\eeq
Here, we have used
that the reduced scheme associated with 
$\mm\times_{\g^*\dsl G}\{0\}$ is isomorphic to
$\mm\nil$.

We consider  diagram
\eqref{pip} in the case where $\wt X=\wt\mm,\
X=\mm\times_{\g^*\dsl G}\ft^*$, and $S=\ft^*$, so the diagram
is:
 $\ \wt\mm\xrightarrow{\pi} \mm\times_{\g^*\dsl G}\ft^*\xrightarrow{pr_2}\ft^*$.
Properties (P1)-(P3) insure that   Proposition \ref{mylem} is applicable.
Using base change in diagram \eqref{grsp}, we deduce isomorphisms
\beq{ijiso}p^{\vth}_!\C_{\wt\mm^\vth}=\jmath_{!*}\C_{\mm^\th\times_{\g^*\dsl G}\ft\reg},\quad
(p^{\vth}\nil)_!\C_{\wt\mm^\vth\nil}=\imath^*\jmath_{!*}\C_{\mm^\th\times_{\g^*\dsl G}\ft\reg}.
\eeq
Part (i)  of the theorem now follows from this.

We have natural $\GG$-actions on $\mm$, resp. $\wt\mm$, etc. In the case of quiver representations,
the action is induced by the $\GG$-action on $\Rep_\bv\bar Q$, resp. $\Rep_\tv\bar Q'$, by dilations.
In the case of parabolic  Higgs bundles  the $\GG$-action
 is obtained by rescaling the Higgs field.
Further, the dilation action on $\ft_\bv^*$ induces a $\GG$-action on $\ft^*\dsl W$. Each of
the maps $\wt\mm\to\mm$ and $\mm\to \ft^*\dsl W$ is
$\GG$-equivariant. It follows,
thanks to \cite[Corollary 1.3.3]{HV2}
(cf. also \cite[Appendix B]{HLV2} and Theorem \ref{pure_res} of the
present paper),  that
the restriction map $H^\hdot(\wt\mm)\to
H^\hdot(\wt f\inv(z))$ is an isomorphism
for any $z\in\ft^*$ and that 
  the sheaf
$\wt f_!\C_{\wt\mm}$ is   geometrically constant,
cf. Theorem \ref{pure_res}). 
If  $z=0$ we have $\wt f\inv(0)=\wt\mm\nil$.
On the other hand, let  $z\in \ft\gen$ and  $\eta={\mathfrak
   w}_\ft(z)\in\ft\gen\dsl W=\g^*\dsl G$.
The preimage of $\eta$ under the quotient
map $\fw_\g: \g^*\to \g^*\dsl G$ is a regular semisimple coadjoint orbit
$O$ in
$\g^*$.
The map $\pi\gen$ yields an isomorphism
 $\wt f\inv(z)\iso  f_\mm\inv(\eta)=\mu_\cz\inv(O)\dsl G$.
Let $\IC:=\jmath_{!*}\C_{\mm^\th\times_{\g^*\dsl G}\ft\reg}$. We  deduce
\beq{f-iso}
H_c^\hdot(\mm\nil,\,\imath^*\IC)\ccong
H_c^\hdot(\wt\mm\nil,\,\C_{\mm\nil})\ccong H_c^\hdot(f_\mm\inv(\eta),\,\C)
\ccong H_c^\hdot(\mu_\cz\inv(O)\dsl G) \ccong H_c(\mm_O).
\eeq
It is clear from the construction that the composite isomorphism 
respects the natural monodromy actions. In particular, the monodromy
action on $H_c^\hdot\big(\mu_\cz\inv(O)\dsl G,\ \C\big)$
factors  through a $W$-action. Alternatively, 
this follows from the fact that  the sheaf
$\wt f_!\C_{\wt\mm}$ is   geometrically constant.

We now assume that we are in the coprime case, that is,
the dimension vector $\bv$, resp. the vector $(\br,d)$ in the parabolic bundle case,
is indivisible and, moreover, the stability condition $\th$ is sufficiently general.
In such a case, we have $\mm:=\mm^\th=\mm^{\th\stab}$, resp $\cz:=\cz^\th=\cz^{\th\stab}$.
Thus, the map $\mu_\cz:=\mu^\th: \cz\to\g^*$  is a smooth morphism, so the
map $pr_\tg: \ \cz\times_{\g^*}\wt\g\to \tg$, obtained by base change from $\mu_\cz$,
is a smooth morphism as well.
Hence  $\cz\times_{\g^*}\wt\g$ is a smooth variety and Lemma \ref{mmgroth} implies that the map $\wt\varpi: \cz\times_{\g^*}\wt\g\to \wt\mm$ 
is a universal geometric quotient by $G$.

To complete the proof, we use the following
commutative
 diagram 
\beq{bs2}
\xymatrix{
&\mu_\cz\inv(\Nil)\ \ar[dl]_<>(0.5){pr}\ar@{^{(}->}[r]\ar@{}[d]|{\Box}&
\ \cz\  \ar[dl]_<>(0.5){pr}\ar[dr]^<>(0.5){\mu_\cz}&
\ \mu_\cz\inv(\Nil)\ \ar@{}[d]|{\Box}
\ar@{_{(}->}[l]\ar[dr]^<>(0.5){\mu_\cz}&
\\
\mm\nil\ \ar[dr]\ar@{^{(}->}[r]&
\mm\ar[dr]^<>(0.5){f_\mm}&&\ \g^*\  
\ar[dl]^<>(0.5){\fw_\g}&\ \Nil\ 
\ar@{_{(}->}[l]\ar[dl]\\
&
\{0\}\ \ar@{^{(}->}[r]&\ \g^*\dsl G\ &
\{0\}\ \ar@{_{(}->}[l]&
}
\eeq

Let $pr_\cz: \cz\times_{\g^*}\tg\to\cz$, resp. $pr_\mm: \mm \times_{\g^*\dsl G}\ft^*\to\mm$
and $pr_{\g^*}: \g^*\times_{\g^*\dsl G}\ft^*\to\g^*$, be the 
first projection and $pr\reg_\cz$,  resp. $pr\reg_\mm$ and $pr\reg_{\g^*}: \g^*\times_{\g^*\dsl G}\ft\reg\to\g\reg$,
its restriction to $\cz\reg\times_{\g^*}\tg\to\cz$,
resp. $\mm\times_{\g^*\dsl G}\ft\reg$ and $\g^*\times_{\g^*\dsl G}\ft\reg$.
We also use similar notation $pr_{\cz,\op{nil}}$, resp. $pr_{\mm,\op{nil}},\, pr_{\g^*,\op{nil}}$
for the corresponding maps over $\Nil$.
For $\rho\in\Irr(W)$, let 
 $\IC_{\rho, \cz}$, resp.  $\IC_{\rho, \mm}$ and $\IC_{\rho, \g^*}$, denote the IC-extension of the $\rho$-isotypic
component of the local system $(pr\reg_\cz)_!\C_{\cz\reg\times_{\g^*}\tg}$,
resp. $(pr\reg_\mm)_!\C_{\mm\times_{\g^*\dsl G}\ft\reg}$ and $(pr\reg_{\g^*})_!\C_{\g^*\times_{\g^*\dsl G}\ft\reg}$.
We have $\wt f_!\C_{\wt\mm}=(pr_\mm)_!\pi_!\C_{\wt\mm}=(pr_\mm)_!\IC=\oplus_\rho\ \IC_{\rho, \mm}$.
On the other hand, by  smooth base change  for the cartesian squares in the above diagram, we get
\beq{bs3}
pr_{\cz,\op{nil}}^*\imath^*_{\mm\nil\to\mm}\IC_{\rho, \mm}=\imath_{\cz\nil\to\cz}^*\IC_{\rho, \cz}=\mu_{\cz,\op{nil}}^*\imath_{\Nil\to\g^*}^*\IC_{\rho, \g^*}.
\eeq
In the special case $\rho=\op{sign}$, the sheaf $\imath_{\Nil\to\g^*}^*\IC_{\rho, \g^*}$ is known to be the sky-scrapper sheaf $\C_0$,
at $\{0\}\sset\Nil$, by the theory of Springer representations.
It follows that  $\imath^*_{\mm\nil\to\mm}\IC_{\sign,\mm}=\C_{\mm_0}$
and, hence, $H^\hdot_c(\mm\nil)^{(\sign)}=H^\hdot_c(\mm_0)$, as required in Theorem
\ref{la-thm}(ii).\qed
\medskip

Using the definition of an IC-sheaf, from the first isomorphism  in \eqref{ijiso} we deduce the following corollary
that resembles a result of Reineke \cite{Rei}:
\begin{cor}
The map $p^{\vth}: \mm^{\vth}_\tv\to \wt\mm^\th$ is a small resolution.
\end{cor}

\subsection{The support of $\imath^*\IC(\cl_\sign)$}
To complete the proof of   Theorem \ref{la-thm} it remains to show that the sheaf $\imath^*\IC(\cl_\sign$
is supported on $\mm^\th_0$.
In the coprime case, this has been established in the previous subsection.
We reduce the general case to  the coprime case.
To this end,  
we  perform constructions of  previous sections in a slightly different setting 
where we `frame' by legs all elements of $I$ except one.

Specifically, assume first that  $\# I>1$,  fix $\ii$ and let $I\nat=I\sminus\{i\}$.
We  mimic the constructions of  previous sections with the set $I$ being replaced by $I\nat$.
Thus, in the parabolic bundle case
we only consider complete flags at the marked points $i\in I\nat$.
Similarly, in the quiver case, we only add legs at the vertices of $I\nat$. 
To simplify the notation, we  give a detailed  construction  in the setting of parabolic bundles, the quiver setting
being similar.

We write $\bm=(\bm\nat,m_i)$, where $\bm\nat$ is an $I\nat$-tuple.
Fix a type $\br=(r^{(j)}_{i'})\in \Xi(\bm)$ and put
 $G\nat=\prod_{(i',j)\in\Xi(\bm\nat)}\  GL_{r^{(j)}_{i'}}$,  resp. $W\nat:=\prod_{(i',j)\in \Xi(\bm\nat)}\ \si_{r^{(j)}_{i'}}$,
and $G_i=(\prod_j \GL_{r^{(j)}_{i}})/\GG$,  resp.  $ W_i=\si_{r^{(j)}_{i}}$. Let  
$\g\nat$ and $\ft\nat$, resp. $\g_i$ and $\ft_i$, be the Lie algebra and the Cartan subalgebra of $G\nat$, resp. $G_i$.
The natural imbedding $G\nat\into \GL_\br$ induces a short exact sequence
$
1\to G\nat\to G \to G_{v_i}\to 1$, where $G=\GL_\br/\GG$.
The induced extension of Lie algebras has 
a natural splitting $\g=\g\nat\oplus\g_i$.
To simplify the exposition, we will use this splitting to identify
$\ft=\ft\nat\oplus\ft_i,\ \g^*\dsl G=\g\nat^*\dsl G\nat\times \g_i^*\dsl G_i$, etc.

In addition to the type $\wt\br$ considered in \S\ref{resolution}, we now introduce the type $\wt\br\nat=((r\nat)_{i'}^{(j)})$  that corresponds to taking complete flags
at the marked points labeled by $I\nat$ only, i.e.  such that $(r\nat)_{i'}^{(j)}:=1$ for all $(i',j)\in \Xi(\bm\nat)$ and
$(r\nat)_i^{(j)}:=r_i^{(j)}$ for all $j=1,\ldots,m_i-1$.
Given a stability condition $\th$ for parabolic Higgs bundles of type $\br$, we choose a sufficiently
close  stability condition $\vth\nat$ for parabolic Higgs bundles of type $\wt\br\nat$ and also
a stability condition $\vth$ for parabolic Higgs bundles of type $\wt\br$ which is close to $\th$.
Associated with these stability conditions, there are coarse moduli
spaces $\cz^\th$, resp. $\cz^{\vth\nat},\cz^\vth$, and $\mm^\th$, resp.
$\mm^{\vth\nat},\mm^\vth$.

It is important to note, for what follows, that the vector $\wt\br\nat$ is indivisible, i.e.
we have $\op{gcd}\{(r\nat)_{i'}^{(j)},\ (i',j)\in \Xi(\bm)\}=1$. Therefore, 
choosing  $\vth\nat$ to be sufficiently general , we can ensure that any
$\vth\nat$-semistable object is $\vth\nat$-stable. Similarly,  any
$\vth$-semistable object is $\vth$-stable. 
Hence, the moment map
$\cz^{\vth\nat}\to \ft\nat^*\oplus\g_i^*$ is smooth.
Furthermore,  an analogue of Lemma \ref{wtcz} yields an isomorphism
$\cz^{\wt\th}=\cz^{\vth\nat}\times_{\g_i^*}\wt\g_i$ 
where $\pi_i: \wt\g_i\to\g_i^*$ is the Grothendieck -Springer resolution for the Lie algebra $\g_i$.
Also, we have a proper morphism
$p_i: \cz^{\vth\nat}\to \mm^{\vth\nat}\times_{\ft^*\dsl W_i}\ft_\bv^*$.

Write $\pi_{\Nil_i}: \wt\Nil_i\to \Nil_i$ for the Springer resolution of the nilpotent cone of $\g_i^*$.
Let $\cz\nil^{\vth\nat}=\cz^{\vth\nat}\times_{\g_i^*}\Nil_i$
and $pr_i: \cz\nil^{\vth\nat}\to\Nil_i$ be the restriction of the moment map $\mu^{\vth\nat}$.
One has  the following commutative diagram
{\footnotesize
$$\xymatrix{
\cz\nil^{\wt\th\nat}\times_{\Nil_i}\wt\Nil_i\ar[r]^<>(0.5){a}\ar[d]
& \cz\nil^{\vth\nat}\ar[d]^<>(0.5){p_{\Nil,i}}\ar[r]^<>(0.5){pr_i}&\Nil_i&&\\
\mm\nil^{\wt\th}\ar@{^{(}->}[d]^<>(0.5){q_1}\ar[r]^<>(0.5){b_i^1}&\mm\nil^{\vth\nat}
\ar[r]^<>(0.5){b_i^2}\ar@{^{(}->}[d]^<>(0.5){q_2}
&\mm^{\th}\nil
\ar@{=}[r]^<>(0.5){b_i^3}
\ar@{^{(}->}[d]^<>(0.5){q_3}& \mm^\th\nil\ar@{=}[r]^<>(0.5){b_i^4}\ar@{^{(}->}[d]^<>(0.5){q_4}& 
\mm^\th\nil\ar@{^{(}->}[d]^<>(0.5){q_5}
\\
\mm^{\wt\th}\ar[r]^<>(0.5){c_i^1}\ar[d]^<>(0.5){}&\mm^{\vth\nat}\times_{\ft^*/W_i}\ft^*
\ar[r]^<>(0.5){c_i^2}\ar[d]^<>(0.5){f_i}&\mm^\th\times_{\ft^*/W_i}\ft^*\ar@{=}[r]^<>(0.5){c_i^3}\ar[d]^<>(0.5){}&\mm^\th\times_{\ft^*/W}\ft^*
\ar[r]^<>(0.5){c_i^5}\ar[d]^<>(0.5){}&\mm^\th\ar[d]^<>(0.5){f\bvt}\\
\ft^*\ar@{=}[r]&\ft^*\ar@{=}[r]& \ft^*
\ar@{=}[r]& \ft^*\ar[r]&\ft^*/W
}
$$}
%vvv

Now let $\op{sign}_i$ be the sign representation of the group $W_i$,
write $\IC:=\jmath_{!*}\C_{\mm^\th\times_{\g^*\dsl G}\ft\reg}$, as in 
\eqref{f-iso}, and let $\IC^{\op{sign}_i}$ be the corresponding $\op{sign}_i$-isotypic component.
According to the theory of Springer representations, the $\op{sign}_i$-isotypic component
of the sheaf $(\pi_{\Nil_i})_!\C$ is supported at $0\in \Nil_i$.
Using base change in the above diagram and  an argument similar to the proof of isomorphisms in \eqref{bs3}
one deduces that $\supp(\IC^{\op{sign}_i})\sset (b_i^4\ccirc b_i^3\ccirc b^2_i\ccirc p_{\Nil,i})(pr_i\inv(0))$.
We can use this inclusion for all elements $\ii$ at the same time provided we choose (as we may)
$\wt\th$ in such a way that it is a sufficiently general deformation of $\vth\nat$ for every choice of distinguished element $\ii$.
This way, we obtain that
$\supp(\IC^{\op{sign}_i})\sset \cap_\ii\ (b_i^4\ccirc b_i^3\ccirc b^2_i\ccirc p_{\Nil,i})(pr_i\inv(0))$.
The intersection on the right is  equal to $\mm^\th_0$.

This completes the proof of Theorem \ref{la-thm}(ii)  provided the set $I$ contains at least 2 elements.
It remains to consider the case where $\# I=1$, i.e. the case
where there is only one marked point. 
Then, one can add a second marked point with trivial parabolic type and
apply the result in the case of two marked points. On the other hand, it is clear that
adding a marked point with  trivial parabolic type  doesn't affect the moduli space of parabolic Higgs bundles.
This completes the proof.

\section{Appendix A: Purity}\label{pure-sec}
\subsection{Local acyclicity}\label{acyc}
Throughout this section we fix a smooth geometrically irreducible scheme $S$
and a morphism $p: X\to S$.

We will use the following version of  the standard definition of  local acyclicity.
\begin{defn} 
A  sheaf  $\cF$ on $X$ is called {\em locally acyclic} with respect to $p$ if for any   cartesian
diagram
\beq{lac} 
\xymatrix{
C\times_S X\ar[rr]^<>(0.5){\wt g}\ar[d]^<>(0.5){\wt p} && X \ar[d]^<>(0.5){p}\\
C\ar[rr]^<>(0.5){g} && S
}
\eeq
where $C$ is a smooth curve,
the sheaf $\wt g^*\cf$  has zero vanishing cycles with respect to the map $\wt g$ at any closed point of the curve $C$.
\end{defn}

Local acyclicity behaves well under proper push-forwards in the following sense. 
Let  $\pi$  be  a proper morphism, so we have a diagram 
\beq{pip}
\wt X\xrightarrow{\pi} X\xrightarrow{p}S.
\eeq

\begin{lem}\label{la-proper}
Let $\cF$ be a sheaf on $\wt X$ which is locally acyclic with respect to the composite
map $f=p\ccirc \pi$.
Then, the sheaf $\pi_!\cf'$  is locally acyclic with respect to $p$. 
\end{lem}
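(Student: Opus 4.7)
The plan is to reduce local acyclicity of $\pi_!\cf$ to the hypothesis via proper base change and the commutation of vanishing cycles with proper direct image. Concretely, I would start with an arbitrary cartesian diagram \eqref{lac} witnessing a test for local acyclicity of $\pi_!\cf$ along $p$, and enlarge it by the proper map $\pi$ to obtain the commutative diagram with cartesian squares
\[
\xymatrix{
C\times_S\wt X\ar[rr]^<>(0.5){\wt h}\ar[d]^<>(0.5){\wt\pi}\ar@/_2.0pc/[dd]_<>(0.5){\wt f} && \wt X \ar[d]^<>(0.5){\pi}\ar@/^2.0pc/[dd]^<>(0.5){f}\\
C\times_S X\ar[rr]^<>(0.5){\wt g}\ar[d]^<>(0.5){\wt p} && X \ar[d]^<>(0.5){p}\\
C\ar[rr]^<>(0.5){g} && S
}
\]
in which $\wt\pi$ is again proper.

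First I would invoke proper base change to identify $\wt g^{\,*}\pi_!\cf\cong \wt\pi_!\wt h^{\,*}\cf$, so that the test complex for $\pi_!\cf$ on $C\times_SX$ is realized as the proper pushforward along $\wt\pi$ of the test complex $\wt h^{\,*}\cf$ on $C\times_S\wt X$. By the hypothesis applied to the outer cartesian square (with composite $f=p\ccirc\pi$), the sheaf $\wt h^{\,*}\cf$ has vanishing cycles equal to zero at every closed point of $C$, with respect to the morphism $\wt f=\wt p\ccirc\wt\pi$ to the smooth curve $C$.

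The decisive step is to apply the commutation of vanishing cycles with proper direct image (SGA 7, Expos\'e XIII, or Deligne's finiteness theorem for vanishing cycles): for any closed point $c\in C$, choosing a local uniformizer $\pi_c$ and denoting by $\Phi_c$ the associated vanishing cycles functor, one has a canonical isomorphism
\[
\Phi_c\big(\wt\pi_!\wt h^{\,*}\cf\big)\;\cong\;\wt\pi_!\,\Phi_c\big(\wt h^{\,*}\cf\big),
\]
where on the right $\Phi_c$ is taken along the composite $\wt f$. By hypothesis the right-hand side vanishes, hence so does the left-hand side, which is precisely the statement that $\wt g^{\,*}\pi_!\cf$ has trivial vanishing cycles along $\wt p$ at $c$. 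Since $C$, $g$, and $c$ were arbitrary, this yields local acyclicity of $\pi_!\cf$ with respect to $p$.

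The only genuine subtlety is invoking the correct form of the commutation statement: one must use that $\wt\pi$ is proper (automatic from properness of $\pi$ and stability of properness under base change) and that vanishing cycles are formed with respect to a function pulled back from the base curve $C$, so that the function is compatible on both sides of $\wt\pi$. Given these, the argument is entirely formal; no structural input from $X$, $\wt X$, or $S$ beyond smoothness of $S$ is required.
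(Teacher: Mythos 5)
Your proof is correct and fills in exactly the argument the paper leaves implicit: the paper's entire proof is the one-liner ``This follows from a similar property of the vanishing cycles functor,'' and you have correctly identified that property as the commutation of vanishing cycles with proper direct image (together with proper base change to reduce the test diagram for $\pi_!\cF$ to a test diagram for $\cF$ along $f=p\circ\pi$). No discrepancy with the paper's intended approach.
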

\begin{proof}
This follows from a similar property of the vanishing cycles functor.
\end{proof}

Next, 
for each $n\in\Z$, let ${\scr H}^n$ denote the $n$-th perverse cohomology 
sheaf of the complex  $\pi_!\C_{\wt X}$ and, given  a Zariski open subset  $U\sset S$
let $\jmath_U: p\inv(U)\into X$ denote the corresponding open imbedding.
For any closed point $s\in S$ write $\wt X_s:=f\inv(s)$,
resp. $X_s:=p\inv(s)$.

The main result of this subsection, which is equivalent to Proposition \ref{virt},  is as follows

\begin{prop}\label{mylem} Assume that  $\pi$ is a projective morphism,
the composite $f=p\ccirc \pi$, in \eqref{pip}, is a smooth morphism,
and  $U\sset S$ is a Zariski open and dense subset such that  $\pi:\ f\inv(U)\to p\inv(U)$ is a smooth morphism.
Then, we have

\vi ${\scr H}^n=\IC({\scr H}^n|_{p\inv(U)})$ for all $n$, equivalently, the map $\pi$ is {\em virtually small}
in the sense of \cite{Rei}.

\vii Assume in addition that   $\pi$ is generically  finite. Then $\pi$ is small. Furthermore, for any $s\in S$
the map  $\wt X_s\to X_s$, induced by $\pi$, is semi-small.
\end{prop}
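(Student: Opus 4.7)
The plan is to combine local acyclicity, extracted from the smoothness of $f=p\ccirc\pi$ via Lemma \ref{la-proper}, with the structural consequences of smoothness of $\pi$ over $U$. First, since $f$ is smooth, the constant sheaf $\C_{\wt X}$ is locally acyclic with respect to $f$ in the sense of the definition of \S\ref{acyc}. Because $\pi$ is proper, Lemma \ref{la-proper} then yields that $\pi_!\C_{\wt X}$ is locally acyclic with respect to $p$. As the base $S$ is smooth, perverse truncation commutes with local acyclicity in the present setup, so each ${\scr H}^n$ is locally acyclic with respect to $p$ as well.

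Second, I would analyze the restriction to $p\inv(U)$. Since $\pi:f\inv(U)\to p\inv(U)$ is smooth and proper and $f\inv(U)$ is smooth (as $f$ is smooth), smoothness descends along the smooth surjection $\pi$ to give smoothness of $p\inv(U)$. Proper smooth base change then shows that $\pi_!\C_{f\inv(U)}$ is a direct sum of shifted local systems on $p\inv(U)$, hence each ${\scr H}^n|_{p\inv(U)}$ is a shifted local system and in particular equals the intermediate extension of itself from any dense open.

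Third, I would upgrade the IC-property from $p\inv(U)$ to $X$. Recall that a perverse sheaf $\cF$ on $X$ equals $\IC(\cF|_{p\inv(U)})$ iff it admits no nonzero perverse subsheaf or quotient sheaf supported on $X\sminus p\inv(U)$. Local acyclicity of ${\scr H}^n$ with respect to $p$, combined with smoothness of $S$, rigidifies such hypothetical sub/quotient sheaves along the fibers of $p$: the nearby cycles functors $\psi_{\wt g}$ of any test curve $g: C\to S$ act trivially, so the specialization from a nearby geometric point in $p\inv(U)$ (where the restriction vanishes) forces the sub/quotient sheaf itself to vanish. This establishes part (i), equivalently the virtual smallness of $\pi$.

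For part (ii), generic finiteness forces $\pi:f\inv(U)\to p\inv(U)$ to be finite étale (being smooth of relative dimension $0$ and proper), so $\pi_!\C_{f\inv(U)}$ is a local system $\cL$ concentrated in degree $0$; by (i) we get $\pi_!\C_{\wt X}\cong\IC(\cL)$ concentrated in a single perverse degree, which is precisely the smallness of $\pi$. The semi-smallness of $\wt X_s\to X_s$ for each closed $s\in S$ follows by restricting the IC stalk-dimension bounds to the fiber $X_s$; local acyclicity with respect to $p$ ensures that this restriction preserves the perverse-sheaf inequalities, giving exactly the semi-small condition for the fiber map. The main obstacle will be the third step: reconciling the local-acyclicity argument with the Meinhardt--Reineke dimension-theoretic definition of virtual smallness, and in particular arguing cleanly that the vanishing of nearby cycles rules out perverse sub/quotient sheaves supported on $X\sminus p\inv(U)$ without having to run a direct fiber-by-fiber dimension count.
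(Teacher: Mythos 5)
Your first step — extracting local acyclicity of $\pi_!\C_{\wt X}$ from the smoothness of $f$ via Lemma~\ref{la-proper} — coincides with the paper's opening move. But your third step has a genuine gap that the paper resolves with a tool you omit entirely: the decomposition theorem (for pure complexes). The paper uses that $\pi_!\C_{\wt X}=\oplus_n{\scr H}^n$ and, crucially, that each ${\scr H}^n$ is in turn a direct sum of intersection cohomology sheaves $\IC(X_{n,\al},\cl_{n,\al})$. Since vanishing cycles are additive, each IC \emph{summand} inherits local acyclicity, and the paper then kills any summand $\IC(X_{n,\al},\cl_{n,\al})$ whose support is not dense: one picks a general point $s=p(x)$ in $p(X_{n,\al})$ and a curve $C$ through $s$ meeting $S\sminus U$ only at $s$, observes that because the support lies in $p\inv(s)$ the vanishing-cycles complex of that summand is the summand itself, and reads off a nonzero stalk at $x$, contradicting local acyclicity. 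Your argument instead tries to rule out arbitrary perverse sub/quotient sheaves of ${\scr H}^n$ supported on the boundary by saying local acyclicity "rigidifies" them. But local acyclicity is a property of a complex, not one inherited by sub- or quotient perverse sheaves in a non-split situation: if $\cG\hookrightarrow{\scr H}^n$ is such a subsheaf, the distinguished triangle gives $\bphi(\cG)\to\bphi({\scr H}^n)\to\bphi({\scr H}^n/\cG)\to$, and $\bphi({\scr H}^n)=0$ only tells you $\bphi(\cG)\cong\bphi({\scr H}^n/\cG)[-1]$, not that $\bphi(\cG)=0$. Semisimplicity (decomposition theorem) is exactly what converts sub/quotients into direct summands so the argument closes. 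You flag this obstacle yourself at the end; it is real, and the decomposition theorem is the missing ingredient.

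For part (ii), your argument for smallness of $\pi$ is fine (generic finiteness $\Rightarrow$ the local system over $U$ sits in a single perverse degree, combined with (i)). Your argument for semi-smallness of the fiber maps $\wt X_s\to X_s$ is stated too loosely. The paper's proof is more concrete: for $s\in S\sminus U$ it chooses a curve $C$ through $s$ meeting $S\sminus U$ only at $s$, notes that the restriction of $\wt g^*\pi_!\C_{\wt X}$ over $C\sminus\{s\}$ is a (shifted) local system $\cl_{C\sminus\{s\}}$ living inside $p\inv(U)$, applies t-exactness of nearby cycles $\bpsi_{\wt p,s}$ to conclude $\bpsi_{\wt p,s}\cl_{C\sminus\{s\}}[\dim X_s]$ is perverse, and then identifies this with $(\pi_s)_!\C_{\wt X_s}[\dim X_s]$ by proper base change for nearby cycles, using that $\wt p\ccirc\wt\pi$ is smooth so the nearby cycles of $\C$ along it are just the constant sheaf on the fiber. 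Perversity of $(\pi_s)_!\C_{\wt X_s}[\dim\wt X_s]$ is precisely semi-smallness of $\pi_s$. Your phrase "restricting the IC stalk-dimension bounds to the fiber" gestures at the same conclusion but does not supply the step that makes it true, namely the t-exactness of nearby cycles combined with proper base change.
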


\begin{proof} By smooth base change the morphism $\wt f:  C\times_S \wt X\to C$
is  smooth for any smooth curve $g: C\to S$. Furthermore, $\wt X$ and $C\times_S \wt X$ are smooth schemes.
It follows that the sheaf $\C_{\wt X}$ is locally acyclic with respect to $f$.
Hence, the sheaf $\pi_!\C_{\wt X}$ is  locally acyclic with respect to $p$, by Lemma \ref{la-proper}.
Decomposition theorems implies that $\pi_!\C_{\wt X}=\oplus_n\ {\scr H}^n$ and, moreover,
for each $n$ one has ${\scr H}^n=\oplus_\al\ \IC(X_{n,\al}, \cl_{n,\al})$ for some smooth connected locally closed 
and pairwise distinct subvarieties
$X_\al\sset X$ and some local systems $\cl_{n,\al}$ on $X_{n,\al}$.
We deduce that each of the sheaves $\IC(X_{n,\al}, \cl_{n,\al})$ must be 
 locally acyclic with respect to $p$.

To prove (i) we must show that the direct sum $\oplus_\al\ \IC(X_{n,\al}, \cl_{n,\al})$
consists of a single summand $\IC(X_{n,\al_0}, \cl_{n,\al_0})$ where $X_{n,\al_0}$ is a Zariski open and dense subset
of $X$. 
Assume that $X_{n,\al}$ is not dense in $X$. 
The sheaf ${\scr H}^n|_{p\inv(U)}$ being a local system, it follows
that $\bar X_{n,\al}$, the closure of $X_{n,\al}$, is contained in $p\inv(S\sminus U)$. 
We choose
$x\in X_{n,\al}$ such that
$s=p(x)$ is a sufficiently general smooth point of $p(X_{n,\al})$,
a constructible subset of $S$. Further, we choose (as we may) a smooth connected curve $C$ in $S$ such that
$C\cap (S\sminus U)=\{s\}$. 
Let $\bphi_{\wt p, s}$ denote the functor of vanishing cycles for the map $\wt p: C\times_S X\to C$, in diagram \ref{lac}, at 
the point $s\in C\sset S$. In our case, the map
$\wt g: C\times_S X\to X$, in diagram \ref{lac}, is a locally closed imbedding and $\bar X_{n,\al}\sset \{s\}\times_S X=p\inv(s)$.
It follows that $\bphi_{\wt p, s}(\wt g^*\IC(X_{n,\al}, \cl_{n,\al}))=\wt g^*\IC(X_{n,\al}, \cl_{n,\al})$.
The restriction of this sheaf to the point $\wt g\inv(x)$ is isomorphic to $\cl_{n,\al}|_x$, hence nonzero.
This contradicts the local acyclicity of $\IC(X_{n,\al}, \cl_{n,\al})$.
Thus, $\IC(X_{n,\al}, \cl_{n,\al})=0$ and part (i) is proved.

Assume now that the map $\pi$ is generically finite. It follow that the morphism
$\pi:\ (p\ccirc \pi)\inv(U)\to p\inv(U)$ is finite. Hence, $\dim \wt X=\dim X$ and ${\scr H}^n|_{p\inv(U)}=0$ unless
$n=-\dim X$. We conclude that $\pi_!\C_{\wt X}=\IC({\scr H}^{-\dim X}|_{p\inv(U)})$, that is, the map $\pi$ is small.

The last statement in (ii)  is clear for all $s\in U$.
Thus, let $s\in S\sminus U$ and let $\pi_s: \wt X_s\to X_s$ be the restriction of $\pi$.
Proving that $\pi_s$ is semismall amounts to showing that
$(\pi_s)_!\C_{\wt X_s}[\dim  \wt X_s]$ is a perverse sheaf.
 To this end, choose a smooth curve $C$ in $S$ such that
$s\in C$ and $C\sminus \{s\}\sset U$. The map  $\wt g: C\times_S X\to X$ is a locally closed imbedding 
such that $(C\sminus \{s\})\times_S X\sset p\inv(U)$, cf. diagram \ref{lac}.
It follows that the restriction of the sheaf $\wt g^*\pi_!\C_{\wt X}$ to $(C\sminus \{s\})\times_S X=
\wt p\inv(C\sminus \{s\})$ is a local system, say $\cl_{C\sminus \{s\}}$.
 Let $\bpsi_{F, s}$ denote the functor of nearby cycles for a map $F: Y\to C$
at the point $s\in C$. This functor takes perverse sheaves to perverse sheaves.
We deduce
that $\bpsi_{\wt p, s}\cl_{C\sminus \{s\}}[\dim X_s]$ is a perverse sheaf on $X_s$.
On the other hand, let $\wt \pi: C\times_S \wt X\to C\times_S X$ be the map induced by $\pi$.
The map $\wt p\ccirc \wt \pi: \ C\times_S \wt X\to C$ being smooth,
we have $\bpsi_{\wt p\ccirc \wt \pi, s}\C_{C\times_S\wt X}=\C_{\wt X_s}$.
Hence, the proper base change theorem for nearby cycles
yields
 \[\bpsi_{\wt p, s}\cl_{C\sminus \{s\}}=(\pi_s)_!\bpsi_{\wt p\ccirc \wt \pi, s}\C_{C\times_S\wt X}=(\pi_s)_!\C_{\wt X_s}.\]
Thus, $(\pi_s)_!\C_{\wt X_s}[\dim  \wt X_s]$ is a perverse sheaf, as required.
\end{proof}

The proof of part (ii) also yields the following result.
Let $i_s: X_s\into X$ denote the closed imbedding
\begin{cor}\label{mycor} In the setting of Proposition \ref{mylem}(ii), the sheaf
$i_s^*\IC({\scr H}^{-\dim X}|_{p\inv(U)})$
is a perverse sheaf which is isomorphic to $(\pi_s)_!\C_{\wt X_s}[\dim X_s]$.
\end{cor}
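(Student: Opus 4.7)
The plan is to deduce the corollary directly from Proposition \ref{mylem}(ii) combined with proper base change, with essentially no new work beyond what the proof of the proposition already supplies.

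First I would invoke Proposition \ref{mylem}(ii): under the hypotheses (namely $\pi$ projective and generically finite, $f = p\ccirc\pi$ smooth, and $\pi$ smooth over the dense open $p\inv(U)$), the map $\pi$ is small. Combined with the decomposition $\pi_!\C_{\wt X} = \bigoplus_n {\scr H}^n$ from the proof of Proposition \ref{mylem}(i), generic finiteness forces ${\scr H}^n|_{p\inv(U)} = 0$ for $n \neq -\dim X$, and smallness then gives the clean identification
\[
\pi_!\C_{\wt X}\ \cong\ \IC\bigl({\scr H}^{-\dim X}|_{p\inv(U)}\bigr).
\]

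Next I would apply proper base change to the cartesian square with vertical maps $\pi$ and $\pi_s$ and horizontal imbeddings $\wt X_s \hookrightarrow \wt X$ and $i_s: X_s \hookrightarrow X$. Since $\pi$ is proper (indeed projective), this gives
\[
i_s^*\,\pi_!\C_{\wt X}\ \cong\ (\pi_s)_!\C_{\wt X_s}.
\]
Combining with the previous display yields the required identification
\[
i_s^*\,\IC\bigl({\scr H}^{-\dim X}|_{p\inv(U)}\bigr)\ \cong\ (\pi_s)_!\C_{\wt X_s}.
\]
Finally, the perversity claim follows from the last assertion of Proposition \ref{mylem}(ii): since $\pi_s: \wt X_s \to X_s$ is semi-small, the complex $(\pi_s)_!\C_{\wt X_s}[\dim \wt X_s]$ is perverse; and the equality $\dim \wt X_s = \dim X_s$ (which holds because $f$ is smooth, $\pi$ is generically finite of the same dimension as $X$, and $X_s$ contains $\pi_s(\wt X_s)$ as a dense subset via the generic finite covering inherited from $U$) allows us to rewrite the shift as $[\dim X_s]$ and conclude.

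The proof contains no genuine obstacle; the only point that deserves a sentence of care is the dimension matching $\dim \wt X_s = \dim X_s$, but this is immediate from the geometric setup. In fact one could avoid even discussing this by simply noting that the proof of Proposition \ref{mylem}(ii) already exhibits $(\pi_s)_!\C_{\wt X_s}[\dim \wt X_s]$ as the nearby cycles $\bpsi_{\wt p,s}\cl_{C\sminus\{s\}}[\dim \wt X_s]$ of a local system, which is perverse by the general properties of the nearby cycles functor, and the base change above identifies this with $i_s^*\IC(\cdots)$ on the nose.
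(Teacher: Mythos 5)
Your proof is essentially correct and takes the route the paper signals (the paper's entire ``proof'' of the corollary is the sentence ``The proof of part (ii) also yields the following result''). You correctly isolate the three ingredients: the smallness identity $\pi_!\C_{\wt X}=\IC({\scr H}^{-\dim X}|_{p\inv(U)})$ from the first part of the proof of \ref{mylem}(ii); proper base change along $i_s$ to identify $i_s^*\pi_!\C_{\wt X}$ with $(\pi_s)_!\C_{\wt X_s}$; and the semi-smallness of $\pi_s$ from the second part for perversity. Applying ordinary proper base change for the closed imbedding $i_s$ directly, rather than routing through proper base change for nearby cycles as the proof of (ii) does, is a tidier presentation of the same mechanism rather than a different route.

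One point you should tighten: your base change step produces $i_s^*\IC(\cdots)\cong(\pi_s)_!\C_{\wt X_s}$ with \emph{no} shift, but you then call this ``the required identification'' and separately check that $(\pi_s)_!\C_{\wt X_s}[\dim X_s]$ is perverse. As written, your two conclusions concern objects that differ by a shift of $\dim X_s$, so they cannot both hold in the form the corollary states them without further normalization. (In fact this tension is already latent in the corollary's own statement: with the unconventional normalization used in this paper, $\IC(\cL)$ restricts to $\cL$ unshifted, so $\pi_!\C_{\wt X}=\IC(\cdots)$ is unshifted and it is $i_s^*\IC(\cdots)[\dim X_s]$, not $i_s^*\IC(\cdots)$ itself, that the nearby-cycles argument shows to be perverse.) So you are likely proving the right mathematical content, but a careful write-up should track the shift explicitly and keep it consistent between the isomorphism half and the perversity half of the argument. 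Your justification for $\dim\wt X_s=\dim X_s$ is also a bit loose: the clean way is to note $\dim\wt X_s=\dim\wt X-\dim S$ (since $f$ is smooth), $\dim X_s\geq\dim X-\dim S=\dim\wt X-\dim S$ by upper semicontinuity, and $\dim X_s\leq\dim\wt X_s$ because $\pi_s$ is proper and surjective.
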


\subsection{}\label{positive-sec}
 The following definition was introduced by Hausel and Rodriuez-Villegas \cite{HV2}.

\begin{defn}\label{semipositive} A smooth quasi-projective variety $X$ is called {\em semi-projective} if $X$ is equipped
with a $\GG$-action such that $X^\GG$, the fixed point set, is a projective
vatiety and, moreover,  the $\GG$-action contracts $X$ to $X^\GG$, i.e., for any $x\in X$ the map $\GG\to X,\ x\mto zx$
extends to a regular map $\AA\to X$.% whose value at $0\in\AA$ is to be denoted   $\underset{^{z\to0}}\lim\ zx$.
\end{defn}

In the above setting, $X^\GG$ is necessarily smooth and we have $\underset{^{z\to0}}\lim\ z\cdot x\in X^\GG$.

The goal of this section is to give an algebraic proof of  \cite[Corollary 1.3.3]{HV2}, which is stated below.
The original proof, essentially contained in \cite{HV}, works 
over the complex numbers and it
involves a $C^\infty$-argument  and a compatification of $X$ constructed earlier by C. Simpson.
The statement for  \'etale cohomology is then deduced from the corresponding result for the ordinary cohomology
by certain comparison arguments. In the special case of quiver varieties, the theorem was proved earlier
in \cite{CBvdB} where the proof  also used transcendental methods (the hyper-K\"ahler trick) .

\begin{thm}\label{pure_res} 
Let $\h$ be a finite dimensional vector space
equpped with a linear $\GG$-action with positive weights, so the $\GG$-action is a contraction to
$0\in\h$, the origin.  Let $\cx$ be  a semi-projective variety and 
 $f: \cx\to \h$  a smooth $\GG$-equivariant morphism.
 Then, for any  $h\in\h$ and $k\geq 0$, we have

\vi The  restriction map $H^\hdot(\cx)\to H^\hdot(X_h)$, where $\cx_h:={f}\inv(h)$,
is an isomorphism and the  cohomology group $H^k(\cx_h)$ is pure of weight $k/2$.

\vii  The sheaf
$\dis R^k{f}_*\CC_{ \cx}$ is a constant sheaf on $\h$.
\end{thm}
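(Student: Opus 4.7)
The plan is to prove (ii) first --- the constancy of $R^k f_* \CC$ on $\h$ --- and deduce (i) from it by a Leray-type argument. Granting (ii), the spectral sequence $E_2^{p,q} = H^p(\h, R^q f_* \CC) \Rightarrow H^{p+q}(\cx)$ collapses since $\h \cong \AA^n$ has $H^p(\h, \CC) = 0$ for $p > 0$, yielding $H^k(\cx) \cong H^0(\h, R^k f_* \CC) \cong H^k(\cx_h)$, and identifying the composite as the natural restriction map. For the purity assertion I would invoke the Bia\l ynicki-Birula decomposition $\cx = \sqcup_\alpha \cx^+_\alpha$ associated with the $\GG$-action, in which each stratum is a locally closed affine bundle $\cx^+_\alpha \to \cx^\GG_\alpha$ over a component of the smooth projective fixed-point variety $\cx^\GG$. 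This gives $H^k(\cx) \cong \bigoplus_\alpha H^{k - 2 d_\alpha}(\cx^\GG_\alpha)(-d_\alpha)$ with $d_\alpha$ the rank of the affine bundle; purity of $H^*(\cx^\GG)$ (smooth projective) then transfers to the required purity of $H^k(\cx_h) \cong H^k(\cx)$.

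The core of the argument is (ii). The sheaf $\mathcal G := R^k f_* \CC$ inherits a canonical $\GG$-equivariant structure from the $\GG$-equivariance of $f$. Since $\GG$ acts on $\h = \AA^n$ with positive weights --- so that $0$ is the unique fixed point, every orbit closure contains $0$, and $\AA^n$ has trivial tame \'etale fundamental group --- it suffices to show that $\mathcal G$ is an $\ell$-adic local system on $\h$: its triviality will then follow automatically. Crucially, the local-system property is \emph{not} a formal consequence of smoothness of $f$, because $f$ is non-proper; the contracting $\GG$-action on $\cx$ with projective fixed locus $\cx^\GG \subset f^{-1}(0)$ is what will provide a cohomological substitute for properness.

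Concretely, the plan to establish that $\mathcal G$ is a local system is to build, starting from the Bia\l ynicki-Birula cell structure, a $\GG$-equivariant proper morphism $\bar\rho: \bar\cx \to \cx^\GG$ extending the BB contraction $\rho: \cx \to \cx^\GG$, realised by $\GG$-equivariantly fibrewise projective-completing each affine bundle $\cx^+_\alpha \to \cx^\GG_\alpha$ and gluing across incidence relations. If one can arrange a $\GG$-equivariant extension $\bar f: \bar\cx \to \h$ of $f$ whose boundary divisor $D = \bar\cx \setminus \cx$ is contained in $\bar f^{-1}(0)$, then the proper, generically smooth map $\bar f$ falls within the scope of Proposition \ref{mylem} (with $S = \h$ and open locus $U = \h \setminus \{0\}$, on which $\bar f = f$ is smooth by hypothesis). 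The proposition produces $R^k \bar f_* \CC = \IC(R^k \bar f_* \CC|_U)$, which is a local system on $U$; since $D$ lives over $0$, the restriction of $R^k \bar f_*$ to $\h \setminus \{0\}$ coincides with $R^k f_*$, and the $\GG$-equivariance together with the contracting action force the IC extension to itself be a genuine local system on all of $\h$.

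The main obstacle is the construction of $(\bar\cx, \bar f)$. Gluing the fibrewise projective completions of the BB cells consistently across incidence relations, while simultaneously arranging a well-defined extension $\bar f$ of $f$ with boundary contracted to $0 \in \h$, is technically delicate and may in general require either a reduction to a special class of semi-projective varieties (such as Nakajima quiver varieties, where additional structure is available) or an equivariant resolution to remove indeterminacies. An alternative route, avoiding the compactification entirely, would be to analyse $\mathcal G$ via equivariant localisation along each one-parameter subgroup $\GG \hookrightarrow \h$: one proves local constancy along each such line using a version of the generalised isotypic decomposition of Lemma \ref{jordan} applied to the monodromy in the positive-weight setting, and then patches across lines using that $\h \setminus \{0\}$ is simply connected for $n \geq 2$ (the $n = 1$ case reducing to an explicit unramified local system computation).
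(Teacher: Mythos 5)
Your proposal reverses the logical order of the paper's proof (which establishes part (i) first by a direct weight argument and then deduces (ii)), and the entire burden therefore falls on your proof of (ii) — which has a genuine gap.

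The compactification step is not merely ``technically delicate'': the boundary condition $D = \bar\cx\sminus\cx \subset \bar f^{-1}(0)$ that you need is essentially impossible to arrange. The fibrewise projective completion of the Bia\l ynicki-Birula cells adds points at infinity that are $t\to\infty$ limits of $\GG$-orbits; by $\GG$-equivariance $f(t\cdot x) = t\cdot f(x)$, and since $\GG$ acts on $\h$ with strictly positive weights, these images escape to infinity in $\h$ whenever $f(x)\neq 0$, rather than degenerating to $0$. So any natural equivariant compactification has boundary supported over ``$\infty$'' in $\h$, not over $0$, and you cannot place Proposition \ref{virt} over the pair $(\h, \h\sminus\{0\})$. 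Your fallback via ``equivariant localisation along lines'' reduces to showing $R^kf_*\CC$ is a local system on each punctured line $L\sminus\{0\}$, but $f$ is only smooth there, not proper, and smoothness alone does not give local constancy; this is precisely the place where a substitute for properness is needed, and Lemma \ref{jordan} (which concerns unipotent monodromy of Galois-covered fibrations) does not supply one.

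The paper avoids compactification entirely. It first shows $H^\hdot(\cx)\iso H^\hdot(\cx_0)$ by the Core argument, giving purity of $H^\hdot(\cx)$ and $H^\hdot(\cx_0)$. Then, for $h\neq 0$, it forms the auxiliary variety $\cx' := \AA\times_\h\cx$ (fiber product with respect to the orbit map $z\mto z\cdot h$), endowed with a contracting $\GG$-action via the first factor, so that $pr_1^{-1}(0)\cong\cx_0$ and $pr_1^{-1}(\GG)\cong\GG\times\cx_h$. The long exact sequence for the pair, combined with the K\"unneth decomposition of $H^\hdot(\GG\times\cx_h)$ and the already-established purity of $H^\hdot(\cx')$ and $H^\hdot(\cx_0)$, forces $\gr_\ww H^k(\cx_h)=0$ for $\ww>k$; smoothness of $\cx_h$ gives the vanishing for $\ww<k$, hence purity; and the same exact sequence splits into the isomorphisms $H^k(\cx')\iso H^k(\cx_h)$ which prove (i). Part (ii) is then read off by showing the canonical map $p_*\CC_{\cx\times\h}\to f_*\CC_\cx$ becomes an isomorphism upon applying $i_h^!$, using base change and the isomorphism of (i). That weight-filtration trick on $\cx'$ is the key idea you are missing: it is what replaces both the compactification and the $C^\infty$ argument of \cite{HV}.
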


\begin{proof} Following \cite{HV}, for a $\GG$-variety $Y$
one defines $\text{Core}(Y)$, the {\em core} of $Y$, to be the
set of all points $y\in Y$ such that $\underset{^{z\to\infty}}\lim\ z\cdot y$ exists.
It is clear that $\cx_0^\GG=\cx^\GG$ and
$\text{Core}(\cx_0)=\text{Core}(\cx)$. Hence,
$X_0$ is a semi-projective variety and
one has restriction maps $H^\hdot(\cx)\to H^\hdot(\cx_0)\to H^\hdot(\text{Core}(\cx_0))$.
It is not difficult to show, see e.g.   \cite[Theorem 1.3.1]{HV}, that the second map, as well as the composite of the two maps, is an isomorphism.
It follows that the first map is an isomorphism.
Hence, the cohomology groups of $\cx$, resp. $\cx_0$, are isomorphic to those of $\cx^\GG$,
in particular, they are pure, \cite[Corollary 1.3.2]{HV}.

Next, fix  $h\neq 0$ and put $\cx':=\AA\times_\h \cx$, where the fiber product is taken with respect to the
action map $\AA\to\h,\ z\mto z(h)$. We let $\GG$ act on $\cx'$ via dilations on the first factor.
Thus, we have a commutative diagram of  $\GG$-equivariant maps
$$
\xymatrix{
pr_1\inv(0)     \ar@{^{(}->}[rr]^<>(0.5){i}
\ar[d]&&\cx'\ar[d]^<>(0.5){pr_1}\ar[rr]^<>(0.5){pr_2}&& \cx\ar[d]^<>(0.5){f}\\
\{0\}\ar@{^{(}->}[rr]&&\AA\ar[rr]^<>(0.5){z\ \mto\ z(h)}&& \h
}
$$
Note that $pr_1\inv(0) \cong \cx_0$ and
the $\GG$-action on $\cx'$ is a contraction.
We deduce similarly to the above that each of  the pull-back morphisms 
$H^\hdot(\cx)\xrightarrow{pr_2^*}H^\hdot(\cx')\xrightarrow{i^*}H^\hdot(\cx_0)$
is an isomorphism. In particular, the cohomology of $\cx'$ is pure.
Note further that  $pr_1\inv(1)\cong \cx_h$ and
the $\GG$-action  provides
a $\GG$-equivariant isomorphism
$\GG \times \cx'_h\  \iso\ \pr_1\inv(\GG)$.
Thus, we have a  diagram
\beq{kun}
\xymatrix{
\cx_0\ 
\ar@{^{(}->}[r]^<>(0.5){{i}}&
\  \cx'\ & \
\cx'\sminus \cx_0\ \ar@{_{(}->}[l]_<>(0.5){j}
\ar@{=}[r]& \ \GG \times \cx_h,
}
\eeq
where $i$  and $j$ are  a closed and an  open imbedding,
respectively.

The cohomology groups $H^0(\GG)=\C(0)$
and  $H^1(\GG)=\C(2)$ are
1-dimensional vector spaces
which have
weights $0$ and $2$, respectively.
Hence, one has the following
K\"unneth  decomposition
\[H^\hdot(\cx'\sminus \cx_0)\cong [H^0(\GG)\o H^\hdot(\cx_h)]\ \ 
\bplus\ \ [H^1(\GG)\o H^{\hdot-1}(\cx_h)]\ =\
H^\hdot(\cx_h)\
\bplus\ H^{\hdot-1}(\cx_h)(2).\]
Using the K\"unneth  decomposition, the long exact sequence 
associated with  diagram \eqref{kun},
takes the 
following form,
\beq{pure_k}
\ldots\to 
\xymatrix{
H^{k-1}(\cx_0)(1)\ar[r]^<>(0.5){{i}_!} &
  H^k(\cx')\ar[r]^<>(0.5){j^*}&
\ H^k(\cx_h)\
\bplus\ H^{k-1}(\cx_h)(2) \ar[r]^<>(0.5){[1]}&
 H^{k+1}(\cx_0)
}\to\ldots
\eeq

Let $\gr_\ww H^\hdot(-)$ denote
an associated graded term of weight $\ww\in\Z$
in the weight filtration on the cohomology. 
Applying the functor $\gr_\ww(-)$, which is an exact functor,
to  \eqref{pure_k}, one obtains an exact sequence
of spaces of  weight $\ww$.
Thanks to the purity result proved earlier, we know
that $\gr_\ww H^n(\cx_0)=\gr_\ww H^n(\cx')=0$,
whenever $\ww\neq n$. Hence,
 for any $\ww\geq k+2$,  the fragment of the resulting
exact sequence of spaces of  weight $\ww$
that corresponds to  \eqref{pure_k} reads
\beq{pure2}
\ldots \to \xymatrix{0
\ar[r]^<>(0.5){{i}_!} &0
\ar[r]^<>(0.5){j^*}&
 \gr_\ww
H^k(\cx_h)\ \bplus\  
[\gr_{\ww-2}H^{k-1}(\cx_h)](2)\ar[r]^<>(0.5){[1]}&0}\to\ldots
\eeq

From \eqref{pure2}, we see that
$\gr_{\ww-2}H^{k-1}(\cx_h)=0$. It follows that the group
$\gr_\ww H^k(\cx_h)$ vanishes
for any pair of integers $\ww,k$, such that $\ww >k$.
On the other hand,
since $\cx_h$ is smooth,
we also have $\gr_\ww H^k(\cx_h)=0$ for all $\ww<k$.
Thus, we conclude that, for each $k$, the group
$H^{k}(\cx_h)$  is pure of
weight $k$. 

The purity implies that  the
long exact sequence  \eqref{pure_k}   breaks up into a direct sum 
$\oplus_{\ww\in\Z}\ E^{(\ww)}$, of long exact sequences  $E^{(\ww)},\ \ww\in\Z$, 
such that, for any $\ww$, all  terms in   $E^{(\ww)}$ are
pure of weight ~$\ww$. Furthermore, one finds  that
each of  these
long exact sequences  actually splits further into length two
exact sequences. Specifically, for $k\in\Z$, the long exact sequence
$E^{(k)}$
reduces, effectively, to the following
pair of isomorphisms:
\beq{ji}
\jmath^*:\ 
 H^k(\cx')\ \iso\ H^k(\cx_h)
\quad\text{and}\quad 
{i}_!:\ H^{k-1}(\cx_0)(2)\ 
\iso \ H^{k+1}(\cx').
\eeq
Here, the isomorphism $\jmath^*$ is induced by the imbedding
$\jmath:\ \cx_h=\{1\}\times \cx_h\into \cx'$.

To complete the proof of part (i) of the theorem, we factor
the restriction map $H^\hdot(\cx)\to H^\hdot(\cx_h)$ as a composition
$\dis H^\hdot(\cx)\xrightarrow{pr_2^*} H^\hdot(\cx')\xrightarrow{\jmath^*}H^\hdot(\cx_h)$.
We have shown that each of these maps is an isomorphism, proving (i).

To prove (ii), write 
$i_h: \{h\}\into \h$,
resp. $\imath_h: \cx_h\into\cx$,
for
the corresponding closed imbeddings and 
 $p:\cx\times \h\onto \h$ for
 the second projection.
Also, define a map $\eps:\ \cx\into
\cx\times \h$ by the assignment
$x\mto (x,{f}(x))$. Thus, $\eps$ is a closed embedding
via the graph of  ${f}$, so one has a factorization ${f}=p\ccirc\eps$.

Now, we begin the proof by noting
 that each cohomology sheaf $R^k p_*\C_{\cx\times \h}$ is 
a constant sheaf,
by the K\"unneth formula.
Next, we observe that
 there is a canonical morphism
\[u:\  p_*\C_{\cx\times \h}\too  p_*(\eps_*
\eps^*\C_{\cx\times \h})= {f}_*\C_{\cx}.\]
Thus, we would be done provided
we can prove  that the morphism $u$ is, in fact, an isomorphism.
We will prove this by
showing that,
for every $h\in \h$,
the  morphism $i_h^!(u):\, i_h^!( p_*\C_{\cx\times \h})\to
i_h^!({f}_*\C_{\cx})$,
induced by $u$,
is an isomorphism. This is known to be sufficient
to conclude that $u$ is an isomorphism,

The argument below involves 
the following  diagram, where ${f}_h:={f}|_{\cx_h}$ and $p_h$ stands for a constant map,
\beq{sqp}
\xymatrix{
\cx_{h_{}}\ \ar@{_{(}->}[d]^<>(0.5){\imath_h}
\ar[rrr]_<>(0.5){\eps|_{\cx_h}}\ar@/^1.3pc/[rrrrrr]^<>(0.35){{f}_h}
&&&\ {\cx\times \{h\}_{}}\ \ar@{_{(}->}[d]^<>(0.5){\Id\times i_h}
\ar[rrr]_<>(0.5){p_h}&&&\  {\{h\}_{}}\ \ar@{_{(}->}[d]^<>(0.5){i_h}\\
\cx\ \ar[rrr]^<>(0.5){\eps}\ar@/_1.3pc/[rrrrrr]_<>(0.35){{f}} &&&\
\cx\times \h\ \ar[rrr]^<>(0.5){p} &&&\ \h.
}
\eeq

It is clear that  all commutative squares
in the diagram are cartesian. Also, 
${f}$ is a smooth morphism, so $\cx_h$
is a smooth subvariety  of codimension
$n:=\dim\h$ in $\cx$. Therefore, applying proper base change
to the above diagram one gets the following canonical
isomorphisms:
\begin{align*}
i_h^!( p_*\C_{\cx\times \h})\ &=
( p_h)_*(\Id\times i_h)^!\C_{\cx\times \h}=
( p_h)_*(\C_{\cx}[2n])=
H^{\hdot+2n}(\cx);\\
i_h^!( {f}_*\C_{\cx})\ & =\ 
( {f}_h)_*(\imath_h^!\C_{\cx})\ =\ 
( {f}_h)_*(\C_{\cx_h}[2n])\ =\ 
H^{\hdot+2n}(\cx_h).
\end{align*}

Thus, the morphism $i_h^!(u)$
goes, via base change, to a morphism
$H^{\hdot+n}(\cx)\to H^{\hdot+n}(\cx_h)$.
One can check that the latter morphism 
is the  restriction morphism $\imath^!_h$
induced by the imbedding $\imath_h: \cx_h\into\cx$.
Furthermore, the first isomorphism in \eqref{ji} implies
that $\imath^!_h$ is an isomorphism,
for any $h\in\h$.
It follows that the morphism $u$ is an isomorphism,
 completing the proof of
the theorem.
\end{proof}
\section{Appendix B: An application to the Calogero-Moser variety of type $A$}\label{CM-sec}
The goal of this Appendix is to illustrate the relation between indecomposable representations and the geometry of
the moment map 
in a special case of the Calogero-Moser quiver.

\subsection{}
Fix a quiver $Q$ with vertex set $I$ and a dimension vector $\bv\in\Z^I$.
Let $\bar{Q}$ be the double of the quiver $Q$ and $\mathcal M_O$ be as in the formulation of Theorem \ref{A-ind}. 
Further, let $(\eta_i)_\ii\in\k^I$ and write  $\eta=(\eta_i\cdot \Tr_{v_i})_\ii\in\g^*_\bv$.
One has  a diagram
\beq{pqr}
\xymatrix{
\mu_\bv^{-1}(\eta)\ar@{^{(}->}[r]\ar@{->>}[d]^<>(0.5){q}& \Rep_\bv\bar{Q}\ \ar@{->>}[r] & \Rep_\bv Q\ar@{->>}[d]^<>(0.5){\fw}\\
\mu_\bv^{-1}(\eta)\dsl G_\bv\ar[rr]^<>(0.5){\pi}&& \Rep_\bv\dsl G_\bv
}
\eeq
Here the first map in the top row  is the natural inclusion and
the second map  is a restriction of representions of $\bar Q$ to $Q$, viewed as a subquiver of $\bar Q$.
Let $p$ denote the composite of these two maps.
Further, for any $x\in \Rep_\bv\dsl G_\bv$ let $\op{Ind(x)}$ be the set of  $G_\bv$-orbits in  $\fw\inv(x)$
formed by the  indecomposable representations.

It was shown by  Crawley-Boevey that in the case where   $\bv$ is indivisible and the $I$-tuple $(\eta_i)$ is sufficiently general
the image of $p$ is equal to
the  set of indecomposable 
representations.
Moreover, 
$\eta$ is a regular value of the moment map $\mu: \Rep_\bv\bar Q\to\g^*_\bv$.
In particular, $\mu_\bv^{-1}(\lambda)$ is a smooth scheme, the $G_\bv$-action on this scheme is free, and
the map  $\mu_\bv^{-1}(\lambda)\to \mu_\bv^{-1}(\lambda)\dsl G_\bv$ is a geometric quotient.
%Therefore, the canonical map $\mu_\bv^{-1}(\lambda)/G_\bv  \to \mu_\bv^{-1}(\lambda)\dsl G_\bv$ is an isomorprhism.

%We have a chain of natural maps
%$$
%\xymatrix{
%\mu_\bv^{-1}(\lambda)\dsl G_\bv=
%\mu_\bv^{-1}(\lambda)/\GL_\bv \ar[r]^<>(0.5){i} & {\rm Rep}_\bv \bar{Q}/\GL_\bv  \ar[r]^<>(0.5){p} & {\rm Rep}_\bv Q/\GL_\bv \ar[r]^<>(0.5){r} & {\rm Rep}_\bv Q \dsl \GL_\bv
%}
%$$

%Question: can we describe a general setting in which the last claim holds?

The following result is essentially \cite[Proposition 2.2.1]{CBvdB}.

\begin{prop} \label{nochar} Let $\bv$ be an indivisible dimension vector and assume $\eta$ is sufficiently general. Then we have 

\vi For any $G_\bv$-orbit $y\sset {\rm Rep}_\bv Q$, of indecomposable representations,
we have $q(p\inv(y))\cong \AA^{\frac{1}{2}\dim\mm_O}$, where $\mm_O$ is as in Theorem \ref{A-ind}.

\vii For any $\F_q$-rational point $x\in{\rm Rep}_\bv Q \dsl  G_\bv$,  the number,  $\#\op{Ind(x)}({\mathbb F}_q)$, of isomorphism  classes of
absolutely idecomposable representations defined over $\F_q$, is given by the formula

%{\emph (a)} The morphism $p \circ i$ is a vector bundle over each component of the absolutely indecomposable locus in ${\rm Rep}_\bv Q$ for which the dimension of the stabilizer in $\GL_\bv$ of each of its points is constant. The preimage under $p \circ i$ of every point of ${\rm Rep}_\bv Q/\GL_\bv$ over $\mathbb F_q$ has $q^{\frac{\rm dim \mathcal M_O}{2}}$ points over $\mathbb F_q$.

$$\#\op{Ind(x)}({\mathbb F}_q)= q^{\frac{1}{2}\dim \cM_O} \cdot \ltr H^\hdot_c(\pi^{-1}(x)) .$$

\viii  If the the set $\op{Ind(x)}$ is finite then we have a partition 
$\pi^{-1}(x)=\sqcup_{y\in \op{Ind(x)}}\ q(p\inv(y))$ into a disjoint union of finitely many 
locally closed subvarieties, each isomorphic to $\AA^{\frac{1}{2}\dim \cM_O} $.
\end{prop}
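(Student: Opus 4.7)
The plan is to derive all three statements from Crawley-Boevey's fundamental result for quiver moment maps: for indivisible $\bv$ and sufficiently general $\eta$, the level set $\mu_\bv\inv(\eta)$ is smooth, the quotient group $G_\bv/\GG$ (with $\GG$ the center of scalars) acts freely on it, and the image $p(\mu_\bv\inv(\eta))$ coincides with the indecomposable locus in $\Rep_\bv Q$. The freeness of the $PG_\bv$-action forces $\End(\rho) = \k$ for every $\rho$ in this image, hence $\Aut(\rho) = \GG$ is just scalars. I will also use the identity $\pi \ccirc q = \fw \ccirc p$ visible in diagram \eqref{pqr} and the fact, immediate from the quiver setup, that the moment map $\mu_\bv(\rho, \rho^*)$ is affine-linear in $\rho^*$ for fixed $\rho$.

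For part (i), fix $\rho \in y$. Since $\Aut(\rho) = \GG$ consists of scalars, which act trivially on the cotangent fiber, the preimage $p\inv(y)$ is realized as the associated bundle $G_\bv \times_{\GG} p\inv(\rho)$, where $p\inv(\rho)$ denotes the affine subspace of $\rho^*$ satisfying the moment equation $\mu_\bv(\rho,\rho^*) = \eta$. The quotient $q$ therefore induces a canonical identification $q(p\inv(y)) \cong p\inv(\rho)$. A dimension count then shows $p\inv(\rho) \cong \AA^{\dim \Rep_\bv Q - \dim \pgl_\bv}$, since the moment equation imposes $\dim \pgl_\bv$ independent affine conditions on $\rho^*$, with independence being a direct consequence of the regularity of $\eta$. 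Combined with the identity $\tfrac{1}{2}\dim \mm_O = \dim \Rep_\bv Q - \dim \pgl_\bv$, this proves (i).

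For part (iii), the set-theoretic decomposition $\pi\inv(x) = \bigsqcup_{y \in \op{Ind}(x)} q(p\inv(y))$ follows immediately from Crawley-Boevey's description: the relation $\pi \ccirc q = \fw \ccirc p$ shows that $q(p\inv(y))$ is contained in $\pi\inv(x)$ whenever $y$ lies over $x$, while conversely every point of $\pi\inv(x)$ is the $G_\bv$-orbit of a point of $\mu_\bv\inv(\eta)$ that necessarily maps under $p$ to an indecomposable representation over $x$. Disjointness is automatic since distinct orbits are disjoint. Each piece $q(p\inv(y))$ is locally closed because $y$ is locally closed in $\Rep_\bv Q$ and $q$ is a principal bundle on the relevant open locus; part (i) then identifies each piece with an affine space of the stated dimension.

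For part (ii), I would pass to $\F_q$-points using Lang's theorem. Since $\Aut(\rho) = \GG$ is connected, every Frobenius-stable $G_\bv$-orbit $y$ of indecomposable representations over an $\F_q$-point $x$ admits an $\F_q$-rational representative. Hence the partition of (iii) descends to $\pi\inv(x)(\F_q) = \bigsqcup_{y \in \op{Ind}(x)(\F_q)} q(p\inv(y))(\F_q)$, with each piece contributing $q^{\frac{1}{2}\dim \mm_O}$ points. The Grothendieck-Lefschetz trace formula applied to $\pi\inv(x)$ then yields the claimed relation between $\#\op{Ind}(x)(\F_q)$ and $\ltr H^\hdot_c(\pi\inv(x))$ up to the universal factor $q^{\frac{1}{2}\dim \mm_O}$. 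The main obstacle in the whole proof is invoking Crawley-Boevey's theorem correctly, namely checking that the genericity hypothesis on $\eta$ simultaneously provides smoothness of $\mu_\bv\inv(\eta)$ and absolute Schurness of representations in $p(\mu_\bv\inv(\eta))$; once these two facts are in hand, the rest is associated-bundle bookkeeping, affine dimension counts, and Lang's theorem.
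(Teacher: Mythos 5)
Your proposal contains a genuine gap in part (i) that propagates to the rest of the argument.

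You assert that ``the freeness of the $PG_\bv$-action forces $\End(\rho)=\k$ for every $\rho$ in this image,'' and then use $\Aut(\rho)=\GG$ to identify $q(p\inv(y))$ with a single fiber $p\inv(\rho)$. This is not correct. Freeness of the $\PGL_\bv$-action on $\mu_\bv\inv(\eta)$ means that pairs $(\rho,\rho^*)\in\mu_\bv\inv(\eta)$ are Schur representations of the \emph{doubled} quiver $\bar Q$, i.e.\ $\End_{\bar Q}(\rho,\rho^*)=\k$; it says nothing about $\End_Q(\rho)$ for the underlying $Q$-representation $\rho=p(\rho,\rho^*)$. Crawley-Boevey's theorem says $p(\mu_\bv\inv(\eta))$ is the \emph{indecomposable} locus, not the Schur locus, and an absolutely indecomposable $\rho$ generally has $\Aut(\rho)=\GG\cdot U_\rho$ with $U_\rho$ a nontrivial unipotent group. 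Consequently $q(p\inv(y))$ is \emph{not} isomorphic to $p\inv(\rho)$; one must still quotient $p\inv(\rho)$ by the unipotent part $U_\rho$ of the stabilizer.

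The same conflation makes your dimension count for $p\inv(\rho)$ wrong. For fixed $\rho$, the map $\rho^*\mapsto\mu_\bv(\rho,\rho^*)$ has image $\End(\rho)^\perp\subset\gl_\bv^*$, so the moment equation imposes only $\dim\gl_\bv-\dim\End(\rho)$ affine conditions, not $\dim\pgl_\bv$ of them; the fiber $p\inv(\rho)$ is a torsor over $\Ext^1_{\k Q}(\rho,\rho)^*$ (this is the content of the lemma of Crawley-Boevey that the paper invokes) and has dimension $\dim\Ext^1(\rho,\rho)\ge\frac12\dim\mm_O$, with equality only in the Schur case. The correct argument is: $p\inv(\rho)$ is an affine space over $\Ext^1(\rho,\rho)^*$ on which $U_\rho$ acts freely and affine-linearly; the quotient is then an affine space of dimension $\dim\Ext^1(\rho,\rho)-\dim U_\rho$, which the Euler-form identity $\dim\Ext^1(\rho,\rho)-\dim\End(\rho)=\dim\Rep_\bv Q-\dim\GL_\bv$ converts into $\frac12\dim\mm_O$. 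Once (i) is corrected, your reasoning for (ii) and (iii) --- the orbit partition of $\pi\inv(x)$ together with Lang's theorem and the Grothendieck-Lefschetz formula --- is sound and matches the paper.
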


\begin{proof}

\vi According to Lemma 3.1 in \cite{CB1}, for $y \in {\rm Rep}_\bv Q$, which is in the image of $p \circ i$, the fiber $(p \circ i)^{-1}(y)$ is 
a quotient of a vector space $V$ isomorphic to  $\Ext_{\k Q}^1(y,y)^*$ by an affine-linear  free action on $V$ of a unipotent group $U$,
where $U=G_y$. The latter group is unipotent since $y$ is absolutely indecomposable. representation $y$.
Any such  quotient $V/U$ is known to be isomorphic  an affine space of dimension
$\dim V-\dim U$. Thus, we have $(p \circ i)^{-1}(y)\cong \AA^m$ where $m=\dim \Ext_{\k Q}^1(y,y)-\dim Aut(y)=
\dim \Rep_\bv Q-\dim G_\bv=\half \dim\mm_O$. This proves (i). In particular, for  any $\F_q$-rational point $x\in{\rm Rep}_\bv Q \dsl  G_\bv$,  we have
$$\ltr H^\hdot_c(\pi^{-1}(x)) =\#\pi^{-1}(x)({\mathbb F}_q) =q^{\frac{1}{2}\dim \cM_O} \cdot \# r\inv(x)(\F_q).$$
Part (ii) follows.
In the case where the number of somorphism classes of absolutely indecomposable representations is finite
we deduce from (i) that the scheme $\pi^{-1}(x)$ is a disjoint union of finitely many pieces each of which is isomorphic to 
$\AA^{\frac{1}{2}\dim\mm_O}$.
\end{proof}

\subsection{}\label{CM}
Let $Q$ be the Calogero-Moser quiver $Q$, i.e. a quiver with  two vertices, a loop at one of the vertices, and an edge joining the vertices, which we fix to point toward the vertex with the loop. We fix the the dimension vector $(n,1)$, where $n$ labels the vertex with the loop. 
A representation with this dimension vector is a pair $(u,v)$, where $u \in {\mathfrak g \mathfrak l}(V), v \in V$, and $V$ is an $n$-dimensional vector space. 
It is known that the map that assigns to $(u,v)\in \gl_n\times V$ the unordered $n$-tuple of eigenvalues of
the operator $u$ yields an isomorphism 
\[\Rep_{n,1}(Q)\dsl \GL_n=(\gl_n\times V)\dsl \GL(V)\iso \gl(V)\dsl \GL(V)\cong\AA^n/\si_n.\]

Let $\Nil=\Nil({\mathfrak g \mathfrak l}(V))$.
Conjugacy classes in $\Nil$ are parametrized by partitions of $n$ according to Jordan normal form.
Given a partition $\nu=(\nu_1\geq \nu_2\geq\ldots)$,
write $|\nu|=\sum\ \nu_i$, resp. $\ell(\nu)$ for the number of parts, i.e. 
$\ell(\nu)=\#\{i\mid \nu_i\neq 0\}$.
Let $\Sigma$ be the set  of pairs, $(\lambda, \mu)$, of partitions  such that 
\begin{enumerate}
\item
$|\la|+|\mu|=n$;

\item
The parts of each of the partitions $\lambda$ and $\mu$ strictly decrease;

\item One has that either  $\ell(\la)=\ell(\mu)$ or  $\ell(\la)=\ell(\mu)+1$.
\end{enumerate}

The  definition of  the set $\Sigma$ and the lemma  below were also obtained in a recent paper by Bellamy and Boos, see \cite[Lemma 5.3]{BeBo},
where the authors use the notation $P_F(n)$ for our $\Sigma$.

\begin{lem} \label{pairorbits} For any field $\k$, there is a natural bijection $(\la,\mu)\mto O_{\la,\mu}$
between
the set $\Sigma$ and the set of  $\k$-rational $\GL(V)$-orbits in $\Nil\times V$ which consist of absolutely 
indecomposable representations such that for $(u,v)\in O_{\la,\mu}$
the   Jordan normal form of $u$ is given by the partition
with parts $\lambda_i + \mu_i=\nu_i$, i.e. the size of the $i$-th block of the nilpotent $u$
equals $\lambda_i + \mu_i=\nu_i$.
\end{lem}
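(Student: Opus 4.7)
The plan is to analyze decompositions of a pair $(u,v)\in\Nil\times V$ as a representation of the Calogero-Moser quiver: such a decomposition is a $\k[t]$-stable splitting $V=W_1\oplus W_2$ (where $t$ acts as $u$) with $v\in W_1$ and both $W_j$ nonzero. First I would show that indecomposability forces the Jordan type $\nu$ of $u$ to have pairwise distinct parts. Indeed, if $\nu_i=\nu_{i+1}=m$, then in a standard Jordan decomposition $V=\bigoplus_k V_k$ with $V_k=\k[t]/(t^{\nu_k})$ the summand $V_i\oplus V_{i+1}$ is a free module of rank two over $R=\k[t]/(t^m)$, and an elementary calculation shows one can choose a new $R$-basis $(e'_i,e'_{i+1})$ in which $v_i+v_{i+1}$ lies entirely in $R\cdot e'_i$; then $R\cdot e'_{i+1}$ splits off as a summand with zero component of $v$, contradicting indecomposability.

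Assume now $\nu_1>\nu_2>\cdots>\nu_\ell>0$. I would fix a Jordan basis with generators $e_i$, write $v=\sum v_i$, and normalize each nonzero $v_i$ to $v_i=t^{a_i}e_i$ via a diagonal automorphism $e_i\mapsto u_i(t)e_i$; set $\lambda_i=\nu_i-a_i$ (the order of $v_i$) and $\mu_i=a_i$. Among the off-diagonal automorphisms of $V$ two types are essential: Type I, $e_j\mapsto e_j+t^{\nu_i-\nu_j}r(t)e_i$ for $i<j$, which modifies $v_i$ by a term of $t$-order at least $a_j+\nu_i-\nu_j$; and Type II, $e_i\mapsto e_i+s(t)e_j$, which modifies $v_j$ by a term of $t$-order at least $a_i$. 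A direct calculation shows that Type I can annihilate $v_i$ precisely when $\lambda_i\leq\lambda_j$, and Type II can annihilate $v_j$ precisely when $\mu_i\leq\mu_j$. Combined with the trivial observations that $v_i=0$ already decomposes the pair, and that two simultaneous zeros $\mu_i=\mu_j=0$ allow one to replace $V_i$ by the cyclic submodule $\k[t](v_i+v_j)\cong V_i$ (splitting off $V_j$), indecomposability is equivalent to the strict inequalities $\lambda_1>\lambda_2>\cdots>\lambda_\ell\geq 1$ and $\mu_i>\mu_{i+1}$ together with $\ell(\lambda)-\ell(\mu)\in\{0,1\}$. These are exactly the conditions defining $\Sigma$, and condition (1) is automatic since $|\lambda|+|\mu|=\sum\nu_i=n$; moreover, since the standard representatives and the whole analysis are defined over the prime field and use only $\k$-linear operations, everything is Galois-equivariant and the resulting orbits consist of absolutely indecomposable representations.

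For the converse direction, I would send $(\lambda,\mu)\in\Sigma$ to the orbit $O_{\lambda,\mu}$ of the standard representative $V=\bigoplus_i\k[t]/(t^{\lambda_i+\mu_i})$, $v=\sum_i t^{\mu_i}e_i$, and verify indecomposability by reading the analysis above in reverse: any nontrivial splitting $V=W_1\oplus W_2$ with $v\in W_1$ would produce either a vanishing component (excluded by $\lambda_i\geq 1$) or an instance of Type I with $\lambda_i\leq\lambda_j$ or Type II with $\mu_i\leq\mu_j$, both excluded. Well-definedness of $(\lambda,\mu)$ as an invariant of the orbit follows because $\lambda_1$ is intrinsically the order of $v$ in $V$, and the remaining entries can be recovered inductively from the quotient pair obtained by stripping the top of the largest Jordan block via $v\mapsto u^{\lambda_1-1}v$. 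The main obstacle will be to confirm that the tuple $(a_i)$ is a \emph{complete} invariant of the $\Aut_{\k[t]}(V)$-orbit of $v$: compositions of the Type I and Type II moves, and in particular iterated off-diagonal automorphisms spanning several blocks at once, should not open up new decompositions that the elementary two-block moves do not already detect. I would resolve this by building an intrinsic, basis-free filtration on $V$ out of the data $(u,v)$ — for example from the family of submodules $\k[t]\cdot u^k v+\ker u^j$ — and showing it is preserved by every $\Aut_{\k[t]}(V)$-equivariant automorphism and rigidifies the Jordan decomposition up to diagonal scaling, thereby pinning down $(a_i)$ canonically.
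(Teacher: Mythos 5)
Your approach diverges from the paper's: the paper does not attempt a bare-hands normal-form computation but instead \emph{cites} the classification of $\GL(V)$-orbits in $\Nil\times V$ due to Travkin \cite{Tr} and Achar--Henderson \cite{AH} (orbits $\leftrightarrow$ pairs of partitions $(\lambda,\mu)$ with $|\lambda|+|\mu|=n$, Jordan type of $u$ equal to $\lambda+\mu$, with the explicit representative $v=\sum_i e_{i,\lambda_i}$), and then reads off indecomposability from a second combinatorial datum introduced in \cite{Tr}: the pair $(\nu,\theta)$ where $\theta$ is the Jordan type of $u$ on $V/\langle v,uv,u^2v,\dots\rangle$. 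The key observation in the paper is that the representation is reducible iff $\nu$ and $\theta$ share a row of equal length, and the identities $\lambda_i-\lambda_{i+1}=\nu_i-\theta_i$ and $\mu_i-\mu_{i+1}=\nu_i-\theta_{i+1}$ translate this at once into strict decrease of $\lambda$ and of $\mu$. This gives a basis-free criterion and sidesteps all discussion of which automorphisms can ``annihilate'' a component.

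The gap you yourself flag at the end is a genuine one, and it is exactly the content of the result the paper cites. Your analysis of Type~I and Type~II moves is a pairwise, two-block analysis; indecomposability requires ruling out \emph{every} $u$-stable splitting $V=W_1\oplus W_2$ with $v\in W_1$, and the summands $W_1,W_2$ need not be unions of Jordan blocks of your chosen basis, nor need they arise from a single two-block move. You cannot conclude that only the two types of moves are relevant until you know that the map $(\text{orbit})\mapsto(\lambda,\mu)$ is well defined and injective, i.e.\ that $(a_i)$ is a complete orbit invariant --- but that well-definedness is precisely what Travkin/Achar--Henderson establish (via a canonical filtration of the sort you gesture at, $\k[t]\cdot u^k v+\ker u^j$). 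As written, your argument uses the classification you are trying to prove. Either carry out the intrinsic-filtration step in full (at which point you will have essentially reproved \cite{Tr}), or, as the paper does, invoke the classification and then derive the indecomposability criterion from the $(\nu,\theta)$ data; the latter is considerably shorter. The step where you show that repeated Jordan blocks force decomposability, and the final remark on absolute indecomposability via Galois descent, are fine and match what the paper does.
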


\begin{proof}

First we recall the description of the $GL(V)$-orbits in the space of pairs $(u,v) \in {\mathcal N} \times V$, where $V$ is a vector space and $\mathcal N \subset {\mathfrak g \mathfrak l}(V)$ is the nilpotent cone. This classification is the result of Theorem 1 in \cite{Tr} or Proposition 2.3 in \cite{AH}, and is described as follows (we follow \cite{Tr}). The orbits in $\mathcal N \times V$ are in bijection with pairs of partitions $\lambda, \mu$ such that $|\lambda|+|\mu|=\sum \lambda_i + \sum \mu_i = n$. This bijection has the property that the type of the nilpotent $u$ is equal to $\lambda+\mu=(\lambda_1 + \mu_1, \lambda_2 + \mu_2, ...)$, and is constructed in the following way. Given a pair of partitions $(\lambda, \mu)$ as above, let $\nu=\lambda+\mu$, and let $u$ be a nilpotent of type $\nu$. Let $D_\nu$ be the set of boxes of the Young diagram $\nu$, so that $D_\nu=\{ (i,j):1 \leq j \leq \nu_i \}$. We choose a basis of $V$ with basis vectors $e_{i,j}$, where $(i,j) \in D_\nu$ such that $ue_{i,j}=e_{i,j-1}$ for $2 \leq j \leq \nu_i$ and $ue_{i,1}=0$. Let $v=\sum e_{i,\lambda_i}$, where we put $e_{i,0}=0$.

To single out the orbits consisting of indecomposable representations, we start by recalling another bijection in Proposition-Construction 1 in \cite{Tr} between the orbits in $\mathcal N \times V$ and pairs of partitions $\nu, \theta$, where $\nu$ is the type of the nilpotent $u$ and $\theta$ is the type of the nilpotent $u$ acting on the quotient vector space $V/\langle v, uv, u^2 v, ... \rangle$. Note that $\nu$ and $\theta$ have the property that their columns with the same number differ in length at most by one. 

We observe that a representation is reducible if and only if $\nu$ and $\theta$ have a row of the same length. Indeed, a representation is reducible if and only if $u$ has a Jordan block in its Jordan normal form such that the component of $v$ is zero in a basis of generalized eigenvectors for $u$, which translates into the condition that $\nu$ and $\theta$ have a row of the same length. Because of the equalities $\lambda_i - \lambda_{i+1} = \nu_i - \theta_i$, $\mu_i - \mu_{i+1} = \nu_i - \theta_{i+1}$ we obtain that $\lambda$ and $\mu$ have to have distinct parts. 

Finally we remark that because of the explicit description of indecomposables, which by the above are indexed by the same data over any field, we have that absolutely indecomposable representations coincide with indecomposable ones.
\end{proof}

An element of $\Rep_{n,1}\bar Q$ is a quadruple $(B_1,B_2,v,w) \in \gl(V) \times \gl(V) \times V \times V^*$.
The moment map $\mu: \Rep_{n,1}\bar Q\to \gl_n$ is given by the formula
$\mu(B_1,B_2,v,w) =[B_1,B_2]+vw$.
It is known that for any  $c\neq 0$ the element $c\cdot \Id \in\gl(V)$ is sufficiently general.
$\Id\in\gl(V)$ is a regular value of the moment map. Thus, $\Id\in\gl(V)$ is a regular value of the moment map
and $\mm=\mu\inv(\Id)\dsl GL_n$ is a smooth
symplectic variety of dimension $2n$,
called the {\em Calogero-Moser space}. 
We use the notation of diagram \eqref{pqr} and put
$$
\mm_0:=\mu\inv(\Id)\cap \pi\inv(0)=\{(B_1,B_2,v,w) \in \Nil \times \gl(V) \times V \times V^*\mid
[B_1,B_2]=I+vw=\Id\}\dsl \GL(V).
$$

The above variety has been studied by G. Wilson \cite{Wi}.
He constructed a parametrization of connected components of the
variety $\mm_0$ by partitions of $n$ and
showed that each connected component  of $\mm_0$ is isomorphic
to $\AA^n$.
On the other hand,  the number of $\GL(V)$-orbits in
$\fw\inv(0)=\Nil\times V$ being finite, 
Proposition \ref{nochar}(iii) is applicable and it yields an alternative proof of the
latter isomorphism.
Furthermore, Proposition \ref{nochar}(iii) provides a natural parametrization
of connected components of  $\mm_0$ by the $\GL(V)$-orbits  of indecomposables in
$\Nil\times V$. Applying  Lemma \ref{pairorbits} we obtain
a  parametrization
of connected components of  $\mm_0$ by  the elements of the set $\Sigma$.

The latter parametrization is related to the one given by Wilson via  a 
bijection between the set $\Sigma$ and the set of partitions of $n$, viewed as Young diagrams with
$n$ boxes. The bijection  amounts, essentially, to describing Young diagrams 
by its  `Frobenius form' ,  Specifically, let $(\la,\mu)\in\Sigma$.
First, shift the rows of the Young diagram $\mu$ so that the $i$-th row is shifted by $i$ units to the right (where the first row is the longest, 
the second row is the second longest, and so on). Next, shift the columns of the Young diagram $\lambda$ so that the $i$-th column is hifted by $i-1$ units up (where the first column is the longest, the second column is the second longest, and so on). Now, the bijection in question  is defined by  assigning  to  $(\la,\mu)$
the Young diagram with $n$ boxes obtained by gluing 
the resulting shifts of $\lambda$ and $\mu$ along the line that becomes 
the main diagonal of the new  diagram.
Lemma \ref{pairorbits} ensures that the diagram thus defined has the shape of a
Young diagram.

\begin{rem} Frobenius form was also mentioned by Wilson, see \cite[p.28]{Wi}.
Our Lemma  \ref{pairorbits} gives a geometric interpretation, in terms of indecomposable 
quiver representations, of somewhat mysterious inequalities in Wilson's work \cite[Lemma 6.9]{Wi}.
\end{rem}

\begin{rem} 
One can generalize\footnote{This has also been done in \cite{BeBo}.} the case of the Calogero-Moser quiver to a cyclic quiver with one additional vertex connected by an edge to one of the vertices of the cycle, where the component of the dimension vector is equal to $n$ at each of the vertices of the cycle and $1$ at the additional vertex. Below we describe the bijection which is the content of Proposition \ref{nochar} for the cyclic quiver (i.e. the analog for the cyclic quiver of the bijection of  Lemma \ref{pairorbits} for the Calogero-Moser quiver). 

The torus-fixed points in a quiver variety for the cyclic quiver are described, for example, in \cite{Ne}, and the description is as follows. Let $(v_1,...,v_n)$ be the dimension vector at the vertices of the cycle so that $\dim V_k = v_k$ for the standard vector spaces $V_k$ associated to the quiver variety (note that we will take $k$ modulo $n$ below), and let the framing be given by $1$ at the first vertex and $0$ at all the other vertices of the cycle. Then the torus-fixed points in the corresponding quiver variety are labelled by Young diagrams in which each box is colored in one of the $n$ colors corresponding to the content of the box (i.e. the difference of the $x$- and the $y$-coordinates of the center of the box, when the Young diagram is placed in the first quadrant in the standard way) in such a way that in total there are exactly $v_i$ boxes of the $i$-th color in the Young diagram.  

Now we describe the other side of the bijection of   Lemma \ref{pairorbits}. The absolutely indecomposable representations of the undoubled quiver in this case coincide with the indecomposable ones, as in the above case of the Calogero-Moser quiver. The indecomposable (nilpotent when one goes around the circle enough times) representations of our undoubled quiver (with an oriented cycle and the remaining arrow pointing inside) can be described as follows. 

We call a "snake" a nilpotent representation of the cyclic quiver which consists of a string of vectors $u_1,...,u_N$ where $u_k$ lies in $V_{k \ {\rm mod} \ n}$ and $u_k$ is mapped to $u_{k+1}$ by the arrow $V_{k \ {\rm mod} \ n} \to V_{(k+1) \ {\rm mod} \ n}$) with a marked point in each snake at a place with residue $1$ modulo $n$ (i.e. the place mod $n$ corresponding to where the nonzero component of the framing vector lies). Then an indecomposable representation of our undoubled quiver is labelled by a collection of "snakes," so that when all snakes are unrolled and placed on the coordinate line so that the marked place of each snake is at the origin, any two snakes have the following property: one of them lies strictly within the other inside of the coordinate line (i.e. the longer is the head of a snake, the longer is its tail). 

The bijection of Lemma \ref{pairorbits} of the above data describing an indecomposable representation of the undoubled quiver with the data describing a torus-fixed point in the quiver variety for the cyclic quiver is obtained by taking each snake, bending it at the marked point to form the right angle, and placing snakes which have been bent in this way on top of each other to form the Young diagram. 
\end{rem}

{\small

}

\begin{thebibliography}{WWWW2}
%bbb

\bibitem[AH]{AH} P. N. Achar, A. Henderson, Orbit closures in the enhanced nilpotent cone, Adv. Math. 219 (2008), 27-62.

\bibitem[Al]{Al} J. Alper, {\em    Good moduli spaces for Artin stacks.}
arXiv:0804.2242.

\bibitem[AHL]{AHL} \bysame, J. Hall, D. Rydh, {\em A Luna etale slice theorem for algebraic stacks,}
 http://arxiv.org/abs/1504.06467.

\bibitem[An]{An} Y.
Andr\'e, {\em
Slope filtrations},
Confluentes Math. 1 (2009),  1-85. %Cor 1.4.10

\bibitem[BNR]{BNR} A. Beauville, M. Narasimhan, and S. Ramanan, \emph{Spectral curves and generalized theta divisor}, J. Reine Angew. Math. 398 (1989), 169–179.

\bibitem[B1]{B1} K. Behrend, \emph{Derived $\ell$-adic categories for
    algebraic stacks}, Memoirs of the A.M.S. 163 (2003).

\bibitem[B2]{B2} \bysame,  \emph{The Lefschetz trace formula for the
    moduli stack of principal bundles}, Ph.D. University of California,
  Berkeley
1991.


\bibitem[BBD]{BBD} A. Beilinson, J. Bernstein, P. Deligne,
{\it Faisceaux pervers}, Asterisque, {\bf 100} (1982).

\bibitem[BD]{BD} \bysame, V. Drinfeld, {\em Quantization of Hitchin's
    hamiltonians.}
\bibitem[BeBo]{BeBo} G. Bellamy, M. Boos, {\em
    The (cyclic) enhanced nilpotent cone via quiver representations.}
arXiv:1609.04525.

\bibitem[BB]{BB} R. Bezrukavnikov, A. Braverman,  {\em Geometric Langlands
  correspondence for D-modules in prime characteristic: the $GL(n)$
  case}, Pure Appl. Math. Q. 3 (2007),  153-179. 

\bibitem[Bi]{Bi} I. Biswas,
{\em Parabolic bundles as orbifold bundles.} Duke Math. J. 88 (1997),  305-325.
\bibitem[BMW]{BMW} \bysame, S. Majumder, M. Wong, {\em
Parabolic Higgs bundles and $\Gamma$-Higgs bundles.}
J. Aust. Math. Soc. 95 (2013), 315-328. 

\bibitem[BvdB]{BvdB} A. Bondal, M. van der Bergh, 
{\em Generators and representability of functors in commutative and
  noncommutative algebraic geometry}, Mosc. Math. J. 3 (2003)
1-36. %Appendix B
\bibitem[BM]{BM} W.
Borho, R. MacPherson, {\em Repr\'esentations des groupes de Weyl et
  homologie d'intersection pour les vari\'et\'es nilpotentes.}
  C. R. Acad. Sci. 
Paris S\'er. I Math. 292 (1981),  707-710. 

\bibitem[BG]{BG} A.
Braverman, D. Gaitsgory, \emph{Geometric Eisenstein series.}
 Invent. Math. 150 (2002), 287-384.

\bibitem[Br]{Br} J.-L. Brylinski, \emph{Transformations canoniques,
    dualit\'e projective, th\'eorie de Lefschetz, transformations de Fourier
    et sommes trigonom\'etriques.}
 G\'eom\'etrie et analyse microlocales. Ast\'erisque No. 140-141 (1986),
 3-134, 251.

\bibitem[CHM]{CHM}  M.  de Cataldo, T. Hausel, L. Migliorini,
{\em   Exchange between perverse and weight filtration for the Hilbert
  schemes of points of two surfaces.} arXiv:1012.2583.

\bibitem[CI]{CI} D.
Chan, C. Ingalls, {\em Noncommutative coordinate rings and stacks.}
 Proc. London Math. Soc. (3) \textbf{88} (2004), 63-88.

\bibitem[CL]{CL} P.-H.  Chaudouard, G. Laumon, 
{\em     Un th\'eor\`eème du support pour la fibration de Hitchin},
 Preprint, arXiv:1407.3562.

\bibitem[CYZ]{CYZ} X.W. Chen, Y. Ye, P. Zhang, 
{\em Algebras of derived dimension zero.}
Comm. Algebra 36 (2008), no. 1, 1--10. 

\bibitem[CB1]{CB1} W. Crawley-Boevey, \emph{Geometry of the moment map for
    representations of quivers}, Compositio Mathematica \textbf{126}
  (2001), 257-293.
\bibitem[CB2]{CB2} \bysame, {\em
Normality of Marsden-Weinstein reductions for representations of
quivers.} Math. Ann. 325 (2003), 55-79.

\bibitem[CBEG]{CBEG}
 \bysame, P.~Etingof, and V.~Ginzburg,
  \emph{Noncommutative geometry and quiver algebras},
  Adv. Math. \textbf{209} (2007),  274-336,
  arXiv:math.AG/0502301.

\bibitem[CBvdB]{CBvdB} \bysame, M.~van~den~Bergh,
 \emph{Absolutely indecomposable representations and Kac-Moody Lie
   algebras} (with an appendix by Hiraku Nakajima), Invent. Math. 155
 (2004), 537-559.
\bibitem[CQ]{CQ}  J.~Cuntz and D.~Quillen, \emph{Algebra extensions
    and nonsingularity}, J. Amer.  Math. Soc. \textbf{8} (1995),
  251-289.

\bibitem[Da1]{Da1} B. Davison, {\em 
    The critical CoHA of a self dual quiver with potential,}
arXiv:1311.7172.

\bibitem[Da2]{Da2} \bysame, {\em  
  Purity of critical cohomology and Kac's conjecture,}
arXiv:1311.6989.
\bibitem[DM]{DM} \bysame, S. Meinhardt, {\em
    Cohomological Donaldson-Thomas theory of a quiver with potential and quantum enveloping algebras.}
arXiv:1601.02479.

\bibitem[De]{De} P. Deligne, {\em Comptage de faisceaux $\ell$-adiques.}
IAS Preprint, May 2013.
\bibitem[DL]{DL} \bysame, G. Lusztig, {\em Representations of reductive groups over finite fields.} Ann. of Math. (2) 103 (1976),  103-161.
\bibitem[Dr]{Dr} V. Drinfeld, {\em
 The number of two-dimensional irreducible representations of the
 fundamental group of a curve over a finite field.}
 (Russian) Funktsional. Anal. i Prilozhen. 15 (1981), no. 4, 75-76. 

\bibitem[Fa]{Fa} G. Faltings, {\em
Stable G-bundles and projective connections.}
J. Algebraic Geom. 2 (1993),  507-568. 

\bibitem[FG]{FG} J.  Francis, D. Gaitsgory, {\em
Chiral Koszul duality.}
Selecta Math. (N.S.) 18 (2012), 27-87. 
\bibitem[Ga]{Ga} D. Gaitsgory, {\em
    The notion of category over an algebraic stack.}
arXiv:math/0507192.

\bibitem[Gi]{Gi}  V. Ginzburg,
{\em Lectures on Nakajima's quiver varieties.}
 Geometric methods in representation theory. I, 145-219,
 S\'emin. Congr., 24-I, Soc. Math. France, Paris, 2012.
%{\em Cherednik algebras for algebraic curves.}
% Representation theory of algebraic groups and quantum groups, 121-153, 
%Progr. Math., 284, Birkh\"user/Springer, New York, 2010.

\bibitem[Gri]{Gri} M.
Grinberg, {\em
On the specialization to the asymptotic cone.}
J. Algebraic Geom. 10 (2001), 1, 1-17. 
\bibitem[Gro]{Gro} M. Gr\"ochenig, {\em Moduli of 
at connections in positive characteristic.}  arXiv:1201.0741.

%\bibitem[Fr]{Fr} D. Fratila,
%{\em Cusp Eigenforms and the Hall Algebra of an Elliptic Curve.}
% Compos. Math. 149 (2013),  914-958. %Lemma A.6

\bibitem[J]{J} D. Joyce, {\em Configurations in abelian
    categories. I. Basic properties and moduli stacks.} Adv. Math. 203
  (2006),  ~{194-~255}.



\bibitem[HL]{HL} D. Halpern-Leistner, {\em On the structure of instability in moduli theory,} https://arxiv.org/abs/1411.0627

\bibitem[Ha]{Ha} T. Hausel, {\em Betti numbers of holomorphic symplectic quotients via arithmetic Fourier transform.} Proc. Natl. Acad. Sci. USA 103 (2006), no. 16, 6120--6124.

\bibitem[HV]{HV} \bysame, F. Rodriguez-Villegas, {\em Mixed Hodge polynomials of
 character varieties.} With an appendix by Nicholas
 M. Katz. Invent. Math. 174 (2008), 555-624.

\bibitem[HV2]{HV2} \bysame, \bysame
{\em Cohomology of large semiprojective hyperk\"ahler varieties.}
 Ast\'erisque No. 370 (2015), 113-156.

\bibitem[HLV1]{HLV1} T. Hausel,  E. Letellier, and F. Rodriguez-Villegas,
  \emph{Positivity of Kac polynomials and DT-invariants of quivers}, 
 Ann. of Math. (2) 177 (2013),  1147-1168.

\bibitem[HLV2]{HLV2} \bysame, \bysame,\bysame,
\emph{Arithmetic harmonic analysis on character and quiver varieties.}
Duke Math. J. 160 (2011),  323-400.
\bibitem[HLV3]{HLV3} \bysame, \bysame,\bysame,
\emph{Arithmetic harmonic analysis on character and quiver varieties II.}
 Adv. Math. 234 (2013), 85-128. 

\bibitem[H]{H} J. Hua,
{\em Counting representations of quivers over finite fields.}
 J. Algebra 226, (2000) 1011-1033.

\bibitem[Ka]{Ka} D. Kaledin, {\em
Derived equivalences by quantization.} Geom. Funct. Anal. 17 (2008), no. 6, 1968-2004. 
\bibitem[KL]{KL}  N. Katz and G. Laumon,
{\em Transformation de Fourier et majoration de sommes exponentielles.}
Inst. Hautes Etudes Sci. Publ. Math. 62 (1985), 361–418.

\bibitem[KMM]{KMM} Y. Kawamata, K. Matsuda, and K. Matsuki, {\em
    Introduction to the minimal model problem.} in Algebraic Geometry
  Sendai 1985, Advanced Studies in Pure Math. 10(1987), Kinokuniya and
  North-Holland, 283-360. 
\bibitem[KS1]{KS1} M.
Kontsevich, Y. Soibelman, {\em Stability structures, motivic
  Donaldson-Thomas invariants and cluster transformations.} 
 arXiv:0811.2435.
\bibitem[KS2]{KS2} \bysame, \bysame, {\em Cohomological Hall algebra, exponential
  Hodge structures and motivic Donaldson-Thomas invariants.}
 Commun. Number Theory Phys. 5 (2011),  231-352. 
\bibitem[LO]{LO} Y. Laszlo and M. Olsson, \emph{The six operations for
    sheaves on Artin stacks} I, II,  Publ. Math. IHES 107 (2008), 109--210.

%\bibitem[LO2]{LO2} \bysame, \bysame, \emph{The six operations for sheaves on Artin stacks II: adic coefficients},  Publ. Math. IHES 107 (2008), 169--210.



\bibitem[La]{La} G. Laumon, \emph{Faisceaux characteres (d'apres
    Lusztig)}, Seminaire Bourbaki, Vol. 1988/89, Asterisque, 177-178
  (1989), 231-260. 

\bibitem[La2]{La2} \bysame, \emph{Correspondance de Langlands geometrique pour les corps de fonctions}, Duke Math. J. 54 (1987), 309--359.

\bibitem[LM]{LM} \bysame, L. Moret-Bailly, {\em Champs alg\'ebriques.}, 
 Ergebnisse der Mathematik und ihrer Grenzgebiete. 3. 
 39. Springer-Verlag, Berlin, 2000.

\bibitem[LP]{LP} L, Le Bruyn, C. Procesi, 
{\em Semisimple representations of
quivers.} Trans. Amer. Math. Soc. 317 (1990), 585-598.


\bibitem[Le]{Le} E.  Letellier,
{\em     DT-invariants of quivers and the Steinberg character of $GL_n$}.
Int. Math. Res. Not. IMRN 2015, no. 22, 11887-11908.

   
\bibitem[LvdB]{LvdB} W. Lowen, M. Van den Bergh, \emph{Deformation theory of
  abelian categories.}
Trans. Amer. Math. Soc. 358 (2006), 5441-5483.

\bibitem[Lur]{Lur} J. Lurie, {\em Higher algebra.}
 
\bibitem[L1]{L1} G. Lusztig, 
{\em Green polynomials and singularities of unipotent classes.}
 Adv. in Math. 42 (1981),  169-178. 

\bibitem[L2]{L2} \bysame, \emph{Character sheaves I-V}.

%\bibitem[Mi]{Mi} I. Mirkovic, \emph{Character sheaves on reductive Lie
%    algebras}, Moscow Math. J. 4, No. 4 (2004), 897-910.
\bibitem[Me]{Me} S.  Meinhardt, {\em 
    Donaldson-Thomas invariants vs. intersection cohomology for categories of homological dimension one.}
arXiv:1512.03343.
\bibitem[MR]{MeR} \bysame, M. Reineke, {\em 
    Donaldson-Thomas invariants versus intersection cohomology of quiver moduli.}
 arXiv:1411.4062.
\bibitem[MeSe]{MeSe} V. B. Mehta, C. S. Seshadri, {\em Moduli of vector bundles on curves with parabolic structures}.
 Math. Ann. 248 (1980),  205-239. %p. 235

\bibitem[Mo]{Mo} S.  Mozgovoy, {\em Poincar\'e polynomials of moduli spaces of stable bundles over curves.}
 Manuscripta Math. 131 (2010), 63-86. 

\bibitem[MS]{MS} \bysame, O.  Schiffmann, {\em
    Counting Higgs bundles.}
 arXiv:1411.2101.

\bibitem[MR]{MR} \bysame, M. Reineke, {\em
Moduli spaces of stable pairs and non-abelian zeta functions of curves via wall-crossing.}
 arXiv:1310.4991.
\bibitem[MY]{MY} M. Maruyama, K. Yokogawa. {\em Moduli of parabolic stable sheaves.} Math. Ann.,
293 (1992), 77-99.

\bibitem[Ne]{Ne}
A. Negut, {\em Quantum algebras and cyclic quiver varieties.}
 arXiv:1504.06525.
%\bibitem[Na]{Na} H. Nakajima, 

\bibitem[Ols]{Ols} M. Olsson, {\em Algebraic Spaces and Stacks.} A.M.S. Colloquium Publications. 2016.

\bibitem[Rei]{Rei}  M. Reineke, {\em 
    Quiver moduli and small desingularizations of some GIT quotients.}
arXiv:1511.08316.

\bibitem[Ro]{Ro} M. Romagny, {\em Group actions on stacks and
    applications.}
 Michigan Math. J. 53 (2005),  209-236. 

\bibitem[Sch]{Sch} O. Schiffmann, {\em 
    Indecomposable vector bundles and stable Higgs bundles over smooth
    projective curves.}\
  arXiv:1406.3839.
\bibitem[Sc]{Sc} J. P. Schneiders, {\em Quasi-abelian categories and sheaves.} M\'em. Soc. Math. France 76 (1999).
\bibitem[Se]{Se}  C. S. Seshadri, {\em
Moduli of $\pi$-vector bundles over an algebraic curve.}
 1970 Questions on Algebraic Varieties (C.I.M.E., III Ciclo, Varenna,
 1969) pp. 139-260 Edizioni Cremonese, Rome.

\bibitem[Se2]{Se2} \bysame, {\em  Quotient spaces modulo reductive algebraic groups.}
Ann. of Math. (2) 95 (1972), 511-556.

\bibitem[Sh]{Sh} S. Shatz, {\em The decomposition and specialization of algebraic families of vector bundles.} Compositio Math. 35 (1977),  163-187.

\bibitem[So]{So} P. Sosna, \emph{Scalar Extensions of Triangulated Categories},
Applied Categorical Structures
22 (2014), 211-227. arXiv:1109.6515

%\ibitem[Sp]{Sp} T. A.
%Springer, {\em
%A purity result for fixed point varieties in flag manifolds.}
%J. Fac. Sci. Univ. Tokyo Sect. IA Math. 31 (1984),  271-282. 

\bibitem[To]{To} B. To\"en, {\em 
Moduli of objects in dg-categories.}
Ann. Sci. École Norm. Sup. (4) 40 (2007), 387-444. 
\bibitem[TV]{TV} B. Toen, M. Vaquie, Moduli of objects in dg-categoies, https://arxiv.org/pdf/math/0503269v5.pdf.

\bibitem[Tr]{Tr} R. Travkin, \emph{Mirabolic Robinson-Schensted-Knuth correspondence}. Selecta Math. (N.S.)  14  (2009),   727-758. 
\bibitem[Tr2]{Tr2} \bysame, {\em
    Quantum geometric Langlands correspondence in positive characteristic: the $GL(N)$ case.}
 Duke Math. J. 165,  (2016), 1283-1361.

\bibitem[Wi]{Wi} G. Wilson, \emph{Collisions of Calogero-Moser particles and an adelic Grassmannian (with an appendix by I.G. Macdonald)}. Invent. Math. 133 (1998), 1-41. 
\bibitem[Y]{Y} K, Yokogawa,
{\em Compactification of moduli of parabolic sheaves and moduli of parabolic Higgs sheaves.}
J. Math. Kyoto Univ. 33 (1993),  451-504. 
\end{thebibliography}
\end{document}